\newcommand{\thesistitle}{Nonlinear Hawkes Processes}
\newcommand{\thesisauthor}{Lingjiong Zhu}
\newcommand{\thesisadvisor}{Professor S. R. S. Varadhan}
\newcommand{\graddate}{May 2013}
\newcommand{\thesisdedication}{To the memory of my grandpa
\\
Zhixuan Zhu (1923-2001)}
\renewcommand{\emptyset}{\ensuremath{\varnothing}}
\newtheorem{theorem}{Theorem}
\newtheorem{assumption}{Assumption}
\newtheorem{corollary}{Corollary}
\newtheorem{example}{Example}
\newtheorem{lemma}{Lemma}
\newtheorem{proposition}{Proposition}
\newtheorem{remark}{Remark}
\def\P{\mathbb P} \def\E{\mathbb E} 
\begin{document}

\pagenumbering{roman}
%
\thispagestyle{empty}
\begin{center}
  {\large\textbf{\thesistitle}}
  \vspace{.7in}

  by
  \vspace{.7in}

  \thesisauthor
  \vfill

\begin{doublespace}
  A dissertation submitted in partial fulfillment\\
  of the requirements for the degree of\\
  Doctor of Philosophy\\
  Department of Mathematics\\
  New York University\\
  \graddate
\end{doublespace}
\end{center}
\vfill

\noindent\makebox[\textwidth]{\hfill\makebox[2.5in]{\hrulefill}}\\
\makebox[\textwidth]{\hfill\makebox[2.5in]{\hfill\thesisadvisor\hfill}}
\newpage
\thispagestyle{empty}
\vspace*{2in} 
\begin{center}
\copyright \hspace{.2cm} \thesisauthor\\
All Rights Reserved, 2013
\end{center}
\newpage

\begin{center}
Dedication
\end{center}
\vspace*{\fill}
\begin{center}
  \thesisdedication\addcontentsline{toc}{section}{Dedication}
\end{center}
\vfill
\newpage
\section*{Acknowledgements}\addcontentsline{toc}{section}{Acknowledgements}
%
It is difficult to overstate my gratitude to my adviser Professor Varadhan. 
Working with Professor Varadhan has been an absolutely amazing experience for me. 
I thank him for always keeping his door open and patiently answering my questions.
I thank him for his superb guidance, understanding, and generosity.
I thank him for suggesting the topic for my thesis, which would not be possible
without his deep wisdom and sharing of many new ideas.
He has been everything that one can reasonably ask for in an advisor and more, and I am
truly grateful to him.

I want to thank the Courant community for guiding me through this
process and for putting up with me in general. Tamar Arnon does her job
exceptionally well and her efforts are much appreciated. I want to thank the faculty for
many well taught and interesting classes. 
I am indebted to G\'{e}rard Ben Arous, Sourav Chatterjee and Raghu Varadhan
for writing me recommendations for my first academic job. I also want to thank Peter Carr for his interest in my thesis.

I remember a joke told by Jalal Shatah that the most important thing as an undergraduate student is to go to a top
graduate program. But once you are already at a graduate school, the most important thing is to get out of it! This would not be
possible without the final step, i.e. thesis defense! I am grateful to have Henry McKean, Chuck Newman and Raghu Varadhan
as the three readers and G\'{e}rard Ben Arous and Lai-Sang Young as the two non-readers on my thesis committee.

Most importantly, I want to thank my fellow
colleagues for all the fun memories that I take with me from Courant. New York
City, without good friends, can be the most populated lonely place in the world,
but thankfully the constant friendship of my fellow Courant colleagues has made
these five years some of the most entertaining and pleasurable of my life. I thank Antoine Cerfon, Shirshendu Chatterjee, Oliver Conway,
S\^{i}nziana Datcu, Partha Dey, Thomas Fai, Max Fathi, Mert G\"{u}rb\"{u}zbalaban, Matan Harel, Miranda Holmes-Cerfon, Arjun Krishnan, 
Shoshana Leffler, Sandra May, Jim Portegies, Alex Rozinov, 
Patrick Stewart, Adam Stinchcombe, Jordan Thomas, Chen-Hung Wu and many others for their friendship.
In particular, I want to thank Dmytro Karabash, Behzad Mehrdad and Sanchayan Sen. 
They are not only my good friends, but coauthors as well. I also thank my office neighbor Cheryl Sylivant
for her friendship.

By living in New York City, I had the great opportunities to visit as many museums
and go to as many concerts as possible. I am grateful to the
New York Philharmonic and Metropolitan Opera House for their student ticket offers and also many wonderful student recitals and
concerts at Juilliard School, which have made my stay in New York City
much more enjoyable.

I also want to thank the professors at the University of Cambridge, who provided me a solid
undergraduate education. In particular, I am grateful to Rachel Camina, as well as Houshang Ardavan
and Tom K\"{o}rner. I also want to thank Stefano Luzzatto for supervising me on an
undergraduate research project at Imperial College, London.

I am very much indebted to my family back home. 
I thank my parents for so many years of love and understanding. They are truly the best parents one could ask for.
I also thank my grandmas, uncles and aunts for their support. Finally, I dedicate
this thesis to the memory of my late grandpa. I miss him dearly.

\newpage
\section*{Abstract}\addcontentsline{toc}{section}{Abstract}
%
The Hawkes process is a simple point process
that has long memory, clustering effect, self-exciting property and is in
general non-Markovian. The future evolution of a self-exciting point process is
influenced by the timing of the past events. There are applications in finance, neuroscience,
genome analysis, seismology, sociology, criminology and many other fields.
We first survey the known results about the theory and applications of both linear and nonlinear Hawkes processes.
Then, we obtain the central limit theorem and process-level, i.e. level-3 large deviations for nonlinear Hawkes processes.
The level-1 large deviation principle holds as a result of the contraction principle.
We also provide an alternative variational formula for the rate function of the level-1 large deviations in the Markovian case.
Next, we drop the usual assumptions on the nonlinear Hawkes process and categorize it into different regimes, 
i.e. sublinear, sub-critical, critical, super-critical and explosive regimes.
We show the different time asymptotics in different regimes and obtain other properties as well.
Finally, we study the limit theorems of linear Hawkes processes with random marks.
\newpage
\tableofcontents

\listoffigures\addcontentsline{toc}{section}{List of Figures}
\newpage

\pagenumbering{arabic} 
\section*{Introduction}\addcontentsline{toc}{section}{Introduction}
%
This thesis is about the nonlinear Hawkes process, a simple point processes, that has long memory,
the clustering effect, the self-exciting property and is in general non-Markovian.
The future evolution of a self-exciting point process
is influenced by the timing of the past events. 
There are applications in finance, neuroscience, genome analysis, sociology, criminology, seismology, and many other fields.

Chapter \ref{chap:one} includes the introduction of the model and the survey of the results already known in the
literature about Hawkes processes. That includes the stability results, limit theorems, power spectra
of linear Hawkes processes and stability results of nonlinear Hawkes processes.

Chapter \ref{chap:two} is about the functional central limit theorem of nonlinear Hawkes processes. 
A Strassen's invariance holds under the same assumptions. The work in Chapter \ref{chap:two} is based
on Zhu \cite{ZhuIII}.

Chapter \ref{chap:three} is dedicated to the process-level large deviations, i.e. level-3 large deviations, of
the nonlinear Hawkes processes. The proofs consist of the proofs of the lower bound, the upper bound and
the superexponential estimates. The level-1 large deviation principle is derived as a result of the contraction principle.
This chapter is based on Zhu \cite{ZhuII}.

Chapter \ref{chap:four} is dedicated to the study of level-1 large deviation principle for nonlinear Hawkes processes
when the exciting functions are exponential or sums of exponentials. It is based on the observation that when the exciting
functions are exponential or sums of exponentials, the process is Markovian and a combination of Feynman-Kac formula
for the upper bound of large deviations of Markov processes and tilting of the intensity function of Hawkes processes
for the lower bound will establish a level-1 large deviation principle with the rate function expressed in terms of 
some variational formula. This chapter is based on Zhu \cite{ZhuI}.

Chapter \ref{chap:five} is about the asymptotics for nonlinear Hawkes processes. In this chapter, we drop the usual
assumptions on nonlinear Hawkes processes, and study the phase transitions in different regimes.
We categorize nonlinear Hawkes processes into the following regimes: sublinear regime, sub-critical regime, critical regime,
super-critical regime and explosive regime. Different time asymptotics and various properties are obtained in different regimes.
This chapter is based on Zhu \cite{ZhuV}.

Chapter \ref{chap:six} is about the limit theorems for linear Hawkes processes with random marks. The Central limit theorem
and the large deviation principle are derived. We end this chapter with a simple application to a risk model.
This is based on the joint work with my colleague Dmytro Karabash, see \cite{Karabash}.

During my time as a PhD student at Courant Institute, I have the joy to work on some other problems either by myself
or with my colleagues. For example, I studied the large deviations of self-correcting point processes with Sanchayan Sen, see \cite{Sen}
and also did some work on biased random walks on Galton-Watson trees without leaves with Behzad Mehrdad and Sanchayan Sen,
see \cite{Mehrdad}. But since they are not closely related to the topics of my thesis, I do not include them here.

\chapter{Hawkes Processes\label{chap:one}}

\section{Introduction}

Hawkes process is a self-exciting simple point process first introduced by Hawkes \cite{Hawkes}. The future
evolution of a self-exciting point process is influenced by the timing of past events. The process is non-Markovian except
for some very special cases. In other words, Hawkes process depends on the entire past history and has a long memory. 
Hawkes process has wide applications in neuroscience, see e.g. Johnson \cite{Johnson}, Chornoboy et al. \cite{Chornoboy}, Pernice et al. \cite{PerniceI},
Pernice et al. \cite{PerniceII}, Reynaud et al. \cite{ReynaudIII};
seismology, see e.g. Hawkes and Adamopoulos \cite{HawkesIV}, Ogata \cite{OgataII}, Ogata \cite{OgataIII}, Ogata et al. \cite{OgataV};
genome analysis, see e.g. Gusto and Schbath \cite{Gusto}, Reynaud-Bouret and Schbath \cite{Reynaud}; psycology, see 
e.g. Halpin and De Boeck \cite{Halpin}; 
spread of infectious disease, see e.g. Meyer et al. \cite{Meyer};
finance, see e.g. Bauwens and Hautsch \cite{Bauwens}, 
Bowsher \cite{Bowsher}, Hewlett \cite{Hewlett}, Large \cite{Large}, Cartea et al. \cite{Cartea},
Chavez-Demoulin et al. \cite{Chavez}, Errais et al. \cite{Errais}.
Embrechts et al. \cite{Embrechts},  
Muni Toke and Pomponio \cite{Muni},
Bacry et al. \cite{BacryII}, \cite{BacryIII}, \cite{BacryIV}; and in many other fields.

Let $N$ be a simple point process on $\mathbb{R}$ and $\mathcal{F}^{-\infty}_{t}:=\sigma(N(C),C\in\mathcal{B}(\mathbb{R}), C\subset(-\infty,t])$ be
an increasing family of $\sigma$-algebras. Any nonnegative $\mathcal{F}^{-\infty}_{t}$-progressively measurable process $\lambda_{t}$ with
\begin{equation}
\mathbb{E}\left[N(a,b]|\mathcal{F}^{-\infty}_{a}\right]=\mathbb{E}\left[\int_{a}^{b}\lambda_{s}ds\big|\mathcal{F}^{-\infty}_{a}\right],
\end{equation}
a.s. for all intervals $(a,b]$ is called the $\mathcal{F}^{-\infty}_{t}$-intensity of $N$. We use the notation $N_{t}:=N(0,t]$ to denote the number of
points in the interval $(0,t]$. 

A nonlinear Hawkes process is a simple point process $N$ admitting an $\mathcal{F}^{-\infty}_{t}$-intensity
\begin{equation}
\lambda_{t}:=\lambda\left(\int_{-\infty}^{t}h(t-s)N(ds)\right),\label{dynamics}
\end{equation}
where $\lambda(\cdot):\mathbb{R}^{+}\rightarrow\mathbb{R}^{+}$ is locally integrable and left continuous, 
$h(\cdot):\mathbb{R}^{+}\rightarrow\mathbb{R}^{+}$. 
We always assume that $\Vert h\Vert_{L^{1}}=\int_{0}^{\infty}h(t)dt<\infty$ unless otherwise specified. 
Here $\int_{-\infty}^{t}h(t-s)N(ds)$ stands for $\int_{(-\infty,t)}h(t-s)N(ds)$, which is important for $\mathcal{F}^{-\infty}_{t}$-predictability.
The local integrability assumption of $\lambda(\cdot)$ is to avoid explosion and the left continuity assumption 
of $\lambda(\cdot)$ is to ensure that the process is $\mathcal{F}^{-\infty}_{t}$-predictable.

In the literature, $h(\cdot)$ and $\lambda(\cdot)$ are usually referred to
as exciting function and rate function respectively.

A Hawkes process is said to be linear if $\lambda(\cdot)$ is linear and it is nonlinear otherwise.
For a linear Hawkes process, we can assume that the intensity is
\begin{equation}
\lambda_{t}:=\nu+\int_{(-\infty,t)}h(t-s)N(ds).
\end{equation}

In this thesis, unless otherwise specified, we assume the following.
\begin{itemize}
\item
$\lambda(\cdot):\mathbb{R}^{+}\rightarrow\mathbb{R}^{+}$ is continuous and non-decreasing.
\item
$h(\cdot):\mathbb{R}^{+}\rightarrow\mathbb{R}^{+}$ is continuous and non-increasing.
\item
$N(-\infty,0]=0$, i.e. the Hawkes process has empty past history.
\end{itemize}

Throughout, we define $Z_{t}$ as $Z_{t}:=\int_{0}^{t}h(t-s)N(ds)$. Thus, $\lambda_{t}=\lambda(Z_{t})$. 

The first assumption says that the occurence of the past and present events have positive impact on the occurence of the future events.
The second assumption says that as time evolves, the impact of the past events is decreasing. For most of the results in this paper,
these two assumptions may not be necessary. We nevertheless make them to avoid some technical difficulties.

If one looks at \eqref{dynamics}, it is clear that if you witness some events occuring, $\lambda_{t}$ increases since $\lambda(\cdot)$
is increasing and you would expect even more events occuring. This is called the self-exciting property.
Because of this, you would expect to see some clustering effects.

Figure \ref{Comparison} shows the histograms of a Hawkes process and a usual Poisson process. A Poisson process is stationary with
independent increments. On the contrary, the Hawkes process has dependent increments and has clustering effects. As a result,
in the picture, the Poisson process is more or less flat whilst the Hawkes process has peaks when it gets ``excited'' and has valleys
when it ``cools down''. 
Figure \ref{IntensityPlot} shows
the plot of the intensity $\lambda_{t}$ of a Hawkes process. Unlike the usual Poisson process for which the intensity
is a positive constant, the intensity of Hawkes process increases when you witness arrivals of points and it
decays when there are no arrivals of points.

The self-exciting and clutstering properties of the Hawkes process make it ideal to characterize
the correlations in some complex systems, including the default clustering effect in finance.

One generalization of classical linear Hawkes process is the so-called multivariate Hawkes process.
We will define the multivariate Hawkes process and discuss some
basic results in Section \ref{multivariate} of Chapter \ref{chap:one}.
The multivariate Hawkes process has been well studied in the literature and we would like to point out that
if you have the result for the univariate Hawkes process, mathematically, 
it is not too difficult to generalize your result to multivariate Hawkes process.

Unlike the univariate Hawkes process, which only has the self-exciting property, the multivariate Hawkes process also has
the mutually-exciting property. In the context of industry, consider that you have a large portfolio
of companies, then the failure of one company can have impact on the performance of other companies.
In other words, multivariate Hawkes process captures the cross-sectional clustering effect.
That is why in most applications of Hawkes processes in finance, people usually consider multivariate Hawkes processes.
We will review some basic results about multivariate linear Hawkes process in Chapter \ref{chap:one}.

Another possible
generalization to Hawkes process is the marked Hawkes process, i.e. Hawkes process with random marks. Just like univariate Hawkes process vesus
multivariate Hawkes process, if you have the results in unmarked Hawkes process, usually it can be generalized to marked Hawkes process
without much difficulty. 
For instance, the large deviations for linear Hawkes process is proved in Bordenave and Torrisi \cite{Bordenave} and
the large deviations for linear marked Hawkes process is then proved in Karabash and Zhu \cite{Karabash}.
We will discuss the details of limit theorems of linear marked Hawkes process in Chapter \ref{chap:six}.

Most of the literature on Hawkes processes studies only the linear case, which has an immigration-birth representation (see Hawkes and Oakes \cite{HawkesII}).
The stability, law of large numbers, central limit theorem, large deviations, Bartlett spectrum etc. have all been studied and understood very well.
Almost all of the applications of Hawkes processes in the literature consider exclusively the linear case. Daley and Vere-Jones \cite{Daley} 
and Liniger \cite{Liniger} provide nice surveys about the theory and applications of Hawkes processes.

One special case of the Hawkes process is when the exciting function $h(\cdot)$ is exponential. 
In this case, the Hawkes process is a continuous time Markov process. If $\lambda(\cdot)$ is linear, the process
is a special case of affine jump-diffusion process and is analytically tractable.
This special case was for example studied in Oakes \cite{Oakes} and Errais et al. \cite{Errais}.

Because of the lack of computational tractability and immigration-birth representation, nonlinear Hawkes process is much less studied. 
However, some efforts have already been made in this direction. For instance, see Br\'{e}maud and Massouli\'{e} \cite{Bremaud} for stability results,
and Bremaud et al. \cite{BremaudII} for the rate of convergence to stationarity. Karabash \cite{KarabashII} recently
proved the stability results for a wider class of nonlinear Hawkes processes. 

As to the limit theorems, Bacry et al. \cite{Bacry} proved the central limit theorem for linear Hawkes process
and Bordenave and Torrisi \cite{Bordenave} proved the large deviation principle for linear Hawkes process.

For nonlinear Hawkes process, there is no explicit expression for the
variance in the central limit theorem or the rate function for the large deviation principle. The method is more abstract and much
more involved. Zhu \cite{ZhuIII} proved a central limit theorem for ergodic nonlinear Hawkes processes. 
Zhu \cite{ZhuI} studied the large deviations in the Markovian case, i.e. when $h(\cdot)$ is exponential
or sum of exponentials. And Zhu \cite{ZhuII} proved the large deviation principle for more general nonlinear Hawkes processes
at the process-level, i.e. level-3. 

\begin{figure}
\centering
\begin{subfigure}
  \centering
  \includegraphics[scale=0.50]{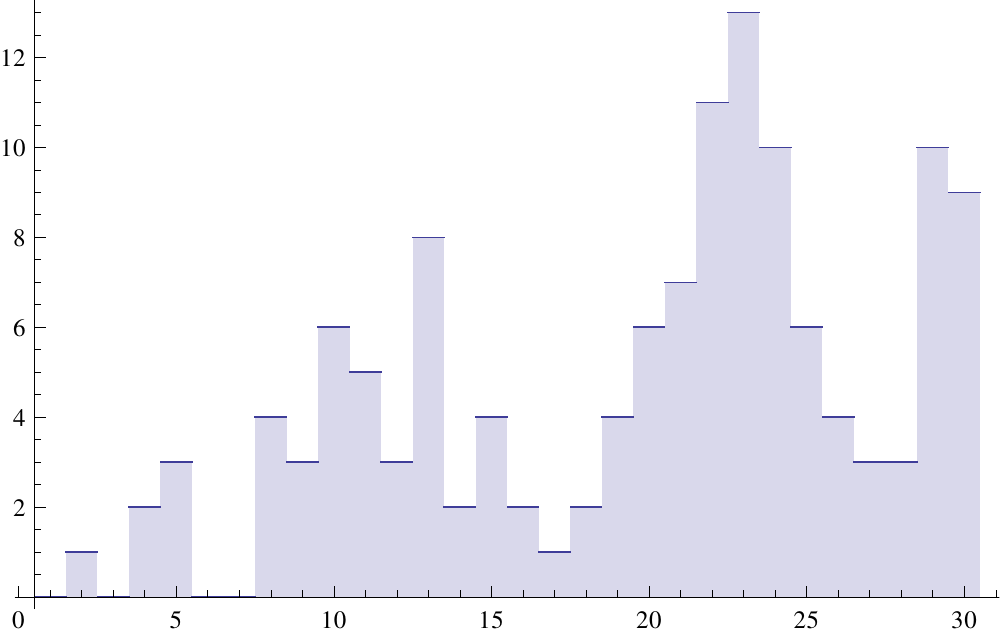}
\end{subfigure}
\begin{subfigure}
  \centering
  \includegraphics[scale=0.50]{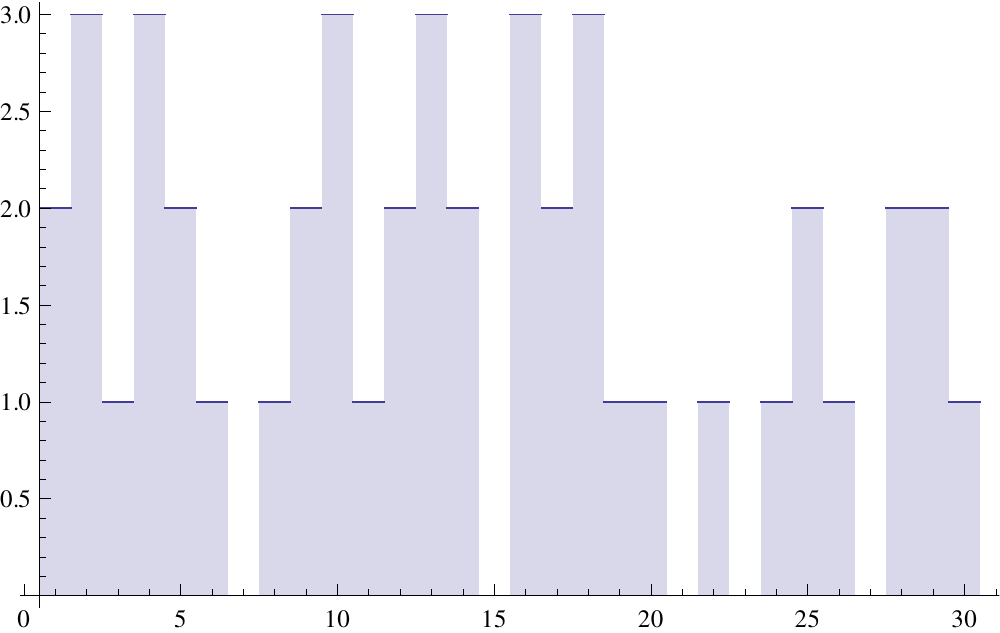}
\end{subfigure}
\caption{This is a comparison of a Hawkes process with a Poisson process.
The figure on the left shows the histogram of a Hawkes process with $h(t)=\frac{1}{(t+1)^{2}}$ and $\lambda(z)=1+\frac{9}{10}z$
and the figure on the right the histogram of a Poisson process with constant intensity $\lambda\equiv\frac{3}{2}$. In the figure,
each column represents the number of points that arrived in that unit time subinterval.}
\label{Comparison}
\end{figure}

\begin{figure}[htb]
\begin{center}
\includegraphics[scale=0.70]{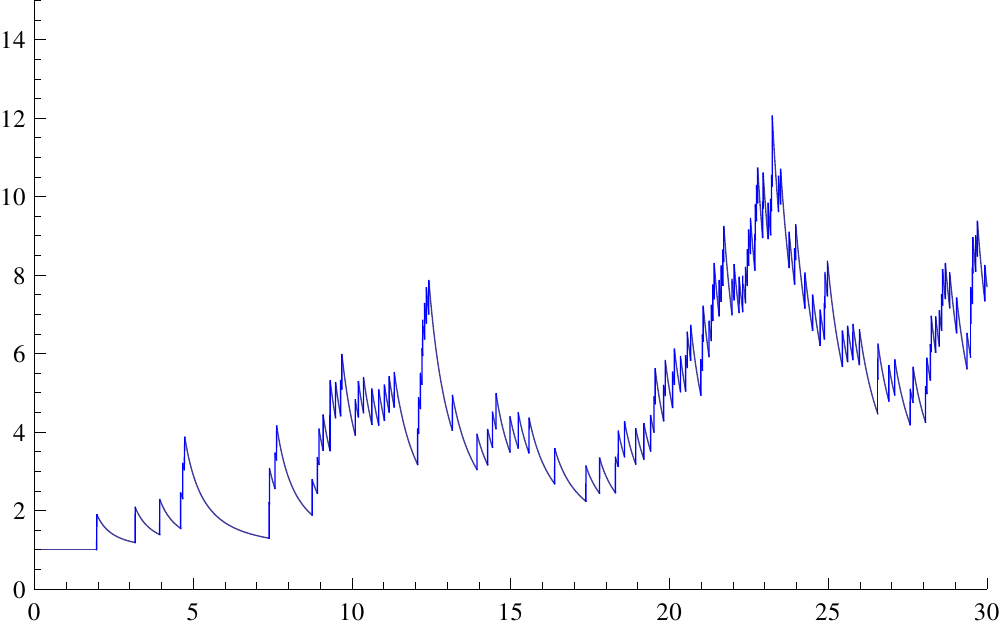}
\caption{Plot of intensity $\lambda_{t}$ for a realization of Hawkes process.
Here $h(t)=\frac{1}{(t+1)^{2}}$ and $\lambda(z)=1+0.9z$.}
\label{IntensityPlot}
\end{center}
\end{figure}

\section{Applications of Hawkes Processes}

\subsection{Applications in Finance}

The applications of Hawkes processes in finance include market orders modelling, see e.g. Bauwens and Hautsch \cite{Bauwens}, 
Bowsher \cite{Bowsher}, Hewlett \cite{Hewlett}, Large \cite{Large} and Cartea et al. \cite{Cartea};
value-at-risk, see e.g. Chavez-Demoulin et al. \cite{Chavez}; and credit risk, see e.g. Errais et al. \cite{Errais}.
Embrechts et al. \cite{Embrechts}
applied Hawkes processes to model the financial data. 
Muni Toke and Pomponio \cite{Muni} applied Hawkes processes to model the trade-through.
Bacry et al. \cite{BacryII} used Hawkes processes to reproduce empirically microstructure noise
and discussed the Epps effect and lead-lag.
The self-exciting and clustering properties of Hawkes processes 
are especially appealing in financial applications. 

Currently, most of the applications of Hawkes process in the finance literature are about market orders modelling, see
e.g. Bauwens and 
Hautsch \cite{Bauwens}, Bowsher \cite{Bowsher} and Large \cite{Large}.

Recently, Chavez-Demoulin and McGill \cite{ChavezII} used Hawkes processes to study the extremal returns in high-frequency trading.
The Hawkes process captures the volatility clustering behavior of the intraday extremal returns.
and provides a suitable estimation of high-quantile based risk measures (e.g. VaR, ES) for financial time series.

Filimonov and Sornette \cite{Filimonov} used Hawkes process to model
market events, with the aim of quantifying precisely endogeneity and exogeneity in market activity.
By using Hawkes process, Filimonov and Sornette \cite{Filimonov} analyzed E-mini S\&P futures contract 
over the period 1998-2010 and discovered that the degree of self-reflexivity has increased steadily in the
last decade, an effect they attribute to the increased deployment of high-frequency and algorithmic trading. 
When they calibrated over much shorter time intervals (10 minutes), the Hawkes process analysis is found
to detect precursors of the flash-crash that happened on May 6th, 2010. An early detection can benefit market regulators.

Very recently, Hardiman et al. \cite{Hardiman} used (linear) Hawkes process to model the arrival of mid-price changes in the E-Mini S\&P futures contract. 
Using several estimation methods, they found that the exciting function $h(\cdot)$ has a power-law decay and $\Vert h\Vert_{L^{1}}$
is close to $1$. They pointed out that markets are and have always been close to criticality, 
challenging the studies of Filimonov and Sornette \cite{Filimonov} which indicates 
that self-reflexivity (endogeneity) has increased in recent years 
as a result of increased automation of trading.

Egami et al. \cite{Egami} studied the credit default swap (CDS) markets in both Japan and U.S. They made a dynamic
analysis of the bid-ask spreads in both countries, which surged dramatically
during the 2008-2009 financial crisis and they used the Hawkes process to predict the bid-ask spreads.

As pointed out in Errais et al. \cite{Errais}, ``The collapse of Lehman Brothers brought the financial system to the brink of a breakdown.
The dramatic repercussions point to the exisence of feedback phenomena that are channeled through the complex web of informational and contractual
relationships in the economy... This and related episodes motivate the design of models of correlated default timing that incorporate
the feedback phenomena that plague credit markets.''
According to Peng and Kou \cite{Peng}, ``We need better models to incorporate the default clustering effect, i.e., one default
event tends to trigger more default events both across time and cross-sectionally.'' The Hawkes process provides a model
to characterize default events across time and if one uses a multivariate Hawkes process, that would describe the cross-sectional
clustering effect as well. 

Hawkes processes have been proposed as models for the arrival of company defaults in
a bond portfolio, starting with the papers Giesecke and Tomecek \cite{Giesecke} and Giesecke et al. \cite{GieseckeII}.
It is not hard to see that when the exciting function $h(\cdot)$ is exponential, the linear Hawkes processes are
affine jump-diffusion processes, see for instance Errais et al. \cite{Errais}.
With the help of the theory of affine jump-diffusions, one can then analyze
price processes related to certain credit derivatives analytically.

\subsection{Applications in Sociology}

The Hawkes process has also been applied to the study of social interactions. 
Crane and Sornette \cite{Crane} analysed the viewing of YouTube videos as an example
of a nonlinear social system. They identified peaks in the time series of viewing figures
for around half a million videos and studied the subsequent decay of the peak to a
background viewing level. In Crane and Sornette \cite{Crane}, the Hawkes process was
proposed as a model of the video-watching dynamics, and a plausible link made to
the social interactions that create strong correlations between the viewing actions of
different people. Individual viewing is not random but influenced by various channels
of communication about what to watch next. Mitchell and Cates \cite{Mitchell} used computer simulation
to test the the claims in Crane and Sornette \cite{Crane} that robust identification is possible
for classes of dynamic response following activity bursts. They also pointed out some limitations
of the analysis based on the Hawkes process.

In sociology, Hawkes process has also been used by Blundell et al. \cite{Blundell}
to study the reciprocating relationships.
Reciprocity is a common social norm, where one person's actions towards another increases the probability of the same type
of action being returned, e.g., if Bob emails Alice, it increases the probability that Alice
will email Bob in the near future. The mutually-exciting processes, e.g. multivariate Hawkes processes, are able to capture
the causal nature of reciprocal interactions.

\subsection{Applications in Seismology}

Ogata \cite{OgataII} used a particular case of the Hawkes process to predict earthquakes and the Hawkes process
appears to be superior to other models by residual analysis. The specific model used by Ogata \cite{OgataII}
is now known as ETAS (Epidemic Type Aftershock-Sequences) model. The discussions of ETAS model
can be found in Daley and Vere-Jones \cite{Daley}.

\subsection{Applications in Genome Analysis}

Gusto and Schbath \cite{Gusto} used the Hawkes process to model the occurences along the genome and studied
how the occurences of a given process along a genome, genes or motifs for instance, 
may be influenced by the occurrences of a second process. More precisely, 
the aim is to detect avoided and/or favored distances between two motifs, for instance, suggesting possible 
interactions at a molecular level. The statistical method proposed by Gusto and Schbath \cite{Gusto}
is useful for functional motif detection or to improve knowledge of some biological mechanisms.

Reynaud-Bouret and Schbath \cite{Reynaud}
provided a new method for the detection of either favored or avoided distances between genomic 
events along DNA sequences. These events are modeled by the Hawkes process. The biological problem is actually complex 
enough to need a non-asymptotic penalized model selection approach and Reynaud-Bouret and Schbath \cite{Reynaud} 
provided a theoretical penalty that satisfies an oracle inequality even for quite complex families of models.

\subsection{Applications in Neuroscience}

Chornoboy et al. \cite{Chornoboy} used the Hawkes process to detect and model the functional relationships
between the neurons. The estimates are based on the maximum likelihood principle.

In most neural systems, neurons communicate via sequences of action potentials. Johnson \cite{Johnson}
used various point processes, including Poisson process, renewal process and the Hawkes process and 
showed that neural discharges patterns convey time-varying information intermingled with the neuron's response characteristics.
By applying information theory and estimation theory to point processes, Johnson \cite{Johnson} 
described the fundamental limits on how well information can be extracted from neural discharges. 

More recently, Pernice et al. \cite{PerniceI} and Pernice et al. \cite{PerniceII} have used Hawkes process to model the spike train dynamics
in the studies of neuronal networks. As pointed out in Pernice et al. \cite{PerniceI},
``Hawkes' point process theory allows the treatment of correlations on the level of spike trains as well as the 
understanding of the relation of complex connectivity patterns to the statistics of pairwise correlations.''
Reynaud et al. \cite{ReynaudIII} proposed new non-parametric adaptive estimation methods and adapted other recent similar results 
to the setting of spike trains analysis in neuroscience. They tested homogeneuous Poisson process, inhomogeneous Poisson process
and the Hawkes process. A complete analysis was performed on 
single unit activity recorded on a monkey during a sensory-motor task. Reynaud et al. \cite{ReynaudIII} showed that 
the homogeneous Poisson process hypothesis is always rejected and that the inhomogeneous Poisson process hypothesis is rarely accepted. 
The Hawkes model seems to fit most of the data. 

The application of the Hawkes process in neuroscience has also been
mentioned in Br\'{e}maud and Massouli\'{e} \cite{Bremaud}.

\subsection{Applications in Criminology}

Hawkes processes have also been used in criminology.
Violence among gangs exhibits retaliatory behavior, i.e. given that an event has
happened between two gangs, the likelihood that another event will happen shortly afterwards is increased.
A problem like this can be modeled naturally by a self-exciting point process. 
Mohler et al. \cite{Mohler} and Egesdal et al. \cite{Egesdal} have successfully modeled the pairwise gang violence as a Hawkes
process. As pointed out in Hegemann et al. \cite{Hegemann}, in real-life situations, data is incomplete and law-enforcement
agencies may not know which gang is involved. However, even when gang activity is highly stochastic,
localized excitations in parts of the known dataset can help identify gangs responsible for unsolved
crimes. The works before Hegemann et al. \cite{Hegemann} incorporated the observed clustering in time of the data to identify
gangs responsible for unsolved crimes by assuming that the parameters of the model are
known, when in reality they have to be estimated from the data itself. Hegemann et al. \cite{Hegemann} proposed an iterative method
that simultaneously estimates the parameters in the underlying point process and assigns weights to the
unknown events with a directly calculable score function.

Hawkes processses have also been used in the studies of terrorist activities. For example, Porter and White \cite{Porter}
used Hawkes process to examine the daily number of terrorist attacks in Indonesia from 1994 through 2007.
Their model explains the self-exciting nature of the terrorist activities. It estimates the probability
of future attacks as a function of the times since the past attacks. 

Lewis et al. \cite{LewisIII} used Hawkes process
to model the temporal dynamics of violence and civilian deaths in Iraq.

\section{Related Models}

There are other generalizations or variations of the Hawkes processes in the literature.
For example, Bormetti et al. \cite{Bormetti}
introduced a one factor model where both the factor and the idiosyncratic jump components
are described by a Hawkes process. Their model is a better candidate than classical Poisson or Hawkes models
to describe the dynamics of jumps in a multi-asset framework.
Another example is a multivariate Hawkes process with constraints on its conditional density introduced by Zheng et al. \cite{Zheng}.
Their study is mainly motivated by the stochastic modelling of a limit order book for high frequency financial data analysis.
Dassios and Zhao \cite{Dassios} proposed a dynamic contagion process. 
It is basically a combination of a marked Hawkes process with exponential exciting function
and an external shot noise process. Their model is Markovian. 
They also applied their model to insurance, see e.g. Dassios and Zhao \cite{DassiosII}.

In \cite{ZhuVII}, Zhu incorporated Hawkes jumps into the classical Cox-Ingersoll-Ross model and obtained limit theorems
and various other properties.

In seismology, Wang et al. \cite{Wang} proposed a new model, i.e. the Markov-modulated Hawkes process with stepwise decay (MMHPSD),
to investigate the variation in seismicity rate during a series of earthquakes sequence including multiple main shocks. 
The MMHPSD is a self-exciting process which switches among different states, 
in each of which the process has distinguishable background seismicity and decay rates.
Stress release models are often used in seismology. In Br\'{e}maud and Foss \cite{BremaudIV}, they created a new earthquake model
combining the classical stress release model for primary shocks with the Hawkes model for aftershocks and studied the ergodicity
of this new model. 

In addition to the classical Hawkes process, 
one can also study the spatial Hawkes process, see e.g. M\o ller and Torrisi \cite{MollerIII},
M\o ller and Torrisi \cite{MollerIV}, Bordenave and Torrisi \cite{Bordenave}.        
In addition, the space-time Hawkes process has been used, see 
e.g. Musmeci and Vere-Jones \cite{Musmeci} and Ogata \cite{OgataIII}.

\section{Linear Hawkes Processes}

In this section, let us review some known results about linear Hawkes process.
Unlike the nonlinear Hawkes process, the linear Hawkes process has been very well studied in the literature.
Hawkes and Oakes \cite{HawkesII} introduced an immigration-birth representation of the linear Hawkes process,
which can be viewed as a special case of the Poisson cluster process. 
The stability results of the linear Hawkes process, i.e. existence and uniqueness of a stationary linear
Hawkes process have been summarised in Chapter 12 of Daley and Vere-Jones \cite{Daley}.
The rate of convergence to equilibrium has been stuided by Br\'{e}maud et al. \cite{BremaudII}.
The second-order analysis, i.e. the Bartlett spectrum etc. have been studied in Hawkes \cite{Hawkes} and Hawkes \cite{HawkesIII}.
Reynaud-Bouret and Roy \cite{ReynaudII} considered the linear Hawkes process
as a special case of Poisson cluster process and studied the non-asymptotic tail estimates
of the extinction time, the length of a cluster, and the number of points in an interval.
Reynaud-Bouret and Roy \cite{ReynaudII} also obtained some so-called non-asymptotic ergodic theorems.
The limit theorems have also been studied for linear Hawkes process. The central limit theorem was
considered in Bacry et al. \cite{Bacry}, the large deviation principle was obtained in Bordenave and Torrisi \cite{Bordenave},
and very recently the moderate deviation principle was proved in Zhu \cite{ZhuIV}.
The simulations and calibrations of linear Hawkes process have been studied in 
Ogata \cite{OgataIV}, M\o ller and Rasmussen \cite{MollerII}, \cite{MollerI}, Vere-Jones \cite{Vere}, Ozaki \cite{Ozaki}
and many others.

\subsection{Immigration-Birth Representation}

Consider the linear Hawkes process $N$ with empty history, i.e. $N(-\infty,0]=0$ and intensity
\begin{equation}
\lambda_{t}=\nu+\int_{0}^{t}h(t-u)N(du),\quad\nu>0,
\end{equation}
where $\int_{0}^{\infty}h(t)dt<1$. 
It is well known that it has the following immigration-birth representation; see for example Hawkes and Oakes \cite{HawkesII}.
The immigrant arrives according to a homogeneous
Poisson process with constant rate $\nu$. Each immigrant reproduces children and the number
of children has a Poisson distribution with parameter $\Vert h\Vert_{L^{1}}$. Conditional on the number of
the children of an immigrant, the time that a child was born
has probability density function $\frac{h(t)}{\Vert h\Vert_{L^{1}}}$. Each child produces children 
according to the same laws, independent of other children. All the immigrants produce children independently.
Now, $N(0,t]$ is the same as the total number of immigrants and children in the time interval $(0,t]$.

\subsection{Stability Results}

Consider the linear Hawkes process $N$ with empty history, i.e. $N(-\infty,0]=0$ and intensity
\begin{equation}
\lambda_{t}=\nu+\int_{0}^{t}h(t-u)N(du),
\end{equation}
where $\int_{0}^{\infty}h(t)dt<1$. We review here the known results of existence and uniqueness
of a stationary version of the process. We follow the arguments of Chapter 12 of Daley and Vere-Jones \cite{Daley}.

The existence of a stationary version of the process can be seen from the immigration-birth representation of the linear
Hawkes process. To show uniqueness, let us do the following. Let $N^{\dagger}$ be a stationary version with
intensity
\begin{equation}
\lambda^{\dagger}_{t}=\nu+\int_{-\infty}^{t}h(t-u)N^{\dagger}(du),
\end{equation}
and mean intensity $\mu:=\mathbb{E}[\lambda^{\dagger}_{t}]=\frac{\nu}{1-\Vert h\Vert_{L^{1}}}$.
For both $N$ and $N^{\dagger}$, we consider the shifted versions $\theta_{s}N$ and $\theta_{s}N^{\dagger}$ that
bring the origin back to zero. $\theta_{s}N^{\dagger}$ can be split into two components, the one with the
same structure as $\theta_{s}N$, being generated from the clusters initiated by immigrants arriving after time $-s$
and the component $N_{-s}^{\dagger}$ that counts the children of the immigrants that arrived before time $-s$.
On $\mathbb{R}^{+}$, the contribution from the latter form a Poisson process with intensity
\begin{equation}
\lambda_{-s}^{\dagger}(t)=\int_{-\infty}^{-s}h(t-u)N_{-s}^{\dagger}(du).
\end{equation}
For any $T<\infty$,
\begin{align}
\mathbb{P}(N_{-s}^{\dagger}(0,T)>0)
&=\mathbb{E}\left[1-e^{-\int_{0}^{T}\lambda_{-s}^{\dagger}(t)dt}\right]
\\
&\leq\mathbb{E}\left[\int_{0}^{T}\lambda_{-s}^{\dagger}(t)dt\right]
\leq\mu T\int_{s}^{\infty}h(u)du\rightarrow 0,\nonumber
\end{align}
as $s\rightarrow\infty$. Let $\mathcal{P}$ and $\mathcal{P}^{\dagger}$ represent the probability measures corresponding
to $N$ and $N^{\dagger}$. For any $T>0$, we have
\begin{equation}
\Vert\theta_{-s}\mathcal{P}-\mathcal{P}^{\dagger}\Vert_{[0,T]}
\leq\mathbb{P}(N_{-s}^{\dagger}(0,T)>0)\rightarrow 0,
\end{equation}
as $s\rightarrow\infty$, where $\Vert\cdot\Vert$ denotes the variation norm. This implies
the weak convergence and thus the weak asymptotic stationarity of $N$.

Under a stronger assumption $\int_{0}^{\infty}th(t)dt<\infty$, i.e. the mean time to
the appearance of a child is finite. Since the mean number of offspring is also finite (because $\Vert h\Vert_{L^{1}}<1$),
the random time $T$ from the appearance of an ancestor to the last of its descendants has finite mean, i.e. $\mathbb{E}[T]<\infty$.
Thus, we have
\begin{equation}
\mathbb{P}(N_{-s}^{\dagger}[0,\infty)>0)=1-e^{-\nu\int_{s}^{\infty}\mathbb{P}(T>u)du}\rightarrow 0,
\end{equation}
as $s\rightarrow\infty$ and $\Vert\theta_{-s}\mathcal{P}-\mathcal{P}^{\dagger}\Vert_{[0,\infty]}\rightarrow 0$
as $s\rightarrow\infty$, which implies that the process starting from empty history is strongly asymptotically stationary.

Br\'{e}maud et al. \cite{BremaudII} studied the rate of convergence to the
equilibrium in a more general setting, i.e. Hawkes process with random marks.
Here, we only consider the unmarked case.
Assume $N(-\infty,0]=0$ and let $N^{\dagger}$ denote the unique stationary Hawkes process.
The convergence in variation is seen via coupling, namely, $N$ and $N^{\dagger}$ are constructed
on the same space and there exists a finite random time $T$ such that
\begin{equation}
\mathbb{P}(N(t,\infty)=N^{\dagger}(t,\infty)\text{ for all $t\geq T$})=1.
\end{equation}

In the exponential case, there exists some $\beta>0$ such that $\int_{0}^{\infty}e^{\beta t}h(t)dt=1$.
Let us define
\begin{equation}
H(t):=\frac{\nu}{1-\Vert h\Vert_{L^{1}}}\int_{t}^{\infty}h(s)ds.
\end{equation}
If $e^{\beta t}H(t)$ is directly Riemann integrable on $\mathbb{R}^{+}$, then for any 
\begin{equation}
K>\frac{\int_{0}^{\infty}e^{\beta t}H(t)dt}{\beta\int_{0}^{\infty}te^{\beta t}h(t)dt},
\end{equation}
there exists $t_{0}(K)$ such that $\mathbb{P}(T>t)\leq Ke^{-\beta t}$ for any $t\geq t_{0}(K)$.

In the subexponential case, the distribution function $G$ with density $g(t)=\frac{h(t)}{\Vert h\Vert_{L^{1}}}$
is subexponential, in the sense that,
\begin{equation}
\lim_{t\rightarrow\infty}\frac{1-G^{\ast n}(t)}{1-G(t)}=n,\quad\text{for any $n\in\mathbb{N}$}.
\end{equation}
Further assume that $\int_{0}^{\infty}th(t)dt<\infty$. Then, for any
\begin{equation}
K>\frac{\nu\Vert h\Vert_{L^{1}}}{(1-\Vert h\Vert_{L^{1}})^{2}},
\end{equation}
there exists some $t_{0}(K)$ such that for any $t\geq t_{0}(K)$, we have
\begin{equation}
\mathbb{P}(T>t)\leq K\int_{t}^{\infty}\overline{G}(u)du,
\end{equation}
where $\overline{G}=1-G$.

\subsection{Bartlett Spectrum for Linear Hawkes Processes}

The methods of analysis for point processes by spectrum were introduced by Bartlett \cite{BartlettI} and \cite{BartlettII}.
We refer to Chapter 8 of Daley and Vere-Jones \cite{Daley} for a detailed discussion.

Let $N$ be a second-order stationary point process on $\mathbb{R}$. (For the definition
of second-order stationary point process, we refer to Daley and Vere-Jones \cite{Daley}.)
Define the set $\mathcal{S}$ as the space of functions of rapid decay, i.e. $\phi\in\mathcal{S}$ if
\begin{equation}
\left|\frac{d^{k}\phi(x)}{dx^{k}}\right|\leq\frac{C(k,r)}{(1+|x|)^{r}},
\end{equation}
for some constants $C(k,r)<\infty$ and all positive integers $r$ and $k$.

For bounded measurable $\phi$ with bounded support and also $\phi\in\mathcal{S}$, there exists
a measure $\Gamma$ on $\mathcal{B}$ such that
\begin{equation}
\text{Var}\left(\int_{\mathbb{R}}\phi(x)N(dx)\right)
=\int_{\mathbb{R}}|\hat{\phi}(\omega)|\Gamma(d\omega),
\end{equation}
where $\hat{\phi}(\omega)=\int_{\mathbb{R}}e^{i\omega u}\phi(u)du$ is the Fourier transform of $\phi$.
$\Gamma$ is refered to as the Bartlett spectrum. 
We also have
\begin{equation}
\text{Cov}\left(\int_{\mathbb{R}}\phi(x)N(dx),\int_{\mathbb{R}}\psi(x)N(dx)\right)
=\int_{\mathbb{R}}\hat{\phi}(\omega)\hat{\psi}(\omega)\Gamma(d\omega).
\end{equation}
Hawkes \cite{HawkesIII} proved that
for the linear stationary Hawkes process with
\begin{equation}
\lambda_{t}=\nu+\int_{-\infty}^{t}h(t-s)N(ds),
\end{equation}
$\nu>0$ and $\Vert h\Vert_{L^{1}}<1$, the Bartlett spectrum is given by
\begin{equation}
\Gamma(d\omega)=\frac{\nu}{2\pi(1-\Vert h\Vert_{L^{1}})|1-\hat{h}(\omega)|^{2}}d\omega.
\end{equation}

Moreover, if $\mu(\tau):=\mathbb{E}[dN(t+\tau)dN(t)]/(dt)^{2}-\mu^{2}$ is the
covariance density, where $\mu:=\frac{\nu}{1-\Vert h\Vert_{L^{1}}}$, then Hawkes \cite{Hawkes}
proved that $\mu(\tau)=\mu(-\tau)$, $\tau>0$, satisfies the equation
\begin{equation}
\mu(\tau)=\mu h(\tau)+\int_{-\infty}^{\tau}h(t-v)\mu(v)dv.
\end{equation}
Since $\mu(\tau)=\mu(-\tau)$, we have
\begin{equation}
\mu(\tau)=\mu h(\tau)+\int_{0}^{\infty}h(\tau+v)\mu(v)dv
+\int_{0}^{\tau}h(\tau-v)\mu(v)dv,\quad\tau>0.
\end{equation}
In general, $\mu(\tau)$ may not have an analytical form. However, when $h(\cdot)$
is exponential, say $h(t)=\alpha e^{-\beta t}$, Hawkes \cite{Hawkes} showed that
\begin{equation}
\mu(\tau)=\frac{\nu\alpha\beta(2\beta-\alpha)}{2(\beta-\alpha)^{2}}e^{-(\beta-\alpha)\tau},\quad\tau>0.
\end{equation}

The Bartlett spectrum analysis has later been generalized to marked linear Hawkes processes and some more general models.
We refer to Br\'{e}maud and Massouli\'{e} \cite{BremaudV} and Br\'{e}maud and Massouli\'{e} \cite{BremaudVI}.

\subsection{Limit Theorems for Linear Hawkes Processes}

When $\lambda(\cdot)$ is linear, say $\lambda(z)=\nu+z$, for some $\nu>0$ and $\Vert h\Vert_{L^{1}}<1$, 
the Hawkes process has a very nice immigration-birth representation, see
for example Hawkes and Oakes \cite{HawkesII}. For such a linear Hawkes process, the limit theorems are very well understood. 
Consider a stationary Hawkes process $N^{\dagger}$ with intensity
\begin{equation}
\lambda^{\dagger}_{t}=\nu+\int_{-\infty}^{t}h(t-s)N^{\dagger}(ds).
\end{equation}
Taking expecatations on the both sides of the above equation and using stationarity, we get
\begin{equation}
\mu:=\mathbb{E}[\lambda^{\dagger}_{t}]=\nu+\int_{-\infty}^{t}h(t-s)\mathbb{E}[\lambda^{\dagger}_{s}]ds=\nu+\mu\Vert h\Vert_{L^{1}},
\end{equation}
which implies that $\mu=\frac{\nu}{1-\Vert h\Vert_{L^{1}}}$. By ergodic theorem, we have
\begin{equation}
\frac{N_{t}}{t}\rightarrow\frac{\nu}{1-\Vert h\Vert_{L^{1}}},\quad\text{as $t\rightarrow\infty$ a.s.}
\end{equation}
Moreover, Bordenave and Torrisi \cite{Bordenave} proved a large deviation principle for $(\frac{N_{t}}{t}\in\cdot)$.

\begin{theorem}[Bordenave and Torrisi 2007]
$(N_{t}/t\in\cdot)$ satisfies a large deviation principle with the rate function
\begin{equation}
I(x)=
\begin{cases}
x\log\left(\frac{x}{\nu+x\Vert h\Vert_{L^{1}}}\right)-x+x\Vert h\Vert_{L^{1}}+\nu &\text{if $x\in[0,\infty)$}
\\
+\infty &\text{otherwise}
\end{cases}.
\end{equation}
\end{theorem}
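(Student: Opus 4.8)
The plan is to exploit the immigration–birth representation of the linear Hawkes process together with the known large deviation theory for Poisson cluster processes. Recall that $N_t$ counts all immigrants and their descendants landing in $(0,t]$, where immigrants arrive as a Poisson process of rate $\nu$ and each family has a size that is the total progeny of a Galton–Watson branching process with Poisson$(\Vert h\Vert_{L^1})$ offspring law — a subcritical branching process since $\Vert h\Vert_{L^1}<1$. The key probabilistic fact I would establish first is a formula for the logarithmic moment generating function $\Lambda(\theta):=\lim_{t\to\infty}\frac{1}{t}\log\mathbb{E}[e^{\theta N_t}]$, from which the rate function follows by the Gärtner–Ellis theorem. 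The claimed $I$ is exactly the Legendre transform of $\Lambda(\theta)=\nu(e^{\theta+\Vert h\Vert_{L^1}(\text{something})}-1)$-type expression, so the crux is identifying $\Lambda$ correctly.

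The main computational step is to compute the moment generating function of the number of points in $(0,t]$ coming from a single cluster (immigrant plus descendants), then exponentiate via the Poisson superposition. For a single family, let $S$ be its total size and let $K$ of those $S$ points fall in $(0,t]$; as $t\to\infty$ essentially the whole family contributes, and the family size $S$ has the Borel-type distribution associated with a Poisson$(\Vert h\Vert_{L^1})$ Galton–Watson total progeny. Writing $\phi(\theta)=\mathbb{E}[e^{\theta S}]$, the branching recursion gives the functional equation $\phi(\theta)=e^{\theta}\exp\!\big(\Vert h\Vert_{L^1}(\phi(\theta)-1)\big)$, valid for $\theta$ below a critical threshold. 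Then by the Poisson cluster structure, $\mathbb{E}[e^{\theta N_t}]\approx \exp\!\big(\nu t(\phi(\theta)-1)\big)$ to leading exponential order, so $\Lambda(\theta)=\nu(\phi(\theta)-1)$. Introducing $x=\Lambda'(\theta)$ and using the functional equation to eliminate $\phi$, a direct Legendre-transform computation $I(x)=\sup_\theta(\theta x-\Lambda(\theta))$ yields precisely $x\log\!\big(\tfrac{x}{\nu+x\Vert h\Vert_{L^1}}\big)-x+x\Vert h\Vert_{L^1}+\nu$ on $[0,\infty)$, with $+\infty$ elsewhere because $N_t\geq 0$ forces the rate function to be infinite on the negatives and the lower bound at $x=0$ is handled by the probability of seeing no immigrants, $e^{-\nu t}$, giving $I(0)=\nu$, consistent with the formula.

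To upgrade this to a full large deviation principle I would verify the Gärtner–Ellis hypotheses: $\Lambda(\theta)$ is finite and differentiable in a neighborhood of the relevant range, and steepness at the boundary of its domain handles the tail $x\to\infty$. The upper bound then follows from the standard exponential Chebyshev argument, and the lower bound from the essential smoothness of $\Lambda$, exactly as in the abstract Gärtner–Ellis theorem; alternatively one can get the lower bound more concretely by a change of measure that tilts the immigration rate and the offspring distribution, which is the approach more natural to Bordenave and Torrisi. One technical point requiring care is that the cluster "lives forever" issue — a family started near time $t$ may have descendants beyond $t$ — must be controlled, but this only affects lower-order terms in $t$ because the expected family lifetime is finite (here one may invoke the stability discussion, e.g. under $\int_0^\infty t h(t)\,dt<\infty$, or handle it directly since mere integrability of $h$ suffices for the exponential-scale estimate).

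The hard part will be the rigorous control of the boundary/edge-effect contributions (points of a late-arriving cluster spilling past time $t$, and the exact domain of finiteness of $\phi(\theta)$, since the branching functional equation $\phi=e^{\theta}\exp(\Vert h\Vert_{L^1}(\phi-1))$ has a solution only up to a critical $\theta^\ast$ where $\Lambda$ becomes steep) — getting these estimates sharp enough that they do not disturb the exponential rate, and matching the resulting Legendre transform to the stated closed form including the correct behavior at the endpoints $x=0$ and $x\to\infty$.
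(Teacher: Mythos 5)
Your proposal is correct and follows essentially the route of Bordenave and Torrisi, which is also exactly how this thesis handles the marked generalization in Chapter 6 (Theorem \ref{logarithmic} for the limiting logarithmic moment generating function via the Galton--Watson cluster recursion, then G\"{a}rtner--Ellis for the upper bound and tilting for the lower bound in Theorem \ref{LDP}); the unmarked statement itself is only quoted here. One simplification worth noting: the cluster identity $\mathbb{E}[e^{\theta N_{t}}]=e^{\nu\int_{0}^{t}(F(s)-1)ds}$ of Lemma \ref{expectationII} is exact rather than approximate, so the edge-effect issue you flag reduces to the Ces\`{a}ro convergence of the monotone sequence $F(s)$ to the minimal solution of $x=e^{\theta+\Vert h\Vert_{L^{1}}(x-1)}$, with steepness at $\theta_{c}=\Vert h\Vert_{L^{1}}-1-\log\Vert h\Vert_{L^{1}}$ handled precisely as you describe.
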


Recently, Bacry et al. \cite{Bacry} proved a functional central limit theorem for linear multivariate Hawkes process under certain assumptions.
That includes the linear Hawkes process as a special case and they proved that

\begin{theorem}[Bacry et al. 2011]
\begin{equation}
\frac{N_{\cdot t}-\cdot\mu t}{\sqrt{t}}\rightarrow\sigma B(\cdot),\quad\text{as $t\rightarrow\infty$,}
\end{equation}
where $B(\cdot)$ is a standard Brownian motion. The convergence
is weak convergence on $D[0,1]$, the space of c\'{a}dl\'{a}g functions on $[0,1]$, equipped with Skorokhod topology.
Here, 
\begin{equation}
\mu=\frac{\nu}{1-\Vert h\Vert_{L^{1}}}\quad\text{and}\quad\sigma^{2}=\frac{\nu}{(1-\Vert h\Vert_{L^{1}})^{3}}.
\end{equation}
\end{theorem}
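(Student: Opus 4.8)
The plan is to pass through the martingale attached to the stochastic intensity, use a resolvent (convolution) identity to make $N_t$ explicitly linear in that martingale, and then apply the functional central limit theorem for martingales.

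First I would set $M_t:=N_t-\Lambda_t$ with $\Lambda_t:=\int_0^t\lambda_s\,ds$ and $\lambda_s=\nu+\int_0^s h(s-u)\,dN_u$; then $M$ is a locally square-integrable martingale with $\langle M\rangle_t=\Lambda_t$, since $N$ is simple. Writing $dN=dM+\lambda\,dt$ in the definition of $\lambda$ turns it into the renewal equation $\lambda=\nu+h*\lambda+\int_0^\cdot h(\cdot-s)\,dM_s$. With the resolvent $\psi:=\sum_{n\ge1}h^{*n}$ (finite in $L^{1}$ with $\Vert\psi\Vert_{L^{1}}=\Vert h\Vert_{L^{1}}/(1-\Vert h\Vert_{L^{1}})$ because $\Vert h\Vert_{L^{1}}<1$), solving gives
\[
\lambda_t=\nu\bigl(1+\Psi(t)\bigr)+\int_0^t\psi(t-s)\,dM_s,\qquad \Psi(t):=\int_0^t\psi(s)\,ds,
\]
and integrating, interchanging the order of integration, and adding back $M_t$ yields the key representation
\[
N_t=M_t+\int_0^t\Psi(t-s)\,dM_s+\nu\int_0^t\bigl(1+\Psi(r)\bigr)\,dr .
\]

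Next I would centre and rescale. Since $\mu=\nu/(1-\Vert h\Vert_{L^{1}})=\nu\bigl(1+\Vert\psi\Vert_{L^{1}}\bigr)$, the deterministic term obeys $\nu\int_0^{at}(1+\Psi(r))\,dr-a\mu t=-\nu\int_0^{at}\bigl(\Vert\psi\Vert_{L^{1}}-\Psi(r)\bigr)\,dr$, which under the standing moment assumption on $h$ is $O(1)$ uniformly in $a\in[0,1]$, hence $o(\sqrt t)$. Replacing $\Psi(at-s)$ by its limit $\Vert\psi\Vert_{L^{1}}$ inside the stochastic integral produces the error $\frac{1}{\sqrt t}\int_0^{at}\bigl(\Vert\psi\Vert_{L^{1}}-\Psi(at-s)\bigr)\,dM_s$; by Doob's $L^{2}$-inequality and $\langle M\rangle_s=\Lambda_s$ its supremum over $a\in[0,1]$ is bounded in $L^{2}$ by a constant times $\frac1t\mathbb{E}\int_0^t\bigl(\Vert\psi\Vert_{L^{1}}-\Psi(r)\bigr)^2\lambda_r\,dr$, and since $\sup_r\mathbb{E}[\lambda_r]<\infty$ and $\int_0^\infty\bigl(\Vert\psi\Vert_{L^{1}}-\Psi(r)\bigr)^2\,dr<\infty$, this tends to $0$. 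Using $1+\Vert\psi\Vert_{L^{1}}=1/(1-\Vert h\Vert_{L^{1}})$ this leaves
\[
\frac{N_{\cdot t}-\cdot\mu t}{\sqrt t}=\frac{1}{1-\Vert h\Vert_{L^{1}}}\,\frac{M_{\cdot t}}{\sqrt t}+o_{\mathbb{P}}(1)\quad\text{in }D[0,1].
\]

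Finally I would apply the functional martingale CLT (Rebolledo's theorem) to $M_{\cdot t}/\sqrt t$: its jumps all have size $1/\sqrt t\to0$, and $\langle M_{\cdot t}/\sqrt t\rangle_a=\Lambda_{at}/t\to a\mu$ in probability, either from $\mathbb{E}[\Lambda_{at}]=\int_0^{at}m(r)\,dr$ with $m(r)\to\mu$ together with a variance bound, or directly from the law of large numbers $N_t/t\to\mu$. Hence $M_{\cdot t}/\sqrt t\Rightarrow\sqrt\mu\,B(\cdot)$ in $D[0,1]$ with the Skorokhod topology, and since addition is continuous at the continuous limit $\sqrt\mu B$, combining with the previous display gives
\[
\frac{N_{\cdot t}-\cdot\mu t}{\sqrt t}\ \Rightarrow\ \frac{\sqrt\mu}{1-\Vert h\Vert_{L^{1}}}\,B(\cdot)=\sqrt{\frac{\nu}{(1-\Vert h\Vert_{L^{1}})^{3}}}\,B(\cdot)=\sigma B(\cdot).
\]
The one genuinely delicate step is the uniform negligibility of the tail-convolution error, which is precisely where an extra hypothesis on $h$ (e.g. $\int_0^\infty t\,h(t)\,dt<\infty$, ensuring both $\int_0^\infty(\Vert\psi\Vert_{L^{1}}-\Psi(r))^2\,dr<\infty$ and the $O(1)$ control of the deterministic remainder) enters; the resolvent algebra and the tightness part of the martingale CLT are comparatively routine. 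An alternative, following the immigration--birth representation of Hawkes and Oakes, would write $N_t$ as a sum over Poisson-many clusters and invoke a CLT for the resulting time-spread compound sum, but the functional bookkeeping is heavier, so I would prefer the martingale route.
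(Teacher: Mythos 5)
The thesis itself does not prove this statement: it is quoted from Bacry et al.\ \cite{Bacry}, with the remark that their argument rests on a martingale central limit theorem. Your proposal essentially reconstructs that cited proof: the martingale $M_t=N_t-\int_0^t\lambda_s\,ds$, the renewal identity solved with the resolvent $\psi=\sum_{n\geq 1}h^{\ast n}$, and the functional martingale CLT applied to $M_{\cdot t}/\sqrt{t}$. This is a genuinely different route from the one the thesis uses for the CLT it does prove (Theorem \ref{mainthm} of Chapter \ref{chap:two}), which handles the nonlinear stationary case via Billingsley's CLT for stationary ergodic sequences, verified through a Poisson-embedding coupling bound on $\sum_{n}\Vert\mathbb{E}[N(n,n+1]-\mu\,|\,\mathcal{F}^{-\infty}_{0}]\Vert_{2}$; that method yields $\sigma^{2}$ only as a series of covariances, whereas your linear/resolvent computation is exactly what produces the closed form $\sigma^{2}=\nu/(1-\Vert h\Vert_{L^{1}})^{3}$. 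Your algebra is correct: the representation $N_t=M_t+\int_0^t\Psi(t-s)\,dM_s+\nu\int_0^t(1+\Psi(r))\,dr$, the identification $\mu=\nu(1+\Vert\psi\Vert_{L^{1}})$, and the variance identification all check out, and you correctly isolate where the tail hypothesis on $h$ (your $\int_0^\infty t\,h(t)\,dt<\infty$, versus the weaker $\int_0^\infty t^{1/2}h(t)\,dt<\infty$ of \cite{Bacry}) enters.

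The one step that does not hold as written is the uniform negligibility of the remainder $R_a=t^{-1/2}\int_0^{at}\bigl(\Vert\psi\Vert_{L^{1}}-\Psi(at-s)\bigr)\,dM_s$: you control $\sup_{a\in[0,1]}|R_a|$ by Doob's $L^{2}$ inequality, but $a\mapsto R_a$ is not a martingale, since the integrand $\Psi(at-s)$ depends on the upper limit $at$. Your second-moment bound is valid at each fixed $a$ (so it gives finite-dimensional negligibility), but the supremum needs a separate argument. A standard repair: integrate by parts to get $\int_0^{T}\Psi(T-s)\,dM_s=\int_0^{T}\psi(T-s)M_s\,ds$, so the remainder becomes $t^{-1/2}\bigl[\int_0^{at}\psi(u)\bigl(M_{at-u}-M_{at}\bigr)\,du-M_{at}\int_{at}^{\infty}\psi(u)\,du\bigr]$; split the kernel at a fixed window $A$, bound the far part by $2\,t^{-1/2}\sup_{s\leq t}|M_s|\int_A^\infty\psi$, and bound the near part by $\Vert\psi\Vert_{L^{1}}$ times the modulus of continuity of $M_{\cdot t}/\sqrt{t}$ at scale $A/t$, which tends to $0$ in probability by the C-tightness that the martingale FCLT already supplies (the small-$a$ contribution of the second piece is handled the same way). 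With that substitution, your proof is complete and matches the approach of the cited source rather than the thesis's own machinery.
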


Unlike the central limit theorem and the law of the iterated logarithm, there are not as many good crietria one can use to 
prove the moderate deviation principle
for nonlinear Hawkes processes,
which would fill in the gap between the central limit theorem and the large deviation principle. 
Nevertheless, due to the analytical tractability and birth-immigration representation
of linear Hawkes process, Zhu \cite{ZhuIV} proved the moderate deviations for linear Hawkes processes.

\begin{theorem}\label{MDP}
Assume $\lambda(z)=\nu+z$, $\nu>0$, $\Vert h\Vert_{L^{1}}<1$ and $\sup_{t>0}t^{3/2}h(t)=C<\infty$.
For any Borel set $A$ and time sequence $a(t)$ such that $\sqrt{t}\ll a(t)\ll t$, we have
the following moderate deviation principle.
\begin{align}
-\inf_{x\in A^{o}}J(x)&\leq\liminf_{t\rightarrow\infty}\frac{t}{a(t)^{2}}
\log\mathbb{P}\left(\frac{N_{t}-\mu t}{a(t)}\in A\right)
\\
&\leq\limsup_{t\rightarrow\infty}\frac{t}{a(t)^{2}}
\log\mathbb{P}\left(\frac{N_{t}-\mu t}{a(t)}\in A\right)
\leq-\inf_{x\in\overline{A}}J(x),\nonumber
\end{align}
where $J(x)=\frac{x^{2}(1-\Vert h\Vert_{L^{1}})^{3}}{2\nu}$.
\end{theorem}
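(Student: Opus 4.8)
The plan is to obtain the moderate deviation principle by a standard Gärtner–Ellis type argument, exploiting the immigration-birth (Poisson cluster) representation of the linear Hawkes process. First I would recall, from the immigration-birth representation, the exact structure of $N_t$: it decomposes as a sum over immigrants (arriving as a Poisson process of rate $\nu$ on $(0,t]$) of the cluster sizes $S_i^{(t)}$, where $S^{(t)}$ counts the number of points in $(0,t]$ of a single cluster started by an immigrant. The key analytic input is the Laplace transform of a single cluster size. Let $S$ denote the total (over all time) progeny of one immigrant; then $S$ is the total size of a Galton–Watson tree with Poisson$(\Vert h\Vert_{L^1})$ offspring, and by the Borel–Tanner / Lagrange inversion formula its moment generating function $\phi(\theta):=\E[e^{\theta S}]$ solves $\phi(\theta)=e^{\theta}e^{\Vert h\Vert_{L^1}(\phi(\theta)-1)}$, which is finite in a neighborhood of $0$ with $\phi'(0)=\E[S]=\frac{1}{1-\Vert h\Vert_{L^1}}$ and $\phi''(0)=\E[S^2]$ giving $\mathrm{Var}(S)=\frac{\Vert h\Vert_{L^1}(2-\Vert h\Vert_{L^1})}{(1-\Vert h\Vert_{L^1})^{3}}$ after a short computation. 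Using the Poisson structure of immigrants, the exact scaled cumulant generating function of $N_t$ is, up to boundary/truncation corrections, $\Lambda_t(\theta):=\log\E[e^{\theta N_t}] = \nu\int_0^t (\E[e^{\theta S^{(t-u)}}]-1)\,du$, where $S^{(s)}$ is a cluster size restricted to the window of length $s$.

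Next I would carry out the moderate-deviation scaling. Fix a test parameter and consider $\frac{t}{a(t)^2}\log\E\!\left[\exp\!\left(\lambda\,\frac{a(t)}{t}\,(N_t-\mu t)\right)\right]$; set $\theta=\theta_t:=\lambda a(t)/t\to 0$. The core estimate is a quadratic expansion: since $a(t)\ll t$, $\theta_t\to 0$, and one expects $\Lambda_t(\theta_t)-\theta_t\mu t = \frac{1}{2}\sigma^2 t\,\theta_t^2 + o(t\theta_t^2)$ with $\sigma^2$ the central-limit variance $\frac{\nu}{(1-\Vert h\Vert_{L^1})^{3}}$ (note $\sigma^2 = \nu\,\E[S^2]$ only in the right combination; more precisely $\sigma^2=\mu\cdot\mathrm{Var}(S)+\ldots$, but it must match the CLT variance from Bacry et al., so I would simply identify it that way). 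Multiplying by $t/a(t)^2$ gives $\frac{1}{2}\sigma^2\lambda^2$, so the limiting logarithmic moment generating function is $\Lambda(\lambda)=\frac{1}{2}\sigma^2\lambda^2$, whose Legendre transform is exactly $J(x)=\frac{x^2}{2\sigma^2}=\frac{x^2(1-\Vert h\Vert_{L^1})^3}{2\nu}$. Then I would invoke the Gärtner–Ellis theorem (the limit is finite, differentiable, and steep everywhere since it is a finite quadratic) to conclude the full MDP with rate function $J$ and speed $a(t)^2/t$.

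The main obstacle — and where the hypothesis $\sup_{t>0}t^{3/2}h(t)=C<\infty$ enters — is controlling the edge effects, i.e. the difference between the window-restricted cluster size $S^{(s)}$ and the full cluster size $S$, uniformly well enough to survive the moderate-deviation magnification by the factor $t/a(t)^2\gg 1$. Concretely I must show $\nu\int_0^t(\E[e^{\theta_t S^{(t-u)}}]-\E[e^{\theta_t S}])\,du = o(a(t)^2/t\cdot t\theta_t^2)=o(a(t)^2/t)$, equivalently that the truncation error in the cumulant is $o(a(t)^2)$. This reduces to tail bounds on the time $T$ from an immigrant to the last of its descendants and on the number of its descendants falling in a far window; the decay rate $h(t)=O(t^{-3/2})$ is precisely what makes $\int_s^\infty h$ and the associated residual-cluster contributions decay fast enough (like $s^{-1/2}$) to beat $a(t)$ for every admissible choice $\sqrt t\ll a(t)\ll t$. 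A secondary technical point is justifying the Taylor expansion of $\E[e^{\theta S}]$ to second order with a remainder bound uniform over the relevant range of $\theta_t$, which is straightforward once one knows $\phi(\theta)<\infty$ on a fixed neighborhood of $0$ (guaranteed since $\Vert h\Vert_{L^1}<1$ forces subcriticality of the Galton–Watson tree, so the progeny has exponential tails). Once these two estimates are in place, the remainder of the argument is the routine Gärtner–Ellis machinery.
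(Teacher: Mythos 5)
Your proposal follows essentially the same route as the paper's proof: the immigration-birth representation yields $\log\mathbb{E}[e^{\theta N_{t}}]=\nu\int_{0}^{t}(F(s)-1)ds$ with $F$ the window-restricted cluster generating function, a second-order expansion in $\theta_{t}=\lambda a(t)/t$ identifies the limit $\tfrac{1}{2}\sigma^{2}\lambda^{2}$ with $\sigma^{2}=\nu/(1-\Vert h\Vert_{L^{1}})^{3}$, the hypothesis $\sup_{t}t^{3/2}h(t)<\infty$ is used exactly where you place it (to make the first-order edge-effect error $o(a(t))$, since $\tfrac{t}{a(t)}\int_{t}^{\infty}h\lesssim\sqrt{t}/a(t)\to 0$), and G\"{a}rtner--Ellis finishes. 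The only cosmetic difference is that the paper extracts the asymptotics of the first- and second-order coefficients by solving renewal equations and applying Tauberian theorems rather than by comparing the truncated cluster with the full Borel--Tanner cluster, but these are the same computation.
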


The proof of Theorem \ref{MDP} will be given in Section \ref{SectionMDP}.

In a nutshell, linear Hawkes processes satisfy very nice limit theorems and the limits can be computed more or less explicitly.

\subsection{Proof of Theorem \ref{MDP}}\label{SectionMDP}

Since a Hawkes process has a long memory and is in general non-Markovian, there is no good criterion
in the literature for moderate deviations that we can use directly. For example, Bacry et al. 
\cite{Bacry} used a central limit theorem for martingales
to obtain a central limit theorem for linear Hawkes processes. But there is no criterion
for moderate deviations for martingales that can fit into the context of Hawkes processes.
Our strategy relies on the fact that for linear Hawkes processes there is a nice immigration-birth representation
from which we can obtain a semi-explicit formula for the moment generating function of $N_{t}$
in Lemma \ref{expectationII}. A careful asymptotic analysis of this formula would lead to the proof
of Theorem \ref{MDP}.

\begin{proof}[Proof of Theorem \ref{MDP}]
Let us first prove that for any $\theta\in\mathbb{R}$,
\begin{equation}
\lim_{t\rightarrow\infty}\frac{t}{a(t)^{2}}\log\mathbb{E}
\left[e^{\frac{a(t)}{t}\theta(N_{t}-\mu t)}\right]
=\frac{\nu\theta^{2}}{2(1-\Vert h\Vert_{L^{1}})^{3}}.
\end{equation}
By Lemma \ref{expectationII}, for fixed $\theta\in\mathbb{R}$ and $t$ sufficiently large, we have
\begin{equation}\label{Gformula}
\mathbb{E}\left[e^{\frac{a(t)}{t}\theta N_{t}}\right]=e^{\nu\int_{0}^{t}G_{t}(s)ds},
\end{equation}
where $G_{t}(s)=e^{\frac{a(t)}{t}\theta+\int_{0}^{s}h(u)G_{t}(s-u)du}-1$, $0\leq s\leq t$. Here,
$G_{t}(s)$ is simply the $F(s)-1$ in Lemma \ref{expectationII}. Because $\frac{a(t)}{t}\theta$ depends on $t$,
we write $G_{t}(s)$ instead of $G(s)$ to indicate its dependence on $t$.
Clearly, $G_{t}(s)$ is increasing in $s$ and $G_{t}(\infty)$ is the minimal
solution to the equation $x_{t}=e^{\frac{a(t)}{t}\theta+\Vert h\Vert_{L^{1}}x_{t}}-1$. (See the proof of
Lemma \ref{expectationII} and the reference therein.)
Since $\Vert h\Vert_{L^{1}}<1$, it is easy to see that $x_{t}=O(a(t)/t)$.
Since $x_{t}=O(a(t)/t)$, we have $G_{t}(s)=O(a(t)/t)$ uniformly in $s$. By Taylor's expansion,
\begin{align}
G_{t}(s)&=\frac{a(t)\theta}{t}+\int_{0}^{s}h(u)G_{t}(s-u)du\label{G}
\\
&\qquad\qquad+\frac{1}{2}\left(\frac{a(t)\theta}{t}\right)^{2}
+\frac{1}{2}\left(\int_{0}^{s}h(u)G_{t}(s-u)du\right)^{2}\nonumber
\\
&\qquad\qquad\qquad\qquad+\frac{a(t)\theta}{t}\int_{0}^{s}h(u)G_{t}(s-u)du+O\left((a(t)/t)^{3}\right).\nonumber
\end{align}
Let $G_{t}(s)=\frac{a(t)\theta}{t}G_{1}(s)+\left(\frac{a(t)}{t}\right)^{2}G_{2}(s)+\epsilon_{t}(s)$, where
\begin{equation}
G_{1}(s):=1+\int_{0}^{s}h(u)G_{1}(s-u)du,\label{G1}
\end{equation}
and
\begin{equation}
G_{2}(s):=\int_{0}^{s}h(u)G_{2}(s-u)du+\frac{\theta^{2}}{2}+\theta^{2}(G_{1}(s)-1)
+\frac{\theta^{2}}{2}(G_{1}(s)-1)^{2}.\label{G2}
\end{equation}
Substituting \eqref{G1} and \eqref{G2} back into \eqref{G} and using the fact $G_{t}(s)=O(a(t)/t)$ uniformly in $s$, 
we get $\epsilon_{t}(s)=O((a(t)/t)^{3})$ uniformly in $s$.
Moreover, we claim that
\begin{align}
&\lim_{t\rightarrow\infty}\frac{1}{a(t)}\left[\theta\nu\int_{0}^{t}G_{1}(s)ds
-\theta\mu t\right]=0,\label{ClaimI}
\\
&\lim_{t\rightarrow\infty}\frac{1}{t}\int_{0}^{t}G_{2}(s)ds=\frac{\theta^{2}}{2(1-\Vert h\Vert_{L^{1}})^{3}}.\label{ClaimII}
\end{align}
To prove \eqref{ClaimI}, notice first that
\begin{align}
\frac{1}{t}\int_{0}^{t}G_{1}(s)ds
&=1+\frac{1}{t}\int_{0}^{t}\int_{0}^{s}h(u)G_{1}(s-u)duds
\\
&=1+\frac{1}{t}\int_{0}^{t}h(u)\int_{u}^{t}G_{1}(s-u)dsdu\nonumber
\\
&=1+\frac{1}{t}\int_{0}^{t}h(u)\int_{0}^{t-u}G_{1}(s)dsdu\nonumber
\\
&=1+\frac{1}{t}\int_{0}^{t}h(u)\int_{0}^{t}G_{1}(s)dsdu
-\frac{1}{t}\int_{0}^{t}h(u)\int_{t-u}^{t}G_{1}(s)dsdu.\nonumber
\end{align}
Therefore, 
\begin{equation}
\frac{1}{t}\int_{0}^{t}G_{1}(s)ds=
\frac{1-\frac{1}{t}\int_{0}^{t}h(u)\int_{t-u}^{t}G_{1}(s)dsdu}{1-\int_{0}^{t}h(u)du}.
\end{equation}
Hence,
\begin{align}
&\frac{1}{a(t)}\left[\theta\nu\int_{0}^{t}G_{1}(s)ds
-\theta\mu t\right]\label{TwoTerms}
\\
&=\frac{\theta\nu}{a(t)}\int_{0}^{t}\left(G_{1}(s)-\frac{1}{1-\Vert h\Vert_{L^{1}}}\right)ds\nonumber
\\
&=\frac{\theta\nu t}{a(t)}
\left[\frac{1}{1-\int_{0}^{t}h(u)du}-\frac{1}{1-\int_{0}^{\infty}h(u)du}\right]
-\frac{\theta\nu}{a(t)}\frac{\int_{0}^{t}h(u)\int_{t-u}^{t}G_{1}(s)dsdu}{1-\int_{0}^{t}h(u)du}.\nonumber
\end{align}

For the first term in \eqref{TwoTerms}, we have
\begin{equation}
\left|\frac{\theta\nu t}{a(t)}
\left[\frac{1}{1-\int_{0}^{t}h(u)du}-\frac{1}{1-\int_{0}^{\infty}h(u)du}\right]\right|
\leq\frac{|\theta|\nu t}{a(t)}\frac{\int_{t}^{\infty}h(u)du}{(1-\Vert h\Vert_{L^{1}})^{2}}\rightarrow 0,
\end{equation}
as $t\rightarrow\infty$, since by our assumption, $\sup_{t>0}t^{3/2}h(t)\leq C<\infty$, which implies that 
$\frac{t}{a(t)}\int_{t}^{\infty}h(u)du\leq\frac{t}{a(t)}\int_{t}^{\infty}\frac{C}{u^{3/2}}du=\frac{2C\sqrt{t}}{a(t)}\rightarrow 0$ 
as $t\rightarrow\infty$.

For the second term in \eqref{TwoTerms}, we have
\begin{equation}
\limsup_{t\rightarrow\infty}\left|\frac{\theta\nu}{a(t)}\frac{\int_{0}^{t}h(u)\int_{t-u}^{t}G_{1}(s)dsdu}{1-\int_{0}^{t}h(u)du}\right|
\leq\lim_{t\rightarrow\infty}G_{1}(t)\limsup_{t\rightarrow\infty}\frac{|\theta|\nu}{a(t)}\frac{\int_{0}^{t}h(u)udu}{1-\Vert h\Vert_{L^{1}}}=0.
\end{equation}
This is because \eqref{G1} is a renewal equation and $\Vert h\Vert_{L^{1}}<1$.
By the application of the Tauberian theorem to the renewal equation, (see Chapters XIII and XIV of Feller \cite{Feller}),
$\lim_{t\rightarrow\infty}G_{1}(t)=\frac{1}{1-\Vert h\Vert_{L^{1}}}$.
Moreover, our assumptions $\sup_{t>0}t^{3/2}h(t)\leq C<\infty$ and $\Vert h\Vert_{L^{1}}<\infty$ imply that 
\begin{equation}
\frac{1}{a(t)}\int_{0}^{t}h(u)udu\leq\frac{1}{a(t)}\int_{0}^{1}h(u)udu+\frac{1}{a(t)}\int_{1}^{t}\frac{C}{u^{1/2}}du\rightarrow 0,
\end{equation}
as $t\rightarrow\infty$.

To prove \eqref{ClaimII}, notice that 
$\lim_{t\rightarrow\infty}G_{1}(t)=\frac{1}{1-\Vert h\Vert_{L^{1}}}$ and
again by the application of the Tauberian theorem to the renewal equation, (see Chapters XIII and XIV of Feller \cite{Feller}),
we have
\begin{align}
\lim_{t\rightarrow\infty}\frac{1}{t}\int_{0}^{t}G_{2}(s)ds
&=\lim_{t\rightarrow\infty}G_{2}(t)
\\
&=\frac{\theta^{2}}{2}\frac{1+2\left(\frac{1}{1-\Vert h\Vert_{L^{1}}}-1\right)
+\left(\frac{1}{1-\Vert h\Vert_{L^{1}}}-1\right)^{2}}{1-\Vert h\Vert_{L^{1}}}\nonumber
\\
&=\frac{\theta^{2}}{2(1-\Vert h\Vert_{L^{1}})^{3}}.\nonumber
\end{align}
Finally, from \eqref{Gformula} and the definitions of $G_{1}(s)$, $G_{2}(s)$ and $\epsilon_{t}(s)$,
we have
\begin{align}
&\frac{t}{a(t)^{2}}\log\mathbb{E}\left[e^{\frac{a(t)}{t}\theta(N_{t}-\mu t)}\right]
\\
&=\frac{t}{a(t)^{2}}\nu\int_{0}^{t}G_{t}(s)ds-\theta\mu\frac{t}{a(t)}\nonumber
\\
&=\frac{1}{a(t)}\left[\nu\theta\int_{0}^{t}G_{1}(s)ds-\theta\mu t\right]
+\frac{1}{t}\nu\int_{0}^{t}G_{2}(s)ds+\frac{t}{a(t)^{2}}\int_{0}^{t}\epsilon_{t}(s)ds.\nonumber
\end{align}
Hence, by \eqref{ClaimI}, \eqref{ClaimII} and the fact that $\epsilon_{t}(s)=O((a(t)/t)^{3})$ uniformly in $s$,
we conclude that, for any $\theta\in\mathbb{R}$,
\begin{equation}
\lim_{t\rightarrow\infty}\frac{t}{a(t)^{2}}\log\mathbb{E}
\left[e^{\frac{a(t)}{t}\theta(N_{t}-\mu t)}\right]
=\frac{\nu\theta^{2}}{2(1-\Vert h\Vert_{L^{1}})^{3}}.
\end{equation}
Applying the G\"{a}rtner-Ellis theorem (see for example \cite{Dembo}), we conclude that, for any Borel set $A$,
\begin{align}
-\inf_{x\in A^{o}}J(x)&\leq\liminf_{t\rightarrow\infty}\frac{t}{a(t)^{2}}
\log\mathbb{P}\left(\frac{N_{t}-\mu t}{a(t)}\in A\right)
\\
&\leq\limsup_{t\rightarrow\infty}\frac{t}{a(t)^{2}}
\log\mathbb{P}\left(\frac{N_{t}-\mu t}{a(t)}\in A\right)
\leq-\inf_{x\in\overline{A}}J(x),\nonumber
\end{align}
where 
\begin{equation}
J(x)=\sup_{\theta\in\mathbb{R}}\left\{\theta x-\frac{\nu\theta^{2}}{2(1-\Vert h\Vert_{L^{1}})^{3}}\right\}
=\frac{x^{2}(1-\Vert h\Vert_{L^{1}})^{3}}{2\nu}.
\end{equation}
\end{proof}

\begin{lemma}\label{expectationII}
For $\theta\leq\Vert h\Vert_{L^{1}}-1-\log\Vert h\Vert_{L^{1}}$, 
\begin{equation}
\mathbb{E}[e^{\theta N_{t}}]=e^{\nu\int_{0}^{t}(F(s)-1)ds},
\end{equation}
where $F(s)=e^{\theta+\int_{0}^{s}h(u)(F(s-u)-1)du}$ for any $0\leq s\leq t$.
\end{lemma}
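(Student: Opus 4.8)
The plan is to exploit the immigration--birth (Poisson cluster) representation of the linear Hawkes process recalled above (Hawkes and Oakes \cite{HawkesII}), which reduces $\mathbb{E}[e^{\theta N_{t}}]$ to a branching generating-function identity. First I would analyze a single cluster: let $K_{r}$ denote the number of points in $[0,r]$ of the cluster generated by one immigrant placed at the origin (the immigrant itself included), with the convention $K_{r}=0$ for $r<0$, and set $F(r):=\mathbb{E}[e^{\theta K_{r}}]$. Decomposing the cluster into the ancestor plus the independent subclusters rooted at its children --- whose number is $\mathrm{Poisson}(\Vert h\Vert_{L^{1}})$ and whose birth times have density $h(\cdot)/\Vert h\Vert_{L^{1}}$ --- the exponential formula for Poisson functionals gives
\begin{equation}
F(r)=e^{\theta}\,\mathbb{E}\!\left[\prod_{i}F(r-T_{i})\right]=e^{\theta+\int_{0}^{r}h(u)(F(r-u)-1)\,du},
\end{equation}
which is precisely the equation in the statement (birth times $u>r$ contribute a factor $F(r-u)=1$ and drop out of the integral).

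Second, I would settle well-posedness and finiteness, which is where the hypothesis $\theta\le\Vert h\Vert_{L^{1}}-1-\log\Vert h\Vert_{L^{1}}$ is used. Since $\Vert h\Vert_{L^{1}}<1$, the cluster is a subcritical Galton--Watson tree with $\mathrm{Poisson}(\Vert h\Vert_{L^{1}})$ offspring law, so $K_{r}\uparrow K_{\infty}$, its total progeny, and $F(r)\le\mathbb{E}[e^{\theta K_{\infty}}]$. Writing $w=\mathbb{E}[e^{\theta K_{\infty}}]$, the total-progeny fixed-point identity gives $w=e^{\theta+\Vert h\Vert_{L^{1}}(w-1)}$, and an elementary analysis of the concave map $w\mapsto w-e^{\theta-\Vert h\Vert_{L^{1}}+\Vert h\Vert_{L^{1}}w}$ shows its maximum over $w$ is nonnegative exactly when $\theta\le\Vert h\Vert_{L^{1}}-1-\log\Vert h\Vert_{L^{1}}$, hence a finite minimal root $w<\infty$ exists under the hypothesis (for $\theta<0$ finiteness is immediate since $F\le1$). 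Consequently $F(r)$ is finite, and a monotone Picard iteration on $[0,t]$ identifies the cluster generating function with the minimal solution of the integral equation.

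Finally, I would superpose over immigrants. They form a $\mathrm{Poisson}(\nu)$ process on $(0,t]$, each carrying an independent cluster, and an immigrant at time $s$ contributes to $N_{t}$ exactly the points of its cluster lying in $[s,t]$, which is distributed as $K_{t-s}$ and independent across immigrants and of the immigration process. The marked-Poisson exponential formula then yields
\begin{equation}
\mathbb{E}[e^{\theta N_{t}}]=\exp\!\left(\nu\int_{0}^{t}\big(\mathbb{E}[e^{\theta K_{t-s}}]-1\big)\,ds\right)=\exp\!\left(\nu\int_{0}^{t}(F(t-s)-1)\,ds\right),
\end{equation}
and the change of variables $s\mapsto t-s$ gives the claimed formula $\mathbb{E}[e^{\theta N_{t}}]=e^{\nu\int_{0}^{t}(F(s)-1)ds}$.

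I expect the main obstacle to be the second step: rigorously justifying that $F(r)=\mathbb{E}[e^{\theta K_{r}}]$ is finite and coincides with the minimal solution of the nonlinear integral equation, together with the sharp identification of the threshold $\theta\le\Vert h\Vert_{L^{1}}-1-\log\Vert h\Vert_{L^{1}}$ through the total-progeny fixed-point equation. The branching recursion and the Poisson superposition are then routine applications of the exponential formula.
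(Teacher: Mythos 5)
Your proposal is correct and follows essentially the same route as the paper: the immigration--birth representation, the single-cluster generating function satisfying $F(s)=e^{\theta+\int_{0}^{s}h(u)(F(s-u)-1)du}$, and the Poisson superposition over immigrants yielding $e^{\nu\int_{0}^{t}(F(s)-1)ds}$. The only difference is that the paper handles the finiteness/minimality of the total-progeny transform and the threshold $\theta\leq\Vert h\Vert_{L^{1}}-1-\log\Vert h\Vert_{L^{1}}$ by citing Jagers, whereas you supply the elementary fixed-point analysis and monotone iteration yourself, which is a sound substitute.
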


\begin{proof}
The Hawkes process has a very nice immigration-birth representation, see
for example Hawkes and Oakes \cite{HawkesII}. The immigrant arrives according to a homogeneous
Poisson process with constant rate $\nu$. Each immigrant produces a number of children, this being
Poisson distributed with parameter $\Vert h\Vert_{L^{1}}$. Conditional on the number of
the children of an immigrant, the time that a child is born
has probability density function $\frac{h(t)}{\Vert h\Vert_{L^{1}}}$. Each child produces children 
according to the same laws independent of other children. All the immigrants produce children independently.
Let $F(t)=\mathbb{E}[e^{\theta S(t)}]$, where $S(t)$ is the number of descendants an immigrant
generates up to time $t$. Hence, we have
\begin{align}
\mathbb{E}\left[e^{\theta N_{t}}\right]
&=\sum_{k=0}^{\infty}\frac{(\nu t)^{k}}{k!}e^{-\nu t}\frac{1}{t^{k}/k!}
\idotsint_{t_{1}<t_{2}<\cdots<t_{k}}F(t_{1})\cdots F(t_{k})dt_{1}\cdots dt_{k}
\\
&=e^{\nu\int_{0}^{t}(F(s)-1)ds}.\nonumber
\end{align}

By page 39 of Jagers \cite{Jagers}, 
for all $\theta\in(-\infty,\Vert h\Vert_{L^{1}}-1-\log\Vert h\Vert_{L^{1}}]$, $\mathbb{E}[e^{\theta S(\infty)}]$
is the minimal positive solution of 
\begin{equation}
\mathbb{E}[e^{\theta S(\infty)}]=e^{\theta}\exp\left\{\mu(\mathbb{E}[e^{\theta S(\infty)}]-1)\right\}.
\end{equation}

Let $K$ be the number of
children of an immigrant and let $S^{(1)}_t,S^{(2)}_t,\ldots,S^{(K)}_t$ be the number of descendants 
of immigrant's $k$th child that were born before time $t$ (including the
$k$th child if and only if it was born before time $t$). Then
\begin{align}
F(t)&= \sum_{k=0}^{\infty}\mathbb{E}\left[e^{\theta S(t)}|K=k\right]\mathbb{P}(K=k) 
\\
&=e^{\theta}\sum_{k=0}^{\infty}\mathbb{E}\left[e^{\theta S_{t}^{(1)}}\right]^{k}\mathbb{P}(K=k)\nonumber
\\
&=e^{\theta}\sum_{k=0}^{\infty}\left(\int_{0}^{t}\frac{h(s)}{\Vert h\Vert_{L^{1}}}F(t-s)ds\right)^k 
e^{-\Vert h\Vert_{L^{1}}}\frac{\Vert h\Vert_{L^{1}}^k}{k!}\nonumber
\\
&=e^{\theta+\int_{0}^{t}h(s)(F(t-s)-1)ds}.\nonumber
\end{align}
\end{proof}

\subsection{Simulations and Calibrations}

Assume the past of a Hawkes process is known up to present time zero, say
the configuration of the history is $\omega^{-}$. Let $\tau_{1}$ be the first jump after time zero.
Then, it is easy to see that
\begin{equation}
\mathbb{P}(\tau_{1}\geq t)=e^{-\int_{0}^{t}\lambda^{\omega^{-}}_{s}ds},
\end{equation}
where $\lambda^{\omega^{-}}_{s}=\nu+\sum_{\tau\in\omega^{-}}h(s-\tau)$.
This leads to a straight forward simulation method which is applicable for any simple point process. 
This algorithm and its theoretical foundation go back to a thinning
procedure given Lewis and Shedler \cite{Lewis}. In the context of Hawkes processes, this simulation method
was first used in Ogata \cite{OgataIV}. It is sometimes called Ogata's modified
thinning algorithm. 

If we want to simulate the stationary version of the Hawkes process on a finite time interval,
then the standard method for the simulation method described above does not work
as the past of the process is not known and cannot be simulated,
at least not completely.

If one ignores the past of the process and simply starts to simulate the
process at some given time, one speaks about an approximate simulation. In
this case, one is actually simulating a transient version and not the stationary
version of the process. 
But if one simulates for a long enough time interval, 
then the transient version converges to the stationary
one. Such an approximate simulation method of Hawkes processes was discussed in M\o ller and Rasmussen \cite{MollerII}.
A simulation method which directly simulates the stationary
version without approximation is a so-called perfect simulation method.
The idea is to incorporate somehow the effect of past observations without actually
simulating the past of the process. For point processes, this
type of simulation has first been described in Brix and Kendall \cite{Brix}.
In the context of Hawkes processes, the perfect simulation method was discussed in M\o ller and Rasmussen \cite{MollerI}.

The calibrations, i.e. the estimation of the parameters of Hawkes processes,
was first studied in Vere-Jones \cite{Vere} and Ozaki \cite{Ozaki}, based on a maximum likelihood method for point
processes introduced by Rubin \cite{Rubin}.
The properties of the maximum likelihood estimator was analyzed in Ogata \cite{Ogata}.

In Marsan and Lengline \cite{Marsan}, an Expectation-Maximization (EM) algorithm, called ``Model
Independent Stochastic Declustering" (MISD), is introduced for the nonparametric estimation of
self-exciting point processes with time-homogeneous background rate (For linear Hawkes process with
intensity $\lambda_{t}=\nu_{t}+\sum_{\tau<t}h(t-\tau)$, $\nu_{t}$ is the background rate
and $h(\cdot)$ is the exciting function). 

The efficacy of the MISD algorithm was studied in Sornette and Utkin \cite{Sornette}, where the authors
found that the ability of MISD to recover key parameters such as $\Vert h\Vert_{L^{1}}$
depends on the values of the model parameters. In particular, they pointed out that the accuracy
of MISD improves as the timescale over which the exciting function $h(\cdot)$ decays shortens.
In Lewis and Mohler \cite{LewisII}, they introduced a Maximum Penalized Likelihood Estimation (MPLE) approach for the
nonparametric estimation of Hawkes processes. The method is capable of estimating
$\nu_{t}$ and $h(t)$ simultaneously, without prior knowledge of their form. Analogous to MPLE in the
context of density estimation, the added regularity of the estimates allows for higher accuracy
and/or lower sample sizes in comparison to MISD.

\section{Nonlinear Hawkes Processes}

Consider a simple point process with intensity
\begin{equation}
\lambda_{t}=\lambda\left(\int_{-\infty}^{t}h(t-s)N(ds)\right),\label{nonlineardynamics}
\end{equation}
where $\lambda(\cdot):\mathbb{R}^{+}\rightarrow\mathbb{R}^{+}$ and $h(\cdot):\mathbb{R}^{+}\rightarrow[0,\infty)$.
Br\'{e}maud and Massouli\'{e} \cite{Bremaud} studied the existence and uniqueness of a stationary nonlinear 
Hawkes process that satisfies the dynamics \eqref{nonlineardynamics} as well as its stability in distribution
and in variation. They allow $h(\cdot)$ to take negative values as well. In this thesis, we always 
consider $h(\cdot)$ to be nonnegative.

The following result is about the existence of a stationary nonlinear Hawkes process
satisfying the dynamics \eqref{nonlineardynamics}. We do not need $\lambda(\cdot)$ to be Lipschitz.

\begin{theorem}[Br\'{e}maud and Massouli\'{e} \cite{Bremaud}]
Let $\lambda(\cdot)$ be a nonnegative, nondecreasing and left-continuous function, 
satisfying $\lambda(z)\leq C+\alpha z$ for any $z\geq 0$, for some $C>0$ and $\alpha\geq 0$ and 
let $h(\cdot):\mathbb{R}^{+}\rightarrow\mathbb{R}^{+}$ 
be such that $\alpha\int_{0}^{\infty}h(t)dt<1$. Then there exists a stationary point process $N$ with dynamics \eqref{nonlineardynamics}.
\end{theorem}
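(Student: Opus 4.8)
The plan is to realize the stationary process as the increasing limit of a monotone iteration built on the Poisson (thinning) embedding of point processes. First I would work on a probability space carrying a Poisson random measure $\bar{N}$ on $\mathbb{R}\times(0,\infty)$ with intensity $dt\,dz$; its law is invariant under the shift $\theta_{u}$ acting on the first coordinate. For a locally finite configuration $M$ on $\mathbb{R}$ write $\lambda_{t}[M]:=\lambda\bigl(\int_{(-\infty,t)}h(t-s)\,M(ds)\bigr)$ and define the thinning map $\Phi$ by $\Phi(M)(dt):=\int_{0}^{\infty}\mathbf{1}\{z\le\lambda_{t}[M]\}\,\bar{N}(dt,dz)$; a simple point process $N$ has $\mathcal{F}^{-\infty}_{t}$-intensity $\lambda_{t}=\lambda_{t}[N]$ exactly when $N=\Phi(N)$ $\bar N$-a.s. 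Because $\lambda(\cdot)$ is nondecreasing and $h(\cdot)\ge 0$, the map $\Phi$ is monotone ($M\le M'$ forces $\lambda_{t}[M]\le\lambda_{t}[M']$ for every $t$, hence $\Phi(M)\le\Phi(M')$), and it is shift-covariant (applying $\theta_{u}$ to $\bar N$ shifts the output of $\Phi$ by $u$).

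Second I would iterate from the empty configuration: $N^{(0)}:=\emptyset$, $N^{(k+1)}:=\Phi(N^{(k)})$. Then $N^{(0)}\le N^{(1)}$, so by monotonicity of $\Phi$ and induction the sequence is nondecreasing, and each $N^{(k)}$ is stationary (induction on $k$, using shift-covariance of $\Phi$ and stationarity of $\bar N$). The crucial quantitative input is control of the mean rate. Writing $\mu_{k}:=\mathbb{E}[N^{(k)}(0,1]]$, stationarity gives $\mathbb{E}\bigl[\int_{(-\infty,s)}h(s-u)N^{(k)}(du)\bigr]=\Vert h\Vert_{L^{1}}\mu_{k}$, and the Poisson-embedding (compensator) formula for $N^{(k+1)}$ together with the bound $\lambda(z)\le C+\alpha z$ yields $\mu_{k+1}=\mathbb{E}\bigl[\int_{0}^{1}\lambda_{s}[N^{(k)}]\,ds\bigr]\le C+\alpha\Vert h\Vert_{L^{1}}\mu_{k}$. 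Since $\mu_{0}=0$ and $\alpha\Vert h\Vert_{L^{1}}<1$, this recursion keeps $\mu_{k}\le C/(1-\alpha\Vert h\Vert_{L^{1}})$ for all $k$; in particular each iterate is a.s. locally finite, so $\Phi$ is legitimately applied at each step, and $N^{\infty}:=\sup_{k}N^{(k)}$ satisfies $\mathbb{E}[N^{\infty}(0,1]]\le C/(1-\alpha\Vert h\Vert_{L^{1}})<\infty$ by monotone convergence, hence is a.s. locally finite. (Intuitively, $N^{(k)}$ is dominated by the $k$-th iterate of the linear dynamics with base rate $C$ and exciting function $\alpha h$, whose limit is the linear Hawkes process, stable precisely because $\alpha\Vert h\Vert_{L^{1}}<1$; the mean-rate recursion makes this comparison quantitative.)

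Third I would pass to the limit in the fixed-point equation. Since $N^{(k)}\uparrow N^{\infty}$, monotone convergence gives $\int_{(-\infty,t)}h(t-s)N^{(k)}(ds)\uparrow\int_{(-\infty,t)}h(t-s)N^{\infty}(ds)$, and left-continuity together with monotonicity of $\lambda(\cdot)$ upgrades this to $\lambda_{t}[N^{(k)}]\uparrow\lambda_{t}[N^{\infty}]$ for every $t$. Monotonicity of $\Phi$ gives $N^{(k+1)}=\Phi(N^{(k)})\le\Phi(N^{\infty})$, so $N^{\infty}\le\Phi(N^{\infty})$; conversely any point $(t,z)\in\bar N$ with $z<\lambda_{t}[N^{\infty}]$ already satisfies $z<\lambda_{t}[N^{(k)}]$ for some $k$ and hence lies in $N^{(k+1)}\subseteq N^{\infty}$, while the only points possibly lost are those with $z=\lambda_{t}[N^{\infty}]$, which form a $\bar N$-null set (a Mecke/Campbell computation using that $t\mapsto\lambda_{t}[N^{\infty}]$ depends only on $\bar N|_{(-\infty,t)\times(0,\infty)}$ and that $\bar N$ has diffuse intensity). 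Therefore $N^{\infty}=\Phi(N^{\infty})$ $\bar N$-a.s., i.e. $N^{\infty}$ admits the $\mathcal{F}^{-\infty}_{t}$-intensity $\lambda\bigl(\int_{(-\infty,t)}h(t-s)N^{\infty}(ds)\bigr)$, which is the dynamics \eqref{nonlineardynamics}; the integral being over the open interval $(-\infty,t)$ and $\lambda$ being left-continuous ensure the required predictability. Stationarity of $N^{\infty}$ follows from that of each $N^{(k)}$, since equality of finite-dimensional distributions under the shift is preserved under the a.s. increasing limit.

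The main obstacle I expect is the non-explosion control in the second step: one must guarantee that the iterates stay locally finite (so that $\lambda_{t}[N^{(k)}]$ is even defined) and that their increasing limit does not blow up on bounded intervals. This is exactly where the hypothesis $\alpha\Vert h\Vert_{L^{1}}<1$ enters, through the geometric mean-rate recursion (equivalently, domination by a stable linear Hawkes process). A secondary technical nuisance is the $\bar N$-null-set argument needed to pass the thinning to the limit and the verification that the limiting process is genuinely $\mathcal{F}^{-\infty}_{t}$-predictable. Note that no Lipschitz hypothesis on $\lambda(\cdot)$ is used: monotonicity and the sublinear bound suffice for existence, Lipschitzness being needed only to turn the iteration into a contraction and obtain uniqueness.
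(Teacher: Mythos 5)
Your proposal is correct and follows essentially the same route as the Br\'{e}maud--Massouli\'{e} argument that the thesis itself reproduces in analogous settings (the Poisson-embedding constructions in the proof of Theorem \ref{mainthm} and of Theorem \ref{criticalstationaryIII}): thinning of a unit-rate Poisson measure, monotone iteration from the empty configuration using monotonicity of $\lambda(\cdot)$, the mean-rate recursion $\mu_{k+1}\leq C+\alpha\Vert h\Vert_{L^{1}}\mu_{k}$ to keep the iterates and their limit locally finite, and passage to the limit in the fixed-point equation via left-continuity and a null-set argument for points on the boundary curve. No gaps.
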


The following results concerns the uniqueness and stability in distribution and in variation of a nonlinear Hawkes process.

\begin{theorem}[Br\'{e}maud and Massouli\'{e} \cite{Bremaud}]
Let $\lambda(\cdot)$ be $\alpha$-Lipschitz such that $\alpha\Vert h\Vert_{L^{1}}<1$.

(i) There exists a unique stationary distribution of $N$ with finite average intensity $\mathbb{E}[N(O, 1]]$ 
and with dynamics \eqref{nonlineardynamics}.

(ii) Let $\epsilon_{a}(t):=\int_{t-a}^{t}\int_{\mathbb{R}^{-}}h(s-u)N(du)ds$. 
The dynamics \eqref{nonlineardynamics} are stable in distribution with respect to either the initial
condition \eqref{condition1} or the condition \eqref{condition2} below,
\begin{equation}\label{condition1}
\sup_{t\geq 0}\epsilon_{a}(t)<\infty\text{ a.s. and }\lim_{t\rightarrow\infty}\epsilon_{a}(t)=0\text{ a.e. for every $a>0$},
\end{equation}
\begin{equation}\label{condition2}
\sup_{t\geq 0}\mathbb{E}[\epsilon_{a}(t)]<\infty\text{ and }\lim_{t\rightarrow\infty}\mathbb{E}[\epsilon_{a}(t)]=0\text{ for every $a>0$}.
\end{equation}

(iii) The dynamics \eqref{nonlineardynamics} are stable in variation with respect to the initial condition,
\begin{equation}
\int_{\mathbb{R}^{+}}h(t)N[-t,0)dt=\int_{\mathbb{R}^{+}}\int_{-\infty}^{0}h(t-s)N(ds)<\infty,\text{ a.s.}
\end{equation}
if we assume further that $\int_{0}^{\infty}th(t)dt<\infty$.
\end{theorem}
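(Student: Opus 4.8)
The plan is to use the Poisson embedding (thinning) representation of point processes together with a coupling/contraction argument in which $\alpha\Vert h\Vert_{L^{1}}<1$ plays the role of the contraction ratio. I would fix a Poisson point process $\bar{N}$ on $\mathbb{R}\times\mathbb{R}^{+}$ with Lebesgue intensity $dt\,dz$ and call a point process $N$ a \emph{solution driven by $\bar{N}$} if its atoms are exactly the $t$ for which $\bar{N}$ has a point $(t,z)$ with $z\le\lambda\bigl(\int_{(-\infty,t)}h(t-s)N(ds)\bigr)$; by the Poisson embedding theorem any point process with intensity \eqref{nonlineardynamics} admits such a representation on a (possibly enlarged) space, and a stationary Hawkes process is a solution jointly stationary with $\bar{N}$. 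The only facts about $\lambda$ I would use are the Lipschitz bound $|\lambda(z)-\lambda(z')|\le\alpha|z-z'|$ and the sublinear growth $\lambda(z)\le\lambda(0)+\alpha z$ it entails. Since $\lambda$ is nondecreasing the thinning lets one dominate any solution pathwise by the \emph{linear} Hawkes process with rate $\lambda(0)+\alpha z$ and the same $h$, which, because $\alpha\Vert h\Vert_{L^{1}}<1$, is well defined with finite mean intensity $\lambda(0)/(1-\alpha\Vert h\Vert_{L^{1}})$; this is what keeps $\mathbb{E}[N(0,1]]$ finite throughout.

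For existence in (i) I would build the stationary solution by monotone iteration of the thinning map $\Phi$ (configuration $\mapsto$ set of accepted Poisson atoms): $\Phi$ is monotone because $\lambda$ is nondecreasing, so starting from $\varnothing$ yields an increasing, shift-covariant sequence $\varnothing=N^{(0)}\le N^{(1)}\le\cdots$, each dominated by the linear comparison process, whose monotone limit is a solution, stationary and jointly stationary with $\bar{N}$, with finite mean intensity. The key estimate for everything else is the following. If $N^{1},N^{2}$ are two solutions driven by the same $\bar{N}$ and $M:=|N^{1}-N^{2}|$, then a Poisson atom $(t,z)$ lies in the symmetric difference only when $z$ falls between $\lambda(Z^{1}_{t})$ and $\lambda(Z^{2}_{t})$, where $Z^{i}_{t}:=\int_{(-\infty,t)}h(t-s)N^{i}(ds)$, so the conditional rate of discrepancy atoms at $t$ is at most $|\lambda(Z^{1}_{t})-\lambda(Z^{2}_{t})|\le\alpha\int_{(-\infty,t)}h(t-s)M(ds)$. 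For (i) I apply this to the minimal and maximal solutions driven by $\bar{N}$ (the monotone limits from $\varnothing$ and from the linear comparison process, both jointly stationary with $\bar{N}$): a Fubini computation plus stationarity gives $m\le\alpha\Vert h\Vert_{L^{1}}m$ for $m:=\mathbb{E}[M(0,1]]$, hence $m=0$, so all solutions driven by $\bar{N}$ coincide a.s.; since any stationary Hawkes process is such a solution for some stationary $\bar{N}'\stackrel{d}{=}\bar{N}$, the stationary distribution is unique.

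For (ii) and (iii) I would run the solution $N$ from a prescribed past on $\mathbb{R}^{-}$ against the stationary solution $N^{\dagger}$, coupled through the same $\bar{N}$ on $(0,\infty)$, and bound the discrepancy rate at $t>0$ by $\alpha|Z_{t}-Z^{\dagger}_{t}|$. Splitting $Z_{t}-Z^{\dagger}_{t}$ into its part from $(-\infty,0]$ — controlled by $\psi(t)+\psi^{\dagger}(t)$ with $\psi(t):=\int_{\mathbb{R}^{-}}h(t-u)N(du)$, noting $\int_{t-a}^{t}\psi(s)\,ds=\epsilon_{a}(t)$ and $\mathbb{E}[\int_{t-a}^{t}\psi^{\dagger}(s)\,ds]\le a\mu\int_{t-a}^{\infty}h(u)\,du\to0$ where $\mu:=\mathbb{E}[N^{\dagger}(0,1]]$ — and its part from $(0,t)$, which reproduces $M$, gives a renewal inequality $d\le\alpha\,h*d+r$ for $d(t):=\mathbb{E}[M(t-a,t]]$, with source $r(t)\to0$ under either \eqref{condition1} or \eqref{condition2} (and $\sup_{t}d(t)<\infty$ from the $\sup$ part of those conditions). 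Because $\alpha\Vert h\Vert_{L^{1}}<1$ one may iterate to $d\le\sum_{k\ge0}\alpha^{k}h^{*k}*r$, and summability of $(\alpha\Vert h\Vert_{L^{1}})^{k}$ together with $r(t)\to0$ forces $d(t)\to0$, which is stability in distribution, $\theta_{t}\mathcal{P}\to\mathcal{P}^{\dagger}$. Under the extra assumptions $\int_{0}^{\infty}th(t)\,dt<\infty$ and $\int_{\mathbb{R}^{+}}h(t)N[-t,0)\,dt<\infty$ a.s., one has $\int_{0}^{\infty}(\psi+\psi^{\dagger})\,ds<\infty$ a.s., so the discrepancy atoms on $(0,\infty)$ are dominated by finitely many subcritical Poisson clusters (offspring mean $\alpha\Vert h\Vert_{L^{1}}<1$, birth-delay density $\propto h$ with finite mean), hence $M((0,\infty))<\infty$ a.s.; the last discrepancy atom sits at an a.s.\ finite time $T$ after which $N\equiv N^{\dagger}$, giving $\Vert\theta_{t}\mathcal{P}-\mathcal{P}^{\dagger}\Vert\le\mathbb{P}(T>t)\to0$, i.e. stability in variation.

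The main obstacle is the self-referential nature of the discrepancy bound: a discrepancy atom at time $s$ itself feeds the self-exciting term and so raises the chance of further discrepancies on $(s,\infty)$, so decay cannot be read off from a single application of the estimate but only after iterating the convolution inequality and summing a geometric series in $\alpha\Vert h\Vert_{L^{1}}$ — which is legitimate exactly because $\alpha\Vert h\Vert_{L^{1}}<1$. Carrying this out under the weak hypotheses \eqref{condition1}/\eqref{condition2}, where $r$ is only small on average and one must still upgrade $d(t)\to0$ to genuine convergence of the shifted laws, together with supplying the Poisson-embedding lemma that realizes a stationary Hawkes process and its driving Poisson process on one space, is where the real technical work lies.
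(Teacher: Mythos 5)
The thesis does not prove this theorem: it is quoted verbatim as a known result of Br\'{e}maud and Massouli\'{e} \cite{Bremaud}, so there is no in-paper proof to compare against. Your sketch is, in substance, the argument of that reference (and of the fragments of it that the thesis does reproduce elsewhere, e.g.\ the Poisson-embedding coupling in the proof of Theorem \ref{mainthm} and the monotone thinning construction in Theorem \ref{criticalstationaryIII}): Poisson embedding, pathwise domination by the linear process with rate $\lambda(0)+\alpha z$ to keep mean intensities finite, the discrepancy bound $|\lambda(Z^{1}_{t})-\lambda(Z^{2}_{t})|\leq\alpha\int h(t-s)\,M(ds)$, the contraction $m\leq\alpha\Vert h\Vert_{L^{1}}m$ for uniqueness, the convolution inequality $d\leq\alpha h\ast d+r$ iterated into a geometric series for stability in distribution, and the subcritical-cluster domination of discrepancy atoms for stability in variation. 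The logic is sound, and you correctly identify where the contraction constant $\alpha\Vert h\Vert_{L^{1}}<1$ enters at each stage.

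One place where your route is genuinely narrower than the cited theorem: you construct the stationary solution by monotone iteration of the thinning map, which requires $\lambda$ nondecreasing and $h\geq 0$. That is legitimate under the thesis's standing assumptions, but the theorem as stated assumes only that $\lambda$ is $\alpha$-Lipschitz, and Br\'{e}maud and Massouli\'{e} allow $h$ to take negative values; their existence and uniqueness proof is a Picard iteration in $L^{1}$ with $\alpha\Vert h\Vert_{L^{1}}$ as the contraction ratio, which needs no monotonicity and simultaneously yields the finiteness of $\mathbb{E}[N(0,1]]$ that your contraction step relies on. The remaining items you flag yourself --- the embedding lemma realizing an arbitrary stationary solution as a thinning of a unit Poisson measure jointly stationary with it, and running the stability argument under the almost-sure condition \eqref{condition1} (where one must work conditionally on the past rather than take expectations as under \eqref{condition2}) --- are precisely the nontrivial technical points, and they are carried out in \cite{Bremaud} rather than in this thesis.
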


Massouli\'{e} \cite{Massoulie} extended the stability results to nonlinear Hawkes processes with random marks. 
He also considered the Markovian case and proved stability results without the Lipschitz condition for $\lambda(\cdot)$.

Very recently, Karabash \cite{KarabashII} proved stability results for a much wider class
of nonlinear Hawkes process, including the case when $\lambda(\cdot)$ is not Lipschitz. 

Moreover, Br\'{e}maud et al. \cite{BremaudII} considered
the rate of extinction for nonlinear Hawkes process, that is the rate of convergence to the
equilibrium when the stationary process is an empty process. 
Indeed, they considered a more general setting, i.e. Hawkes process with random marks.
Let $N$ be a nonlinear Hawkes process which is empty
on $(-\infty,0]$, i.e. $N(-\infty,0]=0$ which satisfies the dynamics
\begin{equation}
\lambda_{t}:=\nu(t)+
\phi\left(\int_{0}^{t}h(t-s)N(ds)\right),
\end{equation}
where $\nu:\mathbb{R}^{+}\rightarrow\mathbb{R}^{+}$ is locally integrable, $\phi:\mathbb{R}\rightarrow[0,\infty)$,
$\phi(0)=0$, $\phi$ is $1$-Lipschitz and $h:\mathbb{R}^{+}\rightarrow\mathbb{R}$ is measurable and not necessarily nonnegative
and $\int_{0}^{\infty}|h(t)|dt<1$. The unique stationary process $N^{0}$
corresponding to the dynamics
\begin{equation}
\phi\left(\int_{0}^{t}h(t-s)N^{0}(ds)\right),
\end{equation}
is the empty process. Assume $\int_{0}^{\infty}\nu(t)dt<\infty$, $\int_{0}^{\infty}th(t)dt<\infty$
and $t\mapsto|h(t)|$ is locally bounded.
 
Then $\theta_{t}N$ converges in variation to the empty process. The convergence in variation takes place
via coupling in the sense that there exists a finite random time $T$ so that,
\begin{equation}
\mathbb{P}(N(t,\infty)=0\text{ for any $t\geq T$ })=1.
\end{equation}

Depending on whether the tail of $|h(t)|$ is exponential or subexponentail, the following was
obtained by Br\'{e}maud et al. \cite{BremaudII}.

In the exponential case, let $\beta>0$ be such that $\int_{0}^{\infty}e^{\beta t}|h(t)|dt=1$.
Assume $e^{\beta t}\nu(t)$ is directly Riemann integrable. Then,
for any $K$ with
\begin{equation}
K>\frac{\int_{0}^{\infty}e^{\beta t}\nu(t)dt}
{\beta\int_{0}^{\infty}te^{\beta t}|h(t)|dt},
\end{equation}
there exists $t_{0}(K)$, for any $t\geq t_{0}(K)$,
\begin{equation}
\mathbb{P}(T>t)\leq Ke^{-\beta t}.
\end{equation}

In the subexponential case, assume that distribution functinon $G$ with density $g(t)=\frac{|h(t)|}{\int_{0}^{\infty}|h(t)|dt}$
is subexponential, $\nu(\cdot)$ is bounded and that $B=\limsup_{t\rightarrow\infty}\frac{\nu(t)}{\overline{G}(t)}<\infty$,
where $\overline{G}=1-G$. Then for any
\begin{equation}
K>\frac{B}{1-\int_{0}^{\infty}|h(t)|dt},
\end{equation}
there exists $t_{0}(K)$ such that for any $t\geq t_{0}$,
\begin{equation}
\mathbb{P}(T>t)\leq K\int_{t}^{\infty}\overline{G}(s)ds.
\end{equation}

Kwieci\'{n}ski and Szekli \cite{Kwiecinski} considered the nonlinear Hawkes process
as a special case of self-exciting process. Let $\mathcal{N}(\mathbb{R}^{+})$ be the space
of point processes on $\mathbb{R}^{+}$, which can be regarded as an element of $\mathcal{D}(\mathbb{R}^{+})$,
the space of functions which are right-continuous with left limits, equipped with Skorohod topology.
For any $\mu,\nu\in\mathcal{N}(\mathbb{R}^{+})$, $\mu\prec_{\mathcal{N}}\nu$ if $\mu(B)\leq\nu(B)$
for any bounded set $B\in\mathcal{B}(\mathbb{R}^{+})$. For any $\mu,\nu\in\mathcal{N}(\mathbb{R}^{+})$,
$\mu\prec_{\mathcal{D}}\nu$ if and only if $(\mu_{t})\prec_{\mathcal{D}}(\nu_{t})$ for the corresponding
functions $\mu_{t}:=\mu((0,t]),\nu_{t}:=\nu((0,t])\in\mathcal{D}(\mathbb{R}^{+})$, i.e. $\mu_{t}\leq\nu_{t}$ for all $t>0$.

Now, for a simple point process $N$ with intensity $\lambda(t,N)$ and compensator $\Lambda(t,N):=\int_{0}^{t}\lambda(s,N)ds$,
we say that $N$ is positively self-exciting w.r.t. $\prec_{\mathcal{N}}$ if for any $\mu,\nu\in\mathcal{N}(\mathbb{R}^{+})$,
\begin{equation}
\mu\prec_{\mathcal{N}}\nu\text{ implies that for any $t>0$, }\lambda(t,\mu)\leq\lambda(t,\nu),
\end{equation}
and $N$ is positively self-exciting w.r.t. $\prec_{\mathcal{D}}$ if for any $\mu,\nu\in\mathcal{N}(\mathbb{R}^{+})$,
\begin{equation}
\mu\prec_{\mathcal{D}}\nu\text{ implies that for any $t>0$, }\Lambda(t,\mu)\leq\Lambda(t,\nu).
\end{equation}

Kwieci\'{n}ski and Szekli \cite{Kwiecinski} pointed out that if $h(\cdot)$ is nonnegative
and $\lambda(\cdot)$ nondecreasing, then $N$ is positively self-exciting with respect to $\prec_{\mathcal{N}}$,
and that if $h(\cdot)$ is nonnegative and nondecreasing and $\lambda(\cdot)$ is nondecreasing, then $N$
is positively self-exciting with respect to $\prec_{\mathcal{D}}$.

Let $(\Omega,\mathcal{F})$ be a Polish space with a closed partical ordering $\prec$. A probability
measure on $(\Omega,\mathcal{F})$ is associated $(\prec)$ if
\begin{equation}
P(C_{1}\cap C_{2})\geq P(C_{1})P(C_{2}),
\end{equation}
for all increasing sets $C_{1},C_{2}\in\mathcal{F}$ (a set $C$ is increasing if $x\in C$ and $x\prec y$ implies
$y\in C$).

Kwieci\'{n}ski and Szekli \cite{Kwiecinski} proved that if $N$ is positively self-exciting point process
w.r.t. $\prec_{\mathcal{N}}$ (resp. $\prec_{\mathcal{D}}$), then $N$ is associated $(\prec_{\mathcal{N}})$
(resp. $(\prec_{\mathcal{D}})$). Therefore, it implies that for a nonlinear Hawkes process,
if $h(\cdot)$ is nonnegative and $\lambda(\cdot)$ nondecreasing, then $N$ is associated $(\prec_{\mathcal{N}})$
and if $h(\cdot)$ is nonnegative and nondecreasing and $\lambda(\cdot)$ is nondecreasing, then $N$
is associated $(\prec_{\mathcal{D}})$.

Next, let us consider the limit theorems for nonlinear Hawkes process.
When $\lambda(\cdot)$ is nonlinear, the usual immigration-birth representation no longer works and you may have to use some
abstract theory to obtain limit theorems. Some progress has already been made. 

Br\'{e}maud and Massouli\'{e} \cite{Bremaud}'s stability result implies that by the ergodic theorem,
\begin{equation}
\frac{N_{t}}{t}\rightarrow\mu:=\mathbb{E}[N[0,1]],
\end{equation}
as $t\rightarrow\infty$, where $\mathbb{E}[N[0,1]]$ is the mean of $N[0,1]$ under the stationary and ergodic measure. 

In this thesis, we will obtain a functional central limit theorem and a Strassen's invariance principle in 
Chapter \ref{chap:two} and a process-level, i.e. level-3 large deviation principle in Chapter \ref{chap:three}
and thus a level-1 large deviation principle by contraction principle.
We will also obtain an alternative expression for the rate function for level-1 large deviation principle
of Markovian nonlinear Hawkes process as a variational formula in Chapter \ref{chap:four}.

\section{Multivariate Hawkes Processes}\label{multivariate}

We say $N=(N_{1},\ldots,N_{d})$ is a multivariate Hawkes process if for any $1\leq i\leq d$, 
$N_{i}$ is a simple point process with intensity
\begin{equation}
\lambda_{i,t}:=\nu_{i}+\int_{0}^{t}\sum_{j=1}^{d}h_{ij}(t-s)dN_{j,s},\label{multidynamics}
\end{equation}
where $\nu_{i}\in\mathbb{R}^{+}$ and $h_{ij}(\cdot):\mathbb{R}^{+}\rightarrow\mathbb{R}^{+}$.
Then, $\mathbf{\nu}:=(\nu_{1},\ldots,\nu_{d})$ is a vector and 
$\mathbf{h}:=(h_{ij})_{1\leq i,j\leq d}$ is a $d\times d$ matrix-valued function.

Let us assume that for any $i,j$, $\int_{0}^{\infty}h_{ij}(t)dt<\infty$ and that the spectral radius $\rho(\mathbf{K})$
of the matrix $\mathbf{K}=\int_{0}^{\infty}\mathbf{h}(t)dt$ satisfies $\rho(\mathbf{K})<1$.
Then, Bacry et al. \cite{Bacry} proved a law of large numbers, i.e.
\begin{equation}
\sup_{u\in[0,1]}\Vert T^{-1}N_{Tu}-u(\mathbf{I}-\mathbf{K})^{-1}\mathbf{\nu}\Vert\rightarrow 0,
\end{equation} 
as $T\rightarrow\infty$ almost surely and also in $L^{2}(\mathbb{P})$.
If we assume further that for any $1\leq i,j\leq d$,
\begin{equation}
\int_{0}^{\infty}h_{ij}(t)t^{1/2}dt<\infty.
\end{equation}
Then, Bacry et al. \cite{Bacry} proved the following central limit theorem:
\begin{equation}
\sqrt{T}\left(\frac{1}{T}N_{Tu}-u(\mathbf{I}-\mathbf{K})^{-1}\mathbf{\nu}\right),\quad u\in[0,1]
\end{equation}
converges in law as $T\rightarrow\infty$ under the Skorohod topology to
\begin{equation}
(\mathbf{I}-\mathbf{K})^{-1}\Sigma^{1/2}W_{u},\quad u\in[0,1],
\end{equation} 
where $\Sigma$ is the diagonal matrix with $\Sigma_{ii}=((\mathbf{I}-\mathbf{K})^{-1}\mathbf{\nu})_{i}$, $1\leq i\leq d$.

It is well known that under the assumption that $\rho(\mathbf{K})<1$, there exists a unique
stationary version of the multivariate Hawkes process satisfying the dynamics \eqref{multidynamics}.
The rate of convergence to the stationary version of the multivariate Hawkes process was obtained in Torrisi \cite{Torrisi}. 
The Bartlett spectrum of the multivariate Hawkes process was derived in Hawkes \cite{HawkesIII}. Some non-asymptotics
estimates for multivariate Hawkes processes were obtained in Hansen et al. \cite{Hansen}.

A nice survey on multivariate linear Hawkes processes can be found in Liniger \cite{Liniger}.
\chapter{Central Limit Theorem for Nonlinear Hawkes Processes\label{chap:two}}

\section{Main Results}

In this chapter, we obtain a functional central limit theorem for the nonlinear Hawkes process
under Assumption \ref{Assumption}. Under the same assumption, a Strassen's invariance principle also holds.
Let us recall that $N$ is a nonlinear Hawkes process with intensity
\begin{equation}
\lambda_{t}:=\lambda\left(\int_{(-\infty,t)}h(t-s)N(ds)\right).
\end{equation}

\begin{assumption}\label{Assumption}
We assume that
\begin{itemize}
\item
$h(\cdot):[0,\infty)\rightarrow\mathbb{R}^{+}$ is a decreasing function and $\int_{0}^{\infty}th(t)dt<\infty$.

\item
$\lambda(\cdot)$ is positive, increasing and $\alpha$-Lipschitz (i.e. $|\lambda(x)-\lambda(y)|\leq\alpha|x-y|$ for any $x,y$) 
and $\alpha\Vert h\Vert_{L^{1}}<1$.
\end{itemize}
\end{assumption}
Br\'{e}maud and Massouli\'{e} \cite{Bremaud} proved that if $\lambda(\cdot)$ is $\alpha$-Lipschitz
with $\alpha\Vert h\Vert_{L^{1}}<1$, 
there exists a unique stationary and ergodic Hawkes process satisfying the dynamics \eqref{dynamics}.
Hence, under our Assumption \ref{Assumption} (which is slightly stronger than \cite{Bremaud}), there exists a unique
stationary and ergodic Hawkes process satisfying the dynamics \eqref{dynamics}.

Let $\mathbb{P}$ and $\mathbb{E}$ denote the probability measure and expectation for a stationary, ergodic Hawkes process, 
and let $\mathbb{P}(\cdot|\mathcal{F}^{-\infty}_{0})$
and $\mathbb{E}(\cdot|\mathcal{F}^{-\infty}_{0})$ denote the conditional probability measure and expectation 
given the past history.

The following are the main results of this chapter.

\begin{theorem}\label{mainthm}
Under Assumption \ref{Assumption}, let $N$ be the stationary and ergodic nonlinear Hawkes process with dynamics \eqref{dynamics}. We have
\begin{equation}
\frac{N_{\cdot t}-\cdot\mu t}{\sqrt{t}}\rightarrow\sigma B(\cdot),\quad\text{as $t\rightarrow\infty$,}\label{convergence}
\end{equation}
where $B(\cdot)$ is a standard Brownian motion and $0<\sigma<\infty$, where
\begin{equation}
\sigma^{2}:=\mathbb{E}[(N[0,1]-\mu)^{2}]+2\sum_{j=1}^{\infty}\mathbb{E}[(N[0,1]-\mu)(N[j,j+1]-\mu)].\label{sigmasquare}
\end{equation}
The convergence in \eqref{convergence}
is weak convergence on $D[0,1]$, the space of c\'{a}dl\'{a}g functions on $[0,1]$, equipped with Skorokhod topology.
\end{theorem}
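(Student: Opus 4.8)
\emph{Strategy.} Under Assumption~\ref{Assumption} there is a unique stationary ergodic Hawkes process with dynamics \eqref{dynamics} (Br\'emaud and Massouli\'e \cite{Bremaud}), and $N_n-\mu n=\sum_{j=0}^{n-1}X_j$ with $X_j:=N(j,j+1]-\mu$ a centered, stationary, ergodic sequence adapted to $\mathcal G_j:=\mathcal F^{-\infty}_{j+1}$. I would prove a central limit theorem and a functional invariance principle for $(X_j)$ via Gordin's martingale approximation, and then transfer from integer times to $D[0,1]$ by routine interpolation. The compensated martingale $M_t:=N_t-\int_0^t\lambda_s\,ds$ alone is not enough: writing $X_j=(M_{j+1}-M_j)+Y_j$ with $Y_j:=\int_j^{j+1}\lambda_s\,ds-\mu$, the increments $M_{j+1}-M_j$ form a stationary ergodic martingale-difference sequence with $\mathbb{E}[(M_{j+1}-M_j)^2]=\mathbb{E}[\int_j^{j+1}\lambda_s\,ds]=\mu$, but $Y_j$ is not a martingale difference and contributes to the limiting variance (already in the linear case $\sigma^2=\mu/(1-\Vert h\Vert_{L^1})^2>\mu$), so $X_j$ must be handled as a whole; consistently, $\sigma^2$ in \eqref{sigmasquare} is exactly the time-series variance $\mathrm{Var}(X_0)+2\sum_{j\ge1}\mathrm{Cov}(X_0,X_j)$.

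\emph{Step 1: decay of memory (the crux).} The one genuinely new ingredient is the estimate $\sum_{n\ge1}\Vert\mathbb{E}[X_n\mid\mathcal F^{-\infty}_0]\Vert_{L^2}<\infty$. Since $\mathbb{E}[M_{n+1}-M_n\mid\mathcal F^{-\infty}_0]=0$, we have $\mathbb{E}[X_n\mid\mathcal F^{-\infty}_0]=\mathbb{E}[\int_n^{n+1}\lambda_s\,ds\mid\mathcal F^{-\infty}_0]-\mu$. I would introduce a coupled Hawkes process $N'$ driven by the \emph{same} Poisson noise on $(0,\infty)$ (via the thinning construction) but with its past on $(-\infty,0]$ replaced by an independent stationary past, so that $N'$ is globally stationary and independent of $\mathcal F^{-\infty}_0$; then $\mu=\mathbb{E}[\int_n^{n+1}\lambda'_s\,ds\mid\mathcal F^{-\infty}_0]$ and hence $\Vert\mathbb{E}[X_n\mid\mathcal F^{-\infty}_0]\Vert_{L^2}\le\alpha\int_n^{n+1}\Vert Z_s-Z'_s\Vert_{L^2}\,ds$, using $\alpha$-Lipschitzness of $\lambda(\cdot)$. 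Setting $\Phi(s):=\Vert Z_s-Z'_s\Vert_{L^2}$, the semimartingale decomposition of $\int_0^s h(s-u)(dN_u-dN'_u)$, the fact that $N$ and $N'$ disagree only at thinning points of intensity $|\lambda_u-\lambda'_u|\le\alpha|Z_u-Z'_u|$, and the stationary second-moment bounds (finite under Assumption~\ref{Assumption}, e.g. by stochastic domination by a stable linear Hawkes process) yield a renewal-type inequality $\Phi(s)\le C\overline H(s)+c_1\int_0^s h(s-u)\Phi(u)\,du+c_2\bigl(\int_0^s h(s-u)\Phi(u)\,du\bigr)^{1/2}$, where $\overline H(s):=\int_s^\infty h(r)\,dr$ controls the $L^2$ size of the initial-segment term $\int_{(-\infty,0]}h(s-u)(dN_u-dN'_u)$. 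Since $\alpha\Vert h\Vert_{L^1}<1$ and $\Phi$ is bounded, iterating this inequality produces a bound on $\Vert\mathbb{E}[X_n\mid\mathcal F^{-\infty}_0]\Vert_{L^2}$ that is summable in $n$, the summability being precisely equivalent to $\int_0^\infty t\,h(t)\,dt<\infty$ (indeed $\sum_{n\ge1}\overline H(n)=\int_0^\infty\lfloor t\rfloor h(t)\,dt\le\int_0^\infty t\,h(t)\,dt$). The same estimate gives $\sum_{j\ge1}|\mathrm{Cov}(X_0,X_j)|\le\Vert X_0\Vert_{L^2}\sum_{j\ge1}\Vert\mathbb{E}[X_j\mid\mathcal F^{-\infty}_1]\Vert_{L^2}<\infty$, so $\sigma<\infty$.

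\emph{Step 2: martingale approximation, the FCLT, and $\sigma^2$.} With $\sum_n\Vert\mathbb{E}[X_n\mid\mathcal F^{-\infty}_0]\Vert_{L^2}<\infty$, Gordin's martingale approximation gives $g\in L^2$ and a stationary ergodic $L^2$ martingale-difference sequence $(D_j)$ for $(\mathcal G_j)$ with $X_j=D_j+(g\circ\vartheta^{j}-g\circ\vartheta^{j+1})$, where $\vartheta$ is the time-one shift; hence $N_m-\mu m=\sum_{j=0}^{m-1}D_j+g-g\circ\vartheta^{m}$, and likewise for $m=\lfloor nu\rfloor$. The invariance principle for stationary ergodic martingale differences (the ergodic theorem gives $\tfrac1n\sum_{j<\lfloor nu\rfloor}D_j^2\to u\,\mathbb{E}[D_0^2]$, and the Lindeberg condition is automatic from $D_0\in L^2$) yields $n^{-1/2}\sum_{j<\lfloor n\cdot\rfloor}D_j\Rightarrow\sigma B(\cdot)$ on $D[0,1]$ with $\sigma^2=\mathbb{E}[D_0^2]$, while $n^{-1/2}(g-g\circ\vartheta^{\lfloor n\cdot\rfloor})\to0$ uniformly since $\max_{j\le n}|g\circ\vartheta^{j}|=o(\sqrt n)$ a.s. for $g\in L^2$. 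Finally $|N_{nu}-N_{\lfloor nu\rfloor}|\le N(\lfloor nu\rfloor,\lfloor nu\rfloor+1]$ and $\max_{0\le j\le n}N(j,j+1]=o(\sqrt n)$ in probability, because $n\,\mathbb{P}(N(0,1]^2>\varepsilon^2 n)\le\varepsilon^{-2}\mathbb{E}[N(0,1]^2\mathbf{1}_{\{N(0,1]^2>\varepsilon^2 n\}}]\to0$ as $N(0,1]\in L^2$; thus the piecewise-constant interpolation of $(N_m-\mu m)_m$ and the process $(N_{n\cdot}-\mu n\,\cdot)$ have the same $D[0,1]$ limit, proving \eqref{convergence}. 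Comparing $\tfrac1n\mathrm{Var}(N_n)\to\mathbb{E}[D_0^2]$ (from $\mathrm{Var}(\sum_{j<n}D_j)=n\,\mathbb{E}[D_0^2]$ and the coboundary term being $L^2$-bounded) with the Ces\`aro limit $\tfrac1n\mathrm{Var}(N_n)\to\mathrm{Var}(X_0)+2\sum_{j\ge1}\mathrm{Cov}(X_0,X_j)$ identifies $\sigma^2$ with \eqref{sigmasquare}.

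\emph{Step 3: positivity and the main obstacle.} It remains to see $\sigma>0$. Using the thinning representation, realize $N$ on a single Poisson random measure $\Pi$ on $(0,\infty)^2$ by $N(0,n]=\Pi(\{(t,x):t\le n,\ x\le\lambda_t\})$; since $\lambda_t\ge\lambda(0)>0$, we have $N(0,n]=\Pi_0(0,n]+E_n$ with $\Pi_0(0,n]:=\Pi((0,n]\times(0,\lambda(0)])$ a Poisson variable of mean $\lambda(0)n$ and $E_n\ge0$, and both $\Pi_0(0,n]$ and $E_n$ are coordinatewise non-decreasing functionals of $\Pi$, so by association of the Poisson random measure $\mathrm{Cov}(\Pi_0(0,n],E_n)\ge0$ and $\mathrm{Var}(N_n)\ge\mathrm{Var}(\Pi_0(0,n])=\lambda(0)\,n$; hence $\sigma^2=\lim_n\tfrac1n\mathrm{Var}(N_n)\ge\lambda(0)>0$. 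I expect the main obstacle to be Step~1 — obtaining an $L^2$, summable-in-$n$ bound on the influence of the remote past, which is exactly where $\int_0^\infty t\,h(t)\,dt<\infty$ enters and which requires combining the contraction $\alpha\Vert h\Vert_{L^1}<1$ with second-moment control of the stationary process — while Steps~2 and~3 are assembled from standard martingale and stationary-sequence limit theory together with the association property of \cite{Kwiecinski}.
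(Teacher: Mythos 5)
Your overall architecture matches the paper's: reduce to the stationary ergodic sequence $X_j=N(j,j+1]-\mu$, prove $\sum_{n}\Vert\mathbb{E}[X_n\mid\mathcal F^{-\infty}_0]\Vert_{L^2}<\infty$, invoke a martingale-approximation FCLT (the paper uses Billingsley's Theorem~\ref{BTheorem}, which is Gordin's condition in the form you state), and interpolate from integer to continuous time using $\mathbb{E}[N[0,1]^2]<\infty$. Your Step~3 is a genuinely different and valid route to $\sigma>0$: the paper instead constructs the martingale difference $Y_n=\eta_{n-1}-\eta_{n-2}+N(n-2,n-1]-\mu$ explicitly and exhibits an event of positive probability on which $\eta_0\neq\eta_{-1}$, whereas your association/FKG argument for increasing functionals of the driving Poisson measure gives the cleaner quantitative bound $\sigma^2\ge\lambda(0)$.

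However, Step~1 as written has a genuine gap. You pass to the unconditional norm $\Phi(s)=\Vert Z_s-Z'_s\Vert_{L^2}$ and derive
\[
\Phi(s)\le C\overline H(s)+c_1\int_0^s h(s-u)\Phi(u)\,du+c_2\Bigl(\int_0^s h(s-u)\Phi(u)\,du\Bigr)^{1/2},
\]
but this inequality does not iterate to a summable (or even vanishing) bound: the square-root term, which comes from the quadratic variation of the disagreement point process, dominates precisely in the regime where $\Phi$ is small. Taking $\limsup_{s\to\infty}$ with $\phi:=\limsup\Phi(s)$ gives only $\phi(1-c_1\Vert h\Vert_{L^1})\le c_2\sqrt{\Vert h\Vert_{L^1}\phi}$, i.e.\ a constant upper bound on $\phi$, not $\phi=0$; and even a two-function refinement tracking $\mathbb{E}|Z_s-Z'_s|$ and $\Phi(s)^2$ separately yields at best $\int_0^\infty\Phi(s)^2\,ds<\infty$, which does not imply $\int_0^\infty\Phi(s)\,ds<\infty$. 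The repair is to keep the conditioning on $\mathcal F^{-\infty}_0$ \emph{inside} the norm: since $\mathbb{E}[X_n\mid\mathcal F^{-\infty}_0]=\mathbb{E}[\int_n^{n+1}(\lambda_s-\lambda'_s)\,ds\mid\mathcal F^{-\infty}_0]$, you only need $\Psi(s):=\Vert\mathbb{E}[\,|Z_s-Z'_s|\mid\mathcal F^{-\infty}_0]\Vert_{L^2}$. Conditioning first annihilates the martingale part of the disagreement process, leaving the linear renewal inequality $\Psi(s)\le g(s)+\alpha\int_0^s h(s-u)\Psi(u)\,du$ with $g(s)\lesssim\Vert N[0,1]\Vert_{L^2}\sum_{k\ge0}h(s+k)$, and then $\int_0^\infty\Psi\le(1-\alpha\Vert h\Vert_{L^1})^{-1}\int_0^\infty g<\infty$ because $\sum_k\overline H(k)\lesssim\Vert h\Vert_{L^1}+\int_0^\infty t\,h(t)\,dt$. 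This is in substance what the paper does: it couples with the empty past via the monotone layered Poisson embedding, bounds $\mathbb{E}^{\omega^-}[N(n,n+1]]-\mathbb{E}^{\emptyset}[N(n,n+1]]$ by the \emph{deterministic} (given $\omega^-$) series $\sum_{\tau\in\omega^-}K(n,\tau)$ of iterated convolutions of $\alpha h$ — a pure first-moment estimate conditional on the past — and only then takes the $L^2$ norm over $\omega^-$. With that correction your argument closes; the remaining steps are sound modulo the second-moment bound $\mathbb{E}[N[0,1]^2]<\infty$, which you correctly note follows from domination by a subcritical linear Hawkes process (the paper's Lemmas~\ref{midstep} and~\ref{secondmoment}).
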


\begin{remark}
By a standard central limit theorem for martingales, i.e. Theorem \ref{BTheoremII}, it is easy to see that
\begin{equation}
\frac{N_{\cdot t}-\int_{0}^{\cdot t}\lambda_{s}ds}{\sqrt{t}}\rightarrow\sqrt{\mu} B(\cdot),\quad\text{as $t\rightarrow\infty$,}
\end{equation}
where $\mu=\mathbb{E}[N[0,1]]$. In the linear case, say $\lambda(z)=\nu+z$, Bacry et al. \cite{Bacry} proved that $\sigma^{2}$ in 
\eqref{sigmasquare} satisfies $\sigma^{2}=\frac{\nu}{(1-\Vert h\Vert_{L^{1}})^{3}}>\mu=\frac{\nu}{1-\Vert h\Vert_{L^{1}}}$. That is
not surprising because $N_{\cdot t}-\cdot\mu t$ ``should'' have more fluctuations than $N_{\cdot t}-\int_{0}^{\cdot t}\lambda_{s}ds$.
Therefore, we guess that for nonlinear $\lambda(\cdot)$, $\sigma^{2}$ defined in \eqref{sigmasquare} should also satisfy $\sigma^{2}>\mu=\mathbb{E}[N[0,1]]$.
However, it might not be very easy to compute and say something about $\sigma^{2}$ in such a case.
\end{remark}

In the classical case for a sequence of i.i.d. random variables $X_{i}$ with mean $0$ and variance $1$, we have
the central limit theorem $\frac{1}{\sqrt{n}}\sum_{i=1}^{n}X_{i}\rightarrow N(0,1)$ as $n\rightarrow\infty$,
and we also have $\frac{\sum_{i=1}^{n}X_{i}}{\sqrt{n\log\log n}}\rightarrow 0$
in probability as $n\rightarrow\infty$, but the convergence does not hold a.s. The law of the iterated logarithm says
that $\limsup_{n\rightarrow\infty}\frac{\sum_{i=1}^{n}X_{i}}{\sqrt{n\log\log n}}=\sqrt{2}$ a.s. A functional version
of the law of the iterated logarithm is called Strassen's invariance principle.

It turns out that we also have a Strassen's invariance principle for nonlinear Hawkes processes under Assumption \ref{Assumption}.
\begin{theorem}\label{LIL}
Under Assumption \ref{Assumption}, let $N$ be the stationary and ergodic nonlinear Hawkes process with dynamics \eqref{dynamics}.
Let $X_{n}:=N[n-1,n]-\mu$, $S_{n}:=\sum_{i=1}^{n}X_{i}$, $s_{n}^{2}:=\mathbb{E}[S_{n}^{2}]$, $g(t)=\sup\{n: s_{n}^{2}\leq t\}$, and
for $t\in[0,1]$, let $\eta_{n}(t)$ be the usual linear interpolation, i.e.
\begin{equation}
\eta_{n}(t)=\frac{S_{k}+(s_{n}^{2}t-s_{k}^{2})(s_{k+1}^{2}-s_{k}^{2})^{-1}X_{k+1}}
{\sqrt{2s_{n}^{2}\log\log s_{n}^{2}}},\quad s_{k}^{2}\leq s_{n}^{2}t\leq s_{k+1}^{2},k=0,1,\ldots,n-1.
\end{equation}
Then, $g(e)<\infty$, $\{\eta_{n},n>g(e)\}$ is relatively compact in $C[0,1]$, the set of continuous functions
on $[0,1]$ equipped with uniform topology, and the set of limit points is
the set of absolutely continuous functions $f(\cdot)$ on $[0,1]$ such that $f(0)=0$ and $\int_{0}^{1}f'(t)^{2}dt\leq 1$.
\end{theorem}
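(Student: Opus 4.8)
The plan is to deduce the Strassen invariance principle from the same martingale decomposition and mixing estimates that underlie the functional CLT in Theorem \ref{mainthm}. First I would observe that by the decomposition $N_t - \int_0^t \lambda_s\,ds$ is a martingale with compensator $\int_0^t\lambda_s\,ds$, and that the fluctuations of $\int_0^t\lambda_s\,ds$ around $\mu t$ can be controlled via the Lipschitz property of $\lambda(\cdot)$ together with $\alpha\Vert h\Vert_{L^1}<1$; this is exactly the kind of estimate used to establish \eqref{sigmasquare} and the convergence in \eqref{convergence}. In particular, the increments $X_n = N[n-1,n]-\mu$ form a stationary, ergodic sequence with $\mathbb{E}[X_n]=0$, $\mathbb{E}[X_n^2]<\infty$, and the partial-sum variances satisfy $s_n^2 = \mathbb{E}[S_n^2] \sim \sigma^2 n$ with $0<\sigma<\infty$ by Theorem \ref{mainthm}. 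This immediately gives $g(e)<\infty$ and ensures the normalizing sequence $\sqrt{2 s_n^2 \log\log s_n^2}$ is the correct one.

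Next I would invoke an almost-sure invariance principle (Strassen-type approximation): under suitable weak-dependence/mixing conditions, the process $\{X_n\}$ can be constructed on a common probability space with a standard Brownian motion $W$ such that $\left| S_n - \sigma W(n) \right| = o\!\left(\sqrt{n\log\log n}\right)$ a.s. The Hawkes process satisfies these conditions because, under Assumption \ref{Assumption}, the cluster structure and the integrability $\int_0^\infty t h(t)\,dt<\infty$ give quantitative decay of correlations between $N[0,1]$ and $N[j,j+1]$ as $j\to\infty$ — the same estimates that make the series in \eqref{sigmasquare} converge. Once this coupling is in place, the classical Strassen theorem for Brownian motion (the set of limit points of $W(n\cdot)/\sqrt{2n\log\log n}$ in $C[0,1]$ is precisely the Strassen ball $\{f \text{ absolutely continuous}, f(0)=0, \int_0^1 f'(t)^2\,dt \le 1\}$) transfers directly to $\eta_n$, since the linear interpolation in the definition of $\eta_n$ and the replacement of $s_k^2$ by its asymptotic $\sigma^2 k$ only introduce errors that are $o\!\left(\sqrt{n\log\log n}\right)$ uniformly on $[0,1]$.

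Finally I would need to check that the continuous-interpolation scheme matches the Brownian scaling: because $s_k^2/s_n^2 \to t$ smoothly and $s_{k+1}^2 - s_k^2$ is bounded, the piecewise-linear path $\eta_n$ stays uniformly close to $W(s_n^2 \cdot)/\sqrt{2 s_n^2 \log\log s_n^2}$, so relative compactness and the identification of limit points follow. The main obstacle is establishing the almost-sure (strong) approximation rather than merely the distributional FCLT: the nonlinear Hawkes process is non-Markovian with long memory, so one cannot directly cite a Markov-chain ASIP. I would handle this by exploiting the martingale part (which satisfies an ASIP by the Strassen martingale approximation, since its quadratic variation $\int_0^t\lambda_s\,ds$ is asymptotically linear by ergodicity) and controlling the remainder $\int_0^t\lambda_s\,ds - \mu t$ through a fixed-point/contraction argument in the Lipschitz regime $\alpha\Vert h\Vert_{L^1}<1$, showing this remainder is itself $o\!\left(\sqrt{t\log\log t}\right)$ a.s. — indeed one expects it to be $O(\sqrt{t})$ up to logarithmic factors by the CLT-scale bounds, which is more than enough.
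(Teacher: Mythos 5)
There is a genuine gap in your argument, and it sits precisely in the step you flag as the ``main obstacle.'' You propose to write $N_t-\mu t$ as the martingale $M_t=N_t-\int_0^t\lambda_s\,ds$ plus the remainder $\int_0^t\lambda_s\,ds-\mu t$, apply a martingale ASIP to $M_t$ (whose quadratic variation grows like $\mu t$), and dismiss the remainder as $o\bigl(\sqrt{t\log\log t}\bigr)$ a.s.\ ``by CLT-scale bounds.'' That last claim is false, and the decomposition itself points at the wrong variance. As noted in the remark after Theorem \ref{mainthm}, in the linear case $\sigma^{2}=\nu/(1-\Vert h\Vert_{L^{1}})^{3}>\mu=\nu/(1-\Vert h\Vert_{L^{1}})$: the compensator's deviation from $\mu t$ carries a nontrivial share of the fluctuation of $N_t-\mu t$ (and is correlated with $M_t$), so it fluctuates at the full $\sqrt{t\log\log t}$ scale and cannot be absorbed as an error term; a CLT-scale $O_P(\sqrt t)$ bound is a statement in probability and gives no almost-sure control at LIL scale. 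If your scheme did go through as written, $S_n$ would inherit Strassen's theorem with constant $\sqrt{\mu}$, whereas the theorem normalizes by $\sqrt{2s_n^{2}\log\log s_n^{2}}$ with $s_n^{2}\sim\sigma^{2}n$; you would then only obtain limit points inside the ball of radius $\sqrt{\mu}/\sigma<1$ and could not identify the limit set as the full unit Strassen ball. So the compensator-based martingale is the wrong martingale approximation for this problem.

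The paper's proof is a one-liner that avoids this entirely: by Heyde and Scott \cite{Heyde}, for a stationary ergodic sequence the Strassen invariance principle holds under exactly the two ingredients already established for the functional CLT, namely the Gordin-type condition \eqref{finitesum} (summability in $L^{2}$ of $\mathbb{E}[N(n,n+1]-\mu\,|\,\mathcal{F}^{-\infty}_{0}]$) and $\sigma>0$ from Lemma \ref{positivesigma}. The correct martingale approximation hidden in that result is the one built in Lemma \ref{positivesigma}, with stationary martingale differences $Y_n=\eta_{n-1}-\eta_{n-2}+X_{n-1}$ incorporating the conditional expectations of all future increments; its variance is $\sigma^{2}$ in \eqref{sigmasquare}, not $\mu$, and the genuinely negligible remainder is the telescoping $\eta$-term, not the compensator fluctuation. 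If you want to salvage an ASIP-style argument, you should run it for $\{Y_n\}$ (or simply cite Heyde--Scott with \eqref{finitesum} and $\sigma>0$), rather than for $M_t$ plus a contraction estimate on $\int_0^t\lambda_s\,ds-\mu t$.
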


\section{Proofs}

This section is devoted to the proof of Theorem \ref{mainthm}. We use a standard central limit theorem, i.e. Theorem \ref{BTheorem}. 
In our proof, we need the fact that $\mathbb{E}[N[0,1]^{2}]<\infty$, which is proved in Lemma \ref{secondmoment}.
Lemma \ref{secondmoment} is proved by proving a stronger result first, i.e. Lemma \ref{midstep}.
We will also prove Lemma \ref{positivesigma} to guarantee that $\sigma>0$ so that the central limit theorem is not degenerate.

Let us first quote the two necessary central limit theorems from Billingsley \cite{Billingsley}. In both Theorem \ref{BTheorem}
and Theorem \ref{BTheoremII}, the filtrations are the natural ones, i.e. given a stochastic process $(X_{n})_{n\in\mathbb{Z}}$,
$\mathcal{F}^{a}_{b}:=\sigma(X_{n},a\leq n\leq b)$, for $-\infty\leq a\leq b\leq\infty$.

\begin{theorem}[Page 197 \cite{Billingsley}]\label{BTheorem}
Suppose $X_{n}$, $n\in\mathbb{Z}$, is an ergodic stationary sequence such that $\mathbb{E}[X_{n}]=0$ and
\begin{equation}
\sum_{n\geq 1}\Vert\mathbb{E}[X_{0}|\mathcal{F}^{-\infty}_{-n}]\Vert_{2}<\infty,
\end{equation}
where $\Vert Y\Vert_{2}=(\mathbb{E}[Y^{2}])^{1/2}$. Let $S_{n}=X_{1}+\cdots+X_{n}$. 
Then $S_{[n\cdot]}/\sqrt{n}\rightarrow\sigma B(\cdot)$ weakly, where the weak convergence is on $D[0,1]$ equipped with the Skorohod topology and
$\sigma^{2}=\mathbb{E}[X_{0}^{2}]+2\sum_{n=1}^{\infty}\mathbb{E}[X_{0}X_{n}]$. The series converges absolutely.
\end{theorem}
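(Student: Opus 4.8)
The statement is the classical martingale-approximation (Gordin) functional central limit theorem, so the plan is to reduce it to a functional CLT for stationary ergodic martingale differences, namely Theorem \ref{BTheoremII}, and then to absorb the approximation error. I would carry this out via the standard coboundary decomposition and a variance identification at the end.

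First I would fix notation: let $\theta$ denote the shift, so $X_n=X_0\circ\theta^n$, and let $\mathcal{F}_n:=\sigma(X_k:k\le n)$, so that $\mathcal{F}^{-\infty}_{-n}=\mathcal{F}_{-n}$. Introduce the corrector
\begin{equation}
h:=\sum_{k\ge 0}\mathbb{E}[X_k\mid\mathcal{F}_0].
\end{equation}
The first thing to verify is that this series converges in $L^2$: by stationarity $\Vert\mathbb{E}[X_k\mid\mathcal{F}_0]\Vert_2=\Vert\mathbb{E}[X_0\mid\mathcal{F}_{-k}]\Vert_2$, so the partial sums are Cauchy in $L^2$ by the triangle inequality together with the hypothesis $\sum_{k\ge 1}\Vert\mathbb{E}[X_0\mid\mathcal{F}_{-k}]\Vert_2<\infty$; hence $h\in L^2$ and $h$ is $\mathcal{F}_0$-measurable. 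Then I would set
\begin{equation}
m_0:=h-\mathbb{E}[h\mid\mathcal{F}_{-1}],\qquad m_n:=m_0\circ\theta^n,
\end{equation}
which is automatically a stationary $L^2$ martingale difference sequence for $(\mathcal{F}_n)$, and ergodic because $m_0$ is a measurable functional of the ergodic sequence $(X_k)$. Using $\mathbb{E}[h\mid\mathcal{F}_{-1}]=h\circ\theta^{-1}-X_{-1}$ (valid because $X_{-1}$ is $\mathcal{F}_{-1}$-measurable), one obtains the coboundary decomposition $X_n=m_{n+1}+h\circ\theta^{n}-h\circ\theta^{n+1}$, and telescoping gives $S_n=M_{n+1}-m_1+h\circ\theta-h\circ\theta^{n+1}$, where $M_n:=m_1+\cdots+m_n$.

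Next I would apply the functional martingale central limit theorem (Theorem \ref{BTheoremII}) to $(m_n)$ to get $M_{[n\cdot]}/\sqrt{n}\to\sigma B(\cdot)$ weakly on $D[0,1]$ with $\sigma^2=\mathbb{E}[m_0^2]$, and then argue that $S_{[n\cdot]}-M_{[n\cdot]}$ is $o(\sqrt{n})$ uniformly in probability. For this it suffices that $n^{-1/2}\max_{k\le n}|Y_k|\to 0$ in probability for a stationary $L^2$ sequence $Y$, which follows from
\begin{equation}
\mathbb{P}\!\left(\max_{k\le n}|Y_k|>\eps\sqrt{n}\right)\le n\,\mathbb{P}(|Y_0|>\eps\sqrt{n})\le\eps^{-2}\,\mathbb{E}\!\left[Y_0^2\mathbf{1}_{\{|Y_0|>\eps\sqrt{n}\}}\right]\to 0,
\end{equation}
applied to $Y=m$ and $Y=h$; hence $S_{[n\cdot]}/\sqrt{n}\to\sigma B(\cdot)$ as well. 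Finally I would identify the variance: on one hand $\mathbb{E}[M_n^2]=n\,\mathbb{E}[m_0^2]$ and the remainder contributes $o(n)$, so $n^{-1}\mathbb{E}[S_n^2]\to\mathbb{E}[m_0^2]$; on the other hand $n^{-1}\mathbb{E}[S_n^2]=\mathbb{E}[X_0^2]+2\sum_{k=1}^{n-1}(1-k/n)\mathbb{E}[X_0X_k]$, which converges to $\mathbb{E}[X_0^2]+2\sum_{k\ge1}\mathbb{E}[X_0X_k]$ since $|\mathbb{E}[X_0X_k]|\le\Vert X_0\Vert_2\Vert\mathbb{E}[X_k\mid\mathcal{F}_0]\Vert_2$ is summable by hypothesis. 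Comparing the two limits yields $\sigma^2=\mathbb{E}[X_0^2]+2\sum_{n\ge1}\mathbb{E}[X_0X_n]$ and, in particular, absolute convergence of the series.

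The main obstacle is not any single estimate but the coboundary bookkeeping: establishing the $L^2$-convergence of $h$ cleanly, verifying the identity $\mathbb{E}[h\mid\mathcal{F}_{-1}]=h\circ\theta^{-1}-X_{-1}$, and—most importantly—checking that the residual $h\circ\theta-h\circ\theta^{n+1}$ together with the leftover martingale increments is negligible \emph{uniformly in probability} after dividing by $\sqrt{n}$, since mere pointwise smallness would not give weak convergence in $D[0,1]$ with the Skorokhod topology.
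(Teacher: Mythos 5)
Your argument is correct: it is the classical Gordin--Heyde martingale-approximation proof, which is precisely the argument behind the cited result in Billingsley (the paper itself only quotes Theorem \ref{BTheorem} from \cite{Billingsley} and gives no proof). The coboundary decomposition, the reduction to Theorem \ref{BTheoremII}, the maximal estimate for the residual, and the variance identification via $n^{-1}\mathbb{E}[S_n^2]$ are all carried out correctly, so nothing needs to be added.
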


\begin{theorem}[Page 196 \cite{Billingsley}]\label{BTheoremII}
Suppose $X_{n}$, $n\in\mathbb{Z}$, is an erogdic stationary sequence of square integrable martingale differences,
i.e. $\sigma^{2}=\mathbb{E}[X_{n}^{2}]<\infty$, and let $\mathbb{E}[X_{n}|\mathcal{F}^{-\infty}_{n-1}]=0$. Let
$S_{n}=X_{1}+\cdots+X_{n}$. Then $S_{[n\cdot]}/\sqrt{n}\rightarrow\sigma B(\cdot)$ weakly, where
the weak convergence is on $D[0,1]$ equipped with the Skorohod topology.
\end{theorem}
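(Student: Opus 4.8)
The plan is to prove the functional central limit theorem for $W_n(\cdot):=S_{[n\cdot]}/\sqrt{n}$ by the classical two-step route: first establish convergence of the finite-dimensional distributions of $W_n$ to those of $\sigma B$, then establish tightness of the laws of $W_n$ on $D[0,1]$, and finally invoke Prohorov's theorem to conclude $W_n\Rightarrow\sigma B$. Throughout I would work with the natural filtration $\mathcal{F}_n:=\sigma(X_k:k\le n)$, under which $(S_n,\mathcal{F}_n)$ is a mean-zero square-integrable martingale; the degenerate case $\sigma=0$ forces $X_n\equiv 0$ a.s. and is trivial, so I assume $\sigma>0$.

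For the finite-dimensional distributions, fix $0=t_0<t_1<\cdots<t_m\le 1$ and write each block $S_{[nt_j]}-S_{[nt_{j-1}]}$ as a sum of martingale increments. I would then apply the central limit theorem for martingale difference arrays (the Lindeberg-type martingale CLT) to the triangular array $\{X_k/\sqrt{n}:1\le k\le n\}$ read blockwise. The two hypotheses to verify are: (a) a conditional Lindeberg condition, $\frac{1}{n}\sum_{k\le n}\mathbb{E}[X_k^2\mathbf{1}_{\{|X_k|>\eps\sqrt{n}\}}\mid\mathcal{F}_{k-1}]\to 0$ in probability for each $\eps>0$, which follows from stationarity together with $\mathbb{E}[X_0^2]<\infty$ and dominated convergence; and (b) convergence of the quadratic variation over a time block, $\frac{1}{n}\sum_{k\le[nt]}X_k^2\to t\sigma^2$ a.s., which is Birkhoff's ergodic theorem applied to the stationary ergodic sequence $(X_k^2)$, whose mean is $\sigma^2$. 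The array CLT then delivers at once that $(W_n(t_1),\dots,W_n(t_m))$ converges in law to a centered Gaussian vector with independent increments of variances $(t_j-t_{j-1})\sigma^2$, i.e. to $(\sigma B(t_1),\dots,\sigma B(t_m))$.

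For tightness I would exploit Doob's $L^2$ maximal inequality for partial sums of martingale differences. First, $\mathbb{E}[\max_{k\le n}S_k^2]\le 4\mathbb{E}[S_n^2]=4n\sigma^2$, so $\sup_{0\le t\le 1}|W_n(t)|$ is bounded in $L^2$ uniformly in $n$, which gives the compact-containment part of tightness. Second, for $s<t$ the increment $S_{[nt]}-S_{[ns]}$ is again a martingale with $([nt]-[ns])$ terms, so by stationarity $\mathbb{E}[(S_{[nt]}-S_{[ns]})^2]=([nt]-[ns])\sigma^2\le(t-s+n^{-1})\,n\sigma^2$, and Doob again bounds $\mathbb{E}[\max_{[ns]\le j\le[nt]}(S_j-S_{[ns]})^2]$ by four times this quantity. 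Plugging these estimates into the standard tightness criterion in $D[0,1]$ — where the martingale property annihilates the usual ``middle increment'' term of the three-increment oscillation functional after conditioning — yields tightness of $\{W_n\}$.

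Combining the two steps, Prohorov's theorem makes $\{W_n\}$ relatively compact for the weak topology on $D[0,1]$, and since every subsequential weak limit has the finite-dimensional distributions of $\sigma B$ — which are those of a sample-continuous process — the limit law is concentrated on $C[0,1]$ and coincides with the law of $\sigma B$; uniqueness of the subsequential limit upgrades this to $W_n\Rightarrow\sigma B$. I expect the tightness step to be the main obstacle: a general stationary ergodic sequence need not be tight under $\sqrt{n}$-scaling, and it is precisely the martingale maximal inequality — available only because of the martingale-difference hypothesis — that controls the modulus of continuity and closes the argument.
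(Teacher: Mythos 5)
This theorem is not proved in the thesis at all: it is quoted verbatim from Billingsley (p.\ 196) and used as a black box, so the only question is whether your blind argument is itself sound. Your finite-dimensional step is essentially the standard one and is fine: the conditional Lindeberg condition follows in $L^{1}$ from stationarity and integrability of $X_{0}^{2}$, and the ergodic theorem controls the normalized quadratic variation. (A minor remark: most statements of the martingale array CLT are phrased with the predictable variances $\frac{1}{n}\sum_{k}\mathbb{E}[X_{k}^{2}\mid\mathcal{F}_{k-1}]$ rather than $\frac{1}{n}\sum_{k}X_{k}^{2}$; this causes no difficulty, since $\mathbb{E}[X_{k}^{2}\mid\mathcal{F}_{k-1}]$ is itself a stationary ergodic sequence with mean $\sigma^{2}$, so Birkhoff applies to it directly.)

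The tightness step, however, has a genuine gap. Doob's inequality only gives $\mathbb{E}\bigl[\max_{[ns]\le j\le[nt]}(S_{j}-S_{[ns]})^{2}\bigr]\le 4([nt]-[ns])\sigma^{2}$, i.e.\ an increment bound of order $|t-s|$ with exponent exactly $1$; the moment criteria for tightness in $D[0,1]$ require an exponent strictly greater than $1$, and the crude alternative of a union bound over the $\approx 1/\delta$ blocks of length $\delta$ gives $\frac{1}{\delta}\cdot\frac{4\delta\sigma^{2}}{\varepsilon^{2}}=4\sigma^{2}/\varepsilon^{2}$, which does not vanish as $\delta\to 0$. Your proposed rescue — that conditioning makes the middle increment of the product criterion disappear — does not go through: conditioning $\mathbb{E}[\Delta_{1}^{2}\Delta_{2}^{2}]$ on $\mathcal{F}_{[nt]}$ leaves the random quantity $\frac{1}{n}\sum_{k}\mathbb{E}[X_{k}^{2}\mid\mathcal{F}_{[nt]}]$, which is controlled only in expectation, not pointwise by a multiple of $(u-t)$; to bound the product you would need uniformly bounded conditional variances or fourth moments, neither of which is assumed. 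So an additional idea is genuinely needed here. Billingsley's own proof supplies it by combining the ergodic theorem for the conditional variances with a truncation/stopping-time refinement of the maximal inequality; alternative correct routes are Skorokhod embedding (the ergodic theorem shows the embedded time change is asymptotically linear) or the semimartingale FCLT of Rebolledo/Jacod--Shiryaev, where a.s.\ convergence of the predictable quadratic variation $\frac{1}{n}\sum_{k\le[nt]}\mathbb{E}[X_{k}^{2}\mid\mathcal{F}_{k-1}]\to t\sigma^{2}$ together with the conditional Lindeberg condition yields the finite-dimensional limits and tightness simultaneously. As written, your argument does not establish tightness, and hence does not prove the theorem.
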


Now, we are ready to prove our main result.

\begin{proof}[Proof of Theorem \ref{mainthm}]
Since in the stationary regime, $\mathbb{E}[N[n,n+1]]=\mathbb{E}[N[0,1]]$ for any $n\in\mathbb{Z}$ and let us denote
$\mathbb{E}[N[0,1]]=\mu$. In order to apply Theorem \ref{BTheorem}, let us first prove that
\begin{equation}\label{finitesum}
\sum_{n=1}^{\infty}\left\{\mathbb{E}\left[\left(\mathbb{E}[N(n,n+1]-\mu|\mathcal{F}^{-\infty}_{0}]\right)^{2}\right]\right\}^{1/2}<\infty.
\end{equation}
Let $\mathbb{E}^{\omega^{-}_{1}}[N(n,n+1]]$ and $\mathbb{E}^{\omega^{-}_{2}}[N(n,n+1]]$ be two independent copies of
$\mathbb{E}[N(n,n+1]|\mathcal{F}^{-\infty}_{0}]$. It is easy to check that
\begin{align}
&\frac{1}{2}\mathbb{E}\left\{\left[\mathbb{E}^{\omega^{-}_{1}}[N(n,n+1]]-\mathbb{E}^{\omega^{-}_{2}}[N(n,n+1]]\right]^{2}\right\}
\\
&=\frac{1}{2}\mathbb{E}\left[\mathbb{E}^{\omega^{-}_{1}}[N(n,n+1]]^{2}\right]
+\frac{1}{2}\mathbb{E}\left[\mathbb{E}^{\omega^{-}_{2}}[N(n,n+1]]^{2}\right]\nonumber
\\
&\phantom{=\frac{1}{2}\mathbb{E}\left[\mathbb{E}^{\omega^{-}_{1}}[N(n,n+1]]^{2}\right]}
-\mathbb{E}\left[\mathbb{E}^{\omega^{-}_{1}}[N(n,n+1]]\mathbb{E}^{\omega^{-}_{2}}[N(n,n+1]]\right]\nonumber
\\
&=\mathbb{E}\left[\mathbb{E}[N(n,n+1]|\mathcal{F}^{-\infty}_{0}]^{2}\right]-\mu^{2}\nonumber
\\
&=\mathbb{E}\left[(\mathbb{E}[N(n,n+1]-\mu|\mathcal{F}^{-\infty}_{0}])^{2}\right].\nonumber
\end{align}
Therefore, we have
\begin{align}
&\mathbb{E}\left[(\mathbb{E}[N(n,n+1]-\mu|\mathcal{F}^{-\infty}_{0}])^{2}\right]
\\
&=\frac{1}{2}\mathbb{E}\left\{\left[\mathbb{E}^{\omega^{-}_{1}}[N(n,n+1]]-\mathbb{E}^{\omega^{-}_{2}}[N(n,n+1]]\right]^{2}\right\}\nonumber
\\
&\leq\mathbb{E}\left\{\left[\mathbb{E}^{\omega^{-}_{1}}[N(n,n+1]]-\mathbb{E}^{\emptyset}[N(n,n+1]]\right]^{2}\right\}\nonumber
\\
&\phantom{\leq\mathbb{E}\mathbb{E}^{\omega^{-}_{1}}[N(n,n+1]]}
+\mathbb{E}\left\{\left[\mathbb{E}^{\omega^{-}_{2}}[N(n,n+1]]-\mathbb{E}^{\emptyset}[N(n,n+1]]\right]^{2}\right\}\nonumber
\\
&=2\mathbb{E}\left\{\left[\mathbb{E}^{\omega^{-}_{1}}[N(n,n+1]]-\mathbb{E}^{\emptyset}[N(n,n+1]]\right]^{2}\right\},\nonumber
\end{align}
where $\mathbb{E}^{\emptyset}[N(n,n+1]]$ denotes the expectation of the number of points in $(n,n+1]$ for the 
Hawkes process with the same dynamics \eqref{dynamics} and empty history, i.e. $N(-\infty,0]=0$.

Next, let us estimate $\mathbb{E}^{\omega^{-}_{1}}[N(n,n+1]]-\mathbb{E}^{\emptyset}[N(n,n+1]]$. 
$\mathbb{E}^{\omega^{-}_{1}}[N(n,n+1]]$
is the expectation of the number of points in $(n,n+1]$ for the Hawkes process with intensity
$\lambda_{t}=\lambda\left(\sum_{\tau: \tau\in\omega^{-}_{1}\cup\omega[0,t)}h(t-\tau)\right)$.
It is well defined for a.e. $\omega^{-}_{1}$ under $\mathbb{P}$ because, under Assumption \ref{Assumption},
\begin{equation}
\mathbb{E}[\lambda_{t}]\leq\lambda(0)+\alpha\mathbb{E}\left[\int_{-\infty}^{t}h(t-s)N(ds)\right]
=\lambda(0)+\alpha\Vert h\Vert_{L^{1}}\mathbb{E}[N[0,1]]<\infty,
\end{equation}
which implies that $\lambda_{t}<\infty$ $\mathbb{P}$-a.s.

It is clear that $\mathbb{E}^{\omega^{-}_{1}}[N(n,n+1]]\geq\mathbb{E}^{\emptyset}[N(n,n+1]]$ almost surely, 
so we can use a coupling method to estimate the difference. 
We will follow the ideas in Br\'{e}maud and Massouli\'{e} \cite{Bremaud} using the Poisson embedding method.
Consider $(\Omega,\mathcal{F},\mathcal{P})$, the canonical space of a point process on $\mathbb{R}^{+}\times\mathbb{R}^{+}$ 
in which $\overline{N}$ is Poisson with intensity $1$ under the probability measure $\mathcal{P}$. 
Then the Hawkes process $N^{0}$ with empty past history and intensity
$\lambda^{0}_{t}$ satisfies the following.
\begin{equation}
\begin{cases}
\lambda^{0}_{t}=\lambda\left(\int_{(0,t)}h(t-s)N^{0}(ds)\right)& t\in\mathbb{R}^{+},
\\
N^{0}(C)=\int_{C}\overline{N}(dt\times[0,\lambda^{0}_{t}])& C\in\mathcal{B}(\mathbb{R}^{+}).
\end{cases}
\end{equation}
For $n\geq 1$, let us define recursively $\lambda^{n}_{t}$, $D_{n}$ and $N^{n}$ as follows.
\begin{equation}\label{canonical}
\begin{cases}
\lambda^{n}_{t}=\lambda\left(\int_{(0,t)}h(t-s)N^{n-1}(ds)+\sum_{\tau\in\omega^{-}_{1}}h(t-\tau)\right)& t\in\mathbb{R}^{+},
\\
D_{n}(C)=\int_{C}\overline{N}(dt\times[\lambda^{n-1}_{t},\lambda^{n}_{t}])& C\in\mathcal{B}(\mathbb{R}^{+}),
\\
N^{n}(C)=N^{n-1}(C)+D_{n}(C) & C\in\mathcal{B}(\mathbb{R}^{+}).
\end{cases}
\end{equation}
Following the arguments as in Br\'{e}maud and Massouli\'{e} \cite{Bremaud}, we know that each $\lambda^{n}_{t}$
is an $\mathcal{F}^{\overline{N}}_{t}$-intensity of $N^{n}$, where $\mathcal{F}^{\overline{N}}_{t}$ is the $\sigma$-algebra
generated by $\overline{N}$ up to time $t$. 
By our Assumption \ref{Assumption}, $\lambda(\cdot)$ is
increasing, and it is clear that $\lambda^{n}(t)$ and $N^{n}(C)$ increase in $n$ for all $t\in\mathbb{R}^{+}$
and $C\in\mathcal{B}(\mathbb{R}^{+})$. Thus, $D_{n}$ is well defined and also that as $n\rightarrow\infty$, the limiting
processes $\lambda_{t}$ and $N$ exist. $N$ counts the number of points
of $\overline{N}$ below the curve $t\mapsto\lambda_{t}$ and admits $\lambda_{t}$ as an $\mathcal{F}^{\overline{N}}_{t}$-intensity.
By the monotonicity properties of $\lambda^{n}_{t}$ and $N^{n}$, we have
\begin{align}
&\lambda^{n}_{t}\leq\lambda\left(\int_{(0,t)}h(t-s)N(ds)+\sum_{\tau\in\omega^{-}_{1}}h(t-\tau)\right),
\\
&\lambda_{t}\geq\lambda\left(\int_{(0,t)}h(t-s)N^{n}(ds)+\sum_{\tau\in\omega^{-}_{1}}h(t-\tau)\right).
\end{align}
Letting $n\rightarrow\infty$ (it is valid since we
assume that $\lambda(\cdot)$ is Lipschitz and thus continuous), 
we conclude that $N$, $\lambda_{t}$ satisfies the dynamics \eqref{dynamics}.
Therefore, with intensity $\lambda_{t}$, $N=N^{0}+\sum_{i=1}^{\infty}D_{i}$ is the Hawkes process with past
history $\omega^{-}_{1}$.

We can then estimate the difference by noticing that
\begin{equation}
\mathbb{E}^{\omega^{-}_{1}}[N(n,n+1]]-\mathbb{E}^{\emptyset}[N(n,n+1]]=\sum_{i=1}^{\infty}\mathbb{E}^{\mathcal{P}}[D_{i}(n,n+1]].
\end{equation}
Here $\mathbb{E}^{\mathcal{P}}$ means the expectation with respect to $\mathcal{P}$, the probability measure on 
the canonical space that we defined earlier.

We have
\begin{align}\label{estimation}
&\mathbb{E}^{\mathcal{P}}[D_{1}(n,n+1]]
\\
&=\mathbb{E}^{\mathcal{P}}\left[\int_{n}^{n+1}(\lambda^{1}(t)-\lambda^{0}(t))dt\right]\nonumber
\\
&=\mathbb{E}^{\mathcal{P}}\left[\int_{n}^{n+1}\lambda\left(\sum_{\tau<t,\tau\in N^{0}\cup\omega^{-}_{1}}h(t-\tau)\right)
-\lambda\left(\sum_{\tau<t,\tau\in N^{0}\cup\emptyset}h(t-\tau)\right)dt\right]\nonumber
\\
&\leq\alpha\int_{n}^{n+1}\sum_{\tau\in\omega_{1}^{-}}h(t-\tau)dt,\nonumber
\end{align}
where the first equality in \eqref{estimation} is due to the construction of $D_{1}$ in \eqref{canonical},
the second equality in \eqref{estimation} is due to the definitions of $\lambda^{1}$ and $\lambda^{0}$ in \eqref{canonical}
and finally the inequality in \eqref{estimation} is due to the fact that
$\lambda(\cdot)$ is $\alpha$-Lipschitz. Similarly, 
\begin{align}
\mathbb{E}^{\mathcal{P}}[D_{2}(n,n+1]]&\leq\mathbb{E}^{\omega_{1}^{-}}\left[\alpha\int_{n}^{n+1}\sum_{\tau\in D_{1},\tau<t}h(t-\tau)dt\right]
\\
&\leq\sum_{\tau\in\omega_{1}^{-}}\alpha^{2}\int_{n}^{n+1}\int_{0}^{t}h(t-s)h(s-\tau)dsdt.\nonumber
\end{align}
Iteratively, we have, for any $k\in\mathbb{N}$,
\begin{align}
\mathbb{E}^{\mathcal{P}}[D_{k}(n,n+1]]
\leq\sum_{\tau\in\omega_{1}^{-}}
\alpha^{k}&\int_{n}^{n+1}\int_{0}^{t_{k}}\cdots\int_{0}^{t_{2}}
h(t_{k}-t_{k-1})h(t_{k-1}-t_{k-2})\nonumber
\\
&\cdots h(t_{2}-t_{1})h(t_{1}-\tau)dt_{1}\cdots dt_{k}
=:\sum_{\tau\in\omega_{1}^{-}}K_{k}(n,\tau).\nonumber
\end{align}
Now let $K(n,\tau):=\sum_{k=1}^{\infty}K_{k}(n,\tau)$. Then,
\begin{align}
&\mathbb{E}\left\{\left[\mathbb{E}^{\omega^{-}_{1}}[N(n,n+1]]-\mathbb{E}^{\emptyset}[N(n,n+1]]\right]^{2}\right\}
\\
&\leq\mathbb{E}\left[\left(\sum_{\tau\in\omega_{1}^{-}}K(n,\tau)\right)^{2}\right]\nonumber
\\
&\leq\mathbb{E}\left[\sum_{i,j\leq 0}K(n,i)K(n,j)N[i,i+1]N[j,j+1]\right]\nonumber
\\
&=\sum_{i,j\leq 0}K(n,i)K(n,j)\mathbb{E}[N[i,i+1]N[j,j+1]]\nonumber
\\
&\leq\sum_{i,j\leq 0}K(n,i)K(n,j)\frac{1}{2}\left\{\mathbb{E}[N[i,i+1]^{2}]+\mathbb{E}[N[j,j+1]^{2}]\right\}\nonumber
\\
&=\mathbb{E}[N[0,1]^{2}]\left(\sum_{i\leq 0}K(n,i)\right)^{2}.\nonumber
\end{align}
Here, $\mathbb{E}[N[0,1]^{2}]<\infty$ by Lemma \ref{secondmoment}. Therefore, we have
\begin{align}
&\sum_{n=1}^{\infty}\left\{\mathbb{E}\left[\left(\mathbb{E}[N(n,n+1]-\mu|\mathcal{F}^{-\infty}_{0}]\right)^{2}\right]\right\}^{1/2}
\\
&\leq\sqrt{2\mathbb{E}[N[0,1]^{2}]}\sum_{n=1}^{\infty}\sum_{i=-\infty}^{0}K(n,i)\nonumber
\\
&\leq\sqrt{2\mathbb{E}[N[0,1]^{2}]}\sum_{k=1}^{\infty}
\alpha^{k}\int_{0}^{\infty}\int_{0}^{t_{k}}\cdots\int_{0}^{t_{2}}\int_{-\infty}^{0}\nonumber
\\
&h(t_{k}-t_{k-1})h(t_{k-1}-t_{k-2})\cdots h(t_{2}-t_{1})h(t_{1}-s)dsdt_{1}\cdots dt_{k}.\nonumber
\end{align}
Let $H(t):=\int_{t}^{\infty}h(s)ds$. It is easy to check that $\int_{0}^{\infty}H(t)dt=\int_{0}^{\infty}th(t)dt<\infty$ by Assumption \ref{Assumption}. 
We have
\begin{align}
&\alpha^{k}\int_{0}^{\infty}\int_{0}^{t_{k}}\cdots\int_{0}^{t_{2}}\int_{-\infty}^{0}
\\
& h(t_{k}-t_{k-1})h(t_{k-1}-t_{k-2})\cdots h(t_{2}-t_{1})h(t_{1}-s)dsdt_{1}\cdots dt_{k}\nonumber
\\
&=\alpha^{k}\int_{0}^{\infty}\int_{0}^{t_{k}}\cdots\int_{0}^{t_{2}}
h(t_{k}-t_{k-1})h(t_{k-1}-t_{k-2})\cdots h(t_{2}-t_{1})H(t_{1})dt_{1}\cdots dt_{k}\nonumber
\\
&=\alpha^{k}\int_{0}^{\infty}\cdots\int_{t_{k-2}}^{\infty}\int_{t_{k-1}}^{\infty}h(t_{k}-t_{k-1})dt_{k}h(t_{k-1}-t_{k-2})dt_{k-1}
\cdots H(t_{1})dt_{1}\nonumber
\\
&=\alpha^{k}\Vert h\Vert_{L^{1}}^{k-1}\int_{0}^{\infty}H(t_{1})dt_{1}=\alpha^{k}\Vert h\Vert_{L^{1}}^{k-1}\int_{0}^{\infty}th(t)dt.\nonumber
\end{align}
Since $\alpha\Vert h\Vert_{L^{1}}<1$, we conclude that
\begin{align}
&\sum_{n=1}^{\infty}\left\{\mathbb{E}\left[\left(\mathbb{E}[N(n,n+1]-\mu|\mathcal{F}^{-\infty}_{0}]\right)^{2}\right]\right\}^{1/2}
\\
&\leq\sum_{k=1}^{\infty}\sqrt{2\mathbb{E}[N[0,1]^{2}]}\alpha^{k}\Vert h\Vert_{L^{1}}^{k-1}\int_{0}^{\infty}th(t)dt\nonumber
\\
&=\sqrt{2\mathbb{E}[N[0,1]^{2}]}\cdot\frac{\alpha}{1-\alpha\Vert h\Vert_{L^{1}}}\cdot\int_{0}^{\infty}th(t)dt<\infty.\nonumber
\end{align}
Hence, by Theorem \ref{BTheorem}, we have
\begin{equation}
\frac{N_{[\cdot t]}-\mu[\cdot t]}{\sqrt{t}}\rightarrow\sigma B(\cdot)\quad\text{as $t\rightarrow\infty$,}
\end{equation}
where
\begin{equation}
\sigma^{2}=\mathbb{E}[(N[0,1]-\mu)^{2}]+2\sum_{j=1}^{\infty}\mathbb{E}[(N[0,1]-\mu)(N[j,j+1]-\mu)]<\infty.\label{sigmadefn}
\end{equation}
By Lemma \ref{positivesigma}, $\sigma>0$. Now, finally, for any $\epsilon>0$, for $t$ sufficiently large,
\begin{align}
&\mathbb{P}\left(\sup_{0\leq s\leq 1}\left|\frac{N_{[st]}-\mu[st]}{\sqrt{t}}
-\frac{N_{st}-\mu st}{\sqrt{t}}\right|>\epsilon\right)
\\
&=\mathbb{P}\left(\sup_{0\leq s\leq 1}\left|(N_{[st]}-N_{st})+\mu(st-[st])\right|>\epsilon\sqrt{t}\right)\nonumber
\\
&\leq\mathbb{P}\left(\sup_{0\leq s\leq 1}\left|N_{[st]}-N_{st}\right|+\mu>\epsilon\sqrt{t}\right)\nonumber
\\
&\leq\mathbb{P}\left(\max_{0\leq k\leq [t],k\in\mathbb{Z}}N[k,k+1]>\epsilon\sqrt{t}-\mu\right)\nonumber
\\
&\leq([t]+1)\mathbb{P}(N[0,1]>\epsilon\sqrt{t}-\mu)\nonumber
\\
&\leq\frac{[t]+1}{(\epsilon\sqrt{t}-\mu)^{2}}\int_{N[0,1]>\epsilon\sqrt{t}-\mu}N[0,1]^{2}d\mathbb{P}\rightarrow 0,\nonumber
\end{align}
as $t\rightarrow\infty$ by Lemma \ref{secondmoment}. Hence, we conclude that $\frac{N_{\cdot t}-\cdot\mu t}{\sqrt{t}}\rightarrow\sigma B(\cdot)$
as $t\rightarrow\infty$.
\end{proof}

The following Lemma \ref{midstep} is used to prove Lemma \ref{secondmoment}.

\begin{lemma}\label{midstep}
There exists some $\theta>0$ such that 
$\sup_{t\geq 0}\mathbb{E}^{\emptyset}\left[e^{\int_{0}^{t}\theta h(t-s)N(ds)}\right]<\infty$.
\end{lemma}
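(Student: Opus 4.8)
The plan is to dominate the empty-history nonlinear Hawkes process by a \emph{linear} one, for which the immigration--birth representation turns the claim into a statement about a nonlinear renewal equation; the real work is then to show that this equation has a bounded, integrable solution once $\theta$ is small, which is where the subcriticality $\alpha\|h\|_{L^1}<1$ enters.

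First I would reuse the Poisson embedding coupling from the proof of Theorem~\ref{mainthm}. Since $\lambda(\cdot)$ is $\alpha$-Lipschitz and nonnegative, $\lambda(z)\le\lambda(0)+\alpha z$ for all $z\ge 0$. On the canonical space $(\Omega,\mathcal{F},\mathcal{P})$ construct, from the same driving Poisson measure $\overline N$ on $\mathbb{R}^{+}\times\mathbb{R}^{+}$, both the empty-history nonlinear Hawkes process $N$ and the empty-history \emph{linear} Hawkes process $\bar N$ with base rate $\nu:=\lambda(0)$ and exciting function $g:=\alpha h$, so that $\|g\|_{L^1}=\alpha\|h\|_{L^1}=:r<1$. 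Revealing the points in time order and using $\lambda(z)\le\nu+\alpha z$, an induction gives $\lambda_t\le\bar\lambda_t$ for all $t$, hence $N(C)\le\bar N(C)$ for all $C$, $\mathcal{P}$-a.s.; since $h\ge 0$,
\begin{equation}
\mathbb{E}^{\emptyset}\!\left[e^{\theta\int_0^t h(t-s)N(ds)}\right]\le\mathbb{E}^{\emptyset}\!\left[e^{\theta\int_0^t h(t-s)\bar N(ds)}\right].
\end{equation}
Now run the cluster computation of Lemma~\ref{expectationII} on $\bar N$, but keeping the weight $h(t-T_v)$ on the point born at time $T_v$ instead of replacing it by $1$. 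Letting $F(a)$ denote the $\theta$-exponential moment of the $h$-weighted mass of one cluster rooted at time $0$ and observed up to time $a$, the branching structure yields $\mathbb{E}^{\emptyset}[e^{\theta\int_0^t h(t-s)\bar N(ds)}]=\exp(\nu\int_0^t (F(a)-1)\,da)$, where $F(a)=\exp(\theta h(a)+\int_0^a (F(a-s)-1)\,g(s)\,ds)$ for $a\ge 0$. So it suffices to produce a single $\theta>0$ with $\int_0^\infty (F(a)-1)\,da<\infty$.

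Write $G:=F-1\ge 0$. From the coupling, $G(a)\le\mathbb{E}^{\emptyset}[e^{\theta h(0)\bar N(0,a]}]$, which by Lemma~\ref{expectationII} is finite for every $a$ and bounded on bounded time sets provided $\theta h(0)\le r-1-\log r$ (a nonempty range since $r\in(0,1)$); together with dominated convergence in the convolution term this makes $G$ continuous on $[0,\infty)$. Fix such a $\theta$, taken small enough that $\beta:=2\theta h(0)/(1-r)$ additionally satisfies $e^{\theta h(0)+r\beta}-1<\beta$ (possible since the left-hand side equals $\tfrac{1+r}{1-r}\theta h(0)+O(\theta^2)$ and $r<1$). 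A bootstrap then gives $G(a)\le\beta$ for all $a\ge 0$: the set $\{a\ge 0:\ G(b)\le\beta\ \text{for all }b\le a\}$ is closed and contains $0$ (since $G(0)=e^{\theta h(0)}-1<\beta$), and if it were $[0,a^{\ast}]$ with $a^{\ast}<\infty$, then $G(a^{\ast})=\beta$ by continuity, whereas the equation for $F$ together with $h(a^{\ast})\le h(0)$ and $\int_0^{a^{\ast}}g(s)\,ds\le r$ forces $G(a^{\ast})\le e^{\theta h(0)+r\beta}-1<\beta$, a contradiction. Finally, setting $B:=\theta h(0)+r\beta$ and using that $x\mapsto(e^x-1)/x$ is increasing on $(0,B]$,
\begin{equation}
G(a)\le C_1\Bigl(\theta h(a)+\int_0^a G(a-s)g(s)\,ds\Bigr),\qquad C_1:=\frac{e^B-1}{B},
\end{equation}
and after shrinking $\theta$ once more so that $C_1 r<1$ (note $C_1\to 1$ as $\theta\to 0$), integrating over $[0,T]$ and bounding the convolution term by Young's inequality gives $(1-C_1 r)\int_0^T G(a)\,da\le C_1\theta\|h\|_{L^1}$; letting $T\to\infty$, $\int_0^\infty G(a)\,da\le C_1\theta\|h\|_{L^1}/(1-C_1 r)<\infty$. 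Hence $\sup_{t\ge 0}\mathbb{E}^{\emptyset}[e^{\theta\int_0^t h(t-s)N(ds)}]\le\exp(\nu\int_0^\infty G(a)\,da)<\infty$.

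The main obstacle is the global control of $G$. The renewal equation for $F$ is exponentially nonlinear, so a priori $F$ could blow up at a finite time --- just as the total progeny of a supercritical Galton--Watson tree has no finite exponential moments --- and for large $\theta$ this genuinely occurs. The point is that subcriticality $r=\alpha\|h\|_{L^1}<1$ leaves room, when $\theta$ is small, for the self-consistency inequality $e^{\theta h(0)+r\beta}-1<\beta$ to hold at some small $\beta$; the bootstrap converts this into the uniform bound $G\le\beta$. Once $G$ is bounded, promoting it to $G\in L^1(\mathbb{R}^{+})$ via Young's inequality with a constant $C_1 r<1$, and the linear comparison itself, are routine.
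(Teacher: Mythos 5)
Your proof is correct, but it follows a genuinely different route from the one in the text. The paper works \emph{directly} with the nonlinear process: it exponentiates the compensator via the martingale $\exp\{\int_0^t f\,dN-\int_0^t(e^{f}-1)\lambda_s\,ds\}$, applies H\"{o}lder's inequality to pass from $\int_0^t\theta h(t-s)\,dN_s$ to $\int_0^t(e^{p\theta h(t-s)}-1)\lambda_s\,ds$, bounds $\lambda_s\le\lambda(0)+\alpha\int_0^s h(s-u)\,dN_u$, and then uses a normalized Jensen step to turn the resulting double integral back into $\sup_{s\le T}\mathbb{E}^{\emptyset}[e^{C(\infty)\int_0^s h(s-u)\,dN_u}]$; choosing $q$ with $q\alpha\Vert h\Vert_{L^{1}}<1$ and then $\theta$ with $C(\infty)<\theta$ closes the resulting self-referential inequality $M_T\le M_T^{1/q}\cdot\text{const}$. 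You instead dominate the nonlinear process pathwise by the linear Hawkes process with immigration rate $\lambda(0)$ and kernel $\alpha h$ (valid since $\lambda$ is increasing, as Assumption \ref{Assumption} provides), and then exploit the immigration--birth representation to reduce everything to the nonlinear renewal equation $F(a)=\exp(\theta h(a)+\int_0^a(F(a-s)-1)\alpha h(s)\,ds)$, proving $F-1$ is uniformly small by a continuity bootstrap and integrable by linearization plus Young's inequality. Both arguments hinge on the same two inputs, the Lipschitz upper bound $\lambda(z)\le\lambda(0)+\alpha z$ and the subcriticality $\alpha\Vert h\Vert_{L^{1}}<1$, but yours trades the paper's probabilistic self-consistency trick for a deterministic fixed-point analysis; this makes the mechanism more transparent and quantitatively explicit (you get an explicit bound $\exp(\nu\int_0^\infty G)$), at the cost of invoking the cluster representation, which is only available after the linear domination step. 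One cosmetic remark: the global finiteness of $F$ that you extract from Lemma \ref{expectationII} (via Jagers) already tends to $1$ as $\theta\downarrow 0$, so your bootstrap to the level $\beta$ could be bypassed, but as written it is self-contained and correct.
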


\begin{proof}
Notice first that for any bounded deterministic function $f(\cdot)$,
\begin{equation}
\exp\left\{\int_{0}^{t}f(s)N(ds)-\int_{0}^{t}(e^{f(s)}-1)\lambda(s)ds\right\}
\end{equation}
is a martingale. Therefore, using the Lipschitz assumption of $\lambda(\cdot)$, 
i.e. $\lambda(z)\leq\lambda(0)+\alpha z$ and applying H\"{o}lder's inequality, for $\frac{1}{p}+\frac{1}{q}=1$, we have
\begin{align}
&\mathbb{E}^{\emptyset}\left[e^{\int_{0}^{t}\theta h(t-s)N(ds)}\right]
\\
&=\mathbb{E}^{\emptyset}
\left[e^{\int_{0}^{t}\theta h(t-s)N(ds)-\frac{1}{p}\int_{0}^{t}(e^{p\theta h(t-s)}-1)\lambda(s)ds
+\frac{1}{p}\int_{0}^{t}(e^{p\theta h(t-s)}-1)\lambda(s)ds}\right]\nonumber
\\
&\leq\mathbb{E}^{\emptyset}\left[e^{\frac{q}{p}\int_{0}^{t}(e^{p\theta h(t-s)}-1)\lambda(s)ds}\right]^{\frac{1}{q}}\nonumber
\\
&\leq\mathbb{E}^{\emptyset}\left[e^{\frac{q}{p}\int_{0}^{t}(e^{p\theta h(t-s)}-1)(\lambda(0)+\alpha\int_{0}^{s}h(s-u)N(du))ds}
\right]^{\frac{1}{q}}\nonumber
\\
&\leq\mathbb{E}^{\emptyset}\left[e^{\int_{0}^{t}\frac{q}{p}(e^{p\theta h(t-s)}-1)\alpha\int_{0}^{s}h(s-u)N(du)ds}\right]^{\frac{1}{q}}
\cdot e^{\frac{1}{p}\int_{0}^{\infty}(e^{p\theta h(s)}-1)\lambda(0)ds}.\nonumber
\end{align}
Let $C(t)=\int_{0}^{t}\frac{q}{p}(e^{p\theta h(t-s)}-1)\alpha ds$. Then, for any $t\in[0,T]$,
\begin{align}\label{Jensen}
&\mathbb{E}^{\emptyset}\left[e^{\int_{0}^{t}\frac{q}{p}(e^{p\theta h(t-s)}-1)\alpha\int_{0}^{s}h(s-u)N(du)ds}\right]
\\
&=\mathbb{E}^{\emptyset}\left[e^{\frac{1}{C(t)}\int_{0}^{t}\frac{q}{p}(e^{p\theta h(t-s)}-1)\alpha C(t)\int_{0}^{s}h(s-u)N(du)ds}\right]
\nonumber
\\
&\leq\mathbb{E}^{\emptyset}\left[\frac{1}{C(t)}\int_{0}^{t}\frac{q}{p}(e^{p\theta h(t-s)}-1)\alpha
e^{C(t)\int_{0}^{s}h(s-u)N(du)}ds\right]\nonumber
\\
&\leq\sup_{0\leq s\leq T}\mathbb{E}^{\emptyset}\left[e^{C(\infty)\int_{0}^{s}h(s-u)N(du)}\right],\nonumber
\end{align}
where in the first inequality in \eqref{Jensen}, we used the Jensen's inequality since $x\mapsto e^{x}$ is convex
and $\frac{1}{C(t)}\int_{0}^{t}\frac{q}{p}(e^{p\theta h(t-s)}-1)\alpha ds=1$, and in the second inequality
in \eqref{Jensen}, we used the fact that $C(t)\leq C(\infty)$ 
and again $\frac{1}{C(t)}\int_{0}^{t}\frac{q}{p}(e^{p\theta h(t-s)}-1)\alpha ds=1$.
Now choose $q>1$ so small that $q\alpha\Vert h\Vert_{L^{1}}<1$. Once $p$ and $q$ are fixed, choose $\theta>0$ 
so small that
\begin{equation}
C(\infty)=\int_{0}^{\infty}\frac{q}{p}(e^{p\theta h(s)}-1)\alpha ds<\theta.
\end{equation}
This implies that for any $t\in[0,T]$,
\begin{equation}
\mathbb{E}^{\emptyset}\left[e^{\int_{0}^{t}\theta h(t-s)N(ds)}\right]
\leq\sup_{0\leq s\leq T}\mathbb{E}^{\emptyset}\left[e^{\theta\int_{0}^{s}h(s-u)N(du)}\right]^{\frac{1}{q}}
\cdot e^{\frac{1}{p}\int_{0}^{\infty}(e^{p\theta h(s)}-1)\lambda(0)ds}.
\end{equation}
Hence, we conclude that for any $T>0$,
\begin{equation}
\sup_{0\leq t\leq T}\mathbb{E}^{\emptyset}\left[e^{\theta\int_{0}^{t}h(t-s)N(ds)}\right]
\leq e^{\int_{0}^{\infty}(e^{p\theta h(s)}-1)\lambda(0)ds}<\infty.
\end{equation}
\end{proof}

\begin{lemma}\label{secondmoment}
There exists some $\theta>0$ such that $\mathbb{E}[e^{\theta N[0,1]}]<\infty$. Hence $\mathbb{E}[N[0,1]^{2}]<\infty$.
\end{lemma}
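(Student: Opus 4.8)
The plan is to reduce the claim to a uniform exponential-moment estimate for the \emph{empty-history} Hawkes process --- which is exactly the content of Lemma~\ref{midstep} --- and then transfer that estimate to the stationary process by the stability-in-variation result quoted earlier. First I would record the elementary reduction: since $x^{2}\le 2\theta^{-2}e^{\theta x}$ for all $x\ge 0$ and every $\theta>0$, the bound $\mathbb{E}[N[0,1]^{2}]<\infty$ follows automatically once $\mathbb{E}[e^{\theta N[0,1]}]<\infty$ for some $\theta>0$, so only the latter needs proof. If $h\equiv 0$ the process is a homogeneous Poisson process and there is nothing to do, so I may fix $\delta\in(0,1]$ with $h(\delta)>0$, which exists because $h$ is non-increasing.

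Next I would establish, for $N^{\emptyset}$ the Hawkes process with the same dynamics \eqref{dynamics} and empty past, a bound on the exponential moment of $N^{\emptyset}[T,T+1]$ that is uniform in $T$. Let $\theta>0$ be the constant from Lemma~\ref{midstep}, so that $C:=\sup_{t\ge 0}\mathbb{E}^{\emptyset}\big[e^{\theta\int_{0}^{t}h(t-s)N(ds)}\big]<\infty$. Since $h$ is non-increasing, $\int_{0}^{t}h(t-s)N^{\emptyset}(ds)\ge h(\delta)\,N^{\emptyset}[t-\delta,t]$ for every $t\ge\delta$, and hence $\mathbb{E}^{\emptyset}[e^{\theta h(\delta)N^{\emptyset}[t-\delta,t]}]\le C$ for all such $t$. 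Covering $[T,T+1]$ by the $K:=\lceil 1/\delta\rceil$ intervals $[T+k\delta,T+(k+1)\delta]$, $k=0,\dots,K-1$, so that $N^{\emptyset}[T,T+1]\le\sum_{k}N^{\emptyset}[T+k\delta,T+(k+1)\delta]$, and applying the generalized H\"older inequality with exponent $K$ in each factor yields
\begin{equation*}
\sup_{T\ge 0}\mathbb{E}^{\emptyset}\!\left[e^{\theta_{1}N^{\emptyset}[T,T+1]}\right]\le C<\infty,\qquad \theta_{1}:=\theta h(\delta)/K .
\end{equation*}

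Finally I would pass to the stationary regime. Under Assumption~\ref{Assumption} the dynamics are stable in variation in the sense of Br\'emaud and Massouli\'e, and $N^{\emptyset}$ has the admissible (trivial) initial configuration; hence the law of $N^{\emptyset}$ shifted by $T$ converges, in total variation and in particular weakly, to the stationary law of $N$, so that $N^{\emptyset}[T,T+1]\Rightarrow N[0,1]$ in distribution. Truncating $e^{\theta_{1}x}$ at level $M$, using the portmanteau theorem for the bounded continuous function $e^{\theta_{1}x}\wedge M$, and then letting $M\uparrow\infty$ by monotone convergence, I obtain $\mathbb{E}[e^{\theta_{1}N[0,1]}]\le\sup_{T\ge 0}\mathbb{E}^{\emptyset}[e^{\theta_{1}N^{\emptyset}[T,T+1]}]\le C$, and the lemma follows.

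The routine parts are the reduction in the first paragraph and the covering/H\"older step; I expect the main obstacle to be the last step, namely invoking the stability-in-variation result correctly and noting that mere weak convergence of the point processes is already enough to push the uniform bound through to the limit --- no uniform integrability is needed, the truncation plus monotone convergence doing the work. A stability-free alternative would be to condition on $\mathcal{F}^{-\infty}_{0}$, dominate $N|_{[0,1]}$ by a linear Hawkes process driven by the past (using that $\lambda$ is increasing and $\lambda(z)\le\lambda(0)+\alpha z$), and thereby reduce everything to $\mathbb{E}[e^{c\int_{(-\infty,0)}h(-s)N(ds)}]<\infty$; but estimating that stationary memory again comes down to Lemma~\ref{midstep} together with a passage to the limit, so it is no shorter than the route above.
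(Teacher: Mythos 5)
Your proposal is correct and follows essentially the same route as the paper: both reduce the claim to the uniform bound of Lemma~\ref{midstep} by comparing $N$ over an interval with $\int_0^t h(t-s)N(ds)$ via monotonicity of $h$, and then pass from the empty-history process to the stationary one through the stability result (the paper writes this as $\mathbb{E}[e^{\theta N[0,1]}]\le\liminf_{t\to\infty}\mathbb{E}^{\emptyset}[e^{\theta N[t-1,t]}]$). The only cosmetic difference is that the paper uses $\delta=\inf_{t\in[0,1]}h(t)>0$ directly, so no covering/H\"older step over subintervals of length $\delta$ is needed.
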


\begin{proof}
By Assumption \ref{Assumption}, $h(\cdot)$ is positive and decreasing. Thus, $\delta=\inf_{t\in[0,1]}h(t)>0$. Hence,
\begin{equation}
\mathbb{E}^{\emptyset}[e^{\theta N[t-1,t]}]\leq\mathbb{E}^{\emptyset}[e^{\frac{\theta}{\delta}\int_{0}^{t}h(t-s)N(ds)}].
\end{equation}
By Lemma \ref{midstep}, we can choose $\theta>0$ so small that
\begin{equation}
\limsup_{t\rightarrow\infty}\mathbb{E}^{\emptyset}[e^{\theta N[t-1,t]}]<\infty.
\end{equation}
Finally, $\mathbb{E}[e^{\theta N[0,1]}]\leq\liminf_{t\rightarrow\infty}\mathbb{E}^{\emptyset}[e^{\theta N[t-1,t]}]<\infty$.
\end{proof}

It is intuitively clear that $\sigma>0$. But still we need a proof.

\begin{lemma}\label{positivesigma}
$\sigma>0$, where $\sigma$ is defined in \eqref{sigmadefn}.
\end{lemma}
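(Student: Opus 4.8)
The plan is to bound $\sigma^{2}$ of \eqref{sigmadefn} below by a strictly positive constant. Since that series converges absolutely (established inside the proof of Theorem \ref{mainthm}), a Ces\`{a}ro computation gives $\sigma^{2}=\lim_{n\rightarrow\infty}n^{-1}\mathrm{Var}(N_{n})$, so it is enough to produce $c>0$ with $\mathrm{Var}(N_{n})\geq cn$ for all $n$. Write $\mathcal{G}_{k}:=\mathcal{F}^{-\infty}_{k}$, $k\in\mathbb{Z}$. Then $N_{n}-\mathbb{E}[N_{n}\,|\,\mathcal{G}_{0}]=\sum_{k=1}^{n}\bigl(\mathbb{E}[N_{n}\,|\,\mathcal{G}_{k}]-\mathbb{E}[N_{n}\,|\,\mathcal{G}_{k-1}]\bigr)$ is a sum of orthogonal martingale differences, whence
\begin{equation}
\mathrm{Var}(N_{n})\geq\sum_{k=1}^{n}\mathbb{E}\Bigl[\mathrm{Var}\bigl(\mathbb{E}[N_{n}\,|\,\mathcal{G}_{k}]\,\big|\,\mathcal{G}_{k-1}\bigr)\Bigr],
\end{equation}
and it suffices to bound each summand below, uniformly in $k\leq n$, by a positive constant.

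Fix $k$ and put $W_{k}:=\mathbb{E}[N_{n}\,|\,\mathcal{G}_{k}]-N_{k-1}=N[k-1,k]+\mathbb{E}[N(k,n]\,|\,\mathcal{G}_{k}]$; as $N_{k-1}$ is $\mathcal{G}_{k-1}$-measurable, the $k$-th summand equals $\mathbb{E}[\mathrm{Var}(W_{k}\,|\,\mathcal{G}_{k-1})]$. The quantity $\mathbb{E}[N(k,n]\,|\,\mathcal{G}_{k}]$ is a deterministic functional of the configuration $N|_{(-\infty,k]}$ that is nondecreasing in that configuration, because a larger past history yields a stochastically larger future count --- the self-exciting property, read off from the Poisson embedding precisely as in the proof of Theorem \ref{mainthm}. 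Let $a$ be its (then $\mathcal{G}_{k-1}$-measurable) value on $\{N[k-1,k]=0\}$, i.e. the mean of $N(k,n]$ when the history up to $k$ is the actual history up to $k-1$ with an empty $(k-1,k]$ appended. On $\{N[k-1,k]=0\}$ one has $W_{k}=a$, whereas on $\{N[k-1,k]\geq1\}$ the history up to $k$ dominates that configuration, so $\mathbb{E}[N(k,n]\,|\,\mathcal{G}_{k}]\geq a$ and therefore $W_{k}\geq a+1$. Hence, conditionally on $\mathcal{G}_{k-1}$, the variable $V:=W_{k}-a\geq0$ puts mass $p_{k}:=\mathbb{P}(N[k-1,k]=0\,|\,\mathcal{G}_{k-1})$ on $\{0\}$ and the remaining mass on $[1,\infty)$; conditioning further on $\mathbf{1}_{\{V\geq1\}}$ and applying the law of total variance gives
\begin{equation}
\mathrm{Var}(W_{k}\,|\,\mathcal{G}_{k-1})\geq p_{k}(1-p_{k})\,\mathbb{E}[V\,|\,V\geq1,\mathcal{G}_{k-1}]^{2}\geq p_{k}(1-p_{k}).
\end{equation}

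It remains to keep $\mathbb{E}[p_{k}(1-p_{k})]$ away from $0$. Given $\mathcal{G}_{k-1}$, on the event that $(k-1,k]$ stays empty the intensity there is the deterministic path $\lambda(\rho_{t})$ with $\rho_{t}:=\int_{(-\infty,k-1]}h(t-s)N(ds)\geq0$, so $p_{k}=\exp\bigl(-\int_{k-1}^{k}\lambda(\rho_{t})\,dt\bigr)\leq e^{-\lambda(0)}<1$ almost surely, since $\lambda$ is positive and nondecreasing. Thus $p_{k}(1-p_{k})\geq(1-e^{-\lambda(0)})\,p_{k}$, and $\mathbb{E}[p_{k}]=\mathbb{P}(N[k-1,k]=0)=\mathbb{P}(N[0,1]=0)$ by stationarity; moreover, by Jensen's inequality together with $\mathbb{E}[\lambda(\rho_{t})]\leq\mathbb{E}[\lambda_{t}]=\mu<\infty$ (finiteness of $\mu$ being the bound already used in the proof of Theorem \ref{mainthm}), $\mathbb{P}(N[0,1]=0)=\mathbb{E}\bigl[e^{-\int_{0}^{1}\lambda(\rho_{t})\,dt}\bigr]\geq e^{-\mu}>0$. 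Therefore each summand is at least $(1-e^{-\lambda(0)})e^{-\mu}$, giving $\mathrm{Var}(N_{n})\geq n(1-e^{-\lambda(0)})e^{-\mu}$ and $\sigma^{2}\geq(1-e^{-\lambda(0)})e^{-\mu}>0$. The delicate point --- the reason the proof is not immediate --- is that $p_{k}$ cannot be bounded below by a deterministic constant (a very excited past makes empty unit intervals arbitrarily unlikely), which forces one to average over the past first and then fall back on the crude unconditional estimate $\mathbb{P}(N[0,1]=0)\geq e^{-\mu}$.
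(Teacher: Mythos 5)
Your proof is correct, but it takes a genuinely different route from the paper's. The paper first builds the coboundary decomposition $Y_{n}=\eta_{n-1}-\eta_{n-2}+N(n-2,n-1]-\mu$ with $\eta_{n}=\sum_{j\geq n}\mathbb{E}[N(j,j+1]-\mu\,|\,\mathcal{F}^{-\infty}_{n+1}]$, identifies $\sigma^{2}=\mathbb{E}[Y_{1}^{2}]$ through the martingale central limit theorem (Theorem \ref{BTheoremII}), and then shows $Y_{1}$ is non-degenerate by exhibiting the positive-probability event $D=\{\omega^{-}\neq\emptyset,\ \omega(0,1]=\emptyset\}$ on which the two conditional sums differ. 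You instead pass to $\sigma^{2}=\lim_{n}n^{-1}\mathrm{Var}(N_{n})$ (legitimate, given the absolute summability of the covariances coming from \eqref{finitesum}), lower-bound $\mathrm{Var}(N_{n})$ by the orthogonal increments of the Doob martingale $\mathbb{E}[N_{n}\,|\,\mathcal{F}^{-\infty}_{k}]$, and quantify each increment's conditional variance by splitting on whether $(k-1,k]$ is empty, using the monotone Poisson-embedding coupling from the proof of Theorem \ref{mainthm} to guarantee a gap of at least $1$ between the two branches. Both arguments ultimately rest on the same two ingredients --- the attractiveness of the dynamics and the Jensen-type lower bound $\mathbb{P}(N(0,1]=0)\geq e^{-\mu}>0$ --- but yours bypasses the $\eta_{n}$ construction and the martingale CLT entirely, yields the explicit quantitative bound $\sigma^{2}\geq(1-e^{-\lambda(0)})e^{-\mu}$ where the paper's conclusion is purely qualitative, and makes explicit the monotonicity reasoning that the paper compresses into the assertion that the inequality on $D$ is ``clear''. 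The only hypothesis you invoke beyond what the paper's own proof uses is $\lambda(0)>0$, which is supplied by Assumption \ref{Assumption} ($\lambda$ positive and increasing).
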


\begin{proof}
Let $\eta_{n}=\sum_{j=n}^{\infty}\mathbb{E}[N(j,j+1]-\mu|\mathcal{F}^{-\infty}_{n+1}]$, where $\mu=\mathbb{E}[N[0,1]]$.
$\eta_{n}$ is well defined because we proved \eqref{finitesum}. To see this, notice that
\begin{align}
\Vert\eta_{n}\Vert_{2}&=\bigg\Vert\sum_{j=n}^{\infty}\mathbb{E}[N(j,j+1]-\mu|\mathcal{F}^{-\infty}_{n+1}]\bigg\Vert_{2}
\\
&\leq\sum_{j=n}^{\infty}\Vert\mathbb{E}[N(j,j+1]-\mu|\mathcal{F}^{-\infty}_{n+1}]\Vert_{2}<\infty,\nonumber
\end{align}
by \eqref{finitesum}. Also, it is easy to check that
\begin{align}
&\mathbb{E}[\eta_{n+1}-\eta_{n}+N(n,n+1]-\mu|\mathcal{F}^{-\infty}_{n+1}]
\\
&=\mathbb{E}\left[\sum_{j=n+1}^{\infty}\mathbb{E}[N(j,j+1]-\mu|\mathcal{F}^{-\infty}_{n+2}]\bigg|\mathcal{F}^{-\infty}_{n+1}\right]\nonumber
\\
&-\mathbb{E}\left[\sum_{j=n}^{\infty}\mathbb{E}[N(j,j+1]-\mu|\mathcal{F}^{-\infty}_{n+1}]\bigg|\mathcal{F}^{-\infty}_{n+1}\right]+N(n,n+1]-\mu\nonumber
\\
&=\sum_{j=n+1}^{\infty}\mathbb{E}[N(j,j+1]-\mu|\mathcal{F}^{-\infty}_{n+1}]
-\sum_{j=n+1}^{\infty}\mathbb{E}[N(j,j+1]-\mu|\mathcal{F}^{-\infty}_{n+1}]\nonumber
\\
&-N(n,n+1]+\mu+N(n,n+1]-\mu=0.\nonumber
\end{align}
Let $Y_{n}=\eta_{n-1}-\eta_{n-2}+N(n-2,n-1]-\mu$. This is an ergodic, stationary sequence such that 
$\mathbb{E}[Y_{n}|\mathcal{F}^{-\infty}_{n-1}]=0$. By \eqref{finitesum}, $\mathbb{E}[Y_{n}^{2}]<\infty$ and 
by Theorem \ref{BTheoremII}, $S'_{[n\cdot]}/\sqrt{n}\rightarrow\sigma'B(\cdot)$, where $S'_{n}=\sum_{j=1}^{n}Y_{j}$.
It is clear that $\sigma=\sigma'<\infty$ 
since for any $\epsilon>0$,
\begin{align}
&\mathbb{P}\left(\max_{1\leq k\leq [n],k\in\mathbb{Z}}\frac{1}{\sqrt{n}}\sum_{j=1}^{k}(\eta_{j-1}-\eta_{j-2})>\epsilon\right)
\\
&=\mathbb{P}\left(\max_{1\leq k\leq [n],k\in\mathbb{Z}}(\eta_{k-1}-\eta_{-1})>\epsilon\sqrt{n}\right)\nonumber
\\
&\leq\mathbb{P}\left(\left\{\max_{1\leq k\leq [n],k\in\mathbb{Z}}|\eta_{k-1}|>\frac{\epsilon\sqrt{n}}{2}\right\}
\bigcup\left\{|\eta_{-1}|>\frac{\epsilon\sqrt{n}}{2}\right\}\right)\nonumber
\\
&\leq\sum_{k=1}^{[n]}\mathbb{P}\left(|\eta_{k-1}|>\frac{\epsilon\sqrt{n}}{2}\right)
+\mathbb{P}\left(|\eta_{-1}|>\frac{\epsilon\sqrt{n}}{2}\right)\nonumber
\\
&=([n]+1)\mathbb{P}\left(|\eta_{-1}|>\frac{\epsilon\sqrt{n}}{2}\right)\nonumber
\\
&\leq\frac{4([n]+1)}{\epsilon^{2}n}\int_{|\eta_{-1}|>\frac{\epsilon\sqrt{n}}{2}}|\eta_{-1}|^{2}d\mathbb{P}\rightarrow 0,\nonumber
\end{align}
as $n\rightarrow\infty$, where we used the stationarity of $\mathbb{P}$, Chebychev's inequality and \eqref{finitesum}.

Now, it becomes clear that
\begin{align}
\sigma^{2}&=(\sigma')^{2}=\mathbb{E}[Y_{1}^{2}]
\\
&=\mathbb{E}\left(\eta_{0}-\eta_{-1}+N(-1,0]-\mu\right)^{2}\nonumber
\\
&=\mathbb{E}\left(\sum_{j=0}^{\infty}\mathbb{E}[N(j,j+1]-\mu|\mathcal{F}^{-\infty}_{1}]
-\sum_{j=0}^{\infty}\mathbb{E}[N(j,j+1]-\mu|\mathcal{F}^{-\infty}_{0}]\right)^{2}.\nonumber
\end{align}
Consider $D=\{\omega:\omega^{-}\neq\emptyset, \omega(0,1]=\emptyset\}$. Notice that
$\mathbb{P}(\omega^{-}=\emptyset)=0$. By Jensen's inequality and Assumption \ref{Assumption}, we have
\begin{align}
\mathbb{P}(D)&=\int\mathbb{P}^{\omega^{-}}(N(0,1]=0)\mathbb{P}(d\omega^{-})
\\
&=\mathbb{E}\left[e^{-\int_{0}^{1}\lambda(\sum_{\tau\in\omega^{-}}h(t-\tau))dt}\right]\nonumber
\\
&\geq\exp\left\{-\mathbb{E}\int_{0}^{1}\lambda\left(\sum_{\tau\in\omega^{-}}h(t-\tau)\right)dt\right\}\nonumber
\\
&\geq\exp\left\{-\lambda(0)-\alpha\mathbb{E}\int_{0}^{1}\sum_{\tau\in\omega^{-}}h(t-\tau)dt\right\}\nonumber
\\
&\geq\exp\left\{-\lambda(0)-\alpha\mathbb{E}[N[0,1]]\cdot\Vert h\Vert_{L^{1}}\right\}>0.\nonumber
\end{align}
It is clear that given the event $D$, 
\begin{equation}
\sum_{j=0}^{\infty}\mathbb{E}[N(j,j+1]-\mu|\mathcal{F}^{-\infty}_{1}]
<\sum_{j=0}^{\infty}\mathbb{E}[N(j,j+1]-\mu|\mathcal{F}^{-\infty}_{0}]. 
\end{equation}
Therefore,
\begin{equation}
\mathbb{P}\left(\sum_{j=0}^{\infty}\mathbb{E}[N(j,j+1]-\mu|\mathcal{F}^{-\infty}_{1}]
\neq\sum_{j=0}^{\infty}\mathbb{E}[N(j,j+1]-\mu|\mathcal{F}^{-\infty}_{0}]\right)>0,
\end{equation}
which implies that $\sigma>0$.
\end{proof}

\begin{proof}[Proof of Theorem \ref{LIL}]
By Heyde and Scott \cite{Heyde}, the Strassen's invariance principle holds if we have \eqref{finitesum} and $\sigma>0$.
\end{proof}
 
\chapter{Process-Level Large Deviations for Nonlinear Hawkes Processes\label{chap:three}}

\section{Main Results}

In this chaper, we prove a process-level, i.e. level-3 large deviation principle
for nonlinear Hawkes processes. As a corollary, a level-1 large deviation principle is obtained by a contraction principle.

Let us recall that $N$ is a nonlinear Hawkes process with intensity
\begin{equation}
\lambda_{t}:=\lambda\left(\int_{(-\infty,t)}h(t-s)N(ds)\right).
\end{equation}

Throughout this chapter, we assume that
\begin{itemize}
\item 
The exciting function $h(t)$ is positive, continuous and decreasing for $t\geq 0$ and $h(t)=0$ for any $t<0$. 
We also assume that $\int_{0}^{\infty}h(t)dt<\infty$.

\item 
The rate function $\lambda(\cdot):[0,\infty)\rightarrow\mathbb{R}^{+}$ is increasing and $\lim_{z\rightarrow\infty}\frac{\lambda(z)}{z}=0$. 
We also assume that $\lambda(\cdot)$ is Lipschitz with constant $\alpha>0$, i.e. $|\lambda(x)-\lambda(y)|\leq\alpha|x-y|$ for any $x,y\geq 0$.
\end{itemize}

Let $\Omega$ be the set of countable, locally finite subsets of $\mathbb{R}$ and for any $\omega\in\Omega$ 
and $A\subseteq\mathbb{R}$, write $\omega(A):=\omega\cap A$. For any $t\in\mathbb{R}$, we write $\omega(t)=\omega(\{t\})$. 
Let $N(A)=\#|\omega\cap A|$ denote the number of points in the set $A$ for any $A\subset\mathbb{R}$. 
We also use the notation $N_{t}$ to denote $N[0,t]$, the number of points up to time $t$, starting from time $0$. 
We define the shift operator $\theta_{t}$ by $\theta_{t}(\omega)(s)=\omega(t+s)$. We equip the sample space $\Omega$ 
with the topology in which the convergence $\omega_{n}\rightarrow\omega$ as $n\rightarrow\infty$ is defined by
\begin{equation}
\sum_{\tau\in\omega_{n}}f(\tau)\rightarrow\sum_{\tau\in\omega}f(\tau),
\end{equation}
for any continuous $f$ with compact support.

This topology is equivalent to the vague topology for random measures, for which, 
see for example Grandell \cite{Grandell}. One can equip the space of 
locally finite random measures with the vague topology. The subspace of integer valued random measures is 
then the space of point processes. A simple point processes is a point process without multiple jumps. 
The space of point processes is closed. But the space of simple point processes is not closed. 

Denote $\mathcal{F}^{s}_{t}=\sigma(\omega[s,t])$ for any $s<t$, i.e. the $\sigma$-algebra generated by 
all the possible configurations of points in the interval $[s,t]$. Denote $\mathcal{M}(\Omega)$ the space of 
probability measures on $\Omega$. We also define $\mathcal{M}_{S}(\Omega)$ as the space of simple point processes 
that are invariant with respect to $\theta_{t}$ with bounded first moment, i.e. for any $Q\in\mathcal{M}_{S}(\Omega)$,
$\mathbb{E}^{Q}[N[0,1]]<\infty$.
Define $\mathcal{M}_{E}(\Omega)$ as the set of ergodic simple point processes in $\mathcal{M}_{S}(\Omega)$. 
We define the topology of $\mathcal{M}_{S}(\Omega)$ as follows.
For a sequence $Q_{n}$ in $\mathcal{M}_{S}(\Omega)$ and $Q\in\mathcal{M}_{S}(\Omega)$, we say
$Q_{n}\rightarrow Q$ as $n\rightarrow\infty$ if and only if
\begin{equation}
\int fdQ_{n}\rightarrow\int fdQ,
\end{equation}
as $n\rightarrow\infty$ for any continuous and bounded $f$ and
\begin{equation}
\int N[0,1](\omega)Q_{n}(d\omega)\rightarrow\int N[0,1](\omega)Q(d\omega),
\end{equation}
as $n\rightarrow\infty$. In other words, the topology is the weak topology strengthened by the convergence of the first moment of $N[0,1]$.
For any $Q_{1}$, $Q_{2}$ in $\mathcal{M}_{S}(\Omega)$, one can define the metric $d(\cdot,\cdot)$ by
\begin{equation}
d(Q_{1},Q_{2})=d_{p}(Q_{1},Q_{2})+\left|\mathbb{E}^{Q_{1}}[N[0,1]]-\mathbb{E}^{Q_{2}}[N[0,1]]\right|,
\end{equation}
where $d_{p}(\cdot,\cdot)$ is the usual Prokhorov metric.
Because this is an unusual topology, the compactness is different
from that in the usual weak topology; later, when we prove the exponential tightness, we need to take some extra care.
See Lemma \ref{tightness} and (iii) of Lemma \ref{finalestimate}.

We denote by $C(\Omega)$ the set of real-valued continous functions on $\Omega$. We similarly define $C(\Omega\times\mathbb{R})$.
We also denote by $\mathcal{B}(\mathcal{F}^{-\infty}_{t})$ the set of all bounded $\mathcal{F}^{-\infty}_{t}$
progressively measurable and $\mathcal{F}^{-\infty}_{t}$ predictable functions.

Before we proceed, recall that a sequence $(P_{n})_{n\in\mathbb{N}}$ of probability measures on a topological space $X$ 
satisfies the large deviation principle (LDP) with rate function $I:X\rightarrow\mathbb{R}$ if $I$ is non-negative, 
lower semicontinuous and for any measurable set $A$,
\begin{equation}
-\inf_{x\in A^{o}}I(x)\leq\liminf_{n\rightarrow\infty}\frac{1}{n}\log P_{n}(A)
\leq\limsup_{n\rightarrow\infty}\frac{1}{n}\log P_{n}(A)\leq-\inf_{x\in\overline{A}}I(x).
\end{equation}
Here, $A^{o}$ is the interior of $A$ and $\overline{A}$ is its closure. See Dembo and Zeitouni \cite{Dembo} or Varadhan \cite{Varadhan} 
for general background regarding large deviations and their applications. 
Also Varadhan \cite{VaradhanII} has an excellent survey article on this subject.

In the pioneering work by Donsker and Varadhan \cite{Donsker}, they obtained a level-3 large deviation result for 
certain stationary Markov processes. 

We would like to prove the large deviation principle for nonlinear Hawkes processes by proving a process-level, also known as 
level-3 large deviation principle first. We can then use the contraction principle to obtain the level-1 large deviation principle for $(N_{t}/t\in\cdot)$.

Let us define the empirical measure for the process as
\begin{equation}
R_{t,\omega}(A)=\frac{1}{t}\int_{0}^{t}\chi_{A}(\theta_{s}\omega_{t})ds,
\end{equation}
for any $A$, where $\omega_{t}(s)=\omega(s)$ for $0\leq s\leq t$ and $\omega_{t}(s+t)=\omega_{t}(s)$ for any $s$. 
Donsker and Varadhan \cite{Donsker} proved that in the case when $\Omega$ is a space of c\`{a}dl\`{a}g functions $\omega(\cdot)$ 
on $-\infty<t<\infty$ endowed with Skorohod topology and taking values in a Polish space $X$, under certain conditions, 
$P^{0,x}(R_{t,\omega}\in\cdot)$ satisfies a large deviation principle, where $P^{0,x}$ is a Markov process on $\Omega^{0}_{\infty}$ 
with initial value $x\in X$. The rate function $H(Q)$ is some entropy function.

Let $h(\alpha,\beta)_{\Sigma}$ be the relative entropy of $\alpha$ with respect to $\beta$ restricted to the $\sigma$-algebra $\Sigma$. 
For any $Q\in\mathcal{M}_{S}(\Omega)$, let $Q^{\omega^{-}}$ be the regular conditional probability distribution of $Q$. 
Similarly we define $P^{\omega^{-}}$.

Let us define the entropy function $H(Q)$ as
\begin{equation}
H(Q)=
\mathbb{E}^{Q}[h(Q^{\omega^{-}},P^{\omega^{-}})_{\mathcal{F}^{0}_{1}}].
\end{equation}
Notice that $P^{\omega^{-}}$ describes the Hawkes process conditional on the past history $\omega^{-}$. 
It has rate $\lambda(\sum_{\tau\in\omega[0,s)\cup\omega^{-}}h(s-\tau))$ at time $0\leq s\leq 1$, 
which is well defined for almost every $\omega^{-}$ under $Q$ if $\mathbb{E}^{Q}[N[0,1]]<\infty$ 
since $\mathbb{E}^{Q}[\sum_{\tau\in\omega^{-}}h(-\tau)]=\Vert h\Vert_{L^{1}}\mathbb{E}^{Q}[N[0,1]]<\infty$ 
implies $\sum_{\tau\in\omega^{-}}h(s-\tau)\leq\sum_{\tau\in\omega^{-}}h(-\tau)<\infty$ for all $0\leq s\leq 1$.

When $H(Q)<\infty$, $h(Q^{\omega^{-}},P^{\omega^{-}})<\infty$ for a.e. $\omega^{-}$ under $Q$, which implies that 
$Q^{\omega^{-}}\ll P^{\omega^{-}}$ on $\mathcal{F}^{0}_{1}$. By the theory of absolute continuity of point processes, 
see for example Chapter 19 of Lipster and Shiryaev \cite{Lipster} or Chapter 13 of Daley and Vere-Jones \cite{Daley}, 
the compensator of $Q^{\omega^{-}}$ is absolutely continuous, i.e. it has some density $\hat{\lambda}$ say, such that by the Girsanov formula,
\begin{align}
H(Q)&=\int_{\Omega^{-}}\int\left[\int_{0}^{1}\left(\lambda-\hat{\lambda}\right)ds+\int_{0}^{1}
\log(\hat{\lambda}/\lambda)dN_{s}\right]dQ^{\omega^{-}}Q(d\omega^{-})\label{HFunction}
\\
&=\int_{\Omega}\left[\int_{0}^{1}\lambda(\omega,s)-\hat{\lambda}(\omega,s)
+\log\left(\frac{\hat{\lambda}(\omega,s)}{\lambda(\omega,s)}\right)\hat{\lambda}ds\right]Q(d\omega)\nonumber,
\end{align}
where $\lambda=\lambda\left(\sum_{\tau\in\omega[0,s)\cup\omega^{-}}h(s-\tau)\right)$. Both $\lambda$ and $\hat{\lambda}$ 
are $\mathcal{F}^{-\infty}_{s}$-predictable for $0\leq s\leq 1$. For the equality in \eqref{HFunction}, we 
used the fact that $N_{t}-\int_{0}^{t}\hat{\lambda}(\omega,s)ds$ is a martingale under $Q$ and
for any $f(\omega,s)$ which is bounded, $\mathcal{F}^{-\infty}_{s}$ progressively measurable and predictable, we have
\begin{equation}
\int_{\Omega}\int_{0}^{1}f(\omega,s)dN_{s}Q(d\omega)=\int_{\Omega}\int_{0}^{1}f(\omega,s)\hat{\lambda}(\omega,s)dsQ(d\omega).
\end{equation}
We will use the above fact repeatedly in this chapter.

The following theorem is the main result of this chapter.

\begin{theorem}
For any open set $G\subset\mathcal{M}_{S}(\Omega)$,
\begin{equation}
\liminf_{t\rightarrow\infty}\frac{1}{t}\log P\left(R_{t,\omega}\in G\right)\geq -\inf_{Q\in G}H(Q),
\end{equation}
and for any closed set $C\subset\mathcal{M}_{S}(\Omega)$,
\begin{equation}
\limsup_{t\rightarrow\infty}\frac{1}{t}\log P\left(R_{t,\omega}\in C\right)\leq -\inf_{Q\in C}H(Q).
\end{equation}
\end{theorem}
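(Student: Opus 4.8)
The plan is to follow the Donsker--Varadhan programme for process-level large deviations, see \cite{Donsker}, adapted to the non-Markovian Hawkes setting: prove the lower bound by an entropy/change-of-measure argument, prove the upper bound first on compact sets via exponential (Feynman--Kac) martingales and then extend it to all closed sets by exponential tightness, and connect the two halves through a battery of superexponential estimates that neutralise the periodization built into $R_{t,\omega}$ and the effect of $P$ starting from the empty past. The workhorse is the fact that for bounded $\mathcal{F}^{-\infty}_{s}$-predictable $f$,
\begin{equation*}
\exp\left(\int_{0}^{t}f(\omega,s)\,dN_{s}-\int_{0}^{t}\left(e^{f(\omega,s)}-1\right)\lambda_{s}\,ds\right)
\end{equation*}
is a mean-one $P$-martingale, together with the Girsanov identity \eqref{HFunction} for $H$.

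For the lower bound, using that $H$ is affine in $Q$ and lower semicontinuous, a standard reduction lets me assume $Q$ ergodic with $H(Q)<\infty$. Then the compensator density $\hat\lambda$ of $Q^{\omega^{-}}$ exists and is $\mathcal{F}^{-\infty}_{s}$-predictable, and I would introduce the tilted law $\widetilde{P}_{t}$ on $\mathcal{F}^{0}_{t}$ with
\begin{equation*}
\frac{d\widetilde{P}_{t}}{dP_{t}}=\exp\left(\int_{0}^{t}\log\!\left(\hat\lambda_{s}/\lambda_{s}\right)dN_{s}-\int_{0}^{t}\left(\hat\lambda_{s}-\lambda_{s}\right)ds\right),
\end{equation*}
under which $N$ evolves on $[0,t]$ with the $Q$-intensity $\hat\lambda$ but from the empty past. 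By the ergodic theorem applied under $\widetilde{P}_{t}$ --- together with a superexponential estimate that removes the empty-past boundary layer near time $0$ and uses $\mathbb{E}^{Q}[N[0,1]]<\infty$ to control the first-moment coordinate of the strengthened topology --- one gets, $\widetilde{P}_{t}$-almost surely, both $R_{t,\omega}\to Q$ and $\frac1t\log\frac{d\widetilde{P}_{t}}{dP_{t}}\to -H(Q)$. The usual change-of-measure estimate $P(R_{t,\omega}\in U)=\mathbb{E}^{\widetilde{P}_{t}}[(dP_{t}/d\widetilde{P}_{t})\mathbf{1}_{\{R_{t,\omega}\in U\}}]$ then yields $\liminf_{t}\frac1t\log P(R_{t,\omega}\in G)\ge -H(Q)$ for any open $G\ni Q$, hence the claim.

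For the upper bound I would first treat compact sets. Given finitely many bounded continuous, $\mathcal{F}^{-\infty}_{s}$-predictable functionals $F_{j}$, the exponential martingale gives, up to $o(1)$,
\begin{equation*}
\frac1t\log\mathbb{E}^{P}\!\left[\exp\!\Big(t\big\langle R_{t,\omega},\textstyle\sum_{j}F_{j}\big\rangle\Big)\right]\le\frac1t\log\mathbb{E}^{P}\!\left[\exp\!\Big(\int_{0}^{t}\big(e^{\sum_{j}F_{j}}-1\big)\lambda_{s}\,ds\Big)\right],
\end{equation*}
and optimizing over such $F$ produces a Legendre-dual upper rate function $I^{*}$ and, through the familiar Chebyshev-plus-finite-cover argument, the compact-set upper bound with $I^{*}$. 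Two further steps close the gap: (i) an exponential-tightness lemma built on an exponential-moment bound for $N[0,1]$ of the type in Lemma~\ref{midstep} and Lemma~\ref{secondmoment}, but strong enough to produce compact level sets in the first-moment-strengthened topology (cf. Lemma~\ref{tightness} and (iii) of Lemma~\ref{finalestimate}), which promotes the bound to all closed sets; and (ii) the variational identification $I^{*}(Q)=H(Q)$, obtained by inserting the optimal tilt $f=\log(\hat\lambda/\lambda)$ and reading off \eqref{HFunction}.

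I expect two main obstacles. The first is the collection of superexponential estimates: because $R_{t,\omega}$ is built from the periodized configuration $\omega_{t}$ while $P$ has empty past, one must show that wrapping the process onto the circle of circumference $t$ and discarding the long-memory influence of the pre-$0$ history perturbs the empirical measure only superexponentially --- subtle precisely because the Hawkes process is non-Markovian, and it is here that the decay of $h$ and the sublinear growth $\lambda(z)/z\to 0$ enter. The second is the non-standard topology on $\mathcal{M}_{S}(\Omega)$, strengthened by convergence of $\mathbb{E}^{Q}[N[0,1]]$: both the exponential tightness and the semicontinuity needed to identify $I^{*}$ with $H$ must be re-established with uniform control of this first-moment coordinate, which is the technical heart of the whole argument.
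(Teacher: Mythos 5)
Your upper-bound strategy is essentially the paper's: exponential-martingale functionals, Chebyshev plus a finite cover on compact sets (Lemma \ref{uppercompact}), the periodization comparison between $\omega_{t}$ and $\omega$ (Lemma \ref{difference}), the identification of the dual functional with $H$ (Lemma \ref{finitetoinfinite}), and exponential tightness in the first-moment-strengthened topology (Lemmas \ref{tightness} and \ref{finalestimate}); you have correctly isolated the two genuine difficulties on that side.

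The lower bound, however, has a gap at exactly the point where the paper does its real work. First, the reduction to ergodic $Q$ is not a soft consequence of affinity and lower semicontinuity of $H$: if $Q=\sum_{j}\alpha_{j}Q_{j}$ with $Q_{j}$ ergodic, the components $Q_{j}$ need not lie in the given neighborhood of $Q$, so one cannot simply quote the ergodic case. The paper instead splits $[0,t]$ into blocks of lengths $\alpha_{j}t$ and runs the $Q_{j}$-dynamics consecutively; for this chaining one needs a lower bound that is \emph{uniform over the initial history} handed over by the previous block, which is precisely the content of Lemma \ref{mainlower} and the sets $K^{-}_{\ell}$. Second, your tilted law $\widetilde{P}_{t}$ is launched from the empty past with intensity $\hat\lambda$, but $\hat\lambda$ is a functional of the full past under the stationary $Q$, and the ergodic theorem you invoke holds under $Q$, not under the empty-past tilted process; nothing in your sketch transfers the almost sure statements $R_{t,\omega}\rightarrow Q$ and $\frac1t\log(d\widetilde{P}_{t}/dP_{t})\rightarrow -H(Q)$ from $Q$ to $\widetilde{P}_{t}$, and for a non-Markovian process this transfer is not automatic. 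The paper avoids the issue by performing the change of measure against $Q$ itself and bounding $\log\big(dP^{\omega^{-}}/dP^{w_{0}}\big)\big|_{\mathcal{F}^{0}_{t}}$ by $C_{1}(\ell)+C_{2}(\ell)u(t)$ with $u(t)/t\rightarrow 0$, uniformly for $\omega^{-},w_{0}\in K^{-}_{\ell}$; this is where $\int_{0}^{\infty}h(t)\,dt<\infty$, the monotonicity of $h$, and the maximal ergodic theorem enter. Without these two ingredients --- the uniform-in-history estimate and the chaining over ergodic components --- the lower bound does not close.
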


We will prove the lower bound in Section \ref{lowerbound}, the upper bound in Section \ref{upperbound}, and the 
superexponential estimates that are needed in the proof of the upper bound in Section \ref{superexpestimates}.

Once we establish the level-3 large deviation result, we can obtain the large deviation principle for $(N_{t}/t\in\cdot)$ 
directly by using the contraction principle.

\begin{theorem}
$(N_{t}/t\in\cdot)$ satisfies a large deviation principle with the rate function $I(\cdot)$ given by
\begin{equation}
I(x)=\inf_{Q\in\mathcal{M}_{S}(\Omega),\mathbb{E}^{Q}[N[0,1]]=x}H(Q).
\end{equation}
\end{theorem}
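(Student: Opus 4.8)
The plan is to deduce this second theorem from the process-level (level-3) large deviation principle just stated, by an application of the contraction principle. Consider the map $\Phi:\mathcal{M}_{S}(\Omega)\to\mathbb{R}$ given by $\Phi(Q)=\mathbb{E}^{Q}[N[0,1]]$. The first thing to verify is that $\Phi$ is continuous, and this is immediate from the way the topology on $\mathcal{M}_{S}(\Omega)$ was defined: we strengthened the weak topology precisely by demanding that $Q_{n}\to Q$ entail $\int N[0,1]\,dQ_{n}\to\int N[0,1]\,dQ$, that is, $\Phi(Q_{n})\to\Phi(Q)$. (On the ordinary weak topology $\Phi$ would fail to be continuous, since $N[0,1]$ is unbounded; this is one of the reasons for working with the stronger topology.)

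Second, I would identify $\Phi(R_{t,\omega})$ with $N_{t}/t$ up to a negligible error. Since $R_{t,\omega}(A)=\frac{1}{t}\int_{0}^{t}\chi_{A}(\theta_{s}\omega_{t})\,ds$ and $(\theta_{s}\eta)([0,1])=\eta([s,s+1])$, we have
\begin{equation}
\Phi(R_{t,\omega})=\frac{1}{t}\int_{0}^{t}N[0,1](\theta_{s}\omega_{t})\,ds=\frac{1}{t}\int_{0}^{t}\omega_{t}[s,s+1]\,ds.
\end{equation}
Because $\omega_{t}$ is the $t$-periodization of $\omega$ restricted to $[0,t]$, a short computation with Fubini's theorem shows that every point of $\omega$ in $[0,t)$ contributes exactly $1$ to the last integral (the fraction of a unit window lost near one end of the period is recovered at the other end), so that $\Phi(R_{t,\omega})=N[0,t)/t$ and hence $\bigl|\Phi(R_{t,\omega})-N_{t}/t\bigr|\le 1/t$ deterministically. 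Consequently $(N_{t}/t\in\cdot)$ and $(\Phi(R_{t,\omega})\in\cdot)$ satisfy the same large deviation principle.

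Finally, applying the contraction principle (see e.g. Dembo and Zeitouni \cite{Dembo}) to the level-3 large deviation principle of the previous theorem and the continuous map $\Phi$, one concludes that $(\Phi(R_{t,\omega})\in\cdot)$, and therefore $(N_{t}/t\in\cdot)$, satisfies a large deviation principle with rate function
\begin{equation}
I(x)=\inf_{Q\in\mathcal{M}_{S}(\Omega),\,\mathbb{E}^{Q}[N[0,1]]=x}H(Q),
\end{equation}
with the convention that the infimum over the empty set is $+\infty$ (so $I(x)=+\infty$ for $x<0$). Here one uses that $H$ is a good rate function, which is part of the content of the process-level principle and follows from the exponential tightness established in its proof (Lemma \ref{tightness}); the contraction principle then yields a lower semicontinuous $I$ with compact level sets. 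The only genuinely new ingredient beyond the first theorem is the continuity of $\Phi$, and, as noted, this is exactly what the strengthened topology on $\mathcal{M}_{S}(\Omega)$ was designed to furnish, so that the real difficulty lies entirely upstream in the level-3 statement rather than in this deduction.
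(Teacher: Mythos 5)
Your proposal is correct and follows the same overall route as the paper: continuity of $\Phi(Q)=\mathbb{E}^{Q}[N[0,1]]$ in the strengthened topology, identification of $\Phi(R_{t,\omega})$ with $N_{t}/t$, and the contraction principle. The one genuine difference is in the identification step. The paper decomposes $\int_{\Omega}N[0,1]\,dR_{t,\omega}=\frac{1}{t}\int_{0}^{t-1}N[s,s+1](\omega)\,ds+\frac{1}{t}\int_{t-1}^{t}N[s,s+1](\omega_{t})\,ds$, bounds the discrepancy with $N_{t}/t$ by $(N[t-1,t]+N_{1})/t$, and then invokes the superexponential estimates of Lemma \ref{indfinite} to show this discrepancy is negligible at scale $e^{-ct}$. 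You instead observe that, because $\omega_{t}$ is the $t$-periodization, each point of $\omega$ in $[0,t)$ is covered by the sliding unit window for a set of $s$ of Lebesgue measure exactly $1$ as $s$ traverses $[0,t]$, so that $\Phi(R_{t,\omega})$ equals $N_{t}/t$ up to a deterministic $O(1/t)$ error; this Fubini identity checks out (for $t\geq 1$ the window wraps at most once, and boundary conventions contribute nothing to the integral), and it renders the probabilistic estimate unnecessary for this particular step. Your version is therefore slightly cleaner here, at the cost of nothing; the paper's bound-plus-superexponential-estimate route is more robust in that it would survive non-periodic modifications of the empirical measure. Your remark that goodness of $H$ (needed for the contraction principle to deliver a lower semicontinuous $I$) follows from the exponential tightness in Lemma \ref{tightness} is also correct and is a point the paper leaves implicit.
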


\begin{proof}
Since $Q\mapsto\mathbb{E}^{Q}[N[0,1]]$ is continuous, $\int_{\Omega}N[0,1]dR_{t,\omega}$ satisfies a large deviation principle 
with the rate function $I(\cdot)$ by the contraction principle. (For a discussion on contraction principle, 
see for example Varadhan \cite{Varadhan}.)
\begin{align}
\int_{\Omega}N[0,1]dR_{t,\omega}&=\frac{1}{t}\int_{0}^{t}N[0,1](\theta_{s}\omega_{t})ds
\\
&=\frac{1}{t}\int_{0}^{t-1}N[s,s+1](\omega)ds+\frac{1}{t}\int_{t-1}^{t}N[s,s+1](\omega_{t})ds.\nonumber
\end{align}
Notice that
\begin{equation}
0\leq\frac{1}{t}\int_{t-1}^{t}N[s,s+1](\omega_{t})ds\leq\frac{1}{t}(N[t-1,t](\omega)+N[0,1](\omega)),
\end{equation}
and
\begin{equation}
\frac{1}{t}\int_{0}^{t-1}N[s,s+1](\omega)ds=\frac{1}{t}\left[\int_{t-1}^{t}N_{s}(\omega)ds-\int_{0}^{1}N_{s}(\omega)ds\right]
\leq\frac{N_{t}}{t},
\end{equation}
and 
\begin{equation}
\frac{1}{t}\int_{0}^{t-1}N[s,s+1](\omega)ds\geq\frac{N_{t-1}-N_{1}}{t}=\frac{N_{t}}{t}-\frac{N[t-1,t]+N_{1}}{t}. 
\end{equation}
Hence,
\begin{equation}
\frac{N_{t}}{t}-\frac{N[t-1,t]+N_{1}}{t}\leq\int_{\Omega}N[0,1]dR_{t,\omega}\leq\frac{N_{t}}{t}+\frac{N[t-1,t]+N_{1}}{t}.
\end{equation}
For the lower bound, for any open ball $B_{\epsilon}(x)$ centered at $x$ with radius $\epsilon>0$,
\begin{align}
P\left(\frac{N_{t}}{t}\in B_{\epsilon}(x)\right)&\geq P\left(\int_{\Omega}N[0,1]dR_{t,\omega}\in B_{\epsilon/2}(x)\right)
\\
&-P\left(\frac{N[t-1,t]}{t}\geq\frac{\epsilon}{4}\right)-P\left(\frac{N_{1}}{t}\geq\frac{\epsilon}{4}\right).\nonumber
\end{align}
For the upper bound, for any closed set $C$ and $C^{\epsilon}=\bigcup_{x\in C}\overline{B_{\epsilon}(x)}$,
\begin{align}
P\left(\frac{N_{t}}{t}\in C\right)&\leq P\left(\int_{\Omega}N[0,1]dR_{t,\omega}\in C^{\epsilon}\right)
\\
&+P\left(\frac{N[t-1,t]}{t}\geq\frac{\epsilon}{4}\right)+P\left(\frac{N_{1}}{t}\geq\frac{\epsilon}{4}\right).\nonumber
\end{align}
Finally, by Lemma \ref{indfinite}, we have the following superexponential estimates
\begin{equation}
\limsup_{t\rightarrow\infty}\frac{1}{t}\log P\left(\frac{N[t-1,t]}{t}\geq\frac{\epsilon}{4}\right)
=\limsup_{t\rightarrow\infty}\frac{1}{t}\log P\left(\frac{N_{1}}{t}\geq\frac{\epsilon}{4}\right)=-\infty.
\end{equation}
Hence, for the lower bound, we have
\begin{equation}
\liminf_{t\rightarrow\infty}\frac{1}{t}\log P\left(\frac{N_{t}}{t}\in B_{\epsilon}(x)\right)\geq -I(x),
\end{equation}
and for the upper bound, we have
\begin{equation}
\limsup_{t\rightarrow\infty}\frac{1}{t}\log P\left(\frac{N_{t}}{t}\in C\right)\leq-\inf_{x\in C^{\epsilon}}I(x),
\end{equation}
which holds for any $\epsilon>0$. Letting $\epsilon\downarrow 0$, we get the desired result.
\end{proof}

\section{Lower Bound}\label{lowerbound}

\begin{lemma}\label{positivity}
For any $\lambda,\hat{\lambda}\geq 0$, $\lambda-\hat{\lambda}+\hat{\lambda}\log(\hat{\lambda}/\lambda)\geq 0$.
\end{lemma}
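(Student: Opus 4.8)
The plan is to reduce this to the elementary one-variable fact that $\phi(x):=x\log x-x+1\ge 0$ for all $x\ge 0$, using the convention $0\log 0=0$ (equivalently, extending $x\mapsto x\log x$ continuously to $x=0$). First I would dispose of the degenerate cases directly from the formula: if $\lambda=0$ and $\hat\lambda>0$ the term $\hat\lambda\log(\hat\lambda/\lambda)$ is $+\infty$, so the inequality is trivial; if $\hat\lambda=0$ the left-hand side equals $\lambda\ge 0$; and if $\lambda=\hat\lambda=0$ it equals $0$. So one may assume $\lambda,\hat\lambda>0$.

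In that regime set $x:=\hat\lambda/\lambda>0$. Then
\begin{equation*}
\lambda-\hat\lambda+\hat\lambda\log(\hat\lambda/\lambda)=\lambda\bigl(1-x+x\log x\bigr)=\lambda\,\phi(x),
\end{equation*}
and since $\lambda>0$ it suffices to prove $\phi(x)\ge 0$ for $x>0$. For this I would use either of two routine arguments: (a) observe $\phi(1)=0$ and $\phi'(x)=\log x$, which is negative on $(0,1)$ and positive on $(1,\infty)$, so $x=1$ is the global minimum of $\phi$ on $(0,\infty)$ and hence $\phi(x)\ge\phi(1)=0$; or (b) apply the standard inequality $\log y\le y-1$ (valid for $y>0$) with $y=1/x$, obtaining $-\log x\le \tfrac1x-1$, then multiply through by $x>0$ to get $-x\log x\le 1-x$, i.e. $\phi(x)\ge 0$. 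Either route is two lines.

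There is essentially no obstacle in this lemma; the only point deserving a word of care is the convention $0\log 0=0$ (continuity of $x\mapsto x\log x$ at $0$), which is precisely what makes the Girsanov expression for $H(Q)$ in \eqref{HFunction} well posed when $\hat\lambda$ vanishes, together with the observation that equality holds in the pointwise bound exactly when $\hat\lambda=\lambda$ — a fact that will be convenient later when locating the zeros of the rate function.
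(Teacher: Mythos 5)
Your proof is correct and takes essentially the same route as the paper: both reduce the claim to the elementary inequality $x-1-\log x\ge 0$ (you factor out $\lambda$ and work with $\phi(x)=x\log x-x+1$, the paper factors out $\hat{\lambda}$ and works with $F(x)=x-1-\log x$, which is the same fact under $x\mapsto 1/x$), and both verify it by the first-derivative test. Your explicit handling of the degenerate cases $\lambda=0$ or $\hat{\lambda}=0$ is a small point of extra care that the paper's proof glosses over.
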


\begin{proof}
Write $\lambda-\hat{\lambda}+\hat{\lambda}\log(\hat{\lambda}/\lambda)
=\hat{\lambda}\left[(\lambda/\hat{\lambda})-1-\log(\lambda/\hat{\lambda})\right]$. 
Thus, it is sufficient to show that $F(x)=x-1-\log x\geq 0$ for any $x\geq 0$. 
Note that $F(0)=F(\infty)=0$ and $F'(x)=1-\frac{1}{x}<0$ when $0<x<1$ and $F'(x)>0$ 
when $x>1$ and finally $F(1)=0$. Hence $F(x)\geq 0$ for any $x\geq 0$.
\end{proof}

\begin{lemma}\label{finitemean}
Assume $H(Q)<\infty$. Then,
\begin{equation}
\mathbb{E}^{Q}[N[0,1]]\leq C_{1}+C_{2}H(Q),
\end{equation}
where $C_{1},C_{2}>0$ are some constants independent of $Q$.
\end{lemma}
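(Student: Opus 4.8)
The idea is to pass from $\mathbb{E}^{Q}[N[0,1]]$ to the compensator density $\hat{\lambda}$ of $Q$, bound $\hat\lambda$ by the entropy integrand via an elementary Young-type inequality, and then absorb the leftover $\lambda$-term using the sublinear growth of $\lambda$.

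First, since $H(Q)<\infty$, the compensator of $Q$ on $\mathcal{F}^{0}_{1}$ is absolutely continuous with some density $\hat{\lambda}(\omega,s)$, the process $N_{t}-\int_{0}^{t}\hat{\lambda}\,ds$ is a $Q$-martingale, and by \eqref{HFunction}
\[
H(Q)=\mathbb{E}^{Q}\!\left[\int_{0}^{1}\left(\lambda(\omega,s)-\hat{\lambda}(\omega,s)+\hat{\lambda}(\omega,s)\log\frac{\hat{\lambda}(\omega,s)}{\lambda(\omega,s)}\right)ds\right];
\]
in particular $\mathbb{E}^{Q}[N[0,1]]=\mathbb{E}^{Q}[\int_{0}^{1}\hat{\lambda}\,ds]$, which is finite because $Q\in\mathcal{M}_{S}(\Omega)$. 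Next, the elementary inequality $ab\le a\log a-a+e^{b}$ (valid for $a\ge 0$, $b\in\mathbb{R}$, with $0\log 0:=0$, since the minimum over $b$ of $e^{b}-ab$ is $a-a\log a$), applied with $a=\hat{\lambda}(\omega,s)$ and $b=c+\log\lambda(\omega,s)$ for a fixed $c>0$, rearranges to the pointwise bound
\[
\hat{\lambda}\le\frac{1}{c}\left(\lambda-\hat{\lambda}+\hat{\lambda}\log\frac{\hat{\lambda}}{\lambda}\right)+\frac{e^{c}-1}{c}\,\lambda
\]
(the degenerate case $\lambda=0$ making the right side $+\infty$). Integrating over $s\in[0,1]$ and taking $\mathbb{E}^{Q}$ gives
\[
\mathbb{E}^{Q}[N[0,1]]\le\frac{1}{c}H(Q)+\frac{e^{c}-1}{c}\,\mathbb{E}^{Q}\!\left[\int_{0}^{1}\lambda(\omega,s)\,ds\right].
\]

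Finally, I would control the $\lambda$-term by sublinearity. Since $\lambda(\cdot)$ is nondecreasing and $\lim_{z\to\infty}\lambda(z)/z=0$, for each $\eps>0$ there is $M_{\eps}$ with $\lambda(z)\le\lambda(M_{\eps})+\eps z$ for all $z\ge 0$. Writing $\lambda(\omega,s)=\lambda\big(\int_{(-\infty,s)}h(s-u)N(du)\big)$ and using stationarity of $Q$ (so the intensity measure of $Q$ is Lebesgue measure times $\mathbb{E}^{Q}[N[0,1]]$, giving $\mathbb{E}^{Q}[\int_{(-\infty,s)}h(s-u)N(du)]=\Vert h\Vert_{L^{1}}\mathbb{E}^{Q}[N[0,1]]$ for each $s$), one gets
\[
\mathbb{E}^{Q}\!\left[\int_{0}^{1}\lambda(\omega,s)\,ds\right]\le\lambda(M_{\eps})+\eps\Vert h\Vert_{L^{1}}\,\mathbb{E}^{Q}[N[0,1]].
\]
Choosing $\eps$ (after $c$ is fixed) so small that $\frac{e^{c}-1}{c}\eps\Vert h\Vert_{L^{1}}\le\frac12$, and using that $\mathbb{E}^{Q}[N[0,1]]<\infty$ to rearrange, yields
\[
\mathbb{E}^{Q}[N[0,1]]\le\frac{2}{c}H(Q)+\frac{2(e^{c}-1)}{c}\lambda(M_{\eps}),
\]
i.e.\ the claim with $C_{2}=2/c$ and $C_{1}=\frac{2(e^{c}-1)}{c}\lambda(M_{\eps})$, both independent of $Q$.

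\textbf{Main obstacle.} The only genuinely delicate point is the last step: a crude Lipschitz bound $\lambda(z)\le\lambda(0)+\alpha z$ would leave a term $\frac{e^{c}-1}{c}\alpha\Vert h\Vert_{L^{1}}\mathbb{E}^{Q}[N[0,1]]$ whose coefficient need not be $<1$, so the sublinear growth $\lambda(z)/z\to 0$ must be used to make the effective slope arbitrarily small and absorb the term. One must also be careful that every quantity is finite before rearranging, which is exactly why both $Q\in\mathcal{M}_{S}(\Omega)$ and $H(Q)<\infty$ are invoked.
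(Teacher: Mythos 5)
Your proof is correct, and it follows the same overall strategy as the paper's: express $H(Q)$ and $\mathbb{E}^{Q}[N[0,1]]$ through the compensator density $\hat{\lambda}$ via the Girsanov formula, dominate $\hat{\lambda}$ by the entropy integrand plus a multiple of $\lambda$, control $\mathbb{E}^{Q}[\int_{0}^{1}\lambda\,ds]$ using the sublinearity $\lambda(z)\leq C_{\epsilon}+\epsilon z$, and absorb the resulting small multiple of $\mathbb{E}^{Q}[N[0,1]]$ into the left-hand side. The one step you execute differently is the pointwise comparison of $\hat{\lambda}$ with the entropy integrand: the paper splits into the sets $\{\hat{\lambda}<K\lambda\}$ and $\{\hat{\lambda}\geq K\lambda\}$, bounding $\hat{\lambda}$ by $K\lambda$ on the first and using $\lambda-\hat{\lambda}+\hat{\lambda}\log(\hat{\lambda}/\lambda)\geq(\log K-1)\hat{\lambda}$ on the second, whereas you use the convex-duality (Young-type) inequality $ab\leq a\log a-a+e^{b}$ to get the single clean bound
\begin{equation}
\hat{\lambda}\leq\frac{1}{c}\left(\lambda-\hat{\lambda}+\hat{\lambda}\log\frac{\hat{\lambda}}{\lambda}\right)+\frac{e^{c}-1}{c}\lambda.
\end{equation}
These are two realizations of the same trade-off; yours avoids the case split and gives explicit constants in one line. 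Your treatment of $\mathbb{E}^{Q}[\int_{0}^{1}\lambda\,ds]$ via stationarity of the first moment measure (yielding the coefficient $\Vert h\Vert_{L^{1}}$) is also slightly sharper than the paper's bound $\int_{0}^{1}\sum_{\tau<s}h(s-\tau)\,ds\leq h(0)N[0,1]+\sum_{\tau<0}h(-\tau)$ (which yields $h(0)+\Vert h\Vert_{L^{1}}$), but since $\epsilon$ is chosen afterwards this makes no difference to the conclusion. You also correctly flag the two points where care is needed: the sublinearity of $\lambda$ (a Lipschitz bound alone would not let you make the absorbed coefficient less than one) and the finiteness of $\mathbb{E}^{Q}[N[0,1]]$ before rearranging, which is exactly how the paper's hypotheses enter.
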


\begin{proof}
If $H(Q)<\infty$, then $h(Q^{\omega^{-}},P^{\omega^{-}})_{\mathcal{F}^{0}_{1}}<\infty$ for a.e. $\omega^{-}$ under $Q$, 
which implies that $Q^{\omega^{-}}\ll P^{\omega^{-}}$ and thus $\hat{A}_{t}\ll A_{t}$, where $\hat{A}_{t}$ and $A_{t}$ 
are the compensators of $N_{t}$ under $Q^{\omega^{-}}$ and $P^{\omega^{-}}$ respectively. 
(For the theory of absolute continuity of point processes and Girsanov formula, 
see for example Lipster and Shiryaev \cite{Lipster} or Daley and Vere-Jones \cite{Daley}.) 
Since $A_{t}=\int_{0}^{t}\lambda(\omega,s)ds$, we have $\hat{A}_{t}=\int_{0}^{t}\hat{\lambda}(\omega,s)ds$ for some $\hat{\lambda}$. 
By the Girsanov formula,
\begin{equation}
H(Q)=\mathbb{E}^{Q}\left[\int_{0}^{1}\lambda-\hat{\lambda}+\log\left(\hat{\lambda}/\lambda\right)\hat{\lambda}ds\right].
\end{equation}
Notice that $\mathbb{E}^{Q}[N[0,1]]=\int\int_{0}^{1}\hat{\lambda}dsdQ$.
\begin{align}
\int\int_{0}^{1}\lambda dsdQ&\leq\epsilon\int\int_{0}^{1}\sum_{\tau<s}h(s-\tau)dsdQ+C_{\epsilon}
\\
&\leq\epsilon\int h(0)N[0,1]dQ+\epsilon\int\sum_{\tau<0}h(-\tau)dQ+C_{\epsilon}\nonumber
\\
&=\epsilon(h(0)+\Vert h\Vert_{L^{1}})\mathbb{E}^{Q}[N[0,1]]+C_{\epsilon}\nonumber
\\
&=\epsilon(h(0)+\Vert h\Vert_{L^{1}})\int\int_{0}^{1}\hat{\lambda}dsdQ+C_{\epsilon}.\nonumber
\end{align}
Therefore, we have
\begin{equation}
\int\int_{0}^{1}\hat{\lambda}\cdot 1_{\hat{\lambda}<K\lambda}dsdQ\leq K\epsilon(h(0)+\Vert h\Vert_{L^{1}})\int\int_{0}^{1}\hat{\lambda}dsdQ+KC_{\epsilon}.
\end{equation}
On the other hand, by Lemma \ref{positivity},
\begin{align}
H(Q)&\geq\int\int_{0}^{1}\left[\lambda-\hat{\lambda}+\hat{\lambda}\log(\hat{\lambda}/\lambda)\right]\cdot 1_{\hat{\lambda}\geq K\lambda}dsdQ
\\
&\geq(\log K-1)\int\int_{0}^{1}\hat{\lambda}\cdot 1_{\hat{\lambda}\geq K\lambda}dsdQ.\nonumber
\end{align}
Thus,
\begin{equation}
\int\int_{0}^{1}\hat{\lambda}dsdQ\leq K\epsilon(h(0)+\Vert h\Vert_{L^{1}})\int\int_{0}^{1}\hat{\lambda}dsdQ+KC_{\epsilon}+\frac{H(Q)}{\log K-1}.
\end{equation}
Choosing $K>e$ and $\epsilon<\frac{1}{K(h(0)+\Vert h\Vert_{L^{1}})}$, we get
\begin{equation}
\mathbb{E}^{Q}[N[0,1]]\leq\frac{KC_{\epsilon}}{1-K\epsilon(h(0)+\Vert h\Vert_{L^{1}})}+\frac{H(Q)}{(\log K-1)K\epsilon(h(0)+\Vert h\Vert_{L^{1}})}.
\end{equation}
\end{proof}

\begin{lemma}\label{variational}
We have the following alternative expression for $H(Q)$.
\begin{equation}
H(Q)=\sup_{f(\omega,s)\in\mathcal{B}(\mathcal{F}^{-\infty}_{s})\cap C(\Omega\times\mathbb{R}),0\leq s\leq 1}
\mathbb{E}^{Q}\left[\int_{0}^{1}\lambda(1-e^{f})ds+\int_{0}^{1}fdN_{s}\right].
\end{equation}
\end{lemma}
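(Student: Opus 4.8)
This is the Gibbs variational formula for the conditional relative entropy $H(Q)$, and the plan is to obtain it from the exponential martingale attached to a change of intensity together with the Donsker--Varadhan entropy inequality. For $f(\omega,s)$ in the admissible class $\mathcal{B}(\mathcal{F}^{-\infty}_{s})\cap C(\Omega\times\mathbb{R})$, $0\le s\le 1$, the Dol\'{e}ans--Dade exponential $M^{f}_{t}=\exp\{\int_{0}^{t}f\,dN_{s}-\int_{0}^{t}(e^{f}-1)\lambda\,ds\}$ (with $\lambda=\lambda(\sum_{\tau\in\omega[0,s)\cup\omega^{-}}h(s-\tau))$) is, for $Q$-a.e.\ $\omega^{-}$, a mean-one $P^{\omega^{-}}$-martingale, since $f$ is bounded and $\mathbb{E}^{P^{\omega^{-}}}[\int_{0}^{1}\lambda\,ds]<\infty$ for a.e.\ $\omega^{-}$. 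Writing $\phi_{f}:=\log M^{f}_{1}=\int_{0}^{1}f\,dN_{s}-\int_{0}^{1}(e^{f}-1)\lambda\,ds$, the right-hand functional in the statement is, by definition, $\Phi(f,Q):=\mathbb{E}^{Q}[\int_{0}^{1}\lambda(1-e^{f})\,ds+\int_{0}^{1}f\,dN_{s}]=\mathbb{E}^{Q}[\phi_{f}]$.

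For the inequality $\sup_{f}\Phi(f,Q)\le H(Q)$ I would apply the Donsker--Varadhan entropy bound conditionally on the past: for every admissible $f$ and $Q$-a.e.\ $\omega^{-}$,
\[
h(Q^{\omega^{-}},P^{\omega^{-}})_{\mathcal{F}^{0}_{1}}\ge\mathbb{E}^{Q^{\omega^{-}}}[\phi_{f}]-\log\mathbb{E}^{P^{\omega^{-}}}[e^{\phi_{f}}]=\mathbb{E}^{Q^{\omega^{-}}}[\phi_{f}],
\]
because $\mathbb{E}^{P^{\omega^{-}}}[e^{\phi_{f}}]=\mathbb{E}^{P^{\omega^{-}}}[M^{f}_{1}]=1$. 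Integrating against $Q(d\omega^{-})$ gives $H(Q)\ge\mathbb{E}^{Q}[\phi_{f}]=\Phi(f,Q)$, and taking the supremum over $f$ finishes this half. All integrals are well defined and finite because $Q\in\mathcal{M}_{S}(\Omega)$ forces $\mathbb{E}^{Q}[N[0,1]]<\infty$, hence $\mathbb{E}^{Q}[\int_{0}^{1}\lambda\,ds]<\infty$ by the Lipschitz bound on $\lambda(\cdot)$, while $f$ and $e^{f}$ are bounded.

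For the reverse inequality I would split on absolute continuity. If $Q^{\omega^{-}}\ll P^{\omega^{-}}$ on $\mathcal{F}^{0}_{1}$ for $Q$-a.e.\ $\omega^{-}$, then by Girsanov $N_{t}$ has a predictable $Q$-intensity $\hat\lambda$ and \eqref{HFunction} holds; the formal optimizer in $\Phi(\cdot,Q)$ is $\hat f:=\log(\hat\lambda/\lambda)$, so I would use the truncations $f_{n}:=(\hat f\wedge n)\vee(-n)$, which are bounded and $\mathcal{F}^{-\infty}_{s}$-predictable. Using the compensation identity $\mathbb{E}^{Q}[\int_{0}^{1}f_{n}\,dN_{s}]=\mathbb{E}^{Q}[\int_{0}^{1}f_{n}\hat\lambda\,ds]$ recorded above, $\Phi(f_{n},Q)=\mathbb{E}^{Q}[\int_{0}^{1}(\lambda(1-e^{f_{n}})+f_{n}\hat\lambda)\,ds]$; a direct computation shows that the integrand is nonnegative and increases pointwise to $\lambda-\hat\lambda+\hat\lambda\log(\hat\lambda/\lambda)$, the nonnegative integrand of \eqref{HFunction} (Lemma \ref{positivity}), as $n\to\infty$, so monotone convergence yields $\Phi(f_{n},Q)\uparrow H(Q)$ whether $H(Q)$ is finite or not. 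If instead $Q^{\omega^{-}}\not\ll P^{\omega^{-}}$ for $\omega^{-}$ in a set of positive $Q$-measure, then $H(Q)=\infty$, and I would make $\Phi(f,Q)$ arbitrarily large by localizing $f$ in the time variable near a region where $Q^{\omega^{-}}$ charges a $P^{\omega^{-}}$-null event: a narrow tall bump $f=M\varphi_{\delta(M)}(s)$ with $\delta(M)\to 0$ fast makes $\mathbb{E}^{Q}[\int_{0}^{1}f\,dN_{s}]\gtrsim M$ while $\mathbb{E}^{Q}[\int_{0}^{1}(e^{f}-1)\lambda\,ds]\to 0$.

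The last issue, and the one I expect to be the main obstacle, is that the truncations $f_{n}$ are only predictable, not continuous, whereas the supremum ranges over $\mathcal{B}(\mathcal{F}^{-\infty}_{s})\cap C(\Omega\times\mathbb{R})$; so I must verify that continuous predictable test functions already exhaust $H(Q)$. I would do this by $L^{1}$-approximation: for fixed $n$, $f_{n}$ lies in $L^{1}(d\mu)$ for the finite measure $d\mu=(\lambda+\hat\lambda+1)\,ds\,dQ$ on $\Omega\times[0,1]$; since $\Omega$ is Polish, bounded continuous predictable functions are $L^{1}(\mu)$-dense among predictable functions (a monotone-class argument starting from predictable indicators), and since $x\mapsto e^{x}$ is Lipschitz on $[-n,n]$, any approximant $g$ with $\|g\|_{\infty}\le n$ obeys $|\Phi(g,Q)-\Phi(f_{n},Q)|\le(1+e^{n})\|g-f_{n}\|_{L^{1}(\mu)}$, which can be made as small as we wish. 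Combining the two halves gives the identity; the conceptual core is just the mean-one exponential martingale and the Donsker--Varadhan entropy inequality, while the genuine care is needed in this density step and in the non-absolutely-continuous case.
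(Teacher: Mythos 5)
Your two main steps — the bound $\sup_{f}\Phi(f,Q)\le H(Q)$ via the mean-one exponential martingale (equivalently the pointwise inequality $\hat{\lambda}f+(1-e^{f})\lambda\le\lambda-\hat{\lambda}+\hat{\lambda}\log(\hat{\lambda}/\lambda)$), and the lower bound in the absolutely continuous case via truncations $f_{n}$ of $\log(\hat{\lambda}/\lambda)$ with monotone convergence — are correct and run parallel to the paper's argument; your monotone-convergence justification is in fact more explicit than the paper's one-line appeal to Fatou, and the continuity/predictability density issue you flag is glossed over in the paper as well, so I do not count it against you.

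The genuine gap is your treatment of the case where $Q^{\omega^{-}}\not\ll P^{\omega^{-}}$ on a set of positive $Q$-measure. There you must show the supremum is infinite, and your proposed construction — a deterministic tall narrow bump $f=M\varphi_{\delta(M)}(s)$ in the time variable — does not do this. Failure of absolute continuity here means (by the cited equivalence, since both conditional laws have finite mean) that the compensator $\hat{A}_{t}$ of $N$ under $Q^{\omega^{-}}$ has a part singular with respect to $A_{t}=\int_{0}^{t}\lambda\,ds$, i.e.\ singular in $t$; but the support of that singular part is random, depending on $\omega^{-}$ and on the path. A test function localized in a fixed small time window gives $\mathbb{E}^{Q}[\int_{0}^{1}f\,dN_{s}]=\mathbb{E}^{Q}[\int_{0}^{1}f\,d\hat{A}_{s}]$ of order $M\,\hat{A}(\text{window})$, and there is no reason $Q$ places mass of order one on any prescribed deterministic window while $\int_{0}^{1}(e^{f}-1)\lambda\,ds$ stays small; making the bump adapted to the random singular set while keeping $f$ bounded, continuous and $\mathcal{F}^{-\infty}_{s}$-predictable is precisely the hard part, and your sketch does not address it. The paper avoids exhibiting any blowing-up test function: it argues contrapositively, assuming $\sup_{f}\Phi(f,Q)<\infty$, introducing the perturbed reference law $P^{\omega^{-}}_{\epsilon}$ with compensator $A_{t}+\epsilon\hat{A}_{t}$ (so that $Q^{\omega^{-}}\ll P^{\omega^{-}}_{\epsilon}$ automatically), bounding $\mathbb{E}^{Q}[h(Q^{\omega^{-}},P^{\omega^{-}}_{\epsilon})_{\mathcal{F}^{0}_{1}}]$ uniformly in $\epsilon$ by the assumed finite supremum plus harmless terms, and then letting $\epsilon\downarrow 0$ using weak convergence $P^{\omega^{-}}_{\epsilon}\to P^{\omega^{-}}$ and lower semicontinuity of relative entropy to conclude $H(Q)<\infty$. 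You need either this perturbation argument or a genuinely adapted construction of test functions concentrating on the random singular support; as written, this case of your proof fails.
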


\begin{proof}
$\mathbb{E}^{Q}[N[0,1]]<\infty$ implies that $\mathbb{E}^{Q^{\omega^{-}}}[N[0,1]]<\infty$ for almost every $\omega^{-}$ under $Q$, 
also $\sum_{\tau\in\omega^{-}}h(-\tau)<\infty$ since $\mathbb{E}^{Q}[\sum_{\tau\in\omega^{-}}h(-\tau)]
=\Vert h\Vert_{L^{1}}\mathbb{E}^{Q}[N[0,1]]<\infty$. Thus,
\begin{align}
\mathbb{E}^{P^{\omega^{-}}}[N[0,1]]&=\mathbb{E}^{P^{\omega^{-}}}\left[\int_{0}^{1}\lambda\left(\sum_{\tau\in\omega[0,s)\cup\omega^{-}}h(s-\tau)\right)ds\right]
\\
&\leq C_{\epsilon}+\epsilon h(0)\mathbb{E}^{P^{\omega^{-}}}[N[0,1]]+\epsilon\sum_{\tau\in\omega^{-}}h(-\tau)<\infty,\nonumber
\end{align}
so $\mathbb{E}^{P^{\omega^{-}}}[N[0,1]]<\infty$ by choice of $\epsilon<\frac{1}{h(0)}$.

By the theory of absolute continuity of point processes, see for example Chapter 13 of Daley and Vere-Jones \cite{Daley}, 
if $\mathbb{E}^{Q^{\omega^{-}}}[N[0,1]],\mathbb{E}^{P^{\omega^{-}}}[N[0,1]]<\infty$, $Q^{\omega^{-}}\ll P^{\omega^{-}}$ 
if and only if $\hat{A}_{t}\ll A_{t}$, where $\hat{A}_{t}$ and $A_{t}=\int_{0}^{t}\lambda(\omega^{-},\omega,s)ds$ 
are the compensators of $N_{t}$ under $Q^{\omega^{-}}$ and $P^{\omega^{-}}$ respectively. 
If that's the case, we can write $\hat{A}_{t}=\int_{0}^{t}\hat{\lambda}(\omega^{-},\omega,s)ds$ for some $\hat{\lambda}$ and there is Girsanov formula
\begin{equation}
\log\frac{dQ^{\omega^{-}}}{dP^{\omega^{-}}}\bigg|_{\mathcal{F}^{0}_{1}}=\int_{0}^{1}(\lambda-\hat{\lambda})ds
+\int_{0}^{1}\log\left(\hat{\lambda}/\lambda\right)dN_{s},
\end{equation}
which implies that
\begin{equation}
H(Q)=\mathbb{E}^{Q}\left[\int_{0}^{1}\lambda-\hat{\lambda}+\log\left(\hat{\lambda}/\lambda\right)\hat{\lambda}ds\right].
\end{equation}
For any $f$, $\hat{\lambda}f+(1-e^{f})\lambda\leq\hat{\lambda}\log(\hat{\lambda}/\lambda)+\lambda-\hat{\lambda}$ 
and the equality is achieved when $f=\log(\hat{\lambda}/\lambda)$. Thus, clearly, we have
\begin{equation}
\sup_{f(\omega,s)\in\mathcal{B}(\mathcal{F}^{-\infty}_{s})\cap C(\Omega\times\mathbb{R}),0\leq s\leq 1}\mathbb{E}^{Q}
\left[\int_{0}^{1}\lambda(1-e^{f})ds+\int_{0}^{1}fdN_{s}\right]\leq H(Q).
\end{equation}
On the other hand, we can always find a sequence $f_{n}$ convergent to $\log(\hat{\lambda}/\lambda)$ and by Fatou's lemma, we get the 
opposite inequality.

Now, assume that we do not have $Q^{\omega^{-}}\ll P^{\omega^{-}}$ for a.e. $\omega^{-}$ under $Q$. That implies that $H(Q)=\infty$.
We want to show that
\begin{equation}
\sup_{f(\omega,s)\in\mathcal{B}(\mathcal{F}^{-\infty}_{s})\cap C(\Omega\times\mathbb{R}),0\leq s\leq 1}
\mathbb{E}^{Q}\left[\int_{0}^{1}\lambda(1-e^{f})ds+\int_{0}^{1}fdN_{s}\right]=\infty.
\end{equation}
Let us assume that
\begin{equation}
\sup_{f(\omega,s)\in\mathcal{B}(\mathcal{F}^{-\infty}_{s})\cap C(\Omega\times\mathbb{R}),0\leq s\leq 1}
\mathbb{E}^{Q}\left[\int_{0}^{1}\lambda(1-e^{f})ds+\int_{0}^{1}fdN_{s}\right]<\infty.
\end{equation}
We want to prove that $H(Q)<\infty$.

Let $P^{\omega^{-}}_{\epsilon}$ be the point process on $[0,1]$ with compensator $A_{t}+\epsilon\hat{A}_{t}$. 
Clearly $\hat{A}_{t}\ll A_{t}+\epsilon\hat{A}_{t}$ and $Q^{\omega^{-}}\ll P^{\omega^{-}}_{\epsilon}$.

For any $f$,
\begin{align}
&\mathbb{E}^{Q}\left[\int_{0}^{1}(1-e^{f})d(A_{s}+\epsilon\hat{A}_{s})+fd\hat{A}_{s}\right]
\\
&=\mathbb{E}^{Q}\left[\int_{0}^{1}(1-e^{f})\chi_{f<0}d(A_{s}+\epsilon\hat{A}_{s})+f\chi_{f<0}d\hat{A}_{s}\right]\nonumber
\\
&+\mathbb{E}^{Q}\left[\int_{0}^{1}(1-e^{f})\chi_{f\geq 0}d(A_{s}+\epsilon\hat{A}_{s})+f\chi_{f\geq 0}d\hat{A}_{s}\right]\nonumber
\\
&\leq\mathbb{E}^{Q}\left[\int_{0}^{1}d(A_{s}+\epsilon\hat{A}_{s})\right]+\mathbb{E}^{Q}\left[\int_{0}^{1}(1-e^{f})
\chi_{f\geq 0}dA_{s}+f\chi_{f\geq 0}d\hat{A}_{s}\right]\nonumber
\\
&=\mathbb{E}^{Q}\left[\int_{0}^{1}d(A_{s}+\epsilon\hat{A}_{s})\right]
+\mathbb{E}^{Q}\left[\int_{0}^{1}(1-e^{f\chi_{f\geq 0}})dA_{s}+f\chi_{f\geq 0}d\hat{A}_{s}\right]\nonumber
\\
&\leq C_{\delta}+\delta(h(0)+\Vert h\Vert_{L^{1}})\mathbb{E}^{Q}[N[0,1]]\nonumber
\\
&+\sup_{f(\omega,s)\in\mathcal{B}(\mathcal{F}^{-\infty}_{s})\cap C(\Omega\times\mathbb{R}),0\leq s\leq 1}
\mathbb{E}^{Q}\left[\int_{0}^{1}\lambda(1-e^{f})ds+\int_{0}^{1}fdN_{s}\right]<\infty.\nonumber
\end{align}
Therefore,
\begin{align}
\infty&>\liminf_{\epsilon\downarrow 0}\sup_{f(\omega,s)\in\mathcal{B}(\mathcal{F}^{-\infty}_{s})
\cap C(\Omega\times\mathbb{R}),0\leq s\leq 1}\mathbb{E}^{Q}\left[\int_{0}^{1}(1-e^{f})d(A_{s}+\epsilon\hat{A}_{s})+fd\hat{A}_{s}\right]
\\
&=\liminf_{\epsilon\downarrow 0}\sup_{f(\omega,s)\in\mathcal{B}(\mathcal{F}^{-\infty}_{s})
\cap C(\Omega\times\mathbb{R}),0\leq s\leq 1}\nonumber
\\
&\qquad\qquad\qquad\qquad\qquad\mathbb{E}^{Q}\left[\int_{0}^{1}\left(1-e^{f}
+f\cdot\frac{d\hat{A}_{s}}{d(A_{s}+\epsilon\hat{A}_{s})}\right)d(A_{s}+\epsilon\hat{A}_{s})\right]\nonumber
\\
&=\liminf_{\epsilon\downarrow 0}\mathbb{E}^{Q}[h(Q^{\omega^{-}},P^{\omega^{-}}_{\epsilon})_{\mathcal{F}^{0}_{1}}]\nonumber
\\
&=\mathbb{E}^{Q}[h(Q^{\omega^{-}},P^{\omega^{-}})_{\mathcal{F}^{0}_{1}}]=H(Q),\nonumber
\end{align}
by lower semicontinuity of the relative entropy $h(\cdot,\cdot)$, Fatou's lemma, and the fact that 
$P^{\omega^{-}}_{\epsilon}\rightarrow P^{\omega^{-}}$ weakly as $\epsilon\downarrow 0$. Hence $H(Q)<\infty$.
\end{proof}

\begin{lemma}\label{lscconvex}
$H(Q)$ is lower semicontinuous and convex in $Q$.
\end{lemma}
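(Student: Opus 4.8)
The plan is to read off both properties from the variational formula of Lemma~\ref{variational}. For $f=f(\omega,s)\in\mathcal{B}(\mathcal{F}^{-\infty}_{s})\cap C(\Omega\times\mathbb{R})$, $0\le s\le1$, set
\begin{equation}
\Phi_{f}(\omega):=\int_{0}^{1}\lambda(\omega,s)\bigl(1-e^{f(\omega,s)}\bigr)\,ds+\int_{0}^{1}f(\omega,s)\,dN_{s}(\omega),
\end{equation}
where $\lambda(\omega,s)=\lambda\bigl(\sum_{\tau\in\omega[0,s)\cup\omega^{-}}h(s-\tau)\bigr)$, so that $H(Q)=\sup_{f}\mathbb{E}^{Q}[\Phi_{f}]$. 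For each fixed $f$ the map $Q\mapsto\mathbb{E}^{Q}[\Phi_{f}]$ is \emph{affine} in $Q$, being the integral of a fixed function against $Q$. A pointwise supremum of affine functionals is convex, so $H$ is convex. (Convexity can also be obtained from the joint convexity of relative entropy, using the identity $H(Q)=h\bigl(Q|_{\mathcal{F}^{-\infty}_{1}},\,\bar{P}_{Q}\bigr)$, where $\bar{P}_{Q}$ extends the past-marginal $Q|_{\mathcal{F}^{-\infty}_{0}}$ by the Hawkes kernel $P^{\omega^{-}}$ on $[0,1]$ and hence depends affinely on $Q$.) Since a pointwise supremum of lower semicontinuous functions is lower semicontinuous, it remains only to show that $Q\mapsto\mathbb{E}^{Q}[\Phi_{f}]$ is lower semicontinuous --- in fact continuous --- on $\mathcal{M}_{S}(\Omega)$ for each fixed $f$.

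The obstacle is that $\Phi_{f}$ is neither bounded nor continuous in the vague topology on $\Omega$: although $|1-e^{f}|$ and $|f|$ are bounded by a constant $C(f)$, the intensity $\lambda(\omega,s)$ is unbounded and, since $h$ need not have compact support, it depends on points of $\omega$ arbitrarily far in the past, which vague convergence does not control. I would handle this by a memory truncation. Let $\Phi_{f}^{(T)}$ be obtained from $\Phi_{f}$ by replacing $\sum_{\tau\in\omega[0,s)\cup\omega^{-}}h(s-\tau)$ with $\sum_{\tau\in\omega\cap(s-T,s)}h(s-\tau)$. Using $\lambda(z)\le\lambda(0)+\alpha z$, the $\alpha$-Lipschitz bound, and the stationarity of $Q$ (so that the first moment measure of $Q$ is $\mathbb{E}^{Q}[N[0,1]]$ times Lebesgue measure), one gets
\begin{equation}
\mathbb{E}^{Q}\bigl[\,\bigl|\Phi_{f}-\Phi_{f}^{(T)}\bigr|\,\bigr]\le C(f)\,\alpha\,\mathbb{E}^{Q}[N[0,1]]\int_{T}^{\infty}h(v)\,dv,
\end{equation}
which tends to $0$ as $T\to\infty$, uniformly over any family of $Q$'s with bounded first moment. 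On the other hand, for fixed $T$ the functional $\Phi_{f}^{(T)}$ is $Q$-almost surely vaguely continuous for every $Q\in\mathcal{M}_{S}(\Omega)$: the memory-truncated integral term is vaguely continuous after the $ds$-integration (local point counts are controlled by vague convergence, and the $s$-set where an atom of $\omega$ sits on a window boundary is Lebesgue-null), while the jump term $\int_{0}^{1}f\,dN_{s}$ is vaguely continuous off the $Q$-null set of configurations with an atom at $0$ or $1$; moreover $\Phi_{f}^{(T)}$ is dominated by $c(f,T)\bigl(1+N[-T,1]\bigr)$ for a constant $c(f,T)$, whose $Q$-mean is an affine function of $\mathbb{E}^{Q}[N[0,1]]$ by stationarity.

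Now suppose $Q_{n}\to Q$ in $\mathcal{M}_{S}(\Omega)$. By definition of this topology, $Q_{n}\to Q$ weakly and $\mathbb{E}^{Q_{n}}[N[0,1]]\to\mathbb{E}^{Q}[N[0,1]]$, so in particular $\sup_{n}\mathbb{E}^{Q_{n}}[N[0,1]]<\infty$. For fixed $T$, weak convergence together with the $Q$-a.s.\ continuity and the domination --- the $Q_{n}$-mean of the dominating function converging yields uniform integrability of $\Phi_{f}^{(T)}$ along $(Q_{n})$ --- gives $\mathbb{E}^{Q_{n}}[\Phi_{f}^{(T)}]\to\mathbb{E}^{Q}[\Phi_{f}^{(T)}]$. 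Feeding this and the uniform error bound above into a standard $\varepsilon/3$ argument gives $\mathbb{E}^{Q_{n}}[\Phi_{f}]\to\mathbb{E}^{Q}[\Phi_{f}]$, hence $Q\mapsto\mathbb{E}^{Q}[\Phi_{f}]$ is continuous and the lemma follows. The main difficulty is exactly this failure of $\Phi_{f}$ to be bounded and vaguely continuous; the resolution hinges on the topology of $\mathcal{M}_{S}(\Omega)$ having been strengthened by the convergence of $\mathbb{E}^{Q}[N[0,1]]$, which is precisely what lets one control the unbounded intensity and the infinitely long memory uniformly along convergent sequences.
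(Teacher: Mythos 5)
Your proposal is correct and follows essentially the same route as the paper: both read convexity and lower semicontinuity off the variational formula of Lemma~\ref{variational}, establish continuity of each linear functional $Q\mapsto\mathbb{E}^{Q}[\Phi_{f}]$ by truncating the memory of $h$ (your $\Phi_{f}^{(T)}$ is the paper's $\lambda^{M}$ with $h^{M}=h\chi_{s\le M}$), and use the first-moment convergence built into the topology on $\mathcal{M}_{S}(\Omega)$ to control both the truncation error uniformly and the unbounded dominating functions. Your treatment of the almost-sure vague continuity and uniform integrability is somewhat more explicit than the paper's, but the argument is the same.
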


\begin{proof}
By Lemma \ref{variational}, we can rewrite $H(Q)$ as
\begin{align}
H(Q)&=\sup_{f(\omega,s)\in\mathcal{B}(\mathcal{F}^{-\infty}_{s})\cap C(\Omega\times\mathbb{R}),0\leq s\leq 1}
\mathbb{E}^{Q}\left[\int_{0}^{1}\lambda(1-e^{f})+\hat{\lambda}fds\right]
\\
&=\sup_{f(\omega,s)\in\mathcal{B}(\mathcal{F}^{-\infty}_{s})\cap C(\Omega\times\mathbb{R}),0\leq s\leq 1}
\mathbb{E}^{Q}\left[\int_{0}^{1}\lambda(1-e^{f})ds+\int_{0}^{1}fdN_{s}\right].\nonumber
\end{align}
If $Q_{n}\rightarrow Q$, then $\mathbb{E}^{Q_{n}}[N[0,1]]\rightarrow\mathbb{E}^{Q}[N[0,1]]$ and $Q_{n}\rightarrow Q$ weakly. 
Since $f(\omega,s)\in C(\Omega\times\mathbb{R})\cap\mathcal{B}(\mathcal{F}^{-\infty}_{s})$, $\int_{0}^{1}f(\omega,s)dN_{s}$ is continuous 
on $\Omega$, and since $f$ is uniformly bounded, $\int_{0}^{1}f(\omega,s)dN_{s}\leq\Vert f\Vert_{L^{\infty}}N[0,1]$. Hence,
\begin{equation}
\mathbb{E}^{Q_{n}}\left[\int_{0}^{1}f(\omega,s)dN_{s}\right]\rightarrow\mathbb{E}^{Q}\left[\int_{0}^{1}f(\omega,s)dN_{s}\right].
\end{equation}
Let $\lambda^{M}=\lambda\left(\sum_{\tau<s}h^{M}(s-\tau)\right)$, where $h^{M}(s)=h(s)\chi_{s\leq M}$. 
Then, $\lambda^{M}(\omega,s)\in C(\Omega\times\mathbb{R})$ and thus $\int_{0}^{1}\lambda^{M}(1-e^{f(\omega,s)})ds\in C(\Omega)$. 
Also, $\int_{0}^{1}\lambda^{M}(1-e^{f(\omega,s)})ds\leq K(1+e^{\Vert f\Vert_{L^{\infty}}})N[-M,1]$, where $K>0$ is some constant. 
Therefore,
\begin{equation}
\mathbb{E}^{Q_{n}}\left[\int_{0}^{1}\lambda^{M}(1-e^{f(\omega,s)})ds\right]
\rightarrow\mathbb{E}^{Q}\left[\int_{0}^{1}\lambda^{M}(1-e^{f(\omega,s)})ds\right]
\end{equation} 
as $n\rightarrow\infty$. Next, notice that
\begin{align}
&\left|\mathbb{E}^{Q}\left[\int_{0}^{1}\lambda^{M}(1-e^{f(\omega,s)})ds\right]-\mathbb{E}^{Q}\left[\int_{0}^{1}\lambda(1-e^{f(\omega,s)})ds\right]\right|
\\
&\leq\mathbb{E}^{Q}(1+e^{\Vert f\Vert_{L^{\infty}}})\alpha\mathbb{E}^{Q}[N[0,1]]\int_{M}^{\infty}h(s)ds\rightarrow 0\nonumber
\end{align}
as $M\rightarrow\infty$. Similarly, we have
\begin{equation}
\limsup_{M\rightarrow\infty}\limsup_{n\rightarrow\infty}\left|\mathbb{E}^{Q_{n}}
\left[\int_{0}^{1}\lambda^{M}(1-e^{f(\omega,s)})ds\right]-\mathbb{E}^{Q_{n}}\left[\int_{0}^{1}\lambda(1-e^{f(\omega,s)})ds\right]\right|=0.
\end{equation}
Hence,
\begin{equation}
\mathbb{E}^{Q_{n}}\left[\int_{0}^{1}\lambda(\omega,s)(1-e^{f(\omega,s)})ds\right]
\rightarrow\mathbb{E}^{Q}\left[\int_{0}^{1}\lambda(\omega,s)(1-e^{f(\omega,s)})ds\right].
\end{equation}
The supremum is taken over a linear functional of $Q$, which is continuous in $Q$, 
therefore the supremum over these linear functionals will be lower semicontinuous. 
Similarly, since in the variational formula expression 
of $H(Q)$ in Lemma \ref{variational}, the supremum is taken over a linear functional of $Q$, $H(Q)$ is convex in $Q$.
\end{proof}

\begin{lemma}
$H(Q)$ is linear in $Q$.
\end{lemma}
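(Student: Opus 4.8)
Since $H$ is already known to be convex and lower semicontinuous by Lemma~\ref{lscconvex}, it suffices to establish the reverse (super‑additivity) inequality, and in fact the plan produces equality directly. The engine is an exact additivity‑in‑time identity: for every $Q\in\mathcal{M}_{S}(\Omega)$ and every $T\in\mathbb{N}$,
\begin{equation}
\mathbb{E}^{Q}\left[h(Q^{\omega^-},P^{\omega^-})_{\mathcal{F}^{0}_{T}}\right]=T\,H(Q).\label{additivityintime}
\end{equation}
To prove \eqref{additivityintime} one uses the chain rule for relative entropy together with the shift‑invariance of $Q$ and of $P$: conditioning first on the full past $\mathcal{F}^{-\infty}_{0}$ and then on $\mathcal{F}^{0}_{k}$ amounts to conditioning on $\mathcal{F}^{-\infty}_{k}$, so the chain rule gives
\[
\mathbb{E}^{Q}\!\left[h(Q^{\omega^-},P^{\omega^-})_{\mathcal{F}^{0}_{k+1}}\right]=\mathbb{E}^{Q}\!\left[h(Q^{\omega^-},P^{\omega^-})_{\mathcal{F}^{0}_{k}}\right]+\mathbb{E}^{Q}\!\left[h\big(Q(\cdot|\mathcal{F}^{-\infty}_{k}),P(\cdot|\mathcal{F}^{-\infty}_{k})\big)_{\mathcal{F}^{k}_{k+1}}\right],
\]
and the last term equals $H(Q)$ by stationarity; iterating from $k=0$, with the case $T=1$ being the definition of $H(Q)$, yields \eqref{additivityintime}. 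Here $P^{\omega^-}$ is the genuine Hawkes dynamics on $[0,T]$ for $Q$‑a.e.\ past, since $h$ is decreasing and $\mathbb{E}^{Q}[N[0,1]]<\infty$ force $\sum_{\tau\in\omega^-}h(s-\tau)\le\sum_{\tau\in\omega^-}h(-\tau)<\infty$ for all $s\ge 0$.

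Now fix $Q_{1},Q_{2}\in\mathcal{M}_{S}(\Omega)$ and $\alpha\in(0,1)$, write $\alpha_{1}=\alpha$, $\alpha_{2}=1-\alpha$, and set $Q=\alpha_{1}Q_{1}+\alpha_{2}Q_{2}$, which is again stationary with finite first moment, so $Q\in\mathcal{M}_{S}(\Omega)$. Let $\pi,\pi_{1},\pi_{2}$ be the laws of the past under $Q,Q_{1},Q_{2}$; then $\pi=\alpha_{1}\pi_{1}+\alpha_{2}\pi_{2}$, so $\pi_{i}\ll\pi$, and $w_{i}:=\alpha_{i}\,d\pi_{i}/d\pi$ satisfies $w_{1}+w_{2}=1$ and $0\le w_{i}\le 1$ $\pi$‑a.s. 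By the defining property of regular conditional probabilities, $Q^{\omega^-}=w_{1}(\omega^-)Q_{1}^{\omega^-}+w_{2}(\omega^-)Q_{2}^{\omega^-}$ for $\pi$‑a.e.\ $\omega^-$, as measures on each $\mathcal{F}^{0}_{T}$. Apply the elementary mixture identity $\sum_{i}w_{i}h(\mu_{i},\rho)=h\big(\sum_{i}w_{i}\mu_{i},\rho\big)+\sum_{i}w_{i}h\big(\mu_{i},\sum_{j}w_{j}\mu_{j}\big)$ (valid in $[0,\infty]$) with $\mu_{i}=Q_{i}^{\omega^-}$, $\rho=P^{\omega^-}$ on $\mathcal{F}^{0}_{T}$, and integrate against $\pi$; using $w_{i}\,d\pi=\alpha_{i}\,d\pi_{i}$ and \eqref{additivityintime} for $Q$ and for each $Q_{i}$, this becomes
\[
\alpha_{1}\,T\,H(Q_{1})+\alpha_{2}\,T\,H(Q_{2})=T\,H(Q)+\sum_{i=1}^{2}\alpha_{i}\,\mathbb{E}^{Q_{i}}\!\left[h\big(Q_{i}^{\omega^-},Q^{\omega^-}\big)_{\mathcal{F}^{0}_{T}}\right].
\]

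To conclude, note that $Q^{\omega^-}\ge w_{i}(\omega^-)Q_{i}^{\omega^-}$ forces $h(Q_{i}^{\omega^-},Q^{\omega^-})_{\mathcal{F}^{0}_{T}}\le\log(1/w_{i}(\omega^-))$, so that
\[
0\le\mathbb{E}^{Q_{i}}\!\left[h(Q_{i}^{\omega^-},Q^{\omega^-})_{\mathcal{F}^{0}_{T}}\right]\le\int\log\frac{1}{w_{i}}\,d\pi_{i}=\log\frac{1}{\alpha_{i}}-h(\pi_{i},\pi)\le\log\frac{1}{\alpha_{i}},
\]
a bound uniform in $T$. Dividing the previous display by $T$ and letting $T\to\infty$ makes the remainder vanish, giving $\alpha H(Q_{1})+(1-\alpha)H(Q_{2})=H(Q)$; the lower bound on the remainder alone re‑proves convexity and the uniform upper bound supplies concavity, and infinite values of $H$ cause no trouble since every relation above holds in $[0,\infty]$. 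The main obstacle is establishing \eqref{additivityintime} cleanly: one must justify that conditioning on the full past and then on a finite future block coincides with conditioning on the truncated past, that $P^{\omega^-}$ is a legitimate conditional law with finite intensities on each $\mathcal{F}^{0}_{T}$, and that the chain‑rule and mixture manipulations remain valid when some of the entropies involved are infinite; the rest is bookkeeping.
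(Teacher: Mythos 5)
Your proof is correct, but it follows a genuinely different route from the paper. The paper invokes the Donsker--Varadhan measurable ergodic decomposition: there is an $\mathcal{F}^{-\infty}_{0}$-measurable, $Q$-independent ``universal'' kernel $\hat{Q}^{\omega^{-}}$ that coincides $Q$-a.s.\ with the conditional law $Q^{\omega^{-}}$ for every $Q\in\mathcal{M}_{S}(\Omega)$, so that $H(Q)=\mathbb{E}^{Q}\bigl[h(\hat{Q}^{\omega^{-}},P^{\omega^{-}})_{\mathcal{F}^{0}_{1}}\bigr]$ is the integral of a fixed nonnegative function of $\omega^{-}$ against $Q$ and is therefore trivially affine. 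You instead prove the additivity-in-time identity $\mathbb{E}^{Q}[h(Q^{\omega^{-}},P^{\omega^{-}})_{\mathcal{F}^{0}_{T}}]=TH(Q)$ via the entropy chain rule, the tower property of regular conditional probabilities, and the time-homogeneity of the Hawkes conditional dynamics, and then combine the exact compensation identity for mixtures with the pointwise bound $h(Q_{i}^{\omega^{-}},Q^{\omega^{-}})_{\mathcal{F}^{0}_{T}}\le\log(1/w_{i})$, so that the correction term is bounded by $\sum_{i}\alpha_{i}\log(1/\alpha_{i})$ uniformly in $T$; dividing by $T$ kills it. This is the classical ``specific entropy is affine'' argument, and your identity \eqref{additivityintime} is in any case implicitly used in the paper (in the proof of Lemma \ref{mainlower}, where $\mathbb{E}^{Q}[\psi(\omega,t)]=tH(Q)$), so your route meshes well with the rest of the chapter. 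What each approach buys: the paper's argument is shorter but leans on the nontrivial measurable-selection fact behind the universal version $\hat{Q}^{\omega^{-}}$ (and on the invariant $\sigma$-field being past-measurable modulo null sets), whereas yours is self-contained modulo standard Polish-space facts (existence of regular conditional probabilities, chain rule and compensation identity valid in $[0,\infty]$, shift-covariance of $P^{\omega^{-}}$), all of which you correctly flag as the points needing care; your bound also gives the quantitative estimate $|H(\alpha Q_{1}+(1-\alpha)Q_{2})-\alpha H(Q_{1})-(1-\alpha)H(Q_{2})|\le\bigl(\alpha\log\frac{1}{\alpha}+(1-\alpha)\log\frac{1}{1-\alpha}\bigr)/T$ at each finite horizon, which the paper's argument does not exhibit.
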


\begin{proof}
It is in general true that the process-level entropy function $H(Q)$ is linear in $Q$. Following the arguments in Donsker and Varadhan \cite{Donsker},
there exists a subset $\Omega_{0}\subset\Omega$ which is $\mathcal{F}^{-\infty}_{0}$ measurable and a $\mathcal{F}^{-\infty}_{0}$ 
measurable map $\hat{Q}:\Omega_{0}\rightarrow\mathcal{M}_{E}(\Omega)$ such that $Q(\Omega_{0})=1$ for all $Q\in\mathcal{M}_{S}(\Omega)$
and $Q(\omega:\hat{Q}=Q)=1$ for all $Q\in\mathcal{M}_{E}(\Omega)$. Therefore, there exists a universal version,
say $\hat{Q}^{\omega^{-}}$ independent of $Q$ such that $\int\hat{Q}^{\omega^{-}}Q(d\omega^{-})=Q$. Since that is
true for all $Q\in\mathcal{M}_{E}(\Omega)$, it also holds for $Q\in\mathcal{M}_{S}(\Omega)$. Hence, 
\begin{equation}
H(Q)=\mathbb{E}^{Q}\left[h(Q^{\omega^{-}},P^{\omega^{-}})_{\mathcal{F}^{0}_{1}}\right]
=\mathbb{E}^{Q}\left[h(\hat{Q}^{\omega^{-}},P^{\omega^{-}})_{\mathcal{F}^{0}_{1}}\right],
\end{equation}
i.e. $H(Q)$ is linear in $Q$.
\end{proof}

In this chapter, we are proving the large deviation principle
for Hawkes processes started with empty history, i.e. with probability measure $P^{\emptyset}$. But when
time elapses, the Hawkes process generates points and that create a new history. We need
to understand how the history created affects the future. What we want to prove is some uniform
estimates to the effect that if the past history is well controlled, then the new history will also be well controlled.
This is essentially what the following Lemma \ref{mainlower} says. Consider the configuration of points
starting from time $0$ up to time $t$. We shift it by $t$ and denote that by $w_{t}$ such that $w_{t}\in\Omega^{-}$, where $\Omega^{-}$ is
$\Omega$ restricted to $\mathbb{R}^{-}$.
These notations will be used in Lemma \ref{mainlower}.

\begin{remark}
At the very beginning of the chapter, we defined $\omega_{t}$. It should not be confused with $w_{t}$ in this section.
\end{remark}

\begin{lemma}\label{mainlower}
For any $Q\in\mathcal{M}_{E}(\Omega)$ such that $H(Q)<\infty$ and any open neighborhood $N$ of $Q$, 
there exists some $K^{-}_{\ell}$ such that $\emptyset\in K^{-}_{\ell}$ and $Q(K^{-}_{\ell})\rightarrow 1$ as $\ell\rightarrow\infty$ and
\begin{equation}
\liminf_{t\rightarrow\infty}\frac{1}{t}\inf_{w_{0}\in K^{-}_{\ell}}\log P^{w_{0}}(R_{t,\omega}\in N,w_{t}\in K^{-}_{\ell})\geq-H(Q).
\end{equation}
\end{lemma}

\begin{proof}
Let us abuse the notations a bit by defining
\begin{equation}
\lambda(\omega^{-})=\lambda\left(\sum_{\tau\in\omega^{-},\tau\in\omega[0,s)}h(s-\tau)\right).
\end{equation}

For any $t>0$, since $\lambda(\cdot)\geq c>0$ and $\lambda(\cdot)$ is Lipschitz with constant $\alpha$, we have
\begin{align}
\log\frac{dP^{\omega^{-}}}{dP^{w_{0}}}\bigg|_{\mathcal{F}^{0}_{t}}
&=\int_{0}^{t}\lambda(w_{0})-\lambda(\omega^{-})ds+\int_{0}^{t}\log\left(\frac{\lambda(\omega^{-})}{\lambda(w_{0})}\right)dN_{s}
\\
&\leq\int_{0}^{t}|\lambda(w_{0})-\lambda(\omega^{-})|ds
+\int_{0}^{t}\log\left(1+\frac{|\lambda(w_{0})-\lambda(\omega^{-})|}{\lambda(w_{0})}\right)dN_{s}\nonumber
\\
&\leq\int_{0}^{t}\alpha\sum_{\tau\in\omega^{-}\cup w_{0}}h(s-\tau)ds+\int_{0}^{t}\frac{\alpha}{c}\sum_{\tau\in\omega^{-}\cup w_{0}}h(s-\tau)dN_{s}.\nonumber
\end{align}
Define
\begin{equation}
K^{-}_{\ell}=\left\{\omega:N[-t,0](\omega)\leq\ell(1+t), \forall t>0\right\}.
\end{equation}
By the maximal ergodic theorem,
\begin{align}
Q((K^{-}_{\ell})^{c})&=Q\left(\sup_{t>0}\frac{N[-t,0]}{t+1}>\ell\right)
\\
&\leq Q\left(\sup_{t>0}\frac{N[-([t]+1),0]}{[t]+1}>\ell\right)\nonumber
\\
&=Q\left(\sup_{n\geq 1, n\in\mathbb{N}}\frac{N[-n,0]}{n}>\ell\right)\nonumber
\\
&\leq\frac{\mathbb{E}^{Q}[N[0,1]]}{\ell}\rightarrow 0
\end{align}
as $\ell\rightarrow\infty$. Thus $Q(K^{-}_{\ell})\rightarrow 1$ as $\ell\rightarrow\infty$.

Fix any $s>0$ and $\omega^{-}\in K^{-}_{\ell}$. Since $h$ is decreasing, $h'\leq 0$, integration by parts shows that
\begin{align}
\sum_{\tau\in\omega^{-}}h(s-\tau)&=\int_{0}^{\infty}h(s+\sigma)dN[-\sigma,0]
\\
&=-\int_{0}^{\infty}N[-\sigma,0]h'(s+\sigma)d\sigma\nonumber
\\
&\leq-\int_{0}^{\infty}\ell(1+\sigma)h'(s+\sigma)d\sigma\nonumber
\\
&=\ell h(s)+\ell\int_{0}^{\infty}h(s+\sigma)d\sigma\nonumber
\\
&=\ell h(s)+\ell H(s),\nonumber
\end{align}
where $H(t)=\int_{t}^{\infty}h(s)ds$.

Therefore, uniformly for $\omega_{-},w_{0}\in K^{-}_{\ell}$,
\begin{equation}
\int_{0}^{t}\alpha\sum_{\tau\in\omega^{-}\cup w_{0}}h(s-\tau)ds\leq 2\ell\alpha\Vert h\Vert_{L^{1}}+2\ell\alpha u(t),
\end{equation}
where $u(t)=\int_{0}^{t}H(s)ds$ and
\begin{equation}
\int_{0}^{t}\frac{\alpha}{c}\sum_{\tau\in\omega^{-}\cup w_{0}}h(s-\tau)dN_{s}\leq\frac{2\ell\alpha}{c}\int_{0}^{t}(h(s)+H(s))dN_{s}.
\end{equation}
Define
\begin{equation}
K^{+}_{\ell,t}=\left\{\omega:\frac{2\ell\alpha}{c}\int_{0}^{t}(h(s)+H(s))dN_{s}\leq\ell^{2}(\Vert h\Vert_{L^{1}}+u(t))\right\}.
\end{equation}
Then, uniformly in $t>0$,
\begin{equation}
Q((K^{+}_{\ell,t})^{c})\leq\frac{2\alpha\mathbb{E}^{Q}[N[0,1]]}{c\cdot\ell}\rightarrow 0,
\end{equation}
as $\ell\rightarrow\infty$. Thus $\inf_{t>0}Q(K^{+}_{\ell,t})\rightarrow 1$ as $\ell\rightarrow\infty$.

Hence, uniformly for $\omega_{-},w_{0}\in K^{-}_{\ell}$ and $\omega\in K^{+}_{\ell,t}$,
\begin{align}
\log\frac{dP^{\omega^{-}}}{dP^{w_{0}}}\bigg|_{\mathcal{F}^{0}_{t}}&\leq 2\ell\alpha\Vert h\Vert_{L^{1}}+2\ell\alpha u(t)+\ell^{2}(\Vert h\Vert_{L^{1}}+u(t))
\\
&=C_{1}(\ell)+C_{2}(\ell)u(t),
\end{align}
where $C_{1}(\ell)=2\ell\alpha\Vert h\Vert_{L^{1}}+\ell^{2}\Vert h\Vert_{L^{1}}$ and $C_{2}(\ell)=2\ell\alpha+\ell^{2}$.

Observe that
\begin{equation}
\limsup_{t\rightarrow\infty}\frac{u(t)}{t}=\limsup_{t\rightarrow\infty}\frac{1}{t}\int_{0}^{t}H(s)ds=0.
\end{equation}

Let $D_{t}=\{R_{t,\omega}\in N,w_{t}\in K^{-}_{\ell}\}$. 

Uniformly for $w_{0}\in K^{-}_{\ell,t}$,
\begin{align}
&P^{w_{0}}(D_{t})
\\
&\geq e^{-t(H(Q)+\epsilon)-C_{1}(\ell)-C_{2}(\ell)u(t)}\nonumber
\\
&\cdot Q\left[D_{t}\cap\left\{\frac{1}{t}\log\frac{dP^{\omega^{-}}}{dQ^{\omega^{-}}}\bigg|_{\mathcal{F}^{0}_{t}}
\leq H(Q)+\epsilon\right\}\cap\left\{\log\frac{dP^{\omega^{-}}}{dP^{w_{0}}}\bigg|_{\mathcal{F}^{0}_{t}}
\leq C_{1}(\ell)+C_{2}(\ell)u(t)\right\}\right]\nonumber
\\
&\geq e^{-t(H(Q)+\epsilon)-C_{1}(\ell)-C_{2}(\ell)u(t)}\nonumber
\\
&\qquad\qquad\qquad\qquad\cdot 
Q\left[D_{t}\cap\left\{\frac{1}{t}\log\frac{dP^{\omega^{-}}}{dQ^{\omega^{-}}}\bigg|_{\mathcal{F}^{0}_{t}}
\leq H(Q)+\epsilon\right\}\cap\{K^{+}_{\ell,t}\cap K^{-}_{\ell}\}\right].\nonumber
\end{align}

Since $Q\in\mathcal{M}_{E}(\Omega)$, by ergodic theorem,
\begin{equation}
\lim_{t\rightarrow\infty}Q(R_{t,\omega}\in N)=1,
\end{equation}
and since $\psi(\omega,t)=\log\frac{dQ^{\omega}}{dP^{\omega}}\big|_{\mathcal{F}^{0}_{t}}$ satisfies,
\begin{equation}
\psi(\omega,t+s)=\psi(\omega,t)+\psi(\theta_{t}\omega,s),\quad\mathbb{E}^{Q}[\psi(\omega,t)]=tH(Q),
\end{equation}
for almost every $\omega^{-}$ under $Q$,
\begin{equation}
\lim_{t\rightarrow\infty}\frac{1}{t}\log\frac{dP^{\omega^{-}}}{dQ^{\omega^{-}}}\bigg|_{\mathcal{F}^{0}_{t}}=H(Q).
\end{equation}
$Q$ is stationary, so $Q(w_{t}\in K^{-}_{\ell})\geq Q(K^{-}_{\ell})\rightarrow 1$ as $\ell\rightarrow\infty$. 
Also, $Q(K^{+}_{\ell,t})\geq\inf_{t>0}Q(K^{+}_{\ell,t})\rightarrow 1$ as $\ell\rightarrow\infty$. 
Remember that $\limsup_{t\rightarrow\infty}\frac{u(t)}{t}=0$. By choosing $\ell$ big enough, we conclude that
\begin{equation}
\liminf_{t\rightarrow\infty}\frac{1}{t}\inf_{w_{0}\in K^{-}_{\ell}}\log P^{w_{0}}(R_{t,\omega}\in N,w_{t}\in K^{-}_{\ell})\geq-H(Q)-\epsilon.
\end{equation}
Since it holds for any $\epsilon>0$, we get the desired result.
\end{proof}

\begin{theorem}[Lower Bound]
For any open set $G$, 
\begin{equation}
\liminf_{t\rightarrow\infty}\frac{1}{t}\log P(R_{t,\omega}\in G)\geq-\inf_{Q\in G}H(Q).
\end{equation}
\end{theorem}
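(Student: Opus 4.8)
The plan is to reduce the statement to ergodic measures, for which Lemma~\ref{mainlower} does essentially everything, and then to treat a general stationary $Q$ by an ergodic decomposition together with a block--concatenation argument. Since $-\inf_{Q\in G}H(Q)=\sup\{-H(Q):Q\in G,\ H(Q)<\infty\}$ (with $\sup\emptyset=-\infty$), it is enough to show that for each fixed $Q\in G$ with $H(Q)<\infty$,
\begin{equation*}
\liminf_{t\to\infty}\frac1t\log P(R_{t,\omega}\in G)\ \ge\ -H(Q),
\end{equation*}
and then take the supremum over such $Q$. Suppose first $Q\in\mathcal{M}_E(\Omega)$. Pick an open neighborhood $N$ of $Q$ with $N\subseteq G$ (possible since $G$ is open) and apply Lemma~\ref{mainlower}: it produces a set $K^-_\ell$ with $\emptyset\in K^-_\ell$ and
\begin{equation*}
\liminf_{t\to\infty}\frac1t\inf_{w_0\in K^-_\ell}\log P^{w_0}(R_{t,\omega}\in N,\ w_t\in K^-_\ell)\ \ge\ -H(Q).
\end{equation*}
Because $P=P^{\emptyset}$ and $\emptyset\in K^-_\ell$, evaluating the infimum at $w_0=\emptyset$, dropping the constraint $w_t\in K^-_\ell$, and enlarging $N$ to $G$ gives $\liminf_{t\to\infty}\frac1t\log P(R_{t,\omega}\in G)\ge -H(Q)$, as desired.

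For a general, possibly non-ergodic, $Q\in G$ with $H(Q)<\infty$, I would use the ergodic decomposition $Q=\int \hat Q^{\omega^-}\,Q(d\omega^-)$ and the linearity of $H$, so that $H(Q)=\int H(\hat Q^{\omega^-})\,Q(d\omega^-)$ and in particular $H(\hat Q^{\omega^-})<\infty$ for $Q$-a.e.\ $\omega^-$. Approximating this barycenter by a finite convex combination, one chooses ergodic $q_1,\ldots,q_n$ with $H(q_i)<\infty$ and weights $\alpha_i\ge0$, $\sum_i\alpha_i=1$, such that $\sum_i\alpha_i q_i\in G$ and $\sum_i\alpha_i H(q_i)\le H(Q)+\epsilon$. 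Crucially the set $K^-_\ell$ in Lemma~\ref{mainlower} does not depend on the target measure, so one may fix a single $\ell$ serving all of $q_1,\ldots,q_n$, choose neighborhoods $N_i\ni q_i$, and fix $L$ so large that $P^{w_0}(R_{L,\omega}\in N_i,\ w_L\in K^-_\ell)\ge e^{-L(H(q_i)+\epsilon)}$ for every $i$ and every $w_0\in K^-_\ell$ (this follows from the $\liminf$ assertion of Lemma~\ref{mainlower}). Partitioning $[0,t]$ into $\approx t/L$ blocks of length $L$, a fraction $\approx\alpha_i$ of which is assigned ``type $i$'', and using the restart property of the Hawkes process (conditionally on the configuration up to time $s$ the future is again a Hawkes process, with history $w_s$) together with $\emptyset\in K^-_\ell\ni w_L$, one chains the block estimates: the $P^{\emptyset}$-probability that the $j$-th block looks like its assigned $q_{i(j)}$ is bounded below by $\prod_j e^{-L(H(q_{i(j)})+\epsilon)}\gtrsim e^{-t(\sum_i\alpha_i H(q_i)+\epsilon)}$. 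On this event the per-block contributions to $R_{t,\omega}$ average to a measure within $O(1/L)$ of $\sum_i\alpha_i q_i$, hence lie in $G$ once $L$ is large and $t\to\infty$; letting $\epsilon\downarrow0$ finishes the argument.

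The ergodic case above is immediate once one observes $\emptyset\in K^-_\ell$ and $P=P^{\emptyset}$, so the real work lies in the non-ergodic case, and I expect the main obstacle there to be twofold. First, producing the finite mixture $\sum_i\alpha_i q_i\in G$ while keeping $\sum_i\alpha_i H(q_i)\le H(Q)+\epsilon$ is delicate because $H$ is only lower semicontinuous: the upper bound on the entropy of the approximant has to be squeezed out of $Q$-integrability of $\omega^-\mapsto H(\hat Q^{\omega^-})$ rather than from continuity. Second, the concatenation bookkeeping must be done carefully --- one must verify that the local behavior on each block, read through the periodization $\omega_t$ that defines $R_{t,\omega}$, genuinely sums across block boundaries to the target mixture up to an error controlled by taking $L$ large, and that the restart property may legitimately be invoked along the natural filtration when chaining the block estimates.
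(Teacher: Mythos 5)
Your proposal is correct and follows essentially the same route as the paper: reduce to $Q$ with $H(Q)<\infty$, handle ergodic $Q$ directly from Lemma \ref{mainlower} (using $\emptyset\in K^{-}_{\ell}$ and $P=P^{\emptyset}$), and handle general stationary $Q$ via the ergodic decomposition, the linearity of $H$, and a concatenation of blocks chained through the uniform-in-$w_{0}$ estimate of Lemma \ref{mainlower}. The only real difference is the block scheme: the paper uses $\ell$ blocks of proportional lengths $\alpha_{j}t$, so boundary effects are $O(\ell/t)$ and vanish automatically (including for the first-moment part of the topology), whereas your $t/L$ blocks of fixed length $L$ leave a persistent $O(1/L)$ error that must be absorbed into the openness of $G$ --- workable, but the paper's choice makes that bookkeeping trivial.
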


\begin{proof}
It is sufficent to prove that for any $Q\in\mathcal{M}_{S}(\Omega)$, $H(Q)<\infty$, for any neighborhood $N$ of $Q$, 
$\liminf_{t\rightarrow\infty}\frac{1}{t}\log P(R_{t,\omega}\in N)\geq -H(Q)$. 
Since for every invariant measure $P\in\mathcal{M}_{S}$, 
there exists a probability measure $\mu_{P}$ on the space $\mathcal{M}_{E}$ of ergodic measures 
such that $P=\int_{\mathcal{M}_{E}}Q\mu_{P}(dQ)$, for any $Q\in\mathcal{M}_{S}(\Omega)$ such that $H(Q)<\infty$, 
without loss of generality, we can assume that $Q=\sum_{j=1}^{\ell}\alpha_{j}Q_{j}$, 
where $\alpha_{j}\geq 0$, $1\leq j\leq\ell$ and $\sum_{j=1}^{\ell}\alpha_{j}=1$. 
By linearity of $H(\cdot)$, $H(Q)=\sum_{j=1}^{\ell}\alpha_{j}H(Q_{j})$. 
Divide the interval $[0,t]$ into subintervals of length $\alpha_{j}t$, 
let $t_{j}$, $1\leq j\leq\ell$ be the right hand endpoints of these subintervals, and let $t_{0}=0$. 
For each $Q_{j}$, take $K^{-}_{M}$ as in Lemma \ref{mainlower}. 
We have $\min_{1\leq j\leq\ell}Q_{j}(K^{-}_{M})\rightarrow 1$, as $M\rightarrow\infty$. 
Choose neighborhoods $N_{j}$ of $Q_{j}$, $1\leq j\leq\ell$ such that $\bigcup_{j=1}^{\ell}\alpha_{j}N_{j}\subseteq N$.  We have
\begin{align}
P^{\emptyset}(R_{t,\omega}\in N)&\geq P^{\emptyset}(R_{t_{1},\omega}\in N_{1},w_{t_{1}}\in K^{-}_{M})
\\
&\cdot\prod_{j=2}^{\ell}\inf_{w_{0}\in K^{-}_{t_{j-1}-t_{j-2}}}P^{w_{0}}(R_{t_{j}-t_{j-1},\omega}\in N_{j},w_{t_{j}-t_{j-1}}\in K^{-}_{M}).\nonumber
\end{align}
Now, applying Lemma \ref{mainlower} and the linearity of $H(\cdot)$,
\begin{equation}
\liminf_{t\rightarrow\infty}\frac{1}{t}\log P^{\emptyset}(R_{t,\omega}\in N)\geq-\sum_{j=1}^{\ell}\alpha_{j}H(Q_{j})=-H(Q).
\end{equation}
\end{proof}

\section{Upper Bound}\label{upperbound}

\begin{remark}
By following the argument in Donsker and Varadhan \cite{Donsker}, if $\omega^{-}\mapsto P^{\omega^{-}}$ is weakly continuous, then
\begin{equation}
\limsup_{t\rightarrow\infty}\frac{1}{t}\log P(R_{t,\omega}\in A)\leq-\inf_{Q\in A}H(Q),
\end{equation}
for any compact $A$. If the Hawkes process has finite range of memory, i.e. $h(\cdot)$ has compact support, and if it is continuous, 
then, for any $a<b$, if $\omega^{-}_{n}\rightarrow\omega^{-}$, we have
\begin{align}
&\left|\int_{a}^{b}\lambda(\omega^{-}_{n},\omega,s)ds-\int_{a}^{b}\lambda(\omega^{-},\omega,s)ds\right|
\\
&\leq\alpha\int_{a}^{b}\left|\sum_{\tau\in\omega^{-}_{n}}h(s-\tau)-\sum_{\tau\in\omega^{-}}h(s-\tau)\right|ds\rightarrow\infty,\nonumber
\end{align}
as $n\rightarrow\infty$, which implies that $P^{\omega^{-}_{n}}\rightarrow P^{\omega^{-}}$.

If the Hawkes process does not have finite range of memory, then we should use the specific features of the Hawkes process to obtain the upper bound.
\end{remark}

Before we proceed, let us prove an easy but very useful lemma that we will use repeatedly in the proofs of the estimates in this chapter.

\begin{lemma}
Let $f(\omega,s)$ be $\mathcal{F}^{-\infty}_{s}$ progressively measurable and predictable. Then,
\begin{equation}
\mathbb{E}\left[e^{\int_{0}^{t}f(\omega,s)dN_{s}}\right]\leq\mathbb{E}\left[e^{\int_{0}^{t}(e^{2f(\omega,s)}-1)\lambda(\omega,s)ds}\right]^{1/2}.
\end{equation} 
\end{lemma}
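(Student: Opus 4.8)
The plan is to combine the Dol\'{e}ans--Dade exponential formula for point processes with the Cauchy--Schwarz inequality. Recall that for our point process $N$ with $\mathcal{F}^{-\infty}_{s}$-intensity $\lambda(\omega,s)$ and any $\mathcal{F}^{-\infty}_{s}$-predictable process $g(\omega,s)$, the process
\begin{equation}
M_{t}:=\exp\left\{\int_{0}^{t}g(\omega,s)dN_{s}-\int_{0}^{t}(e^{g(\omega,s)}-1)\lambda(\omega,s)ds\right\}
\end{equation}
is a nonnegative local martingale, hence a supermartingale, so that $\mathbb{E}[M_{t}]\leq M_{0}=1$ for every $t\geq 0$. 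For bounded $g$ this is exactly the kind of martingale already used in Lemma \ref{midstep}; for general predictable $g$ one localizes along $T_{k}:=\inf\{t:N_{t}\geq k\}$, applies the bounded case to $M_{t\wedge T_{k}}$, and lets $k\to\infty$ using Fatou's lemma, since $T_{k}\uparrow\infty$ a.s. because $\int_{0}^{t}\lambda_{s}ds<\infty$ a.s.

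First I would apply this with $g=2f$. If $\int_{0}^{t}(e^{2f(\omega,s)}-1)\lambda(\omega,s)ds=+\infty$ with positive probability, the right-hand side of the claimed bound is infinite and there is nothing to prove; note this integral can never equal $-\infty$, since the integrand dominates $-\lambda(\omega,s)$, which is integrable on $[0,t]$. So we may assume it is a.s. finite, in which case
\begin{equation}
\exp\left\{\int_{0}^{t}f(\omega,s)dN_{s}\right\}=M_{t}^{1/2}\cdot\exp\left\{\frac{1}{2}\int_{0}^{t}(e^{2f(\omega,s)}-1)\lambda(\omega,s)ds\right\}.
\end{equation}
Applying the Cauchy--Schwarz inequality to this product, and then using $\mathbb{E}[M_{t}]\leq 1$, yields
\begin{equation}
\mathbb{E}\left[e^{\int_{0}^{t}f(\omega,s)dN_{s}}\right]\leq\mathbb{E}[M_{t}]^{1/2}\,\mathbb{E}\left[e^{\int_{0}^{t}(e^{2f(\omega,s)}-1)\lambda(\omega,s)ds}\right]^{1/2}\leq\mathbb{E}\left[e^{\int_{0}^{t}(e^{2f(\omega,s)}-1)\lambda(\omega,s)ds}\right]^{1/2},
\end{equation}
which is the asserted inequality.

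The only delicate point, and the step I expect to require the most care, is the justification of $\mathbb{E}[M_{t}]\leq 1$ for a merely predictable (possibly unbounded) $f$ via the localization argument sketched above; everything else is a one-line rearrangement followed by Cauchy--Schwarz.
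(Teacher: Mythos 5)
Your proof is correct and follows essentially the same route as the paper's: split $e^{\int_0^t f\,dN_s}$ into the square root of the Dol\'{e}ans--Dade exponential with exponent $2f$ times the square root of the compensator term, then apply Cauchy--Schwarz. The only difference is that you are more careful than the paper in justifying $\mathbb{E}[M_t]\leq 1$ via the supermartingale property and localization (the paper simply asserts the exponential is a martingale), which is a harmless refinement.
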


\begin{proof}
Since $\exp\left\{\int_{0}^{t}2f(\omega,s)dN_{s}-\int_{0}^{t}(e^{2f(\omega,s)}-1)\lambda(\omega,s)ds\right\}$ is a martingale, 
by Cauchy-Schwarz inequality,
\begin{align}
\mathbb{E}\left[e^{\int_{0}^{t}f(\omega,s)dN_{s}}\right]
&=\mathbb{E}\left[e^{\frac{1}{2}\int_{0}^{t}2f(\omega,s)dN_{s}-\frac{1}{2}\int_{0}^{t}(e^{2f(\omega,s)}-1)\lambda(\omega,s)ds
+\frac{1}{2}\int_{0}^{t}(e^{2f(\omega,s)}-1)\lambda(\omega,s)ds}\right]
\\
&\leq\mathbb{E}\left[e^{\int_{0}^{t}(e^{2f(\omega,s)}-1)\lambda(\omega,s)ds}\right]^{1/2}.\nonumber
\end{align}
\end{proof}

Define $\mathcal{C}_{T}$
\begin{align}
\mathcal{C}_{T}&=\bigg\{F(\omega):=\int_{0}^{T}f(\omega,s)dN_{s}-\int_{0}^{T}(e^{f(\omega,s)}-1)\lambda(\omega,s)ds, 
\\
&\qquad\qquad\qquad\qquad\qquad\qquad\qquad f(\omega,s)\in\mathcal{B}(\mathcal{F}^{0}_{s})\cap C(\Omega\times\mathbb{R})\bigg\}.\nonumber
\end{align}
Here $\lambda(\omega,s)$ is $\mathcal{F}^{-\infty}_{s}$ progressively measurable and predictable, and 
$f(\omega,s)\in\mathcal{B}(\mathcal{F}^{0}_{s})\cap C(\Omega\times\mathbb{R})$ means that 
$f$ is $\mathcal{F}^{0}_{s}$ progressively measurable, predictable and also bounded and continuous.

\begin{lemma}\label{expectation}
For any $T>0$ and $F\in\mathcal{C}_{T}$, we have, for any $t>0$,
\begin{equation}
\mathbb{E}^{P^{\emptyset}}\left[e^{\frac{1}{T}\int_{0}^{t}F(\theta_{s}\omega)ds}\right]\leq 1.
\end{equation}
\end{lemma}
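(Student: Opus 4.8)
The plan is to recognize the time-smeared exponential as an average, over a phase parameter, of ordinary Dol\'eans--Dade exponential martingales for $N$. A single stochastic exponential will not do directly, because the time-averaging mixes the $dN_s$ part of $F$ with its $\int(e^f-1)\lambda\,ds$ part in a way incompatible with the form $\int g\,dN-\int(e^g-1)\lambda\,ds$; averaging over the phase is exactly what separates them again. Slice $[0,t)$ into residue classes modulo $T$: for $a\in[0,T)$ put $S_a(\omega):=\sum_{k\ge 0:\,a+kT<t}F(\theta_{a+kT}\omega)$. Partitioning $[0,t)=\bigsqcup_{k\ge0}\big([kT,(k+1)T)\cap[0,t)\big)$ and substituting $s=kT+a$ on each piece gives
\begin{equation*}
\frac1T\int_0^t F(\theta_s\omega)\,ds=\frac1T\int_0^T S_a(\omega)\,da .
\end{equation*}
Since $T^{-1}\,da$ is a probability measure on $[0,T)$, convexity of $x\mapsto e^x$ (Jensen) together with Tonelli's theorem yields
\begin{equation*}
\mathbb{E}^{P^{\emptyset}}\Big[e^{\frac1T\int_0^t F(\theta_s\omega)\,ds}\Big]\le \frac1T\int_0^T \mathbb{E}^{P^{\emptyset}}\big[e^{S_a(\omega)}\big]\,da ,
\end{equation*}
so it suffices to prove $\mathbb{E}^{P^{\emptyset}}[e^{S_a}]\le1$ for a.e.\ $a\in[0,T)$.

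Fix such an $a$ and let $K=\max\{k\ge0:a+kT<t\}$ (if this set is empty then $S_a\equiv0$ and there is nothing to prove). The structural fact to use is the shift-covariance of the conditional intensity: writing $\lambda(\omega,s)=\lambda\big(\sum_{\tau\in\omega,\,\tau<s}h(s-\tau)\big)$ one checks directly that $\lambda(\theta_r\omega,v)=\lambda(\omega,r+v)$ and that $\int_0^T f(\theta_r\omega,v)\,N(\theta_r\omega)(dv)=\int_r^{r+T}f(\theta_r\omega,s-r)\,N(\omega)(ds)$. Define the integrand
\begin{equation*}
g(\omega,s):=\sum_{k=0}^{K}f\big(\theta_{a+kT}\omega,\;s-a-kT\big)\,\mathbf{1}_{(a+kT,\;a+(k+1)T]}(s),\qquad s\ge0 ,
\end{equation*}
which is bounded because $f$ is, and is $\mathcal{F}^{-\infty}_s$-predictable: on the $k$-th block its value depends on $\omega$ only through $\omega\cap[a+kT,s]$ (since $f(\cdot,v)\in\mathcal{B}(\mathcal{F}^0_v)$), and it is predictable in $s$ there (since $f(\cdot,v)$ is predictable in $v$). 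As in the preceding lemma, since under $P^{\emptyset}$ the process $N$ has $\mathcal{F}^{-\infty}_s$-intensity $\lambda(\omega,s)$ and is non-explosive, the Dol\'eans--Dade exponential
\begin{equation*}
M_s:=\exp\Big\{\int_0^s g(\omega,u)\,dN_u-\int_0^s\big(e^{g(\omega,u)}-1\big)\lambda(\omega,u)\,du\Big\}
\end{equation*}
is a nonnegative local martingale with $M_0=1$, hence a supermartingale, so $\mathbb{E}^{P^{\emptyset}}[M_s]\le1$ for every $s$. Taking $s=t':=a+(K+1)T$ and using the two shift-covariance identities block by block (and $g\equiv0$ on $[0,a]$),
\begin{equation*}
\int_0^{t'}g\,dN_u-\int_0^{t'}\big(e^{g}-1\big)\lambda\,du=\sum_{k=0}^{K}F(\theta_{a+kT}\omega)=S_a(\omega) ,
\end{equation*}
so $M_{t'}=e^{S_a}$ and therefore $\mathbb{E}^{P^{\emptyset}}[e^{S_a}]=\mathbb{E}^{P^{\emptyset}}[M_{t'}]\le1$, which completes the proof.

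I expect the delicate points to be routine rather than conceptual. One has to verify that $g$ genuinely qualifies as a predictable integrand across the block boundaries (harmless, since deterministic times a.s.\ carry no point of $N$, so the half-open-interval bookkeeping is immaterial) and that $M$ is a true (super)martingale, i.e.\ that $N$ does not explode and $\int_0^{t'}\lambda(\omega,s)\,ds$ is finite under $P^{\emptyset}$ — this follows from the standing assumptions $\|h\|_{L^1}<\infty$, $\lambda$ Lipschitz with $\lim_{z\to\infty}\lambda(z)/z=0$, and the empty history, exactly as in the moment estimates of the previous chapter. The one spot requiring care is the clean derivation of the shift-covariance identities for the intensity and the jump integral; everything else is the standard Jensen-plus-exponential-martingale mechanism.
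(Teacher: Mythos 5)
Your proposal is correct and follows essentially the same route as the paper: decompose $\frac1T\int_0^t F(\theta_s\omega)\,ds$ as an average over the phase $a\in[0,T)$ of the block sums $S_a$, apply Jensen to reduce to $\mathbb{E}^{P^{\emptyset}}[e^{S_a}]\le 1$, and obtain that bound from the exponential-martingale property of $F$. The only cosmetic difference is that the paper gets the per-phase bound by iterated conditioning on the history at each block boundary, whereas you glue the blocks into a single predictable integrand and invoke the supermartingale property of the resulting Dol\'eans--Dade exponential; these are the same mechanism.
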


\begin{proof}
For any $t>0$, writing $\psi(s)=\sum_{k:s+kT\leq t}F(\theta_{s+kT}\omega)$,
\begin{align}
\mathbb{E}^{P^{\emptyset}}\left[e^{\frac{1}{T}\int_{0}^{t}F(\theta_{s}\omega)ds}\right]
&=\mathbb{E}^{P^{\emptyset}}\left[e^{\frac{1}{T}\int_{0}^{T}\psi(s)ds}\right]
\\
&\leq\frac{1}{T}\int_{0}^{T}\mathbb{E}^{P^{\emptyset}}\left[e^{\psi(s)}\right]ds=1,\nonumber
\end{align}
by Jensen's inequality and the fact that $\mathbb{E}^{P^{\emptyset}}\left[e^{\psi(s)}\right]=1$ 
by iteratively conditioning since $\mathbb{E}^{P^{\omega^{-}}}\left[e^{F(\omega)}\right]=1$ for any $\omega^{-}$.
\end{proof}

\begin{remark}
Under $P^{\emptyset}$, the $\mathcal{F}^{-\infty}_{s}$ progressively measurable rate function $\lambda$ 
is well defined since it only creates a history between time $0$ and time $t$. 
Similary, in the proof in Lemma \ref{expectation}, $\mathbb{E}^{P^{\omega^{-}}}\left[e^{F(\omega)}\right]=1$ for any $\omega^{-}$ 
should be interpreted as the expectation is $1$ given any history created between time $0$ and $t$, which is well defined.
\end{remark}

Next, we need to compare $\frac{1}{T}\int_{0}^{t}F(\theta_{s}\omega_{t})ds$ and $\frac{1}{T}\int_{0}^{t}F(\theta_{s}\omega)ds$.

\begin{lemma}\label{difference}
For any $q>0$, $T>0$ and $F\in\mathcal{C}_{T}$,
\begin{equation}
\limsup_{t\rightarrow\infty}\frac{1}{t}\log\mathbb{E}^{P^{\emptyset}}
\left[\exp\left\{q\left|\frac{1}{T}\int_{0}^{t}F(\theta_{s}\omega_{t})ds-\frac{1}{T}\int_{0}^{t}F(\theta_{s}\omega)ds\right|\right\}\right]=0.
\end{equation}
\end{lemma}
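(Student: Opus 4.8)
The plan is to prove the matching upper bound, the lower bound $\liminf\geq 0$ being trivial since the exponent is nonnegative so $\mathbb{E}^{P^{\emptyset}}[e^{q|\cdot|}]\geq 1$. Write $F(\omega)=\int_0^T f(\omega,u)\,dN_u-\int_0^T(e^{f(\omega,u)}-1)\lambda(\omega,u)\,du$ and $D_t(\omega):=\frac1T\int_0^t F(\theta_s\omega_t)\,ds-\frac1T\int_0^t F(\theta_s\omega)\,ds$. The goal is a \emph{pathwise} bound on $|D_t(\omega)|$ by nonnegative functionals of $\omega$ whose exponential moments under $P^{\emptyset}$ grow subexponentially in $t$, after which H\"older's inequality finishes the job. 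Since $\omega_t$ and $\omega$ agree on $[0,t]$, I split $\int_0^t(\cdot)\,ds=\int_0^{t-T}(\cdot)\,ds+\int_{t-T}^{t}(\cdot)\,ds$ into a ``bulk'' and a ``boundary'' piece.

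On the bulk piece $s\in[0,t-T]$, the time window $[s,s+T]$ on which the bounded predictable $f(\theta_s\cdot,u)$ and the jump measure $dN_u(\theta_s\cdot)$ (for $u\leq T$) depend sits inside $[0,t]$, where $\omega_t=\omega$; hence those two parts of $F(\theta_s\omega_t)$ and $F(\theta_s\omega)$ coincide and only the intensities differ. The intensity at internal time $u$, namely $\lambda\big(\sum_{\tau\in\omega_t,\ \tau<s+u}h(s+u-\tau)\big)$ versus $\lambda\big(\sum_{\tau\in\omega\cap[0,s+u)}h(s+u-\tau)\big)$, differs exactly by the effect of the periodic ``ghost'' points $\{\sigma-kt:\sigma\in\omega\cap[0,t),\ k\geq1\}$ that the periodization carries in its past. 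Using that $\lambda$ is $\alpha$-Lipschitz, $h\geq0$, $|e^f-1|\leq e^{\Vert f\Vert_{L^\infty}}+1$, and Fubini, the bulk contribution is bounded by a constant multiple of $\sum_{k\geq1}\sum_{\sigma\in\omega\cap[0,t)}\int_0^{t-T}\!\int_0^T h(s+u-\sigma+kt)\,du\,ds$. Estimating the inner double integral by $T\,H(kt-\sigma)$ for $k=1$ (with $H(x)=\int_x^\infty h$) and by the cruder $tT\,h((k-1)t)$ for $k\geq2$, and invoking $\sum_{\sigma\in\omega\cap[0,t)}H(t-\sigma)\leq H(L)N_t+\Vert h\Vert_{L^1}N[t-L,t]$ (split the sum at $t-L$) together with $\sum_{j\geq1}h(jt)\leq h(t)+t^{-1}H(t)$, one obtains for every $L\geq0$ a bound of the form $C_F\big[(H(L)+\varepsilon(t))N_t+N[t-L,t]\big]$, where $C_F$ depends only on $F$ and $\varepsilon(t):=th(t)+H(t)\to0$ (here $th(t)\leq2\int_{t/2}^t h\to0$ since $h$ is decreasing and integrable, and $H(t)\to0$).

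For the boundary piece $s\in[t-T,t]$ I would bound $|F(\theta_s\omega_t)|$ and $|F(\theta_s\omega)|$ separately by $\Vert f\Vert_{L^\infty}N(\cdot,[s,s+T])+(e^{\Vert f\Vert_{L^\infty}}+1)\int_s^{s+T}\lambda(\cdot,v)\,dv$ and then use $\lambda(\cdot,v)\leq\lambda(0)+\alpha\int h(v-u)\,dN_u(\cdot)$ with the same splitting as above; since the window wraps the seam, $N(\omega_t,[s,s+T])\leq N[t-T,t]+N[0,T]$, the intensity integral for $\omega_t$ picks up an extra ghost term $\leq C_F\varepsilon(t)N_t$, and for $\omega$ itself one gets the analogous bound with a window near $t$. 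Integrating over the length-$T$ interval and combining with the bulk estimate yields, for every fixed $L\geq0$, a pathwise bound
\[
|D_t(\omega)|\ \leq\ C_F\big[\,1+N[0,T+L]+N[t-T-L,\,t+T]+(H(L)+\varepsilon(t))\,N_t\,\big],
\]
with $C_F$ independent of $L$ and $t$, $\varepsilon(t)\to0$ as $t\to\infty$, and $H(L)\to0$ as $L\to\infty$.

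Finally, apply H\"older's inequality with exponents $3$ to distribute $e^{q|D_t|}$ over the three random summands. The factor $\mathbb{E}^{P^{\emptyset}}[e^{3qC_F N[0,T+L]}]$ is a finite constant independent of $t$ (the count of the Hawkes process started empty over a bounded interval has all exponential moments, because sublinearity $\lambda(z)/z\to0$ lets it be dominated by a pure birth process with rates $\lambda(h(0)n)=o(n)$), hence contributes nothing to $\frac1t\log$. For the factor carrying $(H(L)+\varepsilon(t))N_t$, once $t$ is large enough that $\varepsilon(t)\leq H(L)$ we have $\frac1t\log\mathbb{E}^{P^{\emptyset}}[e^{3qC_F(H(L)+\varepsilon(t))N_t}]\leq\frac1t\log\mathbb{E}^{P^{\emptyset}}[e^{6qC_FH(L)N_t}]\leq\Lambda(6qC_FH(L))$, where $\Lambda(\beta):=\sup_{t\geq1}\frac1t\log\mathbb{E}^{P^{\emptyset}}[e^{\beta N_t}]$; this is finite for all $\beta$ and $\Lambda(\beta)\to0$ as $\beta\to0$, again by domination by a subcritical linear Hawkes process with exciting function $\eta h$ and immigration $C_\eta$ (using $\lambda(z)\leq C_\eta+\eta z$) with $\eta$ arbitrarily small, whose family-size exponential moments are finite up to exponents $\eta\Vert h\Vert_{L^1}-1-\log(\eta\Vert h\Vert_{L^1})\uparrow\infty$ as $\eta\downarrow0$ (cf.\ Lemma~\ref{expectationII}). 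The remaining factor, carrying $N$ over the \emph{fixed-length} window near time $t$, is where the main obstacle lies: one must show $\limsup_{t\to\infty}\frac1t\log\mathbb{E}^{P^{\emptyset}}[e^{\beta N[t-m,t+T]}]=0$ for every $\beta$ and every fixed $m$. This I would obtain by conditioning on the ``state'' $V:=\int_0^{t-m}h(t-m-u)\,dN_u$ at time $t-m$, dominating $N[t-m,\cdot]$ given $V$ by a pure birth process with rates $\lambda(h(0)n+V)=o(n)$ whose conditional exponential moments are finite with a constant controlled by $V$, and then bootstrapping the resulting inequality in $t$ as in Lemma~\ref{midstep} (equivalently this is the content of the chapter's superexponential estimates, and also follows from convergence of $P^{\emptyset}$ to the unique stationary Hawkes law together with uniform-in-$t$ exponential integrability). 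Once all three factors are handled, letting $L\to\infty$ sends $\Lambda(6qC_FH(L))\to0$, giving $\limsup_{t\to\infty}\frac1t\log\mathbb{E}^{P^{\emptyset}}[e^{q|D_t|}]\leq0$ and completing the proof.
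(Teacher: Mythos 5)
Your proposal is correct and follows essentially the same route as the paper's proof: the same bulk/boundary split at $t-T$, the same identification of the periodization's ``ghost'' points as the only source of discrepancy on the bulk (so that the $\int f\,dN$ and $e^{f}$ factors cancel there), the same use of the monotonicity of $h$ to sum the periodized tails via $h(\cdot+kt)$, and the same reliance on the superexponential estimates for $N$ over fixed-length windows near $t$ (Lemma \ref{indfinite}) together with small-exponent exponential moments of $N_{t}$ obtained from sublinearity of $\lambda$. The only (harmless) technical variation is that you control the ghost-tail contribution $\sum_{\sigma}H(t-\sigma)$ pathwise by splitting at $t-L$ and sending $L\to\infty$ after H\"older, where the paper instead passes through the exponential martingale and Cauchy--Schwarz to reduce that term to an intensity integral.
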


\begin{proof}
\begin{align}
&\left|\frac{1}{T}\int_{0}^{t}F(\theta_{s}\omega_{t})ds-\frac{1}{T}\int_{0}^{t}F(\theta_{s}\omega)ds\right|
\\
&\leq\left|\frac{1}{T}\int_{0}^{t}\int_{0}^{T}f(u,\theta_{s}\omega)dN_{u}ds-\frac{1}{T}\int_{0}^{t}
\int_{0}^{T}f(u,\theta_{s}\omega_{t})dN_{u}ds\right|\nonumber
\\
&\qquad\qquad\qquad+\bigg|\frac{1}{T}\int_{0}^{t}\int_{0}^{T}(e^{f(u,\theta_{s}\omega)}-1)\lambda(\theta_{s}\omega,u)duds\nonumber
\\
&\qquad\qquad\qquad\qquad\qquad\qquad
-\frac{1}{T}\int_{0}^{t}\int_{0}^{T}(e^{f(u,\theta_{s}\omega_{t})}-1)\lambda(\theta_{s}\omega_{t},u)duds\bigg|.\nonumber
\end{align}
It is easy to see that $\int_{0}^{T}f(u,\theta_{s}\omega)dN_{u}ds$ is $\mathcal{F}^{s}_{s+T}$-measurable and
\begin{equation}
\int_{0}^{T}f(u,\theta_{s}\omega)dN_{u}ds=\int_{0}^{T}f(u,\theta_{s}\omega_{t})dN_{u}ds
\end{equation}
for any $0\leq s\leq t-T$. Hence,
\begin{align}
&\left|\frac{1}{T}\int_{0}^{t}\int_{0}^{T}f(u,\theta_{s}\omega)dN_{u}ds-\frac{1}{T}\int_{0}^{t}\int_{0}^{T}f(u,\theta_{s}\omega_{t})dN_{u}ds\right|
\\
&\leq\frac{1}{T}\int_{t-T}^{t}\int_{0}^{T}|f(u,\theta_{s}\omega)|dN_{u}ds+\frac{1}{T}\int_{t-T}^{t}\int_{0}^{T}|f(u,\theta_{s}\omega_{t})|dN_{u}ds\nonumber
\\
&\leq\frac{\Vert f\Vert_{L^{\infty}}}{T}\int_{t-T}^{t}N[s,s+T](\omega)ds+\frac{\Vert f\Vert_{L^{\infty}}}{T}\int_{t-T}^{t}N[s,s+T](\omega_{t})ds\nonumber
\\
&\leq\frac{\Vert f\Vert_{L^{\infty}}}{T}\left[N[t-T,t+T](\omega)+N[t-T,t+T](\omega_{t})\right]\nonumber
\\
&=\frac{\Vert f\Vert_{L^{\infty}}}{T}\left[N[t-T,t+T](\omega)+N[t-T,T](\omega)+N[0,T](\omega)\right].\nonumber
\end{align}
By H\"{o}lder's inequality and Lemma \ref{indfinite}, we have
\begin{align}
&\limsup_{t\rightarrow\infty}\frac{1}{t}\log\mathbb{E}^{P^{\emptyset}}
\left[e^{\left|\frac{1}{T}\int_{0}^{t}\int_{0}^{T}f(u,\theta_{s}\omega)dN_{u}ds
-\frac{1}{T}\int_{0}^{t}\int_{0}^{T}f(u,\theta_{s}\omega_{t})dN_{u}ds\right|}\right]
\\
&\leq\limsup_{t\rightarrow\infty}\frac{1}{t}\log\mathbb{E}^{P^{\emptyset}}
\left[e^{\frac{\Vert f\Vert_{L^{\infty}}}{T}\left[N[t-T,t+T](\omega)+N[t-T,T](\omega)+N[0,T](\omega)\right]}\right]=0.\nonumber
\end{align}
Furthermore,
\begin{align}
&\left|\frac{1}{T}\int_{0}^{t}\int_{0}^{T}(e^{f(u,\theta_{s}\omega)}-1)\lambda(\theta_{s}\omega,u)duds
-\frac{1}{T}\int_{0}^{t}\int_{0}^{T}(e^{f(u,\theta_{s}\omega_{t})}-1)\lambda(\theta_{s}\omega_{t},u)duds\right|
\\
&\leq\frac{1}{T}\int_{0}^{t}\int_{0}^{T}\left|e^{f(u,\theta_{s}\omega)}-e^{f(u,\theta_{s}\omega_{t})}\right|\lambda(\theta_{s}\omega,u)duds\nonumber
\\
&+\frac{1}{T}\int_{0}^{t}\int_{0}^{T}(e^{f(u,\theta_{s}\omega_{t})}-1)\left|\lambda(\theta_{s}\omega_{t},u)-\lambda(\theta_{s}\omega,u)\right|duds.\nonumber
\end{align}
For the first term
\begin{align}
&\frac{1}{T}\int_{0}^{t}\int_{0}^{T}\left|e^{f(u,\theta_{s}\omega)}-e^{f(u,\theta_{s}\omega_{t})}\right|\lambda(\theta_{s}\omega,u)duds
\\
&=\frac{1}{T}\int_{t-T}^{t}\int_{0}^{T}\left|e^{f(u,\theta_{s}\omega)}-e^{f(u,\theta_{s}\omega_{t})}\right|\lambda(\theta_{s}\omega,u)duds\nonumber
\\
&\leq\frac{2 e^{\Vert f\Vert_{L^{\infty}}}}{T}\int_{t-T}^{t}\int_{0}^{T}\lambda(\theta_{s}\omega,u)duds\nonumber
\\
&=\frac{2 e^{\Vert f\Vert_{L^{\infty}}}}{T}\int_{t-T}^{t}\int_{0}^{T}\lambda\left(\sum_{\tau\in\omega[0,u+s)}h(u+s-\tau)\right)duds\nonumber
\\
&\leq 2 e^{\Vert f\Vert_{L^{\infty}}}TC_{\epsilon}
+\frac{2 e^{\Vert f\Vert_{L^{\infty}}}}{T}\epsilon\int_{t-T}^{t}\int_{0}^{T}\sum_{\tau\in\omega[0,u+s)}h(u+s-\tau)duds\nonumber
\\
&\leq 2 e^{\Vert f\Vert_{L^{\infty}}}TC_{\epsilon}+\frac{2 e^{\Vert f\Vert_{L^{\infty}}}}{T}\epsilon\int_{t-T}^{t}\int_{0}^{T}N[0,u+s]h(0)duds\nonumber
\\
&\leq 2 e^{\Vert f\Vert_{L^{\infty}}}TC_{\epsilon}+2 e^{\Vert f\Vert_{L^{\infty}}}\epsilon\int_{t-T}^{t}N[0,s+T]h(0)ds\nonumber
\\
&\leq 2 e^{\Vert f\Vert_{L^{\infty}}}TC_{\epsilon}+2 e^{\Vert f\Vert_{L^{\infty}}}\epsilon T(N[0,t]+N[t,t+T])h(0).\nonumber
\end{align}
Therefore,
\begin{equation}
\limsup_{t\rightarrow\infty}\frac{1}{t}\log\mathbb{E}^{P^{\emptyset}}
\left[e^{\frac{1}{T}\int_{0}^{t}\int_{0}^{T}\left|e^{f(u,\theta_{s}\omega)}
-e^{f(u,\theta_{s}\omega_{t})}\right|\lambda(\theta_{s}\omega,u)duds}\right]\leq c(\epsilon),
\end{equation}
where $c(\epsilon)\rightarrow 0$ as $\epsilon\rightarrow 0$; in other words, it vanishes.

For the second term,
\begin{align}
&\frac{1}{T}\int_{0}^{t}\int_{0}^{T}(e^{f(u,\theta_{s}\omega_{t})}-1)\left|\lambda(\theta_{s}\omega_{t},u)-\lambda(\theta_{s}\omega,u)\right|duds
\\
&\leq\frac{e^{\Vert f\Vert_{L^{\infty}}}+1}{T}\nonumber
\\
&\qquad\cdot\int_{0}^{t}\int_{0}^{T}\alpha\left|\sum_{\tau\in\omega_{t}[0,u+s)
\cup(\omega_{t})^{-}}h(u+s-\tau)-\sum_{\tau\in\omega[0,u+s)}h(u+s-\tau)\right|duds\nonumber
\\
&\leq\frac{e^{\Vert f\Vert_{L^{\infty}}}+1}{T}\int_{t-T}^{t}\int_{0}^{T}\alpha\sum_{\tau\in\omega_{t}[0,u+s)}h(u+s-\tau)duds\nonumber
\\
&+\frac{e^{\Vert f\Vert_{L^{\infty}}}+1}{T}\int_{t-T}^{t}\int_{0}^{T}\alpha\sum_{\tau\in\omega[0,u+s)}h(u+s-\tau)duds\nonumber
\\
&+\frac{e^{\Vert f\Vert_{L^{\infty}}}+1}{T}\int_{0}^{t}\int_{0}^{T}\alpha\sum_{\tau\in(\omega_{t})^{-}}h(u+s-\tau)duds\nonumber
\end{align}
Assume that $h(\cdot)$ is decreasing and $\lim_{z\rightarrow\infty}\frac{\lambda(z)}{z}=0$. 
By applying Jensen's inequality twice, we can estimate the second term above,
\begin{align}
&\mathbb{E}^{P^{\emptyset}}\left[e^{\frac{e^{\Vert f\Vert_{L^{\infty}}}+1}{T}\alpha\int_{t-T}^{t}\int_{0}^{T}\int_{0}^{u+s}h(u+s-v)dN_{v}duds}\right]
\\
&\leq\frac{1}{T}\int_{t-T}^{t}\mathbb{E}^{P^{\emptyset}}\left[e^{(e^{\Vert f\Vert_{L^{\infty}}}+1)
\alpha\int_{0}^{T}\int_{0}^{u+s}h(u+s-v)dN_{v}du}\right]ds\nonumber
\\
&\leq\frac{1}{T^{2}}\int_{t-T}^{t}\int_{0}^{T}\mathbb{E}^{P^{\emptyset}}\left[e^{(e^{\Vert f\Vert_{L^{\infty}}}+1)
\alpha T\int_{0}^{u+s}h(u+s-v)dN_{v}}\right]duds\nonumber
\\
&\leq\frac{1}{T^{2}}\int_{t-T}^{t}\int_{0}^{T}\mathbb{E}^{P^{\emptyset}}\left[e^{\int_{0}^{u+s}C(\alpha,T,h)\lambda(v)dv}\right]^{1/2}duds\nonumber
\\
&\leq\frac{e^{C(\alpha,T,h)C_{\epsilon}}}{T^{2}}\int_{t-T}^{t}
\int_{0}^{T}\mathbb{E}^{P^{\emptyset}}\left[e^{\epsilon C(\alpha,T,h)N[0,u+s]h(0)}\right]^{1/2}duds\nonumber
\\
&\leq e^{C(\alpha,T,h)C_{\epsilon}}\mathbb{E}^{P^{\emptyset}}\left[e^{\epsilon C(\alpha,T,h)N[0,t+T]h(0)}\right]^{1/2}.\nonumber
\end{align}
where $C(\alpha,T,h)=\exp(2(e^{\Vert f\Vert_{L^{\infty}}}+1)\alpha T h(0))-1$. Thus,
\begin{equation}
\limsup_{t\rightarrow\infty}\frac{1}{t}\log\mathbb{E}^{P^{\emptyset}}
\left[e^{\frac{e^{\Vert f\Vert_{L^{\infty}}}+1}{T}\alpha\int_{t-T}^{t}\int_{0}^{T}\int_{0}^{u+s}h(u+s-v)dN_{v}duds}\right]=0.
\end{equation}
Similarly, we can estimate the first term.

For the third term, by Jensen's inequality, we have
\begin{align}
&\mathbb{E}^{P^{\emptyset}}\left[e^{\frac{e^{\Vert f\Vert_{L^{\infty}}}+1}{T}
\int_{0}^{t}\int_{0}^{T}\alpha\sum_{\tau\in(\omega_{t})^{-}}h(u+s-\tau)duds}\right]
\\
&\leq\frac{1}{T}\int_{0}^{T}\mathbb{E}^{P^{\emptyset}}
\left[e^{\alpha(\exp(\Vert f\Vert_{L^{\infty}})+1)\int_{0}^{t}\sum_{\tau\in(\omega_{t})^{-}}h(u+s-\tau)ds}\right]du\nonumber
\\
&\leq\mathbb{E}^{P^{\emptyset}}\left[e^{\alpha(\exp(\Vert f\Vert_{L^{\infty}})+1)\int_{0}^{t}\sum_{\tau\in(\omega_{t})^{-}}h(s-\tau)ds}\right]\nonumber
\\
&=\mathbb{E}^{P^{\emptyset}}\left[e^{\alpha(\exp(\Vert f\Vert_{L^{\infty}})+1)\int_{0}^{t}\int_{0}^{t}\sum_{k=0}^{\infty}h(s+kt+t-u)dN_{u}ds}\right]\nonumber
\end{align}
Since $h(\cdot)$ is decreasing, $\int_{kt}^{(k+1)t}h(s)ds\geq th((k+1)t)$. Thus
\begin{equation}
\sum_{k=0}^{\infty}h(s+kt+t-u)\leq h(s+t-u)+\frac{1}{t}\int_{s+t-u}^{\infty}h(v)dv.
\end{equation}
Let $C(\alpha,f)=\alpha(\exp(\Vert f\Vert_{L^{\infty}})+1)$ and $H(t)=\int_{t}^{\infty}h(s)ds$. Then,
\begin{align}
&\mathbb{E}^{P^{\emptyset}}\left[e^{\alpha(\exp(\Vert f\Vert_{L^{\infty}})+1)\int_{0}^{t}\int_{0}^{t}\sum_{k=0}^{\infty}h(s+kt+t-u)dN_{u}ds}\right]
\\
&\leq\mathbb{E}^{P^{\emptyset}}
\left[e^{C(\alpha,f)\int_{0}^{t}\int_{0}^{t}\frac{1}{t}H(s+t-u)dN_{u}ds+C(\alpha,f)\int_{0}^{t}
\left[\int_{0}^{t}h(s+t-u)ds\right]dN_{u}}\right]\nonumber
\\
&=\mathbb{E}^{P^{\emptyset}}\left[e^{\int_{0}^{t}\left[\frac{C(\alpha,f)}{t}\int_{0}^{t}H(s+t-u)ds\right]dN_{u}
+\int_{0}^{t}\left[\int_{0}^{t}C(\alpha,f)h(s+t-u)ds\right]dN_{u}}\right].\nonumber
\end{align}
Notice that
\begin{equation}
\mathbb{E}^{P^{\emptyset}}\left[e^{\int_{0}^{t}\left[\frac{C(\alpha,f)}{t}\int_{0}^{t}H(s+t-u)ds\right]dN_{u}}\right]
\leq\mathbb{E}^{P^{\emptyset}}\left[e^{\left[\frac{C(\alpha,f)}{t}\int_{0}^{t}H(s)ds\right]N_{t}}\right],
\end{equation}
where $\frac{C(\alpha,f)}{t}\int_{0}^{t}H(s)ds\rightarrow 0$ as $t\rightarrow\infty$, which implies that
\begin{equation}
\limsup_{t\rightarrow\infty}\frac{1}{t}\log\mathbb{E}^{P^{\emptyset}}\left[e^{\int_{0}^{t}
\left[\frac{C(\alpha,f)}{t}\int_{0}^{t}H(s+u)ds\right]dN_{u}}\right]=0.
\end{equation}
Moreover,
\begin{align}
&\mathbb{E}^{P^{\emptyset}}\left[e^{\int_{0}^{t}\left[\int_{0}^{t}C(\alpha,f)h(s+t-u)ds\right]dN_{u}}\right]
\\
&\leq\mathbb{E}^{P^{\emptyset}}\left[e^{\int_{0}^{t}(e^{2\int_{0}^{t}C(\alpha,f)h(s+t-u)ds}-1)\lambda(u)du}\right]^{1/2}\nonumber
\\
&\leq e^{\frac{1}{2}C_{\epsilon}\int_{0}^{t}(e^{2\int_{0}^{t}C(\alpha,f)h(s+t-u)ds}-1)du}
\mathbb{E}^{P^{\emptyset}}\left[e^{\int_{0}^{t}(e^{\int_{0}^{t}2C(\alpha,f)h(s+t-u)ds}-1)\epsilon\sum_{\tau<u}h(u-\tau)du}\right]^{1/2}\nonumber
\\
&\leq
e^{\frac{1}{2}C_{\epsilon}\int_{0}^{t}(e^{2C(\alpha,f)H(t-u)}-1)du}\mathbb{E}^{P^{\emptyset}}
\left[e^{\int_{0}^{t}(e^{2C(\alpha,f)\Vert h\Vert_{L^{1}}}-1)\epsilon\sum_{\tau<u}h(u-\tau)du}\right]^{1/2}\nonumber
\\
&\leq
e^{\frac{1}{2}C_{\epsilon}\int_{0}^{t}(e^{2C(\alpha,f)H(u)}-1)du}\mathbb{E}^{P^{\emptyset}}
\left[e^{\epsilon(e^{2C(\alpha,f)\Vert h\Vert_{L^{1}}}-1)\Vert h\Vert_{L^{1}}N_{t}}\right]^{1/2}\nonumber
\end{align}
Notice that it holds for any $\epsilon>0$ and that $\frac{1}{t}\int_{0}^{t}(e^{2C(\alpha,f)H(u)}-1)du\rightarrow 0$ as $t\rightarrow\infty$, 
which implies
\begin{equation}
\limsup_{t\rightarrow\infty}\frac{1}{t}\log\mathbb{E}^{P^{\emptyset}}\left[e^{\int_{0}^{t}\left[\int_{0}^{t}C(\alpha,f)h(s+t-u)ds\right]dN_{u}}\right]=0.
\end{equation}

Putting all these things together and applying H\"{o}lder's inequality several times, we find that for any $q>0$, $T>0$ and $F\in\mathcal{C}_{T}$,
\begin{equation}
\limsup_{t\rightarrow\infty}\frac{1}{t}\log\mathbb{E}^{P^{\emptyset}}
\left[\exp\left\{q\left|\frac{1}{T}\int_{0}^{t}F(\theta_{s}\omega_{t})ds-\frac{1}{T}\int_{0}^{t}F(\theta_{s}\omega)ds\right|\right\}\right]=0.
\end{equation}
\end{proof}

\begin{lemma}\label{finitetoinfinite}
\begin{equation}
\lim_{T\rightarrow\infty}\frac{1}{T}\sup_{F\in\mathcal{C}_{T}}\int_{\Omega}F(\omega)Q(d\omega)\geq H(Q).
\end{equation}
\end{lemma}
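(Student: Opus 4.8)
The plan is to manufacture, for large $T$, an element of $\mathcal{C}_T$ whose $Q$-expectation is close to $T\,H(Q)$, by periodizing a near-optimal test function for the variational formula of Lemma \ref{variational} and invoking the stationarity of $Q$. The only genuine obstacle is that the variational formula for $H(Q)$ ranges over $\mathcal{F}^{-\infty}_s$-measurable functions, whereas $\mathcal{C}_T$ only allows $\mathcal{F}^0_s$-measurable ones; this is overcome by first truncating the memory of the test function to a finite horizon $M$ and then absorbing the first $M$ windows into the $O(1/T)$ error. We will show $\liminf_{T\to\infty}\frac1T\sup_{F\in\mathcal{C}_T}\int_\Omega F\,dQ\ge H(Q)$; since $T\mapsto\sup_{F\in\mathcal{C}_T}\int F\,dQ$ is superadditive up to a bounded error (concatenate two test functions after a shift, using stationarity of $Q$), the limit exists and the stated inequality follows.

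First I would dispose of the case $H(Q)=\infty$: Lemma \ref{variational} then gives, for every $M_0>0$, a bounded continuous $\mathcal{F}^{-\infty}_s$-measurable $g$ with $\mathbb{E}^Q[\int_0^1\lambda(1-e^g)\,ds+\int_0^1 g\,dN_s]\ge M_0$, and the construction below yields $\liminf_{T\to\infty}\frac1T\sup_{F\in\mathcal{C}_T}\int F\,dQ\ge M_0$, so $M_0\to\infty$ closes this case. Now assume $H(Q)<\infty$, fix $\epsilon>0$, and by Lemma \ref{variational} choose $g(\omega,s)\in\mathcal{B}(\mathcal{F}^{-\infty}_s)\cap C(\Omega\times\mathbb{R})$, $0\le s\le 1$, with $\mathbb{E}^Q[\int_0^1\lambda(1-e^g)\,ds+\int_0^1 g\,dN_s]\ge H(Q)-\epsilon$. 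Next I would replace $g$ by a bounded, jointly continuous $\tilde g(\omega,s)$, defined for all $s\in\mathbb{R}$ with $\sup_s\|\tilde g(\cdot,s)\|_{L^\infty}\le\|g\|_{L^\infty}$, such that (a) $\tilde g(\omega,s)=0$ for $s\notin[\delta,1-\delta]$; (b) for each $s$, $\tilde g(\omega,s)$ depends on $\omega$ only through $\omega\cap[s-M,s]$ for some fixed finite $M$; and (c) $\mathbb{E}^Q[\int_0^1\lambda(1-e^{\tilde g})\,ds+\int_0^1\tilde g\,dN_s]\ge H(Q)-2\epsilon$. Property (a) costs only $O(\delta)$, since by stationarity $\mathbb{E}^Q[\int_{[0,\delta]\cup[1-\delta,1]}dN_s]=2\delta\,\mathbb{E}^Q[N[0,1]]$ and, from $\lambda\le\lambda(0)+\alpha\sum_{\tau<s}h(s-\tau)$, also $\mathbb{E}^Q[\int_{[0,\delta]\cup[1-\delta,1]}\lambda\,ds]\le C\delta$; property (b) follows by conditioning $g(\cdot,s)$ onto $\mathcal{F}^{s-M}_s$, regularizing, and letting $M\to\infty$, the error tending to $0$ by dominated convergence using $\mathbb{E}^Q[\int_0^1\lambda\,ds]<\infty$, $\mathbb{E}^Q[N[0,1]]<\infty$, and the uniform bound on $g$.

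Then I would periodize. For an integer $n>M$ define, for $s\in[0,n]$,
\begin{equation}
f(\omega,s):=\sum_{k=M}^{n-1}\tilde g(\theta_k\omega,s-k).
\end{equation}
For $s\in[k,k+1)$ with $M\le k\le n-1$ only the $k$-th term survives and, by (a)–(b), it depends on $\omega$ through $\theta_k\omega\cap[s-k-M,s-k]=\omega\cap[s-M,s]\subset[0,s]$ because $s\ge k\ge M$; for $s\in[0,M)$ every term vanishes. Hence $f$ is bounded, $\mathcal{F}^0_s$-predictable, and continuous on $\Omega\times\mathbb{R}$ (the pieces glue continuously across integer times since $\tilde g$ vanishes for its shifted argument outside $[\delta,1-\delta]$, and $f$ extends by $0$ to $s>n-\delta$), so
\begin{equation}
F_n(\omega):=\int_0^n f\,dN_s-\int_0^n(e^f-1)\lambda(\omega,s)\,ds\in\mathcal{C}_n,
\end{equation}
and, extending $f$ by $0$ on $(n,T]$, $F_n\in\mathcal{C}_T$ for $n=\lfloor T\rfloor$. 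Using $dN_{k+u}(\omega)=dN_u(\theta_k\omega)$ and the pathwise identity $\lambda(\omega,s)=\lambda(\theta_k\omega,s-k)$ on $[k,k+1)$, the $k$-th window contributes $G(\theta_k\omega)$ with $G(\omega):=\int_0^1\tilde g\,dN_u-\int_0^1(e^{\tilde g}-1)\lambda(\omega,u)\,du$, so $F_n=\sum_{k=M}^{n-1}G\circ\theta_k$, and by stationarity of $Q$,
\begin{equation}
\int_\Omega F_n\,dQ=(n-M)\,\mathbb{E}^Q\Bigl[\int_0^1\lambda(1-e^{\tilde g})\,du+\int_0^1\tilde g\,dN_u\Bigr]\ge(n-M)\bigl(H(Q)-2\epsilon\bigr).
\end{equation}
Dividing by $T$, letting $T\to\infty$ so that $(\lfloor T\rfloor-M)/T\to1$, and then $\epsilon\downarrow0$ gives $\liminf_{T\to\infty}\frac1T\sup_{F\in\mathcal{C}_T}\int F\,dQ\ge H(Q)$, hence the claim. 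I expect the delicate point to be step (b): producing a bounded, \emph{jointly continuous}, $M$-local approximation of $g$ with controlled error, since in the topology on $\Omega$ "forgetting the far past" is discontinuous in general; one handles this by combining martingale convergence along $\mathcal{F}^{s-M}_s$ with Lusin-type regularization on a $Q$-almost-full compact set, together with the a priori bound $\lambda\le\lambda(0)+\alpha\sum_{\tau<s}h(s-\tau)$ to dominate the tails of the $\lambda$-integral.
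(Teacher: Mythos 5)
Your proof is correct and takes essentially the same route as the paper's: approximate a near-optimal test function from the variational formula of Lemma \ref{variational} by one with finite memory, then use shifts and the stationarity of $Q$ to produce an element of $\mathcal{C}_T$ whose expectation is close to $T\,H(Q)$. The paper's own proof is far terser --- it merely asserts the existence of the finite-memory approximants $f_T$ and the resulting inequality --- so your explicit periodization $f(\omega,s)=\sum_k \tilde g(\theta_k\omega,s-k)$, the boundary cutoff ensuring continuity across windows, and your flagging of the delicate continuity issue in the finite-memory approximation are all welcome elaborations of steps the paper leaves implicit.
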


\begin{proof}
Assume $H(Q)<\infty$. For any $\epsilon>0$, there exists some $f_{\epsilon}$ such that
\begin{equation}
\mathbb{E}^{Q}\left[\int_{0}^{1}f_{\epsilon}dN_{s}-\int_{0}^{1}(e^{f_{\epsilon}}-1)\lambda ds\right]\geq H(Q)-\epsilon.
\end{equation}
We can find a sequence $f_{T}\in\mathcal{B}\left(\mathcal{F}^{-(T-1)}_{s}\right)
\cap C(\Omega\times\mathbb{R})\rightarrow f_{\epsilon}$ as $T\rightarrow\infty$. By Fatou's lemma,
\begin{align}
&\liminf_{T\rightarrow\infty}\frac{1}{T}\sup_{F\in\mathcal{C}_{T}}\int_{\Omega}F(\omega)Q(d\omega)
\\
&\geq\liminf_{T\rightarrow\infty}\mathbb{E}^{Q}\left[\int_{0}^{1}f_{T}dN_{s}-\int_{0}^{1}(e^{f_{T}}-1)\lambda ds\right]\geq H(Q)-\epsilon.\nonumber
\end{align}
If $H(Q)=\infty$, then, for any $M>0$,
there exists some $f_{M}$ such that
\begin{equation}
\mathbb{E}^{Q}\left[\int_{0}^{1}f_{M}dN_{s}-\int_{0}^{1}(e^{f_{M}}-1)\lambda ds\right]\geq M.
\end{equation}
Repeat the same argument as in the case that $H(Q)<\infty$.
\end{proof}

\begin{lemma}\label{uppercompact}
For any compact set $A$, 
\begin{equation}
\limsup_{t\rightarrow\infty}\frac{1}{t}\log P(R_{t,\omega}\in A)\leq-\inf_{Q\in A}H(Q).
\end{equation}
\end{lemma}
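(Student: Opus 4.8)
The plan is to run the classical Donsker--Varadhan upper bound scheme, using the family $\mathcal{C}_T$ of exponential--martingale functionals in place of the Markovian generators. Fix $\delta>0$ and $M>0$. Given $Q\in\mathcal{M}_{S}(\Omega)$, apply Lemma~\ref{finitetoinfinite} to choose a scale $T=T(Q)$ and a functional $F_{Q}\in\mathcal{C}_{T}$ with
\begin{equation}
\frac{1}{T}\int_{\Omega}F_{Q}\,dQ\ \geq\ \min\{H(Q),M\}-\delta .
\end{equation}
Since $F_{Q}(\omega)=\int_{0}^{T}f\,dN_{s}-\int_{0}^{T}(e^{f}-1)\lambda\,ds$ with $f$ bounded, continuous and $\mathcal{F}^{0}_{s}$-predictable, an argument parallel to the proof of Lemma~\ref{lscconvex} (truncating $h$ to $h^{M}$ and controlling the tail uniformly over measures of bounded first moment) shows that $Q'\mapsto\int_{\Omega}F_{Q}\,dQ'$ is continuous on $\mathcal{M}_{S}(\Omega)$. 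Hence there is an open neighborhood $\mathcal{O}_{Q}$ of $Q$ on which $\frac{1}{T}\int_{\Omega}F_{Q}\,dQ'\geq\frac{1}{T}\int_{\Omega}F_{Q}\,dQ-\delta$ for all $Q'\in\mathcal{O}_{Q}$.

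Next I would apply exponential Chebyshev on this neighborhood. Writing $\int_{\Omega}F_{Q}\,dR_{t,\omega}=\frac{1}{t}\int_{0}^{t}F_{Q}(\theta_{s}\omega_{t})\,ds$, on the event $\{R_{t,\omega}\in\mathcal{O}_{Q}\}$ we have $\frac{1}{T}\int_{\Omega}F_{Q}\,dR_{t,\omega}\geq\frac{1}{T}\int_{\Omega}F_{Q}\,dQ-\delta$, so
\begin{equation}
P\left(R_{t,\omega}\in\mathcal{O}_{Q}\right)\ \leq\ \exp\left(-t\left(\tfrac{1}{T}\int_{\Omega}F_{Q}\,dQ-\delta\right)\right)\,\mathbb{E}^{P^{\emptyset}}\left[e^{\frac{1}{T}\int_{0}^{t}F_{Q}(\theta_{s}\omega_{t})\,ds}\right].
\end{equation}
To bound the expectation I would compare $\omega_{t}$ with $\omega$. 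For $\epsilon>0$, H\"{o}lder's inequality with exponents $1+\epsilon$ and $\frac{1+\epsilon}{\epsilon}$ gives
\begin{equation}
\mathbb{E}^{P^{\emptyset}}\left[e^{\frac{1}{T}\int_{0}^{t}F_{Q}(\theta_{s}\omega_{t})ds}\right]\leq\mathbb{E}^{P^{\emptyset}}\left[e^{\frac{1+\epsilon}{T}\int_{0}^{t}F_{Q}(\theta_{s}\omega)ds}\right]^{\frac{1}{1+\epsilon}}\mathbb{E}^{P^{\emptyset}}\left[e^{\frac{1+\epsilon}{\epsilon T}\left|\int_{0}^{t}F_{Q}(\theta_{s}\omega_{t})ds-\int_{0}^{t}F_{Q}(\theta_{s}\omega)ds\right|}\right]^{\frac{\epsilon}{1+\epsilon}}.
\end{equation}
The second factor is $e^{o(t)}$ by Lemma~\ref{difference}. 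For the first, the elementary inequality $(1+\epsilon)(e^{x}-1)\leq e^{(1+\epsilon)x}-1$, valid for all real $x$, together with $\lambda\geq0$ yields the pointwise bound $(1+\epsilon)F_{Q}\leq\widetilde{F}_{Q}$, where $\widetilde{F}_{Q}(\omega)=\int_{0}^{T}(1+\epsilon)f\,dN_{s}-\int_{0}^{T}(e^{(1+\epsilon)f}-1)\lambda\,ds\in\mathcal{C}_{T}$; hence by Lemma~\ref{expectation} the first factor is at most $1$. Thus $\limsup_{t\to\infty}\frac{1}{t}\log\mathbb{E}^{P^{\emptyset}}[e^{\frac{1}{T}\int_{0}^{t}F_{Q}(\theta_{s}\omega_{t})ds}]\leq0$, and combining with the previous display,
\begin{equation}
\limsup_{t\to\infty}\frac{1}{t}\log P\left(R_{t,\omega}\in\mathcal{O}_{Q}\right)\ \leq\ -\min\{H(Q),M\}+2\delta .
\end{equation}

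Finally I would invoke compactness of $A$: from the open cover $\{\mathcal{O}_{Q}\}_{Q\in A}$ extract a finite subcover $\mathcal{O}_{Q_{1}},\dots,\mathcal{O}_{Q_{n}}$, and use $\limsup_{t\to\infty}\frac{1}{t}\log\sum_{i=1}^{n}a_{i}(t)=\max_{i}\limsup_{t\to\infty}\frac{1}{t}\log a_{i}(t)$ to obtain
\begin{equation}
\limsup_{t\to\infty}\frac{1}{t}\log P\left(R_{t,\omega}\in A\right)\ \leq\ -\min\Big\{\inf_{Q\in A}H(Q),\,M\Big\}+2\delta .
\end{equation}
Letting $\delta\downarrow0$ and $M\uparrow\infty$ gives the claim. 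The main obstacle is the mismatch, inside the exponential moment, between the periodized empirical integral $\int_{0}^{t}F_{Q}(\theta_{s}\omega_{t})\,ds$, which is what $R_{t,\omega}$ produces, and the genuine path integral $\int_{0}^{t}F_{Q}(\theta_{s}\omega)\,ds$, for which Lemma~\ref{expectation} gives the clean bound $\leq1$; reconciling the two is exactly the content of Lemma~\ref{difference}, whose proof is where the structural hypotheses $\lim_{z\to\infty}\lambda(z)/z=0$ and the monotone decay of $h$ enter. A secondary technical point is the H\"{o}lder step above, which is what forces the small dilation by $1+\epsilon$ and the verification that $(1+\epsilon)F_{Q}$ is still dominated by an element of $\mathcal{C}_{T}$.
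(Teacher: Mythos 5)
Your overall architecture (exponential Chebyshev on a neighborhood, H\"{o}lder to separate the periodized integral from the true path integral, Lemma \ref{expectation} for the martingale factor, Lemma \ref{difference} for the error, continuity of the linear functional plus a finite subcover, and Lemma \ref{finitetoinfinite} to reach $H(Q)$) is exactly the paper's, but the execution of the H\"{o}lder step contains a step that fails: the claimed domination $(1+\epsilon)F_{Q}\leq\widetilde{F}_{Q}$ is reversed. The elementary inequality $(1+\epsilon)(e^{x}-1)\leq e^{(1+\epsilon)x}-1$ is true, but since the compensator enters $F_{Q}$ with a minus sign it yields
\begin{equation}
\widetilde{F}_{Q}-(1+\epsilon)F_{Q}=-\int_{0}^{T}\left[e^{(1+\epsilon)f}-1-(1+\epsilon)(e^{f}-1)\right]\lambda\,ds\ \leq\ 0,
\end{equation}
i.e. $\widetilde{F}_{Q}\leq(1+\epsilon)F_{Q}$. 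So Lemma \ref{expectation} applied to $\widetilde{F}_{Q}$ controls the wrong (smaller) quantity and gives no bound on $\mathbb{E}^{P^{\emptyset}}\bigl[e^{\frac{1+\epsilon}{T}\int_{0}^{t}F_{Q}(\theta_{s}\omega)ds}\bigr]$. This is not a cosmetic issue: $(1+\epsilon)F_{Q}$ overshoots its correct exponential compensator by $\int_{0}^{T}\bigl[e^{(1+\epsilon)f}-1-(1+\epsilon)(e^{f}-1)\bigr]\lambda\,ds$, a nonnegative random quantity of order $\epsilon\int_{0}^{T}\lambda\,ds$ with $\lambda$ unbounded, so that expectation is generically $\geq 1$ and may grow exponentially in $t$; proving it is $e^{o(t)}$ would require fresh superexponential estimates rather than Lemma \ref{expectation}.

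The repair is the paper's arrangement of the same three tools: put the H\"{o}lder exponent into the Chebyshev step rather than into $F_{Q}$. That is, bound $P(R_{t,\omega}\in\mathcal{O}_{Q})$ using $\exp\bigl\{\frac{1}{pT}\int_{0}^{t}F_{Q}(\theta_{s}\omega_{t})ds\bigr\}$ with $p>1$, then H\"{o}lder with exponents $p,q$ produces exactly $\mathbb{E}^{P^{\emptyset}}\bigl[e^{\frac{1}{T}\int_{0}^{t}F_{Q}(\theta_{s}\omega)ds}\bigr]^{1/p}\leq 1$ by Lemma \ref{expectation}, and $\mathbb{E}^{P^{\emptyset}}\bigl[e^{\frac{q}{pT}\left|\int_{0}^{t}F_{Q}(\theta_{s}\omega_{t})ds-\int_{0}^{t}F_{Q}(\theta_{s}\omega)ds\right|}\bigr]^{1/q}=e^{o(t)}$ by Lemma \ref{difference}; the price is the factor $\frac{1}{p}$ in front of $\inf_{Q'\in\mathcal{O}_{Q}}\frac{1}{T}\int F_{Q}\,dQ'$, which disappears on letting $p\downarrow 1$ before the covering step. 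With that modification, your continuity claim for $Q'\mapsto\int F_{Q}\,dQ'$ (which does follow as in Lemma \ref{lscconvex}), the neighborhoods, the finite subcover, and the final limits $\delta\downarrow 0$, $M\uparrow\infty$ all go through and reproduce the paper's proof.
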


\begin{proof}
Notice that
\begin{equation}
\mathbb{E}^{P^{\emptyset}}\left[e^{N[0,t]}\right]\leq\mathbb{E}^{P^{\emptyset}}\left[e^{(e^{2}-1)\int_{0}^{t}\lambda(s)ds}\right]^{1/2}
\leq\mathbb{E}^{P^{\emptyset}}\left[e^{(e^{2}-1)\epsilon h(0)N[0,t]+C_{\epsilon}(e^{2}-1)}\right]^{1/2}.
\end{equation}
By choosing $\epsilon>0$ small enough, we have $\mathbb{E}^{P^{\emptyset}}[e^{N[0,t]}]\leq e^{Ct}$ for some constant $C>0$. Therefore
\begin{equation}
\limsup_{\ell\rightarrow\infty}\limsup_{t\rightarrow\infty}\frac{1}{t}\log P^{\emptyset}\left(N[0,t]>\ell t\right)=-\infty,
\end{equation}
which implies (by comparing $\int_{\Omega}N[0,1]dR_{t,\omega}$ and $N[0,t]/t$ and the superexponential estimates in Lemma \ref{indfinite})
\begin{equation}
\limsup_{\ell\rightarrow\infty}\limsup_{t\rightarrow\infty}\frac{1}{t}\log P^{\emptyset}\left(\int_{\Omega}N[0,1]dR_{t,\omega}>\ell\right)=-\infty.
\end{equation}
Therefore, we need only to consider compact sets $A$ such that for any $Q\in A$, $\mathbb{E}^{Q}[N[0,1]]<\infty$.

Now for any $A$ compact consisting of $Q$ with $\mathbb{E}^{Q}[N[0,1]]<\infty$ and for any $F\in\mathcal{C}_{T}$ and 
for any $p,q>1$, $\frac{1}{p}+\frac{1}{q}=1$, by H\"{o}lder's inequality, Chebychev's inequality, and Lemma \ref{expectation},
\begin{align}
&P^{\emptyset}(R_{t,\omega}\in A)
\\
&\leq\mathbb{E}^{P^{\emptyset}}\left[e^{\frac{1}{pT}\int_{0}^{t}F(\theta_{s}\omega_{t})ds}\right]
\cdot\exp\left\{-\frac{t}{pT}\inf_{Q\in A}\int_{\Omega}F(\omega)Q(d\omega)\right\}\nonumber
\\
&\leq\mathbb{E}^{P^{\emptyset}}\left[e^{\frac{1}{T}\int_{0}^{t}F(\theta_{s}\omega)ds}\right]^{1/p}
\mathbb{E}^{P^{\emptyset}}\left[e^{\frac{q}{pT}\left|\int_{0}^{t}F(\theta_{s}\omega_{t})ds
-\int_{0}^{t}F(\theta_{s}\omega)ds\right|}\right]^{1/q}\nonumber
\\
&\phantom{\mathbb{E}^{P^{\emptyset}}\left[e^{\frac{1}{T}\int_{0}^{t}F(\theta_{s}\omega)ds}\right]^{1/p}\mathbb{E}^{P^{\emptyset}}}
\cdot\exp\left\{-\frac{t}{pT}\inf_{Q\in A}\int_{\Omega}F(\omega)Q(d\omega)\right\}\nonumber
\\
&\leq\mathbb{E}^{P^{\emptyset}}\left[e^{\frac{q}{pT}\left|\int_{0}^{t}F(\theta_{s}\omega_{t})ds
-\int_{0}^{t}F(\theta_{s}\omega)ds\right|}\right]^{1/q}\cdot\exp\left\{-\frac{t}{pT}\inf_{Q\in A}\int_{\Omega}F(\omega)Q(d\omega)\right\}\nonumber
\end{align}
By Lemma \ref{difference},
\begin{equation}
\limsup_{t\rightarrow\infty}\frac{1}{t}\log P^{\emptyset}(R_{t,\omega}\in A)\leq-\frac{1}{p}\inf_{Q\in A}\frac{1}{T}\int_{\Omega}F(\omega)Q(d\omega).
\end{equation}
Since it holds for any $p>1$, we get
\begin{equation}
\limsup_{t\rightarrow\infty}\frac{1}{t}\log P^{\emptyset}(R_{t,\omega}\in A)\leq-\inf_{Q\in A}\frac{1}{T}\int_{\Omega}F(\omega)Q(d\omega).
\end{equation}

For any compact $A$, given $Q\in A$ and $\epsilon>0$, by Lemma \ref{finitetoinfinite}, there exists $T_{Q}>0$ and 
$F_{Q}\in\mathcal{C}_{T_{Q}}$ such that $\frac{1}{T_{Q}}\int_{\Omega}F_{Q}(\omega)Q(d\omega)\geq\inf_{A\in Q}H(Q)-\frac{1}{2}\epsilon$. 
Since the linear integral is a continuous functional of $Q$ (see the proof of Lemma \ref{lscconvex}), 
there exists a neighborhood $G_{Q}$ of $Q$ such that $\frac{1}{T_{Q}}\int_{\Omega}F_{Q}(\omega)Q(d\omega)\geq\inf_{A\in Q}H(Q)-\epsilon$ 
for all $Q\in G_{Q}$. Since $A$ is compact, there exists $G_{Q_{1}},\ldots,G_{Q_{\ell}}$ such that $A\subset\bigcup_{j=1}^{\ell}G_{Q_{j}}$. 
Hence
\begin{equation}
\inf_{1\leq j\leq\ell}\sup_{T>0}\sup_{F\in\mathcal{C}_{T}}\inf_{Q\in G_{j}}\frac{1}{T}\int_{\Omega}F(\omega)Q(d\omega)\geq\inf_{Q\in A}H(Q)-\epsilon.
\end{equation}
Note that for any $A$ and $B$,
\begin{align}
&\limsup_{t\rightarrow\infty}\frac{1}{t}\log P(R_{t,\omega}\in A\cup B)
\\
&\leq\max\left\{\limsup_{t\rightarrow\infty}\frac{1}{t}\log 
P(R_{t,\omega}\in A),\limsup_{t\rightarrow\infty}\frac{1}{t}\log P(R_{t,\omega}\in B)\right\}.\nonumber
\end{align}
Thus, for $A\subset\bigcup_{j=1}^{\ell}G_{j}$,
\begin{equation}
\limsup_{t\rightarrow\infty}\frac{1}{t}\log P(R_{t,\omega}\in A)\leq-\inf_{1\leq j\leq\ell}
\sup_{T>0}\sup_{F\in\mathcal{C}_{T}}\inf_{Q\in G_{j}}\frac{1}{T}\int F(\omega)Q(d\omega),
\end{equation}
whence $\limsup_{t\rightarrow\infty}\frac{1}{t}\log P(R_{t,\omega}\in A)\leq-\inf_{Q\in A}H(Q)$ for any compact $A$.
\end{proof}

\begin{theorem}[Upper Bound]
For any closed set $C$, 
\begin{equation}
\limsup_{t\rightarrow\infty}\frac{1}{t}\log P(R_{t,\omega}\in C)\leq-\inf_{Q\in C}H(Q).
\end{equation}
\end{theorem}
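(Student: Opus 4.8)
The plan is to deduce the closed-set bound from the compact-set bound of Lemma~\ref{uppercompact} by establishing exponential tightness of the family $\bigl(P^{\emptyset}(R_{t,\omega}\in\cdot)\bigr)_{t>0}$ on $\mathcal{M}_{S}(\Omega)$. Recall that the topology on $\mathcal{M}_{S}(\Omega)$ is the weak topology \emph{strengthened} by the convergence of the first moment $Q\mapsto\mathbb{E}^{Q}[N[0,1]]$; the functionals $F\in\mathcal{C}_{T}$ used in Lemma~\ref{uppercompact} are continuous only in this finer topology, so that lemma genuinely needs its sets to be compact in $\mathcal{M}_{S}(\Omega)$, and ordinary weak tightness does not suffice. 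Consequently the truncating sets $\mathcal{K}_{\ell}$ I would use must control simultaneously (i) the tails of the configuration itself, so as to be tight in the vague topology, and (ii) the tail of $N[0,1]$ in a \emph{uniformly integrable} way, so that weak convergence inside $\mathcal{K}_{\ell}$ forces convergence of first moments and hence $\mathcal{K}_{\ell}$ is compact in $\mathcal{M}_{S}(\Omega)$. Concretely I would take $\mathcal{K}_{\ell}$ to be a level set of a functional $Q\mapsto\mathbb{E}^{Q}[\psi(N[0,1])]$ for a suitably superlinear $\psi$: weak lower semicontinuity of such functionals makes $\mathcal{K}_{\ell}$ closed, stationarity of the elements of $\mathcal{M}_{S}(\Omega)$ converts the moment bound into a bound on $\mathbb{E}^{Q}[N[-a,a]]$ for every $a$, and a union bound over $a\in\mathbb{N}$ then yields tightness. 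This is the content of Lemma~\ref{tightness}.

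Next I would verify the escape estimate
\begin{equation}
\limsup_{\ell\rightarrow\infty}\ \limsup_{t\rightarrow\infty}\frac{1}{t}\log P^{\emptyset}\bigl(R_{t,\omega}\notin\mathcal{K}_{\ell}\bigr)=-\infty,
\end{equation}
which is part~(iii) of Lemma~\ref{finalestimate}. Writing the empirical measure out, $\{R_{t,\omega}\notin\mathcal{K}_{\ell}\}$ becomes an event about the time average $\frac{1}{t}\int_{0}^{t}\psi\bigl(N[s,s+1](\omega_{t})\bigr)\,ds$ being large; after replacing $\omega_{t}$ by $\omega$ at the cost of boundary contributions near $0$ and $t$ that are superexponentially negligible by estimates in the spirit of Lemma~\ref{indfinite}, one reduces to controlling a time average of functionals of $N$ against the exponential-moment bound $\mathbb{E}^{P^{\emptyset}}\bigl[e^{N_{t}}\bigr]\leq e^{Ct}$ obtained in the proof of Lemma~\ref{uppercompact} (and its higher-exponent analogues, available because $\lim_{z\to\infty}\lambda(z)/z=0$). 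An exponential Chebyshev inequality then produces the stated limit.

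Given exponential tightness, the closed-set bound follows in the standard way: for a closed $C$ and each $\ell$ the set $C\cap\mathcal{K}_{\ell}$ is compact, so Lemma~\ref{uppercompact} gives
\begin{equation}
P(R_{t,\omega}\in C)\leq P(R_{t,\omega}\in C\cap\mathcal{K}_{\ell})+P(R_{t,\omega}\notin\mathcal{K}_{\ell}),
\end{equation}
and taking $\frac{1}{t}\log$, then $\limsup_{t\to\infty}$, and using $\limsup_{t}\frac{1}{t}\log(a_{t}+b_{t})\leq\max\{\limsup_{t}\frac{1}{t}\log a_{t},\ \limsup_{t}\frac{1}{t}\log b_{t}\}$ yields
\begin{equation}
\limsup_{t\rightarrow\infty}\frac{1}{t}\log P(R_{t,\omega}\in C)\leq\max\Bigl\{-\inf_{Q\in C}H(Q),\ \limsup_{t\rightarrow\infty}\tfrac{1}{t}\log P(R_{t,\omega}\notin\mathcal{K}_{\ell})\Bigr\}.
\end{equation}
Letting $\ell\to\infty$ and invoking the escape estimate kills the second term, giving $\limsup_{t\to\infty}\frac{1}{t}\log P(R_{t,\omega}\in C)\leq-\inf_{Q\in C}H(Q)$; lower semicontinuity of $H$ (Lemma~\ref{lscconvex}) makes the right-hand side a bona fide rate-function bound.

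The step I expect to be the main obstacle is the exponential tightness, and within it precisely the interaction with the non-standard topology: a naive truncation $\{\mathbb{E}^{Q}[N[0,1]]\leq\ell\}$ is weakly compact but \emph{not} compact in $\mathcal{M}_{S}(\Omega)$, so one is forced to truncate with a superlinear moment functional, and then one must show that the corresponding level sets of the empirical measure are left only with superexponentially small probability — which is exactly where the strong exponential-moment control of $N_{t}$ under $P^{\emptyset}$, available here because $\lambda$ is sublinear at infinity, is indispensable.
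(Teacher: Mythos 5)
Your overall architecture is exactly the paper's: truncate by compact sets, apply the compact-set bound of Lemma~\ref{uppercompact} to $C\cap\mathcal{K}_{\ell}$, control the escape probability superexponentially, and combine via $\limsup_{t}\frac{1}{t}\log(a_{t}+b_{t})\leq\max\{\cdot,\cdot\}$. You also correctly identify the central subtlety of the strengthened topology: plain weak tightness is not enough, and one needs a uniform-integrability-type constraint on $N[0,1]$ so that weak convergence inside the truncating sets forces convergence of first moments; your superlinear-moment level sets are a de la Vall\'ee-Poussin variant of the paper's condition $\int_{N[0,1]\geq\ell}N[0,1]\,dQ\leq m(\ell)$, and the corresponding escape estimate is part~(iii) of Lemma~\ref{finalestimate}.

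There is, however, a genuine gap in your construction of the compact sets. The ambient space $\mathcal{M}_{S}(\Omega)$ consists of \emph{simple} stationary point processes, and, as noted at the start of Chapter~\ref{chap:three}, the space of simple point processes is not closed: a weak limit of simple point processes can have multiple points. A set $\mathcal{K}_{\ell}=\{Q:\mathbb{E}^{Q}[\psi(N[0,1])]\leq\ell\}$ is tight and uniformly integrable, hence precompact in the strengthened weak topology on stationary point processes, but its closure may contain non-simple limits; consequently $C\cap\mathcal{K}_{\ell}$ need not be compact \emph{as a subset of} $\mathcal{M}_{S}(\Omega)$, which is what Lemma~\ref{uppercompact} requires. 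The paper's sets $\mathcal{A}^{n}$ therefore carry a third constraint, $Q(N[0,\delta]\geq 2)\leq\delta g(\delta)$ for all small $\delta=1/j$, which forces any weak limit to charge only simple configurations (see the last computation in the proof of Lemma~\ref{tightness}); and correspondingly the exponential tightness needs the additional superexponential estimate (i) of Lemma~\ref{finalestimate} on $\frac{1}{\delta t}\int_{0}^{t}\chi_{N[0,\delta]\geq 2}(\theta_{s}\omega_{t})\,ds$, proved via comparison with a Poisson process (Lemma~\ref{constrate}). Your proposal omits this ingredient entirely. A second, smaller point: your escape estimate is not just ``exponential Chebyshev against $\mathbb{E}[e^{N_{t}}]\leq e^{Ct}$''---a fixed exponent only yields a finite negative rate, not $-\infty$ as $\ell\to\infty$. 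One must let the exponent grow with the truncation level, and for the Hawkes process this requires the decomposition of $N$ into the part with intensity below a threshold $\ell'$ (dominated by a Poisson process, where the growing exponent $h(\ell)=(\log\ell)^{1/2}$ of Lemma~\ref{constrateIII} works) and the part with intensity above $\ell'$, which is controlled using $\lim_{z\to\infty}\lambda(z)/z=0$ as in Lemma~\ref{midestimate}. You gesture at the right hypothesis but the mechanism needs to be spelled out.
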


\begin{proof}
For any closed set $C$ and compact $\mathcal{A}^{n}$ which is defined in Lemma \ref{tightness}, we have
\begin{align}
&\limsup_{t\rightarrow\infty}\frac{1}{t}\log P(R_{t,\omega}\in C)
\\
&\leq\max\left\{\limsup_{t\rightarrow\infty}\frac{1}{t}\log P(R_{t,\omega}\in C\cap\mathcal{A}^{n}),
\limsup_{t\rightarrow\infty}\frac{1}{t}\log P(R_{t,\omega}\in(\mathcal{A}^{n})^{c})\right\}.\nonumber
\end{align}
Since $C\cap\mathcal{A}_{n}$ is compact, Lemma \ref{uppercompact} implies
\begin{equation}
\limsup_{t\rightarrow\infty}\frac{1}{t}\log P(R_{t,\omega}\in C\cap\mathcal{A}^{n})\leq-\inf_{Q\in C\cap\mathcal{A}^{n}}H(Q)\leq-\inf_{Q\in C}H(Q).
\end{equation}
Furthermore, by Lemma \ref{finalestimate}, 
\begin{align}
&\limsup_{t\rightarrow\infty}\frac{1}{t}\log P(R_{t,\omega}\in(\mathcal{A}^{n})^{c})
\\
&=\limsup_{t\rightarrow\infty}\frac{1}{t}\log P\left(R_{t,\omega}\in\bigcup_{j=n}^{\infty}\mathcal{A}_{\frac{1}{j},j,j}^{c}\right)\nonumber
\\
&\leq\max_{j\geq n}\max\bigg\{\limsup_{t\rightarrow\infty}\frac{1}{t}
\log P\left(\frac{1}{t}\int_{0}^{t}\chi_{N[0,1]\geq j}(\theta_{s}\omega_{t})ds\geq\varepsilon(j)\right),\nonumber
\\
&\limsup_{t\rightarrow\infty}\frac{1}{t}\log P\left(\frac{1}{t}\int_{0}^{t}\chi_{N[0,1/j]\geq 2}(\theta_{s}\omega_{t})ds\geq (1/j)g(1/j)\right),\nonumber
\\
&\limsup_{t\rightarrow\infty}\frac{1}{t}\log 
P\left(\frac{1}{t}\int_{0}^{t}N[0,1]\chi_{N[0,1]\geq\ell}(\theta_{s}\omega_{t})ds\geq m(\ell)\right)\bigg\}\rightarrow-\infty\nonumber
\end{align}
as $n\rightarrow\infty$. Hence, 
\begin{equation}
\limsup_{t\rightarrow\infty}\frac{1}{t}\log P(R_{t,\omega}\in C)\leq-\inf_{Q\in C}H(Q).
\end{equation}
\end{proof}

\section{Superexponential Estimates}\label{superexpestimates}

In order to get the full large deviation principle, we need the upper bound inequality valid for any closed set instead of for any compact set, 
which requires some superexponential estimates.

\begin{lemma}\label{reverse}
For any $q>0$,
\begin{equation}
\limsup_{t\rightarrow\infty}\frac{1}{t}\log\mathbb{E}^{P^{\emptyset}}\left[e^{q\int_{0}^{t}h(t-s)dN_{s}}\right]=0.
\end{equation}
\end{lemma}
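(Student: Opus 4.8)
The plan is to prove the two matching bounds. The lower bound $\liminf_{t\to\infty}\frac1t\log\mathbb{E}^{P^{\emptyset}}[e^{q\int_0^t h(t-s)dN_s}]\ge 0$ is immediate, since the exponent is nonnegative and hence the expectation is at least $1$. All the work is in the upper bound, i.e. in showing $\mathbb{E}^{P^{\emptyset}}[e^{q\int_0^t h(t-s)dN_s}]=e^{o(t)}$.

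First I would apply the exponential martingale inequality $\mathbb{E}^{P^{\emptyset}}[e^{\int_0^t f\,dN_s}]\le\mathbb{E}^{P^{\emptyset}}[e^{\int_0^t(e^{2f}-1)\lambda\,ds}]^{1/2}$ (proved just before Lemma \ref{expectation}) with the bounded, deterministic, predictable integrand $f(s)=qh(t-s)$, to obtain
\[
\mathbb{E}^{P^{\emptyset}}\big[e^{q\int_0^t h(t-s)dN_s}\big]\le\mathbb{E}^{P^{\emptyset}}\Big[e^{\int_0^t(e^{2qh(t-s)}-1)\lambda(\omega,s)ds}\Big]^{1/2}.
\]
Then I would use the sublinearity $\lambda(z)/z\to0$, which gives for every $\epsilon>0$ a constant $C_\epsilon$ with $\lambda(z)\le C_\epsilon+\epsilon z$. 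Since $h$ is decreasing, $\lambda(\omega,s)=\lambda\big(\int_0^s h(s-u)N(du)\big)\le C_\epsilon+\epsilon h(0)N[0,s]\le C_\epsilon+\epsilon h(0)N_t$ for $0\le s\le t$, while the substitution $v=t-s$ gives $\int_0^t(e^{2qh(t-s)}-1)ds\le\int_0^\infty(e^{2qh(v)}-1)dv=:A_q$, and $A_q<\infty$ because $h$ is continuous (hence bounded near $0$) and decreasing and integrable (hence $h(v)\downarrow0$, so $e^{2qh(v)}-1\le 4qh(v)$ for large $v$). Combining these two estimates,
\[
\mathbb{E}^{P^{\emptyset}}\big[e^{q\int_0^t h(t-s)dN_s}\big]\le e^{A_qC_\epsilon/2}\,\mathbb{E}^{P^{\emptyset}}\big[e^{A_q\epsilon h(0)N_t}\big]^{1/2}.
\]

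Next I would invoke the bound $\mathbb{E}^{P^{\emptyset}}[e^{N_t}]\le e^{Ct}$ (for some fixed $C>0$), which is established in the proof of Lemma \ref{uppercompact} by the same martingale device combined with the sublinear comparison — note that argument uses no superexponential estimate, so there is no circularity. By concavity of $x\mapsto x^\beta$ this yields $\mathbb{E}^{P^{\emptyset}}[e^{\beta N_t}]\le e^{C\beta t}$ for every $\beta\in(0,1]$. Choosing $\epsilon$ small enough that $\beta:=A_q\epsilon h(0)\le 1$, we get $\frac1t\log\mathbb{E}^{P^{\emptyset}}[e^{q\int_0^t h(t-s)dN_s}]\le\frac{A_qC_\epsilon}{2t}+\frac{CA_q h(0)}{2}\epsilon$, hence $\limsup_{t\to\infty}\frac1t\log\mathbb{E}^{P^{\emptyset}}[e^{q\int_0^t h(t-s)dN_s}]\le\frac{CA_q h(0)}{2}\epsilon$. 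Since this holds for all sufficiently small $\epsilon>0$, letting $\epsilon\downarrow0$ forces $\limsup\le0$; together with the trivial lower bound this gives the claimed limit $0$.

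The main obstacle — indeed essentially the only delicate point — is to arrange that the term of order $t$ produced by the crude affine bound on $\lambda$ carries a coefficient proportional to $\epsilon$, so that it can be sent to zero; this is exactly why the sublinear growth of $\lambda$ is used (affine domination alone would leave a fixed positive constant), and why it matters that the exponential-moment bound $\mathbb{E}^{P^{\emptyset}}[e^{\beta N_t}]\le e^{C\beta t}$ degenerates linearly as $\beta\downarrow0$. The remaining checks — finiteness of $A_q$, and that the exponential formula gives a genuine martingale for the bounded, $t$-dependent integrand $qh(t-s)$ on $[0,t]$ — are routine.
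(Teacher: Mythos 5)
Your proof is correct and follows essentially the same route as the paper: the exponential martingale inequality with $f(s)=qh(t-s)$, the sublinear bound $\lambda(z)\leq C_{\epsilon}+\epsilon z$ together with $h$ decreasing to get the exponent $(C_{\epsilon}+\epsilon h(0)N_{t})\int_{0}^{t}(e^{2qh(s)}-1)ds$, and finiteness of $\int_{0}^{\infty}(e^{2qh(s)}-1)ds$, then sending $\epsilon\downarrow 0$. The only difference is that you make explicit the last step (controlling $\mathbb{E}^{P^{\emptyset}}[e^{\delta N_{t}}]$ via the non-circular bound $\mathbb{E}^{P^{\emptyset}}[e^{N_{t}}]\leq e^{Ct}$ and Jensen's inequality), which the paper leaves implicit in its ``$\leq c(\epsilon)$'' conclusion; this is a legitimate and welcome filling-in of detail.
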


\begin{proof}
\begin{align}
\mathbb{E}^{P^{\emptyset}}\left[e^{q\int_{0}^{t}h(t-s)dN_{s}}\right]
&\leq\mathbb{E}^{P^{\emptyset}}\left[e^{\int_{0}^{t}(e^{2qh(t-s)}-1)\lambda(\sum_{0<\tau<s}h(s-\tau))ds}\right]^{1/2}
\\
&\leq\mathbb{E}^{P^{\emptyset}}\left[e^{(C_{\epsilon}+h(0)\epsilon N_{t})\int_{0}^{t}(e^{2qh(t-s)}-1)ds}\right]^{1/2}.\nonumber
\end{align}
Note that $\int_{0}^{t}(e^{2qh(t-s)}-1)ds=\int_{0}^{t}(e^{2qh(s)}-1)ds\in L_{1}$ since $h\in L^{1}$. Therefore,
\begin{equation}
\limsup_{t\rightarrow\infty}\frac{1}{t}\log\mathbb{E}^{P^{\emptyset}}\left[e^{q\int_{0}^{t}h(t-s)dN_{s}}\right]\leq c(\epsilon),
\end{equation}
where $c(\epsilon)\rightarrow 0$ as $\epsilon\rightarrow 0$. Since it holds for any $\epsilon$, we get the desired result.
\end{proof}

\begin{lemma}\label{indfinite}
For any $q>0$ and $T>0$,
\begin{equation}
\limsup_{t\rightarrow\infty}\frac{1}{t}\log\mathbb{E}^{P^{\emptyset}}\left[e^{qN[t,t+T]}\right]=0.
\end{equation}
Therefore, for any $\epsilon>0$,
\begin{equation}
\limsup_{t\rightarrow\infty}\frac{1}{t}\log P^{\emptyset}\left(N[t,t+T]\geq\epsilon t\right)=-\infty.
\end{equation}
\end{lemma}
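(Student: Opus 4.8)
The plan is to obtain Lemma~\ref{indfinite} as a short corollary of Lemma~\ref{reverse}, using only that $h$ is positive and non-increasing on $[0,\infty)$. For $s\in[t,t+T]$ the argument $(t+T)-s$ lies in $[0,T]$, so monotonicity of $h$ gives $h((t+T)-s)\ge h(T)>0$; hence every point of $\omega$ lying in $(t,t+T]$ contributes at least $h(T)$ to $\int_{0}^{t+T}h((t+T)-s)\,dN_{s}$, and therefore
\begin{equation}
N[t,t+T]\le\frac{1}{h(T)}\int_{0}^{t+T}h\bigl((t+T)-s\bigr)\,dN_{s}\qquad\text{$P^{\emptyset}$-a.s.}
\end{equation}
This is the only substantive step: it transfers the count over a window of fixed length $T$ to the weighted integral governed by Lemma~\ref{reverse}, at the cost of the constant $1/h(T)$.

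Exponentiating and taking $\mathbb{E}^{P^{\emptyset}}$ gives $\mathbb{E}^{P^{\emptyset}}[e^{qN[t,t+T]}]\le\mathbb{E}^{P^{\emptyset}}[e^{(q/h(T))\int_{0}^{t+T}h((t+T)-s)\,dN_{s}}]$. I would then invoke Lemma~\ref{reverse} with the constant $q/h(T)$ in place of $q$ and terminal time $t+T$ in place of $t$: writing $\frac{1}{t}=\frac{t+T}{t}\cdot\frac{1}{t+T}$ with $\frac{t+T}{t}\to1$, this yields $\limsup_{t\to\infty}\frac{1}{t}\log\mathbb{E}^{P^{\emptyset}}[e^{qN[t,t+T]}]\le0$. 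The opposite inequality is immediate from Jensen, since $\mathbb{E}^{P^{\emptyset}}[e^{qN[t,t+T]}]\ge e^{q\,\mathbb{E}^{P^{\emptyset}}[N[t,t+T]]}\ge1$, which establishes the first identity. For the second identity, Markov's inequality gives $P^{\emptyset}(N[t,t+T]\ge\epsilon t)\le e^{-q\epsilon t}\,\mathbb{E}^{P^{\emptyset}}[e^{qN[t,t+T]}]$, so by the first part $\limsup_{t\to\infty}\frac{1}{t}\log P^{\emptyset}(N[t,t+T]\ge\epsilon t)\le-q\epsilon$ for every $q>0$; letting $q\to\infty$ finishes the proof.

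I do not expect any real obstacle here: the entire content is the pointwise domination above, after which everything reduces to Lemma~\ref{reverse}, whose proof in turn already supplies the exponential martingale bound combined with the sublinear growth $\lambda(z)/z\to0$ and $h\in L^{1}$. The one point where care is needed is the positivity of $h$ on all of $[0,\infty)$ (so that $h(T)>0$ and the division is legitimate); this is part of the standing assumptions of the chapter.
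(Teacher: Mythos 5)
Your proof is correct, but it takes a different route from the paper's. You reduce everything to the deterministic pointwise bound $N[t,t+T]\le h(T)^{-1}\int_{0}^{t+T}h((t+T)-s)\,dN_{s}$, valid because $h$ is decreasing and strictly positive so each point in the window contributes at least $h(T)$, and then you invoke Lemma \ref{reverse} with the constant $q/h(T)$ at time $t+T$; this is essentially the same trick the paper itself uses in Chapter \ref{chap:two} (Lemma \ref{secondmoment}, with $\delta=\inf_{[0,1]}h>0$), just transplanted here, and the remaining steps (the trivial lower bound $\mathbb{E}^{P^{\emptyset}}[e^{qN[t,t+T]}]\ge 1$ and Chebyshev plus $q\to\infty$ for the second display) are fine. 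The paper instead bounds $\mathbb{E}^{P^{\emptyset}}[e^{qN[t,t+T]}]$ by the exponential-martingale/H\"older inequality in terms of $\exp\{(e^{2q}-1)\int_{t}^{t+T}\lambda_{s}\,ds\}$, uses the sublinearity $\lambda(z)\le C_{\epsilon}+\epsilon z$, splits $Z_{s}$ for $s\in[t,t+T]$ into the contribution of new points (at most $h(0)N[t,t+T]$) and of old points (at most $\int_{0}^{t}h(t-s)\,dN_{s}$), and absorbs the $N[t,t+T]$ term back into the left-hand side by taking $\epsilon<q[2(e^{2q}-1)h(0)]^{-1}$, before finishing with Lemma \ref{reverse}. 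Your argument is shorter and avoids the absorption step, but it genuinely uses $h(T)>0$: it would fail if $h$ vanished beyond some finite time (compactly supported $h$), whereas the paper's route only needs $h(0)<\infty$ and the sublinear growth of $\lambda$. Under the chapter's standing assumptions ($h$ positive, continuous, decreasing) both are valid; the only point you gloss over is the possible atom exactly at the endpoint $t+T$, which the left-open convention for the stochastic integral excludes, but this occurs with probability zero (the compensator is absolutely continuous) and at worst costs a factor $e^{q}$ that does not affect the $\limsup$.
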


\begin{proof}
By H\"{o}lder's inequality,
\begin{align}
\mathbb{E}^{P^{\emptyset}}\left[e^{qN[t,t+T]}\right]&\leq\mathbb{E}^{P^{\emptyset}}
\left[e^{(e^{2q}-1)\int_{t}^{t+T}\lambda(\sum_{0<\tau<s}h(s-\tau))ds}\right]^{1/2}
\\
&\leq e^{\frac{1}{2}(e^{2q}-1)C_{\epsilon}T}\cdot\mathbb{E}^{P^{\emptyset}}
\left[e^{\epsilon(e^{2q}-1)h(0)N[t,t+T]+\epsilon(e^{2q}-1)\int_{0}^{t}h(t-s)dN_{s}}\right]^{1/2}\nonumber
\\
&\leq e^{\frac{1}{2}(e^{2q}-1)C_{\epsilon}T}\cdot\mathbb{E}^{P^{\emptyset}}
\left[e^{2\epsilon(e^{2q}-1)h(0)N[t,t+T]}\right]^{1/4}\mathbb{E}^{P^{\emptyset}}\nonumber
\\
&\phantom{\leq e^{\frac{1}{2}(e^{2q}-1)C_{\epsilon}T}\cdot\mathbb{E}^{P^{\emptyset}}}
\cdot\left[e^{2\epsilon(e^{2q}-1)\int_{0}^{t}h(t-s)dN_{s}}\right]^{1/4}.\nonumber
\end{align}
Choose $\epsilon<q[2(e^{2q}-1)h(0)]^{-1}$. Then
\begin{equation}
\mathbb{E}^{P^{\emptyset}}\left[e^{qN[t,t+T]}\right]^{3/4}\leq e^{\frac{1}{2}(e^{2q}-1)C_{\epsilon}T}
\cdot\mathbb{E}^{P^{\emptyset}}\left[e^{2\epsilon(e^{2q}-1)\int_{0}^{t}h(t-s)dN_{s}}\right]^{1/4}.
\end{equation}
Lemma \ref{reverse} completes the proof.
\end{proof}

\begin{lemma}\label{midestimate}
We have the following superexponential estimates.

(i) For any $\epsilon>0$,
\begin{equation}
\limsup_{\delta\rightarrow 0}\limsup_{t\rightarrow\infty}\frac{1}{t}\log P\left(\frac{1}{\delta t}
\int_{0}^{t}\chi_{N[0,\delta]\geq 2}(\theta_{s}\omega)ds\geq\epsilon\right)=-\infty.
\end{equation}

(ii) For any $\epsilon>0$,
\begin{equation}
\limsup_{M\rightarrow\infty}\limsup_{t\rightarrow\infty}\frac{1}{t}
\log P\left(\frac{1}{t}\int_{0}^{t}\chi_{N[0,1]\geq M}(\theta_{s}\omega)ds\geq\epsilon\right)=-\infty.
\end{equation}

(iii) For any $\epsilon>0$,
\begin{equation}
\limsup_{\ell\rightarrow\infty}\limsup_{t\rightarrow\infty}\frac{1}{t}
\log P\left(\frac{1}{t}\int_{0}^{t}N[0,1]\chi_{N[0,1]\geq\ell}(\theta_{s}\omega)ds\geq \epsilon\right)=-\infty.
\end{equation}
\end{lemma}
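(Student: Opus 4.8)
The plan is to prove all three estimates by exponential Chebyshev, fed by the exponential‑moment bounds already available. The ingredient I would record first is that $\mathbb{E}^{P^{\emptyset}}[e^{\theta N_t}]\le e^{C(\theta)t}$ for every $\theta>0$, with $C(\theta)<\infty$ and $C(\theta)\to 0$ as $\theta\to 0$: since $\lambda(z)/z\to 0$ one has $\lambda(z)\le C_\epsilon+\epsilon z$ with $\epsilon\|h\|_{L^{1}}<1$, so under $P^{\emptyset}$ the process $N$ is stochastically dominated by a subcritical linear Hawkes process, and one concludes as in the proof of Lemma~\ref{uppercompact} together with the explicit moment formula of Lemma~\ref{expectationII}. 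Lemmas~\ref{reverse} and~\ref{indfinite} moreover give $\limsup_{t\to\infty}\frac1t\log\mathbb{E}^{P^{\emptyset}}[e^{q\int_0^t h(t-s)dN_s}]=0$ and $\limsup_{t\to\infty}\frac1t\log\mathbb{E}^{P^{\emptyset}}[e^{qN[t,t+T]}]=0$ for all $q,T>0$.

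For (ii) I would bound $\chi_{n\ge M}\le n/M$, so that $\frac1t\int_0^t\chi_{N[0,1]\ge M}(\theta_s\omega)\,ds\le\frac{1}{Mt}\int_0^tN[s,s+1]\,ds\le\frac{N_{t+1}}{Mt}$. Hence $P(\frac1t\int_0^t\chi_{N[0,1]\ge M}(\theta_s\omega)\,ds\ge\epsilon)\le P(N_{t+1}\ge M\epsilon t)\le e^{-M\epsilon t}\mathbb{E}^{P^{\emptyset}}[e^{N_{t+1}}]$, and the preliminary bound yields $\limsup_{t\to\infty}\frac1t\log(\cdots)\le-M\epsilon+C(1)$, which tends to $-\infty$ as $M\to\infty$. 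For (i) I would bound $\chi_{n\ge2}\le\binom{n}{2}$, integrate in $s$, and exchange $\int_0^t(\cdot)\,ds$ with the sum over pairs of points, obtaining $\int_0^t\chi_{N[0,\delta]\ge 2}(\theta_s\omega)\,ds\le\delta\,P_\delta$, where $P_\delta$ counts unordered pairs of distinct points of $\omega$ in $[0,t+\delta]$ at distance less than $\delta$. Writing $P_\delta=\int_{(0,t+\delta]}N([s-\delta,s))\,dN_s$ with an $\mathcal{F}^{-\infty}_s$‑predictable integrand and applying the martingale estimate $\mathbb{E}[e^{\int f\,dN_s}]\le\mathbb{E}[e^{\int(e^{2f}-1)\lambda\,ds}]^{1/2}$, one is left with $\int_0^{t+\delta}(e^{2qN([s-\delta,s))}-1)\lambda_s\,ds$: its integrand is supported on $\bigcup_{\tau\in\omega}(\tau,\tau+\delta]$, of Lebesgue measure $O(\delta N_{t+\delta})$, and $\lambda_s\le C_\epsilon+\epsilon(\cdots)$. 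Iterating the martingale estimate to absorb the $\lambda$‑tail, as in the proof of Lemma~\ref{difference}, gives $\mathbb{E}^{P^{\emptyset}}[e^{qP_\delta}]\le e^{c(q,\delta)t}$ with $c(q,\delta)\to 0$ as $\delta\to 0$; then $\limsup_{t\to\infty}\frac1t\log P(\frac1{\delta t}\int_0^t\chi_{N[0,\delta]\ge 2}(\theta_s\omega)\,ds\ge\epsilon)\le-q\epsilon+c(q,\delta)$, and letting $\delta\to0$ then $q\to\infty$ finishes.

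For (iii) I would reduce to integer windows via $N[s,s+1]\le N[\lfloor s\rfloor,\lfloor s\rfloor+2]$, so the functional is bounded by $\sum_{k=0}^{\lceil t\rceil}N[k,k+2]\chi_{N[k,k+2]\ge\ell}$, and apply exponential Chebyshev; the task becomes to prove $\mathbb{E}^{P^{\emptyset}}[e^{q\sum_k N[k,k+2]\chi_{N[k,k+2]\ge\ell}}]\le e^{c(q,\ell)t}$ with $c(q,\ell)\to 0$ as $\ell\to\infty$. I would split the sum over even and odd $k$ (Hölder), telescope the product by conditioning successively on $\mathcal{F}^{-\infty}_{2k}$, and use that given $\mathcal{F}^{-\infty}_{2k}$ the count $N[2k,2k+2]$ is dominated by a subcritical linear Hawkes count whose immigration intensity is $C_\epsilon+\epsilon W_{2k}$, $W_{2k}:=\sum_{\tau<2k}h(2k-\tau)$ (using that $h$ is decreasing), and whose tail beyond $\ell$ is thus $\le e^{-\beta\ell}e^{C_1(C_\epsilon+\epsilon W_{2k})}$; the residual weighted‑history factors $e^{C_1\epsilon W_{2k}}$ are then controlled through Lemma~\ref{reverse}, i.e. $\mathbb{E}^{P^{\emptyset}}[e^{aW_k}]=e^{o(k)}$, after a further splitting on $\{W_{2k}\le R_\ell\}$ versus its complement with $R_\ell$ growing linearly in $\ell$. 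This last step is the heart of the matter and the main obstacle: the Hawkes memory never vanishes, so distinct windows are never independent and one cannot simply factorize; the ingredient that prevents the argument from being circular is precisely the sub‑linearity $\lambda(z)/z\to0$, which makes a genuinely large window‑count expensive, and it is this expense — quantified by the linear‑Hawkes domination and Lemma~\ref{reverse} — that forces the exponential moment to remain subexponential with a rate improving as $\ell\to\infty$. Once the key bound holds, $\limsup_{t\to\infty}\frac1t\log P(\cdots\ge\epsilon)\le-q\epsilon+c(q,\ell)$, which tends to $-q\epsilon$ as $\ell\to\infty$ and then to $-\infty$ as $q\to\infty$.
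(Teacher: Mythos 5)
Your reduction for (ii) is correct and in fact more direct than the paper's (which deduces (ii) from (iii)): bounding $\chi_{N[0,1]\geq M}\leq N[0,1]/M$ reduces the event to $\{N_{t+1}\geq M\epsilon t\}$, and the bound $\mathbb{E}^{P^{\emptyset}}[e^{N[0,t]}]\leq e^{Ct}$ from the proof of Lemma \ref{uppercompact} finishes. The genuine gap is in (i). Replacing the indicator $\chi_{n\geq 2}$ by $\binom{n}{2}$ and applying exponential Chebyshev to the pair count $P_{\delta}$ cannot work, because $\mathbb{E}[e^{qP_{\delta}}]=+\infty$ for every $q>0$ and every $\delta>0$, already for a homogeneous Poisson process: if $k$ points fall in a single interval of length $\delta$ (probability of order $(\lambda\delta)^{k}/k!$), then $P_{\delta}\geq\binom{k}{2}$, and $\sum_{k}e^{qk(k-1)/2}(\lambda\delta)^{k}/k!$ diverges since the double factorial decay cannot beat $e^{qk^{2}/2}$. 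The same divergence appears on the right-hand side of your martingale inequality: on such a cluster $\int(e^{2qN([s-\delta,s))}-1)\lambda_{s}\,ds$ is of order $\delta\lambda e^{2qk}$, whose exponential moment is again infinite, so there is nothing to iterate. The quadratic blow-up of the pair count on clusters is precisely what the bounded indicator avoids; this is why the paper works with $\chi_{N[0,\delta]\geq 2}$ directly and tilts it by the $\delta$-dependent weight $1/h(\delta)=\log(1/\delta)$ in Lemma \ref{constrate}, rather than passing to pair counts.

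For (iii), your window-by-window conditioning is a plausible outline, but the step you yourself identify as the heart of the matter --- a bound $\mathbb{E}[e^{q\sum_{k}N[k,k+2]\chi_{N[k,k+2]\geq\ell}}]\leq e^{c(q,\ell)t}$ with $c(q,\ell)\to 0$ as $\ell\to\infty$, uniform over the unbounded random histories $W_{2k}$ --- is asserted rather than proved, and it is not clear that the proposed splitting on $\{W_{2k}\leq R_{\ell}\}$ closes: on the complement you need a superexponential estimate of essentially the same kind as the one being proved. The mechanism the paper uses, which resolves (i) and (iii) simultaneously and which your proposal does not contain, is to split the point process by the value of the intensity at each jump, $N=N_{\ell'}+\hat{N}_{\ell'}$ with $N_{\ell'}(dt)=\chi_{\lambda_{t}<\ell'}N(dt)$. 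The low-intensity part is dominated by a Poisson process of constant rate $\ell'$, for which both estimates are explicit computations using independent increments (Lemmas \ref{constrate} and \ref{constrateIII}); the high-intensity part has compensator $\int_{0}^{t}\lambda_{s}\chi_{\lambda_{s}\geq\ell'}ds$, and the sublinearity $\lambda(z)/z\to 0$ yields $\mathbb{E}[e^{h(\ell')\hat{N}_{\ell'}[0,t]}]\leq\mathbb{E}[e^{\Vert h\Vert_{L^{1}}N_{t}}]^{1/2}$ with $h(\ell')\to\infty$, so that $\{\hat{N}_{\ell'}[0,t]\geq\epsilon t/4\}$ becomes superexponentially negligible as $\ell'\to\infty$. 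I would adopt this thresholding decomposition for (i) and (iii); your argument for (ii) can stand as written.
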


\begin{proof}
(i) Define
\begin{equation}
N_{\ell'}[0,t]=\int_{0}^{t}\chi_{\lambda(s)<\ell'}dN_{s},\quad\hat{N}_{\ell'}[0,t]=\int_{0}^{t}\chi_{\lambda(s)\geq\ell'}dN_{s}.
\end{equation}
Then $N[0,t]=N_{\ell'}[0,t]+\hat{N}_{\ell'}[0,t]$ and $N_{\ell'}[0,t]$ has compensator 
$\int_{0}^{t}\lambda(s)\chi_{\lambda(s)<\ell'}ds$ and $\hat{N}_{\ell'}[0,t]$ has 
compensator $\int_{0}^{t}\lambda(s)\chi_{\lambda(s)\geq\ell'}ds$. Notice that
\begin{equation}
\chi_{N[0,\delta]\geq 2}\leq\chi_{N_{\ell'}[0,\delta]\geq 2}+\chi_{\hat{N}_{\ell'}[0,\delta]\geq 1}.
\end{equation}
It is clear that $N_{\ell'}$ is dominated by the usual Poisson process with rate $\ell'$. By Lemma \ref{constrate},
\begin{equation}
\limsup_{\delta\rightarrow 0}\limsup_{t\rightarrow\infty}\frac{1}{t}
\log P\left(\frac{1}{\delta t}\int_{0}^{t}\chi_{N_{\ell'}[0,\delta]\geq 2}(\theta_{s}\omega)ds\geq\frac{\epsilon}{2}\right)=-\infty.
\end{equation}
On the other hand,
\begin{align}
\frac{1}{\delta}\int_{0}^{t}\chi_{\hat{N}_{\ell'}[0,\delta]\geq 1}(\theta_{s}\omega)ds
&=\frac{1}{\delta}\int_{0}^{t}\chi_{\hat{N}_{\ell'}[s,s+\delta]\geq 1}(\omega)ds
\\
&\leq\frac{1}{\delta}\int_{0}^{t}\hat{N}_{\ell'}[s,s+\delta]ds\nonumber
\\
&=\frac{1}{\delta}\int_{\delta}^{t+\delta}\hat{N}_{\ell'}[0,s]ds-\frac{1}{\delta}\int_{0}^{t}\hat{N}_{\ell'}[0,s]ds\nonumber
\\
&\leq\hat{N}_{\ell'}[0,t]+N[t,t+\delta].\nonumber
\end{align}
By Lemma \ref{indfinite}, we have
\begin{equation}
\limsup_{t\rightarrow\infty}\frac{1}{t}\log P\left(\frac{1}{t}N[t,t+\delta]\geq\frac{\epsilon}{4}\right)=-\infty,
\end{equation}
for any $\delta>0$. Hence
\begin{equation}
\limsup_{\delta\rightarrow 0}\limsup_{t\rightarrow\infty}\frac{1}{t}\log P\left(\frac{1}{t}N[t,t+\delta]\geq\frac{\epsilon}{4}\right)=-\infty.
\end{equation}
Finally, for some positive $h(\ell')$ to be chosen later, 
\begin{align}
P\left(\frac{1}{t}\hat{N}_{\ell'}[0,t]\geq\frac{\epsilon}{4}\right)&\leq\mathbb{E}\left[e^{h(\ell')\hat{N}_{\ell'}[0,t]}\right]e^{-t h(\ell')\epsilon/4}
\\
&\leq\mathbb{E}\left[e^{(e^{2h(\ell')}-1)\int_{0}^{t}\lambda(s)\chi_{\lambda(s)\geq\ell'}ds}\right]^{1/2}e^{-t h(\ell')\epsilon/4}.\nonumber
\end{align}
Let $f(z)=\frac{z}{\lambda(z)}$. Then $f(z)\rightarrow\infty$ as $z\rightarrow\infty$. Let $Z_{s}=\sum_{\tau\in\omega[0,s]}h(s-\tau)$. 
Then, by the definition of $\lambda(s)$ and abusing the notation a little bit, we see that $\lambda(s)=\lambda(Z_{s})$. 
Since $\lambda(\cdot)$ is increasing, its inverse function $\lambda^{-1}$ exists and $\lambda^{-1}(\ell')\rightarrow\infty$ 
as $\ell'\rightarrow\infty$. We have
\begin{align}
\mathbb{E}\left[e^{(e^{2h(\ell')}-1)\int_{0}^{t}\lambda(s)\chi_{\lambda(s)\geq\ell'}ds}\right]^{1/2}
&\leq\mathbb{E}\left[e^{(e^{2h(\ell')}-1)\int_{0}^{t}\lambda(Z_{s})\chi_{Z_{s}\geq\lambda^{-1}(\ell')}ds}\right]^{1/2}
\\
&\leq\mathbb{E}\left[e^{(e^{2h(\ell')}-1)\int_{0}^{t}\lambda(Z_{s})\frac{f(Z_{s})}{\inf_{z\geq\ell'}f(\lambda^{-1}(z))}ds}\right]^{1/2}.
\nonumber
\end{align}
It is clear that $\lim_{\ell'\rightarrow\infty}\inf_{z\geq\ell'}f(\lambda^{-1}(z))=\infty$. Choose
\begin{equation}
h(\ell')=\frac{1}{2}\log\left[\inf_{z\geq\ell'}f(\lambda^{-1}(z))+1\right].
\end{equation}
Then $h(\ell')\rightarrow\infty$ as $\ell'\rightarrow\infty$ and
\begin{align}
\mathbb{E}\left[e^{(e^{2h(\ell')}-1)\int_{0}^{t}\lambda(Z_{s})\frac{f(Z_{s})}{\inf_{z\geq\ell'}f(\lambda^{-1}(z))}ds}\right]^{1/2}
&=\mathbb{E}\left[e^{\int_{0}^{t}Z_{s}ds}\right]^{1/2}
\\
&=\mathbb{E}\left[e^{\int_{0}^{t}\sum_{\tau\in\omega[0,s]}h(s-\tau)ds}\right]^{1/2}\nonumber
\\
&\leq\mathbb{E}\left[e^{\Vert h\Vert_{L^{1}}N_{t}}\right]^{1/2}.\nonumber
\end{align}
Hence,
\begin{equation}
\limsup_{\ell'\rightarrow\infty}\limsup_{\delta\rightarrow 0}\limsup_{t\rightarrow\infty}\frac{1}{t}
\log P\left(\frac{1}{t}\hat{N}_{\ell'}[0,t]\geq\frac{\epsilon}{4}\right)=-\infty.
\end{equation}

(ii) It is easy to see that (iii) implies (ii).

(iii) Observe first that
\begin{align}
N[s,s+1]\chi_{N[s,s+1]\geq\ell}&\leq N_{\ell'}[s,s+1]\chi_{N_{\ell'}[s,s+1]\geq\frac{\ell}{2}}
+\hat{N}_{\ell'}[s,s+1]\chi_{\hat{N}_{\ell'}[s,s+1]\geq\frac{\ell}{2}}
\\
&+\frac{\ell}{2}\chi_{N_{\ell'}[s,s+1]\geq\frac{\ell}{2}}+\frac{\ell}{2}\chi_{\hat{N}_{\ell'}[s,s+1]\geq\frac{\ell}{2}}.\nonumber
\end{align}
For the first term, notice that $N_{\ell'}$ is dominated by a usual Poisson process with rate $\ell'$. 
Thus, by Lemma \ref{constrateIII},
\begin{equation}
\limsup_{\ell\rightarrow\infty}\limsup_{t\rightarrow\infty}\frac{1}{t}
\log P\left(\frac{1}{t}\int_{0}^{t}N_{\ell'}[s,s+1]\chi_{N_{\ell'}[s,s+1]\geq\frac{\ell}{2}}(\omega)ds\geq\frac{\epsilon}{4}\right)=-\infty.
\end{equation}
For the second term, $\hat{N}_{\ell'}[s,s+1]\chi_{\hat{N}_{\ell'}[s,s+1]\geq\frac{\ell}{2}}\leq\hat{N}_{\ell'}[s,s+1]$ and
\begin{equation}
\int_{0}^{t}\hat{N}_{\ell'}[s,s+1]ds\leq\hat{N}_{\ell'}[0,t]+N[t,t+1].
\end{equation}
By Lemma \ref{indfinite},
\begin{equation}
\limsup_{t\rightarrow\infty}\frac{1}{t}\log P\left(\frac{1}{t}N[t,t+1]\geq\frac{\epsilon}{8}\right)=-\infty,
\end{equation}
and by the same argument as in (i),
\begin{equation}
\limsup_{\ell'\rightarrow\infty}\limsup_{\ell\rightarrow\infty}\limsup_{t\rightarrow\infty}\frac{1}{t}
\log P\left(\frac{1}{t}\hat{N}_{\ell'}[0,t]\geq\frac{\epsilon}{8}\right)=-\infty.
\end{equation}
For the third term, notice that
\begin{equation}
\int_{0}^{t}\frac{\ell}{2}\chi_{N_{\ell'}[s,s+1]\geq\frac{\ell}{2}}ds\leq\int_{0}^{t}N_{\ell'}[s,s+1]
\chi_{N_{\ell'}[s,s+1]\geq\frac{\ell}{2}}(\omega)ds.
\end{equation}
So we can get the same superexponential estimate as before. Finally, for the fourth term,
\begin{equation}
\int_{0}^{t}\frac{\ell}{2}\chi_{\hat{N}_{\ell'}[s,s+1]\geq\frac{\ell}{2}}ds\leq\int_{0}^{t}\hat{N}_{\ell'}[s,s+1](\omega)ds.
\end{equation}
We can get the same superexponential estimate as before.
\end{proof}

\begin{lemma}\label{constrate}
Assume $N_{t}$ is a Poisson process with constant rate $\lambda$. Then for any $\epsilon>0$,
\begin{equation}
\limsup_{\delta\rightarrow 0}\limsup_{t\rightarrow\infty}\frac{1}{t}\log\mathbb{P}\left(\frac{1}{\delta t}
\int_{0}^{t}\chi_{N[s,s+\delta]\geq 2}(\omega)ds\geq\epsilon\right)=-\infty.
\end{equation}
\end{lemma}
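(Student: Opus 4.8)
The plan is to replace the integral over $s$ by a sum of finitely many weakly dependent Bernoulli indicators and then apply a Cram\'{e}r-type exponential estimate; this is the one place in the argument where the independence of increments of the Poisson process is genuinely used.

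First I would discretize. Partition $[0,t]$ by the half-open intervals $I_k=[k\delta,(k+1)\delta)$ for $0\le k\le K:=\lfloor t/\delta\rfloor$. If $s\in I_k$ then $[s,s+\delta]\subseteq[k\delta,(k+2)\delta]$, hence $\chi_{N[s,s+\delta]\ge 2}(\omega)\le A_k:=\chi_{N[k\delta,(k+2)\delta]\ge 2}(\omega)$. Integrating over $s\in I_k\cap[0,t]$ and summing over $k$ gives
\begin{equation}
\frac{1}{\delta t}\int_{0}^{t}\chi_{N[s,s+\delta]\ge 2}(\omega)\,ds\le\frac{1}{t}\sum_{k=0}^{K}A_k .
\end{equation}

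Next I would use independence. For $k$ of a fixed parity the intervals $[k\delta,(k+2)\delta]$ are pairwise disjoint up to a single endpoint, so $\{A_k:k\text{ even}\}$ and $\{A_k:k\text{ odd}\}$ are each families of i.i.d.\ Bernoulli variables with parameter $p=\mathbb{P}(\mathrm{Poisson}(2\lambda\delta)\ge 2)\le 2\lambda^{2}\delta^{2}$, where the bound comes from $1-e^{-\mu}(1+\mu)\le\mu^{2}/2$. Each parity class has at most $n:=t/(2\delta)+1$ members, so by a union bound and the exponential Chebyshev inequality, for every $\theta>0$,
\begin{equation}
\mathbb{P}\left(\frac{1}{t}\sum_{k=0}^{K}A_k\ge\epsilon\right)\le 2\,e^{-\theta\epsilon t/2}\bigl(1+p(e^{\theta}-1)\bigr)^{n}\le 2\,e^{-\theta\epsilon t/2+np(e^{\theta}-1)} .
\end{equation}
Since $np\le\lambda^{2}\delta t+2\lambda^{2}\delta^{2}$, applying $\frac{1}{t}\log(\cdot)$ and letting first $t\to\infty$ and then $\delta\to 0$ leaves at most $-\theta\epsilon/2$ on the right; since $\theta>0$ is arbitrary, the iterated $\limsup$ equals $-\infty$, which is the assertion.

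The only slightly delicate step is the discretization: one must keep track of the window $[s,s+\delta]$ straddling two consecutive blocks $I_k,I_{k+1}$ and handle the boundary block near $t$. Everything after that is the routine Cram\'{e}r bound for sums of i.i.d.\ Bernoulli variables, and no further structure of the Poisson process is needed.
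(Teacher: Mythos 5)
Your proof is correct, and it reaches the conclusion by a route that differs from the paper's in two technical respects, though it rests on the same two pillars: an exponential Chebyshev bound for a sum of order $t/\delta$ Bernoulli indicators, and the estimate $\mathbb{P}(N(\text{interval of length } O(\delta))\geq 2)=O(\delta^{2})$. To handle the continuum of window positions $s$, the paper keeps the exact indicator $\chi_{N[s,s+\delta]\geq 2}$ and applies Jensen's inequality in the offset $s\bmod\delta$, so that for each fixed offset the windows $[s+j\delta,s+(j+1)\delta]$ are disjoint and the summands exactly independent; you instead dominate the indicator pointwise by that of the doubled block $[k\delta,(k+2)\delta]$, which removes the $s$-dependence at the price of overlapping blocks, repaired by the even/odd split. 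Second, the paper uses a $\delta$-dependent tilt $\theta(\delta)=1/h(\delta)=\log(1/\delta)$, chosen so that the per-block moment generating function is $1+O(\delta)$ and the total one is $e^{Mt}$ uniformly in $\delta$, the divergence then coming from $\epsilon\log(1/\delta)\rightarrow\infty$; you use a fixed tilt $\theta$, obtain the rate $-\theta\epsilon/2+\lambda^{2}\delta(e^{\theta}-1)$ after $t\rightarrow\infty$, and send $\delta\rightarrow 0$ before $\theta\rightarrow\infty$, which is exactly the order of limits the iterated $\limsup$ permits. The two arguments are equally elementary, and yours also yields the diagonal strengthening needed later in Lemma \ref{finalestimate} (threshold $\epsilon$ replaced by some $g(\delta)\rightarrow 0$), e.g.\ by letting $\theta$ grow like $\log(1/\delta)$ with $g(\delta)\log(1/\delta)\rightarrow\infty$. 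The two points you flag as delicate are indeed harmless: the shared endpoints of the doubled blocks are a.s.\ not atoms of the Poisson process, so the parity classes are genuinely i.i.d., and the last block $[K\delta,(K+2)\delta]$ covers every window $[s,s+\delta]$ with $s\leq t$.
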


\begin{proof}
Let $f(\delta,\omega)=\frac{1}{h(\delta)}\chi_{N[0,\delta]\geq 2}(\omega)$, where $h(\delta)$ is to be chosen later. 
By Jensen's inequality and stationarity and independence of increments of the Poisson process,
\begin{align}
\mathbb{E}\left[e^{\int_{0}^{t}\frac{1}{\delta}f(\delta,\theta_{s}\omega)ds}\right]
&\leq\mathbb{E}\left[e^{\frac{1}{\delta}\int_{0}^{\delta}\sum_{j=0}^{[t/\delta]}f(\delta,\theta_{s+j\delta}\omega)ds}\right]
\\
&\leq\mathbb{E}\left[\frac{1}{\delta}\int_{0}^{\delta}e^{\sum_{j=0}^{[t/\delta]}f(\delta,\theta_{s+j\delta}\omega)}ds\right]\nonumber
\\
&=\mathbb{E}\left[e^{\sum_{j=0}^{[t/\delta]}f(\delta,\theta_{j\delta}\omega)}\right]\nonumber
\\
&=\mathbb{E}\left[e^{f(\delta,\omega)}\right]^{[t/\delta]+1}\nonumber
\\
&=\left\{e^{1/h(\delta)}(1-e^{-\lambda\delta}-\lambda\delta e^{-\lambda\delta})+e^{-\lambda\delta}
+\lambda\delta e^{-\lambda\delta}\right\}^{[t/\delta]+1}\nonumber
\\
&\leq (M'e^{1/h(\delta)}\lambda^{2}\delta^{2}+1)^{[t/\delta]+1},\nonumber
\end{align}
for some $M'>0$. Choose $h(\delta)=\frac{1}{\log(1/\delta)}$. Then,
\begin{equation}
\mathbb{E}\left[e^{\int_{0}^{t}\frac{1}{\delta}f(\delta,\theta_{s}\omega)ds}\right]\leq(M'\delta+1)^{[t/\delta]+1}\leq e^{Mt},
\end{equation}
for some $M>0$. Therefore, by Chebychev's inequality,
\begin{equation}
\limsup_{t\rightarrow\infty}\frac{1}{t}\log\mathbb{P}\left(\frac{1}{\delta h(\delta)t}\int_{0}^{t}
\chi_{N[s,s+\delta]\geq 2}(\omega)ds\geq\frac{\epsilon}{h(\delta)}\right)
\leq M-\frac{\epsilon}{h(\delta)},
\end{equation}
which holds for any $\delta>0$. Letting $\delta\rightarrow 0$, we get the desired result.
\end{proof}

\begin{lemma}\label{constrateIII}
Assume $N_{t}$ is a Poisson process with constant rate $\lambda$. Then for any $\epsilon>0$,
\begin{equation}
\limsup_{\ell\rightarrow\infty}\limsup_{t\rightarrow\infty}\frac{1}{t}
\log P\left(\frac{1}{t}\int_{0}^{t}N[0,1]\chi_{N[0,1]\geq\ell}(\theta_{s}\omega)ds\geq \epsilon\right)=-\infty.
\end{equation}
\end{lemma}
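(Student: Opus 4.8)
The plan is to follow the exponential-moment / Chebyshev strategy used in the proof of Lemma~\ref{constrate}, now with the weight $g(s):=N[s,s+1]\chi_{N[s,s+1]\ge\ell}(\omega)$, noting that $\frac1t\int_0^t N[0,1]\chi_{N[0,1]\ge\ell}(\theta_s\omega)\,ds=\frac1t\int_0^t g(s)\,ds$. Fix $\epsilon>0$ and $\ell$. For a parameter $h=h(\ell)>0$ to be chosen, Chebyshev's inequality gives
\[
P\!\left(\tfrac1t\int_0^t g(s)\,ds\ge\epsilon\right)\le e^{-h\epsilon t}\,\mathbb E\!\left[e^{h\int_0^t g(s)\,ds}\right].
\]
For the moment generating factor I would discretize exactly as in Lemma~\ref{constrate}: with $n=[t]$, nonnegativity gives $\int_0^t g(s)\,ds\le\int_0^{n+1}g(s)\,ds=\int_0^1\sum_{j=0}^n g(s+j)\,ds$, so by Jensen's inequality $\mathbb E[e^{h\int_0^t g}]\le\int_0^1\mathbb E[e^{h\sum_{j=0}^n g(s+j)}]\,ds$. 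Since $g(s+j)=N[s+j,s+j+1]\chi_{\{\cdots\ge\ell\}}$ involves the increment of the Poisson process over the pairwise disjoint intervals $[s+j,s+j+1]$, these variables are independent, and by stationarity each is distributed as $K\chi_{K\ge\ell}$ with $K\sim\mathrm{Poisson}(\lambda)$. Hence the inner expectation equals $\bigl(\mathbb E[e^{hK\chi_{K\ge\ell}}]\bigr)^{n+1}$, independent of $s$, and therefore $\mathbb E[e^{h\int_0^t g}]\le\bigl(\mathbb E[e^{hK\chi_{K\ge\ell}}]\bigr)^{[t]+1}$.

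The crux is controlling $\mathbb E[e^{hK\chi_{K\ge\ell}}]$. Splitting on $\{K<\ell\}$ and $\{K\ge\ell\}$ and using $\binom{k}{\ell}\ge1$,
\[
\mathbb E[e^{hK\chi_{K\ge\ell}}]\le 1+e^{-\lambda}\sum_{k\ge\ell}\frac{(\lambda e^h)^k}{k!}\le 1+e^{-\lambda}\frac{(\lambda e^h)^\ell}{\ell!}\,e^{\lambda e^h}=:1+\rho_h(\ell).
\]
By Stirling, $\rho_h(\ell)\le\bigl(\lambda e^{h+1}/\ell\bigr)^\ell e^{\lambda e^h}$, which tends to $0$ as $\ell\to\infty$ \emph{even when} $h=h(\ell)\to\infty$, provided $h(\ell)=o(\log\ell)$; concretely, taking $h(\ell)=\tfrac12\log\log\ell$ makes $e^{\lambda e^h}=e^{\lambda\sqrt{\log\ell}}$ negligible against the super-exponentially small prefactor. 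With such a choice one gets $\mathbb E[e^{h\int_0^t g}]\le(1+\rho_{h(\ell)}(\ell))^{[t]+1}\le e^{\rho_{h(\ell)}(\ell)(t+1)}$, hence
\[
\frac1t\log P\!\left(\tfrac1t\int_0^t g(s)\,ds\ge\epsilon\right)\le -h(\ell)\epsilon+\rho_{h(\ell)}(\ell)\frac{t+1}{t}.
\]
Letting $t\to\infty$ yields the $\ell$-dependent bound $-h(\ell)\epsilon+\rho_{h(\ell)}(\ell)$, and letting $\ell\to\infty$ sends this to $-\infty$ since $h(\ell)\to\infty$ and $\rho_{h(\ell)}(\ell)\to0$, which is the claim.

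The routine Jensen/independence bookkeeping is essentially copied from Lemma~\ref{constrate}; the one genuinely delicate point is the quantitative Poisson-tail estimate, namely verifying that the parameter $h(\ell)$ can be sent to infinity while keeping $\rho_{h(\ell)}(\ell)\to0$. I expect that to be the only step requiring care, and the calibration $h(\ell)=\tfrac12\log\log\ell$ (any $h(\ell)\to\infty$ with $h(\ell)=o(\log\ell)$ works) resolves it.
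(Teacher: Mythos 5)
Your proof is correct and follows essentially the same route as the paper: Chebyshev with an $\ell$-dependent tilting parameter, the Jensen/independent-increments reduction to the one-unit Poisson moment generating function as in Lemma \ref{constrate}, a Stirling-type tail estimate, and a slowly growing $h(\ell)$. The paper's only cosmetic differences are that it bounds the tail sum termwise (absorbing constants) rather than via $\sum_{k\ge\ell}x^{k}/k!\le (x^{\ell}/\ell!)e^{x}$, and it calibrates $h(\ell)=(\log\ell)^{1/2}$, which falls within your admissible range $h(\ell)=o(\log\ell)$.
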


\begin{proof}
Let $h(\ell)$ be some function of $\ell$ to be chosen later. Following the same argument as in the proof of Lemma \ref{constrate}, we have
\begin{align}
&\mathbb{P}\left(h(\ell)\int_{0}^{t}N[0,1]\chi_{N[0,1]\geq\ell}(\theta_{s}\omega)ds\geq\epsilon h(\ell)t\right)
\\
&\leq\mathbb{E}\left[e^{h(\ell)\int_{0}^{t}N[0,1]\chi_{N[0,1]\geq\ell}(\theta_{s}\omega)ds}\right]e^{-\epsilon h(\ell)t}\nonumber
\\
&\leq\mathbb{E}\left[e^{h(\ell)N[0,1]\chi_{N[0,1]\geq\ell}}\right]^{[t]+1}e^{-\epsilon h(\ell)t}\nonumber
\\
&=\left\{\mathbb{P}(N[0,1]<\ell)+\sum_{k=\ell}^{\infty}e^{h(\ell)k}e^{-\lambda}\frac{\lambda^{k}}{k!}\right\}^{[t]+1}e^{-\epsilon h(\ell)t}\nonumber
\\
&\leq\left\{1+C_{1}\sum_{k=\ell}^{\infty}e^{h(\ell)k+\log(\lambda)k-\log(k)k}\right\}^{[t]+1}e^{-\epsilon h(\ell)t}\nonumber
\\
&\leq\left\{1+C_{2}e^{h(\ell)\ell+\log(\lambda)\ell-\log(\ell)\ell}\right\}^{[t]+1}e^{-\epsilon h(\ell)t}.\nonumber
\end{align}
Choosing $h(\ell)=(\log(\ell))^{1/2}$ will do the work.
\end{proof}

The following Lemma \ref{finalestimate} provides us the superexponential estimates that we need. These superexponential
estimates have basically been done in Lemma \ref{midestimate}. The difference is that in the statement in Lemma \ref{midestimate},
we used $\omega$ and in Lemma \ref{finalestimate} it is changed to $\omega_{t}$ which is what we needed.
Lemma \ref{finalestimate} has three statements. Part (i) says if you start with a sequence of simple point processes,
the limiting point process may not be simple, but this has probability that is superexponentially small. Part (ii) is the
usual superexponential we would expect if $\mathcal{M}_{S}(\Omega)$ were equipped with weak topology. But since
we are using a strengthened weak topology with the convergence of first moment as well, we will also need Part (iii).

\begin{lemma}\label{finalestimate}
We have the following superexponential estimates.

(i) For some $g(\delta)\rightarrow 0$ as $\delta\rightarrow 0$,
\begin{equation}
\limsup_{\delta\rightarrow 0}\limsup_{t\rightarrow\infty}\frac{1}{t}\log P\left(\frac{1}{\delta t}
\int_{0}^{t}\chi_{N[0,\delta]\geq 2}(\theta_{s}\omega_{t})ds\geq g(\delta)\right)=-\infty.
\end{equation}

(ii) For some $\varepsilon(M)\rightarrow 0$ as $M\rightarrow\infty$,
\begin{equation}
\limsup_{M\rightarrow\infty}\limsup_{t\rightarrow\infty}\frac{1}{t}\log P\left(\frac{1}{t}\int_{0}^{t}
\chi_{N[0,1]\geq M}(\theta_{s}\omega_{t})ds\geq\varepsilon(M)\right)=-\infty.
\end{equation}

(iii) For some $m(\ell)\rightarrow 0$ as $\ell\rightarrow\infty$,
\begin{equation}
\limsup_{\ell\rightarrow\infty}\limsup_{t\rightarrow\infty}\frac{1}{t}\log P\left(\frac{1}{t}
\int_{0}^{t}N[0,1]\chi_{N[0,1]\geq\ell}(\theta_{s}\omega_{t})ds\geq m(\ell)\right)=-\infty.
\end{equation}
\end{lemma}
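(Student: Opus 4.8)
The plan is to deduce all three parts from the already-proved $\omega$-versions in Lemma~\ref{midestimate}, the key point being to control the discrepancy between $\theta_s\omega$ and $\theta_s\omega_t$. Recall that $\omega_t$ coincides with $\omega$ on $[0,t]$ and is then extended $t$-periodically, so for a functional $F$ depending only on the configuration in a window of length $a$ (with $a=\delta$ in part (i) and $a=1$ in parts (ii), (iii)) one has $F(\theta_s\omega_t)=F(\theta_s\omega)$ for every $0\le s\le t-a$; the two can differ only on the ``seam'' $s\in(t-a,t]$, where the window $[s,s+a]$ wraps around and $N[s,s+a](\omega_t)=N[s,t](\omega)+N[0,s+a-t](\omega)\le N[t-a,t](\omega)+N[0,a](\omega)$, a sum of the counts of two intervals each of length $\le a$. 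First I would split
\begin{equation}
\frac1t\int_0^t F(\theta_s\omega_t)\,ds=\frac1t\int_0^{t-a}F(\theta_s\omega)\,ds+\frac1t\int_{t-a}^t F(\theta_s\omega_t)\,ds
\end{equation}
(and the analogous split with the $\frac{1}{\delta t}$ prefactor in part (i)) and estimate the two pieces separately.

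For the main piece one simply has $\frac1t\int_0^{t-a}F(\theta_s\omega)\,ds\le\frac1t\int_0^t F(\theta_s\omega)\,ds$, so the probability that it exceeds a threshold is bounded directly by the corresponding $\omega$-estimate in Lemma~\ref{midestimate}. For the seam piece in part (i), the integrand is an indicator, so $\frac1{\delta t}\int_{t-\delta}^t\chi_{N[0,\delta]\ge2}(\theta_s\omega_t)\,ds\le\frac1{\delta t}\cdot\delta=\frac1t\to0$, i.e. for any fixed $\delta$ the seam contributes nothing once $t$ is large. For the seam pieces in (ii) and (iii), the wrap-around bound gives $F(\theta_s\omega_t)\le N[0,1](\theta_s\omega_t)\le N[t-1,t](\omega)+N[0,1](\omega)$ for $s\in(t-1,t]$, hence $\frac1t\int_{t-1}^t F(\theta_s\omega_t)\,ds\le\frac1t\bigl(N[t-1,t](\omega)+N[0,1](\omega)\bigr)$, and $\limsup_{t\to\infty}\frac1t\log P^\emptyset\bigl(\frac1t(N[t-1,t]+N[0,1])\ge\eta\bigr)=-\infty$ for every $\eta>0$ by Lemma~\ref{indfinite} (the tail of $N[t-1,t]$, applied with $T=1$, together with the finiteness of $\mathbb{E}^{P^\emptyset}[e^{qN[0,1]}]$ for all $q$, which is the $t=0$ case of the estimate in the proof of Lemma~\ref{indfinite}).

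It then remains to pass from the ``for every $\epsilon>0$'' form of Lemma~\ref{midestimate} to single admissible rates $g(\delta),\varepsilon(M),m(\ell)$ tending to $0$; this is a routine diagonalization. For (i), for each $n$ choose $\delta_n\downarrow0$ so that $\limsup_{t\to\infty}\frac1t\log P(\frac1{\delta t}\int_0^t\chi_{N[0,\delta]\ge2}(\theta_s\omega)\,ds\ge\tfrac{1}{2n})<-n$ for all $\delta<\delta_n$, and set $g(\delta)=\tfrac1n$ on $[\delta_{n+1},\delta_n)$; then for $\delta$ in that range and $t$ large enough that the seam term is below $g(\delta)/2$, the probability in (i) is dominated by $P(\frac1{\delta t}\int_0^t\chi_{N[0,\delta]\ge2}(\theta_s\omega)\,ds\ge\tfrac{1}{2n})$, which gives the claim. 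Parts (ii) and (iii) follow in the same way from Lemma~\ref{midestimate}(ii), (iii) (and (ii) also follows from (iii) via $\chi_{N[0,1]\ge M}\le\tfrac1M N[0,1]\chi_{N[0,1]\ge M}$). The only step requiring genuine care is the seam analysis: correctly identifying how the periodic wrap-around of $\omega_t$ near time $t$ affects a bounded-window functional and checking that the resulting discrepancy is superexponentially negligible; everything else is bookkeeping layered on top of Lemma~\ref{midestimate} and Lemma~\ref{indfinite}.
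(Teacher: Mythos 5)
Your proposal is correct and follows essentially the same route as the paper: bound the discrepancy between the $\theta_s\omega_t$ and $\theta_s\omega$ integrals (deterministically for the indicator functionals, and via Lemma~\ref{indfinite} for the $N[0,1]$-weighted one), reduce to Lemma~\ref{midestimate}, and diagonalize to produce $g(\delta)$, $\varepsilon(M)$, $m(\ell)$. The only differences are cosmetic — you split into a main piece and a seam piece where the paper bounds the absolute difference of the two integrals, and you write out the diagonalization that the paper dismisses as ``a standard analysis argument.''
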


\begin{proof}
We can replace the $\epsilon$ in the statement of Lemma \ref{midestimate} by $g(\delta)$, $\varepsilon(M)$ 
and $m(\ell)$ by a standard analysis argument. Here, we can also replace the $\omega$ in Lemma \ref{midestimate} by $\omega_{t}$ since
\begin{equation}
\left|\int_{0}^{t}\chi_{N[0,\delta]\geq 2}(\theta_{s}\omega_{t})ds-\int_{0}^{t}\chi_{N[0,\delta]\geq 2}(\theta_{s}\omega)ds\right|\leq 2\delta,
\end{equation}
\begin{equation}
\left|\int_{0}^{t}\chi_{N[0,1]\geq M}(\theta_{s}\omega_{t})ds-\int_{0}^{t}\chi_{N[0,1]\geq M}(\theta_{s}\omega)ds\right|\leq 2,
\end{equation}
and
\begin{align}
&\left|\int_{0}^{t}N[0,1]\chi_{N[0,1]\geq\ell}(\theta_{s}\omega_{t})ds-\int_{0}^{t}N[0,1]\chi_{N[0,1]\geq\ell}(\theta_{s}\omega)ds\right|
\\
&\leq\int_{t-1}^{t}N[s,s+1](\omega)ds+\int_{t-1}^{t}N[s,s+1](\omega_{t})ds\nonumber
\\
&\leq N[t-1,t+1](\omega)+N[t-1,t+1](\omega_{t})\nonumber
\\
&=N[t-1,t+1](\omega)+N[t-1,t](\omega)+N[0,1](\omega).\nonumber
\end{align}
By Lemma \ref{indfinite}, we have the superexponential estimate, for any $\epsilon>0$,
\begin{equation}
\limsup_{t\rightarrow\infty}\frac{1}{t}\log P\left(\frac{1}{t}\left\{N[t-1,t+1](\omega)+N[t-1,t](\omega)+N[0,1](\omega)\right\}\geq\epsilon\right)=-\infty.
\end{equation}
\end{proof}

\begin{lemma}\label{tightness}
For any $\delta, M>0,\ell>0$, define
\begin{align}
&\mathcal{A}_{\delta}=\left\{Q\in\mathcal{M}_{S}(\Omega): Q(N[0,\delta]\geq 2)\leq\delta g(\delta)\right\},
\\
&\mathcal{A}_{M}=\left\{Q\in\mathcal{M}_{S}(\Omega): Q(N[0,1]\geq M)\leq\varepsilon(M)\right\},\nonumber
\\
&\mathcal{A}_{\ell}=\left\{Q\in\mathcal{M}_{S}(\Omega):\int_{N[0,1]\geq\ell}N[0,1]dQ\leq m(\ell)\right\},\nonumber
\end{align}
where $\varepsilon(M)\rightarrow 0$ as $M\rightarrow\infty$, $m(\ell)\rightarrow 0$ as $\ell\rightarrow\infty$ 
and $g(\delta)\rightarrow 0$ as $\delta\rightarrow 0$. 
Let $\mathcal{A}_{\delta,M,\ell}=\mathcal{A}_{\delta}\cap\mathcal{A}_{M}\cap\mathcal{A}_{\ell}$ and
\begin{equation}
\mathcal{A}^{n}=\bigcap_{j=n}^{\infty}\mathcal{A}_{\frac{1}{j},j,j}.
\end{equation}
Then $\mathcal{A}^{n}$ is compact.
\end{lemma}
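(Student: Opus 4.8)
The plan is to establish sequential compactness. Since the topology on $\mathcal{M}_{S}(\Omega)$ is metrized by $d(\cdot,\cdot)$, it suffices to take an arbitrary sequence $(Q_{k})_{k\geq1}\subset\mathcal{A}^{n}$ and produce a subsequence converging, in this strengthened weak topology, to a limit $Q\in\mathcal{A}^{n}$. I will organize the argument into three steps: extracting a weakly convergent subsequence by tightness; checking that the weak limit $Q$ still lies in $\mathcal{M}_{S}(\Omega)$ and satisfies, for every $j\geq n$, the three constraints $Q(N[0,1/j]\geq2)\leq(1/j)g(1/j)$, $Q(N[0,1]\geq j)\leq\varepsilon(j)$, $\int_{N[0,1]\geq j}N[0,1]\,dQ\leq m(j)$ that define $\mathcal{A}_{1/j,j,j}$; and finally promoting weak convergence to convergence of the first moment $\mathbb{E}^{(\cdot)}[N[0,1]]$.

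For tightness I would use the constraints $Q(N[0,1]\geq j)\leq\varepsilon(j)$, $j\geq n$. By stationarity and $\omega([-j,j])\leq\sum_{i=-j}^{j-1}\omega([i,i+1])$ one gets $\sup_{Q\in\mathcal{A}^{n}}Q(\omega([-j,j])>2jM)\leq2j\,\varepsilon(M)\to0$ as $M\to\infty$ for each fixed $j$, so the laws of $\omega([-j,j])$ under $\mathcal{A}^{n}$ are tight for every $j$. By the standard tightness criterion for point processes in the vague topology (see e.g.\ Grandell \cite{Grandell} or Daley and Vere-Jones \cite{Daley}), $\mathcal{A}^{n}$ is relatively compact for the weak topology, and we may pass to a subsequence $Q_{k_{i}}\Rightarrow Q$. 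Each shift $\theta_{t}$ is continuous on $\Omega$, so $Q$ is again $\theta_{t}$-invariant.

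The main obstacle is that the sets cutting out $\mathcal{A}^{n}$ are \emph{closed}: $\omega\mapsto\omega(K)$ is only upper semicontinuous for compact $K$, so $\{N[0,1]\geq M\}$ and $\{N[0,\delta]\geq2\}$ are closed, and the portmanteau inequality for closed sets runs the wrong way, so weak convergence does not obviously preserve a constraint like $Q(N[0,1]\geq M)\leq\varepsilon(M)$. The remedy is to show these are $Q$-continuity sets. First extract a uniform first-moment bound from the $m(\cdot)$-constraints: $\mathbb{E}^{Q_{k_{i}}}[N[0,1]]\leq\mathbb{E}^{Q_{k_{i}}}[N[0,1]\chi_{N[0,1]<n}]+m(n)\leq n+m(n)$. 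Since $N(0,1)$ is lower semicontinuous and equals $N[0,1]$ a.s.\ under each $Q_{k_{i}}\in\mathcal{M}_{S}(\Omega)$, Fatou gives $\mathbb{E}^{Q}[N(0,1)]\leq n+m(n)<\infty$; thus $Q$ has finite intensity, which for a stationary process forces $Q(\omega(\{t\})\geq1)=0$ for every fixed $t$, so $N[0,1]=N(0,1)$ $Q$-a.s.\ and $\mathbb{E}^{Q}[N[0,1]]<\infty$. Now $\partial\{N[0,1]\geq M\}\subseteq\{\omega(\{0\})\geq1\}\cup\{\omega(\{1\})\geq1\}$ (a configuration with $\geq M$ points in $[0,1]$ but fewer in $(0,1)$ must place a point at an endpoint, while a configuration with no endpoint point lies in the interior), and this boundary is $Q$-null, so $Q(N[0,1]\geq M)=\lim_{i}Q_{k_{i}}(N[0,1]\geq M)\leq\varepsilon(M)$ for all $M\geq n$. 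The same boundary estimate gives $Q(N[0,1/j]\geq2)\leq(1/j)g(1/j)$, and summing over a partition of $[0,1)$ into $j$ intervals of length $1/j$, $Q(\exists\text{ a multiple point in }[0,1))\leq j\,Q(N[0,1/j]\geq2)\leq g(1/j)\to0$, so $Q$ is carried by simple configurations and $Q\in\mathcal{M}_{S}(\Omega)$. For the $m(\ell)$-constraint, write $\int_{N[0,1]\geq\ell}N[0,1]\,dQ=\ell\,Q(N[0,1]\geq\ell)+\sum_{k>\ell}Q(N[0,1]\geq k)$, control the tail uniformly by $\sum_{k>K}Q_{k_{i}}(N[0,1]\geq k)=\mathbb{E}^{Q_{k_{i}}}[(N[0,1]-K)^{+}]\leq m(K)$ for $K\geq n$, and pass the finitely many remaining terms to the limit through the continuity sets; letting $K\to\infty$ yields $\int_{N[0,1]\geq\ell}N[0,1]\,dQ\leq m(\ell)$. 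Hence $Q\in\mathcal{A}^{n}$.

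Finally, the identical truncation upgrades the convergence itself: $\mathbb{E}^{Q_{k_{i}}}[N[0,1]]=\sum_{k\geq1}Q_{k_{i}}(N[0,1]\geq k)\to\sum_{k\geq1}Q(N[0,1]\geq k)=\mathbb{E}^{Q}[N[0,1]]$, so $d(Q_{k_{i}},Q)\to0$ and $Q_{k_{i}}\to Q$ in $\mathcal{M}_{S}(\Omega)$ with $Q\in\mathcal{A}^{n}$. This gives sequential compactness, hence compactness of $\mathcal{A}^{n}$. As indicated, the crux is the upper-semicontinuity mismatch, and the whole scheme hinges on first squeezing the uniform bound on $\mathbb{E}^{(\cdot)}[N[0,1]]$ out of the $m(\cdot)$-constraints, which makes the limit non-atomic and thereby turns all the relevant events into $Q$-continuity sets.
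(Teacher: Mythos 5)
Your proof is correct and follows essentially the same route as the paper's: tightness of $\mathcal{A}^{n}$ from the $\varepsilon(M)$-constraints, uniform integrability from the $m(\ell)$-constraints to upgrade weak convergence to convergence of the first moment, simpleness of the limit from the $g(\delta)$-constraints, and stability of the defining constraints under the limit. The one place you go beyond the paper is the closedness step, which the paper dismisses as ``easy to see'': you correctly note that $\{N[0,1]\geq M\}$ is closed in the vague topology so the portmanteau inequality runs the wrong way, and you repair this by showing the limit has no fixed atoms (finite intensity plus stationarity), which makes the relevant events $Q$-continuity sets.
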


\begin{proof}
Observe that for $\beta>0$, the sets
\begin{equation}
K_{\beta}=\bigcap_{k=1}^{\infty}\left\{\omega:\{N[-k,-(k-1)](\omega)\leq\beta\ell_{k}\}\cap\{N[k-1,k](\omega)\leq\beta\ell_{k}\}\right\}
\end{equation}
are relatively compact in $\Omega$. Let $\overline{K_{\beta}}$ be the closure of $K_{\beta}$, 
which is then compact.

For any $Q\in\mathcal{A}^{n}$, $Q(N[0,1]\geq M)\leq\epsilon(M)$ for any $M\geq n$. 
We can choose $\beta$ big enough and an increasing sequence $\ell_{k}$ such that $\beta\ell_{1}\geq n$ 
and $\infty>\sum_{k=1}^{\infty}\epsilon(\beta\ell_{k})\rightarrow 0$ as $\beta\rightarrow\infty$, uniformly for $Q\in\mathcal{A}^{n}$,
\begin{align}
Q\left(\overline{K_{\beta}}^{c}\right)&\leq Q(K_{\beta}^{c})
\\
&=Q\left(\bigcup_{k=1}^{\infty}\{N[-k,-(k-1)](\omega)>\beta\ell_{k}\}\cap\{N[k-1,k](\omega)>\beta\ell_{k}\}\right)\nonumber
\\
&\leq\sum_{k=1}^{\infty}\left\{Q(N[-(k-1),-k]>\beta\ell)+Q(N[k-1,k]>\beta\ell_{k})\right\}\nonumber
\\
&=2\sum_{k=1}^{\infty}Q(N[0,1]>\beta\ell_{k})\nonumber
\\
&\leq 2\sum_{k=1}^{\infty}\epsilon(\beta\ell_{k})\rightarrow 0\nonumber
\end{align}
as $\beta\rightarrow\infty$. Therefore, $\mathcal{A}^{n}$ is tight in the weak topology and by 
Prokhorov theorem $\mathcal{A}^{n}$ is precompact in the weak topology. 
In other words, for any sequence in $\mathcal{A}^{n}$, there exists a subsequence, say $Q_{n}$ 
such that $Q_{n}\rightarrow Q$ weakly as $n\rightarrow\infty$ for some $Q$. By the definition of $\mathcal{A}^{n}$, $Q_{n}$ 
are uniformly integrable, which implies that $\int N[0,1]dQ_{n}\rightarrow\int N[0,1]dQ$ 
as $n\rightarrow\infty$. It is also easy to see that $\mathcal{A}^{n}$ is closed by checking that 
each $\mathcal{A}_{\frac{1}{j},j,j}$ is closed. That implies that $Q\in\mathcal{A}^{n}$. 
Finally, we need to check that $Q$ is a simple point process. Let $I_{j,\delta}=[(j-1)\delta,j\delta]$. 
We have for any $Q\in\mathcal{A}^{n}$,
\begin{align}
Q\left(\exists t: N[t-,t]\geq 2\right)&=Q\left(\bigcup_{k=1}^{\infty}\left\{\exists t\in[-k,k]: N[t-,t]\geq 2\right\}\right)
\\
&=Q\left(\bigcup_{k=1}^{\infty}\bigcap_{\delta>0}\bigcup_{j=-[k/\delta]+1}^{[k/\delta]}
\left\{\omega:\#\{\omega\cup I_{j,\delta}\}\geq 2\right\}\right)\nonumber
\\
&\leq\sum_{k=1}^{\infty}\inf_{\delta=\frac{1}{m},m\geq n}\sum_{j=-[k/\delta]+1}^{[k/\delta]}Q(\#\{\omega\cup I_{j,\delta}\}\geq 2)\nonumber
\\
&\leq\sum_{k=1}^{\infty}\inf_{\delta=\frac{1}{m},m\geq n}\{2[k/\delta]\delta g(\delta)\}\nonumber
\\
&=0.\nonumber
\end{align}
Hence, $\mathcal{A}^{n}$ is precompact in our topology.  Since $\mathcal{A}^{n}$ is closed, it is compact.
\end{proof}

\section{Concluding Remarks}\label{conclusion}

In this chapter, we obtained a process-level large deviation principle for a wide class of simple point processes,
i.e. nonlinear Hawkes processes. Indeed, the methods and ideas should apply to other simple point processes as well
and we should expect to get the same expression for the rate function $H(Q)$. For $H(Q)<\infty$, it should be of the form
\begin{equation}
H(Q)=\int_{\Omega}\int_{0}^{1}\lambda(\omega,s)-\hat{\lambda}(\omega,s)+\log\left(\frac{\hat{\lambda}(\omega,s)}{\lambda(\omega,s)}\right)
\hat{\lambda}(\omega,s)dsQ(d\omega),
\end{equation}
where $\lambda(\omega,s)$ is the intensity of the underlying simple point process. Now, it would be interesting
to ask for what conditions for a simple point process would guarantee the process-level large deviation principle 
that we obtained in this chapter? First, we have to assume that $\lambda(\omega,t)$ is predictable and progressively measurable.
Second, in our proof of the upper bound in this chapter, the key assumption we used about
nonlinear Hawkes process was that $\lim_{z\rightarrow\infty}\frac{\lambda(z)}{z}=0$. That is crucial to guarantee
the superexponential estimates we needed for the upper bound. If for a simple point process, we have $\lambda(\omega,t)\leq F(N(t,\omega))$
for some sublinear function $F(\cdot)$, we would expect the superexponential estimates still to work for the upper bound.
Third, it is not enough to have $\lambda(\omega,t)\leq F(N(t,\omega))$ for sublinear $F(\cdot)$ to get the full large deviation
principle. The reason is that in the proof of lower bound, in particular, in Lemma \ref{mainlower}, we need to use the fact
that any memory in  $\lambda(\omega,t)$ has memory will decay to zero over time. For nonlinear Hawkes processes,
this is guaranteed by the assumption that $\int_{0}^{\infty}h(t)dt<\infty$, which is crucial in the proof of Lemma \ref{mainlower}.
Indeed for any simple point process $P$, if you want to define $P^{\omega^{-}}$, the probability measure conditional 
on the past history $\omega^{-}$, to make sense of it, you have to have some regularities to ensure that the memory
of the history will decay to zero eventually over time.
From this perspective, nonlinear Hawkes processes form a rich and ideal class for which the process-level large deviation principle holds. 

\chapter{Large Deviations for Markovian Nonlinear Hawkes Processes\label{chap:four}}

In Chapter \ref{chap:three}, we studied the large deviations for $(N_{t}/t\in\cdot)$ by proving first a process-level, i.e. level-3
large deviation principle and then applying the contraction principle. In this chapter, we will obtain an alternative
expression for the rate function of the large deviation principle of $(N_{t}/t\in\cdot)$ when $h(\cdot)$ is exponential
or sums of exponentials. The main idea is that when $h(\cdot)$ is exponential or sums of exponentials, the system
is Markovian and we can use Feynman-Kac formula to obtain an upper bound and some tilting method to get a lower bound.
The assumption $\lim_{z\rightarrow\infty}\frac{\lambda(z)}{z}=0$ will provide us the compactness in order
to apply a minmax theorem to match the lower bound and the upper bound.

\section{An Ergodic Lemma}\label{ErgodicSection}

In this section, we prove an ergodic theorem for a class of Markovian processes with jumps more general than the Markovian nonlinear Hawkes processes. 

Let $Z_{i}(t):=\sum_{\tau_{j}<t}a_{i}e^{-b_{i}(t-\tau_{j})}$, $1\leq i\leq d$, where
$b_{i}>0$, $a_{i}\neq 0$ (might be negative), and $\tau_{j}$'s are the arrivals
of the simple point process with intensity $\lambda(Z_{1}(t),\cdots,Z_{d}(t))$ at time $t$, where
$\lambda:\mathcal{Z}\rightarrow\mathbb{R}^{+}$ and $\mathcal{Z}:=\mathbb{R}^{\epsilon_{1}}\times\cdots\mathbb{R}^{\epsilon_{d}}$
is the domain for $(Z_{1}(t),\ldots,Z_{d}(t))$, 
where $\mathbb{R}^{\epsilon_{i}}:=\mathbb{R}^{+}$ or $\mathbb{R}^{-}$ depending on whether $\epsilon_{i}=+1$ or $-1$,
where $\epsilon_{i}=+1$ if $a_{i}>0$ and $\epsilon_{i}=-1$ otherwise. If we assume the exciting function to be $h(t)=\sum_{i=1}^{d}a_{i}e^{-b_{i}t}$,
then a Markovian nonlinear Hawkes process is a simple point process with intensity of the form $\lambda(\sum_{i=1}^{d}Z_{i}(t))$. 

The generator $\mathcal{A}$ for $(Z_{1}(t),\ldots,Z_{d}(t))$ is given by
\begin{equation}
\mathcal{A}f=-\sum_{i=1}^{d}b_{i}z_{i}\frac{\partial f}{\partial z_{i}}+\lambda(z_{1},\ldots,z_{d})[f(z_{1}+a_{1},\ldots,z_{d}+a_{d})-f(z_{1},\ldots,z_{d})].
\end{equation}

We want to prove the existence and uniqueness of the invariant probability measure for $(Z_{1}(t),\ldots,Z_{d}(t))$.
Here the invariance is in time.

The lecture notes \cite{Hairer} by Martin Hairer gives the criterion for the existence and uniqueness of the invariant probability measure for Markov processes.

Suppose we have a jump diffusion process with generator $\mathcal{L}$. 
If we can find $u$ such that $u\geq 0$, $\mathcal{L}u\leq C_{1}-C_{2}u$ for some constants $C_{1},C_{2}>0$, 
then, there exists an invariant probability measure. We thereby have the following lemma.

\begin{lemma}\label{ergodiclemma}
Consider $h(t)=\sum_{i=1}^{d}a_{i}e^{-b_{i}t}>0$. Let $\epsilon_{i}=+1$ if $a_{i}>0$ and $\epsilon_{i}=-1$ if $a_{i}<0$. 
Assume $\lambda(z_{1},\ldots,z_{n})\leq\sum_{i=1}^{d}\alpha_{i}|z_{i}|+\beta$, where $\beta>0$ and $\alpha_{i}>0$, $1\leq i\leq d$, 
satisfies $\sum_{i=1}^{d}\frac{|a_{i}|}{b_{i}}\alpha_{i}<1$. Then, there exists a unique invariant probability measure for $(Z_{1}(t),\ldots,Z_{d}(t))$. 
\end{lemma}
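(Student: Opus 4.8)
The plan is to verify the Foster--Lyapunov drift criterion recalled just above the statement, using a Lyapunov function that is linear in the absolute values of the coordinates, and then to upgrade existence to uniqueness by exhibiting a small set and invoking Harris's theorem. Set
\[
u(z_1,\ldots,z_d)=\sum_{i=1}^{d}\frac{\alpha_i}{b_i}\,|z_i|.
\]
Because each $z_i$ lives in $\mathbb{R}^{\epsilon_i}$ with $\epsilon_i=\operatorname{sign}(a_i)$, we have $|z_i|=\epsilon_i z_i$, so $u$ is actually a \emph{linear} (hence smooth, and norm-like) function on $\mathcal{Z}$, which disposes of the only apparent non-smoothness. First I would compute $\mathcal{A}u$. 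The transport part gives $-\sum_i b_i z_i\,\frac{\alpha_i}{b_i}\epsilon_i=-\sum_i\alpha_i|z_i|$. For the jump part, $z_i$ and $a_i$ have the same sign, so $|z_i+a_i|=|z_i|+|a_i|$, and the jump increment of $u$ is the constant $\sum_i\frac{\alpha_i}{b_i}|a_i|$; hence the jump part contributes $\lambda(z)\sum_i\frac{\alpha_i}{b_i}|a_i|$.

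Writing $S:=\sum_{i=1}^{d}\frac{|a_i|}{b_i}\alpha_i$, which is $<1$ by hypothesis, and using $\lambda(z)\le\sum_i\alpha_i|z_i|+\beta$,
\begin{align*}
\mathcal{A}u(z) &\le -\sum_{i}\alpha_i|z_i|+\Bigl(\sum_{i}\alpha_i|z_i|+\beta\Bigr)S\\
&=\beta S-(1-S)\sum_{i}\alpha_i|z_i|\ \le\ C_1-C_2\,u(z),
\end{align*}
with $C_1:=\beta S$ and $C_2:=(1-S)\min_i b_i>0$, where the last step uses $\sum_i\alpha_i|z_i|=\sum_i b_i\cdot\frac{\alpha_i}{b_i}|z_i|\ge(\min_i b_i)\,u(z)$. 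This is exactly the drift inequality. From here existence is routine: the process is non-explosive because $\lambda_t=\lambda(Z(t))\le\bigl(\sum_i\alpha_i|a_i|\bigr)N_{t-}+\beta$ (since $|Z_i(t)|\le|a_i|N_{t-}$), and a counting process whose intensity grows at most linearly in its own count does not explode (dominate it by a linear birth-with-immigration process). The drift bound then gives $\mathbb{E}[u(Z(t))]\le u(z_0)e^{-C_2 t}+C_1/C_2$, so $\sup_{t\ge0}\mathbb{E}[u(Z(t))]<\infty$; since $u$ is norm-like, the Cesàro-averaged laws $\frac1t\int_0^t\operatorname{Law}(Z(s))\,ds$ are tight, and the Feller property of the (piecewise-deterministic) process $(Z_1,\ldots,Z_d)$ together with Krylov--Bogolyubov produces an invariant probability measure.

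For uniqueness I would establish a minorization on a compact set and apply Harris's theorem (equivalently the Meyn--Tweedie criterion, which is the ``uniqueness'' half of the result attributed to Hairer). Between jumps the state decays deterministically toward the origin, and while no jump occurs the norm only decreases, so the intensity stays bounded and the probability of no jump over a fixed time window is bounded away from $0$, uniformly on compact sets; hence from every point the process reaches an arbitrarily small ball $B_\rho$ about $0$ with positive probability, and the compact sublevel sets of $u$ — into which the drift condition forces the process — all funnel into $B_\rho$. It then remains to produce a local Doeblin bound near $0$: after $d$ jumps in a window $[0,T']$ at times $0<s_1<\cdots<s_d<T'$, the $j$-th coordinate equals $z_je^{-b_jT'}+a_j\sum_{k}e^{-b_j(T'-s_k)}$, and (assuming the $b_i$ distinct, which we may since otherwise the corresponding coordinates are proportional and the effective state space has lower dimension) the map $(s_1,\ldots,s_d)\mapsto(Z_1,\ldots,Z_d)(T')$ is a local diffeomorphism with a Vandermonde-type Jacobian, nonzero for distinct $s_k$; combining this with a lower bound on the conditional density of the jump times (the intensity being bounded above and below on the relevant compact, using continuity and positivity of $\lambda$) yields $P^{T'}(z,\cdot)\ge\delta\,\nu(\cdot)$ for all $z\in B_\rho$, for a fixed $\delta>0$ and probability measure $\nu$. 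Harris's theorem then gives a unique invariant probability measure (in fact geometric ergodicity). The hard part will be precisely this last step: since the dynamics are purely deterministic between jumps, all the smoothing must come from the randomness of the jump times, so one must argue carefully — via the Jacobian computation and a uniform lower bound on the conditional density of $(s_1,\ldots,s_d)$ — that the $T'$-step transition kernel has an absolutely continuous component that is bounded below uniformly over the ball $B_\rho$.
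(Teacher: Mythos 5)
Your existence argument is the paper's own: the Lyapunov function $u(z)=\sum_{i}\frac{\alpha_i}{b_i}\epsilon_i z_i=\sum_i\frac{\alpha_i}{b_i}|z_i|$ is exactly the paper's choice $c_i=\alpha_i/b_i$, and your drift computation $\mathcal{A}u\le \beta S-(1-S)(\min_i b_i)\,u$ with $S=\sum_i\frac{|a_i|}{b_i}\alpha_i<1$ reproduces the paper's inequality line for line (the paper then simply cites Hairer's criterion, whereas you spell out non-explosion, the moment bound, tightness and Krylov--Bogolyubov; that is fine and arguably more self-contained). For uniqueness, however, you take a genuinely different and strictly harder route. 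The paper does \emph{not} prove a Doeblin/Harris minorization: it only shows that for any two starting points $x,y$ the transition kernels $\mathcal{P}^x(T_1,\cdot)$ and $\mathcal{P}^y(T_2,\cdot)$ fail to be mutually singular for suitable $T_1,T_2$ (in $d=1$ by conditioning on exactly one jump and checking that the resulting densities have overlapping supports for $T$ large), and then derives uniqueness from the fact that two distinct ergodic invariant measures would live on disjoint invariant sets, forcing mutual singularity. Non-mutual-singularity is a much weaker property than the uniform minorization $P^{T'}(z,\cdot)\ge\delta\nu(\cdot)$ on a small set that Harris's theorem requires, and the step you correctly flag as ``the hard part'' --- the local Doeblin bound via the Vandermonde-type Jacobian of $(s_1,\ldots,s_d)\mapsto Z(T')$ and a uniform lower bound on the conditional density of the jump times --- is precisely the work the paper's argument avoids. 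Your Jacobian observation is sound (factoring $a_jb_je^{-b_jT'}$ from row $j$ leaves the generalized Vandermonde matrix $(e^{b_js_k})_{j,k}$, nonsingular for distinct $b_j$ and $s_k$), so the plan is viable and would in fact buy you more than the lemma asks for (geometric ergodicity); but if you only need uniqueness, the paper's support-overlap argument is considerably cheaper. One small caveat on your reduction ``we may assume the $b_i$ distinct'': when $b_i=b_j$ the coordinates are only \emph{asymptotically} proportional from a general initial point (the discrepancy decays like $e^{-b_it}$ but never vanishes), so the reduction to a lower-dimensional state space should be phrased in terms of where the invariant measure is supported rather than exact coupling in finite time; the paper glosses over the same point.
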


\begin{proof}
The lecture notes \cite{Hairer} by Martin Hairer gives the criterion for the existence of an invariant probability measure for Markov processes.
Suppose we have a jump diffusion process with generator $\mathcal{L}$. 
If we can find $u$ such that $u\geq 0$, $\mathcal{L}u\leq C_{1}-C_{2}u$ for some constants $C_{1},C_{2}>0$, 
then, there exists an invariant probability measure. 

Try $u(z_{1},\ldots,z_{d})=\sum_{i=1}^{d}\epsilon_{i}c_{i}z_{i}\geq 0$, where $c_{i}>0$, $1\leq i\leq d$. Then,
\begin{align}
\mathcal{A}u&=-\sum_{i=1}^{d}b_{i}\epsilon_{i}c_{i}z_{i}+\lambda(z_{1},\ldots,z_{d})\sum_{i=1}^{d}a_{i}\epsilon_{i}c_{i}
\\
&\leq-\sum_{i=1}^{d}b_{i}c_{i}|z_{i}|+\sum_{i=1}^{d}\alpha_{i}|z_{i}|\sum_{i=1}^{d}|a_{i}|c_{i}+\beta\sum_{i=1}^{d}|a_{i}|c_{i}\nonumber.
\end{align}
Taking $c_{i}=\frac{\alpha_{i}}{b_{i}}>0$, we get
\begin{align}
\mathcal{A}u&\leq-\left(1-\sum_{i=1}^{d}\frac{|a_{i}|\alpha_{i}}{b_{i}}\right)
\sum_{i=1}^{d}\alpha_{i}|z_{i}|+\beta\sum_{i=1}^{d}\frac{|a_{i}|\alpha_{i}}{b_{i}}
\\
&\leq-\min_{1\leq i\leq d}b_{i}\cdot\left(1-\sum_{i=1}^{d}\frac{|a_{i}|\alpha_{i}}{b_{i}}\right)u+\beta\sum_{i=1}^{d}\frac{|a_{i}|\alpha_{i}}{b_{i}}.\nonumber
\end{align}

Next, we will prove the uniqueness of the invariant probability measure. 
It is sufficient to prove that for any $x,y\in\mathcal{Z}_{d}$, there exist times $T_{1},T_{2}>0$ such that
$\mathcal{P}^{x}(T_{1},\cdot)$ and $\mathcal{P}^{y}(T_{2},\cdot)$ are not mutually singular. 
Here $\mathcal{P}^{x}(T,\cdot):=\mathbb{P}(Z^{x}_{T}\in\cdot)$,
where $Z^{x}_{T}$ is $Z_{T}$ starting at $Z_{0}=x$, i.e. $Z^{x}_{T}=xe^{-bT}+\sum_{\tau_{j}<T}ae^{-b(T-\tau_{j})}$.
To see this, let us prove by contradiction. If there were two distinct invariant probability measures $\mu_{1}$
and $\mu_{2}$, then there exist two disjoints sets $E_{1}$ and $E_{2}$ such that $\mu_{1}:E_{1}\rightarrow E_{1}$
and $\mu_{2}:E_{2}\rightarrow E_{2}$, see for example Varadhan \cite{VaradhanIV}. Now, we can choose $x_{1}\in E_{1}$ and $x_{2}\in E_{2}$. 
so that $\mathcal{P}^{x_{1}}(T_{1},\cdot)$
and $\mathcal{P}^{x_{2}}(T_{2},\cdot)$ are supported on $E_{1}$ and $E_{2}$ respectively for any $T_{1},T_{2}>0$, which implies
that $\mathcal{P}^{x_{1}}(T_{1},\cdot)$ and $\mathcal{P}^{x_{2}}(T_{2},\cdot)$ are mutually singular.
This leads to a contradiction.

Consider the simplest case $h(t)=ae^{-bt}$. 
Let us assume that $x>y>0$. Conditioning on the event that $Z_{t}^{x}$ and $Z_{t}^{y}$ have exactly one jump during the time interval $(0,T)$ respectively, 
the laws of $\mathcal{P}^{x}(T,\cdot)$ and $\mathcal{P}^{y}(T,\cdot)$ have positive densities on the sets 
\begin{equation}
\left((a+x)e^{-bT},xe^{-bT}+a\right)\quad\text{and}\quad\left((a+y)e^{-bT},ye^{-bT}+a\right)
\end{equation}
respectively. Choosing $T>\frac{1}{b}\log(\frac{x-y+a}{a})$, we have
\begin{equation}
\left((a+x)e^{-bT},xe^{-bT}+a\right)\bigcap\left((a+y)e^{-bT},ye^{-bT}+a\right)\neq\emptyset,
\end{equation}
which implies that $\mathcal{P}^{x}(T,\cdot)$ and $\mathcal{P}^{y}(T,\cdot)$ are not mutually singular. 

Similarly, one can show the uniqueness of the invariant probability measure for the multidimensional case. Indeed, it is easy to see
that for any $x,y\in\mathcal{Z}_{d}$, $Z_{T_{1}}^{x}$ and $Z_{T_{2}}^{y}$ hit a common point for some $T_{1}$
and $T_{2}$ after possibly different number of jumps. Here $Z_{t}^{x}:=(Z_{t}^{x_{1}},\ldots,Z_{t}^{x_{d}})\in\mathcal{Z}_{d}$
and $Z_{t}^{y}:=(Z_{t}^{y_{1}},\ldots,Z_{t}^{y_{d}})\in\mathcal{Z}_{d}$, 
where $Z_{t}^{x_{i}}=x_{i}e^{-b_{i}t}+\sum_{\tau_{j}<t}a_{i}e^{-b_{i}(t-\tau_{j})}$, $1\leq i\leq d$. 
Since $\mathcal{P}^{x}(T_{1},\cdot)$ and $\mathcal{P}^{y}(T_{2},\cdot)$ have probability densities,
$\mathcal{P}^{x}(T_{1},\cdot)$ and $\mathcal{P}^{y}(T_{2},\cdot)$ are not mutually singular for some $T_{1}$ and $T_{2}$.
\end{proof}

\section{Large Deviations for Markovian Nonlinear Hawkes Processes with Exponential Exciting Function}

We assume first that $h(t)=ae^{-bt}$, where $a,b>0$, i.e. the process $Z_{t}$ jumps upwards an amount $a$ at each point 
and decays exponentially between points with rate $b$. In this case, $Z_{t}$ is Markovian. 

Notice first that $Z_{0}=0$ and
\begin{equation}
dZ_{t}=-bZ_{t}dt+adN_{t},
\end{equation}
which implies that $N_{t}=\frac{1}{a}Z_{t}+\frac{b}{a}\int_{0}^{t}Z_{s}ds$.

We prove first the existence of the limit of the logarithmic moment generating function of $N_{t}$.

\begin{theorem}\label{mainlimit}
Assume that $\lim_{z\rightarrow\infty}\frac{\lambda(z)}{z}=0$ and that $\lambda(\cdot)$
is continuous and bounded below by some positive constant. Then,
\begin{equation}
\lim_{t\rightarrow\infty}\frac{1}{t}\log\mathbb{E}[e^{\theta N_{t}}]=\Gamma(\theta),
\end{equation}
where 
\begin{equation}
\Gamma(\theta)=\sup_{(\hat{\lambda},\hat{\pi})\in\mathcal{Q}_{e}}\left\{\int\frac{\theta b}{a}z\hat{\pi}(dz)
+\int(\hat{\lambda}-\lambda)\hat{\pi}(dz)-\int\left(\log(\hat{\lambda}/\lambda)\right)\hat{\lambda}\hat{\pi}(dz)\right\},
\end{equation}
where $\mathcal{Q}_{e}$ is defined as
\begin{equation}
\mathcal{Q}_{e}=\left\{(\hat{\lambda},\hat{\pi})\in\mathcal{Q}: \text{$\hat{\mathcal{A}}$ has unique invariant probability measure $\hat{\pi}$}\right\},
\end{equation}
where
\begin{equation}
\mathcal{Q}=\left\{(\hat{\lambda},\hat{\pi}):\hat{\pi}\in\mathcal{M}(\mathbb{R}^{+}),\int z\hat{\pi}(dz)<\infty,
\hat{\lambda}\in L^{1}(\hat{\pi}), \hat{\lambda}>0\right\},
\end{equation}
where $\mathcal{M}(\mathbb{R}^{+})$ denotes the space of probability measures on $\mathbb{R}^{+}$
and for any $\hat{\lambda}$ such that $(\hat{\lambda},\hat{\pi})\in\mathcal{Q}$, we define the generator $\hat{\mathcal{A}}$ as
\begin{equation}
\hat{\mathcal{A}}f(z)=-bz\frac{\partial f}{\partial z}+\hat{\lambda}(z)[f(z+a)-f(z)].
\end{equation}
for any $f:\mathbb{R}^{+}\rightarrow\mathbb{R}$ that is $C^{1}$, i.e. continuously differentiable.
\end{theorem}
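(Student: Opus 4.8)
The plan is to establish matching upper and lower bounds for $\frac{1}{t}\log\mathbb{E}[e^{\theta N_t}]$ using the Markov structure of $Z_t$ and the identity $N_t=\frac{1}{a}Z_t+\frac{b}{a}\int_0^t Z_s\,ds$. Since $Z_t$ is exponentially bounded away from zero and $Z_t\ge 0$, up to a negligible boundary term $\frac{1}{a}Z_t$ (which I will control via the superexponential/exponential moment estimates of the type in Chapter~\ref{chap:three}, e.g. Lemma~\ref{reverse} and Lemma~\ref{indfinite}), it suffices to analyze $\mathbb{E}[e^{\frac{\theta b}{a}\int_0^t Z_s\,ds}]$. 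This is a Feynman--Kac functional for the Markov process $Z_t$ with generator $\mathcal{A}$, so the natural guess is that the limit is the top of the spectrum of $\mathcal{A}+\frac{\theta b}{a}z$, and the Donsker--Varadhan variational formula for that principal eigenvalue is exactly the stated $\Gamma(\theta)$ once one rewrites the ``tilted'' generator $\hat{\mathcal{A}}$ and identifies the entropy cost of changing the jump rate from $\lambda$ to $\hat\lambda$.

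For the upper bound I would use the Feynman--Kac/eigenfunction method: if one can produce a sufficiently nice positive function $u$ with $\frac{\mathcal{A}u}{u}+\frac{\theta b}{a}z\le \Gamma(\theta)$ (a super-solution to the eigenvalue inequality), then $e^{-\Gamma(\theta)t}u(Z_t)e^{\frac{\theta b}{a}\int_0^t Z_s\,ds}$ is a supermartingale and the upper bound $\limsup_t\frac{1}{t}\log\mathbb{E}[e^{\theta N_t}]\le\Gamma(\theta)$ follows. The assumption $\lim_{z\to\infty}\lambda(z)/z=0$ is what makes the relevant operator have compact resolvent / confining behaviour, guaranteeing such a $u$ exists with the right growth, and it also supplies the exponential tightness needed to pass from the abstract variational object to $\Gamma(\theta)$. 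For the lower bound I would use the tilting (change of measure) argument: given any admissible pair $(\hat\lambda,\hat\pi)\in\mathcal{Q}_e$, construct a Hawkes-type process with intensity $\hat\lambda(Z_s)$, whose law $\hat{\mathbb{P}}$ is absolutely continuous with respect to $\mathbb{P}$ with an explicit Girsanov density; under $\hat{\mathbb{P}}$ the process $Z_t$ is ergodic with invariant measure $\hat\pi$, so by the ergodic theorem $\frac{1}{t}\int_0^t Z_s\,ds\to\int z\,\hat\pi(dz)$ and $\frac{1}{t}(\text{log-density})\to\int(\hat\lambda-\lambda)\,d\hat\pi-\int\log(\hat\lambda/\lambda)\hat\lambda\,d\hat\pi$. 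Combining these via the standard lower-bound inequality $\mathbb{E}[e^{\theta N_t}]\ge e^{t(\cdots)+o(t)}$ gives $\liminf_t\frac{1}{t}\log\mathbb{E}[e^{\theta N_t}]\ge$ the bracketed expression, and taking the supremum over $(\hat\lambda,\hat\pi)\in\mathcal{Q}_e$ yields $\liminf\ge\Gamma(\theta)$.

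The final step is to match the two bounds, i.e. to show that the principal-eigenvalue object from the upper bound coincides with the supremum $\Gamma(\theta)$ over $\mathcal{Q}_e$. This I would do through a minimax/duality argument: write the eigenvalue as $\inf_{u>0}\sup_z\{\frac{\mathcal{A}u}{u}+\frac{\theta b}{a}z\}$ and the variational formula as $\sup_{(\hat\lambda,\hat\pi)}\inf_u\{\cdots\}$, then invoke a minmax theorem (Sion-type) to interchange $\inf_u$ and $\sup_{(\hat\lambda,\hat\pi)}$; this is precisely where the compactness coming from $\lambda(z)/z\to 0$ is indispensable, exactly as flagged in the chapter introduction. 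I expect this matching step — verifying the hypotheses of the minmax theorem (lower semicontinuity, convexity/concavity in the right variables, and the compactness of the effective domain of the maximizing measures) and checking that the ergodicity restriction ``$\hat{\mathcal{A}}$ has a unique invariant measure $\hat\pi$'' does not cost anything in the supremum — to be the main obstacle; the Feynman--Kac upper bound and the Girsanov lower bound are comparatively routine given Lemma~\ref{ergodiclemma} and the exponential moment bounds already available.
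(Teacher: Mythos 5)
Your proposal follows essentially the same route as the paper's proof: the upper bound via the Feynman--Kac/Dynkin supermartingale argument over a class of exponential test functions $u=e^{Kz+g+L}$ with $K>\theta/a$ (which is also how the paper absorbs the boundary term $\tfrac{1}{a}Z_t$, rather than discarding it separately), the lower bound via Girsanov tilting, Jensen's inequality and the ergodic theorem, and the matching step via replacing the constraint $(\hat\lambda,\hat\pi)\in\mathcal{Q}_e$ by the penalty $\inf_f\int\hat{\mathcal{A}}f\,d\hat\pi$ (justified by Echeverr\'{\i}a's theorem) followed by Ky Fan's minimax theorem, with compactness of the level sets supplied by $\lambda(z)/z\to 0$. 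The two obstacles you flag — verifying the minimax hypotheses and showing the ergodicity restriction is costless — are precisely the two technical cores of the paper's argument, so your outline is correct and faithful to the actual proof.
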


\begin{proof}
By Lemma \ref{thetafinite},$\mathbb{E}[e^{\theta N_{t}}]<\infty$ for any $\theta\in\mathbb{R}$, also
\begin{equation}
\mathbb{E}[e^{\theta N_{t}}]=\mathbb{E}\left[e^{\frac{\theta}{a}\left(Z_{t}+b\int_{0}^{t}Z_{s}ds\right)}\right].\label{upper1}
\end{equation}
Define the set
\begin{equation}
\mathcal{U}_{\theta}=\left\{u\in C^{1}(\mathbb{R}^{+},\mathbb{R}^{+}): u(z)=e^{f(z)}, \text{ where }f\in\mathcal{F}\right\},
\end{equation}
where
\begin{align}
\mathcal{F}&=\bigg\{f: f(z)=Kz+g(z)+L, K>\frac{\theta}{a}, K,L\in\mathbb{R},
\\
&\qquad\qquad\qquad\qquad\qquad\qquad\qquad\text{ $g$ is $C_{1}$ with compact support}\bigg\}.\nonumber
\end{align}
Now for any $u\in\mathcal{U}_{\theta}$, define
\begin{equation}
M:=\sup_{z\geq 0}\frac{\mathcal{A}u(z)+\frac{\theta b}{a}zu(z)}{u(z)}.
\end{equation}
By Dynkin's formula if $M<\infty$, for $V(z):=\frac{\theta b}{a}z$, we have 
\begin{align}
\mathbb{E}\left[u(Z_{t})e^{\int_{0}^{t}V(Z_{s})ds}\right]&=u(Z_{0})
+\int_{0}^{t}\mathbb{E}\left[(\mathcal{A}u(Z_{s})+V(Z_{s})u(Z_{s}))e^{\int_{0}^{s}V(Z_{v})dv}\right]ds
\\
&\leq u(Z_{0})+M\int_{0}^{t}\mathbb{E}\left[u(Z_{s})e^{\int_{0}^{s}V(Z_{v})dv}\right]ds,\nonumber
\end{align}
which implies by Gronwall's lemma that
\begin{equation}
\mathbb{E}\left[u(Z_{t})e^{\int_{0}^{t}V(Z_{s})ds}\right]\leq u(Z_{0})e^{Mt}=u(0)e^{Mt}.\label{upper2}
\end{equation}
Observe that by the definition of $\mathcal{U}_{\theta}$, for any $u\in\mathcal{U}_{\theta}$,
we have $u(z)\geq c_{1}e^{\frac{\theta}{a}z}$ for some constant $c_{1}>0$ and therefore by \eqref{upper1} and \eqref{upper2},
\begin{equation}
\mathbb{E}\left[e^{\theta N_{t}}\right]\leq\frac{1}{c_{1}}
\mathbb{E}\left[u(Z_{t})e^{\int_{0}^{t}\frac{\theta b}{a}Z_{s}ds}\right]\leq\frac{1}{c_{1}}u(0)e^{Mt}.
\end{equation}
Hence,
\begin{equation}
\limsup_{t\rightarrow\infty}\frac{1}{t}\log\mathbb{E}\left[e^{\theta N_{t}}\right]\leq
M=\sup_{z\geq 0}\frac{\mathcal{A}u(z)+\frac{\theta b}{a}zu(z)}{u(z)},
\end{equation}
which is still true even if $M=\infty$. Since this holds for any $u\in\mathcal{U}_{\theta}$, 
\begin{equation}
\limsup_{t\rightarrow\infty}\frac{1}{t}\log\mathbb{E}\left[e^{\theta N_{t}}\right]\leq
\inf_{u\in\mathcal{U}_{\theta}}\sup_{z\geq 0}\frac{\mathcal{A}u(z)+\frac{\theta b}{a}zu(z)}{u(z)}.
\end{equation}

Define the tilted probability measure $\hat{\mathbb{P}}$ by
\begin{equation}
\frac{d\hat{\mathbb{P}}}{d\mathbb{P}}\bigg|_{\mathcal{F}_{t}}
=\exp\left\{\int_{0}^{t}(\lambda(Z_{s})-\hat{\lambda}(Z_{s}))ds+\int_{0}^{t}\log\left(\frac{\hat{\lambda}(Z_{s})}{\lambda(Z_{s})}\right)dN_{s}\right\}.
\label{tiltedP}
\end{equation}
Notice that $\hat{\mathbb{P}}$ defined in \eqref{tiltedP} is indeed a probability measure by Girsanov formula. (For the theory of absolute continuity
for point processes and their Girsanov formulas, we refer to Lipster and Shiryaev \cite{Lipster}.)

Now by Jensen's inequality, 
\begin{align}
&\liminf_{t\rightarrow\infty}\frac{1}{t}\log\mathbb{E}[e^{\theta N_{t}}]
\\
&=\liminf_{t\rightarrow\infty}\frac{1}{t}\log\hat{\mathbb{E}}\left[\exp\left\{\theta N_{t}-\log\frac{d\hat{\mathbb{P}}}{d\mathbb{P}}
\bigg|_{\mathcal{F}_{t}}\right\}\right]\nonumber
\\
&\geq\liminf_{t\rightarrow\infty}\hat{\mathbb{E}}\left[\frac{1}{t}\theta N_{t}-\frac{1}{t}\log\frac{d\hat{\mathbb{P}}}{d\mathbb{P}}
\bigg|_{\mathcal{F}_{t}}\right]\nonumber
\\
&=\liminf_{t\rightarrow\infty}\hat{\mathbb{E}}\left[\frac{1}{t}\theta N_{t}-\frac{1}{t}\int_{0}^{t}(\lambda(Z_{s})
-\hat{\lambda}(Z_{s}))ds-\int_{0}^{t}\log\left(\frac{\hat{\lambda}(Z_{s})}{\lambda(Z_{s})}\right)dN_{s}\right].\nonumber
\end{align}
Since $N_{t}-\int_{0}^{t}\hat{\lambda}(Z_{s})ds$ is a martingale under $\hat{\mathbb{P}}$, we have
\begin{equation}
\hat{\mathbb{E}}\left[\int_{0}^{t}\log\left(\frac{\hat{\lambda}(Z_{s})}{\lambda(Z_{s})}\right)(dN_{s}-\hat{\lambda}(Z_{s})ds)\right]=0.
\end{equation}
Therefore, by the ergodic theorem, (for a reference, see Chapter 16.4 of Koralov and Sinai \cite{Koralov}),
for any $(\hat{\lambda},\hat{\pi})\in\mathcal{Q}_{e}$,
\begin{align}
&\liminf_{t\rightarrow\infty}\frac{1}{t}\log\mathbb{E}[e^{\theta N_{t}}]
\\
&\geq\liminf_{t\rightarrow\infty}\hat{\mathbb{E}}\left[\frac{1}{t}\theta N_{t}-\frac{1}{t}\int_{0}^{t}(\lambda(Z_{s})
-\hat{\lambda}(Z_{s}))ds-\int_{0}^{t}\log\left(\frac{\hat{\lambda}(Z_{s})}{\lambda(Z_{s})}\right)\hat{\lambda}(Z_{s})ds\right]\nonumber
\\
&=\int\frac{\theta b}{a}z\hat{\pi}(dz)+\int(\hat{\lambda}-\lambda)\hat{\pi}(dz)-\int\left(\log(\hat{\lambda})
-\log(\lambda)\right)\hat{\lambda}\hat{\pi}(dz).\nonumber
\end{align}
Hence,
\begin{align}
&\liminf_{t\rightarrow\infty}\frac{1}{t}\log\mathbb{E}[e^{\theta N_{t}}]
\\
&\geq\sup_{(\hat{\lambda},\hat{\pi})\in\mathcal{Q}_{e}}\left\{\int\frac{\theta b}{a}z\hat{\pi}+\int(\hat{\lambda}-\lambda)\hat{\pi}
-\int\left(\log(\hat{\lambda})-\log(\lambda)\right)\hat{\lambda}\hat{\pi}\right\}.\nonumber
\end{align}

Recall that
\begin{align}
\mathcal{F}&=\bigg\{f: f(z)=Kz+g(z)+L, K>\frac{\theta}{a}, K,L\in\mathbb{R},
\\
&\qquad\qquad\qquad\qquad\qquad\qquad\qquad\text{ $g$ is $C_{1}$ with compact support}\bigg\}.\nonumber
\end{align}
We claim that
\begin{equation}
\inf_{f\in\mathcal{F}}\left\{\int\hat{\mathcal{A}}f(z)\hat{\pi}(dz)\right\}=
\begin{cases}
0 &\text{if $(\hat{\lambda},\hat{\pi})\in\mathcal{Q}_{e}$,}
\\
-\infty &\text{if $(\hat{\lambda},\hat{\pi})\in\mathcal{Q}\backslash\mathcal{Q}_{e}$.}
\end{cases}
\end{equation}
It is easy to see that for $(\hat{\lambda},\hat{\pi})\in\mathcal{Q}_{e}$, and $g$ being $C_{1}$ with compact support, $\int\mathcal{A}g\hat{\pi}=0$.
Next, we can find a sequence $f_{n}(z)\rightarrow z$ pointwise under the bound $|f_{n}(z)|\leq\alpha z+\beta$, for some $\alpha,\beta>0$, where
$f_{n}(z)$ is $C_{1}$ with compact support. But by our definition of $\mathcal{Q}$, $\int z\hat{\pi}<\infty$. So by the dominated convergence theorem, 
$\int\hat{\mathcal{A}}z\hat{\pi}=0$.
The nontrivial part is to prove that if for any $g\in\mathcal{G}=\{g(z)+L, g\text{ is $C_{1}$ with compact support}\}$ 
such that $\int\hat{\mathcal{A}}g\hat{\pi}=0$, then $(\hat{\lambda},\hat{\pi})\in\mathcal{Q}_{e}$. We can easily check the conditions 
in Echevrr\'{i}a \cite{Echeverria}. (For instance, $\mathcal{G}$ is dense in $C(\mathbb{R}^{+})$, the set of continuous and bounded functions
on $\mathbb{R}^{+}$ with limit that exists at infinity and $\hat{\mathcal{A}}$ satisfies the minimum principle, 
i.e. $\hat{\mathcal{A}}f(z_{0})\geq 0$ for any $f(z_{0})=\inf_{z\in\mathbb{R}^{+}}f(z)$. This is because at minimum, the first derivative of $f$
vanishes and $\hat{\lambda}(z_{0})(f(z_{0}+a)-f(z_{0}))\geq 0$. The other conditions in Echeverr\'{i}a \cite{Echeverria} can also
be easily verified.) Thus, Echevrr\'{i}a \cite{Echeverria} implies that $\hat{\pi}$ is an 
invariant measure. Now, our proof in Lemma \ref{ergodiclemma} shows that $\hat{\pi}$ has to be unique as well. 
Therefore, $(\hat{\lambda},\hat{\pi})\in\mathcal{Q}_{e}$. This implies that if $(\hat{\lambda},\hat{\pi})\in\mathcal{Q}\backslash\mathcal{Q}_{e}$,
there exists some $g\in\mathcal{G}$, such that $\int\hat{\mathcal{A}}g\hat{\pi}\neq 0$. Now, any constant multiplier of $g$ still belongs
to $\mathcal{G}$ and thus $\inf_{g\in\mathcal{G}}\int\hat{\mathcal{A}}g\hat{\pi}=-\infty$ and 
hence $\inf_{f\in\mathcal{F}}\int\hat{\mathcal{A}}f\hat{\pi}=-\infty$
if $(\hat{\lambda},\hat{\pi})\in\mathcal{Q}\backslash\mathcal{Q}_{e}$.

Therefore, 
\begin{align}
\liminf_{t\rightarrow\infty}\frac{1}{t}\log\mathbb{E}[e^{\theta N_{t}}]
&\geq\sup_{(\hat{\lambda},\hat{\pi})\in\mathcal{Q}}\inf_{f\in\mathcal{F}}\left\{\int\frac{\theta b}{a}z\hat{\pi}-\hat{H}(\hat{\lambda},\hat{\pi})
+\int\hat{\mathcal{A}}f\hat{\pi}\right\}
\\
&\geq\sup_{(\hat{\lambda}\hat{\pi},\hat{\pi})\in\mathcal{R}}\inf_{f\in\mathcal{F}}\left\{\int\frac{\theta b}{a}z\hat{\pi}
-\hat{H}(\hat{\lambda},\hat{\pi})+\int\hat{\mathcal{A}}f\hat{\pi}\right\},\label{switch}
\end{align}
where $\mathcal{R}=\{(\hat{\lambda}\hat{\pi},\hat{\pi}):(\hat{\lambda},\hat{\pi})\in\mathcal{Q}\}$ and
\begin{equation}
\hat{H}(\hat{\lambda},\hat{\pi})=\int\left[(\lambda-\hat{\lambda})+\log\left(\hat{\lambda}/\lambda\right)\hat{\lambda}\right]\hat{\pi}.
\end{equation}
Define
\begin{align}
F(\hat{\lambda}\hat{\pi},\hat{\pi},f)&=\int\frac{\theta b}{a}z\hat{\pi}-\hat{H}(\hat{\lambda},\hat{\pi})+\int\hat{\mathcal{A}}f\hat{\pi}
\\
&=\int\frac{\theta b}{a}z\hat{\pi}-\hat{H}(\hat{\lambda},\hat{\pi})-\int bz\frac{\partial f}{\partial z}\hat{\pi}
+\int(f(z+a)-f(z))\hat{\lambda}\hat{\pi}.\nonumber
\end{align}
Notice that $F$ is linear in $f$ and hence convex in $f$ and also
\begin{equation}
\hat{H}(\hat{\lambda},\hat{\pi})=\sup_{f\in C_{b}(\mathbb{R}^{+})}\left\{\int\left[\hat{\lambda}f+\lambda(1-e^{f})\right]\hat{\pi}\right\},
\end{equation}
where $C_{b}(\mathbb{R}^{+})$ denotes the set of bounded functions on $\mathbb{R}^{+}$. Inside the bracket above, 
it is linear in both $\hat{\pi}$ and $\hat{\lambda}\hat{\pi}$. Hence $\hat{H}$ is weakly lower semicontinuous and convex in 
$(\hat{\lambda}\hat{\pi},\hat{\pi})$. Therefore, $F$ is concave in $(\hat{\lambda}\hat{\pi},\hat{\pi})$. Furthermore, for any $f=Kz+g+L\in\mathcal{F}$,
\begin{align}
F(\hat{\lambda}\hat{\pi},\hat{\pi},f)&=\int\left(\frac{\theta}{a}-K\right)bz\hat{\pi}-\hat{H}(\hat{\lambda},\hat{\pi})
-\int bz\frac{\partial g}{\partial z}\hat{\pi}
\\
&+\int(g(z+a)-g(z))\hat{\lambda}\hat{\pi}+Ka\int\hat{\lambda}\hat{\pi}.\nonumber
\end{align}
If $\lambda_{n}\pi_{n}\rightarrow\gamma_{\infty}$ and $\pi_{n}\rightarrow\pi_{\infty}$ weakly, then, since $g$ is $C_{1}$ with compact support, we have
\begin{align}
&-\int bz\frac{\partial g}{\partial z}\pi_{n}+\int(g(z+a)-g(z))\lambda_{n}\pi_{n}+Ka\int\lambda_{n}\pi_{n}
\\
&\rightarrow-\int bz\frac{\partial g}{\partial z}\pi_{\infty}+\int(g(z+a)-g(z))\gamma_{\infty}+Ka\int\gamma_{\infty},\nonumber
\end{align}
as $n\rightarrow\infty$. Moreover, in general, if $P_{n}\rightarrow P$ weakly, then, for any $f$ which is upper semicontinuous and 
bounded from above, we have $\limsup_{n}\int fdP_{n}\leq\int fdP$. Since $\left(\frac{\theta}{a}-K\right)bz$ is continuous and nonpositive 
on $\mathbb{R}^{+}$, we have
\begin{equation}
\limsup_{n\rightarrow\infty}\int\left(\frac{\theta}{a}-K\right)bz\pi_{n}\leq\int\left(\frac{\theta}{a}-K\right)bz\pi_{\infty}.
\end{equation}
Hence, we conclude that $F$ is upper semicontinuous in the weak topology.

In order to switch the supremum and infimum in \eqref{switch}, since we have already proved that $F$ is concave, 
upper semicontinuous in $(\hat{\lambda}\hat{\pi},\hat{\pi})$ and convex in $f$, it is sufficient to prove the compactness of $\mathcal{R}$ 
to apply Ky Fan's minmax theorem (see Fan \cite{Fan}). Indeed, Jo\'{o} developed some level set method 
and proved that it is sufficient to show the compactness of the level set (see Jo\'{o} \cite{Joo} and Frenk and Kassay \cite{Frenk}). 
In other words, it suffices to prove that, for any $C\in\mathbb{R}$ and $f\in\mathcal{F}$, the level set
\begin{equation}
\left\{(\hat{\lambda}\hat{\pi},\hat{\pi})\in\mathcal{R}: \hat{H}+\int bz\frac{\partial f}{\partial z}\hat{\pi}
-\frac{\theta b}{a}z\hat{\pi}-\hat{\lambda}[f(z+a)-f(z)]\hat{\pi}\leq C\right\}\label{pairinlevelset}
\end{equation}
is compact.

Fix any $f=Kz+g+L\in\mathcal{F}$, where $K>\frac{\theta}{a}$ and $g$ is $C_{1}$ with compact support and $L$ is some constant, uniformly for any
pair $(\hat{\lambda}\hat{\pi},\hat{\pi})$ that is in the level set of \eqref{pairinlevelset}, there exists some $C_{1},C_{2}>0$ such that
\begin{align}
C_{1}&\geq\hat{H}+\left(K-\frac{\theta}{a}\right)b\int z\hat{\pi}-C_{2}\int\hat{\lambda}\hat{\pi}
\\
&\geq\int_{\hat{\lambda}\geq cz+\ell}\left[\lambda-\hat{\lambda}+\hat{\lambda}\log(\hat{\lambda}/\lambda)\right]\hat{\pi}
+\left(K-\frac{\theta}{a}\right)b\int z\hat{\pi}\nonumber
\\
&-C_{2}\int_{\hat{\lambda}\geq cz+\ell}\hat{\lambda}\hat{\pi}-C_{2}\int_{\hat{\lambda}<cz+\ell}\hat{\lambda}\hat{\pi}\nonumber
\\
&\geq\left[\min_{z\geq 0}\log\frac{cz+\ell}{\lambda(z)}-1-C_{2}\right]\int_{\hat{\lambda}\geq cz+\ell}\hat{\lambda}\hat{\pi}
+\left[-c\cdot C_{2}+\left(K-\frac{\theta}{a}\right)b\right]\int z\hat{\pi}-\ell C_{2}.\nonumber
\end{align}
We choose $0<c<\left(K-\frac{\theta}{a}\right)\frac{b}{C_{2}}$ and $\ell$ large enough so that 
$\min_{z\geq 0}\log\frac{cz+\ell}{\lambda(z)}-1-C_{2}>0$, where we used the fact that $\lim_{z\rightarrow\infty}\frac{\lambda(z)}{z}=0$
and $\min_{z}\lambda(z)>0$. 
Hence, 
\begin{equation}
\int z\hat{\pi}\leq C_{3},\quad\int_{\hat{\lambda}\geq cz+\ell}\hat{\lambda}\hat{\pi}\leq C_{4},
\end{equation}
where
\begin{equation}
C_{3}=\frac{C_{1}+\ell C_{2}}{-c\cdot C_{2}+\left(K-\frac{\theta}{a}\right)b},
\quad C_{4}=\frac{C_{1}+\ell C_{2}}{\min_{z\geq 0}\log\frac{cz+\ell}{\lambda(z)}-1-C_{2}}.
\end{equation}
Therefore, we have
\begin{equation}
\int\hat{\lambda}\hat{\pi}=\int_{\hat{\lambda}\geq cz+\ell}\hat{\lambda}\hat{\pi}+\int_{\hat{\lambda}<cz+\ell}\hat{\lambda}\hat{\pi}
\leq C_{4}+c\cdot C_{3}+\ell,
\end{equation}
and hence
\begin{equation}
\hat{H}(\hat{\lambda},\hat{\pi})\leq C_{1}+C_{2}\left[C_{4}+c\cdot C_{3}+\ell\right]<\infty.
\end{equation}

Therefore, for any $(\lambda_{n}\pi_{n},\pi_{n})\in\mathcal{R}$, we get
\begin{equation}
\lim_{\ell\rightarrow\infty}\sup_{n}\int_{z\geq\ell}\pi_{n}\leq\lim_{\ell\rightarrow\infty}\sup_{n}\frac{1}{\ell}\int z\pi_{n}
\leq\lim_{\ell\rightarrow\infty}\frac{C_{3}}{\ell}=0,
\end{equation}
which implies the tightness of $\pi_{n}$. By Prokhorov's Theorem, there exists a subsequence of $\pi_{n}$ 
which converges weakly to $\pi_{\infty}$. We also want to show that there exists some $\gamma_{\infty}$ such that 
$\lambda_{n}\pi_{n}\rightarrow\gamma_{\infty}$ weakly (passing to a subsequence if necessary). 
It is enough to show that

(i) $\sup_{n}\int\lambda_{n}\pi_{n}<\infty$.

(ii) $\lim_{\ell\rightarrow\infty}\sup_{n}\int_{z\geq\ell}\lambda_{n}\pi_{n}=0$.

(i) and (ii) will give us tightness of $\lambda_{n}\pi_{n}$ and hence implies the weak convergence for a subsequence.

Now, let us prove statements (i) and (ii).

To prove (i), notice that
\begin{equation}
\sup_{n}\int\lambda_{n}\pi_{n}=\sup_{n}\int\frac{b}{a}z\pi_{n}\leq\frac{b}{a}[C_{4}+c\cdot C_{3}+\ell]<\infty.
\end{equation}

To prove (ii), notice that $(\lambda-\lambda_{n})+\lambda_{n}\log(\lambda_{n}/\lambda)\geq 0$. That is because
$x-1-\log x\geq 0$ for any $x>0$ and hence
\begin{equation}
\lambda-\hat{\lambda}+\hat{\lambda}\log(\hat{\lambda}/\lambda)=\hat{\lambda}\left[(\lambda/\hat{\lambda})-1-\log(\lambda/\hat{\lambda})\right]\geq 0.
\end{equation}
Notice that
\begin{align}
\lim_{\ell\rightarrow\infty}\sup_{n}\int_{z\geq\ell}\lambda_{n}\pi_{n}&\leq\lim_{\ell\rightarrow\infty}
\sup_{n}\int_{\lambda_{n}<\sqrt{\lambda z},z\geq\ell}\lambda_{n}\pi_{n}
\\
&+\lim_{\ell\rightarrow\infty}\sup_{n}\int_{\lambda_{n}\geq\sqrt{\lambda z},z\geq\ell}\lambda_{n}\pi_{n}.\nonumber
\end{align}
For the first term, since $\sup_{n}\int z\pi_{n}<\infty$ and $\lim_{z\rightarrow\infty}\frac{\lambda(z)}{z}=0$,
\begin{equation}
\lim_{\ell\rightarrow\infty}\sup_{n}\int_{\lambda_{n}<\sqrt{\lambda z},z\geq\ell}\lambda_{n}\pi_{n}
\leq\lim_{\ell\rightarrow\infty}\sup_{n}\int_{z\geq\ell}\sqrt{\lambda z}\pi_{n}=0.
\end{equation}
For the second term, since $\limsup_{z\rightarrow\infty}\frac{\lambda(z)}{z}=0$,
\begin{align}
&\lim_{\ell\rightarrow\infty}\sup_{n}\int_{\lambda_{n}\geq\sqrt{\lambda z},z\geq\ell}\lambda_{n}\pi_{n}
\\
&\leq\lim_{\ell\rightarrow\infty}\sup_{n}\hat{H}(\lambda_{n},\pi_{n})\sup_{\lambda_{n}\geq\sqrt{\lambda  z}, z\geq\ell}
\frac{\lambda_{n}}{\lambda-\lambda_{n}+\lambda_{n}\log(\lambda_{n}/\lambda)}=0.\nonumber
\end{align}

Therefore, passing to some subsequence if necessary, we have $\lambda_{n}\pi_{n}\rightarrow\gamma_{\infty}$ and $\pi_{n}\rightarrow\pi_{\infty}$ weakly. 
Since we proved that $F$ is upper semicontinuous in the weak topology, the level set is compact in the weak topology. 
Therefore, we can switch the supremum and infimum in \eqref{switch} and get
\begin{align}
&\liminf_{t\rightarrow\infty}\frac{1}{t}\log\mathbb{E}\left[e^{\theta N_{t}}\right]
\\
&\geq\inf_{f\in\mathcal{F}}\sup_{\hat{\pi}:\int z\hat{\pi}<\infty}\sup_{\hat{\lambda}\in L^{1}(\hat{\pi})}
\left\{\int\frac{\theta b}{a}z\hat{\pi}+(\hat{\lambda}-\lambda)\hat{\pi}-\log(\hat{\lambda}/\lambda)\hat{\lambda}\hat{\pi}+\hat{\mathcal{A}}f\hat{\pi}\right\}
\\
&=\inf_{f\in\mathcal{F}}\sup_{\hat{\pi}:\int z\hat{\pi}<\infty}
\int\left[\frac{\theta bz}{a}+\lambda(z)(e^{f(z+a)-f(z)}-1)-bz\frac{\partial f}{\partial z}\right]\hat{\pi}(dz)\label{optimallambda}
\\
&=\inf_{f\in\mathcal{F}}\sup_{z\geq 0}\left[\frac{\theta bz}{a}+\lambda(z)(e^{f(z+a)-f(z)}-1)-bz\frac{\partial f}{\partial z}\right]\label{optimalpi}
\\
&=\inf_{f\in\mathcal{F}}\sup_{z\geq 0}\left[\frac{\theta bze^{f(z)}}{ae^{f(z)}}
+\frac{\lambda(z)}{e^{f(z)}}(e^{f(z+a)}-e^{f(z)})-\frac{bz}{e^{f(z)}}\frac{\partial e^{f(z)}}{\partial z}\right]
\\
&\geq\inf_{u\in\mathcal{U}_{\theta}}\sup_{z\geq 0}\left\{\frac{\mathcal{A}u}{u}+\frac{\theta b}{a}z\right\}.\label{fandu}
\end{align}
We need some justifications. Define $G(\hat{\lambda})=\hat{\lambda}-\log(\hat{\lambda}/\lambda)\hat{\lambda}+\hat{\mathcal{A}}f$. 
The supremum of $G(\hat{\lambda})$ is achieved when $\frac{\partial G}{\partial\hat{\lambda}}=0$ 
which implies $\hat{\lambda}=\lambda e^{f(z+a)-f(z)}$. Notice that for $f\in\mathcal{F}$, 
the optimal $\hat{\lambda}=\lambda e^{f(z+a)-f(z)}$ satisfies $\int\hat{\lambda}\hat{\pi}<\infty$ 
since $\int\lambda\hat{\pi}<\infty$ and $\int z\hat{\pi}<\infty$. This gives us \eqref{optimallambda}.
Next, let us explain \eqref{optimalpi}. For any probability measure $\hat{\pi}$, 
\begin{align}
&\int\left[\frac{\theta bz}{a}+\lambda(z)(e^{f(z+a)-f(z)}-1)-bz\frac{\partial f}{\partial z}\right]\hat{\pi}(dz)
\\
&\leq\sup_{z\geq 0}\left[\frac{\theta bz}{a}+\lambda(z)(e^{f(z+a)-f(z)}-1)-bz\frac{\partial f}{\partial z}\right],
\nonumber
\end{align}
which implies the right hand side of \eqref{optimallambda} is less or equal to the right hand side of \eqref{optimalpi}. To prove 
the other direction.
For any $f=Kz+g+L\in\mathcal{F}$, we have
\begin{align}
&\frac{\theta bz}{a}+\lambda(z)(e^{f(z+a)-f(z)}-1)-bz\frac{\partial f}{\partial z}
\\
&=\left(\frac{\theta b}{a}-Kb\right)z+\lambda(z)(e^{Ka+g(z+a)-g(z)}-1)-bz\frac{\partial g}{\partial z},\nonumber
\end{align}
which is continuous in $z$ and also bounded on $z\in[0,\infty)$ since $g$ is $C^{1}$ with compact support
and $K>\frac{\theta}{a}$ and $\lim_{z\rightarrow\infty}\frac{\lambda(z)}{z}=0$. Hence
there exists some $z^{\ast}\geq 0$ such that
\begin{align}
&\frac{\theta bz}{a}+\lambda(z)(e^{f(z+a)-f(z)}-1)-bz\frac{\partial f}{\partial z}
\\
&=\frac{\theta bz^{\ast}}{a}+\lambda(z^{\ast})(e^{f(z^{\ast}+a)-f(z^{\ast})}-1)-bz^{\ast}\frac{\partial f}{\partial z}\bigg|_{z=z^{\ast}}.\nonumber
\end{align}
Take a sequence of probability measures $\hat{\pi}_{n}$ such that it has probability density function
$n$ if $z\in[z^{\ast}-\frac{1}{2n},z^{\ast}+\frac{1}{2n}]$ and $0$ otherwise. Then, for every $n$, $\int z\hat{\pi}_{n}(dz)<\infty$.
Therefore, we have
\begin{align}
&\lim_{n\rightarrow\infty}\int\left[\frac{\theta bz}{a}+\lambda(z)(e^{f(z+a)-f(z)}-1)-bz\frac{\partial f}{\partial z}\right]\hat{\pi}_{n}(dz)
\\
&=\lim_{n\rightarrow\infty}n\int_{z^{\ast}-\frac{1}{2n}}^{z^{\ast}+\frac{1}{2n}}
\left[\frac{\theta bz}{a}+\lambda(z)(e^{f(z+a)-f(z)}-1)-bz\frac{\partial f}{\partial z}\right]dz\nonumber
\\
&=\frac{\theta bz^{\ast}}{a}+\lambda(z^{\ast})(e^{f(z^{\ast}+a)-f(z^{\ast})}-1)-bz^{\ast}\frac{\partial f}{\partial z}\bigg|_{z=z^{\ast}}
\nonumber
\\
&=\sup_{z\geq 0}\left[\frac{\theta bz}{a}+\lambda(z)(e^{f(z+a)-f(z)}-1)-bz\frac{\partial f}{\partial z}\right].\nonumber
\end{align}
We conclude that the right hand side of \eqref{optimallambda} is greater or equal to
the right hand side of \eqref{optimalpi}.

Notice that for any $f=Kz+g+L\in\mathcal{F}$,
\begin{align}
&\frac{\theta bz}{a}+\lambda(z)(e^{f(z+a)-f(z)}-1)-bz\frac{\partial f}{\partial z}
\\
&=\frac{b(\theta-Ka)}{a}z+\lambda(z)(e^{Ka+g(z+a)-g(z)}-1)-bz\frac{\partial g}{\partial z},\nonumber
\end{align}
whose supremum is achieved at some finite $z^{\ast}>0$ since $\lim_{z\rightarrow\infty}\frac{\lambda(z)}{z}=0$, 
$K>\frac{\theta}{a}$ and $g\in C^{1}$ with compact support. Hence $\int z\hat{\pi}<\infty$ is satisified for the optimal $\hat{\pi}$. 
This gives us \eqref{optimalpi}. Finally, for any $f\in\mathcal{F}$, $u=e^{f}\in\mathcal{U}_{\theta}$, which implies \eqref{fandu}. 
\end{proof}

\begin{lemma}\label{thetafinite}
Assume $\lim_{z\rightarrow\infty}\frac{\lambda(z)}{z}=0$, we have $\mathbb{E}[e^{\theta N_{t}}]<\infty$
for any $\theta\in\mathbb{R}$.
\end{lemma}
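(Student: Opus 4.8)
The plan is to dispose of $\theta\le 0$ trivially and to handle $\theta>0$ by a Dynkin--Gronwall estimate on the Markov process $Z_t$, in the same spirit as the upper-bound computation inside the proof of Theorem \ref{mainlimit}.

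If $\theta\le 0$, then $N_t\ge 0$ forces $e^{\theta N_t}\le 1$, so $\mathbb{E}[e^{\theta N_t}]\le 1<\infty$. Assume now $\theta>0$. Since $\lambda(\cdot)$ is continuous and $\lim_{z\to\infty}\lambda(z)/z=0$, for every $\epsilon>0$ there is a finite constant $C_\epsilon$ with $\lambda(z)\le\epsilon z+C_\epsilon$ for all $z\ge 0$; this is the only place the sublinearity hypothesis is used, and it is precisely what lets $\epsilon$ be chosen arbitrarily small, hence lets the argument cover all $\theta$ rather than just a threshold range as in the linear case.

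Recall $Z_0=0$, $Z_t\ge 0$, and $N_t=\frac1a Z_t+\frac ba\int_0^t Z_s\,ds$, so $e^{\theta N_t}=\exp\{\frac\theta a Z_t+\frac{\theta b}{a}\int_0^t Z_s\,ds\}$. Fix any $K>\theta/a$, put $u(z)=e^{Kz}$ and $V(z)=\frac{\theta b}{a}z$, and recall the generator $\mathcal{A}f(z)=-bzf'(z)+\lambda(z)[f(z+a)-f(z)]$ of $Z$. A direct computation gives
\[
\mathcal{A}u(z)+V(z)u(z)=e^{Kz}\Big[\big(\tfrac{\theta b}{a}-bK\big)z+\lambda(z)(e^{Ka}-1)\Big].
\]
Because $K>\theta/a$, we may choose $\epsilon$ small enough that $\epsilon(e^{Ka}-1)\le bK-\frac{\theta b}{a}$; then the bracket is at most $C_\epsilon(e^{Ka}-1)=:M<\infty$ for every $z\ge 0$, i.e. $\mathcal{A}u+Vu\le Mu$ pointwise.

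Now apply Dynkin's formula to $Z$ with the Feynman--Kac weight $e^{\int_0^t V(Z_s)ds}$ --- localizing at the stopping times $T_n=\inf\{t:Z_t\ge n\}$ to accommodate the unboundedness of $u$ and $V$, and letting $n\to\infty$ by monotone convergence, which is legitimate since $Z\ge 0$ makes all the integrands nonnegative --- to obtain
\begin{align*}
\mathbb{E}\!\left[u(Z_t)e^{\int_0^t V(Z_s)ds}\right]
&=u(0)+\int_0^t\mathbb{E}\!\left[\big(\mathcal{A}u(Z_s)+V(Z_s)u(Z_s)\big)e^{\int_0^s V(Z_v)dv}\right]ds\\
&\le u(0)+M\int_0^t\mathbb{E}\!\left[u(Z_s)e^{\int_0^s V(Z_v)dv}\right]ds .
\end{align*}
Gronwall's inequality then gives $\mathbb{E}[u(Z_t)e^{\int_0^t V(Z_s)ds}]\le u(0)e^{Mt}=e^{Mt}$, and since $K>\theta/a$ together with $Z_t\ge 0$ yields $u(Z_t)\ge e^{\frac\theta a Z_t}$ and $\int_0^t V(Z_s)ds=\frac{\theta b}{a}\int_0^t Z_s\,ds$, we conclude
\[
\mathbb{E}[e^{\theta N_t}]=\mathbb{E}\!\left[e^{\frac\theta a Z_t+\frac{\theta b}{a}\int_0^t Z_s\,ds}\right]\le \mathbb{E}\!\left[u(Z_t)e^{\int_0^t V(Z_s)ds}\right]\le e^{Mt}<\infty .
\]
The only step that needs genuine care is the localization in the Dynkin formula --- verifying integrability of the stopped processes and the passage to the limit in $n$ --- and this goes through because $V(Z_s)\ge 0$ and every quantity involved is nonnegative, so the limits are monotone.
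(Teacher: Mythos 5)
Your proof is correct, but it takes a genuinely different route from the paper's. The paper disposes of the lemma with a single exponential-martingale estimate that never touches the Markovian structure: since $\exp\{\gamma N_{t}-\int_{0}^{t}(e^{\gamma}-1)\lambda(Z_{s})\,ds\}$ has expectation one, the bounds $\lambda(z)\le C_{\epsilon}+\epsilon z$ and $\int_{0}^{t}Z_{s}\,ds\le\Vert h\Vert_{L^{1}}N_{t}$ give $1\ge\mathbb{E}[e^{(\gamma-(e^{\gamma}-1)\epsilon\Vert h\Vert_{L^{1}})N_{t}-(e^{\gamma}-1)C_{\epsilon}t}]$, and choosing $\gamma>\theta$ and then $\epsilon$ small yields $\mathbb{E}[e^{\theta N_{t}}]\le e^{(e^{\gamma}-1)C_{\epsilon}t}$. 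That argument is shorter and works for an arbitrary integrable exciting function $h$, not just $h(t)=ae^{-bt}$. Your argument instead reruns the Feynman--Kac/Dynkin--Gronwall machinery of Theorem \ref{mainlimit} with $u(z)=e^{Kz}$, $K>\theta/a$; it is tied to the exponential (Markovian) case, but it has the virtue of showing that the finiteness claim is already contained in the upper-bound computation of Theorem \ref{mainlimit}, so no separate lemma is logically needed there, and it makes explicit the localization that the paper glosses over. Both proofs exploit sublinearity in exactly the same way, namely that $\epsilon$ in $\lambda(z)\le\epsilon z+C_{\epsilon}$ can be taken arbitrarily small, which is what lets $\theta$ range over all of $\mathbb{R}$ rather than a bounded interval as in the linear case. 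One small imprecision: when you remove the localization at $T_{n}=\inf\{t:Z_{t}\ge n\}$, the quantities $u(Z_{t\wedge T_{n}})e^{\int_{0}^{t\wedge T_{n}}V(Z_{s})ds}$ are not monotone in $n$ (the stopped position $Z_{t\wedge T_{n}}$ is not monotone), so monotone convergence is not the right citation; the clean way is to get the uniform bound $\mathbb{E}[u(Z_{t\wedge T_{n}})e^{\int_{0}^{t\wedge T_{n}}V\,ds}]\le e^{Mt}$ from Gronwall at the stopped level and then apply Fatou's lemma as $n\to\infty$, using $T_{n}\uparrow\infty$ a.s. (non-explosion holds here since sublinearity forces $\sum_{n}1/\lambda(n)=\infty$). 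This is a cosmetic fix, not a gap.
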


\begin{proof}
Observe that for any $\gamma\in\mathbb{R}$,
\begin{equation}
\exp\left\{\gamma N_{t}-\int_{0}^{t}(e^{\gamma}-1)\lambda(Z_{s})ds\right\}
\end{equation}
is a martinagle. Since $\lim_{z\rightarrow\infty}\frac{\lambda(z)}{z}=0$, for any $\epsilon>0$,
there exists a constant $C_{\epsilon}>0$ such that $\lambda(z)\leq C_{\epsilon}+\epsilon z$ for any $z\geq 0$. 
Also, 
\begin{align}
\int_{0}^{t}Z_{s}ds&=\int_{0}^{t}\int_{0}^{s}h(s-u)N(du)ds
\\
&=\int_{0}^{t}\left[\int_{u}^{t}h(s-u)ds\right]N(du)\nonumber
\\
&\leq\int_{0}^{t}\left[\int_{u}^{\infty}h(s-u)ds\right]N(du)=\Vert h\Vert_{L^{1}}N_{t}.\nonumber
\end{align}
Therefore, for any $\gamma>0$,
\begin{align}
1&=\mathbb{E}\left[e^{\gamma N_{t}-\int_{0}^{t}(e^{\gamma}-1)\lambda(Z_{s})ds}\right]
\\
&\geq\mathbb{E}\left[e^{\gamma N_{t}-(e^{\gamma}-1)\int_{0}^{t}(C_{\epsilon}+\epsilon Z_{s})ds}\right]\nonumber
\\
&\geq\mathbb{E}\left[e^{\gamma N_{t}-(e^{\gamma}-1)C_{\epsilon}t-(e^{\gamma}-1)\epsilon\Vert h\Vert_{L^{1}}N_{t}}\right].\nonumber
\end{align}
For any $\theta>0$, choose $\gamma>\theta$ and $\epsilon$ small enough so that $\gamma-(e^{\gamma}-1)\epsilon\Vert h\Vert_{L^{1}}\geq\theta$.
Then,
\begin{equation}
\mathbb{E}\left[e^{\theta N_{t}}\right]
\leq e^{(e^{\gamma}-1)C_{\epsilon}t}<\infty.
\end{equation}
\end{proof}

Now, we are ready to prove the large deviations result.

\begin{theorem}
Assume $\lim_{z\rightarrow\infty}\frac{\lambda(z)}{z}=0$ and that $\lambda(\cdot)$ is continuous and bounded below by some positive constant. 
Then, $(\frac{N_{t}}{t}\in\cdot)$ satisfies the large deviation principle with the rate function $I(\cdot)$ as the Fenchel-Legendre 
transform of $\Gamma(\cdot)$,
\begin{equation}
I(x)=\sup_{\theta\in\mathbb{R}}\left\{\theta x-\Gamma(\theta)\right\}.
\end{equation}
\end{theorem}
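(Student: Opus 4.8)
The plan is to deduce the large deviation principle from the convergence of logarithmic moment generating functions in Theorem \ref{mainlimit} by the G\"{a}rtner--Ellis theorem (see \cite{Dembo}). First I would record the analytic properties of $\Gamma$. By Theorem \ref{mainlimit}, the limit $\Gamma(\theta)=\lim_{t\to\infty}\frac1t\log\mathbb{E}[e^{\theta N_{t}}]$ exists for every $\theta\in\mathbb{R}$, and by Lemma \ref{thetafinite} together with the explicit bound $\mathbb{E}[e^{\theta N_{t}}]\le e^{(e^{\gamma}-1)C_{\epsilon}t}$ obtained in its proof we get $\Gamma(\theta)<\infty$ for all $\theta$; hence the effective domain of $\Gamma$ is all of $\mathbb{R}$ and in particular $0$ is an interior point. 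Each map $\theta\mapsto\frac1t\log\mathbb{E}[e^{\theta N_{t}}]$ is convex by H\"{o}lder's inequality, so its pointwise limit $\Gamma$ is convex, and being finite-valued on $\mathbb{R}$ it is continuous, hence lower semicontinuous; moreover $\Gamma(0)=0$, so the Fenchel--Legendre transform $I=\Gamma^{*}$ automatically satisfies $I\ge 0$, is not identically $+\infty$, and is itself convex and lower semicontinuous, i.e. a good convex rate function (coercivity following from $\Gamma(\theta)/|\theta|$ not tending to a constant, which is immediate since $\Gamma$ is defined on all of $\mathbb{R}$ and dominates, say, $\theta\mapsto\theta\,\mathbb{E}^{\pi}[\lambda]$ by Jensen, with $\pi$ the invariant measure from Lemma \ref{ergodiclemma}).

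With these properties in hand, the large deviation upper bound $\limsup_{t\to\infty}\frac1t\log P(N_{t}/t\in C)\le-\inf_{x\in C}I(x)$ for closed $C$ is immediate from the first part of the G\"{a}rtner--Ellis theorem (no extra input beyond finiteness of $\Gamma$ at the origin, convexity and lower semicontinuity), and one may also note exponential tightness of $(N_{t}/t)_{t>0}$ directly: $P(N_{t}\ge Mt)\le e^{-\theta Mt}\mathbb{E}[e^{\theta N_{t}}]\le e^{t(-\theta M+(e^{\gamma}-1)C_{\epsilon})+o(t)}$ for a fixed admissible $\theta>0$, which is $\le e^{-Lt+o(t)}$ once $M$ is large. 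The substantive part is the lower bound, which I would obtain by verifying that $\Gamma$ is \emph{essentially smooth}. Since $\mathrm{dom}\,\Gamma=\mathbb{R}$ has empty boundary, the steepness requirement is vacuous, so the whole issue reduces to showing that $\Gamma$ is differentiable at every $\theta\in\mathbb{R}$. I expect this to be the main obstacle.

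To prove differentiability I would exploit the variational representation from Theorem \ref{mainlimit}: writing $\hat H(\hat\lambda,\hat\pi)=\int[(\lambda-\hat\lambda)+\log(\hat\lambda/\lambda)\hat\lambda]\,\hat\pi$, that theorem gives $\Gamma(\theta)=\sup_{(\hat\lambda,\hat\pi)\in\mathcal{Q}_{e}}\bigl[\theta\,\tfrac{b}{a}\!\int z\,\hat\pi(dz)-\hat H(\hat\lambda,\hat\pi)\bigr]$, exhibiting $\Gamma$ as a supremum of affine functions of $\theta$ whose slopes are precisely the attainable normalized mean intensities $\tfrac{b}{a}\int z\,\hat\pi(dz)\ge 0$. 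Consequently the subdifferential $\partial\Gamma(\theta_{0})$ is the closed convex hull of the slopes of the supporting lines active at $\theta_{0}$, so it suffices to show that any two (near-)maximizers at $\theta_{0}$ share the same value of $\tfrac{b}{a}\int z\,\hat\pi(dz)$; here the compactness of the level sets established in the proof of Theorem \ref{mainlimit} guarantees that a maximizer exists, and the strict convexity of $\hat H$ along the relevant direction --- a consequence of the strict convexity of $x\mapsto x\log x$ appearing in $\hat H$ once one reduces, via Lemma \ref{variational}-type arguments, to the effective one-parameter family indexed by the slope --- forces uniqueness of the slope, hence $\partial\Gamma(\theta_{0})$ is a singleton and $\Gamma$ is differentiable. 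An alternative I would keep in reserve is to identify $\Gamma(\theta)$ with the logarithm of the principal eigenvalue of the Feynman--Kac tilted generator $f\mapsto\mathcal{A}f+\tfrac{\theta b}{a}zf$ and invoke analytic perturbation theory to conclude real-analyticity of $\Gamma$; in either approach the decisive inputs are the sublinearity $\lambda(z)/z\to 0$ and the positive lower bound on $\lambda$, the same features driving the compactness and ergodicity used in Lemma \ref{ergodiclemma} and Theorem \ref{mainlimit}. Granting differentiability, $\Gamma$ is essentially smooth and lower semicontinuous, so the G\"{a}rtner--Ellis theorem yields the full large deviation principle for $(N_{t}/t\in\cdot)$ with rate function $I(x)=\sup_{\theta\in\mathbb{R}}\{\theta x-\Gamma(\theta)\}$, as asserted.
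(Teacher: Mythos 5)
Your upper bound is exactly the paper's: finiteness of $\Gamma$ on all of $\mathbb{R}$ (Lemma \ref{Gammafinite}, or the bound inside Lemma \ref{thetafinite}) plus the G\"{a}rtner--Ellis upper bound. The divergence is in the lower bound, and there your plan has a genuine gap: you reduce everything to \emph{essential smoothness}, i.e.\ differentiability of $\Gamma$ at every $\theta$, and the argument you sketch does not deliver it. The entropy term is an integral of $(u,v)\mapsto u\log(u/v)-u+v$, which is jointly convex and positively homogeneous of degree one, hence \emph{not} strictly convex: it is affine along rays and along mixtures of mutually singular measures. Indeed, in the mixture construction of Lemma \ref{convexity} the Jensen step is an equality whenever $\hat{\pi}_{1}\perp\hat{\pi}_{2}$, so nothing in the paper (or in your sketch) excludes affine stretches of $\Lambda$, which are exactly corners of $\Gamma=\Lambda^{*}$. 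Moreover, your Danskin-type identification of $\partial\Gamma(\theta_{0})$ with the slopes of active maximizers presupposes that the supremum over $\mathcal{Q}_{e}$ is attained; the level-set compactness proved for Theorem \ref{mainlimit} lives in the weak topology on pairs $(\hat{\lambda}\hat{\pi},\hat{\pi})$ and a weak limit need not be an admissible ergodic pair (absolute continuity of the limiting intensity measure with respect to the limiting $\hat{\pi}$ can fail), so existence of a maximizer in $\mathcal{Q}_{e}$ is itself unproved. The fallback via a principal eigenvalue and analytic perturbation theory is likewise unsubstantiated: for this non-compact, non-reversible jump generator no spectral gap is available from anything established here. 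Without differentiability, G\"{a}rtner--Ellis only gives the lower bound at exposed points, which is weaker than the stated LDP.

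The paper avoids this issue entirely: for the lower bound it tilts directly to a pair $(\hat{\lambda},\hat{\pi})\in\mathcal{Q}_{e}^{x}$, applies Jensen's inequality and the ergodic theorem to get
\begin{equation}
\liminf_{t\rightarrow\infty}\frac{1}{t}\log\mathbb{P}\left(\frac{N_{t}}{t}\in B_{\epsilon}(x)\right)\geq-\Lambda(x),
\end{equation}
with $\Lambda$ as in \eqref{Lambdaofx}, and then identifies $\Lambda$ with $I=\Gamma^{*}$ by convex duality: the variational formula gives $\Gamma(\theta)=\sup_{x}\{\theta x-\Lambda(x)\}$, and convexity of $\Lambda$ (Lemma \ref{convexity}) closes the loop. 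No smoothness of $\Gamma$ is ever needed. So either adopt that tilting-plus-duality route, or supply an actual proof of differentiability of $\Gamma$ (equivalently strict convexity of $\Lambda$ on its effective domain), which is a substantive additional task, not a corollary of what is already proved.
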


\begin{proof}
If $\limsup_{z\rightarrow\infty}\frac{\lambda(z)}{z}=0$, then the forthcoming Lemma \ref{Gammafinite} implies that $\Gamma(\theta)<\infty$ for any $\theta$. 
Thus, by G\"{a}rtner-Ellis Theorem, we have the upper bound. For G\"{a}rtner-Ellis Theorem and a general theory of large deviations, 
see for example \cite{Dembo}. To prove the lower bound, it suffices to show that for any $x>0$, $\epsilon>0$, we have
\begin{equation}
\liminf_{t\rightarrow\infty}\frac{1}{t}\log\mathbb{P}\left(\frac{N_{t}}{t}\in B_{\epsilon}(x)\right)\geq-\sup_{\theta}\{\theta x-\Gamma(\theta)\},
\end{equation}
where $B_{\epsilon}(x)$ denotes the open ball centered at $x$ with radius $\epsilon$. Let $\hat{\mathbb{P}}$ denote the 
tilted probability measure with rate $\hat{\lambda}$ defined in Theorem \ref{mainlimit}. By Jensen's inequality,
\begin{align}
&\frac{1}{t}\log\mathbb{P}\left(\frac{N_{t}}{t}\in B_{\epsilon}(x)\right)
\\
&=\frac{1}{t}\log\int_{\frac{N_{t}}{t}\in B_{\epsilon}(x)}\frac{d\mathbb{P}}{d\hat{\mathbb{P}}}d\hat{\mathbb{P}}\nonumber
\\
&=\frac{1}{t}\log\hat{\mathbb{P}}\left(\frac{N_{t}}{t}\in B_{\epsilon}(x)\right)
+\frac{1}{t}\log\left[\frac{1}{\hat{\mathbb{P}}\left(\frac{N_{t}}{t}\in B_{\epsilon}(x)\right)}
\int_{\frac{N_{t}}{t}\in B_{\epsilon}(x)}\frac{d\mathbb{P}}{d\hat{\mathbb{P}}}d\hat{\mathbb{P}}\right]\nonumber
\\
&\geq\frac{1}{t}\log\hat{\mathbb{P}}\left(\frac{N_{t}}{t}\in B_{\epsilon}(x)\right)-\frac{1}{\hat{\mathbb{P}}\left(\frac{N_{t}}{t}\in B_{\epsilon}(x)\right)}
\cdot\frac{1}{t}\hat{\mathbb{E}}\left[1_{\frac{N_{t}}{t}\in B_{\epsilon}(x)}\log\frac{d\hat{\mathbb{P}}}{d\mathbb{P}}\right].\nonumber
\end{align}
By the ergodic theorem, 
\begin{equation}
\liminf_{t\rightarrow\infty}\frac{1}{t}\log\mathbb{P}\left(\frac{N_{t}}{t}\in B_{\epsilon}(x)\right)
\geq -\Lambda(x),
\end{equation}
where
\begin{equation}\label{Lambdaofx}
\Lambda(x)=\inf_{(\hat{\lambda},\hat{\pi})\in\mathcal{Q}_{e}^{x}}\left\{\int(\lambda-\hat{\lambda})\hat{\pi}
+\int\log(\hat{\lambda}/\lambda)\hat{\lambda}\hat{\pi}\right\},
\end{equation}
and
\begin{equation}
\mathcal{Q}_{e}^{x}=\left\{(\hat{\lambda},\hat{\pi})\in\mathcal{Q}_{e}:\int\hat{\lambda}(z)\hat{\pi}(dz)=x\right\}.
\end{equation}
Notice that
\begin{align}
\Gamma(\theta)&=\sup_{(\hat{\lambda},\hat{\pi})\in\mathcal{Q}_{e}}\left\{\int\theta\hat{\lambda}\hat{\pi}+\int(\hat{\lambda}-\lambda)\hat{\pi}
-\int\log(\hat{\lambda}/\lambda)\hat{\lambda}\hat{\pi}\right\}
\\
&=\sup_{x}\sup_{(\hat{\lambda},\hat{\pi})\in\mathcal{Q}_{e}^{x}}\left\{\int\theta\hat{\lambda}\hat{\pi}+\int(\hat{\lambda}-\lambda)\hat{\pi}
-\int\log(\hat{\lambda}/\lambda)\hat{\lambda}\hat{\pi}\right\}\nonumber
\\
&=\sup_{x}\sup_{(\hat{\lambda},\hat{\pi})\in\mathcal{Q}_{e}^{x}}\left\{\int\frac{\theta b}{a}z\hat{\pi}(dz)+\int(\hat{\lambda}-\lambda)\hat{\pi}
-\int\log(\hat{\lambda}/\lambda)\hat{\lambda}\hat{\pi}\right\}\nonumber
\\
&=\sup_{x}\{\theta x-\Lambda(x)\}.\nonumber
\end{align}
We prove in Lemma \ref{convexity} that $\Lambda(x)$ is convex in $x$, identify it as the convex conjugate of $\Gamma(\theta)$ 
and thus conclude the proof.
\end{proof}

\begin{lemma}\label{convexity}
$\Lambda(x)$ in \eqref{Lambdaofx} is convex in $x$.
\end{lemma}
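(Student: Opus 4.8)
The plan is to prove convexity directly, by gluing near-minimizers. Fix $x_{1},x_{2}\in(0,\infty)$ and $\alpha\in(0,1)$, put $x=\alpha x_{1}+(1-\alpha)x_{2}$, and assume $\Lambda(x_{1}),\Lambda(x_{2})<\infty$ (otherwise the inequality to be proved is trivial). For $\varepsilon>0$ choose $(\hat{\lambda}_{i},\hat{\pi}_{i})\in\mathcal{Q}_{e}^{x_{i}}$ with $\hat{H}(\hat{\lambda}_{i},\hat{\pi}_{i})\le\Lambda(x_{i})+\varepsilon$, where $\hat{H}(\hat{\lambda},\hat{\pi})=\int[(\lambda-\hat{\lambda})+\log(\hat{\lambda}/\lambda)\hat{\lambda}]\hat{\pi}$, and write $\gamma_{i}:=\hat{\lambda}_{i}\hat{\pi}_{i}$ for the corresponding flux measures. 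I then form the convex combinations
\begin{equation}
\hat{\pi}:=\alpha\hat{\pi}_{1}+(1-\alpha)\hat{\pi}_{2},\qquad\gamma:=\alpha\gamma_{1}+(1-\alpha)\gamma_{2},\qquad\hat{\lambda}:=\frac{d\gamma}{d\hat{\pi}},
\end{equation}
which is legitimate since $\gamma\ll\hat{\pi}$, with $\hat{\lambda}>0$ $\hat{\pi}$-a.e.; off $\mathrm{supp}\,\hat{\pi}$ one may set $\hat{\lambda}=1$, which affects neither $\hat{H}(\hat{\lambda},\hat{\pi})$ nor the invariance identity below.

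The first step is to check that $(\hat{\lambda},\hat{\pi})\in\mathcal{Q}_{e}^{x}$. Membership in $\mathcal{Q}$ is immediate: $\hat{\pi}$ is a probability measure on $\mathbb{R}^{+}$ with $\int z\,\hat{\pi}<\infty$, and $\int\hat{\lambda}\,\hat{\pi}=\gamma(\mathbb{R}^{+})=\alpha x_{1}+(1-\alpha)x_{2}=x$, so $\hat{\lambda}\in L^{1}(\hat{\pi})$ and the constraint defining $\mathcal{Q}_{e}^{x}$ holds. For invariance, observe that for every $f\in\mathcal{F}$, since $\hat{\lambda}\hat{\pi}=\gamma$,
\begin{equation}
\int\hat{\mathcal{A}}f\,d\hat{\pi}=-b\int zf'(z)\,\hat{\pi}(dz)+\int\bigl(f(z+a)-f(z)\bigr)\,\gamma(dz),
\end{equation}
and the right-hand side is an affine function of the pair $(\gamma,\hat{\pi})$ that equals $\alpha\int\hat{\mathcal{A}}_{\hat{\lambda}_{1}}f\,d\hat{\pi}_{1}+(1-\alpha)\int\hat{\mathcal{A}}_{\hat{\lambda}_{2}}f\,d\hat{\pi}_{2}=0$ by invariance of $\hat{\pi}_{1}$ and $\hat{\pi}_{2}$ (here $\hat{\mathcal{A}}_{\hat{\lambda}_{i}}$ denotes the generator with rate $\hat{\lambda}_{i}$). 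Hence $\hat{\pi}$ is invariant for $\hat{\mathcal{A}}$; uniqueness of this invariant measure, and thus $(\hat{\lambda},\hat{\pi})\in\mathcal{Q}_{e}$, follows from the coupling argument in the proof of Lemma \ref{ergodiclemma}, which uses only positivity of the rate and the explicit decay-plus-jump structure of $Z_{t}$ and so goes through verbatim with $\hat{\lambda}$ in place of $\lambda$.

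Finally I would invoke the joint convexity of $\hat{H}$ already established inside the proof of Theorem \ref{mainlimit}: there it is shown that $\hat{H}(\hat{\lambda},\hat{\pi})=\sup_{f\in C_{b}(\mathbb{R}^{+})}\{\int f\,d\gamma+\int\lambda(1-e^{f})\,d\hat{\pi}\}$, a supremum of functions affine in $(\gamma,\hat{\pi})$, hence convex in $(\gamma,\hat{\pi})$. Since $(\gamma,\hat{\pi})=\alpha(\gamma_{1},\hat{\pi}_{1})+(1-\alpha)(\gamma_{2},\hat{\pi}_{2})$ and $(\hat{\lambda},\hat{\pi})\in\mathcal{Q}_{e}^{x}$ by the previous step,
\begin{equation}
\Lambda(x)\le\hat{H}(\hat{\lambda},\hat{\pi})\le\alpha\hat{H}(\hat{\lambda}_{1},\hat{\pi}_{1})+(1-\alpha)\hat{H}(\hat{\lambda}_{2},\hat{\pi}_{2})\le\alpha\Lambda(x_{1})+(1-\alpha)\Lambda(x_{2})+\varepsilon,
\end{equation}
and letting $\varepsilon\downarrow0$ proves convexity. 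The main obstacle is the verification that the glued pair really lies in $\mathcal{Q}_{e}^{x}$ — in particular that a convex combination of invariant measures of two different tilted generators is invariant for the correctly averaged generator (which works precisely because the stationarity equation is linear in $(\gamma,\hat{\pi})$) and that ergodicity/uniqueness survives the gluing; once that and the joint convexity of $\hat{H}$ are in hand, the rest is a one-line estimate.
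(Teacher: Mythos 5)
Your proposal is correct and follows essentially the same route as the paper: the paper also glues near-minimizers by taking $\hat{\pi}_{3}=\alpha\hat{\pi}_{1}+\beta\hat{\pi}_{2}$ with $\hat{\lambda}_{3}$ defined exactly as your $d\gamma/d\hat{\pi}$, verifies membership in $\mathcal{Q}_{e}^{x}$ via linearity of the stationarity equation in $(\hat{\lambda}\hat{\pi},\hat{\pi})$, and concludes with the joint convexity of $\hat{H}$. The only cosmetic difference is that the paper justifies the last inequality by applying Jensen's inequality to $x\log x$ pointwise, whereas you invoke the supremum-of-affine-functionals representation of $\hat{H}$ from the proof of Theorem \ref{mainlimit}; both are valid and equivalent.
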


\begin{proof}
Define
\begin{equation}
\hat{H}(\hat{\lambda},\hat{\pi})=\int(\lambda-\hat{\lambda})\hat{\pi}+\int\log(\hat{\lambda}/\lambda)\hat{\lambda}\hat{\pi}.
\end{equation}
Then,
\begin{equation}
\Lambda(x)=\inf_{(\hat{\lambda},\hat{\pi})\in\mathcal{Q}_{e}^{x}}\hat{H}(\hat{\lambda},\hat{\pi}).
\end{equation}
We want to prove that $\Lambda(\alpha x_{1}+\beta x_{2})\leq\alpha\Lambda(x_{1})+\beta\Lambda(x_{2})$ 
for any $\alpha,\beta\geq 0$ with $\alpha+\beta=1$. For any $\epsilon>0$, 
we can choose $(\hat{\lambda}_{k},\hat{\pi}_{k})\in\mathcal{Q}_{e}^{x_{k}}$ such that 
$\hat{H}(\hat{\lambda}_{k},\hat{\pi}_{k})\leq\Lambda(x_{k})+\epsilon/2$, for $k=1,2$. Set
\begin{equation}
\hat{\pi}_{3}=\alpha\hat{\pi}_{1}+\beta\hat{\pi}_{2},
\quad\hat{\lambda}_{3}=\frac{d(\alpha\hat{\pi}_{1})}{d(\alpha\hat{\pi}_{1}+\beta\hat{\pi}_{2})}\hat{\lambda}_{1}
+\frac{d(\beta\hat{\pi}_{2})}{d(\alpha\hat{\pi}_{1}+\beta\hat{\pi}_{2})}\hat{\lambda}_{2}.
\end{equation}
Then for any test function $f$,
\begin{equation}
\int\hat{\mathcal{A}}_{3}f\hat{\pi}_{3}=\alpha\int\hat{\mathcal{A}}_{1}f\hat{\pi}_{1}+\beta\int\hat{\mathcal{A}}_{2}f\hat{\pi}_{2}=0,
\end{equation}
which implies $(\hat{\lambda}_{3},\hat{\pi}_{3})\in\mathcal{Q}_{e}$. Furthermore,
\begin{equation}
\int\hat{\lambda}_{3}\hat{\pi}_{3}=\alpha\int\hat{\lambda}_{1}\hat{\pi}_{1}+\beta\int\hat{\lambda}_{2}\hat{\pi}_{2}=\alpha x_{1}+\beta x_{2}.
\end{equation}
Therefore, $(\hat{\lambda}_{3},\hat{\pi}_{3})\in\mathcal{Q}_{e}^{\alpha x_{1}+\beta x_{2}}$. 
Finally, since $x\log x$ is a convex function and if we apply Jensen's inequality, we get
\begin{align}
\hat{H}(\hat{\lambda}_{3},\hat{\pi}_{3})
&=\int\left[(\lambda-\hat{\lambda}_{3}-\hat{\lambda}_{3}\log\lambda)+\hat{\lambda}_{3}\log\hat{\lambda}_{3}\right]\hat{\pi}_{3}
\\
&\leq\int\left[(\lambda-\hat{\lambda}_{3}-\hat{\lambda}_{3}\log\lambda)
+\alpha\frac{d\hat{\pi}_{1}}{d\hat{\pi}_{3}}\hat{\lambda}_{1}\log\hat{\lambda}_{1}
+\beta\frac{d\hat{\pi}_{2}}{d\hat{\pi}_{3}}\hat{\lambda}_{2}\log\hat{\lambda}_{2}\right]\hat{\pi}_{3}\nonumber
\\
&=\alpha\hat{H}(\hat{\lambda}_{1},\hat{\pi}_{1})+\beta\hat{H}(\hat{\lambda}_{2},\hat{\pi}_{2})\nonumber.
\end{align}
Therefore, 
\begin{equation}
\Lambda(\alpha x_{1}+\beta x_{2})\leq\hat{H}(\hat{\lambda}_{3},\hat{\pi}_{3})
\leq\alpha\hat{H}(\hat{\lambda}_{1},\hat{\pi}_{1})+\beta \hat{H}(\hat{\lambda}_{2},\hat{\pi}_{2})\leq\alpha\Lambda(x_{1})+\beta\Lambda(x_{2})+\epsilon.
\end{equation}
\end{proof}

\begin{lemma}\label{Gammafinite}
If $\limsup_{z\rightarrow\infty}\frac{\lambda(z)}{bz}<\frac{1}{a}$, then for any
\begin{equation}
\theta<\log\left(\frac{b}{a\limsup_{z\rightarrow\infty}\frac{\lambda(z)}{z}}\right)-1+\frac{a}{b}\cdot\limsup_{z\rightarrow\infty}\frac{\lambda(z)}{z},
\end{equation}
we have $\Gamma(\theta)<\infty$. If $\limsup_{z\rightarrow\infty}\frac{\lambda(z)}{z}=0$, then $\Gamma(\theta)<\infty$ for any $\theta\in\mathbb{R}$.
\end{lemma}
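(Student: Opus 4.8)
The plan is to bound the variational expression for $\Gamma(\theta)$ from Theorem~\ref{mainlimit} directly, reducing it to a two–variable calculus problem by testing the invariant measures against affine functions. Fix $\theta$ strictly below the asserted threshold and set $\ell:=\limsup_{z\to\infty}\lambda(z)/z$, so that by hypothesis $\ell<b/a$. Let $(\hat{\lambda},\hat{\pi})\in\mathcal{Q}_{e}$ and let $K>\theta/a$ be arbitrary. The function $f(z)=Kz$ satisfies $\int\hat{\mathcal{A}}f\,d\hat{\pi}=0$: although $f$ is unbounded, this is justified exactly as in the proof of Theorem~\ref{mainlimit}, approximating $z$ by $C^{1}$ functions of compact support and passing to the limit by dominated convergence, using $\int z\,\hat{\pi}<\infty$ and $\hat{\lambda}\in L^{1}(\hat{\pi})$ (both built into $\mathcal{Q}$). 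Since $\hat{\mathcal{A}}f(z)=K(a\hat{\lambda}(z)-bz)$, this identity says $\int(a\hat{\lambda}(z)-bz)\,\hat{\pi}(dz)=0$. Adding a multiple of this vanishing quantity into the integrand of $\Gamma$'s formula and then bounding, at each point $z$, the value $s=\hat{\lambda}(z)$ by the maximizer of $s\mapsto(1+Ka)s-\lambda(z)-s\log(s/\lambda(z))$ — attained at $s=\lambda(z)e^{Ka}$ with value $\lambda(z)(e^{Ka}-1)$ — I obtain, for every $K>\theta/a$,
\begin{equation}
\Gamma(\theta)\le\sup_{z\ge0}\Big[\big(\tfrac{\theta}{a}-K\big)bz+\lambda(z)\big(e^{Ka}-1\big)\Big].
\end{equation}
This is the same quantity as the Dynkin/Feynman--Kac upper bound of Theorem~\ref{mainlimit} applied to $u(z)=e^{Kz}\in\mathcal{U}_{\theta}$.

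It remains to choose $K$ making the right-hand side finite. For $\epsilon>0$ pick $C_{\epsilon}$ with $\lambda(z)\le C_{\epsilon}+(\ell+\epsilon)z$ for all $z\ge0$; then
\begin{equation}
\big(\tfrac{\theta}{a}-K\big)bz+\lambda(z)\big(e^{Ka}-1\big)\le C_{\epsilon}\big(e^{Ka}-1\big)+z\Big[(\ell+\epsilon)\big(e^{Ka}-1\big)-b\big(K-\tfrac{\theta}{a}\big)\Big],
\end{equation}
so the supremum is at most $C_{\epsilon}(e^{Ka}-1)<\infty$ once the bracketed coefficient is $\le0$. Setting $t=Ka$, this asks for some $t>\theta$ with $(\ell+\epsilon)(e^{t}-1)\le\frac{b}{a}(t-\theta)$; the function $\psi(t)=\frac{b}{a}(t-\theta)-(\ell+\epsilon)(e^{t}-1)$ is concave with unique maximizer $t^{\ast}=\log\frac{b}{a(\ell+\epsilon)}$ and $\psi(t^{\ast})=\frac{b}{a}\big[\log\frac{b}{a(\ell+\epsilon)}-1+\frac{a(\ell+\epsilon)}{b}-\theta\big]$. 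Because $x\mapsto\log\frac{b}{ax}-1+\frac{ax}{b}$ is continuous and strictly decreasing on $(0,b/a)$ and takes the value of the asserted threshold at $x=\ell$, the strict inequality on $\theta$ gives, for all sufficiently small $\epsilon>0$, both $\psi(t^{\ast})>0$ and (since then $-1+\frac{a(\ell+\epsilon)}{b}<0$) $t^{\ast}>\theta$. Taking $K=t^{\ast}/a>\theta/a$, the coefficient of $z$ above equals $-\psi(t^{\ast})<0$, whence $\Gamma(\theta)\le C_{\epsilon}(e^{t^{\ast}}-1)<\infty$.

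For the final assertion, when $\ell=0$ the threshold expression is $+\infty$, so the argument covers every $\theta\in\mathbb{R}$; concretely, given any $\theta$ choose $K>|\theta|/a$ and observe that $(\ell+\epsilon)(e^{Ka}-1)=\epsilon(e^{Ka}-1)\le b(K-\theta/a)$ for $\epsilon$ small, again making the coefficient of $z$ nonpositive. The only step that is not pure calculus is the use of the unbounded test function $f(z)=Kz$ against $\hat{\pi}$, and I expect that to be the one place where a little care is needed; but it is exactly the approximation argument already carried out in the proof of Theorem~\ref{mainlimit}, so no genuine obstacle remains.
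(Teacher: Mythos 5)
Your proof is correct and follows essentially the same route as the paper's: both reduce the claim to the bound $\Gamma(\theta)\le\sup_{z\ge 0}\bigl[(\tfrac{\theta}{a}-K)bz+\lambda(z)(e^{Ka}-1)\bigr]$ obtained from the exponential test function $e^{Kz}$ (you reach it through the variational formula and the identity $\int\hat{\mathcal{A}}(Kz)\,\hat{\pi}=0$, the paper through the Feynman--Kac upper bound, but as you note these give the same quantity), and then both perform the identical one-variable optimization, your $t^{\ast}=K^{\ast}a=\log\frac{b}{a(\ell+\epsilon)}$ being exactly the paper's optimal $K^{\ast}$. Your $\epsilon$-approximation $\lambda(z)\le C_{\epsilon}+(\ell+\epsilon)z$ together with the continuity of the threshold in $\ell$ is in fact slightly more careful than the paper's direct manipulation of the $\limsup$.
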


\begin{proof}
For $K\geq\frac{\theta}{a}$, we have $e^{Kz}\in\mathcal{U}_{\theta}$ and
\begin{align}
\Gamma(\theta)&\leq\inf_{g\in\mathcal{U}_{\theta}}\sup_{z\geq 0}\frac{\mathcal{A}g(z)
+\frac{\theta b}{a}zg(z)}{g(z)}\leq\sup_{z\geq 0}\left\{\frac{\mathcal{A}e^{Kz}}{e^{Kz}}+\frac{\theta b}{a}z\right\}
\\
&=\sup_{z\geq 0}\left\{-\left(bK-\frac{\theta b}{a}\right)z+\lambda(z)(e^{Ka}-1)\right\}\nonumber.
\end{align}
Define the function
\begin{equation}
F(K)=-K+\limsup_{z\rightarrow\infty}\frac{\lambda(z)}{bz}\cdot(e^{Ka}-1).
\end{equation}
Then $F(0)=0$, $F$ is convex and $F(K)\rightarrow\infty$ as $K\rightarrow\infty$ and its minimum is attained at
\begin{equation}
K^{\ast}=\frac{1}{a}\log\left(\frac{b}{a\limsup_{z\rightarrow\infty}\frac{\lambda(z)}{z}}\right)>0,
\end{equation}
and $F(K^{\ast})<0$. Therefore, $\Gamma(\theta)<\infty$ for any
\begin{align}
\theta&<-a\min_{K>0}\left\{-K+\limsup_{z\rightarrow\infty}\frac{\lambda(z)}{bz}\cdot(e^{Ka}-1)\right\}
\\
&=\log\left(\frac{b}{a\limsup_{z\rightarrow\infty}\frac{\lambda(z)}{z}}\right)-1
+\frac{a}{b}\cdot\limsup_{z\rightarrow\infty}\frac{\lambda(z)}{z}<K^{\ast}a.\nonumber
\end{align}
If $\limsup_{z\rightarrow\infty}\frac{\lambda(z)}{z}=0$, trying $e^{Kz}\in\mathcal{U}_{\theta}$ 
for any $K>\frac{\theta}{a}$, we have $\Gamma(\theta)<\infty$ for any $\theta$.
\end{proof}

\section{Large Deviations for Markovian Nonlinear Hawkes Processes with Sum of Exponentials Exciting Function}

In this section, we consider the Markovian nonlinear Hawkes processes with sum of exponentials exciting functions, 
i.e. $h(t)=\sum_{i=1}^{d}a_{i}e^{-b_{i}t}$.
Let
\begin{equation}
Z_{i}(t)=\sum_{\tau_{j}<t}a_{i}e^{-b_{i}(t-\tau_{j})},\quad 1\leq i\leq d,
\end{equation}
and $Z_{t}=\sum_{i=1}^{d}Z_{i}(t)=\sum_{\tau_{j}<t}h(t-\tau_{j})$, where $\tau_{j}$'s are the arrivals of the Hawkes process
with intensity $\lambda(Z_{t})=\lambda(Z_{1}(t)+\cdots+Z_{d}(t))$ at time $t$. Observe that this is a special
case of the Markovian processes with jumps studied in Section \ref{ErgodicSection} with $\lambda(Z_{1}(t),Z_{2}(t),\cdots,Z_{d}(t))$
taking the form $\lambda(\sum_{i=1}^{d}Z_{i}(t))$. 
It is easy to see that $(Z_{1},\ldots,Z_{d})$ is Markovian with generator
\begin{equation}
\mathcal{A}f=-\sum_{i=1}^{d}b_{i}z_{i}\frac{\partial f}{\partial z_{i}}
+\lambda\left(\sum_{i=1}^{d}z_{i}\right)\cdot[f(z_{1}+a_{1},\ldots,z_{d}+a_{d})-f(z_{1},\ldots,z_{d})].
\end{equation}
Here $b_{i}>0$ for any $1\leq i\leq d$ and $a_{i}$ can be negative. But we restrict ourselves
to the set of $b_{i}$'s and $a_{i}$'s so that $h(t)=\sum_{i=1}^{d}a_{i}e^{-b_{i}t}>0$ for any $t\geq 0$
for the rest of this paper. 
In particular, $h(0)=\sum_{i=1}^{d}a_{i}>0$. If $a_{i}>0$, then $Z_{i}(t)\geq 0$ almost surely; if $a_{i}<0$, then $Z_{i}(t)\leq 0$ almost surely.

\begin{theorem}
Assume $\lim_{z\rightarrow\infty}\frac{\lambda(z)}{z}=0$, $\lambda(\cdot)$ is continuous and bounded below
by a positive constant. Then,
\begin{equation}
\lim_{t\rightarrow\infty}\frac{1}{t}\log\mathbb{E}[e^{\theta N_{t}}]
=\inf_{u\in\mathcal{U}_{\theta}}\sup_{(z_{1},\ldots,z_{d})\in\mathcal{Z}}\left\{\frac{\mathcal{A}u}{u}+\frac{\theta}{\sum_{i=1}^{d}a_{i}}
\sum_{i=1}^{d}b_{i}z_{i}\right\},
\end{equation}
where $\mathcal{Z}=\{(z_{1},\ldots,z_{d}): a_{i}z_{i}\geq 0, 1\leq i\leq d\}$ and
\begin{equation}
\mathcal{U}_{\theta}=\left\{u\in C_{1}(\mathbb{R}^{d},\mathbb{R}^{+}),u=e^{f}, f\in\mathcal{F}\right\},
\end{equation}
where 
\begin{equation}
\mathcal{F}=\left\{f=g+\frac{\theta\sum_{i=1}^{d}z_{i}}{\sum_{i=1}^{d}a_{i}}+L, L\in\mathbb{R}, g\in\mathcal{G}\right\},
\end{equation}
where
\begin{equation}
\mathcal{G}=\left\{\sum_{i=1}^{d}K\epsilon_{i}z_{i}+g, K>0, g\text{ is $C_{1}$ with compact support}\right\}.
\end{equation}
\end{theorem}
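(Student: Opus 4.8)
The plan is to mirror the two–sided argument used for Theorem~\ref{mainlimit} in the single–exponential case, now carried out on the $d$–dimensional state space $\mathcal{Z}$. The key preliminary identity is that, since $dZ_i(t)=-b_iZ_i(t)\,dt+a_i\,dN_t$ for $1\le i\le d$, summing over $i$ and dividing by $h(0)=\sum_{i=1}^d a_i>0$ gives
\begin{equation}
N_t=\frac{1}{\sum_{i=1}^d a_i}\sum_{i=1}^d Z_i(t)+\frac{1}{\sum_{i=1}^d a_i}\int_0^t\sum_{i=1}^d b_iZ_i(s)\,ds,
\end{equation}
which is why the term $\frac{\theta}{\sum_{i=1}^d a_i}\sum_{i=1}^d b_iz_i$ appears in the statement; write $V(z):=\frac{1}{\sum_{i=1}^d a_i}\sum_{i=1}^d b_iz_i$. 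First I would record the analogue of Lemma~\ref{thetafinite}, namely $\mathbb{E}[e^{\theta N_t}]<\infty$ for every $\theta\in\mathbb{R}$; the proof is unchanged, using $\int_0^t Z_s\,ds\le\Vert h\Vert_{L^1}N_t$, the exponential martingale $\exp\{\gamma N_t-\int_0^t(e^\gamma-1)\lambda(Z_s)\,ds\}$ and $\lim_{z\to\infty}\lambda(z)/z=0$.

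For the upper bound, for $u\in\mathcal{U}_\theta$ one has $u(z)\ge c_1\exp\{\frac{\theta}{\sum a_i}\sum z_i\}$ on $\mathcal{Z}$ for some $c_1>0$ (because on $\mathcal{Z}$ the term $\sum K\epsilon_iz_i$ in $f$ is nonnegative), so by the identity above $\mathbb{E}[e^{\theta N_t}]\le c_1^{-1}\mathbb{E}[u(Z_t)e^{\int_0^t V(Z_s)\,ds}]$. Dynkin's formula with potential $V$ and Gronwall's inequality then yield $\mathbb{E}[u(Z_t)e^{\int_0^t V(Z_s)\,ds}]\le u(0)e^{Mt}$ with $M:=\sup_{z\in\mathcal{Z}}\frac{\mathcal{A}u(z)+V(z)u(z)}{u(z)}$, the bound being trivial when $M=\infty$, and taking the infimum over $u\in\mathcal{U}_\theta$ gives $\limsup_{t\to\infty}\frac1t\log\mathbb{E}[e^{\theta N_t}]\le\inf_{u\in\mathcal{U}_\theta}\sup_{z\in\mathcal{Z}}\{\frac{\mathcal{A}u}{u}+\theta V\}$; here finiteness of $M$ for the test functions $u=\exp\{K\sum\epsilon_iz_i\}$ uses $\lim_{z\to\infty}\lambda(z)/z=0$.

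For the lower bound I would tilt the intensity from $\lambda(Z_s)$ to $\hat\lambda(Z_s)$ via the Girsanov change of measure (Lipster and Shiryaev~\cite{Lipster}), apply Jensen's inequality exactly as in Theorem~\ref{mainlimit}, and invoke the ergodic theorem (Koralov and Sinai~\cite{Koralov}) together with the existence and uniqueness of the invariant measure $\hat\pi$ from Lemma~\ref{ergodiclemma} — whose uniqueness proof, based on non–mutual–singularity of transition kernels, applies verbatim to the tilted $d$–dimensional process — to get
\begin{equation}
\liminf_{t\to\infty}\frac1t\log\mathbb{E}[e^{\theta N_t}]\ge\sup_{(\hat\lambda,\hat\pi)\in\mathcal{Q}_e}\left\{\int\theta V\,\hat\pi+\int(\hat\lambda-\lambda)\hat\pi-\int\log(\hat\lambda/\lambda)\,\hat\lambda\,\hat\pi\right\}.
\end{equation}
Then, exactly as in the single–exponential proof, Echeverr\'{i}a's theorem (\cite{Echeverria}) and the uniqueness in Lemma~\ref{ergodiclemma} show that $\inf_{f\in\mathcal{F}}\int\hat{\mathcal{A}}f\,\hat\pi$ equals $0$ on $\mathcal{Q}_e$ and $-\infty$ on $\mathcal{Q}\setminus\mathcal{Q}_e$, so the lower bound becomes a supremum over $\mathcal{R}=\{(\hat\lambda\hat\pi,\hat\pi):(\hat\lambda,\hat\pi)\in\mathcal{Q}\}$ and infimum over $f\in\mathcal{F}$ of $F(\hat\lambda\hat\pi,\hat\pi,f)=\int\theta V\,\hat\pi-\hat H(\hat\lambda,\hat\pi)+\int\hat{\mathcal{A}}f\,\hat\pi$.

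Interchanging the supremum and infimum is where the real work lies and is the main obstacle. As before, $F$ is concave and weakly upper semicontinuous in $(\hat\lambda\hat\pi,\hat\pi)$ and affine in $f$, so by the level–set form of Ky Fan's minmax theorem (Fan~\cite{Fan}, Jo\'{o}~\cite{Joo}, Frenk and Kassay~\cite{Frenk}) it suffices to prove that for each $C\in\mathbb{R}$ and $f=g+\frac{\theta}{\sum a_i}\sum z_i+L\in\mathcal{F}$ with $g=\sum K\epsilon_iz_i+\tilde g$, the corresponding level set of pairs $(\hat\lambda\hat\pi,\hat\pi)\in\mathcal{R}$ is compact in the weak topology. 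Using $\lim_{z\to\infty}\lambda(z)/z=0$ and $\inf_z\lambda(z)>0$, estimating the level–set inequality (splitting on $\{\hat\lambda\ge c\sum|z_i|+\ell\}$ and choosing $c,\ell$ appropriately) gives uniform control of $\int\sum|z_i|\,\hat\pi$, of $\int_{\hat\lambda\ge c\sum|z_i|+\ell}\hat\lambda\,\hat\pi$, hence of $\int\hat\lambda\,\hat\pi$ and of $\hat H(\hat\lambda,\hat\pi)$; this yields tightness of $\hat\pi_n$ and, by splitting $\int\hat\lambda_n\hat\pi_n$ on $\{\hat\lambda_n<(\lambda\sum|z_i|)^{1/2}\}$ and its complement, tightness of $\hat\lambda_n\hat\pi_n$, producing a weakly convergent subsequence along which $F$ is upper semicontinuous. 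After the interchange, $\sup_{\hat\pi}$ collapses to a pointwise supremum over $z\in\mathcal{Z}$ and the optimal $\hat\lambda=\lambda(z)e^{f(z+a)-f(z)}$ (which lies in $L^1(\hat\pi)$ because the growth of $f$ is controlled and $\lim_{z\to\infty}\lambda(z)/z=0$) is inserted, giving
\begin{equation}
\inf_{f\in\mathcal{F}}\sup_{z\in\mathcal{Z}}\left\{\theta V(z)+\lambda(z)\big(e^{f(z+a)-f(z)}-1\big)-\sum_{i=1}^d b_iz_i\frac{\partial f}{\partial z_i}\right\}.
\end{equation}
Rewriting in terms of $u=e^f\in\mathcal{U}_\theta$, this equals $\inf_{u\in\mathcal{U}_\theta}\sup_{z\in\mathcal{Z}}\{\frac{\mathcal{A}u}{u}+\theta V\}$, which matches the upper bound; hence the limit exists and equals the asserted expression.
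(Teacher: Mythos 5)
Your proposal is correct and follows essentially the same route as the paper: the same identity expressing $N_{t}$ through $\sum_{i}Z_{i}(t)$ and $\int_{0}^{t}\sum_{i}b_{i}Z_{i}(s)\,ds$, the Feynman--Kac/Dynkin--Gronwall upper bound over $\mathcal{U}_{\theta}$, the Girsanov tilting with Jensen and the ergodic theorem for the lower bound, Echeverr\'{i}a's characterization to convert $\mathcal{Q}_{e}$ into a $\sup$--$\inf$ over $\mathcal{Q}$ and $\mathcal{F}$, and the level-set compactness argument for the minmax interchange (your splitting on $\{\hat{\lambda}\ge c\sum_{i}|z_{i}|+\ell\}$ is equivalent to the paper's $\{\hat{\lambda}\ge\sum_{i}c_{i}z_{i}+\ell\}$ since $\epsilon_{i}z_{i}=|z_{i}|$ on $\mathcal{Z}$). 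No gaps.
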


\begin{proof}
Notice that
\begin{equation}
dZ_{i}(t)=-b_{i}Z_{i}(t)dt+a_{i}dN_{t},\quad 1\leq i\leq d.
\end{equation}
Hence, $a_{i}N_{t}=Z_{i}(t)-Z_{i}(0)+\int_{0}^{t}b_{i}Z_{i}(s)ds$ and
\begin{equation}
\mathbb{E}[e^{\theta N_{t}}]=\mathbb{E}\left[\exp\left\{\frac{\theta\sum_{i=1}^{d}Z_{i}(t)-Z_{i}(0)}{\sum_{i=1}^{d}a_{i}}
+\frac{\theta}{\sum_{i=1}^{d}a_{i}}\int_{0}^{t}\sum_{i=1}^{d}b_{i}Z_{i}(s)ds\right\}\right].
\end{equation}
Following the same arguments in the proof of Theorem \ref{mainlimit}, we obtain the upper bound
\begin{equation}
\limsup_{t\rightarrow\infty}\frac{1}{t}\log\mathbb{E}[e^{\theta N_{t}}]\leq\inf_{u\in\mathcal{U}_{\theta}}
\sup_{(z_{1},\ldots,z_{d})\in\mathcal{Z}}\left\{\frac{\mathcal{A}u}{u}+\frac{\theta}{\sum_{i=1}^{d}a_{i}}\sum_{i=1}^{d}b_{i}z_{i}\right\}.
\end{equation}
As before, we can obtain the lower bound
\begin{align}
&\liminf_{t\rightarrow\infty}\frac{1}{t}\log\mathbb{E}[e^{\theta N_{t}}]
\\
&\geq\sup_{(\hat{\lambda},\hat{\pi})\in\mathcal{Q}_{e}}\int\left[\theta\hat{\lambda}-\lambda+\hat{\lambda}
-\hat{\lambda}\log\left(\hat{\lambda}/\lambda\right)\right]\hat{\pi}(dz_{1},\ldots,dz_{d})\nonumber
\\
&\geq\sup_{(\hat{\lambda},\hat{\pi})\in\mathcal{Q}}\inf_{g\in\mathcal{G}}\int\left[\theta\hat{\lambda}-\lambda
+\hat{\lambda}-\hat{\lambda}\log\left(\hat{\lambda}/\lambda\right)+\hat{\mathcal{A}}g\right]\hat{\pi}\nonumber
\\
&=\sup_{(\hat{\lambda},\hat{\pi})\in\mathcal{Q}}\inf_{f\in\mathcal{F}}
\int\left[\frac{\theta\sum_{i=1}^{d}b_{i}z_{i}}{\sum_{i=1}^{d}a_{i}}-\lambda+\hat{\lambda}-\hat{\lambda}\log\left(\hat{\lambda}/\lambda\right)
+\hat{\mathcal{A}}f\right]\hat{\pi}.\nonumber
\end{align}
The equality in the last line above holds by taking $f=g+L+\frac{\theta\sum_{i=1}^{d}z_{i}}{\sum_{i=1}^{d}a_{i}}\in\mathcal{F}$ for $g\in\mathcal{G}$, where
\begin{equation}
\mathcal{G}=\left\{\sum_{i=1}^{d}K\epsilon_{i}z_{i}+g, K>0, g\text{ is $C_{1}$ with compact support}\right\}.
\end{equation}
Here, $\epsilon_{i}=a_{i}/|a_{i}|$, $1\leq i\leq d$. Define
\begin{equation}
F(\hat{\lambda}\hat{\pi},\hat{\pi},f)=\int\left[\frac{\theta\sum_{i=1}^{d}b_{i}z_{i}}{\sum_{i=1}^{d}a_{i}}+\hat{\mathcal{A}}f\right]\hat{\pi}
-\hat{H}(\hat{\lambda},\hat{\pi}).
\end{equation}
$F$ is linear in $f$ and hence convex in $f$. Also $\hat{H}$ is weakly lower semicontinuous and convex in $(\hat{\lambda}\hat{\pi},\hat{\pi})$. 
Therefore, $F$ is concave in $(\hat{\lambda}\hat{\pi},\hat{\pi})$. 
Furthermore, for any $f=\frac{\theta\sum_{i=1}^{d}z_{i}}{\sum_{i=1}^{d}a_{i}}+\sum_{i=1}^{d}K\epsilon_{i}z_{i}+g+L\in\mathcal{F}$,
\begin{equation}
F(\hat{\lambda}\hat{\pi},\hat{\pi},f)=\int\left[\theta+\sum_{i=1}^{d}K\epsilon_{i}a_{i}\right]\hat{\lambda}\hat{\pi}
-\int\sum_{i=1}^{d}K\epsilon_{i}b_{i}z_{i}\hat{\pi}-\hat{H}(\hat{\lambda},\hat{\pi})+\int\hat{\mathcal{A}}g\hat{\pi}.
\end{equation}
If $\lambda_{n}\pi_{n}\rightarrow\gamma_{\infty}$ and $\pi_{n}\rightarrow\pi_{\infty}$ weakly, 
then, since $g$ is $C_{1}$ with compact support, we have
\begin{equation}
\int\left[\theta+\sum_{i=1}^{d}K\epsilon_{i}a_{i}\right]\lambda_{n}\pi_{n}+\int\hat{\mathcal{A}}g\pi_{n}\rightarrow
\int\left[\theta+\sum_{i=1}^{d}K\epsilon_{i}a_{i}\right]\gamma_{\infty}+\int\hat{\mathcal{A}}g\pi_{\infty}.
\end{equation}
Since $-\sum_{i=1}^{d}K\epsilon_{i}b_{i}z_{i}$ is continuous and nonpositive on $\mathcal{Z}$, we have
\begin{equation}
\limsup_{n\rightarrow\infty}\int\left[-\sum_{i=1}^{d}K\epsilon_{i}b_{i}z_{i}\right]\pi_{n}
\leq\int\left[-\sum_{i=1}^{d}K\epsilon_{i}b_{i}z_{i}\right]\pi_{\infty}.
\end{equation}
Hence, we conclude that $F$ is upper semicontinuous in the weak topology.

In order to apply the minmax theorem, we want to prove the compactness in the weak topology of the level set 
\begin{equation}
\left\{(\hat{\lambda}\hat{\pi},\hat{\pi}):\int\left[-\frac{\theta\sum_{i=1}^{d}b_{i}z_{i}}{\sum_{i=1}^{d}a_{i}}
-\hat{\mathcal{A}}f\right]\hat{\pi}+\hat{H}(\hat{\lambda},\hat{\pi})\leq C\right\}.
\end{equation}
For any $f=\frac{\theta\sum_{i=1}^{d}z_{i}}{\sum_{i=1}^{d}a_{i}}+\sum_{i=1}^{d}K\epsilon_{i}z_{i}+g+L\in\mathcal{F}$,
where $g$ is $C_{1}$ with compact support etc., there exist some $C_{1}, C_{2}>0$ such that
\begin{align}
C_{1}&\geq\hat{H}+\sum_{i=1}^{d}Kb_{i}\epsilon_{i}\int z_{i}\hat{\pi}-C_{2}\int\hat{\lambda}\hat{\pi}
\\
&\geq\int_{\hat{\lambda}\geq\sum_{i=1}^{d}c_{i}z_{i}+\ell}\left[\lambda-\hat{\lambda}
+\hat{\lambda}\log(\hat{\lambda}/\lambda)\right]\hat{\pi}+\sum_{i=1}^{d}Kb_{i}\epsilon_{i}\int z_{i}\hat{\pi}\nonumber
\\
&-C_{2}\int_{\hat{\lambda}\geq\sum_{i=1}^{d}c_{i}z_{i}+\ell}\hat{\lambda}\hat{\pi}-C_{2}\int_{\hat{\lambda}
<\sum_{i=1}^{d}c_{i}z_{i}+\ell}\hat{\lambda}\hat{\pi}\nonumber
\\
&\geq\left[\min_{(z_{1},\ldots,z_{d})\in\mathcal{Z}}\log\frac{c_{1}z_{1}+\cdots+c_{d}z_{d}+\ell}{\lambda(z_{1}+\cdots+z_{d})}-1-C_{2}\right]
\int_{\hat{\lambda}\geq\sum_{i=1}^{d}c_{i}z_{i}+\ell}\hat{\lambda}\hat{\pi}\nonumber
\\
&+\sum_{i=1}^{d}[-c_{i}\cdot C_{2}+Kb_{i}\epsilon_{i}]\int z_{i}\hat{\pi}-\ell C_{2}.\nonumber
\end{align}
If $a_{i}>0$, then $\epsilon_{i}>0$, pick up $c_{i}>0$ such that $-c_{i}\cdot C_{2}+Kb_{i}\epsilon_{i}>0$. 
If $a_{i}<0$, then $\epsilon_{i}<0$, pick up $c_{i}$ such that $-c_{i}\cdot C_{2}+Kb_{i}\epsilon_{i}<0$. 
Finally, choose $\ell$ big enough such that the big bracket above is positive. Then
\begin{equation}
\int|z_{i}|\hat{\pi}\leq C_{3},\quad \int_{\hat{\lambda}\geq\sum_{i=1}^{d}c_{i}z_{i}+\ell}\hat{\lambda}\hat{\pi}\leq C_{4}.
\end{equation}
Hence, $\int\hat{\lambda}\hat{\pi}\leq C_{5}$ and $\hat{H}\leq C_{6}$. 
We can use the similar method as in the proof of Theorem \ref{mainlimit} to show that
\begin{equation}
\lim_{\ell\rightarrow\infty}\sup_{n}\int_{|z_{i}|>\ell}\lambda_{n}\pi_{n}=0,\quad 1\leq i\leq d.
\end{equation}
For any $(\lambda_{n}\pi_{n},\pi_{n})\in\mathcal{R}$, we can find a subsequence that converges in the weak topology by Prokhorov's Theorem.
Therefore,
\begin{align}
&\liminf_{t\rightarrow\infty}\frac{1}{t}\log\mathbb{E}[e^{\theta N_{t}}]
\\
&\geq\sup_{(\hat{\lambda},\hat{\pi})\in\mathcal{Q}}\inf_{f\in\mathcal{F}}
\int\left[\frac{\theta\sum_{i=1}^{d}b_{i}z_{i}}{\sum_{i=1}^{d}a_{i}}-\lambda+\hat{\lambda}-\hat{\lambda}
\log\left(\hat{\lambda}/\lambda\right)+\hat{\mathcal{A}}f\right]\hat{\pi}\nonumber
\\
&=\inf_{f\in\mathcal{F}}\sup_{\hat{\pi}}\sup_{\hat{\lambda}}\int\left[\frac{\theta\sum_{i=1}^{d}b_{i}z_{i}}{\sum_{i=1}^{d}a_{i}}-\lambda
+\hat{\lambda}-\hat{\lambda}\log\left(\hat{\lambda}/\lambda\right)+\hat{\mathcal{A}}f\right]\hat{\pi}\nonumber
\\
&=\inf_{f\in\mathcal{F}}\sup_{(z_{1},\ldots,z_{d})\in\mathcal{Z}}\frac{\theta\sum_{i=1}^{d}b_{i}z_{i}}{\sum_{i=1}^{d}a_{i}}
+\lambda(e^{f(z_{1}+a_{1},\ldots,z_{d}+a_{d})-f(z_{1},\ldots,z_{d})}-1)-\sum_{i=1}^{d}b_{i}z_{i}\frac{\partial f}{\partial z_{i}}\nonumber
\\
&\geq\inf_{u\in\mathcal{U}_{\theta}}\sup_{(z_{1},\ldots,z_{d})\in\mathcal{Z}}\left\{\frac{\mathcal{A}u}{u}
+\frac{\theta}{\sum_{i=1}^{d}a_{i}}\sum_{i=1}^{d}b_{i}z_{i}\right\}.\nonumber
\end{align}
That is because optimizing over $\hat{\lambda}$, we get $\hat{\lambda}=\lambda e^{f(z_{1}+a_{1},\ldots,z_{d}+a_{d})-f(z_{1},\ldots,z_{d})}$ 
and finally for each $f\in\mathcal{F}$, $u=e^{f}\in\mathcal{U}_{\theta}$.
\end{proof}

\begin{theorem}
Assume $\lim_{z\rightarrow\infty}\frac{\lambda(z)}{z}=0$, $\lambda(\cdot)$ is continuous and bounded below by some positive constant. 
Then, $(\frac{N_{t}}{t}\in\cdot)$ satisfies the large deviation principle with the rate function $I(\cdot)$ as the 
Fenchel-Legendre transform of $\Gamma(\cdot)$,
\begin{equation}
I(x)=\sup_{\theta\in\mathbb{R}}\left\{\theta x-\Gamma(\theta)\right\},
\end{equation}
where
\begin{equation}
\Gamma(\theta)=\sup_{(\hat{\lambda},\hat{\pi})\in\mathcal{Q}_{e}}\int\left[\theta\hat{\lambda}-\lambda+\hat{\lambda}
-\hat{\lambda}\log\left(\hat{\lambda}/\lambda\right)\right]\hat{\pi}.
\end{equation}
\end{theorem}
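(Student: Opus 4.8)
The plan is to mirror the proof of the large deviation principle in the exponential case (the theorem just before Lemma \ref{thetafinite}), now using the multidimensional ergodic result Lemma \ref{ergodiclemma} and the minmax machinery already set up for the sum--of--exponentials logarithmic moment generating function limit. First I would record that $\Gamma(\theta)<\infty$ for every $\theta\in\mathbb{R}$; this is the analogue of Lemma \ref{Gammafinite}. Inserting the trial function $u(z_1,\ldots,z_d)=\exp\{K\sum_{i=1}^d\epsilon_i z_i\}\in\mathcal{U}_\theta$ for $K$ large into the variational upper bound
\[
\limsup_{t\to\infty}\frac1t\log\mathbb{E}[e^{\theta N_t}]\le\inf_{u\in\mathcal{U}_\theta}\sup_{(z_1,\ldots,z_d)\in\mathcal{Z}}\left\{\frac{\mathcal{A}u}{u}+\frac{\theta}{\sum_{i=1}^d a_i}\sum_{i=1}^d b_i z_i\right\}
\]
from the previous theorem, and using $\lim_{z\to\infty}\lambda(z)/z=0$ together with $\lambda$ bounded below and continuous, one checks the inner supremum over $\mathcal{Z}$ is finite. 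Since $\frac1t\log\mathbb{E}[e^{\theta N_t}]$ converges to the finite limit $\Gamma(\theta)$ (that theorem, together with the minmax identification of its two expressions), the upper bound in the G\"artner--Ellis theorem yields $\limsup_{t\to\infty}\frac1t\log\mathbb{P}(N_t/t\in C)\le-\inf_{x\in C}I(x)$ for every closed $C$.

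For the lower bound I would fix $x>0$ and $\epsilon>0$, and for $(\hat\lambda,\hat\pi)\in\mathcal{Q}_e$ with $\int\hat\lambda\,\hat\pi=x$ introduce the tilted measure $\hat{\mathbb{P}}$ with intensity $\hat\lambda(Z_1(s),\ldots,Z_d(s))$, defined via the Girsanov density as in \eqref{tiltedP}. Writing $\mathbb{P}(N_t/t\in B_\epsilon(x))=\int_{N_t/t\in B_\epsilon(x)}\frac{d\mathbb{P}}{d\hat{\mathbb{P}}}\,d\hat{\mathbb{P}}$, applying Jensen's inequality, discarding the $\hat{\mathbb{P}}$--martingale part $\int_0^t\log(\hat\lambda/\lambda)(dN_s-\hat\lambda\,ds)$, and invoking the ergodic theorem for $(Z_1,\ldots,Z_d)$ under $\hat{\mathbb{P}}$ (Lemma \ref{ergodiclemma} supplies the unique invariant measure $\hat\pi$, and $\hat{\mathbb{P}}$--a.s. $N_t/t\to\int\hat\lambda\,\hat\pi=x$, so $\hat{\mathbb{P}}(N_t/t\in B_\epsilon(x))\to1$), one obtains
\[
\liminf_{t\to\infty}\frac1t\log\mathbb{P}\!\left(\frac{N_t}{t}\in B_\epsilon(x)\right)\ge-\Lambda(x),\qquad
\Lambda(x):=\inf_{(\hat\lambda,\hat\pi)\in\mathcal{Q}_e^x}\int\left[(\lambda-\hat\lambda)+\hat\lambda\log(\hat\lambda/\lambda)\right]\hat\pi,
\]
where $\mathcal{Q}_e^x=\{(\hat\lambda,\hat\pi)\in\mathcal{Q}_e:\int\hat\lambda\,\hat\pi=x\}$.

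It then remains to identify $\Lambda$ with $I=\Gamma^*$. Splitting the supremum defining $\Gamma(\theta)$ over the value $x=\int\hat\lambda\,\hat\pi$, and using that on $\mathcal{Q}_e$ the invariance identity $\int\hat{\mathcal{A}}f\,\hat\pi=0$ applied to functions approximating $z_i$ (dominated by an affine bound, integrable since $\int|z_i|\,\hat\pi<\infty$) gives $\int\hat\lambda\,\hat\pi=\frac{1}{\sum_{i=1}^d a_i}\int\sum_{i=1}^d b_i z_i\,\hat\pi$, one gets $\Gamma(\theta)=\sup_{x}\{\theta x-\Lambda(x)\}$. I would then prove $\Lambda$ is convex in $x$, exactly as in Lemma \ref{convexity}: given $(\hat\lambda_1,\hat\pi_1)\in\mathcal{Q}_e^{x_1}$, $(\hat\lambda_2,\hat\pi_2)\in\mathcal{Q}_e^{x_2}$ and $\alpha+\beta=1$, set $\hat\pi_3=\alpha\hat\pi_1+\beta\hat\pi_2$ and $\hat\lambda_3=\frac{d(\alpha\hat\pi_1)}{d\hat\pi_3}\hat\lambda_1+\frac{d(\beta\hat\pi_2)}{d\hat\pi_3}\hat\lambda_2$; linearity gives $\int\hat{\mathcal{A}}_3 f\,\hat\pi_3=0$ so $(\hat\lambda_3,\hat\pi_3)\in\mathcal{Q}_e^{\alpha x_1+\beta x_2}$, and convexity of $x\log x$ with Jensen's inequality yields $\hat H(\hat\lambda_3,\hat\pi_3)\le\alpha\hat H(\hat\lambda_1,\hat\pi_1)+\beta\hat H(\hat\lambda_2,\hat\pi_2)$. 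Being convex and lower semicontinuous with $\Gamma=\Lambda^*$, Fenchel--Moreau gives $\Lambda=\Gamma^*=I$, so the lower and upper bounds coincide. The main obstacle is the compactness and upper--semicontinuity bookkeeping behind the minmax interchange that makes $\Gamma$ the genuine exponential growth rate, i.e.\ the uniform control of $\int|z_i|\,\hat\pi$ and $\int\hat\lambda\,\hat\pi$ on the relevant level sets; this is carried out in the preceding theorem, and the only genuinely new subtlety beyond the exponential case is tracking the mixed signs $\epsilon_i=a_i/|a_i|$ throughout, which governs the choice of the coefficients $c_i$ in the level--set estimate and the shape of the domain $\mathcal{Z}$.
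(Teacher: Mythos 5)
Your proposal is correct and takes essentially the same route as the paper: the paper's proof of this theorem simply defers to the exponential case, which proceeds exactly as you describe — Gärtner--Ellis for the upper bound after checking $\Gamma(\theta)<\infty$ via a trial function $e^{K\sum_i\epsilon_i z_i}$, a Girsanov tilt plus the ergodic theorem for the lower bound yielding $-\Lambda(x)$, and the identification $\Lambda=\Gamma^{*}=I$ through the splitting of the supremum over $x=\int\hat\lambda\,\hat\pi$ and the convexity argument of Lemma \ref{convexity}. Your remarks on tracking the signs $\epsilon_i$ in the level-set estimates correctly isolate the only adaptation needed beyond the one-dimensional case.
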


\begin{proof}
The proof is the same as in the case of exponential $h(\cdot)$.
\end{proof}

\section{Large Deviations for a Special Class of Nonlinear Hawkes Processes: An Approximation Approach}

We already proved in Chapter \ref{chap:three} a large deviation principle of $(N_{t}/t\in\cdot)$ for nonlinear
Hawkes process by proving a level-3 large deviation first and then applying the contraction principle.
In this section, we point out that there is an alternative approach, i.e. for general exciting function $h(\cdot)$,
we can use sums of exponential functions to approximate $h(\cdot)$ and use the large deviations 
for the case when $h(\cdot)$ is a sum of exponentials to obtain the large deviations for general $h(\cdot)$.
The advantage of approximating the general case by the case when $h$ is a sum of exponentials is that 
the rate function for the large deviations when $h$ is a sum of exponentials
can be evaluated by an optimization problem, which should be computable by some numerical scheme.

Before we proceed, let us first prove that $h$ can be approximated by a sum of exponentials in both $L_{1}$
and $L_{\infty}$ norms.

\begin{lemma}\label{littleh}
If $h(t)>0$, $\int_{0}^{\infty}h(t)dt<\infty$, $h(\infty)=0$, and $h$ is continuous, 
then $h$ can be approximated by a sum of exponentials both in $L^{1}$ and $L^{\infty}$ norms.
\end{lemma}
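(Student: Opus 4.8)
The plan is to exploit the density of exponential sums in an appropriate function space, using a classical approximation-theoretic fact together with a little care about the two norms. First I would reduce to a statement about a fixed compact interval plus a controllable tail: since $h$ is continuous, positive, $\int_0^\infty h<\infty$ and $h(\infty)=0$, for any $\eps>0$ I can choose $T$ large enough that $\int_T^\infty h(t)\,dt<\eps$ and $\sup_{t\ge T}h(t)<\eps$; on $[0,\infty)$ one single exponential $c e^{-b t}$ with $c,b>0$ chosen so that $ce^{-bT}$ dominates $h(T)$ and $c/b$ is small can absorb the tail contribution in both norms after one re-centers. The real content is then the approximation of $h$ on $[0,T]$.

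On $[0,T]$ I would use the substitution $s=e^{-t}\in[e^{-T},1]$, which turns a finite exponential sum $\sum_i a_i e^{-b_i t}$ into $\sum_i a_i s^{b_i}$, i.e. a generalized polynomial in $s$. The function $\tilde h(s):=h(-\log s)$ is continuous on the compact interval $[e^{-T},1]$. By the M\"untz--Sz\'asz theorem (or even just the ordinary Stone--Weierstrass theorem applied to the subalgebra generated by $\{s^{b}:b\ge 0\}$, which separates points and contains constants), the linear span of $\{s^{b_i}\}$ is dense in $C([e^{-T},1])$ in the uniform norm. Pulling back, $\sum_i a_i e^{-b_i t}$ approximates $h$ uniformly on $[0,T]$ to within any prescribed tolerance; choosing the $b_i>0$ strictly positive (which is harmless, we may take $b_i\ge \delta$) makes each summand genuinely an exponential. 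Uniform closeness on $[0,T]$ plus the tail bound gives $L^\infty$ closeness on all of $[0,\infty)$; for the $L^1$ bound on $[0,T]$ I multiply the uniform error by $T$, and the tail is already controlled, so $\|h-\sum_i a_i e^{-b_i t}\|_{L^1}$ is small as well.

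The one point that needs genuine attention — and which I expect to be the main obstacle — is matching the positivity and integrability requirements imposed elsewhere in the paper on the approximating kernel, namely that $\sum_i a_i e^{-b_i t}>0$ for all $t\ge 0$, that $\sum_i a_i>0$, and (if one wants to invoke the ergodicity and large-deviation machinery of the previous sections) that the $\alpha$-type contraction condition $\alpha\|h\|_{L^1}<1$ or its analogue is preserved. Uniform approximation does not automatically keep the approximant positive, but this is easily fixed: after obtaining a uniform approximant $g$ with $\|h-g\|_{L^\infty}<\eps/2$, replace $g$ by $g+\eps e^{-\delta t}$ for small $\delta$; since $h\ge 0$ and in fact $h>0$, for $\eps$ small enough relative to $\inf_{[0,T]}h$ one gets $g+\eps e^{-\delta t}>0$ on $[0,T]$, while on $[T,\infty)$ the term $\eps e^{-\delta t}$ itself dominates the (small) error, so the full approximant is positive everywhere; its $L^1$ and $L^\infty$ distances to $h$ have increased by at most $O(\eps)$. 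The bookkeeping of these elementary estimates — choosing $T$, then the M\"untz/Weierstrass approximant, then the positivity correction, and tracking the constants in both norms — is routine, so I would state it compactly rather than grinding through every inequality.
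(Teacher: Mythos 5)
There is a genuine gap in your tail control, and it propagates into both the $L^{1}$ estimate and the positivity correction. You obtain the exponential-sum approximant $g=\sum_{i}a_{i}e^{-b_{i}t}$ by applying Stone--Weierstrass (or M\"untz) on the \emph{compact} interval $[0,T]$ (equivalently $[e^{-T},1]$ after $s=e^{-t}$), but an exponential sum is defined on all of $[0,\infty)$ and nothing in the compact-interval approximation controls it on $(T,\infty)$. The coefficients $a_{i}$ produced by Weierstrass-type approximation are typically large and of alternating sign; the cancellation that makes $|g-h|<\epsilon$ on $[0,T]$ need not persist for $t>T$, so neither $\sup_{t\geq T}|g(t)|$ nor $\int_{T}^{\infty}|g(t)|\,dt\leq\sum_{i}|a_{i}|e^{-b_{i}T}/b_{i}$ is small. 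Your ``one single exponential absorbs the tail'' remark approximates the tail of $h$, but it cannot switch off $g$ beyond $T$. The fix is to compactify the whole half-line: apply Stone--Weierstrass to $\tilde h(s)=h(-\log s)$ on all of $[0,1]$ (with $\tilde h(0)=h(\infty)=0$), i.e.\ work in $C[0,\infty]$. This is exactly what the paper does, and it yields uniform closeness on the entire half-line in one step, with no $[0,T]$/tail splitting.

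Two further points where your route diverges from the paper's and needs repair. First, even with uniform approximation on all of $[0,\infty)$, the $L^{1}$ bound does not follow (a uniform error $\epsilon$ integrated over an infinite interval is infinite); the paper handles this by damping, replacing $h_{n}$ by $h_{n}e^{-\delta_{n}t}$ with $\delta_{n}\downarrow0$ chosen via dominated convergence so that $\int|h-h_{n}|e^{-\delta_{n}t}\,dt\to0$ and $\int h(1-e^{-\delta_{n}t})\,dt\to0$, and then checks that the damped sums still converge in $L^{\infty}$. Your ``multiply the uniform error by $T$'' only covers $[0,T]$ and again leans on the broken tail control. Second, for positivity your additive correction $g+\epsilon e^{-\delta t}$ relies on the error being small on $(T,\infty)$, which it need not be; the paper's device of approximating $\sqrt{h}$ and squaring the resulting exponential sum gives a nonnegative exponential-sum approximant globally with no case analysis. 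So the overall strategy (Stone--Weierstrass after exponential substitution) is the right one, but as written the argument fails at the tail, and the $L^{1}$ step requires the damping idea you have not supplied.
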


\begin{proof}
The Stone-Weierstrass theorem says that if $X$ is a compact Hausdorff space and suppose $A$ is a subspace of $C(X)$ 
with the following properties. (i) If $f,g\in A$, then $f\times g\in A$. (ii) $1\in A$. 
(iii) If $x,y\in X$ then we can find an $f\in A$ such that $f(x)\neq f(y)$. Then $A$ is dense in $C(X)$ in $L^{\infty}$ norm. 
Consider $X=\mathbb{R}^{+}\cup\{\infty\}=[0,\infty]$ and $C[0,\infty]$ consists of continuous functions vanishing at $\infty$ and the constant function $1$.

By Stone-Weierstrass theorem, the linear combination of $1$, $e^{-t}$, $e^{-2t}$ etc. is dense in $C[0,\infty]$. 
In other words, for any continuous function $h$ on $C[0,\infty]$, we have
\begin{equation}
\sup_{t\geq 0}\left|h(t)-\sum_{j=0}^{n}a_{j}e^{-jt}\right|\leq\epsilon.
\end{equation}
In fact, since $h(\infty)=0$, we get $|a_{0}|\leq\epsilon$. Thus
\begin{equation}
\sup_{t\geq 0}\left|h(t)-\sum_{j=1}^{n}a_{j}e^{-jt}\right|\leq 2\epsilon.
\end{equation}
However, $\sum_{j=1}^{n}a_{j}e^{-jt}$ may not be positive. We can approximate $\sqrt{h(t)}$ first by a sum of exponentials 
and then approximate $h(t)$ by the square of that sum of exponentials, which is again a sum of exponentials but positive this time.

Indeed, we can approximate $h(t)$ by the sum of exponentials in $L^{1}$ norm as well. 
Suppose $\Vert h-h_{n}\Vert_{L^{\infty}}\rightarrow 0$, where $h_{n}$ is a sum of exponentials. 
Then, by dominated convergence theorem, for any $\delta>0$, $\int|h-h_{n}|e^{-\delta t}dt\rightarrow 0$ as $n\rightarrow\infty$. 
Thus, we can find a sequence $\delta_{n}>0$ such that $\delta_{n}\rightarrow 0$ as $n\rightarrow\infty$ and $\int|h-h_{n}|e^{-\delta_{n}t}dt\rightarrow 0$. 
By dominated convergence theorem again, $\int h(1-e^{-\delta_{n}t})dt\rightarrow 0$.
Hence, we have $\int|h-h_{n}e^{-\delta_{n}t}|dt\rightarrow 0$ as $n\rightarrow\infty$, where $h_{n}e^{-\delta_{n}t}$ is a sum of exponentials.

We will show that $h_{n}e^{-\delta_{n}t}$ converges to $h$ in $L^{\infty}$ as well. 
\begin{equation}
\Vert h-h_{n}e^{-\delta_{n}t}\Vert_{L^{\infty}}\leq\Vert h-h_{n}\Vert_{L^{\infty}}+\Vert h_{n}-h_{n}e^{-\delta_{n}t}\Vert_{L^{\infty}}.
\end{equation}
Notice that $(1-e^{-\delta_{n}t})h_{n}\leq(1-e^{-\delta_{n}t})(h(t)+\epsilon)$. Since $h(\infty)=0$, 
there exists some $M>0$, such that for $t>M$, $h(t)\leq\epsilon$ so that $(1-e^{-\delta_{n}t})h_{n}\leq 2\epsilon$ for $t>M$. 
For $t\leq M$, $(1-e^{-\delta_{n}t})h_{n}\leq(1-e^{-\delta_{n}M})(\Vert h\Vert_{L^{\infty}}+\epsilon)$ which is small if $\delta_{n}$ is small.
\end{proof}

We have the following results.

\begin{theorem}\label{specialthm}
Assume that $\lambda(\cdot)\geq c$ for some $c>0$, $\lim_{z\rightarrow\infty}\frac{\lambda(z)}{z}=0$ 
and $\lambda(\cdot)^{\alpha}$ is Lipschitz with constant $L_{\alpha}$ for any $\alpha\geq 1$. 
We have $(N_{t}/t\in\cdot)$ satisfies the large deviation principle with the rate function 
\begin{equation}
I(x)=\sup_{\theta\in\mathbb{R}}\{\theta x-\Gamma(\theta)\}.
\end{equation}
\end{theorem}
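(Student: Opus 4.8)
The plan is to prove Theorem \ref{specialthm} by approximation: write $h$ as a limit of sums of exponentials, invoke the large deviation principle already established for Markovian nonlinear Hawkes processes with sum-of-exponentials exciting functions, and pass to the limit. By Lemma \ref{littleh} there is a sequence $h_{n}$, each a positive sum of exponentials, with $\Vert h-h_{n}\Vert_{L^{1}}\to 0$ and $\Vert h-h_{n}\Vert_{L^{\infty}}\to 0$. Let $N^{n}$ denote the Markovian nonlinear Hawkes process with rate function $\lambda$ and exciting function $h_{n}$ started with empty history, and let $\Gamma_{n}(\theta)=\lim_{t\to\infty}\frac{1}{t}\log\mathbb{E}[e^{\theta N^{n}_{t}}]$ be the logarithmic moment generating function limit supplied by the sum-of-exponentials analysis; thus $N^{n}_{t}/t$ satisfies a large deviation principle with rate function $I_{n}(x)=\sup_{\theta}\{\theta x-\Gamma_{n}(\theta)\}$. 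It then remains to show that $N^{n}_{t}/t$ is an exponentially good approximation of $N_{t}/t$ and to identify the limiting rate function.

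First I would set up the coupling. Using the Poisson embedding construction from the proof of Theorem \ref{mainthm}, realize $N$ (exciting function $h$) and $N^{n}$ (exciting function $h_{n}$) on a common probability space together with two sandwiching Hawkes processes $N^{-,n}$ and $N^{+,n}$ having exciting functions $h\wedge h_{n}$ and $h\vee h_{n}$ respectively. Since $\lambda$ is nondecreasing and all exciting functions here are nonnegative, the monotonicity of the Poisson embedding gives $N^{-,n}_{t}\le N_{t}\wedge N^{n}_{t}$ and $N_{t}\vee N^{n}_{t}\le N^{+,n}_{t}$, hence $|N_{t}-N^{n}_{t}|\le N^{+,n}_{t}-N^{-,n}_{t}$. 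The increment $N^{+,n}-N^{-,n}$ is controlled by the gap between the two intensities, which is governed by $|(h\vee h_{n})-(h\wedge h_{n})|=|h-h_{n}|$; iterating the Girsanov/martingale exponential estimate exactly as in Lemma \ref{midstep}, and using the hypothesis that $\lambda(\cdot)^{\alpha}$ is Lipschitz for every $\alpha\ge 1$ to absorb the successive H\"{o}lder exponents, one obtains, for each fixed $\theta>0$,
\begin{equation*}
c_{n}(\theta):=\limsup_{t\to\infty}\frac{1}{t}\log\mathbb{E}\left[e^{\theta(N^{+,n}_{t}-N^{-,n}_{t})}\right]\longrightarrow 0\quad\text{as }n\to\infty,
\end{equation*}
because the relevant bounds are powers of a factor proportional to $\Vert h-h_{n}\Vert_{L^{1}}$, together with a term controlled by $\Vert h-h_{n}\Vert_{L^{\infty}}$. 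By Chebyshev's inequality this yields, for every $\delta>0$,
\begin{equation*}
\lim_{n\to\infty}\limsup_{t\to\infty}\frac{1}{t}\log\mathbb{P}\left(\frac{|N_{t}-N^{n}_{t}|}{t}>\delta\right)=-\infty,
\end{equation*}
so that $N^{n}_{t}/t$ is an exponentially good approximation of $N_{t}/t$.

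Second I would conclude. The estimate $\mathbb{E}[e^{N_{t}}]\le e^{Ct}$, which follows from $\lim_{z\to\infty}\lambda(z)/z=0$ exactly as in Lemma \ref{uppercompact}, gives exponential tightness of $(N_{t}/t)$; combined with the exponentially good approximation and the large deviation principles for the $N^{n}_{t}/t$, the theorem on exponentially good approximations (see Dembo and Zeitouni \cite{Dembo}) shows that $(N_{t}/t\in\cdot)$ satisfies a large deviation principle with some rate function. To identify it, the exponential equivalence forces $\lim_{t\to\infty}\frac{1}{t}\log\mathbb{E}[e^{\theta N_{t}}]$ to exist and to equal $\Gamma(\theta):=\lim_{n\to\infty}\Gamma_{n}(\theta)$; $\Gamma$ is convex as a pointwise limit of convex functions, and by the analogue of Lemma \ref{Gammafinite} (again using $\lim_{z\to\infty}\lambda(z)/z=0$) it is finite on all of $\mathbb{R}$, hence continuous. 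The G\"{a}rtner--Ellis theorem then applies and identifies the rate function as $I(x)=\sup_{\theta\in\mathbb{R}}\{\theta x-\Gamma(\theta)\}$, as claimed.

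The main obstacle is the coupling estimate, i.e. proving $c_{n}(\theta)\to 0$: one must show that the exponential moments of the sandwich gap $N^{+,n}_{t}-N^{-,n}_{t}$ are $e^{o(t)}$ with a rate that vanishes as $h_{n}\to h$. This is exactly where the stronger hypothesis that $\lambda(\cdot)^{\alpha}$ is Lipschitz for all $\alpha\ge 1$ enters, since the iterated H\"{o}lder argument of Lemma \ref{midstep} raises $\lambda$ to larger and larger powers and one needs the $L^{1}$-smallness of $h-h_{n}$ to survive the full iteration. The remaining ingredients — the approximation lemma, the exponentially good approximations machinery, and the G\"{a}rtner--Ellis identification of the rate function — are comparatively routine once this estimate is in hand.
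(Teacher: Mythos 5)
Your overall strategy (approximate $h$ by sums of exponentials via Lemma \ref{littleh}, use the Markovian LDP for each $h_{n}$, and pass to the limit) is the same as the paper's, but the mechanism you propose for comparing $N$ and $N^{n}$ --- a monotone Poisson-embedding coupling with sandwich processes $N^{\pm,n}$ and an exponential moment bound on the gap $N^{+,n}_{t}-N^{-,n}_{t}$ --- has a genuine gap, and it is precisely at the step you yourself identify as the main obstacle. The gap process $D^{n}=N^{+,n}-N^{-,n}$ has intensity $\lambda^{+,n}_{s}-\lambda^{-,n}_{s}$, and the only way to exploit the hypotheses additively is through the Lipschitz bound
\begin{equation*}
\lambda^{+,n}_{s}-\lambda^{-,n}_{s}\le L_{1}\int (h\vee h_{n})(s-u)\,dD^{n}_{u}+L_{1}\int |h-h_{n}|(s-u)\,dN^{-,n}_{u},
\end{equation*}
so $D^{n}$ is dominated by a \emph{linear} Hawkes process with branching kernel $L_{1}(h\vee h_{n})$ and a small immigration term. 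The small factor $\Vert h-h_{n}\Vert_{L^{1}}$ sits only in the immigration; the branching ratio $L_{1}\Vert h\vee h_{n}\Vert_{L^{1}}$ does not shrink with $n$, and nothing in the hypotheses forces it to be less than $1$ (only $\lim_{z\to\infty}\lambda(z)/z=0$ is assumed, and sublinearity of $\lambda$ does not transfer to differences of $\lambda$, which are controlled only by the linear Lipschitz bound). If $L_{1}\Vert h\Vert_{L^{1}}\ge 1$ the dominating process is critical or supercritical and its exponential growth rate is bounded away from $0$ uniformly in $n$, so the claimed convergence $c_{n}(\theta)\to 0$ cannot be extracted this way. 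Note also that the iteration of Lemma \ref{midstep} you invoke explicitly requires choosing $q$ with $q\alpha\Vert h\Vert_{L^{1}}<1$, i.e. subcriticality, which is not available here; the Lipschitzness of $\lambda^{\alpha}$ for all $\alpha\ge 1$ does not repair this, because the problem is the size of $\Vert h\Vert_{L^{1}}$, not the powers of $\lambda$.

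The paper circumvents this by comparing the two processes \emph{multiplicatively} rather than additively: it writes $\mathbb{E}^{P_{m}}[e^{\theta N_{t}}]=\mathbb{E}^{P_{n}}[e^{\theta N_{t}}\,dP_{m}/dP_{n}]$ and controls the Girsanov density using H\"{o}lder's inequality. There the hypotheses $\lambda\ge c$ and $\lambda^{\alpha}$ Lipschitz enter through bounds of the form $\frac{\lambda(H_{m})^{2q}}{\lambda(H_{n})^{2q-1}}-\lambda(H_{n})\le \frac{L_{2q}}{c^{2q-1}}\sum_{\tau<s}|h_{m}-h_{n}|(s-\tau)$, so the perturbation appears only as a factor $\Vert h_{m}-h_{n}\Vert_{L^{1}}$ multiplying $N_{t}$ in the exponent; the uniform Lipschitz continuity of $\theta\mapsto\Gamma_{n}(\theta)$ (Lemma \ref{Lipschitz}) then shows $\Gamma_{n}$ is uniformly Cauchy, and the same change-of-measure argument transfers the lower bound for $P_{n}$ to $\mathbb{P}$. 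If you want to keep your exponentially-good-approximation framework you would need to replace the sandwich coupling by an estimate of this multiplicative type; as written, the additive coupling step fails under the stated hypotheses.
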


\begin{remark}
The class of nonlinear Hawkes process with general exciting function $h$ for which 
we proved the large deviation principle here is unfortunately a bit too special. 
It works for the rate function like $\lambda(z)=[\log(c+z)]^{\beta}$ for example 
but does not work for $\lambda(\cdot)$ that has sublinear power law growth. 
\end{remark}

We end this chapter with the proof of Theorem \ref{specialthm}.

Let $P_{n}$ denote the probability measure under which $N_{t}$ follows the Hawkes process with 
exciting function $h_{n}=\sum_{i=1}^{n}a_{i}e^{-b_{i}t}$ such that $h_{n}\rightarrow h$ as $n\rightarrow\infty$ 
in both $L^{1}$ and $L^{\infty}$ norms. We can find such a sequence $h_{n}$ by Lemma \ref{littleh}.
Let us define
\begin{equation}
\Gamma_{n}(\theta)=\lim_{t\rightarrow\infty}\frac{1}{t}\log\mathbb{E}^{P_{n}}\left[e^{\theta N_{t}}\right].
\end{equation}

We need the following lemmas to prove Theorem \ref{specialthm}.

\begin{lemma}\label{Lipschitz}
For any $K>0$ and $\theta_{1},\theta_{2}\in[-K,K]$, there exists some constant $C(K)$ such that for any $n$,
\begin{equation}
|\Gamma_{n}(\theta_{1})-\Gamma_{n}(\theta_{2})|\leq C(K)|\theta_{1}-\theta_{2}|.
\end{equation}
\end{lemma}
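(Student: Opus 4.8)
The plan is to use convexity of $\Gamma_{n}$ together with a bound on $\Gamma_{n}$ on a slightly enlarged interval that is uniform in $n$; a convex function bounded on $[-K-1,K+1]$ is automatically Lipschitz on $[-K,K]$ with a constant controlled by that bound, and the whole point is to make the bound $n$-free.

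First I would record convexity. For each fixed $t>0$ the map $\theta\mapsto\frac{1}{t}\log\mathbb{E}^{P_{n}}[e^{\theta N_{t}}]$ is convex by H\"{o}lder's inequality, and by Lemma \ref{thetafinite} (applied with $h_{n}$ in place of $h$) the limit $\Gamma_{n}(\theta)$ exists and is finite for every $\theta\in\mathbb{R}$ (the hypotheses on $\lambda$ hold throughout this section, and the limit is the one established in the previous section). Hence $\Gamma_{n}$ is convex on $\mathbb{R}$ for each $n$.

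Next, and this is the crux, I would prove $\sup_{n}\sup_{|\theta|\le K+1}|\Gamma_{n}(\theta)|\le M(K)<\infty$. The key observation is that $h_{n}\to h$ in $L^{1}$ by Lemma \ref{littleh}, so $C_{h}:=\sup_{n}\Vert h_{n}\Vert_{L^{1}}<\infty$, and the constant $C_{\epsilon}$ in the sublinear bound $\lambda(z)\le C_{\epsilon}+\epsilon z$ depends only on $\lambda$, not on $n$. Running the martingale argument of Lemma \ref{thetafinite} with $\gamma=K+2$ fixed and $\epsilon\le(e^{\gamma}-1)^{-1}C_{h}^{-1}$ (so also $\epsilon C_{h}<1$), and using $\int_{0}^{t}Z_{s}\,ds\le\Vert h_{n}\Vert_{L^{1}}N_{t}$, gives $\mathbb{E}^{P_{n}}[e^{\theta N_{t}}]\le e^{(e^{\gamma}-1)C_{\epsilon}t}$ for all $0\le\theta\le K+1$, all $n$, all $t$, hence $\Gamma_{n}(\theta)\le(e^{\gamma}-1)C_{\epsilon}$ there; for $\theta<0$, $e^{\theta N_{t}}\le 1$ gives $\Gamma_{n}(\theta)\le 0$. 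For the lower bound, Jensen's inequality gives $\mathbb{E}^{P_{n}}[e^{\theta N_{t}}]\ge e^{\theta\mathbb{E}^{P_{n}}[N_{t}]}$, and from $\mathbb{E}^{P_{n}}[N_{t}]=\mathbb{E}^{P_{n}}\big[\int_{0}^{t}\lambda(Z_{s})\,ds\big]\le C_{\epsilon}t+\epsilon\Vert h_{n}\Vert_{L^{1}}\mathbb{E}^{P_{n}}[N_{t}]$ one gets $\mathbb{E}^{P_{n}}[N_{t}]/t\le C_{\epsilon}/(1-\epsilon C_{h})=:C'$, uniformly in $n$ and $t$, whence $\Gamma_{n}(\theta)\ge-|\theta|C'\ge-(K+1)C'$ on $[-K-1,K+1]$. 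Taking $M(K)=\max\{(e^{\gamma}-1)C_{\epsilon},\,(K+1)C'\}$ completes this step.

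Finally I would conclude by convexity: for $\theta_{1}<\theta_{2}$ in $[-K,K]$ the difference quotient $\big(\Gamma_{n}(\theta_{2})-\Gamma_{n}(\theta_{1})\big)/(\theta_{2}-\theta_{1})$ lies between $\Gamma_{n}(-K)-\Gamma_{n}(-K-1)$ and $\Gamma_{n}(K+1)-\Gamma_{n}(K)$, each of which has absolute value at most $2M(K)$, so $|\Gamma_{n}(\theta_{1})-\Gamma_{n}(\theta_{2})|\le 2M(K)\,|\theta_{1}-\theta_{2}|$ and $C(K)=2M(K)$ works for every $n$. The only delicate point, and the one I would take care with, is checking that every constant produced is genuinely independent of $n$; this is guaranteed precisely by the uniform $L^{1}$ bound $C_{h}$ on $h_{n}$ and by $C_{\epsilon}$ being a property of $\lambda$ alone.
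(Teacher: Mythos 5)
Your proof is correct, but it takes a genuinely different route from the paper's. The paper works entirely inside the variational representation $\Gamma_{n}(\theta)=\sup_{(\hat{\lambda},\hat{\pi})\in\mathcal{Q}_{e}}\{\theta\int\hat{\lambda}\hat{\pi}-\hat{H}(\hat{\lambda},\hat{\pi})\}$: it restricts to the set $\mathcal{Q}_{e}^{\ast}$ of pairs within $1$ of the supremum at $\theta_{1}$, and then shows $\sup_{(\hat{\lambda},\hat{\pi})\in\mathcal{Q}_{e}^{\ast}}\int\hat{\lambda}\hat{\pi}\leq C(K)$ uniformly in $n$ by a Lyapunov-function argument (taking $u=e^{\sum_{i}c_{i}z_{i}}$, $V=-\mathcal{A}u/u$, using $\int\hat{\mathcal{A}}f\hat{\pi}=0$ with $f=z_{i}/b_{i}$ to identify $\int\hat{\lambda}\hat{\pi}$ with $\Vert h_{n}\Vert_{L^{1}}^{-1}\int\sum_{i}z_{i}\hat{\pi}$, and a Feynman--Kac/Dynkin bound $\int V\hat{\pi}\leq\hat{H}(\hat{\pi})$); the Lipschitz estimate then follows from $\Gamma_{n}(\theta_{2})-\Gamma_{n}(\theta_{1})\leq(\theta_{2}-\theta_{1})\sup_{\mathcal{Q}_{e}^{\ast}}\int\hat{\lambda}\hat{\pi}$. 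Your argument bypasses the variational formula entirely: convexity of $\theta\mapsto t^{-1}\log\mathbb{E}^{P_{n}}[e^{\theta N_{t}}]$ passes to the limit, and a uniform-in-$n$ two-sided bound for $\Gamma_{n}$ on $[-K-1,K+1]$ (the upper bound by rerunning Lemma \ref{thetafinite} with constants depending only on $C_{h}=\sup_{n}\Vert h_{n}\Vert_{L^{1}}<\infty$ and on $C_{\epsilon}$, which is a property of $\lambda$ alone; the lower bound by Jensen and the uniform first-moment bound) yields the Lipschitz constant $2M(K)$ by the standard fact that a convex function bounded on an enlarged interval is Lipschitz on the smaller one. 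Your route is more elementary and arguably more robust, since it does not rely on the minimax identification of $\Gamma_{n}$; what the paper's route buys is the quantitative control $\sup_{\mathcal{Q}_{e}^{\ast}}\int\hat{\lambda}\hat{\pi}\leq C(K)$ on near-optimizing invariant pairs, which is of independent interest, though only the Lipschitz statement itself is used in Lemma \ref{Cauchy} and afterwards, so your proof would serve the chapter equally well.
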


\begin{proof}
Without loss of generality, take $\theta_{2}>\theta_{1}$. Then
\begin{align}
\Gamma_{n}(\theta_{1})&\leq\Gamma_{n}(\theta_{2})
\\
&=\sup_{(\hat{\lambda},\hat{\pi})\in\mathcal{Q}^{\ast}_{e}}\int (\theta_{2}-\theta_{1})\hat{\lambda}\hat{\pi}
+\theta_{1}\hat{\lambda}\hat{\pi}-\hat{H}(\hat{\lambda},\hat{\pi})\nonumber
\\
&\leq\sup_{(\hat{\lambda},\hat{\pi})\in\mathcal{Q}^{\ast}_{e}}\int (\theta_{2}-\theta_{1})\hat{\lambda}\hat{\pi}+\Gamma_{n}(\theta_{1}),\nonumber
\end{align}
where
\begin{equation} 
\mathcal{Q}_{e}^{\ast}=\left\{(\hat{\lambda},\hat{\pi})\in\mathcal{Q}_{e}:\int\theta_{1}\hat{\lambda}\hat{\pi}
-\hat{H}(\hat{\lambda},\hat{\pi})\geq\Gamma_{n}(\theta_{1})-1\right\}.
\end{equation}
The key is to prove that $\sup_{(\hat{\lambda},\hat{\pi})\in\mathcal{Q}_{e}^{\ast}}\int\hat{\lambda}\hat{\pi}\leq C(K)$ 
for some positive constant $C(K)$ depending only on $K$. Define $u=u(z_{1},\ldots,z_{n})=e^{\sum_{i=1}^{n}c_{i}z_{i}}$ where
\begin{equation}
c_{i}=\frac{3K}{\sum_{i=1}^{n}\frac{a_{i}}{b_{i}}}\cdot\frac{1}{b_{i}},\quad 1\leq i\leq n.
\end{equation}
Define $V=-\frac{\mathcal{A}u}{u}$ such that
\begin{equation}
V(z_{1},\ldots,z_{n})=\frac{3K}{\sum_{i=1}^{n}\frac{a_{i}}{b_{i}}}\sum_{i=1}^{n}z_{i}-\lambda(z_{1}+\cdots+z_{n})(e^{3K}-1).
\end{equation}
Notice that $\int\hat{\mathcal{A}}f\hat{\pi}=0$ for any test function $f$ with certain regularities. 
If we try $f=\frac{z_{i}}{b_{i}}$, $1\leq i\leq n$, we get
\begin{equation}
-\int z_{i}\hat{\pi}+\frac{a_{i}}{b_{i}}\int\hat{\lambda}\hat{\pi}=0,\quad 1\leq i\leq n.
\end{equation}
Summing over $1\leq i\leq n$, we get
\begin{equation}
\int\hat{\lambda}\hat{\pi}=\frac{1}{\sum_{i=1}^{n}\frac{a_{i}}{b_{i}}}\int\sum_{i=1}^{n}z_{i}\hat{\pi}.
\end{equation}
Notice that $\sum_{i=1}^{n}\frac{a_{i}}{b_{i}}=\Vert h_{n}\Vert_{L^{1}}$ which is approximately $\Vert h\Vert_{L^{1}}$ 
when $n$ is large. Since $\limsup_{z\rightarrow\infty}\frac{\lambda(z)}{z}=0$ and $\sum_{i=1}^{n}z_{i}\geq 0$, we have
\begin{equation}
\theta_{1}\int\hat{\lambda}\hat{\pi}\leq K\int\hat{\lambda}\hat{\pi}=\frac{K}{\sum_{i=1}^{n}\frac{a_{i}}{b_{i}}}
\int\sum_{i=1}^{n}z_{i}\hat{\pi}\leq\frac{1}{2}\int V\hat{\pi}+C_{1/2}(K),
\end{equation}
where $C_{1/2}(K)$ is some positive constant depending only on $K$. 

We claim that $\int V(z)\hat{\pi}\leq\hat{H}(\hat{\pi})$ for any $\hat{\pi}\in\mathcal{Q}_{e}^{\ast}$. 
Let us prove it. By the ergodic theorem and Jensen's inequality,
\begin{equation}
\int V(z)\hat{\pi}=\lim_{t\rightarrow\infty}\mathbb{E}^{\hat{\pi}}\left[\frac{1}{t}\int_{0}^{t}V(Z_{s})ds\right]
\leq\limsup_{t\rightarrow\infty}\frac{1}{t}\log\mathbb{E}^{\pi}\left[e^{\int_{0}^{t}V(Z_{s})ds}\right]+\hat{H}(\hat{\pi}).
\end{equation}
Next, we will show that $u\geq 1$. That is equivalent to proving $\sum_{i=1}^{n}\frac{z_{i}}{b_{i}}\geq 0$. Consider the process
\begin{equation}
Y_{t}=\sum_{i=1}^{n}\frac{Z_{i}(t)}{b_{i}}=\sum_{\tau_{j}<t}\sum_{i=1}^{n}\frac{a_{i}}{b_{i}}e^{-b_{i}(t-\tau_{j})}=\sum_{\tau_{j}<t}g(t-\tau_{j}),
\end{equation}
where $g(t)=\sum_{i=1}^{n}\frac{a_{i}}{b_{i}}e^{-b_{i}t}$. Notice that $g(t)=\int_{t}^{\infty}h(s)ds>0$. 
Therefore, $Y_{t}\geq 0$ almost surely and $\sum_{i=1}^{n}\frac{Z_{i}(t)}{b_{i}}\geq 0$. 
Since $\frac{\mathcal{A}u}{u}+V=0$ and $u\geq 1$, by Feynman-Kac formula and Dynkin's formula, 
\begin{align}
\mathbb{E}^{\pi}\left[e^{\int_{0}^{t}V(Z_{s})ds}\right]&\leq\mathbb{E}^{\pi}\left[u(Z_{t})e^{\int_{0}^{t}V(Z_{s})ds}\right]
\\
&=u(Z_{0})+\int_{0}^{t}\mathbb{E}^{\pi}\left[(\mathcal{A}u(Z_{s})+V(Z_{s})u(Z_{s}))e^{\int_{0}^{s}V(Z_{u})du}\right]ds\nonumber
\\
&=u(Z_{0}),\nonumber
\end{align}
and therefore $\int V(z)\hat{\pi}\leq\hat{H}(\hat{\pi})$ for any $\hat{\pi}\in\mathcal{Q}_{e}^{\ast}$. Hence,
\begin{equation}
\theta_{1}\int\hat{\lambda}\hat{\pi}\leq\frac{1}{2}\int V(z)+C_{1/2}(K)\leq\frac{1}{2}\hat{H}+C_{1/2}(K).
\end{equation}
Notice that
\begin{equation}
-\infty<\Gamma_{n}(\theta_{1})-1\leq\theta_{1}\int\hat{\lambda}\hat{\pi}-\hat{H}\leq\Gamma_{n}(\theta_{1})<\infty.
\end{equation}
Hence, 
\begin{equation}
\Gamma_{n}(\theta_{1})-1+\frac{1}{2}\hat{H}\leq\theta_{1}\int\hat{\lambda}\hat{\pi}-\frac{1}{2}\hat{H}\leq C_{1/2}(K),
\end{equation}
which implies $\hat{H}\leq 2(C_{1/2}(K)-\Gamma_{n}(\theta_{1})+1)$ and so also,
\begin{equation}
\int\hat{\lambda}\hat{\pi}\leq\frac{1}{2K}\int V\hat{\pi}+\frac{1}{K}C_{1/2}(K)\leq\frac{1}{K}(C_{1/2}(K)-\Gamma_{n}(\theta_{1})+1)+\frac{1}{K}C_{1/2}(K).
\end{equation}
Finally, notice that since $h_{n}\rightarrow h$ in both $L^{1}$ and $L^{\infty}$ norms, we can find a function $g$
such that  $\sup_{n}h_{n}\leq g$ and $\Vert g\Vert_{L^{1}}<\infty$.
and thus
\begin{equation}
\Gamma_{n}(\theta_{1})\geq\Gamma_{n}(-K)\geq\Gamma_{g}(-K),
\end{equation}
where $\Gamma_{g}$ denotes the case when the rate function is still $\lambda(\cdot)$ but the exciting function is $g(\cdot)$ instead of $h_{n}(\cdot)$.
Notice that here $\Vert g\Vert_{L^{1}}<\infty$ but may not be less than $1$. It is still well defined because of 
the assumption $\lim_{z\rightarrow\infty}\frac{\lambda(z)}{z}=0$. Indeed, we can find
$\lambda(z)=\nu_{\epsilon}+\epsilon z$ that dominates the original $\lambda(\cdot)$ for $\nu_{\epsilon}>0$ big enough 
and $\epsilon>0$ small enough so that $\epsilon\Vert g\Vert_{L^{1}}<1$. 
Now, we have $\Gamma_{g}(-K)\geq\Gamma^{\nu_{\epsilon}}_{\epsilon g}(-K)$ which is finite,
where $\Gamma^{\nu_{\epsilon}}_{\epsilon g}(-K)$ corresponds to the case when $\lambda(z)=\nu_{\epsilon}+\epsilon z$. Hence,
\begin{equation}
\sup_{(\hat{\lambda},\hat{\pi})\in\mathcal{Q}_{e}^{\ast}}\int\hat{\lambda}\hat{\pi}\leq C(K),
\end{equation}
for some $C(K)>0$ depending only on $K$.
\end{proof}

\begin{lemma}\label{Cauchy}
Assume that $\lambda(\cdot)\geq c$ for some $c>0$, $\lim_{z\rightarrow\infty}\frac{\lambda(z)}{z}=0$ 
and $\lambda(\cdot)^{\alpha}$ is Lipschitz with constant $L_{\alpha}$ for any $\alpha\geq 1$. 
Then for any $K>0$, $\Gamma_{n}(\theta)$ is Cauchy with $\theta$ uniformly in $[-K,K]$.
\end{lemma}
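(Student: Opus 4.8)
The plan is to upgrade the uniform equicontinuity of $\{\Gamma_n\}$ on compact intervals provided by Lemma \ref{Lipschitz} to a uniform Cauchy property, by a Girsanov comparison of $P_n$ and $P_m$ that converts the closeness of $h_n$ and $h_m$ (in $L^1$ and $L^\infty$, as produced in Lemma \ref{littleh}) into a small correction to $\Gamma_n$. Fix $K>0$ and write $Z^{(k)}_s=\int_0^s h_k(s-u)\,N(du)$, so that under $P_k$ the process $N$ has intensity $\lambda(Z^{(k)}_t)$.

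First I would reduce the problem to two errors. Since $P_n$ and $P_m$ are Hawkes processes with empty history and intensities bounded below by $c>0$ and a.s. finite, $P_m\ll P_n$ on $\mathcal F_t$ with
\[
\frac{dP_m}{dP_n}\Big|_{\mathcal F_t}=\exp\left\{\int_0^t\big(\lambda(Z^{(n)}_s)-\lambda(Z^{(m)}_s)\big)\,ds+\int_0^t\log\frac{\lambda(Z^{(m)}_s)}{\lambda(Z^{(n)}_s)}\,dN_s\right\}
\]
by the absolute continuity of point processes (cf.\ Lipster and Shiryaev \cite{Lipster}). For conjugate exponents $p\in(1,2]$ and $q$, H\"older's inequality and Lemma \ref{thetafinite} (applied with $h_n$) give, for $\theta\in[-K,K]$, $\frac1t\log\mathbb{E}^{P_m}[e^{\theta N_t}]\le\frac1{pt}\log\mathbb{E}^{P_n}[e^{p\theta N_t}]+\frac1{qt}\log\mathbb{E}^{P_n}[(dP_m/dP_n|_{\mathcal F_t})^q]$, and letting $t\to\infty$ yields $\Gamma_m(\theta)\le\tfrac1p\Gamma_n(p\theta)+\tfrac1q B_{n,m,q}$ with $B_{n,m,q}=\limsup_{t\to\infty}\tfrac1t\log\mathbb{E}^{P_n}[(dP_m/dP_n|_{\mathcal F_t})^q]$. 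By Lemma \ref{Lipschitz}, $\Gamma_n$ is Lipschitz on $[-2K,2K]$ with a constant $C(2K)$ independent of $n$ and $\Gamma_n(0)=0$, so the \emph{rescaling error} obeys $\big|\tfrac1p\Gamma_n(p\theta)-\Gamma_n(\theta)\big|\le 2C(2K)K\,(p-1)$ uniformly in $n$ and $\theta\in[-K,K]$.

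The heart of the argument is bounding $B_{n,m,q}$. I would use $\big|Z^{(n)}_s-Z^{(m)}_s\big|\le\int_0^s|h_n-h_m|(s-u)\,N(du)$, hence $\int_0^t|Z^{(n)}_s-Z^{(m)}_s|\,ds\le\|h_n-h_m\|_{L^1}N_t$, together with $\lambda\ge c$ and the hypothesis that $\lambda^\alpha$ is Lipschitz with constant $L_\alpha$ for every $\alpha\ge1$. Applying H\"older's inequality twice and the exponential martingale $\exp\{\int_0^t f\,dN_s-\int_0^t(e^{f}-1)\lambda(Z^{(n)}_s)\,ds\}$ (the device behind the inequality $\mathbb{E}[e^{\int f\,dN}]\le\mathbb{E}[e^{\int(e^{2f}-1)\lambda\,ds}]^{1/2}$ used repeatedly in Chapter \ref{chap:three}), with $f=2q\log(\lambda(Z^{(m)})/\lambda(Z^{(n)}))$, reduces $\mathbb{E}^{P_n}[(dP_m/dP_n|_{\mathcal F_t})^q]$ to factors involving $\exp\{\int_0^t((\lambda(Z^{(m)}_s)/\lambda(Z^{(n)}_s))^{4q}-1)\lambda(Z^{(n)}_s)\,ds\}$ and $\exp\{\int_0^t|\lambda(Z^{(n)}_s)-\lambda(Z^{(m)}_s)|\,ds\}$; bounding $\big|\big((\lambda(Z^{(m)}_s)/\lambda(Z^{(n)}_s))^{4q}-1\big)\lambda(Z^{(n)}_s)\big|\le c^{-(4q-1)}L_{4q}|Z^{(n)}_s-Z^{(m)}_s|$ and $|\lambda(Z^{(n)}_s)-\lambda(Z^{(m)}_s)|\le L_1|Z^{(n)}_s-Z^{(m)}_s|$ gives
\[
\mathbb{E}^{P_n}\!\Big[\big(dP_m/dP_n\big|_{\mathcal F_t}\big)^q\Big]\le\mathbb{E}^{P_n}\!\Big[\exp\big\{\kappa(q)\,\|h_n-h_m\|_{L^1}\,N_t\big\}\Big],\qquad \kappa(q)=\max\!\big(2qL_1,\,c^{-(4q-1)}L_{4q}\big),
\]
so $B_{n,m,q}\le\Gamma_n\big(\kappa(q)\|h_n-h_m\|_{L^1}\big)\le C(2K)\kappa(q)\|h_n-h_m\|_{L^1}$ once $\kappa(q)\|h_n-h_m\|_{L^1}\le 2K$ (using the uniform Lipschitz bound and $\Gamma_n(0)=0$); the finiteness of this exponential moment, uniformly in $n$, comes from $\sup_n h_n\le g$ with $\|g\|_{L^1}<\infty$ and $\lambda(z)/z\to0$, exactly as in the proof of Lemma \ref{Lipschitz}. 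Given $\varepsilon>0$ one then chooses $p=p(\varepsilon,K)\in(1,2]$ with $2C(2K)K(p-1)<\varepsilon/2$, which fixes $q$ and $\kappa(q)$, and then $N(\varepsilon,K)$ so large that $\tfrac1q C(2K)\kappa(q)\|h_n-h_m\|_{L^1}<\varepsilon/2$ for $n,m\ge N(\varepsilon,K)$ (possible since $\|h_n-h_m\|_{L^1}\to0$); combining, $\Gamma_m(\theta)-\Gamma_n(\theta)<\varepsilon$, and by symmetry $|\Gamma_n(\theta)-\Gamma_m(\theta)|<\varepsilon$ for all such $n,m$ and all $\theta\in[-K,K]$, i.e.\ $(\Gamma_n)$ is uniformly Cauchy on $[-K,K]$.

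The main obstacle is the estimate of $B_{n,m,q}$: one must pull the small factor $\|h_n-h_m\|_{L^1}$ out of an exponential moment of $(dP_m/dP_n)^q$ uniformly in $n,m,t,\theta$. The crude bound $|Z^{(n)}_s-Z^{(m)}_s|\le\|h_n-h_m\|_{L^\infty}N_s$ only controls a linearly growing quantity, so the logarithmic Girsanov term must instead be routed through the $\lambda^{4q}$-Lipschitz hypothesis (this is precisely why $\lambda^\alpha$ is assumed Lipschitz for every $\alpha\ge1$, not merely $\alpha=1$) and the time integrals through the $L^1$ closeness, after which the sublinearity of $\lambda$ supplies the needed finiteness.
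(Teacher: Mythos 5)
Your proposal is correct and follows essentially the same route as the paper: a Girsanov change of measure from $P_n$ to $P_m$, H\"older plus the exponential-martingale/Cauchy--Schwarz device to convert the log-ratio term into $c^{-(\alpha-1)}L_{\alpha}\Vert h_m-h_n\Vert_{L^{1}}N_{t}$, the uniform Lipschitz bound of Lemma \ref{Lipschitz} applied to $\Gamma_n$, and finally $p\downarrow 1$ to kill the rescaling error. The only differences are bookkeeping ones (you split off $e^{\theta N_t}$ from $(dP_m/dP_n)^q$ and thus land on $L_{4q}$ where the paper's grouping gives $L_{2q}$, and you run an explicit $\varepsilon$--$p$--$N$ selection instead of taking $\limsup_{m,n}$ first), which do not change the argument.
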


\begin{proof}
Let us write $H_{n}(t)=\sum_{\tau_{j}<t}h_{n}(t-\tau_{j})$. Observe first, that for any $q$,
\begin{equation}
\exp\left\{q\int_{0}^{t}\log\left(\frac{\lambda(H_{m}(s))}{\lambda(H_{n}(s))}\right)dN_{s}
-\int_{0}^{t}\left(\frac{\lambda(H_{m}(s))^{q}}{\lambda(H_{n}(s))^{q-1}}-\lambda(H_{n}(s))\right)ds\right\}
\end{equation}
is a martingale under $P_{n}$. By H\"{o}lder's inequality, for any $p,q>1$ with $\frac{1}{p}+\frac{1}{q}=1$, 
\begin{align}
\mathbb{E}^{P_{m}}[e^{\theta N_{t}}]&=\mathbb{E}^{P_{n}}\left[e^{\theta N_{t}}\frac{dP_{m}}{dP_{n}}\right]
\\
&=\mathbb{E}^{P_{n}}\left[e^{\theta N_{t}-\int_{0}^{t}(\lambda(H_{m}(s))-\lambda(H_{n}(s)))ds-\int_{0}^{t}
\log\left(\frac{\lambda(H_{n}(s))}{\lambda(H_{m}(s))}\right)dN_{s}}\right]\nonumber
\\
&\leq\mathbb{E}^{P_{n}}\left[e^{p\theta N_{t}-p\int_{0}^{t}(\lambda(H_{m}(s))-\lambda(H_{n}(s)))ds}\right]^{1/p}
\mathbb{E}^{P_{n}}\left[e^{q\int_{0}^{t}\log\left(\frac{\lambda(H_{m}(s))}{\lambda(H_{n}(s))}\right)dN_{s}}\right]^{1/q}.\nonumber
\end{align}
By the Cauchy-Schwarz inequality, 
\begin{align}
\mathbb{E}^{P_{n}}\left[e^{q\int_{0}^{t}\log\left(\frac{\lambda(H_{m}(s))}{\lambda(H_{n}(s))}\right)dN_{s}}\right]^{1/q}
&\leq\mathbb{E}^{P_{n}}\left[e^{\int_{0}^{t}\left(\frac{\lambda(H_{m}(s))^{2q}}{\lambda(H_{n}(s))^{2q-1}}-\lambda(H_{n}(s))\right)ds}\right]^{\frac{1}{2q}}
\\
&\leq\mathbb{E}^{P_{n}}\left[e^{\frac{1}{c^{2q-1}}L_{2q}\int_{0}^{t}\sum_{\tau<s}|h_{m}(s-\tau)-h_{n}(s-\tau)|ds}\right]^{\frac{1}{2q}}\nonumber
\\
&\leq\mathbb{E}^{P_{n}}\left[e^{\frac{1}{c^{2q-1}}L_{2q}\Vert h_{m}-h_{n}\Vert_{L^{1}}N_{t}}\right]^{\frac{1}{2q}}.\nonumber
\end{align}
We also have
\begin{equation}
\mathbb{E}^{P_{n}}\left[e^{p\theta N_{t}-p\int_{0}^{t}(\lambda(H_{m}(s))-\lambda(H_{n}(s)))ds}\right]^{1/p}
\leq\mathbb{E}^{P_{n}}\left[e^{p\theta N_{t}+pL_{1}\Vert h_{m}-h_{n}\Vert_{L^{1}}N_{t}}\right]^{1/p}.
\end{equation}
Therefore, by Lemma \ref{Lipschitz} and the fact $\Gamma_{n}(0)=0$ for any $n$, we have
\begin{align}
&\Gamma_{m}(\theta)-\Gamma_{n}(\theta)
\\
&\leq\frac{1}{p}\Gamma_{n}\left(p\theta+pL_{1}\epsilon_{m,n}\right)
+\frac{1}{2q}\Gamma_{n}\left(\frac{L_{2q}\epsilon_{m,n}}{c^{2q-1}}\right)-\Gamma_{n}(\theta)\nonumber
\\
&\leq C(K)L_{1}\epsilon_{m,n}+\frac{C(K)}{2q}\cdot\frac{L_{2q}\epsilon_{m,n}}{c^{2q-1}}
+\frac{1}{p}\Gamma_{n}(p\theta)-\frac{1}{p}\Gamma_{n}(\theta)+\left(1-\frac{1}{p}\right)|\Gamma_{n}(\theta)|,\nonumber
\\
&\leq C(K)L_{1}\epsilon_{m,n}+\frac{C(K)}{2q}\cdot\frac{L_{2q}\epsilon_{m,n}}{c^{2q-1}}+\frac{C(K)(p-1)K}{p}+\left(1-\frac{1}{p}\right)C(K)K,\nonumber
\end{align}
where $\epsilon_{m,n}=\Vert h_{m}-h_{n}\Vert_{L^{1}}$. Hence, 
\begin{equation}
\limsup_{m,n\rightarrow\infty}\{\Gamma_{m}(\theta)-\Gamma_{n}(\theta)\}\leq 2\left(1-\frac{1}{p}\right)C(K)K,
\end{equation}
which is true for any $p>1$. Letting $p\downarrow 1$, we get the desired result.
\end{proof}

\begin{remark}
If $\lambda(\cdot)\geq c>0$ and $\lim_{z\rightarrow\infty}\frac{\lambda(z)}{z^{\alpha}}=0$ for any $\alpha>0$, 
then, $\lambda(\cdot)^{\sigma}$ is Lipschitz for any $\sigma\geq 1$. For instance, $\lambda(z)=[\log(z+c)]^{\beta}$ satisfies the conditions
if $\beta>0$ and $c>1$.
\end{remark}

\begin{theorem}
Assume that $\lambda(\cdot)\geq c$ for some $c>0$, $\lim_{z\rightarrow\infty}\frac{\lambda(z)}{z}=0$ and $\lambda(\cdot)^{\alpha}$ is
Lipschitz with constant $L_{\alpha}$ for any $\alpha\geq 1$.
\begin{equation}
\lim_{t\rightarrow\infty}\frac{1}{t}\log\mathbb{E}[e^{\theta N_{t}}]=\Gamma(\theta)=\lim_{n\rightarrow\infty}\Gamma_{n}(\theta),
\end{equation}
for any $\theta\in\mathbb{R}$.
\end{theorem}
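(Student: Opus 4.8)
The plan is to run the approximation $h_{n}\to h$ of Lemma \ref{littleh} through the same Girsanov--H\"{o}lder comparison that proves Lemma \ref{Cauchy}, but now comparing the original measure $\mathbb{P}$ (with exciting function $h$) directly against $P_{n}$, and then to squeeze. Throughout write $\epsilon_{n}=\Vert h-h_{n}\Vert_{L^{1}}\to 0$, $H(s)=\sum_{\tau_{j}<s}h(s-\tau_{j})$, $H_{n}(s)=\sum_{\tau_{j}<s}h_{n}(s-\tau_{j})$, and set
\[
\Gamma^{+}(\theta):=\limsup_{t\to\infty}\tfrac{1}{t}\log\mathbb{E}[e^{\theta N_{t}}],\qquad
\Gamma^{-}(\theta):=\liminf_{t\to\infty}\tfrac{1}{t}\log\mathbb{E}[e^{\theta N_{t}}].
\]
First I would record three elementary facts. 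By Lemma \ref{Cauchy} the limit $\Gamma(\theta):=\lim_{n\to\infty}\Gamma_{n}(\theta)$ exists, and by Lemma \ref{Lipschitz} it satisfies $|\Gamma(\theta_{1})-\Gamma(\theta_{2})|\le C(K)|\theta_{1}-\theta_{2}|$ on each $[-K,K]$, so $\Gamma$ is continuous and $\Gamma_{n}\to\Gamma$ uniformly on compacts. The argument of Lemma \ref{thetafinite} applies verbatim to $\mathbb{P}$ (it uses only $\int_{0}^{t}Z_{s}\,ds\le\Vert h\Vert_{L^{1}}N_{t}$ and the exponential martingale), so $\Gamma^{+}(\theta)<\infty$ for all $\theta$; and since $\theta\mapsto\tfrac{1}{t}\log\mathbb{E}[e^{\theta N_{t}}]$ is convex for every $t$, the finite function $\Gamma^{+}$, being a limsup of convex functions, is convex, hence continuous, with $\Gamma^{+}(0)=0$.

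Next I would run the comparison. Exactly as in the proof of Lemma \ref{Cauchy} --- Girsanov's formula for $\tfrac{d\mathbb{P}}{dP_{n}}$, H\"{o}lder with conjugate exponents $p,q$, Cauchy--Schwarz for the stochastic-integral term, and the bound $\int_{0}^{t}|\lambda(H(s))^{\alpha}-\lambda(H_{n}(s))^{\alpha}|\,ds\le L_{\alpha}\epsilon_{n}N_{t}$ (valid since $\lambda(\cdot)\ge c$ and $\lambda(\cdot)^{\alpha}$ is $L_{\alpha}$-Lipschitz) --- one obtains, for all $p,q>1$ with $\tfrac{1}{p}+\tfrac{1}{q}=1$,
\[
\mathbb{E}[e^{\theta N_{t}}]\le \mathbb{E}^{P_{n}}\big[e^{(p\theta+pL_{1}\epsilon_{n})N_{t}}\big]^{1/p}\,\mathbb{E}^{P_{n}}\big[e^{(L_{2q}\epsilon_{n}/c^{2q-1})N_{t}}\big]^{1/(2q)},
\]
together with the same inequality with $\mathbb{P}$ and $P_{n}$ interchanged (absolute continuity holds in both directions because the intensities are bounded below by $c>0$). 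Taking $\tfrac{1}{t}\log$ and $\limsup_{t\to\infty}$ in the first inequality and using that $\Gamma_{n}$ is a genuine limit gives $\Gamma^{+}(\theta)\le\tfrac{1}{p}\Gamma_{n}(p\theta+pL_{1}\epsilon_{n})+\tfrac{1}{2q}\Gamma_{n}(L_{2q}\epsilon_{n}/c^{2q-1})$; letting $n\to\infty$ (uniform convergence of $\Gamma_{n}$, Lemma \ref{Lipschitz} to absorb the $O(\epsilon_{n})$ shift, $\Gamma_{n}(0)=0$) yields $\Gamma^{+}(\theta)\le\tfrac{1}{p}\Gamma(p\theta)$, and $p\downarrow 1$ with continuity of $\Gamma$ gives $\Gamma^{+}(\theta)\le\Gamma(\theta)$.

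For the matching lower bound I would use the interchanged inequality, rearranged into $\mathbb{E}[e^{(p\theta+pL_{1}\epsilon_{n})N_{t}}]\ge \mathbb{E}^{P_{n}}[e^{\theta N_{t}}]^{p}\big/\mathbb{E}[e^{(L_{2q}\epsilon_{n}/c^{2q-1})N_{t}}]^{p/(2q)}$; taking $\tfrac{1}{t}\log$ and $\liminf_{t\to\infty}$, and again using that $\Gamma_{n}$ is a limit, yields $\Gamma^{-}(p\theta+pL_{1}\epsilon_{n})\ge p\,\Gamma_{n}(\theta)-\tfrac{p}{2q}\Gamma^{+}(L_{2q}\epsilon_{n}/c^{2q-1})$. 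Since $\Gamma^{-}$ is not known to be continuous, instead of letting $n\to\infty$ in this form I would apply it with $\theta$ replaced by $\theta_{n}:=\theta/p-L_{1}\epsilon_{n}$, so that $p\theta_{n}+pL_{1}\epsilon_{n}=\theta$ exactly, obtaining $\Gamma^{-}(\theta)\ge p\,\Gamma_{n}(\theta_{n})-\tfrac{p}{2q}\Gamma^{+}(L_{2q}\epsilon_{n}/c^{2q-1})$; now $n\to\infty$ (with $\theta_{n}\to\theta/p$, uniform convergence of $\Gamma_{n}$, Lemma \ref{Lipschitz}, and continuity of $\Gamma^{+}$ at $0$ with $\Gamma^{+}(0)=0$) gives $\Gamma^{-}(\theta)\ge p\,\Gamma(\theta/p)$, and $p\downarrow 1$ gives $\Gamma^{-}(\theta)\ge\Gamma(\theta)$. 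Combining the two bounds, $\Gamma(\theta)\ge\Gamma^{+}(\theta)\ge\Gamma^{-}(\theta)\ge\Gamma(\theta)$, so $\lim_{t\to\infty}\tfrac{1}{t}\log\mathbb{E}[e^{\theta N_{t}}]$ exists and equals $\Gamma(\theta)=\lim_{n\to\infty}\Gamma_{n}(\theta)$. The main obstacle is exactly this a priori non-existence of the limit under $\mathbb{P}$: the three limits ($t\to\infty$, $n\to\infty$, $p\downarrow 1$) must be interchanged with care, which is handled by convexity (giving continuity of the limsup $\Gamma^{+}$) and by the reparametrization $\theta_{n}=\theta/p-L_{1}\epsilon_{n}$ (avoiding any appeal to continuity of the liminf); every other ingredient is a transcription of the uniform-in-$t$ estimates already established in Lemmas \ref{Lipschitz} and \ref{Cauchy}.
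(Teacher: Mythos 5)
Your proof is correct and follows essentially the same route as the paper: the Girsanov--H\"{o}lder comparison of $\mathbb{P}$ against $P_{n}$ carried over from Lemma \ref{Cauchy}, followed by a squeeze of the $\limsup$ and $\liminf$ between $\Gamma_{n}$-expressions whose error terms vanish as $n\rightarrow\infty$ and $p\downarrow 1$. The only differences are cosmetic: where you use convexity and finiteness of the prelimit logarithmic moment generating functions to get continuity of $\Gamma^{+}$ at $0$ (so that $\Gamma^{+}(L_{2q}\epsilon_{n}/c^{2q-1})\rightarrow 0$), the paper instead dominates $\lambda(\cdot)$ by a linear rate function and uses continuity of the linear Hawkes limit; and where you reparametrize $\theta_{n}=\theta/p-L_{1}\epsilon_{n}$ so the lower bound lands exactly at $\theta$, the paper performs a second H\"{o}lder split with exponents $p',q'$, arrives at $\liminf_{t\rightarrow\infty}\frac{1}{t}\log\mathbb{E}[e^{\theta N_{t}}]\geq pp'\Gamma(\theta/(pp'))$, and then lets $p,p'\downarrow 1$ using the continuity of $\Gamma$.
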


\begin{proof}
By Lemma \ref{Cauchy}, $\Gamma_{n}(\theta)$ tends to $\Gamma(\theta)$ uniformly on any compact set $[-K,K]$. Since $\Gamma_{n}(\theta)$ is Lipschitz
by Lemma \ref{Lipschitz}, it is continuous and the limit $\Gamma$ is also continuous. Let $\epsilon_{n}=\Vert h_{n}-h\Vert_{L^{1}}\leq\epsilon$. 
As in the proof of Lemma \ref{Cauchy}, for any $\theta\in[-K,K]$, $p,q>1$, $\frac{1}{p}+\frac{1}{q}=1$, we get
\begin{align}
&\limsup_{t\rightarrow\infty}\frac{1}{t}\log\mathbb{E}[e^{\theta N_{t}}]
\\
&\leq\Gamma_{n}(\theta)+C(K)L_{1}\epsilon_{n}+\frac{C(K)}{2q}\cdot\frac{L_{2q}\epsilon_{n}}{c^{2q-1}}+2\left(1-\frac{1}{p}\right)C(K)K.\nonumber
\end{align}
Letting $n\rightarrow\infty$ first and then $p\downarrow 1$, we get $\limsup_{t\rightarrow\infty}\frac{1}{t}\log\mathbb{E}[e^{\theta N_{t}}]
\leq\Gamma(\theta)$. Similarly, for any $p',q'>1$ with $\frac{1}{p'}+\frac{1}{q'}=1$,
\begin{align}
\Gamma_{n}(\theta)&\leq\liminf_{t\rightarrow\infty}\frac{1}{pt}\log\mathbb{E}[e^{(p\theta+pL_{1}\epsilon_{n})N_{t}}]
+\liminf_{t\rightarrow\infty}\frac{1}{2qt}\log\mathbb{E}\left[e^{\frac{L_{2q}\epsilon_{n}}{c^{2q-1}}N_{t}}\right]
\\
&\leq\liminf_{t\rightarrow\infty}\frac{1}{pp't}\log\mathbb{E}[e^{pp'\theta N_{t}}]
+\liminf_{t\rightarrow\infty}\frac{1}{pq't}\log\mathbb{E}[e^{q'pL_{1}\epsilon_{n}N_{t}}]\nonumber
\\
&+\liminf_{t\rightarrow\infty}\frac{1}{2qt}\log\mathbb{E}\left[e^{\frac{L_{2q}\epsilon_{n}}{c^{2q-1}}N_{t}}\right].\nonumber
\end{align}
Since we can dominate $\lambda(\cdot)$ by the linear function $\lambda(z)=\nu+z$ in which case the limit of logarithmic 
moment generating function $\Gamma_{\nu}(\theta)$ is continuous in $\theta$, we may let $n\rightarrow\infty$ to obtain
\begin{equation}
\Gamma(\theta)\leq\liminf_{t\rightarrow\infty}\frac{1}{pp't}\log\mathbb{E}[e^{pp'\theta N_{t}}].
\end{equation}
This holds for any $\theta$ and thus
\begin{equation}
\liminf_{t\rightarrow\infty}\frac{1}{t}\log\mathbb{E}[e^{\theta N_{t}}]\geq pp'\Gamma\left(\frac{\theta}{pp'}\right).
\end{equation}
Letting $p,p'\downarrow 1$ and using the continuity of $\Gamma(\cdot)$, we get the desired result.
\end{proof}

Finally, let us prove Theorem \ref{specialthm}.

\begin{proof}[Proof of Theorem \ref{specialthm}]
For the upper bound, apply the G\"{a}rtner-Ellis Theorem. Let us prove the lower bound. 
Let $B_{\epsilon}(x)$ denote the open ball centered at $x$ with radius $\epsilon>0$. By H\"{o}lder's inequality, 
for any $p,q>1$ with $\frac{1}{p}+\frac{1}{q}=1$, 
\begin{equation}
P_{n}\left(\frac{N_{t}}{t}\in B_{\epsilon}(x)\right)
\leq\bigg\Vert\frac{dP_{n}}{d\mathbb{P}}\bigg\Vert_{L^{p}(\mathbb{P})}\mathbb{P}\left(\frac{N_{t}}{t}\in B_{\epsilon}(x)\right)^{1/q}.
\end{equation}
Therefore, letting $t\rightarrow\infty$, we have
\begin{align}
&\sup_{\theta\in\mathbb{R}}\{\theta x-\Gamma_{n}(\theta)\}=\lim_{t\rightarrow\infty}\frac{1}{t}\log P_{n}\left(\frac{N_{t}}{t}\in B_{\epsilon}(x)\right)
\\
&\leq\frac{1}{pp'}\Gamma(pp'L_{1}\epsilon_{n})+\frac{1}{2pq'}\Gamma\left(\frac{L_{2pq'}\epsilon_{n}}{c^{2pq'-1}}\right)
+\frac{1}{q}\liminf_{t\rightarrow\infty}\frac{1}{t}\log\mathbb{P}\left(\frac{N_{t}}{t}\in B_{\epsilon}(x)\right),\nonumber
\end{align}
where $\epsilon_{n}=\Vert h_{n}-h\Vert_{L^{1}}$. Hence, letting $n\rightarrow\infty$, see that
\begin{equation}
\frac{1}{q}\liminf_{t\rightarrow\infty}\frac{1}{t}\log\mathbb{P}\left(\frac{N_{t}}{t}\in B_{\epsilon}(x)\right)\geq\limsup_{n\rightarrow\infty}
\sup_{\theta\in\mathbb{R}}\{\theta x-\Gamma_{n}(\theta)\}.
\end{equation}
Since $\Gamma_{n}(\theta)\rightarrow\Gamma(\theta)$ uniformly on any compact set $K$, 
\begin{equation}
\sup_{\theta\in K}\{\theta x-\Gamma_{n}(\theta)\}\rightarrow\sup_{\theta\in K}\{\theta x-\Gamma(\theta)\},
\end{equation}
as $n\rightarrow\infty$ for any such set $K$. Notice that $\lambda(\cdot)\geq c>0$ and recall that the limit for 
the logarithmic moment generating function with parameter $\theta$ for a Poisson process with constant rate $c$ is $(e^{\theta}-1)c$. Hence
\begin{equation}
\liminf_{\theta\rightarrow +\infty}\frac{\Gamma_{n}(\theta)}{\theta}\geq\liminf_{\theta\rightarrow +\infty}\frac{(e^{\theta}-1)c}{\theta}=+\infty,
\end{equation}
which implies that $\sup_{\theta\in\mathbb{R}}\{\theta x-\Gamma_{n}(\theta)\}\rightarrow\sup_{\theta\in\mathbb{R}}\{\theta x-\Gamma(\theta)\}$. 
Therefore,
\begin{equation}
\frac{1}{q}\liminf_{t\rightarrow\infty}\frac{1}{t}\log\mathbb{P}\left(\frac{N_{t}}{t}\in B_{\epsilon}(x)\right)
\geq\sup_{\theta\in\mathbb{R}}\{\theta x-\Gamma(\theta)\}.
\end{equation}
Letting $q\downarrow 1$, we get the desired result.
\end{proof}

\chapter{Asymptotics for Nonlinear Hawkes Processes\label{chap:five}}

In the existing literature of on nonlinear Hawkes processes, the usual assumption is that $\lambda(\cdot)$ is $\alpha$-Lipschitz,
$h(\cdot)$ is integrable and $\alpha\Vert h\Vert_{L^{1}}<1$. But how about other regimes? How do the asymptotics vary
in different regimes? This is the question we would try to answer in this chapter.

We divide the nonlinear Hawkes process into the following regimes.

\begin{enumerate}
\item 
$\lim_{z\rightarrow\infty}\frac{\lambda(z)}{z}=0$.
This is the sublinear regime. In this regime, if we assume that $\lambda(\cdot)$ is $\alpha$-Lipschitz, $\Vert h\Vert_{L^{1}}<\infty$
and $\alpha\Vert h\Vert_{L^{1}}<1$,
then there exists a unique stationary version of the nonlinear Hawkes process. The central limit theorem and large deviations
for this regime are proved in Zhu \cite{ZhuIII}, \cite{ZhuI} and \cite{ZhuII}.
On the contrary, if we assume that $\Vert h\Vert_{L^{1}}=\infty$, then, there is no stationary version.
Figure \ref{IntensityPlotOne} illustrates $\lambda_{t}$ in this case. We will obtain the time asymptotics for $\lambda_{t}$
in Section \ref{sublinear}.

\item
$\lim_{z\rightarrow\infty}\frac{\lambda(z)}{z}=1$ and $\Vert h\Vert_{L^{1}}<1$.
This is the sub-critical regime. In this regime, if we assume that $\lambda(\cdot)$ is $\alpha$-Lipschitz
and $\alpha\Vert h\Vert_{L^{1}}<1$, then
there exists a unique stationary version of the nonlinear Hawkes process, see Br\'{e}maud and Massouli\'{e} \cite{Bremaud}.
The central limit theorem is proved in Zhu \cite{ZhuIII}. Figure \ref{IntensityPlotThree} illustrates $\lambda_{t}$
in this case. We will summarize some known results
about the limit theorems in Section \ref{sub}.

\item
$\lim_{z\rightarrow\infty}\frac{\lambda(z)}{z}=1$ and $\Vert h\Vert_{L^{1}}=1$.
This is the critical regime. This regime is very subtle. We will show in Section \ref{critical} that in
some cases, there exists a stationary version of the Hawkes process. In some other cases, it does not exist. 
In particular, when $\lambda(z)=\nu+z$
and $\int_{0}^{\infty}th(t)dt<\infty$, 
we will prove that $\frac{N_{tT}}{T^{2}}\rightarrow\int_{0}^{t}\eta_{s}ds$, 
where $\eta_{s}$ is a squared Bessel process.
$N[T, T+\frac{t}{T}]$ will converge to a P\'{o}lya process as $T\rightarrow\infty$. 
Figure \ref{IntensityPlotFour} illustrates the behavior of $\lambda_{t}$ in this case.
When $h(\cdot)$ has heavy tails, i.e. $\int_{0}^{\infty}th(t)dt=\infty$, we will prove that the time
asymptotic behavior is different from the light tail case.

\item
$\lim_{z\rightarrow\infty}\frac{\lambda(z)}{z}=1$ and $\Vert h\Vert_{L^{1}}>1$.
This is the super-critical regime. We will prove in Section \ref{super} that $\lambda_{t}$ grows exponentially in $t$ in this regime,
which is consistent with what we can see in Figure \ref{IntensityPlotFive}.

\item
$\sum_{n=0}^{\infty}\frac{1}{\lambda(n)}<\infty$.
This is the explosive regime. In Section \ref{explosive}, we will first provide a criterion for the explosion and non-explosion
for nonlinear Hawkes process. Then, we will study the asymptotic behavior of the explosion time. 
Figure \ref{IntensityPlotSix}
illustrates the explosion of a finite time.
\end{enumerate}

Notice that if $\Vert h\Vert_{L^{1}}=\infty$ and $\lim_{z\rightarrow\infty}\frac{\lambda(z)}{z}=\alpha>0$, then one is in
the super-critical regime and we will see that $\lambda_{t}$
grows exponentially; this is discussed Section \ref{super}. If $\Vert h\Vert_{L^{1}}=\infty$ and $\sum_{n=0}^{\infty}\frac{1}{\lambda(n)}<\infty$, 
then one is in the explosive regime to be discussed in Section \ref{explosive}.

We will launch a systematic study of the time asymptoics for Hawkes process
in different regimes. We will study the sublinear regime, sub-critical regime, critical regime and super-critical regime
in Sections \ref{sublinear}, \ref{sub}, \ref{critical}, \ref{super} respectively. Finally, in Section \ref{explosive}, we will
provide a criterion for explosion and non-explosion for Hawkes process and obtain some asymptotics for the explosion time.

\begin{figure}[htb]
\begin{center}
\includegraphics[scale=0.70]{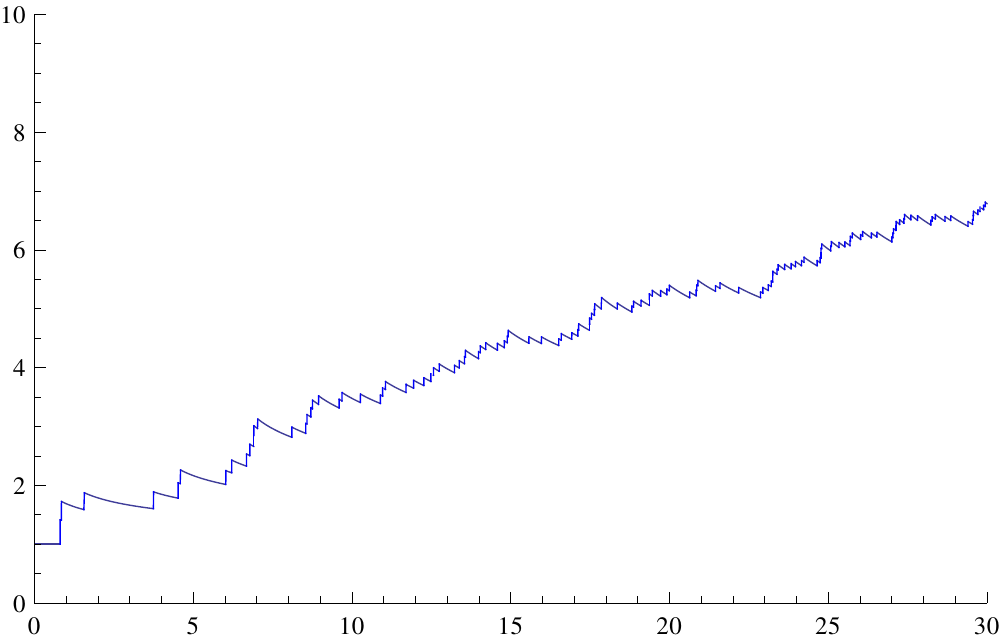}
\caption{Plot of intensity $\lambda_{t}$ for a realization of Hawkes process.
Here $h(t)=(t+1)^{-\frac{1}{2}}$ and $\lambda(z)=(1+z)^{\frac{1}{2}}$.}
\label{IntensityPlotOne}
\end{center}
\end{figure}

\begin{figure}[htb]
\begin{center}
\includegraphics[scale=0.70]{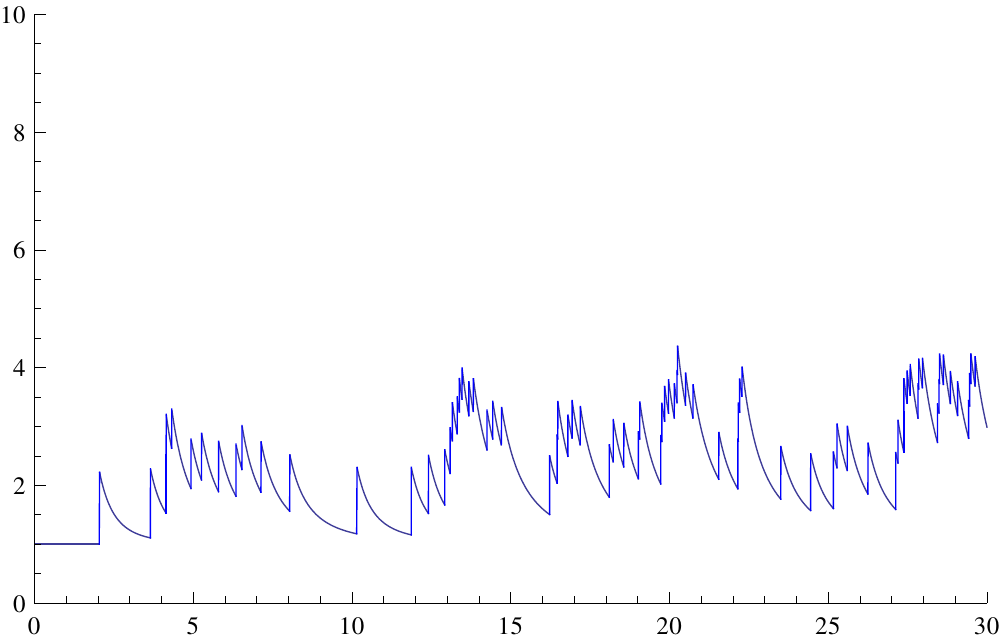}
\caption{Plot of intensity $\lambda_{t}$ for a realization of Hawkes process.
Here $h(t)=\frac{4}{(t+1)^{3}}$ and $\lambda(z)=(1+z)^{\frac{1}{2}}$. In this case, $\Vert h\Vert_{L^{1}}<\infty$ and
$\lambda(\cdot)$ is sublinear and Lipschitz. It will converge to the unique stationary version of the Hawkes process.}
\label{IntensityPlotTwo}
\end{center}
\end{figure}

\begin{figure}[htb]
\begin{center}
\includegraphics[scale=0.70]{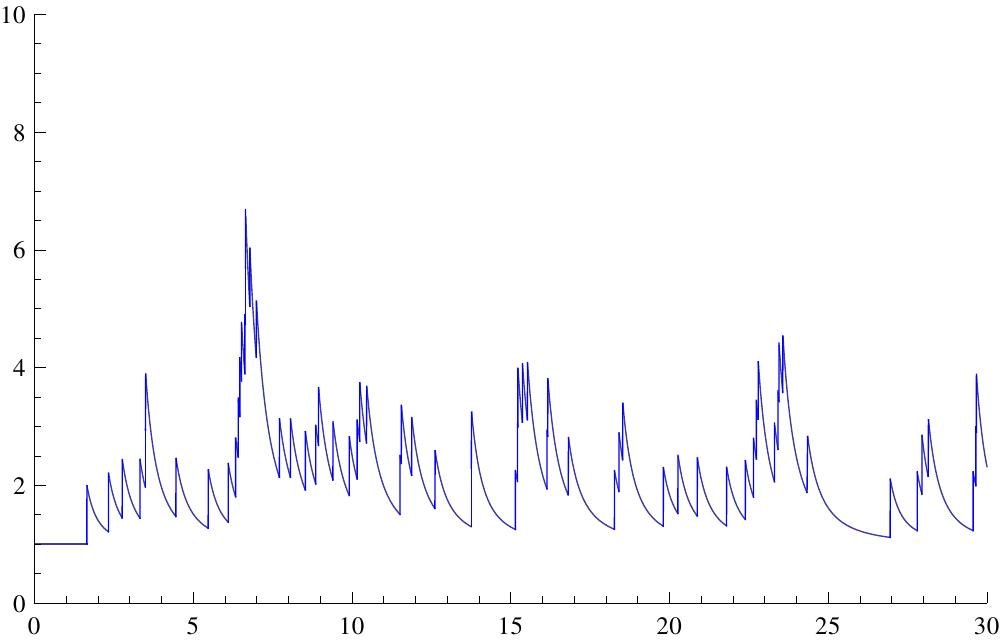}
\caption{Plot of intensity $\lambda_{t}$ for a realization of Hawkes process.
Here $h(t)=\frac{1}{(t+1)^{3}}$ and $\lambda(z)=1+z$. In this case, $\Vert h\Vert_{L^{1}}=\frac{1}{2}<1$. It is
in the sub-critical regime. This is a classical Hawkes process and it will converge to the unique stationary version
of the Hawkes process.}
\label{IntensityPlotThree}
\end{center}
\end{figure}

\begin{figure}[htb]
\begin{center}
\includegraphics[scale=0.70]{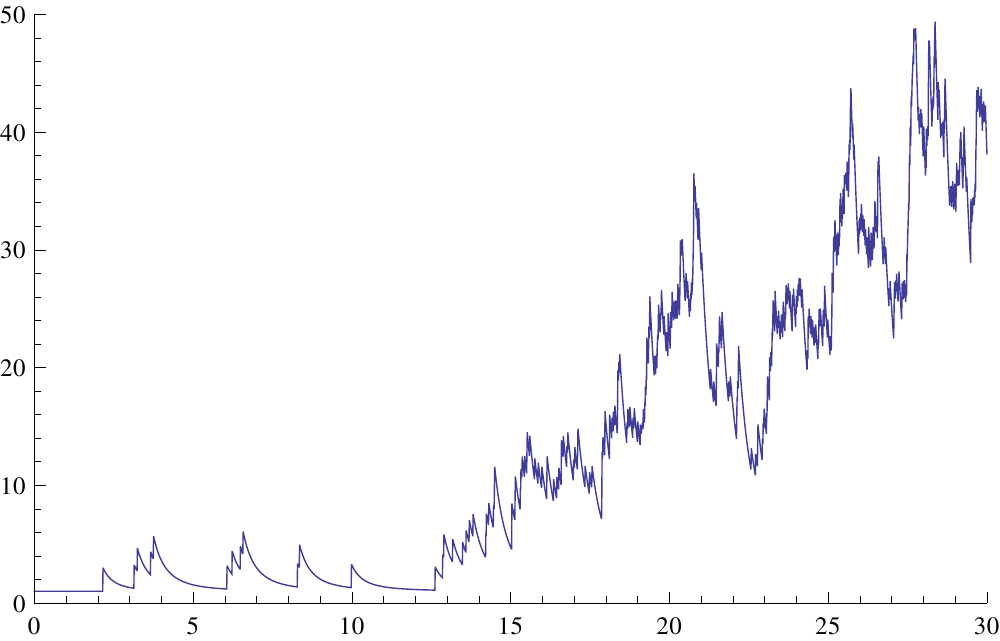}
\caption{Plot of intensity $\lambda_{t}$ for a realization of Hawkes process.
Here $h(t)=\frac{2}{(t+1)^{3}}$ and $\lambda(z)=1+z$. In this case, $\Vert h\Vert_{L^{1}}=1$, $\int_{0}^{\infty}th(t)dt<\infty$
and $\lambda(\cdot)$ is linear. It is therefore in the critical regime.
From the graph, we can see that $\lambda_{t}$ grows linearly in $t$, which will be proved in this chapter. Indeed,
we will prove that $\frac{N_{\cdot T}}{T^{2}}$ converges to $\int_{0}^{\cdot}\eta_{s}ds$ as $T\rightarrow\infty$, where
$\eta_{s}$ is a squared Bessel process.}
\label{IntensityPlotFour}
\end{center}
\end{figure}

\begin{figure}[htb]
\begin{center}
\includegraphics[scale=0.70]{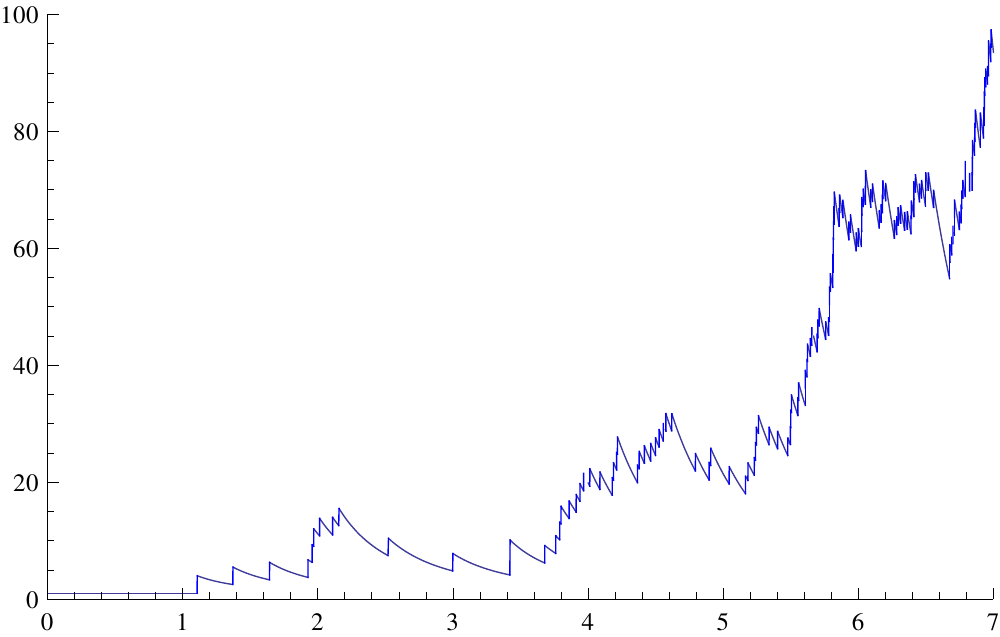}
\caption{Plot of intensity $\lambda_{t}$ for a realization of Hawkes process.
Here $h(t)=\frac{3}{(t+1)^{3}}$ and $\lambda(z)=1+z$. In this case, $\Vert h\Vert_{L^{1}}=\frac{3}{2}>1$ and it is
in the super-critical regime. We expect that $\lambda_{t}$ would grow exponentially in this case.}
\label{IntensityPlotFive}
\end{center}
\end{figure}

\begin{figure}[htb]
\begin{center}
\includegraphics[scale=0.70]{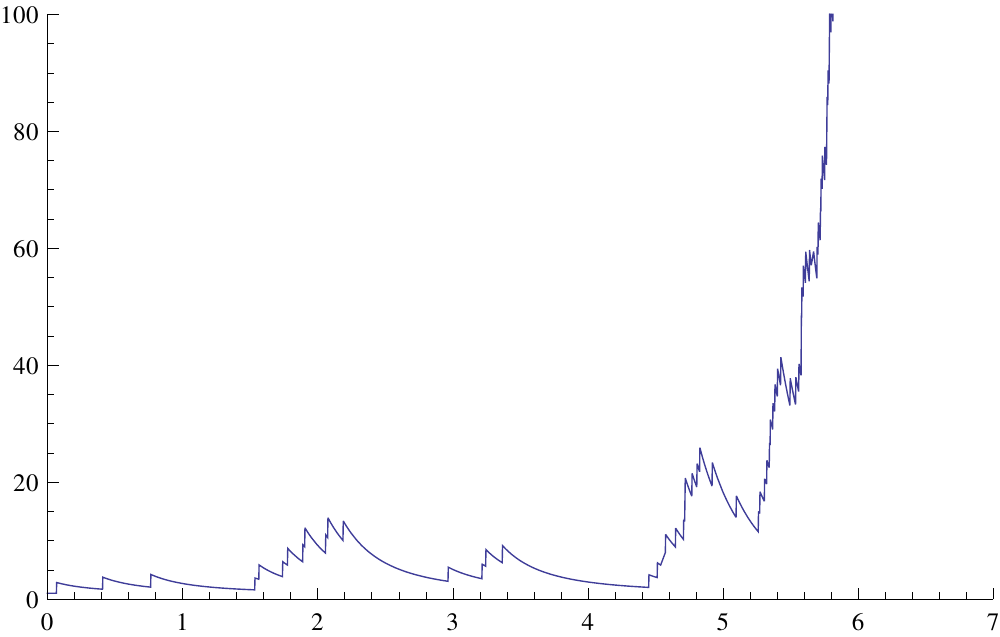}
\caption{Plot of intensity $\lambda_{t}$ for a realization of Hawkes process.
Here $h(t)=\frac{1}{(t+1)^{3}}$ and $\lambda(z)=(1+z)^{\frac{3}{2}}$. This is in the explosive regime. The plot
is a little bit cheating because it is impossible to ``plot'' explosion. Nevertheless, you can think it as an illustration.
It ``appears'' that the process explodes near time $t=6$.}
\label{IntensityPlotSix}
\end{center}
\end{figure}

\section{Sublinear Regime}\label{sublinear}

In this section, we are interested in the sublinear case
$\lim_{z\rightarrow\infty}\frac{\lambda(z)}{z}=0$.
If $\Vert h\Vert_{L^{1}}<\infty$ and $\lambda(\cdot)$ is $\alpha$-Lipschitz
and $\alpha\Vert h\Vert_{L^{1}}<1$, then,
as Br\'{e}maud and Massouli\'{e} \cite{Bremaud} proved, there exists
a unique stationary Hawkes process. Recently, Karabash \cite{KarabashII} relaxed the Lipschitz condition and proved
the stability result for a wider class of $\lambda(\cdot)$.
Let $\mathbb{P}$ and $\mathbb{E}$ denote the probability measure and expectation for stationary Hawkes process.
Then, by ergodic theorem, we have the law of large numbers,
\begin{equation}
\frac{N_{t}}{t}\rightarrow\mu=\mathbb{E}[N[0,1]],\quad\text{as $t\rightarrow\infty$}.
\end{equation}
The central limit theorem and large deviations have already been discussed in Chapter \ref{chap:two}, Chapter \ref{chap:three}
and Chapter \ref{chap:four}.

If $\Vert h\Vert_{L^{1}}=\infty$, then there is no stationary version
of Hawkes process and $\lambda_{t}$ tends to $\infty$ as $t\rightarrow\infty$. This is the
case we are going to study for the rest of the subsection. 
We are interested the time asymptotic behavior of the nonlinear Hawkes process in this regime.

Let us first make a simple observation. Assume that $\lambda(z)\uparrow\infty$ as $z\rightarrow\infty$. Then,
assuming $\Vert h\Vert_{L^{1}}=\infty$, we have $\lambda_{t}\rightarrow\infty$ as $t\rightarrow\infty$ a.s. This can
be seen by noticing that $\int_{0}^{t}h(t-s)N(ds)\rightarrow\infty$ a.s. if $\Vert h\Vert_{L^{1}}=\infty$, where $N_{t}$ follows
from a standard Poisson process with constant rate $\lambda(0)$.

Let us prove a special case first.
\begin{proposition}
Assume that $h(\cdot)\equiv 1$ and $\lambda(z)=\gamma(\nu+z)^{\beta}$, where $\gamma,\nu>0$ and $0<\beta<1$.
Then,
\begin{equation}
\frac{\lambda_{t}}{t^{\frac{\beta}{1-\beta}}}\rightarrow\gamma^{\frac{1}{1-\beta}}(1-\beta)^{\frac{\beta}{1-\beta}},
\end{equation}
in probability as $t\rightarrow\infty$.
\end{proposition}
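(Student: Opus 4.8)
The plan is to exploit the fact that when $h(\cdot)\equiv 1$ the Hawkes process degenerates into a pure birth process, so that its growth is governed by first passage times that form a sum of independent exponential random variables; a weak law of large numbers for that sum, followed by an inversion, then gives the result. First I would record the reduction: since $h\equiv 1$ we have $Z_t=\int_{(-\infty,t)}h(t-s)N(ds)=N_{t-}$, so $\lambda_t=\gamma(\nu+N_{t-})^{\beta}$, and $N$ is the pure birth (hence Markov) process on $\mathbb{Z}_{\ge 0}$ started from $0$ whose jump rate out of state $n$ equals $\lambda(n)=\gamma(\nu+n)^{\beta}$. Because $0<\beta<1$ we have $\sum_{n\ge 0}\lambda(n)^{-1}=\gamma^{-1}\sum_{n\ge 0}(\nu+n)^{-\beta}=\infty$, so $N$ is non-explosive and $N_t\uparrow\infty$ a.s., consistent with $\|h\|_{L^{1}}=\infty$. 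Writing $T_N:=\inf\{t\ge 0:N_t=N\}$, we have $T_N=\sum_{n=0}^{N-1}E_n$ with the $E_n$ independent and $E_n\sim\mathrm{Exp}(\lambda(n))$, so it suffices to control $T_N$ and then invert.

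Next I would prove a weak law for $T_N$. By comparison with $\int_0^N x^{-\beta}\,dx$ one has $\mathbb{E}[T_N]=\gamma^{-1}\sum_{n=0}^{N-1}(\nu+n)^{-\beta}=\frac{N^{1-\beta}}{\gamma(1-\beta)}(1+o(1))$, while $\mathrm{Var}(T_N)=\gamma^{-2}\sum_{n=0}^{N-1}(\nu+n)^{-2\beta}$, which in each of the cases $2\beta<1$, $2\beta=1$, $2\beta>1$ is $o\big(N^{2(1-\beta)}\big)=o\big(\mathbb{E}[T_N]^{2}\big)$. Chebyshev's inequality then yields $\gamma(1-\beta)\,N^{\beta-1}\,T_N\to 1$ in probability as $N\to\infty$.

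Then I would invert using the identity $\{N_t\ge N\}=\{T_N\le t\}$ together with the monotonicity of $N\mapsto T_N$. For fixed $\varepsilon\in(0,1)$ set $N^{\pm}(t):=\big\lceil(1\pm\varepsilon)^{1/(1-\beta)}(\gamma(1-\beta)t)^{1/(1-\beta)}\big\rceil$, chosen so that $\mathbb{E}[T_{N^{\pm}(t)}]\sim(1\pm\varepsilon)t$; since $N^{\pm}(t)\to\infty$, the weak law for $T_N$ gives $T_{N^{+}(t)}/t\to 1+\varepsilon$ and $T_{N^{-}(t)}/t\to 1-\varepsilon$ in probability, hence $\mathbb{P}(T_{N^{+}(t)}\le t)\to 0$ and $\mathbb{P}(T_{N^{-}(t)}\le t)\to 1$, and therefore $\mathbb{P}\big(N^{-}(t)\le N_t<N^{+}(t)\big)\to 1$. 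Letting $\varepsilon\downarrow 0$ gives $N_t/(\gamma(1-\beta)t)^{1/(1-\beta)}\to 1$ in probability. Finally, writing $\lambda_t\,t^{-\beta/(1-\beta)}=\gamma\big((\nu+N_{t-})\,t^{-1/(1-\beta)}\big)^{\beta}$ and applying the continuous mapping theorem for convergence in probability with the map $x\mapsto\gamma x^{\beta}$ (continuous at the positive limit point $(\gamma(1-\beta))^{1/(1-\beta)}$), one obtains $\lambda_t\,t^{-\beta/(1-\beta)}\to\gamma\cdot(\gamma(1-\beta))^{\beta/(1-\beta)}=\gamma^{1/(1-\beta)}(1-\beta)^{\beta/(1-\beta)}$, as desired.

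The only step requiring genuine care is the inversion: a weak law for the increasing sequence $(T_N)_N$ does not transfer automatically to the inverse process $(N_t)_t$, and this is resolved by the two-sided $N^{\pm}(t)$ sandwiching above. The distinction between $N_{t-}$ and $N_t$ is immaterial, since they differ only at the countably many jump times and share the same normalized limit; everything else is routine partial-sum asymptotics together with Chebyshev's inequality.
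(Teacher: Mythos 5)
Your proof is correct, but it follows a genuinely different route from the paper's. The paper works directly with the intensity: it computes the semimartingale decomposition of $\lambda_{t}^{1/\alpha}$ for the specially chosen power $\alpha=\frac{\beta}{1-\beta}$, so that the drift term $\left[(\lambda_{s}^{1/\beta}+\gamma^{1/\beta})^{1-\beta}-(\lambda_{s}^{1/\beta})^{1-\beta}\right]\lambda_{s}$ converges to the constant $(1-\beta)\gamma^{1/\beta}$ as $\lambda_{s}\rightarrow\infty$; bounded convergence handles the drift and a second-moment bound kills the martingale part, giving $\lambda_{t}^{(1-\beta)/\beta}/t\rightarrow(1-\beta)\gamma^{1/\beta}$ in probability. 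You instead exploit the fact that $h\equiv 1$ makes $N$ a pure birth process, prove a weak law for the hitting times $T_{N}=\sum_{n=0}^{N-1}E_{n}$ by Chebyshev, and invert via the two-sided sandwich $N^{\pm}(t)$ — the one step that genuinely needs care, and which you handle correctly. Each approach buys something: yours is more elementary, and since $\mathrm{Var}(E_{n})/\mathbb{E}[T_{n}]^{2}\asymp n^{-2}$ is summable, Kolmogorov's strong law for independent sums would upgrade your conclusion to almost sure convergence essentially for free; the paper's intensity-based computation, on the other hand, does not rely on the pure-birth reduction and is the template behind the remark that follows the proposition, where $h(t)=(t+1)^{\delta}$ is conjectured to give $\lambda_{t}\sim c\,t^{(1+\delta)\beta/(1-\beta)}$ — a setting where your first-passage decomposition is no longer available because the jump rate then depends on the full past, not just on $N_{t-}$.
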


\begin{proof}
For $\alpha>0$, 
\begin{align}
d\lambda_{t}^{\frac{1}{\alpha}}
&=\left[\lambda(\nu+N_{t}+1)^{\frac{1}{\alpha}}-\lambda(\nu+N_{t})^{\frac{1}{\alpha}}\right]dN_{t}
\\
&=\left[\gamma^{\frac{1}{\alpha}}(\nu+N_{t}+1)^{\frac{\beta}{\alpha}}-\gamma^{\frac{1}{\alpha}}(\nu+N_{t})^{\frac{\beta}{\alpha}}\right]dN_{t}\nonumber
\\
&=\left[\left(\lambda_{t}^{\frac{1}{\beta}}+\gamma^{\frac{1}{\beta}}\right)^{\frac{\beta}{\alpha}}-\lambda_{t}^{\frac{1}{\alpha}}\right]dN_{t}.\nonumber
\end{align}
Let $\alpha=\frac{\beta}{1-\beta}$. We have
\begin{equation}
\lambda_{t}^{\frac{1-\beta}{\beta}}=\int_{0}^{t}\left[(\lambda_{s}^{\frac{1}{\beta}}+\gamma^{\frac{1}{\beta}})^{1-\beta}
-(\lambda_{s}^{\frac{1}{\beta}})^{1-\beta}\right]\lambda_{s}ds+\int_{0}^{t}\left[(\lambda_{s}^{\frac{1}{\beta}}+\gamma^{\frac{1}{\beta}})^{1-\beta}
-(\lambda_{s}^{\frac{1}{\beta}})^{1-\beta}\right]dM_{s}.
\end{equation}
Since $\lambda_{t}\rightarrow\infty$ a.s. as $t\rightarrow\infty$, by the bounded convergence theorem, 
\begin{equation}
\frac{1}{t}\int_{0}^{t}\mathbb{E}\left\{\left[(\lambda_{s}^{\frac{1}{\beta}}+\gamma^{\frac{1}{\beta}})^{1-\beta}
-(\lambda_{s}^{\frac{1}{\beta}})^{1-\beta}\right]\lambda_{s}\right\}ds\rightarrow(1-\beta)\gamma^{\frac{1}{\beta}},
\end{equation}
as $t\rightarrow\infty$. It is not difficult to see that $\frac{1}{t}\int_{0}^{t}[(\lambda_{s}^{\frac{1}{\beta}}+\gamma^{\frac{1}{\beta}})^{1-\beta}
-(\lambda_{s}^{\frac{1}{\beta}})^{1-\beta}]dM_{s}\rightarrow 0$ in probability as $t\rightarrow\infty$.
Hence, $\frac{\lambda_{t}^{\frac{1-\beta}{\beta}}}{t}\rightarrow(1-\beta)\gamma^{\frac{1}{\beta}}$ in probability
as $t\rightarrow\infty$.
\end{proof}

\begin{remark}
Assume that $h(t)=(t+1)^{\delta}$, $\delta>-1$ and $\lambda(z)=\gamma(\nu+z)^{\beta}$, where $\gamma,\nu>0$ and $0<\beta<1$.
We conjecture that 
\begin{equation}
\frac{\lambda_{t}}{t^{\alpha}}\rightarrow\gamma^{\frac{1}{1-\beta}}B(\delta,\alpha)^{\frac{\beta}{1-\beta}},
\end{equation}
as $t\rightarrow\infty$ a.s., where $\alpha=\frac{(1+\delta)\beta}{1-\beta}$ and $B(\delta,\alpha)=\int_{0}^{1}u^{\delta}(1-u)^{\alpha}du$.
\end{remark}

\section{Sub-Critical Regime}\label{sub}

In this section, we review some known results about the limit theorems in the sub-critical regime.
We say the Hawkes process is in the sub-critical regime if $\lim_{z\rightarrow\infty}\frac{\lambda(z)}{z}=1$ and
$\Vert h\Vert_{L^{1}}<1$. If we further assume that $\lambda(\cdot)$ is $\alpha$-Lipschitz and $\alpha\Vert h\Vert_{L^{1}}<1$, then
Br\'{e}maud and Massouli\'{e} \cite{Bremaud} proved that there exists a unique stationary Hawkes process.
In this regime, we also have the law of large numbers and the central limit theorem just as in Section \ref{sublinear}.
For the case when $\lambda(\cdot)$ is nonlinear, we refer to the review in Section \ref{sublinear} for the
law of large numbers and central limit theorem.

In particular, when $\lambda(z)=\nu+z$ and $\nu>0$, we have explict expressions for
the law of large numbers, central limit theorem and large deviation principle. They are well known in the literature.

The ergodic theorem implies the following law of large numbers,
\begin{equation}
\frac{N_{t}}{t}\rightarrow\frac{\nu}{1-\Vert h\Vert_{L^{1}}},\quad\text{as $t\rightarrow\infty$ a.s.}
\end{equation}
Bordenave and Torrisi \cite{Bordenave} proved a large deviation principle for $(\frac{N_{t}}{t}\in\cdot)$ with the rate function
\begin{equation}
I(x)=
\begin{cases}
x\log\left(\frac{x}{\nu+x\Vert h\Vert_{L^{1}}}\right)-x+x\Vert h\Vert_{L^{1}}+\nu &\text{if $x\in[0,\infty)$}
\\
+\infty &\text{otherwise}
\end{cases}.
\end{equation}
Bacry et al. \cite{Bacry} proved a functional central limit theorem, stating that
\begin{equation}
\frac{N_{\cdot t}-\cdot\mu t}{\sqrt{t}}\rightarrow\sigma B(\cdot),\quad\text{as $t\rightarrow\infty$,}
\end{equation}
on $D[0,1]$ with Skorokhod topology, where
\begin{equation}
\mu=\frac{\nu}{1-\Vert h\Vert_{L^{1}}}\quad\text{and}\quad\sigma^{2}=\frac{\nu}{(1-\Vert h\Vert_{L^{1}})^{3}}.
\end{equation}

When $\lambda(\cdot)$ is nonlinear and sub-critical, the central limit theorem has been obtained in Chapter \ref{chap:two}.

\section{Critical Regime}\label{critical}

In this section, we are interested in the critical regime, i.e. $\lim_{z\rightarrow\infty}\frac{\lambda(z)}{z}=1$ and $\Vert h\Vert_{L^{1}}=1$. 
This regime is
very subtle. In some cases, there exists a stationary version of Hawkes process whilst in some cases there does not.
For example, Br\'{e}maud and Massouli\'{e} \cite{BremaudII} proved that 
\begin{proposition}[Br\'{e}maud and Massouli\'{e}]
Assume $\lambda(z)=z$, $\Vert h\Vert_{L^{1}}=1$ and
\begin{equation}
\sup_{t\geq 0}t^{1+\alpha}h(t)\leq R,\quad\lim_{t\rightarrow\infty}t^{1+\alpha}h(t)=r,
\end{equation}
for some finite constants $r,R>0$ and $0<\alpha<\frac{1}{2}$. Then, there exists a non-trivial stationary Hawkes process with finite intensity.
\end{proposition}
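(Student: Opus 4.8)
The plan is to obtain the stationary process as a weak limit of genuinely subcritical linear Hawkes processes whose immigration rate is sent to zero while their mean intensity is held fixed. Concretely, for $\epsilon\in(0,1/2)$ I would set $h_{\epsilon}=(1-\epsilon)h$, so that $\Vert h_{\epsilon}\Vert_{L^{1}}=1-\epsilon$ and $h_{\epsilon}$ still obeys $\sup_{t\geq 0}t^{1+\alpha}h_{\epsilon}(t)\leq R$ and $t^{1+\alpha}h_{\epsilon}(t)\to(1-\epsilon)r$, fix a constant $\mu_{0}>0$, and put $\nu_{\epsilon}=\epsilon\mu_{0}$. By the immigration--birth representation and the subcritical stability theory recalled earlier in this chapter, there is a unique stationary linear Hawkes process $N^{\epsilon}$ with intensity $\lambda^{\epsilon}_{t}=\nu_{\epsilon}+\int_{-\infty}^{t}h_{\epsilon}(t-s)N^{\epsilon}(ds)$, whose mean intensity is $\mathbb{E}[\lambda^{\epsilon}_{0}]=\nu_{\epsilon}/(1-\Vert h_{\epsilon}\Vert_{L^{1}})=\mu_{0}$, independently of $\epsilon$. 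The stationary process being sought will be a subsequential weak limit of $N^{\epsilon}$ as $\epsilon\downarrow 0$, viewed as a random point measure on $\mathbb{R}$ with the vague topology.

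The crucial, and I expect hardest, step is a second-moment bound on $N^{\epsilon}$ that is uniform in $\epsilon$. I would use the Bartlett spectrum of the linear Hawkes process recalled above, $\Gamma_{\epsilon}(d\omega)=\frac{\mu_{0}}{2\pi|1-\hat h_{\epsilon}(\omega)|^{2}}\,d\omega$, so that $\mathrm{Var}(N^{\epsilon}[0,T])=\int_{\mathbb{R}}\frac{2(1-\cos\omega T)}{\omega^{2}}\,\Gamma_{\epsilon}(d\omega)$. Writing $1-\hat h_{\epsilon}(\omega)=\epsilon+(1-\epsilon)(1-\hat h(\omega))$ and using the tail asymptotics of $h$, one gets a lower bound $\mathrm{Re}(1-\hat h(\omega))=\int_{0}^{\infty}(1-\cos\omega t)h(t)\,dt\geq c|\omega|^{\alpha}$ for $|\omega|$ small (a Tauberian-type estimate: substitute $u=\omega t$ and use $h(t)\geq (r/2)t^{-(1+\alpha)}$ for large $t$), together with $|1-\hat h(\omega)|\geq\eta>0$ for $|\omega|$ bounded away from $0$ (here $h>0$ continuous and the Riemann--Lebesgue lemma prevent $|\hat h|$ from reaching $1$ off the origin). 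Hence $|1-\hat h_{\epsilon}(\omega)|^{-2}\leq C\min(|\omega|^{-2\alpha},\eta^{-2})$ uniformly in $\epsilon$, and a change of variables $u=\omega T$ yields $\mathrm{Var}(N^{\epsilon}[0,T])\leq C\bigl(T+T^{1+2\alpha}\bigr)$ with $C$ independent of $\epsilon$. The integrability of $\frac{1-\cos u}{u^{2}}|u|^{-2\alpha}$ near $u=0$ requires exactly $2\alpha<1$; this is where the hypothesis $\alpha<1/2$ is used, and it is what keeps the limiting spectrum from having a non-integrable singularity at $\omega=0$. Combined with $\mathbb{E}[N^{\epsilon}[0,T]]=\mu_{0}T$ this gives $\sup_{\epsilon}\mathbb{E}[N^{\epsilon}[0,T]^{2}]\leq C(1+T^{2})$ for every fixed $T$.

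With this in hand the remaining steps are routine. First- and second-moment control on compact intervals, uniformly in $\epsilon$, gives tightness of the laws of $N^{\epsilon}$ in the vague topology, so along a subsequence $N^{\epsilon}\Rightarrow N$ for some point process $N$ on $\mathbb{R}$; $N$ is stationary since each $N^{\epsilon}$ is. The uniform second-moment bound makes $\{N^{\epsilon}[0,1]\}_{\epsilon}$ uniformly integrable, so $\mathbb{E}[N[0,1]]=\lim_{\epsilon}\mathbb{E}[N^{\epsilon}[0,1]]=\mu_{0}\in(0,\infty)$; in particular $N$ is non-trivial and has finite intensity, and $\mathbb{E}\bigl[\int_{-\infty}^{0}h(-u)N(du)\bigr]=\Vert h\Vert_{L^{1}}\mu_{0}<\infty$, so the candidate intensity $\lambda_{t}=\int_{-\infty}^{t}h(t-s)N(ds)$ is a.s.\ finite. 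Finally I would identify $N$ as the Hawkes process with $\lambda(z)=z$: for each $\epsilon$ and each interval $(a,b]$, $N^{\epsilon}(a,b]-\int_{a}^{b}\bigl(\nu_{\epsilon}+\int_{-\infty}^{s}h_{\epsilon}(s-u)N^{\epsilon}(du)\bigr)ds$ has conditional mean zero given $\mathcal{F}^{-\infty}_{a}$, and passing to the limit using $\nu_{\epsilon}\to 0$, $h_{\epsilon}\to h$ in $L^{1}$, the uniform moment bounds, and the tail bound on $h$ to control the contribution of the remote past, one concludes $\mathbb{E}[N(a,b]\mid\mathcal{F}^{-\infty}_{a}]=\mathbb{E}[\int_{a}^{b}\lambda_{s}\,ds\mid\mathcal{F}^{-\infty}_{a}]$, i.e.\ $N$ admits the intensity $\lambda_{t}=\int_{-\infty}^{t}h(t-s)N(ds)$. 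The one place in this last step that needs care is the interchange of the limit with the long-memory integral $\int_{-\infty}^{s}h_{\epsilon}(s-u)N^{\epsilon}(du)$, which can be handled via the tail estimate $\int_{s}^{\infty}h(u)\,du\lesssim s^{-\alpha}$ together with the uniform bound $\mathbb{E}[N^{\epsilon}[-u,0]]=\mu_{0}u$.
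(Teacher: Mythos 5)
The thesis itself gives no proof of this proposition: it is quoted from Br\'{e}maud and Massouli\'{e} (the relevant source is \cite{BremaudIII}, ``Hawkes branching point processes without ancestors''), so there is no internal argument to compare against. What you propose is essentially a reconstruction of that original construction: approximate by subcritical processes $h_{\epsilon}=(1-\epsilon)h$, $\nu_{\epsilon}=\epsilon\mu_{0}$ with the mean intensity held at $\mu_{0}$, control second moments through the Bartlett spectrum, where the tail hypothesis gives $\mathrm{Re}(1-\hat{h}(\omega))\geq c|\omega|^{\alpha}$ near the origin and $\alpha<1/2$ is exactly what makes the spectral singularity $|\omega|^{-2\alpha}$ integrable, then extract a stationary non-trivial weak limit and identify its intensity by passing the compensator identity to the limit with a truncation of the memory (uniformly small since $\int_{M}^{\infty}h\lesssim M^{-\alpha}$). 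This outline is sound; one slip is that your bound should read $|1-\hat{h}_{\epsilon}(\omega)|^{-2}\leq C|\omega|^{-2\alpha}$ for $|\omega|\leq 1$ and $\leq C$ for $|\omega|\geq 1$ (your ``$\min$'' is the wrong way around), though your subsequent integrability discussion uses the correct piecewise bound; also, the conditional-expectation identity should be passed to the limit in the unconditional form $\mathbb{E}[\Phi\,(N(a,b]-\int_{a}^{b}\lambda_{s}ds)]=0$ for bounded continuous functionals $\Phi$ of the restricted past, which is the fiddly step you correctly flag.

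The one substantive gap is simplicity of the limit. A Hawkes process is by definition a \emph{simple} point process, and neither of your closing steps delivers this: the set of simple point processes is not closed under your topology (the thesis stresses this and needs a dedicated superexponential estimate for exactly this issue in its level-3 upper bound), and the compensator identification does not exclude multiple points --- a Poisson process of rate $1/2$ with every point doubled has the same continuous compensator $t$ as a standard Poisson process. The fix uses the same second-order machinery you already invoke. With $\Psi_{\epsilon}=\sum_{n\geq1}h_{\epsilon}^{\ast n}$ the covariance density is $c_{\epsilon}(\tau)=\mu_{0}(\Psi_{\epsilon}(\tau)+\Psi_{\epsilon}(-\tau)+\Psi_{\epsilon}\ast\check{\Psi}_{\epsilon}(\tau))$, which increases to $c_{0}$ as $\epsilon\downarrow0$; $c_{0}$ is locally integrable near $0$ because $\int_{0}^{\delta}\Psi=\sum_{n}\mathbb{P}(S_{n}\leq\delta)\leq F(\delta)/(1-F(\delta))\rightarrow0$ ($h$ is a probability density, $S_{n}$ a sum of $n$ i.i.d.\ positive variables, $F(\delta)=\int_{0}^{\delta}h$), and $\Psi\ast\check{\Psi}$ is bounded and continuous since $\hat{\Psi}=\hat{h}/(1-\hat{h})\in L^{2}$, again precisely because $\alpha<1/2$. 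Hence $\mathbb{E}\bigl[N^{\epsilon}[0,\delta](N^{\epsilon}[0,\delta]-1)\bigr]\leq\mu_{0}^{2}\delta^{2}+\delta\int_{-\delta}^{\delta}c_{0}(\tau)d\tau=o(\delta)$ uniformly in $\epsilon$, so $\mathbb{P}(N^{\epsilon}[0,\delta]\geq2)=o(\delta)$ uniformly; passing this to the stationary limit and covering $[0,1]$ by order $1/\delta$ intervals shows the limit has no multiple points (the Korolyuk-type criterion). With that step added, your construction does prove the proposition.
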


Br\'{e}maud and Massouli\'{e} considered only the linear Hawkes process in their paper \cite{BremaudII}. 
If you allow nonlinear rate function,
you get a much richer class of Hawkes processes and in some cases, there still exists a stationary Hawkes process. It is much easier
to work with the exponential case, i.e. when $h(t)=ae^{-at}$ and $\Vert h\Vert_{L^{1}}=1$.

The lecture notes by Hairer \cite{Hairer} provides a sufficient condition for which there exists an invariant probability measure. 
Let $\mathcal{L}$ be the generator of a Markov process. 
If there exists $V\geq 1$, continuous, with precompact sublevel sets and some function $\phi:\mathbb{R}^{+}\rightarrow\mathbb{R}^{+}$ strictly concave, 
increasing, with $\phi(0)=0$, and $\phi(x)\rightarrow\infty$ as $x\rightarrow\infty$ and $\mathcal{L}V\leq K-\phi(V)$ for some $K>0$, 
then there exists an invariant probability measure.

\begin{proposition}\label{criticalstationary}
Assume $h(t)=ae^{-at}$, $a>0$ and $\lambda(z)=z-\psi(z)+\nu$, where $\psi(z)$ is positive, increasing, strictly concave 
and $\psi(z)\rightarrow\infty$ and $\frac{\psi(z)}{z}\rightarrow 0$ as $z\rightarrow\infty$.
If also $\lambda(z)$ is strictly positive. Then there exists an invarint probability measure.
\end{proposition}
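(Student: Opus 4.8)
The plan is to invoke the Foster--Lyapunov criterion of Hairer quoted just before the statement. Since $h(t)=ae^{-at}$, the intensity process $Z_{t}=\sum_{\tau_{j}<t}ae^{-a(t-\tau_{j})}$ is Markovian on $\mathbb{R}^{+}$ with generator $\mathcal{A}f(z)=-az\,\partial_{z}f(z)+\lambda(z)[f(z+a)-f(z)]$, so it suffices to produce a continuous $V\geq 1$ with precompact sublevel sets and a strictly concave increasing $\phi:\mathbb{R}^{+}\to\mathbb{R}^{+}$ with $\phi(0)=0$ and $\phi(x)\to\infty$ such that $\mathcal{A}V\leq K-\phi(V)$ for some $K>0$. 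Only existence is asserted, so no irreducibility or uniqueness argument is needed here.

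First I would take the linear Lyapunov function $V(z)=z+1$; it is smooth, bounded below by $1$, and has compact sublevel sets $[0,c-1]$. The reason for this choice is that $\mathcal{A}V(z)=-az+a\lambda(z)=a\nu-a\psi(z)$, so the drift tends to $-\infty$ as $z\to\infty$, but only at the sublinear rate $-a\psi(z)$. This is exactly why the stronger criterion $\mathcal{A}V\leq C_{1}-C_{2}V$ used in Lemma \ref{ergodiclemma} is unavailable in the critical regime: $\psi$ is sublinear, there is no linear decay, and one genuinely needs the concave comparison function $\phi$.

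Next I would set $\phi(v):=a\bigl(\psi(v)-\psi(0)\bigr)$. Since $\psi$ is positive, increasing and strictly concave with $\psi(v)\to\infty$, this $\phi$ is strictly concave, increasing, satisfies $\phi(0)=0$ and $\phi(v)\to\infty$, and takes values in $\mathbb{R}^{+}$. To verify the drift inequality, write $z=V(z)-1$; the required bound $a\nu-a\psi(z)\leq K-\phi(z+1)=K-a\psi(z+1)+a\psi(0)$ rearranges to $a\bigl(\psi(z+1)-\psi(z)\bigr)\leq K-a\nu+a\psi(0)$, and concavity of $\psi$ forces $\psi(z+1)-\psi(z)\leq\psi(1)-\psi(0)$ for every $z\geq 0$. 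Hence the inequality holds with $K=a\bigl(\nu+\psi(1)-2\psi(0)\bigr)$, enlarged to $\max\{K,1\}$ if necessary so that $K>0$, and Hairer's theorem then yields an invariant probability measure.

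The calculation itself is short; the two points that require a little care are (i) the concavity estimate $\psi(z+1)-\psi(z)\leq\psi(1)-\psi(0)$, which is the crux of the drift bound, and (ii) checking that the abstract criterion genuinely applies to this piecewise-deterministic jump process. For the latter one needs $\lambda$ (hence $\psi$) continuous, which holds under the standing assumptions, together with non-explosion of $Z_{t}$; the latter follows immediately from $\mathcal{A}V\leq K$ globally, since then $V(Z_{t})-Kt$ is a supermartingale and $\mathbb{E}[V(Z_{t})]\leq V(Z_{0})+Kt<\infty$ for all $t$. I expect (i) and the bookkeeping in (ii) to be the only real content, the rest being a direct application of the quoted theorem.
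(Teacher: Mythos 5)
Your proof is correct and follows essentially the same route as the paper: the same Lyapunov function $V(z)=z+1$, the same comparison function $\phi(v)=a(\psi(v)-\psi(0))$, and the same concavity estimate $\psi(z+1)-\psi(z)\leq\psi(1)-\psi(0)$, yielding the identical constant $K=a(\nu+\psi(1)-2\psi(0))$. The additional remarks on non-explosion and continuity are sound but not part of the paper's (terser) argument.
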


\begin{proof}
Let $V(z)=z+1$ and $\phi(V)=a(\psi(V)-\psi(0))$. Then $\phi:\mathbb{R}^{+}\rightarrow\mathbb{R}^{+}$ is increasing 
and strictly concave, $\phi(z)\rightarrow\infty$, and $\phi(0)=0$. Recall that the generator is given by
\begin{equation}
\mathcal{A}f(z)=-az\frac{\partial f}{\partial z}+\lambda(z)[f(z+a)-f(z)].
\end{equation}
Hence, we have
\begin{equation}
\mathcal{A}V+\phi(V)=-\psi(z)a+a\psi(z+1)-a\psi(0)+a\nu\leq a\psi(1)-2a\psi(0)+a\nu.
\end{equation}
\end{proof}

We can generalize our result to the much wider class of $h(\cdot)$ when $h(\cdot)$ is a sum of exponentials: 
$h(t)=\sum_{i=1}^{d}a_{i}e^{-b_{i}t}$, where $b_{i}>0$ and $a_{i}>0$, $1\leq i\leq d$. 
Write $Z_{i}(t)=\sum_{\tau<t}a_{i}e^{-b_{i}(t-\tau)}$. Then $Z_{t}=\sum_{i=1}^{d}Z_{i}(t)$ 
and $(Z_{1}(t),\ldots,Z_{d}(t))$ is Markovian with the generator
\begin{equation}
\mathcal{A}f=-\sum_{i=1}^{d}b_{i}z_{i}\frac{\partial f}{\partial z_{i}}
+\lambda\left(\sum_{i=1}^{d}z_{i}\right)\cdot\left[f(z_{1}+a_{1},\ldots,z_{d}+a_{d})-f(z_{1},\ldots,z_{d})\right].
\end{equation}
We have the following result.

\begin{proposition}\label{criticalstationaryII}
Assume $h(t)=\sum_{i=1}^{d}a_{i}e^{-b_{i}t}$, $b_{i}>0$ and $a_{i}>0$, $1\leq i\leq d$
and $\Vert h\Vert_{L^{1}}=\sum_{i=1}^{d}\frac{a_{i}}{b_{i}}=1$.
Also assume that $\lambda(z)=z-\psi(z)+\nu$, where $\psi(z)$ is positive, increasing, strictly concave 
and $\psi(z)\rightarrow\infty$ and $\frac{\psi(z)}{z}\rightarrow 0$ as $z\rightarrow\infty$ 
and $\lambda(z)$ is strictly positive. Then, there exists an invariant probability measure.
\end{proposition}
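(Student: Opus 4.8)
The plan is to mimic the proof of Proposition \ref{criticalstationary} and apply the criterion of Hairer \cite{Hairer}: it suffices to produce a continuous $V\geq 1$ with precompact sublevel sets on the state space (here $\mathcal{Z}=(\mathbb{R}^{+})^{d}$, since all $a_{i}>0$ and hence $Z_{i}(t)\geq 0$ a.s.), together with a strictly concave, increasing $\phi:\mathbb{R}^{+}\to\mathbb{R}^{+}$ with $\phi(0)=0$ and $\phi(x)\to\infty$, such that $\mathcal{A}V\leq K-\phi(V)$ for some $K>0$. The key observation is that, because $\Vert h\Vert_{L^{1}}=\sum_{i=1}^{d}\frac{a_{i}}{b_{i}}=1$, the $1/b_{i}$-weighted sum
\begin{equation}
V(z_{1},\ldots,z_{d})=\sum_{i=1}^{d}\frac{z_{i}}{b_{i}}+1
\end{equation}
plays the role that $z+1$ played in the one-dimensional case: the drift part of $\mathcal{A}$ contributes $-\sum_{i=1}^{d}b_{i}z_{i}\cdot\frac{1}{b_{i}}=-\sum_{i=1}^{d}z_{i}$, and the jump part contributes $\lambda\!\left(\sum_{i=1}^{d}z_{i}\right)\cdot\sum_{i=1}^{d}\frac{a_{i}}{b_{i}}=\lambda\!\left(\sum_{i=1}^{d}z_{i}\right)$. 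Writing $Z=\sum_{i=1}^{d}z_{i}$ and using $\lambda(Z)=Z-\psi(Z)+\nu$ this gives the clean identity
\begin{equation}
\mathcal{A}V(z_{1},\ldots,z_{d})=-Z+\lambda(Z)=\nu-\psi(Z).
\end{equation}

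Next I would compare $\psi(Z)$ with a function of $V$. Put $b_{\min}=\min_{1\leq i\leq d}b_{i}>0$. Since $z_{i}=b_{i}\cdot\frac{z_{i}}{b_{i}}\geq b_{\min}\frac{z_{i}}{b_{i}}\geq 0$, we get $Z=\sum_{i}z_{i}\geq b_{\min}\sum_{i}\frac{z_{i}}{b_{i}}=b_{\min}(V-1)\geq 0$, so monotonicity of $\psi$ yields $\psi(Z)\geq\psi(b_{\min}(V-1))$. Define
\begin{equation}
\phi(x)=\psi(b_{\min}x)-\psi(0),\qquad x\geq 0,
\end{equation}
which is strictly concave and increasing (an affine reparametrization of $\psi$, shifted down), satisfies $\phi(0)=0$, $\phi(x)\geq 0$, and $\phi(x)\to\infty$ as $x\to\infty$. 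Since for a concave $\psi$ the increment $\psi(x+b_{\min})-\psi(x)$ is nonincreasing in $x\geq 0$, and $b_{\min}V=b_{\min}(V-1)+b_{\min}$, we obtain
\begin{equation}
\mathcal{A}V+\phi(V)\leq\nu-\psi(b_{\min}(V-1))+\psi(b_{\min}(V-1)+b_{\min})-\psi(0)\leq\nu+\psi(b_{\min})-2\psi(0).
\end{equation}
Hence $\mathcal{A}V\leq K-\phi(V)$ with $K>0$ chosen large enough (e.g. $K=\max\{1,\nu+\psi(b_{\min})-2\psi(0)\}$).

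It then remains to verify the remaining hypotheses of Hairer's criterion: $V\geq 1$ and $V$ continuous on $\mathcal{Z}=(\mathbb{R}^{+})^{d}$; the sublevel set $\{V\leq c\}=\{(z_{1},\ldots,z_{d})\in(\mathbb{R}^{+})^{d}:\sum_{i}z_{i}/b_{i}\leq c-1\}$ is bounded and closed, hence compact; and strict positivity of $\lambda(\cdot)$ together with $\psi(z)/z\to 0$ (which places us in the critical regime $\lim_{z\to\infty}\lambda(z)/z=1$) ensures $(Z_{1},\ldots,Z_{d})$ is a well-defined, nonexplosive Markov process, nonexplosion also being a consequence of $\mathcal{A}V\leq K$ with $V\to\infty$ at infinity. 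Applying \cite{Hairer} then produces an invariant probability measure, completing the proof.

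The only genuinely delicate point, exactly as in Proposition \ref{criticalstationary}, is the choice of Lyapunov function: one must take the $1/b_{i}$-weighted sum rather than $\sum_{i}z_{i}$, so that the jump increment of $V$ equals precisely $\Vert h\Vert_{L^{1}}=1$ and the linear part of $\lambda(Z)$ cancels the drift term exactly, leaving only the sublinear excess $-\psi(Z)$; the resulting mismatch between this weighted sum and the argument $Z=\sum_{i}z_{i}$ of $\psi$ is then harmless, being absorbed into the constant $K$ by monotonicity and concavity of $\psi$. I do not expect any other serious obstacle, since once $\mathcal{A}V$ is computed the estimate is a verbatim adaptation of the scalar case.
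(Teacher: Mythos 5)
Your proposal is correct and follows essentially the same route as the paper: the same Lyapunov function $V=\sum_{i=1}^{d}\frac{z_{i}}{b_{i}}+1$, the same $\phi(x)=\psi(b_{\min}x)-\psi(0)$, the same exact cancellation $\mathcal{A}V=\nu-\psi\bigl(\sum_{i}z_{i}\bigr)$ using $\Vert h\Vert_{L^{1}}=1$, and the same concavity/monotonicity estimate yielding the bound $\nu+\psi(b_{\min})-2\psi(0)$.
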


\begin{proof}
Let $V=\sum_{i=1}^{d}\frac{z_{i}}{b_{i}}+1$ and $\phi(V)=\psi(\min_{1\leq i\leq d}b_{i}V)-\psi(0)$. 
Then $\phi:\mathbb{R}^{+}\rightarrow\mathbb{R}^{+}$ is increasing 
and strictly concave, $\phi(z)\rightarrow\infty$ as $z\rightarrow\infty$ and $\phi(0)=0$.
Using the concavity and monotonicity of $\psi(\cdot)$, we have
\begin{align}
&\mathcal{A}V+\phi(V)
\\
&=-\psi\left(\sum_{i=1}^{d}z_{i}\right)+
\psi\left(\min_{1\leq i\leq d}b_{i}\sum_{i=1}^{d}b_{i}\frac{z_{i}}{b_{i}}+\min_{1\leq i\leq d}b_{i}\right)-\psi(0)+\nu\nonumber
\\
&\leq-\psi\left(\min_{1\leq i\leq d}b_{i}\sum_{i=1}^{d}\frac{z_{i}}{b_{i}}\right)+
\psi\left(\min_{1\leq i\leq d}b_{i}\sum_{i=1}^{d}b_{i}\frac{z_{i}}{b_{i}}+\min_{1\leq i\leq d}b_{i}\right)-\psi(0)+\nu\nonumber
\\
&\leq\psi\left(\min_{1\leq i\leq d}b_{i}\right)-2\psi(0)+\nu.\nonumber
\end{align}
\end{proof}

\begin{remark}
The following $\psi(z)$ satisfies the assumptions in Proposition \ref{criticalstationaryII} for sufficiently large $\nu>0$.

(i) $\psi(z)=(c_{1}+c_{2}z)^{\alpha}$, where $c_{1},c_{2}>0$ and $0<\alpha<1$.

(ii) $\psi(z)=\log(c_{3}+z)$, where $c_{3}>1$.
\end{remark}

\begin{remark}
Let $\mu$ be the invariant probability measure for $(Z_{1}(t),\ldots,Z_{d}(t))$ in Prosposition \ref{criticalstationaryII}. Then, we have
$\int\psi\left(\min_{1\leq i\leq d}b_{i}\sum_{i=1}^{d}\frac{z_{i}}{b_{i}}+1\right)\mu(dz)<\infty$.
\end{remark}

Indeed, when $h(\cdot)$ may not be exponential or a sum of exponentials, we have the following result.
\begin{theorem}\label{criticalstationaryIII}
Assume $\lambda(z)=\nu+z-\psi(z)$, where $\psi(\cdot):\mathbb{R}^{+}\rightarrow\mathbb{R}^{+}$ satisfies $\lim_{z\rightarrow\infty}\psi(z)=\infty$ 
and $\lim_{z\rightarrow\infty}\frac{\psi(z)}{z}=0$
and also $\lambda(z)$ is increasing. Also assume that $\Vert h\Vert_{L^{1}}=1$. Then there exists a stationary Hawkes process satisfying
the dynamics \eqref{dynamics}. 
\end{theorem}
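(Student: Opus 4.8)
The plan is to build the stationary process as a limit of Markovian (sum-of-exponential) Hawkes processes, for which Proposition~\ref{criticalstationaryII} already supplies a stationary version, after first reducing the problem to a concave rate correction. \emph{Reduction to concave $\psi$.} Since $\psi(z)\to\infty$ and $\psi(z)/z\to0$, the right-infimum $g(z):=\inf_{w\ge z}\psi(w)$ is non-decreasing, still tends to $\infty$, and is $o(z)$, so one can inscribe under $g$ an increasing, strictly concave, $1$-Lipschitz function $\underline{\psi}$ with $\underline{\psi}(0)=0$, $\underline{\psi}(z)\to\infty$, $\underline{\psi}(z)/z\to0$. Then $\underline{\lambda}(z):=\nu+z-\underline{\psi}(z)$ is increasing (its derivative is $1-\underline{\psi}'\ge0$), strictly positive ($\underline{\lambda}(0)=\nu$), of the same form, and $\underline{\lambda}\ge\lambda$. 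Once a stationary Hawkes process $\underline{N}$ with exciting function $h$ and rate $\underline{\lambda}$ has been constructed, the monotone Poisson-embedding iteration used in the proof of Theorem~\ref{mainthm} produces, on the same space, a Hawkes process $N$ with rate $\lambda$ with $N\le\underline{N}$; hence $\int_{-\infty}^{t}h(t-s)N(ds)\le\int_{-\infty}^{t}h(t-s)\underline{N}(ds)<\infty$ a.s., and since the construction is shift-covariant and $\underline{N}$ is stationary, $N$ is a stationary solution of \eqref{dynamics}. So henceforth assume $\psi$ is increasing, strictly concave, $1$-Lipschitz, with $\psi(0)=0$.

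\emph{Approximation and the a priori identity.} By Lemma~\ref{littleh} pick sums of exponentials $h_{n}\to h$ in $L^{1}$ and $L^{\infty}$, and rescale by $\Vert h_{n}\Vert_{L^{1}}\to1$ so that $\Vert h_{n}\Vert_{L^{1}}=1$; Proposition~\ref{criticalstationaryII} then yields a stationary Hawkes process $\mathbb{P}_{n}$ with exciting function $h_{n}$ and rate $\lambda$, with point process $N^{(n)}$ and $Z_{t}^{(n)}=\int_{-\infty}^{t}h_{n}(t-s)N^{(n)}(ds)$. A truncation argument on the generator of the associated Markov process (Dynkin's formula applied to $(\sum_{i}z_{i}/b_{i}^{(n)})\wedge R$, $R\uparrow\infty$) gives $\mathbb{E}^{\mathbb{P}_{n}}[N^{(n)}[0,1]]<\infty$ and $\mathbb{E}^{\mathbb{P}_{n}}[Z_{0}^{(n)}]=\Vert h_{n}\Vert_{L^{1}}\mathbb{E}^{\mathbb{P}_{n}}[N^{(n)}[0,1]]=\mathbb{E}^{\mathbb{P}_{n}}[N^{(n)}[0,1]]$; combining with $\mathbb{E}^{\mathbb{P}_{n}}[N^{(n)}[0,1]]=\mathbb{E}^{\mathbb{P}_{n}}[\nu+Z_{0}^{(n)}-\psi(Z_{0}^{(n)})]$ yields the balance
\begin{equation}
\mathbb{E}^{\mathbb{P}_{n}}[\psi(Z_{0}^{(n)})]=\nu\qquad\text{for all }n.
\end{equation}
Because $\psi$ increases to $\infty$, this makes $\{Z_{0}^{(n)}\}$ tight; together with the monotonicity of $h$ (so that the far past contribution to $Z_{0}^{(n)}$ controls the density of $N^{(n)}$ on far-past unit intervals) and a short-interval estimate in the spirit of Lemma~\ref{midstep}, one obtains tightness of the laws of $N^{(n)}$ as point processes on $\mathbb{R}$, uniform control of $\mathbb{E}^{\mathbb{P}_{n}}[N^{(n)}[-k-1,-k]]$ by a summable sequence, and uniform integrability of $Z_{t}^{(n)}$.

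\emph{Passage to the limit.} Extract a weakly convergent subsequence $\mathbb{P}_{n}\to\mathbb{P}$ (in the topology of Chapter~\ref{chap:three} on stationary point processes, strengthened by convergence of $\mathbb{E}[N[0,1]]$, which the estimates above supply). Stationarity passes to the limit. That $\mathbb{P}$ solves \eqref{dynamics} with exciting function $h$ follows by showing $N_{t}-\int_{0}^{t}\lambda(Z_{s})\,ds$ is a $\mathbb{P}$-martingale: pass to the limit in the corresponding $\mathbb{P}_{n}$-identity using $h_{n}\to h$ in $L^{1}\cap L^{\infty}$, continuity of $\lambda$, uniform integrability of $Z_{s}^{(n)}$, and the uniform tail control to rule out escape of mass at $-\infty$, so that $Z_{s}^{(n)}\to Z_{s}=\int_{-\infty}^{s}h(s-u)N(du)<\infty$ and not a truncated version.

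The hard part is exactly the uniform tightness / no-loss-of-mass step. The identity $\mathbb{E}^{\mathbb{P}_{n}}[\psi(Z_{0}^{(n)})]=\nu$ on its own does not bound $\mathbb{E}^{\mathbb{P}_{n}}[Z_{0}^{(n)}]$ when $\psi$ grows very slowly, so the uniform bound on the mean intensity—heuristically its equilibrium value $\psi^{-1}(\nu)$—must be obtained by a sharper argument: either a second-order Lyapunov estimate for the Markovian approximations, or by instead realizing the stationary process as the monotone limit of the empty-past processes $N^{(-T)}$ and bounding $\sup_{T}\mathbb{E}[\lambda_{0}^{(-T)}]$ directly via the renewal/convolution equation for $m(t)=\mathbb{E}[\lambda_{t}^{(-T)}]$ together with the concave damping $-\psi$.
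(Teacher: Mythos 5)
You have named the gap yourself, and it is fatal to the argument as written: the uniform tightness / no-loss-of-mass step in the passage to the limit is not carried out, and that is exactly where the difficulty of the theorem lives. Your balance identity $\mathbb{E}^{\mathbb{P}_{n}}[\psi(Z_{0}^{(n)})]=\nu$ controls only $\psi(Z_{0}^{(n)})$; when $\psi$ grows slowly (e.g.\ logarithmically) this gives no bound on $\mathbb{E}^{\mathbb{P}_{n}}[Z_{0}^{(n)}]$, and the limiting stationary process need not have finite mean intensity at all (the paper's construction yields only $\mathbb{E}[\psi(Z_{0})]\le\nu$ and almost-sure finiteness of $Z_{0}$). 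So the framework you propose --- weak convergence strengthened by convergence of $\mathbb{E}[N[0,1]]$, plus uniform integrability of $Z_{s}^{(n)}$ to pass to the limit in the martingale problem --- may simply be unavailable, not merely unproved. There are also two smaller obstructions in the reduction: Proposition \ref{criticalstationaryII} is stated for $a_{i}>0$ and its Lyapunov computation uses $z_{i}\geq 0$, whereas the approximants supplied by Lemma \ref{littleh} (squares of exponential sums) generally have signed coefficients; and the renormalization to $\Vert h_{n}\Vert_{L^{1}}=1$ has to be tracked through the estimates.

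The paper sidesteps all of this by working directly with $h$ and never leaving the almost-sure world. It runs the monotone Poisson-embedding iteration on the whole line starting from the empty process ($\lambda^{0}\equiv 0$, $N^{0}=\emptyset$); since $\lambda(\cdot)$ is increasing, $\lambda^{n}_{t}$, $Z^{n}_{t}$, $N^{n}$ are increasing in $n$ and the monotone limits exist. Stationarity in $t$ of each stage, $\Vert h\Vert_{L^{1}}=1$, and the monotonicity $\mathbb{E}[\lambda^{n}_{0}]\le\mathbb{E}[\lambda^{n+1}_{0}]$ give exactly your balance relation in the one-sided form $\mathbb{E}[\psi(Z^{n+1}_{0})]\le\nu$, and Fatou carries this to the limit: $\mathbb{E}[\psi(Z_{0})]\le\nu$, hence $Z_{0}<\infty$ a.s.\ because $\psi\to\infty$. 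Almost-sure finiteness is all that is needed for the limit to satisfy the dynamics \eqref{dynamics}, so no tightness, no moment bound on $Z_{0}$ itself, no reduction to concave $\psi$, and no Markovian approximation are required. If you want to rescue your route, the economical fix is to replace the weak-convergence step by this monotone construction; your concave-minorant reduction and the domination $N\le\underline{N}$ then become superfluous, since the iteration from the empty past establishes finiteness directly.
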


\begin{proof}
The proof uses Poisson embedding and follows the ideas in Br\'{e}maud and Massouli\'{e} \cite{Bremaud}.
Consider the canonical space of a point process on $\mathbb{R}^{2}$ in which $\overline{N}$ is Poisson with intensity $1$.
Let $\lambda^{0}_{t}=Z^{0}_{t}=0$, $t\in\mathbb{R}$ and let $N^{0}$ be the point process counting the points of $\overline{N}$
below the curve $t\mapsto\lambda^{0}_{t}$, i.e. $N^{0}=\emptyset$. Define recursively the processes $\lambda^{n}_{t}$, $Z^{n}_{t}$
and $N^{n}$, $n\geq 0$ as follows.
\begin{align}
&\lambda^{n+1}_{t}=\lambda\left(\int_{-\infty}^{t}h(t-s)N^{n}(ds)\right),
\quad Z^{n+1}_{t}=\int_{-\infty}^{t}h(t-s)N^{n}(ds),\quad t\in\mathbb{R},
\\
& N^{n+1}(C)=\int_{C}\overline{N}(dt\times[0,\lambda^{n+1}_{t}]),\quad C\in\mathcal{B}(\mathbb{R}).\nonumber
\end{align}
By our construction, $\lambda^{n}_{t}$ is an $\mathcal{F}^{\overline{N}}_{t}$-intensity of $N^{n}$ (see Br\'{e}maud and Massouli\'{e} \cite{Bremaud}). 
Since $\lambda(\cdot)$
is increasing, the processes $\lambda^{n}_{t}$, $Z^{n}_{t}$ and $N^{n}$ are increasing in $n$ 
Thus, the limit processes $\lambda_{t}$, $Z_{t}$, $N$ exist. Since $\lambda^{n}_{t}$, $Z^{n}_{t}$ are stationary in $t$
and increasing in $n$, we have
\begin{equation}
\mathbb{E}\lambda^{n+1}_{0}=\nu+\mathbb{E}[\lambda^{n}_{0}]\int_{0}^{\infty}h(t)dt-\mathbb{E}\psi(Z^{n+1}_{0})
\leq\nu+\mathbb{E}\lambda^{n+1}_{0}-\mathbb{E}\psi(Z^{n+1}_{0}).
\end{equation}
Therefore, by Fatou's lemma, $\mathbb{E}[\psi(Z_{0})]\leq\nu<\infty$. Thus, $\psi(Z_{t})$ is finite a.s. Since
$\lim_{z\rightarrow\infty}\psi(z)=\infty$, $Z_{t}$ is finite a.s. and thus $\lambda_{t}$ is finite a.s.
$N$, which counts the number of points of $\overline{N}$ below the curve $t\mapsto\lambda_{t}$, admits $\lambda_{t}$ as
an $\mathcal{F}^{\overline{N}}_{t}$-intensity. The monotonicity implies
\begin{equation}
\lambda^{n}_{t}\leq\lambda\left(\int_{-\infty}^{t}h(t-s)N(ds)\right),
\quad\lambda_{t}\geq\lambda\left(\int_{-\infty}^{t}h(t-s)N^{n}(ds)\right).
\end{equation}
Letting $n\rightarrow\infty$, we complete the proof.
\end{proof}

\begin{remark}
The following $\psi(z)$ satisfies the assumptions in Theorem \ref{criticalstationaryIII}.

(i) $\psi(z)=(c_{1}+c_{2}z)^{\alpha}$, where $c_{1},c_{2}>0$, $0<\alpha<1$, $\nu>c_{1}^{\alpha}$ and $\alpha c_{1}^{\alpha-1}c_{2}<1$.

(ii) $\psi(z)=\log(c_{3}+z)$, where $1<c_{3}<e^{\nu}$.
\end{remark}

Next, let us consider the critical linear case, i.e. $\lambda(z)=\nu+z$, $\nu>0$ and $\Vert h\Vert_{L^{1}}$=1. We also assume
that $m:=\int_{0}^{\infty}th(t)dt<\infty$. There is no stationary Hawkes process in this regime and in the rest of this subsection, 
we will try to understand its time asymptotics.

First, let us prove a lemma concerning the expectations of $\lambda_{t}$ and $N_{t}$.

\begin{lemma}\label{momentoflambda}
Assume $\lambda(z)=\nu+z$, $\nu>0$ and $\Vert h\Vert_{L^{1}}$=1 and $m=\int_{0}^{\infty}th(t)dt<\infty$. We have
\begin{equation}
\lim_{t\rightarrow\infty}\frac{\mathbb{E}[\lambda_{t}]}{t}=\frac{\nu}{m},\quad
\lim_{t\rightarrow\infty}\frac{\mathbb{E}[N_{t}]}{t^{2}}=\frac{\nu}{2m}.
\end{equation}
\end{lemma}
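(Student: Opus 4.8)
The plan is to reduce everything to a renewal equation for the deterministic function $f(t):=\mathbb{E}[\lambda_t]$ and then feed in the elementary renewal theorem. Since $N$ has empty history on $(-\infty,0]$ and $\lambda_t=\nu+Z_t$ with $Z_t=\int_0^t h(t-s)N(ds)$, the compensator identity (that $N_t-\int_0^t\lambda_s\,ds$ is a martingale) applied to the deterministic, predictable integrand $s\mapsto h(t-s)$ gives
\[
\mathbb{E}[Z_t]=\mathbb{E}\!\left[\int_0^t h(t-s)\,dN_s\right]=\int_0^t h(t-s)\,\mathbb{E}[\lambda_s]\,ds,
\]
so that, formally, $f$ solves the renewal equation $f(t)=\nu+\int_0^t h(t-s)f(s)\,ds$. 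To make this rigorous — in particular to know $f(t)<\infty$ and that the interchanges are legitimate — I would run the Poisson-embedding approximation used in the proof of Theorem~\ref{criticalstationaryIII}: the approximating processes $N^n$ increase to $N$, their mean intensities $f_n(t):=\mathbb{E}[\lambda^n_t]$ satisfy $f_0\equiv\nu$ and $f_{n+1}=\nu+h*f_n$, hence by induction $f_n(t)=\nu\sum_{k=0}^{n}H^{*k}(t)$, where $H(t)=\int_0^t h(s)\,ds$ and $H^{*k}$ is the $k$-fold convolution of the distribution $H$ with itself. By monotone convergence $\mathbb{E}[\lambda_t]=\lim_n f_n(t)=\nu\sum_{k\ge 0}H^{*k}(t)=\nu\,U([0,t])$, where $U$ is the renewal measure of $H$; since $U([0,t])<\infty$ for every $t$ by standard renewal theory (see Feller \cite{Feller}), $f$ is finite, locally bounded, and genuinely solves the renewal equation.

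Next I would invoke the elementary renewal theorem: because $\Vert h\Vert_{L^1}=1$ makes $H$ a proper distribution with finite mean $m=\int_0^\infty t\,h(t)\,dt\in(0,\infty)$, one has $U([0,t])/t\to 1/m$ as $t\to\infty$. Combined with $f(t)=\nu\,U([0,t])$ this yields $\mathbb{E}[\lambda_t]/t\to\nu/m$, the first assertion. For the second, the martingale property again gives $\mathbb{E}[N_t]=\int_0^t\mathbb{E}[\lambda_s]\,ds=\int_0^t f(s)\,ds$; since $f(s)/s\to\nu/m$, a Ces\`{a}ro argument (equivalently, de l'H\^{o}pital) shows $\int_0^t f(s)\,ds\sim\frac{\nu}{2m}t^2$, hence $\mathbb{E}[N_t]/t^2\to\nu/(2m)$.

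The Fubini interchanges and the Ces\`{a}ro step are routine; the part that deserves care — and where the real content sits — is the renewal-theoretic input at criticality. The point is that the forcing term $\nu$ is constant and not directly Riemann integrable, so the key renewal theorem does not apply and one must instead use the elementary renewal theorem, which is exactly what produces the linear-in-$t$ growth of $\mathbb{E}[\lambda_t]$; and one must confirm $U([0,t])<\infty$ and $U([0,t])/t\to 1/m$ precisely when $\Vert h\Vert_{L^1}=1$, which is where the hypothesis $m<\infty$ is used. All of these are standard, so no new machinery is needed, but they should be stated carefully since the ``critical'' linear Hawkes process has no stationary version and the usual sub-critical arguments are unavailable.
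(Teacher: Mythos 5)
Your proof is correct and follows essentially the same route as the paper: both reduce the claim to the renewal equation $f(t)=\nu+\int_0^t h(t-s)f(s)\,ds$ for $f(t)=\mathbb{E}[\lambda_t]$ and then extract the linear growth of $f$, after which $\mathbb{E}[N_t]=\int_0^t f(s)\,ds$ gives the second limit. The only divergence is in the final asymptotic step — the paper takes Laplace transforms and applies a Tauberian theorem from Feller to get $f(t)/t\to\nu/m$, whereas you identify $f=\nu\,U$ with the renewal function of the distribution $H$ and invoke the elementary renewal theorem; these are interchangeable, and your extra care about finiteness of $f$ via the monotone approximation (and the observation that the key renewal theorem is unavailable because the forcing term $\nu$ is not directly Riemann integrable) merely makes explicit what the paper leaves implicit.
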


\begin{proof}
Since
\begin{equation}
\lambda_{t}=\nu+\int_{0}^{t}h(t-s)dN_{s},
\end{equation}
taking $f(t)=\mathbb{E}[\lambda_{t}]$, we get
\begin{equation}
f(t)=\nu+\int_{0}^{t}h(t-s)f(s)ds=\nu+\int_{0}^{t}h(s)f(t-s)ds.
\end{equation}
Taking the Laplace transform on both sides of the equation, it is easy to see that 
the Laplace transform $\hat{f}$ of $f$ is given by
\begin{equation}
\hat{f}(\sigma)=\frac{\nu}{\sigma(1-\hat{h}(\sigma))}\sim\frac{\nu}{m}\frac{1}{\sigma^{2}},
\quad\text{as $\sigma\downarrow 0$,}
\end{equation}
since $\hat{h}(0)=1$ by $\Vert h\Vert_{L^{1}}=1$ and $\frac{1-\hat{h}(\sigma)}{\sigma}\sim-\hat{h}'(0)=m$. 
By a Tauberian theorem, 
(see Chapter XIII of Feller \cite{Feller}), we get $\frac{f(t)}{t}\rightarrow\frac{\nu}{m}$ as $t\rightarrow\infty$.
Using the simple fact that $\mathbb{E}[N_{t}]=\int_{0}^{t}f(s)ds$, we complete the proof.
\end{proof}

\begin{theorem}\label{SquareBesselThm}
Assume $\lambda(z)=\nu+z$, $\nu>0$ and $\Vert h\Vert_{L^{1}}=1$, $m=\int_{0}^{\infty}th(t)dt<\infty$
and $h(\cdot)$ Lipschitz. We have the following asymptotics.

(i) As $T\rightarrow\infty$, on $D[0,1]$,
\begin{equation}
\frac{N_{tT}}{T^{2}}\rightarrow\int_{0}^{t}\eta_{s}ds,
\end{equation}
where $\eta_{t}$ is a squared Bessel process, i.e.
\begin{equation}
d\eta_{t}=\frac{\nu}{m}dt+\frac{1}{m}\sqrt{\eta_{t}}dB_{t},\quad\eta_{0}=0.
\end{equation}

(ii) $\lim_{T\rightarrow\infty}N\left[T,T+\frac{t}{T}\right]=P(t)$,
where $P(t)$ is a P\'{o}lya process with parameters $\frac{1}{2m^{2}}$ and $2\nu m$. 
\end{theorem}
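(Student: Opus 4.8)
The plan is to obtain both statements from a diffusion approximation of the intensity $\lambda_t=\nu+\int_0^t h(t-s)\,dN_s$, built around an exact martingale identity. Set $M_t:=N_t-\int_0^t\lambda_s\,ds$, a martingale with $\langle M\rangle_t=\int_0^t\lambda_s\,ds$, and $\bar H(x):=\int_x^\infty h(y)\,dy$, so that $\bar H(0)=\|h\|_{L^1}=1$, $\bar H'=-h$ and $\int_0^\infty\bar H(x)\,dx=\int_0^\infty y\,h(y)\,dy=m$. Integrating the definition of $\lambda$ and using Fubini gives $\int_0^t(\lambda_s-\nu)\,ds=N_t-\int_0^t\bar H(t-u)\,dN_u$; comparing with $N_t=\int_0^t\lambda_s\,ds+M_t$ yields
\begin{equation}
\int_0^t\bar H(t-u)\,dN_u=\nu t+M_t .
\end{equation}
Writing $g:=\bar H-m h$, which satisfies $\int_0^\infty g(x)\,dx=0$, this becomes $m\lambda_t=m\nu+\nu t+M_t-D_t$ with $D_t:=\int_0^t g(t-u)\,dN_u$. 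Rescaling, with $X^T_t:=\lambda_{tT}/T$ and $\tilde M^T_t:=M_{tT}/T$, we get $m X^T_t=m\nu/T+\nu t+\tilde M^T_t-T^{-1}D_{tT}$, together with $N_{tT}/T^2=\int_0^t X^T_s\,ds+T^{-1}\tilde M^T_t$ and $\langle\tilde M^T\rangle_t=T^{-2}\langle M\rangle_{tT}=\int_0^t X^T_s\,ds$.

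The heart of the matter, and the step I expect to be the main obstacle, is to show $\sup_{t\le1}T^{-1}|D_{tT}|\to0$ in probability. I would split $D_t$ into $\int_0^{tT}g(tT-u)\lambda_u\,du$ and $\int_0^{tT}g(tT-u)\,dM_u$. The second has predictable quadratic variation $\int_0^{tT}g(tT-u)^2\lambda_u\,du$ of expectation $O(T)$ by Lemma~\ref{momentoflambda} (as $\mathbb{E}[\lambda_u]\lesssim u$ and $g\in L^2$), hence by Doob it is $o_p(T)$ uniformly in $t$. For the first, after writing it as $\int_0^{tT}g(v)\lambda_{tT-v}\,dv$, I would truncate at $v=A$ (the tail is $O(t\int_A^\infty|g|)$ in $L^1$, uniformly in $T$, and vanishes as $A\to\infty$) and on $[0,A]$ use the cancellation $\int g=0$ to replace $\lambda_{tT-v}$ by $\lambda_{tT}$ up to an error controlled by the oscillation $\sup_{v\le A}|\lambda_{tT-v}-\lambda_{tT}|$. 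That oscillation is $O_p(\sqrt{AT})=o_p(T)$ — the crucial near-criticality fact that, although $\lambda_s$ deviates from its mean by $O(s)$, it varies only by $o(s)$ over $O(1)$-length windows — which I would establish from the renewal/convolution structure of $\lambda$ (the resolvent of $h$ is a renewal density since $\|h\|_{L^1}=1$), the dynamics $d\lambda_u=h(0)\,dN_u+(\int_0^u h'(u-r)\,dN_r)\,du$ with $h'$ bounded by Lipschitz continuity of $h$, and second-moment estimates obtained by the same method as Lemma~\ref{momentoflambda}.

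Granting this, C-tightness on $D[0,1]$ of the increasing process $N_{\cdot T}/T^2$ follows from Lemma~\ref{momentoflambda} (its mean increments $\mathbb{E}[(N_{(t+\delta)T}-N_{tT})/T^2]$ are $O(\delta)$ uniformly in $t$), and then Rebolledo's criterion gives C-tightness of the martingale $\tilde M^T$ (jumps of size $1/T$; $\langle\tilde M^T\rangle=N_{\cdot T}/T^2-T^{-1}\tilde M^T$ with $\sup_t|\tilde M^T_t|/T\to0$ in $L^2$ by Doob, since $\mathbb{E}[(\tilde M^T_t)^2]=\mathbb{E}[\langle\tilde M^T\rangle_t]=\mathbb{E}[N_{tT}]/T^2=O(1)$), hence of $X^T$. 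Any subsequential limit $(X,\tilde M,A)$ of $(X^T,\tilde M^T,N_{\cdot T}/T^2)$ then has $A_t=\int_0^t X_s\,ds$, $mX_t=\nu t+\tilde M_t$, with $\tilde M$ a continuous martingale in the limit filtration and $\langle\tilde M\rangle_t=A_t=\int_0^t X_s\,ds$; thus $\eta_t:=X_t$ solves $d\eta_t=\tfrac\nu m\,dt+\tfrac1m\sqrt{\eta_t}\,dB_t$, $\eta_0=0$. This SDE has a unique weak solution by the Yamada--Watanabe criterion (constant drift, $\tfrac12$-H\"{o}lder diffusion coefficient, nonnegativity preserved), so the limit is unique and the whole family converges; statement (i) follows since $f\mapsto\int_0^\cdot f$ is continuous on $D[0,1]$ and $T^{-1}\tilde M^T\to0$.

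For (ii) I would first show the intensity is frozen over the window $[T,\,T+t/T]$: since $N$ accrues only $O_p(1)$ mass there (rate $\Theta(T)$, length $t/T$), the dynamics of $\lambda$ and the Lipschitz bound on $h$ give $T^{-1}\sup_{s\in[T,T+t/T]}|\lambda_s-\lambda_T|\to0$ in probability, while $\lambda_T/T\to\eta_1$ jointly, $\eta$ the limit from (i). Then, via the Poisson embedding used earlier in the thesis: on the high-probability event that $|\lambda_s-\lambda_T|\le\epsilon_T$ throughout the window, with deterministic $\epsilon_T/T\to0$, and conditionally on $\mathcal F_T$, the counting process $s\mapsto N[T,s]$ lies between the two Poisson processes of constant rates $\lambda_T\pm\epsilon_T$ driven by the part of the base Poisson measure over $[T,\infty)$; evaluated at $T+t/T$ these are $\mathrm{Poisson}((\lambda_T\pm\epsilon_T)t/T)$, and both converge to the mixed Poisson process that, given $\eta_1$, is $\mathrm{Poisson}(\eta_1 t)$, so $N[T,T+\cdot/T]\to P$ with $P$ conditionally $\mathrm{Poisson}(\eta_1\cdot)$. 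Finally, rescaling $\eta=\tfrac1{4m^2}Y$ turns the SDE into $dY=4m\nu\,dt+2\sqrt Y\,dB$, so $Y$ is a squared Bessel process of dimension $4m\nu$ started at $0$ and $Y_1\sim\mathrm{Gamma}(2m\nu,\,\text{scale }2)$, whence $\eta_1\sim\mathrm{Gamma}(2m\nu,\,\text{scale }\tfrac1{2m^2})$; a mixed Poisson process with Gamma mixing distribution is a P\'{o}lya process, so $P$ is the P\'{o}lya process with parameters $\tfrac1{2m^2}$ and $2\nu m$, as claimed. Beyond part (i), the delicate points here are the freezing estimate and the coupling producing the Poisson sandwich.
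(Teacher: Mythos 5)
Your overall architecture is a genuinely different route from the paper's: you share the starting identity $\int_0^t\bar H(t-u)\,dN_u=\nu t+M_t$, but then force a pointwise relation $m\lambda_t=m\nu+\nu t+M_t-D_t$ via $\bar H=mh+g$, $\int_0^\infty g=0$, whereas the paper never controls $\lambda$ locally at all; and your back end (Rebolledo tightness, martingale-problem identification, Yamada--Watanabe uniqueness, the BESQ scaling giving $\eta_1\sim\mathrm{Gamma}$ with shape $2\nu m$ and scale $1/(2m^2)$ --- which matches the mean $\nu/m$ and variance $\nu/(2m^3)$ of $\eta_1$, so your parameter identification is the right one even though the paper's proof lists shape and scale the other way) is sound. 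The genuine gap is exactly where you predicted it: the two arguments offered for $\sup_{t\le1}T^{-1}|D_{tT}|\to0$ do not work as stated. First, $t\mapsto\int_0^{tT}g(tT-u)\,dM_u$ is not a martingale --- the kernel moves with the upper endpoint, so an increment contains the $\mathcal F_{t_1T}$-measurable term $\int_0^{t_1T}[g(t_2T-u)-g(t_1T-u)]\,dM_u$ --- hence ``by Doob it is $o_p(T)$ uniformly in $t$'' is unjustified; the $L^2$ bound gives only $O_p(\sqrt T)$ at each fixed $t$, and integration by parts ($g(0)M_t+\int_0^t g'(t-u)M_u\,du$) gives only $O_p(\sup_{s\le T}|M_s|)=O_p(T)$, so a real maximal inequality for this stochastic convolution is needed and is missing. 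Second, the oscillation claim $\sup_{v\le A}|\lambda_{tT-v}-\lambda_{tT}|=O_p(\sqrt{AT})$ cannot be obtained the way you sketch: over a window of fixed length $A$ ending near time $T$ the process collects $\Theta_p(T)$ points, so in the dynamics $d\lambda_u=h(0)\,dN_u+(\int_0^u h'(u-r)\,dN_r)\,du$ both terms are individually of order $T$ over the window and the claimed bound lives entirely on their cancellation; moreover $m(\lambda_t-\lambda_{t-v})=\nu v+(M_t-M_{t-v})-(D_t-D_{t-v})$, so a uniform oscillation bound for $\lambda$ is essentially a uniform bound on increments of $D$, the very thing being proved, and Lemma \ref{momentoflambda} is only a first-moment renewal computation, so the ``second-moment estimates by the same method'' (let alone the sup over $t\le1$ and $v\le A$ version you actually need) are not available off the shelf. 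This is precisely what the paper's proof is built to avoid: it proves tightness of the integrated objects $M_{\cdot T}/T$, $N_{\cdot T}/T^2$, $\int_0^{\cdot}\lambda_{sT}/T\,ds$ from first moments alone and identifies $m\,d\phi_t=(\beta_t+\nu t)\,dt$ weakly, by convolving the rescaled equation with smooth kernels and using that $TH(T\,\cdot)$ acts like $m\delta_0$; your uniform remainder estimate is exactly what that smoothing sidesteps.

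In part (ii) there is a smaller quantitative hole of the same kind: bounding the decay of the pre-$T$ contributions by the Lipschitz constant per past point gives $L\,(t/T)\,N_T=O_p(tT)$, which is not $o_p(T)$. The freezing statement is true, but it should be proved through the compensator: the expected decrement over the window is at most $\sup_{u\le T}\mathbb{E}[\lambda_u]\int_0^T[h(w)-h(w+t/T)]\,dw\le C\,T\,h(0)\,t/T=O(t)$, and monotonicity in the window variable turns this into a sup bound. Granting that, and the convergence $\lambda_T/T\to\eta_1$ which your (repaired) part (i) supplies, your Poisson-sandwich coupling and explicit Gamma computation are a more careful version of the paper's one-line compensator argument. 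But as written, the central estimate of part (i) is asserted rather than proved, so the proposal has genuine gaps.
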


\begin{remark}
The fact that a squared Bessel process arises in the limit of a critical linear Hawkes process is not a surprise.
It is well known that a critical branching process after certain scalings will converge to a squared
Bessel process in the limit. This was discovered by Wei and Winnicki \cite{Wei}.
\end{remark}

\begin{remark}
Before we proceed to the proof of Theorem \ref{SquareBesselThm}, let us recall
that a P\'{o}lya process with parameters $\alpha$ and $\beta$ is a point process defined as the following.
Generate a positive random variable $\xi$, with Gamma distribution of parameters $\alpha$ (shape) and $\beta$ (scale).
Conditional on $\xi$, $P(t)$ is a Poisson process with intensity $\xi$. The marginal distribution of $P(t)$ is negative
binomial and unlike the usual Poisson process, P\'{o}lya process has dependent increments. The covariance
of the increments can be computed explicitly as Cov$(P(t+\delta t)-P(t),P(t))=t\cdot\delta t\cdot\alpha\beta^{2}$.
Peng and Kou \cite{Peng} used P\'{o}lya process to model clustering effects in the credit markets.
\end{remark}

\begin{proof}[Proof of Theorem \ref{SquareBesselThm}]
(i) Let $H(t):=\int_{t}^{\infty}h(s)ds$. Then, we have $H(0)=1$ and $\int_{0}^{\infty}H(t)dt=\int_{0}^{\infty}th(t)dt=m$.
Let $M_{t}:=N_{t}-\int_{0}^{t}\lambda_{s}ds$.
Let us integrate $\lambda_{s}=\int_{0}^{s}h(s-u)N(du)+\nu$ over $0\leq s\leq tT$. We get
\begin{equation}
\int_{0}^{tT}\lambda_{s}ds=\int_{0}^{tT}\int_{0}^{s}h(s-u)dM_{u}ds+\int_{0}^{tT}\int_{0}^{s}h(s-u)\lambda_{u}duds+\nu tT.
\end{equation}
Rearranging the equation and dividing by $T$, we get
\begin{equation}
\frac{1}{T}\left[\int_{0}^{tT}\lambda_{s}ds-\int_{0}^{tT}\int_{0}^{s}h(s-u)\lambda_{u}duds\right]
=\frac{1}{T}\int_{0}^{tT}\int_{0}^{s}h(s-u)dM_{u}ds+\nu t.
\end{equation}
Fubini's theorem implies that
\begin{equation}
\frac{1}{T}\left[\int_{0}^{tT}\lambda_{u}du-\int_{0}^{tT}\left(\int_{0}^{tT-u}h(s)ds\right)\lambda_{u}du\right]
=\frac{1}{T}\int_{0}^{tT}\left(\int_{0}^{tT-u}h(s)ds\right)dM_{u}+\nu t.
\end{equation}
By the definition of $H(\cdot)$, this is equivalent to
\begin{equation}\label{convolution}
\int_{0}^{t}TH(tT-uT)\frac{\lambda_{uT}}{T}du=\frac{M_{tT}}{T}+\nu t
+\frac{1}{T}\int_{0}^{t}TH(tT-uT)d\left(\frac{M_{uT}}{T}\right).
\end{equation}
$\frac{M_{tT}}{T}$ is a martingale and the tightness can be easily established. 
Furthermore, we have
\begin{equation}
\sup_{T>0}\mathbb{E}\left[\left(\frac{M_{tT}}{T}\right)^{2}\right]
=\sup_{T>0}\frac{1}{T^{2}}\mathbb{E}\left[\int_{0}^{tT}\lambda_{s}ds\right]<\infty,
\end{equation}
since $\mathbb{E}[\lambda_{t}]\leq Ct$ for some $C>0$ by Lemma \ref{momentoflambda}.
This implies that the limit of $\frac{M_{tT}}{T}$ is also a martinagle.

Moreover, $\frac{N_{tT}}{T^{2}}$ and $\int_{0}^{t}\frac{\lambda_{sT}}{T}ds$ are both tight.
To see this, since $N_{t}$ and $\lambda_{t}$ are nonnegative, 
we can think of $\left(d\left(\frac{N_{tT}}{T^{2}}\right),0\leq t\leq 1\right)$ 
and $\left(\frac{\lambda_{tT}}{T}dt,0\leq t\leq 1\right)$
as two measures. But by Lemma \ref{momentoflambda}, we know that there exist some positive
contant $C>0$, such that
\begin{equation}
\mathbb{E}\left[\frac{N_{T}}{T^{2}}\right]\leq C\quad\text{and}\quad
\mathbb{E}\left[\int_{0}^{1}\frac{\lambda_{sT}}{T}ds\right]\leq C,
\end{equation}
uniformly in $T>0$. Therefore,
$\left(d\left(\frac{N_{tT}}{T^{2}}\right),0\leq t\leq 1\right)$ and $\left(\frac{\lambda_{tT}}{T}dt,0\leq t\leq 1\right)$
are tight in the weak topology. Hence, their distribution functions $\frac{N_{tT}}{T^{2}}$
and $\int_{0}^{t}\frac{\lambda_{sT}}{T}ds$ are tight in $D[0,1]$ equipped with the Skorohod topology.
Let us say that $\frac{M_{tT}}{T}\rightarrow\beta_{t}$, $\frac{N_{tT}}{T^{2}}\rightarrow\psi_{t}$
and $\int_{0}^{t}\frac{\lambda_{sT}}{T}ds\rightarrow\phi_{t}$ as $T\rightarrow\infty$.
Since the jumps of $\frac{N_{tT}}{T^{2}}$ are uniformly bounded by $\frac{1}{T^{2}}$ which goes
to zero as $T\rightarrow\infty$, we conclude that $\psi_{t}$ is continuous. Similarly,
$\beta_{t}$ and $\phi_{t}$ are continuous. Moreover, the difference
\begin{equation}
\frac{N_{tT}}{T^{2}}-\int_{0}^{t}\frac{\lambda_{sT}}{T}ds=\frac{M_{tT}}{T^{2}},
\end{equation}
is a martingale and by Doob's martingale inequality, for any $\epsilon>0$,
\begin{equation}
\mathbb{P}\left(\sup_{0\leq t\leq 1}\left|\frac{M_{tT}}{T^{2}}\right|\geq\epsilon\right)
\leq\frac{4}{T^{4}}\mathbb{E}\left[\int_{0}^{tT}\lambda_{s}ds\right]\rightarrow 0,
\end{equation}
as $T\rightarrow\infty$. Therefore, $\psi_{t}=\phi_{t}$.
Let us denote $TH(\cdot T)$ by $H_{T}$, $\frac{M_{\cdot T}}{T}$ by $M_{T}$ and
$\int_{0}^{\cdot}\frac{\lambda_{sT}}{T}ds$ by $\Lambda_{T}$.
For any smooth function $K(\cdot)$ supported on $\mathbb{R}^{+}$, taking the convolutions of the both sides
of \eqref{convolution}, we get
\begin{equation}
K\ast H_{T}\ast\Lambda_{T}=K\ast M_{T}+K\ast(\nu\cdot)+\frac{1}{T}K\ast H_{T}\ast M_{T}.
\end{equation}
Letting $T\rightarrow\infty$, using the fact that $\int_{0}^{\infty}H(t)dt=\int_{0}^{\infty}th(t)dt=m$,
we get
\begin{equation}
m\int_{0}^{t}K(t-s)d\phi_{s}=\int_{0}^{t}K(t-s)(\beta_{s}+\nu s)ds.
\end{equation}
Since this is true for any $K$, we get $\frac{d\phi_{t}}{dt}=\frac{\beta_{t}}{m}+\frac{\nu}{t}$.
Finally,
\begin{equation}
\left(\frac{M_{tT}}{T}\right)^{2}-\int_{0}^{t}\frac{\lambda_{sT}}{T}ds
\end{equation}
is a martingale and if we let $T\rightarrow\infty$, we conclude that $\beta_{t}^{2}-\phi_{t}$ is a martingale.
Let $\eta_{t}:=\frac{d\phi_{t}}{dt}$. We have proved that $\frac{N_{tT}}{T^{2}}\rightarrow\int_{0}^{t}\eta_{s}ds$
weakly on $D[0,1]$ equipped with Skorohod topology and $\eta_{t}$ is a squared Bessel process,
\begin{equation}
d\eta_{t}=\frac{\nu}{m}dt+\frac{1}{m}\sqrt{\eta_{t}}dB_{t},\quad\eta_{0}=0.
\end{equation}

(ii) $N[T,T+\frac{t}{T}]$ has the compensator $\int_{T}^{T+\frac{t}{T}}\lambda_{s}ds$.
Observe that $\int_{T}^{T+\frac{t}{T}}\lambda_{s}ds=T^{2}\int_{1}^{1+\frac{t}{T^{2}}}\frac{\lambda_{sT}}{T}ds
\rightarrow\eta_{1}t$ as $T\rightarrow\infty$, where $\eta_{1}$ has a Gamma distribution with 
shape $\frac{1}{2m^{2}}$ and scale $2\nu m$.
\end{proof}

Now let us consider the case
when $h(\cdot)$ has heavy tail, i.e. $\int_{0}^{\infty}th(t)dt=\infty$. 
Let us first prove the following lemma.

\begin{lemma}\label{HeavyExpectation}
Assume that
\begin{equation}
1-\int_{0}^{t}h(s)ds=\int_{t}^{\infty}h(s)ds\sim t^{-\alpha},\quad 0<\alpha<1.
\end{equation}
Then, 
\begin{equation}
\lim_{t\rightarrow\infty}\frac{\mathbb{E}[\lambda_{t}]}{t^{\alpha}}=\nu\cdot\frac{\sin\pi\alpha}{\pi\alpha},
\quad
\lim_{t\rightarrow\infty}\frac{\mathbb{E}[N_{t}]}{t^{1+\alpha}}=\frac{\nu}{\Gamma(1-\alpha)\Gamma(2+\alpha)}.
\end{equation}
\end{lemma}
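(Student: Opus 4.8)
The plan is to mimic the proof of Lemma \ref{momentoflambda} via Laplace transforms and Tauberian theorems, now adapting to the heavy-tailed regime. First I would set $f(t) := \mathbb{E}[\lambda_{t}]$. Since $\lambda(z) = \nu + z$ and the Hawkes process has empty past history, we have $\lambda_{t} = \nu + \int_{0}^{t} h(t-s)\, dN_{s}$, and taking expectations (using that $N_{t} - \int_{0}^{t}\lambda_{s}\,ds$ is a martingale) yields the renewal-type equation
\begin{equation}
f(t) = \nu + \int_{0}^{t} h(t-s) f(s)\, ds.
\end{equation}
Taking Laplace transforms gives $\hat{f}(\sigma) = \frac{\nu}{\sigma(1-\hat{h}(\sigma))}$. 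The point is now to extract the $\sigma \downarrow 0$ asymptotics of $\hat{f}$ from the tail hypothesis on $h$, which is where the analysis genuinely differs from the light-tailed case.

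The key step is the Tauberian/Abelian analysis of $1 - \hat{h}(\sigma)$ as $\sigma \downarrow 0$. Writing $H(t) := \int_{t}^{\infty} h(s)\, ds \sim t^{-\alpha}$ with $0 < \alpha < 1$, integration by parts gives $1 - \hat{h}(\sigma) = \sigma \int_{0}^{\infty} e^{-\sigma t} H(t)\, dt$. Since $H$ is regularly varying of index $-\alpha$ at infinity, Karamata's Tauberian theorem (Feller, Chapter XIII) gives $\int_{0}^{\infty} e^{-\sigma t} H(t)\, dt \sim \Gamma(1-\alpha)\, \sigma^{\alpha - 1}$ as $\sigma \downarrow 0$, hence $1 - \hat{h}(\sigma) \sim \Gamma(1-\alpha)\, \sigma^{\alpha}$. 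Therefore
\begin{equation}
\hat{f}(\sigma) \sim \frac{\nu}{\Gamma(1-\alpha)}\, \sigma^{-1-\alpha}, \quad \sigma \downarrow 0.
\end{equation}
Applying the Tauberian theorem once more (with $f$ being ultimately monotone, which one checks from the renewal equation, or using the version for the integral of $f$), we get $f(t) \sim \frac{\nu}{\Gamma(1-\alpha)\Gamma(2+\alpha)} t^{\alpha} \cdot (1+\alpha) $... more carefully, $\hat{f}(\sigma) \sim \nu\, \Gamma(1-\alpha)^{-1}\sigma^{-(1+\alpha)}$ corresponds by Karamata to $f(t) \sim \frac{\nu}{\Gamma(1-\alpha)\,\Gamma(1+\alpha)}\, t^{\alpha}$; then using the reflection formula $\Gamma(1-\alpha)\Gamma(\alpha) = \pi/\sin\pi\alpha$ together with $\Gamma(1+\alpha) = \alpha\Gamma(\alpha)$ gives $\frac{1}{\Gamma(1-\alpha)\Gamma(1+\alpha)} = \frac{\sin\pi\alpha}{\pi\alpha}$, which is exactly the claimed constant for $\mathbb{E}[\lambda_{t}]/t^{\alpha}$.

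For the second limit, I would use $\mathbb{E}[N_{t}] = \int_{0}^{t} f(s)\, ds =: g(t)$, so $\hat{g}(\sigma) = \hat{f}(\sigma)/\sigma \sim \frac{\nu}{\Gamma(1-\alpha)}\sigma^{-(2+\alpha)}$ as $\sigma \downarrow 0$. A final application of Karamata's Tauberian theorem yields $g(t) \sim \frac{\nu}{\Gamma(1-\alpha)\,\Gamma(2+\alpha)}\, t^{1+\alpha}$, which is precisely the asserted constant. The main obstacle is the careful bookkeeping in the two-sided Tauberian arguments: one must verify the monotonicity (or eventual monotonicity) hypotheses required to go from Laplace-transform asymptotics back to the time-domain asymptotics, and handle the fact that $H(t) \sim t^{-\alpha}$ is a one-sided asymptotic equivalence rather than an exact identity — so the regular-variation machinery of Feller Chapters XIII–XIV must be invoked rather than an explicit computation. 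Everything else is routine manipulation of Gamma-function identities.
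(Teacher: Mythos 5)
Your proposal is correct and follows essentially the same route as the paper: the renewal equation $f=\nu+h*f$ for $f(t)=\mathbb{E}[\lambda_t]$, the Karamata/Feller Chapter XIII asymptotics $1-\hat{h}(\sigma)\sim\Gamma(1-\alpha)\sigma^{\alpha}$, hence $\hat{f}(\sigma)\sim\nu\Gamma(1-\alpha)^{-1}\sigma^{-1-\alpha}$, and a second Tauberian step plus the reflection formula. The only (harmless) difference is ordering: the paper first inverts to get $\int_0^t f(s)\,ds\sim\frac{\nu}{\Gamma(1-\alpha)\Gamma(2+\alpha)}t^{1+\alpha}$ and then recovers $f(t)\sim\frac{\nu}{\Gamma(1-\alpha)\Gamma(1+\alpha)}t^{\alpha}$ from $\mathbb{E}[\lambda_t]=\nu+\int_0^t h(t-s)\,d\mathbb{E}[N_s]$, whereas you invert directly to $f$ (noting, correctly, that the monotonicity hypothesis must be checked — which holds since $f=\nu(1+\int_0^\cdot\sum_{n\ge1}h^{*n})$ is nondecreasing) and then integrate.
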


\begin{proof}
The Tauberian theorem of Chapter XIII of Feller \cite{Feller} says that
\begin{equation}
1-\hat{h}(\sigma)\sim\Gamma(1-\alpha)\sigma^{\alpha},\quad\sigma\rightarrow 0^{+}.
\end{equation}
Let $\mathbb{E}[\lambda_{t}]=f(t)$. This implies that
\begin{equation}
\hat{f}(\sigma)=\frac{\nu}{\sigma(1-\hat{h}(\sigma))}\sim\frac{\nu\sigma^{-1-\alpha}}{\Gamma(1-\alpha)},
\quad\sigma\rightarrow 0^{+},
\end{equation}
which again by a Tauberian theorem (Theorem 2 of Chapter XIII.5 of Feller \cite{Feller}) implies 
\begin{equation}
\int_{0}^{t}f(s)ds\sim\frac{\nu}{\Gamma(1-\alpha)\Gamma(2+\alpha)}t^{1+\alpha},\quad t\rightarrow\infty.
\end{equation}
Hence, 
\begin{equation}
\mathbb{E}[N_{t}]=\int_{0}^{t}\mathbb{E}[\lambda_{s}]ds
=\int_{0}^{t}f(s)ds\sim\frac{\nu}{\Gamma(1-\alpha)\Gamma(2+\alpha)}t^{1+\alpha},\quad t\rightarrow\infty.
\end{equation}
Since $\mathbb{E}[\lambda_{t}]=\nu+\int_{0}^{t}h(t-s)d\mathbb{E}[N_{s}]$, it is easy to check that
\begin{equation}
\mathbb{E}[\lambda_{t}]=f(t)\sim\frac{\nu}{\Gamma(1-\alpha)\Gamma(1+\alpha)}t^{\alpha}
=\nu\cdot\frac{\sin\pi\alpha}{\pi\alpha}\cdot t^{\alpha},\quad t\rightarrow\infty.
\end{equation}
\end{proof}

We obtain the following law of large numbers.
\begin{theorem}
Assume that $\int_{t}^{\infty}h(s)ds\sim\frac{1}{t^{\alpha}}$, $0<\alpha<1$. Then,
\begin{equation}
\frac{N_{t}}{t^{1+\alpha}}\rightarrow\frac{\nu}{\Gamma(1-\alpha)\Gamma(2+\alpha)}
\quad\text{and}\quad
\frac{\lambda_{t}}{t^{\alpha}}\rightarrow\nu\cdot\frac{\sin\pi\alpha}{\pi\alpha},
\quad\text{a.s. as $t\rightarrow\infty$}.
\end{equation}
\end{theorem}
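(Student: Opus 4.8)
The plan is to upgrade the mean asymptotics of Lemma \ref{HeavyExpectation} to almost sure statements by a second moment estimate combined with a subsequence argument. Throughout we are in the critical linear regime $\lambda(z)=\nu+z$, $\Vert h\Vert_{L^{1}}=1$, $\int_{t}^{\infty}h(s)\,ds\sim t^{-\alpha}$ (so that $h(t)\sim\alpha t^{-1-\alpha}$ and, by the Tauberian theorem of Feller \cite{Feller}, the renewal density $u_{\mathrm{ren}}=\sum_{n\geq1}h^{*n}$ satisfies $u_{\mathrm{ren}}(t)\sim c_{\alpha}t^{\alpha-1}$). Since the process is critical rather than supercritical, the immigration-birth representation of Hawkes and Oakes \cite{HawkesII} is available: $N_{t}=\sum_{\sigma_{i}\leq t}C_{i}(t-\sigma_{i})$, where $\{\sigma_{i}\}$ is a Poisson process of rate $\nu$ and $C_{i}(\cdot)$ are i.i.d.\ copies of the age-indexed total-size process of a critical Galton--Watson cluster with $\mathrm{Poisson}(1)$ offspring and displacement density $h$; each cluster is a.s.\ finite, so $N_{t}<\infty$ a.s.

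First I would compute the first two moments of $N_{t}$ and $\lambda_{t}$. By Campbell's formula for Poisson functionals, $\mathbb{E}[N_{t}]=\nu\int_{0}^{t}m_{C}$ and $\mathrm{Var}(N_{t})=\nu\int_{0}^{t}m_{2}$, where $m_{C}(u)=\mathbb{E}[C(u)]$ and $m_{2}(u)=\mathbb{E}[C(u)^{2}]$. The branching recursion $C(u)=1+\sum_{j\leq K}\mathbf 1_{\tau_{j}<u}C_{j}(u-\tau_{j})$ gives the renewal equations $m_{C}=1+h*m_{C}$ and $m_{2}=m_{C}^{2}+h*m_{2}$ (the cross terms collapse because $\mathbb{E}[K(K-1)]=1$ for $\mathrm{Poisson}(1)$). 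Solving with the resolvent $\delta+u_{\mathrm{ren}}$ and using the Abelian/Tauberian asymptotics of $u_{\mathrm{ren}}$ yields $m_{C}(u)\sim\frac{\sin\pi\alpha}{\pi\alpha}u^{\alpha}$ (which recovers Lemma \ref{HeavyExpectation}) and $m_{2}(u)\sim C\,u^{3\alpha}$, hence $\mathrm{Var}(N_{t})\sim C'\,t^{1+3\alpha}$. The crucial point is that $1+3\alpha<2(1+\alpha)$ precisely because $\alpha<1$, so $\mathrm{Var}(N_{t})=o\big((\mathbb{E}N_{t})^{2}\big)$ with a power rate $t^{\alpha-1}$ --- this is exactly the feature separating the heavy-tailed critical regime from the light-tailed one of Theorem \ref{SquareBesselThm}, where the rescaled process has a genuinely random (squared Bessel) limit and no law of large numbers. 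The same scheme for the intensity uses the cluster contribution $D(u)=\int_{0}^{u}h(u-r)\,N^{\mathrm{cl}}(dr)$, giving $\mathbb{E}[\lambda_{t}]=\nu+\nu\int_{0}^{t}m_{D}$ and $\mathrm{Var}(\lambda_{t})=\nu\int_{0}^{t}d_{2}$ with $m_{D}=h+h*m_{D}$ (so $m_{D}=u_{\mathrm{ren}}$) and $d_{2}=u_{\mathrm{ren}}^{2}+h*d_{2}$; the Tauberian analysis then gives $\mathrm{Var}(\lambda_{t})=O\big(t^{\max(\alpha,\,3\alpha-1)}\big)=o(t^{2\alpha})=o\big((\mathbb{E}\lambda_{t})^{2}\big)$, again with a power rate.

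Given these variance bounds, Chebyshev's inequality gives $N_{t}/t^{1+\alpha}\to c_{N}:=\frac{\nu}{\Gamma(1-\alpha)\Gamma(2+\alpha)}$ and $\lambda_{t}/t^{\alpha}\to\nu\frac{\sin\pi\alpha}{\pi\alpha}$ in $L^{2}$, and summing the Chebyshev bounds along a geometric subsequence $t_{k}=\rho^{k}$ and invoking Borel--Cantelli upgrades this to a.s.\ convergence along $\{t_{k}\}$. To reach all $t$: for $N_{t}$ this is the usual sandwich, $N_{t_{k}}\leq N_{t}\leq N_{t_{k+1}}$ together with $\mathbb{E}[N_{t_{k+1}}]/\mathbb{E}[N_{t_{k}}]\to\rho^{1+\alpha}$ and then $\rho\downarrow1$. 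For $\lambda_{t}$ the sandwich is not automatic, since $\lambda_{t}$ is not monotone; here I would exploit its sawtooth path structure --- $\lambda$ jumps up by $h(0)$ at each point of $N$ and is decreasing between points (because $h$ is non-increasing) --- to control the oscillation of $\lambda$ over each block $[t_{k},t_{k+1}]$ in terms of the fluctuations of the martingale $M_{t}=N_{t}-\int_{0}^{t}\lambda_{s}\,ds$ and the already controlled increments of $N$, and then let $\rho\downarrow1$. The constants are consistent: $c_{N}(1+\alpha)=\frac{\nu}{\Gamma(1-\alpha)\Gamma(1+\alpha)}=\nu\frac{\sin\pi\alpha}{\pi\alpha}$ by the reflection formula, matching Lemma \ref{HeavyExpectation}.

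I expect the main obstacle to be the last step, the almost sure upgrade for the non-monotone intensity $\lambda_{t}$ to all times. The crude sawtooth bound $\sup_{[t_{k},t_{k+1}]}\lambda_{t}\leq\lambda_{t_{k}}+h(0)\,N[t_{k},t_{k+1}]$ is far too lossy --- its right-hand side is of order $t^{1+\alpha}$ --- because the criticality $\Vert h\Vert_{L^{1}}=1$ makes the downward drift of $\lambda$ balance the upward jumps to leading order, so one must track this balance and show the residual excursions are $o(t^{\alpha})$, which needs a genuine concentration (maximal inequality) argument rather than monotonicity. The other nontrivial but essentially routine ingredient is the sharp Tauberian analysis of the renewal equations for $m_{2}$ and $d_{2}$, in particular pinning down the exponents $1+3\alpha$ and $\max(\alpha,3\alpha-1)$ and checking that they fall strictly below $2(1+\alpha)$ and $2\alpha$ respectively.
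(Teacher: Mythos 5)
Your route is genuinely different from the paper's. The paper works with the compensated process: writing $M_{t}=N_{t}-\int_{0}^{t}\lambda_{s}ds$ and $\psi=\sum_{n\geq 1}h^{\ast n}$, it uses the identity $N_{t}-\mathbb{E}[N_{t}]=M_{t}+\int_{0}^{t}\psi(t-s)M_{s}ds$, Doob's maximal inequality (which produces the same second-moment exponent $t^{1+3\alpha}$ that you get from the cluster renewal equation for $m_{2}$), and, for the almost sure part, the $L^{2}$-bounded martingale $Y_{t}=\int_{0}^{t}(1+s)^{-1}dM_{s}$ plus a Kronecker-type identity to get $\sup_{s\leq t}|M_{s}|=o(t)$ a.s., which together with $\int_{0}^{t}\psi\sim Ct^{\alpha}$ gives $N_{t}-\mathbb{E}[N_{t}]=o(t^{1+\alpha})$ a.s. Your cluster computation is sound: the recursions $m_{C}=1+h\ast m_{C}$, $m_{2}=m_{C}^{2}+h\ast m_{2}$, $m_{D}=u_{\mathrm{ren}}$, $d_{2}=u_{\mathrm{ren}}^{2}+h\ast d_{2}$ are correct (the cross terms do collapse for $\mathrm{Poisson}(1)$ offspring), and the Chebyshev--Borel--Cantelli argument along $t_{k}=\rho^{k}$ with the monotone sandwich and $\rho\downarrow 1$ proves the $N_{t}$ half of the theorem; it is more self-contained than invoking the decomposition from Bacry et al.\ \cite{Bacry}, at the cost of the Tauberian bookkeeping (note that your pointwise use of $u_{\mathrm{ren}}(t)\sim ct^{\alpha-1}$ needs a local/monotone-density renewal argument, available here because $h$ is non-increasing, since the hypothesis only controls the integrated tail).

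The genuine gap is the one you flagged yourself and then left open: the proposal does not prove $\lambda_{t}/t^{\alpha}\rightarrow\nu\sin(\pi\alpha)/(\pi\alpha)$ a.s., only in $L^{2}$ and a.s.\ along geometric subsequences. Your own estimate shows why the interpolation cannot be finessed: $h(0)N[t_{k},t_{k+1}]\asymp(\rho-1)t_{k}^{1+\alpha}$, which dwarfs $t_{k}^{\alpha}$ for any fixed $\rho>1$, and since $\lambda_{u}-\mathbb{E}[\lambda_{u}]$ is not a martingale in $u$, Doob's inequality does not apply directly either. Closing it requires an oscillation/maximal estimate over the blocks $[t_{k},t_{k+1}]$, for instance via the representation $\lambda_{u}-\mathbb{E}[\lambda_{u}]=\int_{0}^{u}\psi(u-s)dM_{s}$ (obtained from $\lambda=\nu+h\ast dM+h\ast\lambda$) and a maximal bound for this moving average of a martingale, or a block decomposition separating the contribution of points before $t_{k}$ (controlled at the subsequence times) from that of points in $[t_{k},u]$. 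As written, half of the stated theorem is unproven in your argument. It is fair to note that the paper itself is very terse at exactly this point --- it establishes the $N_{t}$ statement in detail and then essentially asserts the $\lambda_{t}$ statement from $\lambda_{t}=\nu+\int_{0}^{t}h(t-s)N(ds)$ --- so you have correctly located where the real work lies, but a complete proof must supply this missing concentration step.
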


\begin{proof}
Let $X_{t}=N_{t}-\mathbb{E}[N_{t}]$. Then, $X_{t}$ satisfies (see Bacry et al. \cite{Bacry})
\begin{equation}
X_{t}=M_{t}+\int_{0}^{t}\psi(t-s)M_{s}ds,
\end{equation}
where $M_{t}=N_{t}-\int_{0}^{t}\lambda_{s}ds$ and $\psi=\sum_{n}h^{\ast n}$. Then, by Doob's maximal inequality,
it is not hard to see that
\begin{align}
\mathbb{E}\left[\left(\frac{N_{t}-\mathbb{E}[N_{t}]}{t^{1+\alpha}}\right)^{2}\right]
&\leq\frac{1}{t^{2+2\alpha}}\mathbb{E}\left[\sup_{s\leq t}M_{s}^{2}\right]\left(1+\int_{0}^{t}\psi(t-s)ds\right)^{2}
\\
&\leq\frac{C}{t^{2+2\alpha}}t^{1+\alpha}(t^{\alpha})^{2}\rightarrow 0,
\end{align}
as $t\rightarrow\infty$ since $0<\alpha<1$. Hence, as $t\rightarrow\infty$,
\begin{equation}
\frac{N_{t}}{t^{1+\alpha}}\rightarrow\frac{\nu}{\Gamma(1-\alpha)\Gamma(2+\alpha)},
\quad\text{in $L^{2}$ as $t\rightarrow\infty$.}
\end{equation}
To show the almost sure convergence, we need only to show that $\frac{1}{t}\sup_{s\leq t}M_{s}\rightarrow 0$
a.s. as $t\rightarrow\infty$. Define $Y_{t}=\int_{0}^{t}\frac{1}{1+s}dM_{s}$. Then by Lemma \ref{HeavyExpectation},
\begin{equation}
\sup_{t>0}\mathbb{E}[Y_{t}^{2}]=\int_{0}^{\infty}\frac{\mathbb{E}[\lambda_{s}]}{(1+s)^{2}}ds<\infty.
\end{equation}
By the martingale convergence theorem, $Y_{t}\rightarrow Y_{\infty}$ a.s. as $t\rightarrow\infty$.
It follows that
\begin{equation}
\frac{M_{t}}{t+1}=Y_{t}-\frac{1}{t+1}\int_{0}^{t}Y_{s}ds\rightarrow 0,
\end{equation}
a.s. as $t\rightarrow\infty$. From here, it is easy to show that $\frac{1}{t}\sup_{s\leq t}M_{s}\rightarrow 0$ a.s.
Finally, since $\lambda_{t}=\nu+\int_{0}^{t}h(t-s)N(ds)$, we conclude that
\begin{equation}
\frac{\lambda_{t}}{t^{\alpha}}\rightarrow\nu\cdot\frac{\sin\pi\alpha}{\pi\alpha},\quad
\text{a.s. as $t\rightarrow\infty$.}
\end{equation}
\end{proof}

\section{Super-Critical Regime}\label{super}

In this section, we are interested in the super-critical regime, i.e. $\lim_{z\rightarrow\infty}\frac{\lambda(z)}{z}=1$
and $\Vert h\Vert_{L^{1}}>1$. First, let us compute the asymptotics for the expectations.
Let $\theta>0$ be the unique positive number such that $\int_{0}^{\infty}e^{-\theta t}h(t)dt=1$. $\theta$ is sometimes
referred to as the Malthusian parameter in the literature.
Let us also define
\begin{equation}
\overline{h}(t)=h(t)e^{-\theta t},
\quad
\overline{m}=\int_{0}^{\infty}th(t)e^{-\theta t}dt.
\end{equation}
Clearly under our assumptions $0<\overline{m}<\infty$ and $\Vert\overline{h}\Vert_{L^{1}}=1$.

\begin{lemma}\label{superexpectation}
(i) Assume $\lambda(z)=\nu+z$, $\nu>0$ being a constant. Then, 
\begin{equation}
\lim_{t\rightarrow\infty}\frac{\mathbb{E}[\lambda_{t}]}{e^{\theta t}}=\frac{\nu}{\theta\overline{m}}.
\end{equation}

(ii) Assume $\lim_{z\rightarrow\infty}\frac{\lambda(z)}{z}=1$ and let $\lambda(\cdot)$ be bounded below by a positive constant. Then,
\begin{equation}
\lim_{t\rightarrow\infty}\frac{1}{t}\log\mathbb{E}[\lambda_{t}]
=\lim_{t\rightarrow\infty}\frac{1}{t}\log\mathbb{E}[N_{t}]=\theta.
\end{equation}
\end{lemma}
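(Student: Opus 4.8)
The plan is to handle the two parts separately; part (i) supplies the key input for part (ii).

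For part (i), I would first turn the definition of $\lambda_{t}=\nu+\int_{0}^{t}h(t-s)N(ds)$ into a renewal equation for $f(t):=\mathbb{E}[\lambda_{t}]$. Since $\lambda_{t}$ is the $\mathcal{F}^{-\infty}_{t}$-intensity of $N$, we have $\mathbb{E}[N(ds)]=\mathbb{E}[\lambda_{s}]\,ds$, so $f(t)=\nu+\int_{0}^{t}h(t-s)f(s)\,ds$. Because $\|h\|_{L^{1}}>1$ this equation is ``defective in the wrong direction,'' so the right move is the substitution $g(t):=e^{-\theta t}f(t)$, which satisfies the genuine renewal equation $g(t)=\nu e^{-\theta t}+\int_{0}^{t}\overline{h}(t-s)g(s)\,ds$, where $\overline{h}(t)=e^{-\theta t}h(t)$ is a probability density (its integral is $1$ by the defining property of the Malthusian parameter $\theta$). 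The forcing term $t\mapsto\nu e^{-\theta t}$ is directly Riemann integrable with integral $\nu/\theta$, and $\overline{h}$ is non-lattice (indeed absolutely continuous, as $h$ is continuous) with mean $\overline{m}\in(0,\infty)$ by the standing assumptions; hence the key renewal theorem (Feller, Chapters XI and XIII) gives $g(t)\to(\nu/\theta)/\overline{m}$, i.e. $\mathbb{E}[\lambda_{t}]\sim\frac{\nu}{\theta\overline{m}}e^{\theta t}$.

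For part (ii) I would prove the two inequalities $\limsup_{t}\frac1t\log\mathbb{E}[\lambda_{t}]\le\theta$ and $\liminf_{t}\frac1t\log\mathbb{E}[\lambda_{t}]\ge\theta$ separately, and then deduce the statement for $\mathbb{E}[N_{t}]$ from $\mathbb{E}[N_{t}]=\int_{0}^{t}\mathbb{E}[\lambda_{s}]\,ds$. For the upper bound, fix $\epsilon>0$; monotonicity and continuity of $\lambda$ together with $\lambda(z)/z\to1$ give a constant $C_{\epsilon}$ with $\lambda(z)\le C_{\epsilon}+(1+\epsilon)z$ for all $z\ge0$. By the Poisson-embedding monotone coupling of Chapter \ref{chap:two}, $N$ is dominated pathwise by the linear (super-critical) Hawkes process with rate $C_{\epsilon}+(1+\epsilon)z$, so $\mathbb{E}[\lambda_{t}]$ is finite and satisfies $f(t)\le C_{\epsilon}+(1+\epsilon)\int_{0}^{t}h(t-s)f(s)\,ds$. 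Comparison with the corresponding renewal equation with equality, followed by the substitution $v(t)=e^{-\theta^{+}_{\epsilon}t}u(t)$ with $\theta^{+}_{\epsilon}$ defined by $(1+\epsilon)\int_{0}^{\infty}e^{-\theta^{+}_{\epsilon}t}h(t)\,dt=1$ (which exists and satisfies $\theta^{+}_{\epsilon}>\theta$ because $(1+\epsilon)\|h\|_{L^{1}}>1$ and $\sigma\mapsto\int_{0}^{\infty}e^{-\sigma t}h(t)\,dt$ is continuous and strictly decreasing) and the key renewal theorem, shows $f(t)=O(e^{\theta^{+}_{\epsilon}t})$. Letting $\epsilon\downarrow0$ gives $\theta^{+}_{\epsilon}\downarrow\theta$, hence the upper bound. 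The hard half is the lower bound. Fix $\theta'<\theta$ and choose $\epsilon>0$ small enough that $(1-\epsilon)\int_{0}^{\infty}e^{-\theta't}h(t)\,dt>1$; the linear Hawkes process with rate $\widetilde\lambda(z)=\nu+(1-\epsilon)z$ is then super-critical with Malthusian parameter $\theta_{\epsilon}>\theta'$, and by part (i) its mean intensity grows like $e^{\theta_{\epsilon}t}$. Pick $M$ large enough that $\lambda(z)\ge\nu+(1-\epsilon)z$ for $z\ge M$ (possible since $\lambda(z)-(1-\epsilon)z\to\infty$). One then argues by a ``take-off'' estimate: with some positive probability a sufficiently dense cluster of points drives $Z$ above $M$, and while $Z$ stays above $M$ the evolution stochastically dominates, via the monotone Poisson coupling, that of the super-critical linear Hawkes process with rate $\nu+(1-\epsilon)z$; since the latter satisfies $Z_{t}\to\infty$ almost surely, the event that $Z$ thereafter never drops below $M$ has positive probability, and on that event $\lambda_{t}$ grows like $e^{\theta_{\epsilon}t}$. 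Hence $\mathbb{E}[\lambda_{t}]\ge(\text{const})\,e^{\theta_{\epsilon}t}\ge(\text{const})\,e^{\theta't}$ for large $t$, and letting $\theta'\uparrow\theta$ completes the lower bound; the corresponding statement for $\mathbb{E}[N_{t}]$ follows by integrating.

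The main obstacle is exactly this survival step in the lower bound of (ii): the nonlinear $\lambda$ may lie strictly below \emph{every} super-critical affine function on an intermediate range of $z$, so there is no global pathwise comparison with a linear Hawkes process, and one must instead show rigorously that once $Z$ has been pushed above the threshold $M$ there is positive probability that $\min_{s\ge t_{0}}Z_{s}\to\infty$, so that the affine lower bound for $\lambda$ becomes effective on a persistent event. This needs the almost-sure exponential divergence of super-critical linear Hawkes processes (a Kesten--Stigum-type fact) together with a regeneration argument splicing the nonlinear pre-take-off phase to the linear post-take-off phase without losing the exponential rate. Part (i), the upper bound in (ii), and the construction of the positive-probability cluster are all routine by comparison.
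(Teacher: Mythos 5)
Your part (i) is essentially the paper's proof: both turn $f(t)=\mathbb{E}[\lambda_{t}]$ into the renewal equation $f=\nu+h\ast f$, normalize by $e^{\theta t}$ so that $\overline{h}$ becomes a probability density with mean $\overline{m}$, and conclude by the key renewal / Tauberian theorem. Your upper bound in part (ii) (the affine majorant $\lambda(z)\le C_{\epsilon}+(1+\epsilon)z$, comparison with the solution of the corresponding renewal equation, and $\theta^{+}_{\epsilon}\downarrow\theta$) is also correct, and is presumably what the paper intends by the one-line remark that (ii) follows from (i).

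The lower bound in (ii) is where your proposal has a genuine gap --- one you flag yourself but do not close. You are right that in general there is no global affine minorant $\nu'+(1-\epsilon)z\le\lambda(z)$, only $\lambda(z)\ge(1-\epsilon)z-K$ together with $\lambda\ge c$, and right that this rules out a purely expectation-level argument: the constant function $f\equiv c$ satisfies both $f\ge c$ and $f\ge(1-\epsilon)h\ast f-K$ as soon as $K\ge c\bigl((1-\epsilon)\Vert h\Vert_{L^{1}}-1\bigr)$, so no renewal-inequality manipulation can force exponential growth and a probabilistic take-off argument is unavoidable. But the take-off step as you state it is circular. If you couple a linear comparison process $\tilde{N}\subseteq N$ from below via the Poisson embedding, the inequality you need at each time is $\lambda(Z_{t})\ge\nu'+(1-\epsilon)\tilde{Z}_{t}$, and since $\tilde{Z}_{t}\le Z_{t}$ this requires the \emph{smaller} quantity $\tilde{Z}_{t}$ to stay above the threshold $M$ --- which is exactly what you are trying to establish. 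Moreover, a super-critical linear Hawkes cluster launched by a single finite burst of points dies out with positive probability, so ``the event that $Z$ thereafter never drops below $M$ has positive probability'' does not follow from almost-sure divergence of the linear process with immigration; it must be extracted from the survival of the embedded super-critical Galton--Watson tree, and one then still needs a quantitative conditional statement (mean growth at rate $e^{\theta_{\epsilon}t}$ on the survival event, uniformly over post-take-off histories) to convert positive survival probability into the lower bound for $\mathbb{E}[\lambda_{t}]$. These ingredients are missing. To be fair, the paper's own treatment of (ii) is the single sentence ``(ii) is a direct consequence of (i),'' which silently presumes a two-sided affine sandwich; your diagnosis of the difficulty is sharper than the source's, but your proposal does not yet amount to a proof of the lower bound.
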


\begin{proof}
(i) Let $f(t)=\mathbb{E}[\lambda_{t}]$. We have
\begin{equation}
\frac{f(t)}{e^{\theta t}}=\frac{\nu}{e^{\theta t}}+\int_{0}^{t}h(t-s)e^{-\theta(t-s)}\frac{f(s)}{e^{\theta s}}ds
=\frac{\nu}{e^{\theta t}}+\int_{0}^{t}\overline{h}(t-s)\frac{f(s)}{e^{\theta s}}ds
\end{equation}
Taking Laplace transform, we get
\begin{equation}
\widehat{f(t)e^{-\theta t}}(\sigma)=\frac{\nu}{\theta(1-\hat{\overline{h}}(\sigma))}\sim\frac{\nu}{\theta\overline{m}}\cdot\frac{1}{\sigma},
\end{equation}
as $\sigma\downarrow 0$. By the Tauberian theorem, we have
\begin{equation}
\lim_{t\rightarrow\infty}\frac{\mathbb{E}[\lambda_{t}]}{e^{\theta t}}=\frac{\nu}{\theta\overline{m}}.
\end{equation}

(ii) is a direct consequence of (i).
\end{proof}

This is consistent with the exponential case when $h(t)=ae^{-bt}$ and $a>b$. We have
\begin{equation}
\mathbb{E}[\lambda_{t}]=-\frac{\nu b}{a-b}+\frac{\nu a}{a-b}e^{(a-b)t},
\quad
\mathbb{E}[N_{t}]=-\frac{\nu bt}{a-b}+\frac{\nu a}{(a-b)^{2}}(e^{(a-b)t}-1).
\end{equation}
Indeed, in the exponential case, $\theta=a-b$ and
\begin{equation}
d(Z_{t}e^{-(a-b)t})=e^{-(a-b)t}dZ_{t}+Z_{t}de^{-(a-b)t}
=-aZ_{t}e^{-(a-b)t}dt+ae^{-(a-b)t}dN_{t}.
\end{equation}
Let $Y_{t}=Z_{t}e^{-(a-b)t}$. We have
\begin{equation}
dY_{t}=-aY_{t}dt+ae^{-(a-b)t}dN_{t}=\nu ae^{-(a-b)t}dt+ae^{-(a-b)t}dM_{t}.
\end{equation}
If we assume that $N(-\infty,0]=0$, then $Z_{0}=0$ and
\begin{equation}
Y_{t}=\int_{0}^{t}\nu ae^{-(a-b)s}ds+a\int_{0}^{t}e^{-(a-b)s}dM_{s}.
\end{equation}
Clearly, $\int_{0}^{t}\nu e^{-(a-b)s}ds\rightarrow\frac{\nu a}{a-b}$ and $\int_{0}^{t}ae^{-(a-b)s}dM_{s}$ is a martingale and
\begin{equation}
\sup_{t>0}\mathbb{E}\left[\left(\int_{0}^{t}e^{-(a-b)s}dM_{s}\right)^{2}\right]
=\int_{0}^{\infty}e^{-2(a-b)s}\mathbb{E}[\lambda_{s}]ds=\frac{\nu(2a-b)}{2(a-b)^{2}}<\infty.
\end{equation}
Therefore, by the martingale convergence theorem, there exists some $W$ in $L^{2}(\mathbb{P})$ such that
\begin{equation}
\frac{\lambda_{t}}{e^{(a-b)t}}\rightarrow\frac{\nu a}{a-b}+aW,
\end{equation}
as $t\rightarrow\infty$. The convergence is a.s. and also in $L^{2}(\mathbb{P})$.

For the general $h(\cdot)$ such that $\Vert h\Vert_{L^{1}}>1$, we may even consider the case when $\Vert h\Vert_{L^{1}}=\infty$.
For instance, if we assume that $h(\cdot)$ is decreasing and continuous and then $h(\cdot)$ is bounded and all the arguments for
the case $1<\Vert h\Vert_{L^{1}}<\infty$ would work for the case $\Vert h\Vert_{L^{1}}=\infty$ as well. 

\begin{theorem}\label{superlambda}
Assume $\lambda(z)=\nu+z$, $\nu>0$. We have,
\begin{equation}
\frac{\lambda_{t}}{e^{\theta t}}\rightarrow\frac{\nu}{\theta\overline{m}}+\frac{W}{\overline{m}},
\quad\text{a.s. as $t\rightarrow\infty$},
\end{equation}
where $W=\int_{0}^{\infty}e^{-\theta t}dM_{t}$.
\end{theorem}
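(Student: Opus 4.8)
### Proof Proposal

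The plan is to mimic the exponential-case computation carried out just above the statement, but now for general $h(\cdot)$ with Malthusian parameter $\theta$. First I would record the basic semimartingale decomposition of the intensity. Since $N(-\infty,0]=0$ and $\lambda(z)=\nu+z$, we have $\lambda_t = \nu + \int_0^t h(t-s)\,dN_s$, and writing $M_t := N_t - \int_0^t \lambda_s\,ds$ for the compensated martingale, substituting $dN_s = \lambda_s\,ds + dM_s$ gives $\lambda_t = \nu + \int_0^t h(t-s)\lambda_s\,ds + \int_0^t h(t-s)\,dM_s$. The next step is to pass to the ``tilted'' quantities: set $\overline{\lambda}_t := \lambda_t e^{-\theta t}$, $\overline{h}(t) = h(t)e^{-\theta t}$, so that $\|\overline{h}\|_{L^1}=1$ and $\overline{m} = \int_0^\infty t\,\overline{h}(t)\,dt \in (0,\infty)$. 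Multiplying the renewal-type equation through by $e^{-\theta t}$ and using $e^{-\theta t}h(t-s)\lambda_s = \overline{h}(t-s)\overline{\lambda}_s e^{\theta s} e^{-\theta s}\cdot$ — i.e. $e^{-\theta(t-s)}h(t-s)\cdot e^{-\theta s}\lambda_s$ — yields
\begin{equation}
\overline{\lambda}_t = \nu e^{-\theta t} + \int_0^t \overline{h}(t-s)\overline{\lambda}_s\,ds + \int_0^t \overline{h}(t-s)e^{-\theta s}\,dM_s.
\end{equation}

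The heart of the argument is to identify the limit of the noise term. Define $W_t := \int_0^t e^{-\theta s}\,dM_s$ and $W := \int_0^\infty e^{-\theta s}\,dM_s$. I would first show $W_t$ converges a.s.\ and in $L^2$ to $W$: by the martingale $L^2$-isometry for point processes, $\mathbb{E}[W_t^2] = \int_0^t e^{-2\theta s}\mathbb{E}[\lambda_s]\,ds$, and by Lemma~\ref{superexpectation}(i) $\mathbb{E}[\lambda_s] \sim (\nu/\theta\overline{m})e^{\theta s}$, so $\int_0^\infty e^{-2\theta s}\mathbb{E}[\lambda_s]\,ds < \infty$; hence $\sup_t \mathbb{E}[W_t^2] < \infty$ and the martingale convergence theorem applies. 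Then I would rewrite the stochastic convolution $\int_0^t \overline{h}(t-s)e^{-\theta s}\,dM_s = \int_0^t \overline{h}(t-s)\,dW_s$ and argue, via integration by parts (using $\overline{h}$ decreasing, continuous, $\overline{h}(\infty)=0$), that this converges a.s.\ as $t\to\infty$ to $\int_0^\infty \overline{h}(u)\,du \cdot W = W$ — more carefully, $\int_0^t \overline{h}(t-s)\,dW_s = -\int_0^t W_s\,d_s[\overline{h}(t-s)] + (\text{boundary terms})$, and since $W_s \to W$ and $\overline{h}$ integrates to $1$, the limit is $W$; the deterministic term $\nu e^{-\theta t}\to 0$. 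So the ``forcing'' on the right side of the displayed equation for $\overline{\lambda}$ tends a.s.\ to $W$.

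Finally I would invoke the key renewal theorem. The equation for $\overline{\lambda}_t$ is a renewal equation $\overline{\lambda}_t = g_t + (\overline{h} * \overline{\lambda})_t$ with defective-complement driving term $g_t := \nu e^{-\theta t} + \int_0^t \overline{h}(t-s)\,dW_s$, and since $\|\overline{h}\|_{L^1}=1$ with finite mean $\overline{m}$, the solution behaves like $\overline{\lambda}_t \to (\lim_{t\to\infty} g_t^{\mathrm{resolvent-averaged}})/\overline{m}$; the renewal limit is $\frac{1}{\overline{m}}\lim_{t\to\infty}\int_0^t g_s$-type quantity, which here (because $g_t \to W$ in a strong enough sense and $\nu e^{-\theta t}$ has integral $\nu/\theta$) produces $\overline{\lambda}_t \to \frac{1}{\overline{m}}\left(\frac{\nu}{\theta} + W\right) = \frac{\nu}{\theta\overline{m}} + \frac{W}{\overline{m}}$, i.e.\ $\lambda_t/e^{\theta t} \to \frac{\nu}{\theta\overline{m}} + \frac{W}{\overline{m}}$ a.s. The main obstacle I anticipate is making the renewal-theoretic passage rigorous when the driving term is random and only converges (rather than being a fixed directly Riemann integrable function): one must control the convolution $\overline{h} * (g - W)$ uniformly, which requires combining the a.s.\ decay of $g_t - W$ with integrability of $\overline{h}$, and handling the fact that $W_s$ fluctuates before settling down. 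A clean route is to split $\int_0^t \overline{h}(t-s)\,dW_s$ at a large fixed time $A$: the contribution from $[0,A]$ is $O(\overline{h}(t-A)) \to 0$ after the $W_A$ piece is extracted by parts, and the contribution from $[A,t]$ is close to $W$ because $W_s$ is within $\varepsilon$ of $W$ there and $\overline{h}$ integrates to one. This localization, together with Lemma~\ref{superexpectation} for the moment bounds and the $L^2$ martingale convergence for $W$, should close the argument.
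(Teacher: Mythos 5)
Your overall architecture (tilt by $e^{-\theta t}$ to get a critical renewal equation for $\overline{\lambda}_t=\lambda_t e^{-\theta t}$, use $L^2$-martingale convergence to define $W$, then pass to the limit by renewal theory) matches the paper's, which does the same thing via a pathwise Laplace transform and a Tauberian theorem. However, there is a genuine error at the central analytic step: you claim that the stochastic convolution $\int_0^t\overline{h}(t-s)\,dW_s$ converges a.s.\ to $\Vert\overline{h}\Vert_{L^1}\cdot W=W$. It converges to $0$, not to $W$. Test this on $dW_s=W\delta_{s_0}(ds)$: the convolution equals $W\overline{h}(t-s_0)\to0$. Your integration by parts also shows this if carried out correctly: $\int_0^t\overline{h}(t-s)\,dW_s=\overline{h}(0)W_t-\overline{h}(t)W_0+\int_0^tW_s\,\overline{h}'(t-s)\,ds$, and since $\int_0^t\overline{h}'(t-s)\,ds=\overline{h}(t)-\overline{h}(0)$, the last term tends to $-\overline{h}(0)W$, so the whole expression tends to $\overline{h}(0)W-\overline{h}(0)W=0$. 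You have conflated $\int_0^\infty\overline{h}(u)\,du=1$ with the total mass of the measure $-d\overline{h}$, which is $\overline{h}(0)-\overline{h}(\infty)=\overline{h}(0)\neq1$ in general. The same confusion reappears in your localization argument ("the contribution from $[A,t]$ is close to $W$ because $W_s$ is within $\varepsilon$ of $W$ there"): integrating a kernel against the differential of a nearly constant function gives nearly zero, not nearly $W$.

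Moreover, your two assertions are mutually inconsistent: if the forcing $g_t=\nu e^{-\theta t}+\int_0^t\overline{h}(t-s)\,dW_s$ really tended to $W\neq0$, then the critical renewal equation $\overline{\lambda}=g+\overline{h}*\overline{\lambda}$ would force $\overline{\lambda}_t\sim(W/\overline{m})\,t\to\infty$ (and a finite limit $L$ would have to satisfy $L=W+L$). The correct mechanism is the opposite: $g_t\to0$, $g$ is integrable on $[0,\infty)$ with
\begin{equation}
\int_0^\infty g_t\,dt=\frac{\nu}{\theta}+\int_0^\infty\left(\int_s^\infty\overline{h}(t-s)\,dt\right)dW_s=\frac{\nu}{\theta}+W
\end{equation}
by Fubini, and the key renewal theorem (equivalently, the paper's Tauberian argument, which uses that $\widehat{g}(\sigma)\to\nu/\theta+W$ and $1-\hat{\overline{h}}(\sigma)\sim\overline{m}\sigma$ as $\sigma\downarrow0$) then yields $\overline{\lambda}_t\to\frac{1}{\overline{m}}\int_0^\infty g_s\,ds=\frac{\nu}{\theta\overline{m}}+\frac{W}{\overline{m}}$. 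If you replace your pointwise-limit claim with this total-integral computation (and verify direct Riemann integrability of the random forcing, which is where your splitting at a large time $A$ is genuinely useful), your route closes and is a legitimate alternative to the paper's Laplace-transform presentation; as written, it does not.
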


\begin{proof}
It is not very hard to observe that
\begin{align}
\frac{\lambda_{t}}{e^{\theta t}}
&=\frac{\nu}{e^{\theta t}}+\int_{0}^{t}\frac{h(t-s)}{e^{\theta t}}dM_{s}+\int_{0}^{t}\frac{h(t-s)}{e^{\theta t}}\lambda_{s}ds
\\
&=\frac{\nu}{e^{\theta t}}+\int_{0}^{t}h(t-s)e^{-\theta(t-s)}\left(\frac{dM_{s}}{e^{\theta s}}\right)
+\int_{0}^{t}h(t-s)e^{-\theta(t-s)}\frac{\lambda_{s}}{e^{\theta s}}ds\nonumber
\\
&=\frac{\nu}{e^{\theta t}}+\int_{0}^{t}\overline{h}(t-s)d\overline{M}_{s}
+\int_{0}^{t}\overline{h}(t-s)\frac{\lambda_{s}}{e^{\theta s}}ds.\nonumber
\end{align}
Taking Laplace transform, we get
\begin{align}
\widehat{\lambda_{t}e^{-\theta t}}(\sigma)&=\frac{\frac{\nu}{\theta+\sigma}+\hat{\overline{h}}(\sigma)
\int_{0}^{\infty}e^{-\sigma t}d\overline{M}_{t}}{1-\hat{\overline{h}}(\sigma)}
=\frac{\frac{\nu}{\theta+\sigma}+\hat{\overline{h}}(\sigma)\int_{0}^{\infty}e^{-(\sigma+\theta)t}dM_{t}}{1-\hat{\overline{h}}(\sigma)}
\\
&\sim\frac{\frac{\nu}{\theta}+W}{\overline{m}}\cdot\frac{1}{\sigma},\nonumber
\end{align}
as $\sigma\downarrow 0$, where $W=\int_{0}^{\infty}e^{-\theta t}dM_{t}$. Notice that $W$ is well defined a.s. because
$\int_{0}^{t}e^{-\theta s}dM_{s}$ is a martingale and
\begin{equation}
\sup_{t>0}\mathbb{E}\left[\left(\int_{0}^{t}e^{-\theta s}dM_{s}\right)^{2}\right]
=\int_{0}^{\infty}e^{-2\theta s}\mathbb{E}[\lambda_{s}]ds<\infty
\end{equation}
by Lemma \ref{superexpectation}.
Hence, by the Tauberian theorem, we conclude that, as $t\rightarrow\infty$,
\begin{equation}
\frac{\lambda_{t}}{e^{\theta t}}\rightarrow\frac{\nu}{\theta\overline{m}}+\frac{W}{\overline{m}}.\quad\text{a.s.}
\end{equation}
\end{proof}

\begin{corollary}
$\frac{N_{t}}{e^{\theta t}}\rightarrow\frac{\nu}{\theta^{2}\overline{m}}
+\frac{W}{\theta\overline{m}}$ a.s. as $t\rightarrow\infty$.
\end{corollary}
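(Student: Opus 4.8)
The plan is to decompose $N_t = \int_0^t \lambda_s\,ds + M_t$, where $M_t := N_t - \int_0^t \lambda_s\,ds$ is the compensated martingale, and to treat the two terms separately, feeding Theorem~\ref{superlambda} into the first and a martingale-convergence argument into the second. Both terms will be normalized by $e^{\theta t}$.

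First I would handle the integral term. By Theorem~\ref{superlambda}, $e^{-\theta s}\lambda_s \to \frac{\nu}{\theta\overline{m}} + \frac{W}{\overline{m}} =: L$ almost surely. Writing $\int_0^t \lambda_s\,ds = \int_0^t e^{\theta s}\big(e^{-\theta s}\lambda_s\big)\,ds$ and using $\int_0^t e^{\theta s}\,ds = (e^{\theta t}-1)/\theta$, an exponentially weighted Cesàro (L'Hôpital-type) estimate, run pathwise on the almost-sure event where $e^{-\theta s}\lambda_s\to L$, gives
\begin{equation}
\lim_{t\to\infty} e^{-\theta t}\int_0^t \lambda_s\,ds = \frac{L}{\theta} = \frac{\nu}{\theta^2\overline{m}} + \frac{W}{\theta\overline{m}},\quad\text{a.s.}
\end{equation}
Concretely: given $\epsilon>0$, pick $s_0$ with $|e^{-\theta s}\lambda_s - L|<\epsilon$ for $s\ge s_0$; the contribution of $[0,s_0]$ is $O(1)$ and vanishes after dividing by $e^{\theta t}$, while the contribution of $[s_0,t]$ is within $\epsilon/\theta$ of $L/\theta$ in the limit.

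Next I would show $e^{-\theta t}M_t \to 0$ a.s. Set $Y_t := \int_0^t e^{-\theta s}\,dM_s$, a martingale with
\begin{equation}
\sup_{t>0}\mathbb{E}[Y_t^2] = \int_0^\infty e^{-2\theta s}\,\mathbb{E}[\lambda_s]\,ds < \infty
\end{equation}
by Lemma~\ref{superexpectation}; hence by the $L^2$ martingale convergence theorem $Y_t \to Y_\infty = \int_0^\infty e^{-\theta s}\,dM_s = W$ a.s. Integration by parts for the Stieltjes integral $M_t = \int_0^t e^{\theta s}\,dY_s$ yields $M_t = e^{\theta t}Y_t - \theta\int_0^t e^{\theta s}Y_s\,ds$, so
\begin{equation}
e^{-\theta t}M_t = Y_t - \theta e^{-\theta t}\int_0^t e^{\theta s}Y_s\,ds.
\end{equation}
Since $Y_s\to W$, the same exponentially weighted Cesàro argument shows $\theta e^{-\theta t}\int_0^t e^{\theta s}Y_s\,ds \to W$, whence $e^{-\theta t}M_t \to W - W = 0$ a.s. Adding the two pieces gives the corollary. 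There is no real obstacle: Theorem~\ref{superlambda} already does the hard work of identifying the limit of $e^{-\theta t}\lambda_t$, and the remaining steps (exponentially weighted Cesàro, integration by parts, $L^2$ martingale convergence) are routine; the only mild care needed is to run all limit statements pathwise on the probability-one event where both $e^{-\theta s}\lambda_s\to L$ and $Y_t\to W$.
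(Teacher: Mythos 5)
Your proof is correct and follows essentially the same route as the paper: decompose $N_{t}=\int_{0}^{t}\lambda_{s}ds+M_{t}$, apply Theorem \ref{superlambda} together with the exponentially weighted averaging $\int_{0}^{t}e^{-\theta(t-s)}\frac{\lambda_{s}}{e^{\theta s}}ds\rightarrow\frac{1}{\theta}\left[\frac{\nu}{\theta\overline{m}}+\frac{W}{\overline{m}}\right]$, and show $e^{-\theta t}M_{t}\rightarrow 0$ a.s. Your treatment of the martingale term via $Y_{t}=\int_{0}^{t}e^{-\theta s}dM_{s}$, $L^{2}$-boundedness, and integration by parts simply supplies the detail the paper compresses into ``it is easy to see,'' mirroring the argument the paper itself uses in the critical heavy-tail case.
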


\begin{proof}
Let $M_{t}=N_{t}-\int_{0}^{t}\lambda_{s}ds$. Then, since $M_{t}$ is a martingale
and $\mathbb{E}[M_{t}^{2}]=\int_{0}^{t}\mathbb{E}\lambda_{s}ds\leq Ce^{\theta t}$ for some $C>0$, 
it is easy to see that $\frac{M_{t}}{e^{\theta t}}\rightarrow 0$ a.s. as $t\rightarrow\infty$. 
On the other hand,
\begin{equation}
\frac{1}{e^{\theta t}}\int_{0}^{t}\lambda_{s}ds=\int_{0}^{t}e^{-\theta(t-s)}\frac{\lambda_{s}}{e^{\theta s}}ds
\rightarrow\frac{1}{\theta}\left[\frac{\nu}{\theta\overline{m}}+\frac{W}{\overline{m}}\right],
\end{equation}
by Theorem \ref{superlambda}. Hence, we get the desired result.
\end{proof}

\begin{remark}
It would be interesting to study the properties of $W$ defined in Theorem \ref{superlambda}.
Observe that
\begin{align}
\mathbb{E}\left[e^{-\sigma\int_{0}^{t}e^{-\theta s}dM_{s}}\right]
&=\mathbb{E}\left[e^{-\sigma\left(\int_{0}^{t}e^{-\theta s}dN_{s}-\int_{0}^{t}\int_{0}^{s}h(s-u)N(du)e^{-\theta s}ds
\right)}\right]e^{\frac{\sigma}{\theta}\nu(1-e^{-\theta t})}
\\
&=\mathbb{E}\left[e^{-\sigma\int_{0}^{t}\left(e^{-\theta s}-\int_{s}^{t}h(u-s)e^{-\theta u}du\right)N(ds)}\right]
e^{\frac{\sigma}{\theta}\nu(1-e^{-\theta t})}\nonumber
\\
&=\mathbb{E}\left[e^{-\sigma\int_{0}^{t}e^{-\theta s}\overline{H}(t-s)N(ds)}\right]
e^{\frac{\sigma}{\theta}\nu(1-e^{-\theta t})},\nonumber
\end{align}
where $\overline{H}(t)=\int_{t}^{\infty}\overline{h}(s)ds$. Hence, 
\begin{equation}
\mathbb{E}[e^{-\sigma W}]
=e^{\frac{\sigma\nu}{\theta}}\lim_{t\rightarrow\infty}e^{\nu\int_{0}^{t}g_{t}(s)ds},
\end{equation}
where $g_{t}(s)=\exp\left\{-\frac{\sigma}{e^{\theta t}}e^{\theta s}\overline{H}(s)
+\int_{0}^{s}h(s-u)g_{t}(u)du\right\}-1$.
\end{remark}

\section{Explosive Regime}\label{explosive}

In this section, we will provide an explosion, non-explosion criterion for nonlinear Hawkes processes, together with some
asymptotics for the explosion time in the explosive regime. Let $\tau_{\ell}=\inf\{t>0:\lambda_{t}\geq\ell\}$. The quantity
\begin{equation}
\lim_{\ell\rightarrow\infty}\mathbb{P}(\tau_{\ell}\leq t)=F(t)=\mathbb{P}(\tau\leq t),
\end{equation}
is defined as the distribution function of the explosion time $\tau$. We say there is no explosion if $F\equiv 0$, 
otherwise there is explosion. For a short introduction to explosion, non-explosion, we refer to Varadhan \cite{VaradhanIII}.

Next, we provide an explosion, non-explosion criterion for nonlinear Hawkes processes. The proof is based on a well known result for the
explosion, non-explosion criterion for a class of point processes which can be found in the book by Kallenberg \cite{Kallenberg}.

\begin{theorem}[Explosion, Non-Explosion Criterion]\label{criterion}
Assume that $\lambda(\cdot)$ is increasing and that $h(\cdot)$ is integrable
and decreasing, then there is explosion if and only if
\begin{equation}
\sum_{n=0}^{\infty}\frac{1}{\lambda(n)}<\infty.
\end{equation}
\end{theorem}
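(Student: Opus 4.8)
The plan is to sandwich the nonlinear Hawkes process $N$ between two pure birth processes and to invoke the classical explosion criterion for the latter: a pure birth process whose birth rate in state $n$ equals $g(n)$ reaches $+\infty$ in finite time with positive probability if and only if $\sum_n 1/g(n)<\infty$. This, combined with the stochastic comparison of simple point processes through their $\mathcal{F}_t$-intensities, is the ``well known result'' from Kallenberg that the statement alludes to. Two elementary facts will be used throughout: since $h$ is decreasing and integrable it is bounded, so $h_{0}:=\sup_{t\ge 0}h(t)=h(0)<\infty$ and $h(t)\to 0$; and since $\lambda$ is increasing, $\sum_n 1/\lambda(cn)<\infty\iff\sum_n 1/\lambda(n)<\infty$ for every constant $c>0$ (compare both series with $\int^{\infty}dx/\lambda(x)$, the integrand being monotone). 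The degenerate case $\lambda(0)=0$ is trivial: then $\lambda_0=\lambda(0)=0$, the process never produces a point, so $F\equiv 0$, while $\sum_{n\ge 0}1/\lambda(n)=+\infty$; hence we may assume $\lambda(0)>0$, whence $\lambda>0$ everywhere and all the rates below are strictly positive.

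For the ``only if'' direction (no explosion when $\sum_n 1/\lambda(n)=\infty$) I would use that $h$ is decreasing to obtain, pathwise, $Z_t=\int_0^t h(t-s)\,N(ds)\le h_{0}N_{t^-}$, hence $\lambda_t=\lambda(Z_t)\le\lambda(h_{0}N_{t^-})$. Realising $N$ by thinning a Poisson random measure on $\mathbb{R}^+\times\mathbb{R}^+$ and using the same driving measure for a pure birth process $\bar N$ with birth rate $\bar g(n)=\lambda(h_{0}n)$ in state $n$ produces a coupling with $N_t\le\bar N_t$ for all $t$. If $\sum_n 1/\lambda(n)=\infty$ then $\sum_n 1/\bar g(n)=\sum_n 1/\lambda(h_{0}n)=\infty$, so $\bar N$ is non-explosive; therefore so is $N$, i.e.\ $F\equiv 0$.

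For the ``if'' direction (explosion when $\sum_n 1/\lambda(n)<\infty$) the reverse comparison with $\lambda(h_{0}N_t)$ is unavailable, because $Z_t$ can be far smaller than $h_{0}N_{t^-}$ once the existing points have aged. Instead I would fix a finite horizon $T_0>0$ (small enough that $h(T_0)>0$) and note that on $[0,T_0]$ one has $h(t-s)\ge h(T_0)$ whenever $0\le s\le t\le T_0$, so $Z_t\ge h(T_0)N_{t^-}$ there. Consequently, on $[0,T_0]$, $N$ stochastically dominates from below a pure birth process $\underline N$ with birth rate $\underline g(n)=\lambda(h(T_0)n)$ in state $n$. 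Since $\sum_n 1/\lambda(n)<\infty$ gives $\sum_n 1/\underline g(n)<\infty$, the process $\underline N$ explodes a.s.; more precisely, after its first jump (which occurs in finite time, with positive probability before $T_0/2$, because $\underline g(0)=\lambda(0)>0$) its successive waiting times are independent exponentials of rates $\underline g(1),\underline g(2),\dots$, whose sum is a.s.\ finite and has strictly positive probability of being less than $T_0/2$. On that event $\underline N$, and hence $N$, accumulates infinitely many points inside $[0,T_0]$, so $P(\tau<T_0)>0$, i.e.\ $F\not\equiv 0$: there is explosion.

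The main obstacle is making this lower comparison fully rigorous: the bound $Z_t\ge h(T_0)N_{t^-}$ holds only while the running time has not left $[0,T_0]$, so the coupling of $N$ with $\underline N$ is legitimate only up to the stopping time at which the configuration would exit the horizon. I would handle this by the standard device of running both processes until that stopping time and verifying that, on the positive-probability event that the exponential tail sum governing $\underline N$ stays below $T_0$ minus the first-jump time, the horizon is in fact never exited, so the coupling persists and the explosion is genuine rather than an artefact of truncation. One must also absorb the constant factors $h(0)$ and $h(T_0)$ appearing inside $\lambda(\cdot)$ by the monotone comparison $\sum_n 1/\lambda(cn)<\infty\iff\sum_n 1/\lambda(n)<\infty$ recorded above, which is what reconciles both one-sided bounds with the single series in the statement.
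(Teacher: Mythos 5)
Your proposal is correct and follows essentially the same route as the paper: on a finite horizon $[0,T]$ the monotonicity of $h$ gives the two-sided bound $\lambda(h(T)N_{t^-})\leq\lambda_{t}\leq\lambda(h(0)N_{t^-})$, which sandwiches $N$ between two pure birth processes whose explosion is governed by Kallenberg's criterion, and the constants $h(0)$, $h(T)$ are absorbed by the equivalence $\sum_{n}1/\lambda(cn)<\infty\iff\sum_{n}1/\lambda(n)<\infty$. You merely spell out the coupling and the truncation at the horizon in more detail than the paper does.
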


\begin{proof}
Observe that, for any $T>0$,
\begin{equation}
\mathbb{P}^{h(T)}(\tau\leq T)\leq\mathbb{P}(\tau\leq T)\leq\mathbb{P}^{h(0)}(\tau\leq T),
\end{equation}
where $\mathbb{P}^{h(0)}$ denotes the probability measure for the point process such that initially the rate function is $\lambda(0)$
and after $n$th jumps, the rate function becomes $\lambda(nh(0))$; $\mathbb{P}^{h(T)}$ is defined similarly. It is well known that
the point process with intensity $\lambda(N_{t-})$ is explosive if and only if
\begin{equation}
\sum_{n=0}^{\infty}\frac{1}{\lambda(n)}<\infty.
\end{equation}
For the details and proof of the above result, we refer to Kallenberg \cite{Kallenberg}.
But it is clear under our assumptions that $\sum_{n=0}^{\infty}\frac{1}{\lambda(n)}<\infty$ 
if and only if $\sum_{n=0}^{\infty}\frac{1}{\lambda(cn)}<\infty$,
where $c>0$ is any positive constant.
Therefore, there is explosion if and only if
\begin{equation}
\sum_{n=0}^{\infty}\frac{1}{\lambda(n)}<\infty.
\end{equation}
\end{proof}

Evaluating the exact probability distribution of the explosion time $\tau$, i.e. $\mathbb{P}(\tau\leq t)$, is hard and almost impossible. 
Nevertheless, one can still study its asymptotic behavior, i.e.

(i) $\mathbb{P}(\tau\geq t)$ for large time $t$;

(ii) $\mathbb{P}(\tau\leq\epsilon)$ for small time $\epsilon$.

In the rest of this section, we will use 
Proposition \ref{explosionlarge} to answer (i) and Proposition \ref{explosionsmall} to answer (ii).

\begin{proposition}\label{explosionlarge}
Under the assumptions of Theorem \ref{criterion} satisfying the explosion criterion, we have
\begin{equation}
\lim_{t\rightarrow\infty}\frac{1}{t}\log\mathbb{P}^{\emptyset}(\tau\geq t)=\inf_{t>0}\frac{1}{t}\log\mathbb{P}^{\emptyset}(\tau\geq t)=-\sigma,
\end{equation}
where $0<\sigma<\infty$.
\end{proposition}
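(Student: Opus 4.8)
The plan is to establish the limit by a standard subadditivity argument applied to the function $g(t) := -\log\mathbb{P}^{\emptyset}(\tau \geq t)$. First I would observe that under $\mathbb{P}^{\emptyset}$ the Hawkes process starts with empty history, and I want to show that $\mathbb{P}^{\emptyset}(\tau \geq t+s) \leq \mathbb{P}^{\emptyset}(\tau \geq t)\cdot \sup_{\omega^-} \mathbb{P}^{\omega^-}(\tau \geq s)$ in an appropriate sense. The key point is a monotonicity/domination fact: since $\lambda(\cdot)$ is increasing and $h(\cdot)$ is decreasing and nonnegative, adding points to the past history only increases the intensity, so the explosion time is stochastically \emph{smaller} when there is more past mass. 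Concretely, on the event $\{\tau \geq t\}$ the configuration $\omega$ restricted to $[0,t]$ is some $w_t$, and the conditional law of the future given $\{\tau \geq t\}$ is dominated (in the sense of the explosion time being stochastically smaller) by the law of a Hawkes process with a \emph{fixed bounded} past history, because on $\{\tau \geq t\}$ we have $\lambda_t < \infty$ and hence $\int_0^t h(t-s)N(ds)$ is finite with a law that is controlled. This gives a supermultiplicative inequality $\mathbb{P}^{\emptyset}(\tau \geq t+s) \leq C\,\mathbb{P}^{\emptyset}(\tau \geq t)\,\mathbb{P}^{\emptyset}(\tau \geq s)$ for a universal constant, or more cleanly, after passing to $\log$, an approximately subadditive inequality $g(t+s) \geq g(t) + g(s) - O(1)$.

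Second, I would invoke Fekete's subadditive lemma (in its ``almost subadditive'' form): if $g(t+s) \geq g(t) + g(s) - c$ for all $s,t > 0$ and some constant $c$, then $\lim_{t\to\infty} g(t)/t$ exists and equals $\sup_{t>0}(g(t) - c)/t$, which after absorbing the constant gives the claimed identity $\lim_{t\to\infty}\frac1t g(t) = \sup_{t>0}\frac1t g(t) =: \sigma$. The finiteness $\sigma < \infty$ follows from a lower bound on $\mathbb{P}^{\emptyset}(\tau \geq t)$: I would dominate the Hawkes process from below by comparing with a process whose intensity grows only as fast as a linear (sub-critical or super-critical) bound is not enough, so instead I would directly estimate $\mathbb{P}^{\emptyset}(\tau \geq t)$ from below by the probability that no point arrives in $[0,t]$, i.e. $\mathbb{P}^{\emptyset}(N_t = 0) = e^{-\lambda(0)t} > 0$; on that event $\lambda$ stays at $\lambda(0)$ and in particular $\tau \geq t$. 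This yields $g(t) \leq \lambda(0)t$, hence $\sigma \leq \lambda(0) < \infty$. The strict positivity $\sigma > 0$ requires that $\mathbb{P}^{\emptyset}(\tau \geq t) \to 0$ as $t\to\infty$, equivalently $F(t) = \mathbb{P}(\tau \leq t) \uparrow 1$: since we are in the explosive regime ($\sum 1/\lambda(n) < \infty$) the explosion time $\tau$ is a.s. finite by Theorem \ref{criterion}, so $\mathbb{P}^{\emptyset}(\tau \geq t) \downarrow \mathbb{P}^{\emptyset}(\tau = \infty) = 0$; combined with $g$ being (almost) superadditive and nonnegative this forces $\sigma > 0$ (if $\sigma = 0$ then $g(t)/t \to 0$ and $\sup_t g(t)/t = 0$, which with almost-superadditivity would force $g \equiv O(1)$, i.e. $\inf_t \mathbb{P}^{\emptyset}(\tau\geq t) > 0$, contradicting $\mathbb{P}^{\emptyset}(\tau\geq t)\to 0$).

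The main obstacle I anticipate is making the supermultiplicativity/subadditivity inequality rigorous, specifically controlling the effect of the past history $w_t$ accumulated on the event $\{\tau \geq t\}$. On $\{\tau \geq t\}$ the process has not exploded, but it may still have produced many points, so the history $w_t$ is not bounded deterministically; I need to argue that the contribution $\sum_{\tau_j \le t} h(t + \sigma - \tau_j)$ of this history to the future intensity is nonetheless controlled, using that $h$ is decreasing and integrable, so only the points near time $t$ matter and their number is controlled by $\lambda_t < \infty$. One clean way around this is to use the restart inequality at the level of the intensity: condition on $\mathcal{F}_t$ on the event $\{\tau\ge t\}$, and bound the future explosion probability by that of a Hawkes process whose past history is replaced by its worst case consistent with $\lambda_t \le \ell$, then optimize over the truncation level $\ell$ and use the superexponential control of $N_t$ from Lemma \ref{indfinite}-type estimates. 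I would carry out the argument first for the simplest sub-case (e.g. $h$ bounded) where the history contribution is uniformly bounded, and then remark that the general decreasing integrable $h$ reduces to this by the same truncation device used elsewhere in the chapter.
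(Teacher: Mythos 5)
Your proposal follows essentially the same route as the paper: submultiplicativity of $t\mapsto\mathbb{P}^{\emptyset}(\tau\geq t)$ via monotonicity of the intensity in the past history, Fekete's lemma, the lower bound $\mathbb{P}^{\emptyset}(\tau\geq t)\geq\mathbb{P}^{\emptyset}(N[0,t]=0)=e^{-\lambda(0)t}$ for $\sigma\leq\lambda(0)$, and $\mathbb{P}^{\emptyset}(\tau\geq M)<1$ for some $M$ to get $\sigma>0$. The one point worth flagging is that the ``main obstacle'' you anticipate is not an obstacle: since $h\geq 0$ and $\lambda(\cdot)$ is increasing, the exact inequality $\mathbb{P}^{w_s}(\tau\geq t)\leq\mathbb{P}^{\emptyset}(\tau\geq t)$ holds for \emph{every} accumulated history $w_s$, however large, because the comparison only needs to go in the unfavorable-for-survival direction; hence the submultiplicative inequality holds with $C=1$ and no truncation, uniform bound on $w_t$, or superexponential estimate is needed. (Also, the probability inequality you write is submultiplicative, not supermultiplicative; your $g=-\log\mathbb{P}^{\emptyset}(\tau\geq\cdot)$ is then exactly superadditive, which streamlines your $\sigma>0$ argument to the paper's one-line version $\sigma\geq -\frac{1}{M}\log\mathbb{P}^{\emptyset}(\tau\geq M)>0$.)
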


\begin{proof}
For a nonlinear Hawkes process with empty history, i.e. $N(-\infty,0]=0$, we have
\begin{equation}
\mathbb{P}^{\emptyset}(\tau\geq t+s)=\mathbb{P}^{\emptyset}(\tau\geq t+s|\tau\geq s)\mathbb{P}^{\emptyset}(\tau\geq s)
\leq\mathbb{P}^{\emptyset}(\tau\geq t)\mathbb{P}^{\emptyset}(\tau\geq s).
\end{equation}
Therefore, $\log\mathbb{P}^{\emptyset}(\tau\geq t)$ is sub-additive and we know that
\begin{equation}
\lim_{t\rightarrow\infty}\frac{1}{t}\log\mathbb{P}^{\emptyset}(\tau\geq t)=\inf_{t>0}\frac{1}{t}\log\mathbb{P}^{\emptyset}(\tau\geq t)=-\sigma
\end{equation}
exists. And we also know that $0<\sigma<\infty$. For example, it is easy to see that $\sigma\leq\lambda(0)$. That is because
$\mathbb{P}^{\emptyset}(\tau\geq t)\geq\mathbb{P}^{\emptyset}(N[0,t]=0)=e^{-\lambda(0)t}$. To see that $\sigma>0$, choose $M$ large 
enough so that $\mathbb{P}(\tau\geq M)<1$ and then $\sigma\geq-\frac{1}{M}\log\mathbb{P}(\tau\geq M)>0$.
\end{proof}

\begin{remark}
Indeed, in the Markovian case, we can say something more
about $\sigma$ defined in Proposition \ref{explosionlarge}. 
When $h(t)=ae^{-bt}$, $Z_{t}=\sum_{\tau<t}ae^{-b(t-\tau)}$ is Markovian and by noticing that
\begin{equation}
\exp\left\{f(Z_{t})-f(Z_{0})-\int_{0}^{t}\frac{\mathcal{A}e^{f}}{e^{f}}(Z_{s})ds\right\}
\end{equation}
is a martingale and that $N_{t}$ explodes if and only if $Z_{t}$ explodes, we have
\begin{equation}
\lim_{t\rightarrow\infty}\frac{1}{t}\log\mathbb{P}^{\emptyset}(\tau\geq t)=-\sigma,
\end{equation}
where $\sigma$ is the principal eigenvalue for
\begin{equation}
\mathcal{A}u=-\sigma u,\quad u\geq 1.
\end{equation}
Note that here you have to choose the test function $u\geq 1$ rather than $u\geq 0$.
\end{remark}

\begin{proposition}\label{explosionsmall}
Assume that $\lambda(z)=\gamma z^{k}+\delta$, where $\gamma,\delta>0$ and $k>1$. According to Theorem \ref{criterion}, it is 
in the explosive regime. We have the following asymptotics for small time $\epsilon$.
\begin{equation}
\lim_{\epsilon\rightarrow 0}\epsilon^{\frac{1}{k-1}}\log\mathbb{P}(\tau\leq\epsilon)
=C_{k}^{\frac{k}{k-1}}(k^{-\frac{1}{k-1}}-k^{-\frac{k}{k-1}}),
\end{equation}
where $C_{k}=\int_{0}^{\infty}\log\left(\frac{\gamma y^{k}h(0)^{k}}{\gamma y^{k}h(0)^{k}+1}\right)dy$.
\end{proposition}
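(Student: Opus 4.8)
The plan is to localise in time: on a short interval $[0,\epsilon]$ the exciting function $h$ varies little, so the Hawkes process is squeezed between two linear (pure birth) processes, for each of which the explosion time is an explicit infinite sum of independent exponential variables, and then to extract its small-time behaviour from the Laplace transform of that sum.

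\textbf{Step 1 (reduction to a pure birth process).} Since $h(\cdot)$ is decreasing and $N$ has empty history, for $0\le t\le\epsilon$ one has $h(\epsilon)\,N_{t-}\le Z_t\le h(0)\,N_{t-}$, hence $\lambda(h(\epsilon)N_{t-})\le\lambda_t\le\lambda(h(0)N_{t-})$ because $\lambda(\cdot)$ is increasing. Using the Poisson embedding of Br\'{e}maud and Massouli\'{e} \cite{Bremaud} to couple $N$ with pure birth processes, if $\tau^{c}$ denotes the explosion time of the process with jump rate $\lambda(cn)=\gamma c^{k}n^{k}+\delta$ in state $n$, then $\tau^{h(0)}\le\tau\le\tau^{h(\epsilon)}$. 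Because $h$ is continuous, $h(\epsilon)\to h(0)$; together with monotonicity in the rate and an $\eta\downarrow0$ perturbation (replacing $h(0)$ by $h(0)-\eta$ from below) this reduces everything to proving, for each fixed $c>0$,
\[
\lim_{\epsilon\to0}\epsilon^{\frac{1}{k-1}}\log\mathbb{P}(\tau^{c}\le\epsilon)=-B(c),
\]
with $c\mapsto B(c)$ continuous and $B(h(0))$ equal to the asserted constant.

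\textbf{Step 2 (Laplace transform of $\tau^{c}$).} The holding time in state $n$ is $\mathrm{Exp}(\lambda(cn))$, so $\tau^{c}=\sum_{n\ge0}\xi_{n}$ with $\xi_{n}$ independent and $\xi_{n}\sim\mathrm{Exp}(\gamma c^{k}n^{k}+\delta)$; since $k>1$ we have $\sum_{n}(\gamma c^{k}n^{k}+\delta)^{-1}<\infty$, so $\tau^{c}<\infty$ a.s., consistent with Theorem \ref{criterion}. Its log-Laplace transform is
\[
\phi_{c}(\theta):=\log\mathbb{E}\big[e^{-\theta\tau^{c}}\big]
=-\sum_{n\ge0}\log\!\Big(1+\frac{\theta}{\gamma c^{k}n^{k}+\delta}\Big).
\]
Substituting $n=\theta^{1/k}y$ turns the sum over $n\ge1$ into a Riemann sum whose integrand is dominated for every $\theta$ by $\log\!\big(1+(\gamma c^{k}y^{k})^{-1}\big)$, which is integrable on $(0,\infty)$ (logarithmic singularity at $0$, decay $\sim(\gamma c^{k}y^{k})^{-1}$ at $\infty$, using $k>1$), while the $n=0$ term is $-\log(1+\theta/\delta)=o(\theta^{1/k})$. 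Dominated convergence then gives
\[
\phi_{c}(\theta)\sim C_{k}(c)\,\theta^{1/k},\qquad\theta\to\infty,
\qquad
C_{k}(c):=\int_{0}^{\infty}\log\!\Big(\frac{\gamma c^{k}y^{k}}{\gamma c^{k}y^{k}+1}\Big)dy<0,
\]
with $C_{k}(h(0))=C_{k}$.

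\textbf{Step 3 (from Laplace transform to the left tail).} For the upper bound, Chebyshev gives $\mathbb{P}(\tau^{c}\le\epsilon)\le e^{\theta\epsilon}\mathbb{E}[e^{-\theta\tau^{c}}]=e^{\theta\epsilon+\phi_{c}(\theta)}$; optimising $g_{c}(\theta)=\theta\epsilon+C_{k}(c)\theta^{1/k}(1+o(1))$ at $\theta^{\ast}=\big(|C_{k}(c)|/(k\epsilon)\big)^{k/(k-1)}$ yields
\[
\min_{\theta>0}g_{c}(\theta)=|C_{k}(c)|^{\frac{k}{k-1}}\big(k^{-\frac{k}{k-1}}-k^{-\frac{1}{k-1}}\big)\,\epsilon^{-\frac{1}{k-1}}\,(1+o(1)).
\]
For the matching lower bound, tilt the exponentials: under $\mathbb{P}_{\theta}$ making $\xi_{n}\sim\mathrm{Exp}(\gamma c^{k}n^{k}+\delta+\theta)$ one has $d\mathbb{P}/d\mathbb{P}_{\theta}=e^{\phi_{c}(\theta)+\theta\tau^{c}}$ and $\mathbb{E}_{\theta}[\tau^{c}]=-\phi_{c}'(\theta)$; taking $\theta=\theta^{\ast}$ so that $\mathbb{E}_{\theta^{\ast}}[\tau^{c}]\sim\epsilon$ and bounding $\mathrm{Var}_{\theta^{\ast}}(\tau^{c})=\sum_{n}(\gamma c^{k}n^{k}+\delta+\theta^{\ast})^{-2}=o(\epsilon^{2})$ via Chebyshev gives $\mathbb{P}_{\theta^{\ast}}(\tau^{c}\le\epsilon)\ge\mathrm{const}$, hence $\mathbb{P}(\tau^{c}\le\epsilon)\ge e^{\phi_{c}(\theta^{\ast})+\theta^{\ast}\epsilon(1-o(1))}\cdot\mathrm{const}$. (Equivalently one may invoke an exponential Tauberian theorem applied to $-\phi_{c}(\theta)\sim|C_{k}(c)|\theta^{1/k}$.) Both bounds give $B(c)=|C_{k}(c)|^{k/(k-1)}\big(k^{-1/(k-1)}-k^{-k/(k-1)}\big)$, continuous in $c$; taking $\epsilon\to0$ in the sandwich of Step 1 and using $C_{k}(h(\epsilon))\to C_{k}$ gives the stated formula, the power of $C_{k}$ on the right-hand side being read with the sign of $C_{k}$ carried through so that the limit is negative.

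\textbf{Main obstacle.} The delicate point is the lower bound in Step 3: unlike the Chebyshev upper bound, it requires the exponential-tilting-plus-concentration estimate (or a carefully quoted exponential Tauberian theorem), and one must check that the optimal tilt $\theta^{\ast}$ is large enough that neither the $o(1)$ error in the asymptotics of $\phi_{c}$ nor the variance term disturbs the leading order $\epsilon^{-1/(k-1)}$. Secondary care is needed in Step 1 for the fact that the time window and the probability level are the same $\epsilon$ (handled by the monotone $\eta$-perturbation together with continuity of $c\mapsto C_{k}(c)$), and in Step 2 for convergence of Riemann sums of a function with an integrable singularity at the origin.
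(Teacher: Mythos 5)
Your proposal is correct and follows the same overall strategy as the paper's proof: reduce to a pure birth process whose explosion time is a sum of independent exponentials with rates $\lambda(nh(0))$, extract the $\theta\to\infty$ asymptotics $\phi_{c}(\theta)\sim C_{k}(c)\theta^{1/k}$ of the log-Laplace transform by a Riemann-sum argument, and then convert this into the small-time left tail. The one genuine difference is the conversion step: the paper quotes de Bruijn's exponential Tauberian theorem and therefore needs only the Laplace asymptotics, whereas you prove both bounds directly, Chebyshev for the upper bound and exponential tilting plus a second-moment concentration estimate for the lower bound. Your route is self-contained and makes visible where the constant comes from (the Legendre-type optimum of $\theta\epsilon+C_{k}\theta^{1/k}$, which indeed evaluates to $|C_{k}|^{k/(k-1)}(k^{-k/(k-1)}-k^{-1/(k-1)})\epsilon^{-1/(k-1)}$), at the cost of the technical checks you flag at the end; the paper outsources exactly those checks to the cited theorem. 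You are also more careful on the reduction step: the paper simply asserts that it suffices to consider the birth process with rate $\lambda(h(0)N_{t-})$, while your two-sided comparison $\tau^{h(0)}\le\tau\le\tau^{h(\epsilon)}$ together with the $\eta$-perturbation and the continuity of $c\mapsto C_{k}(c)$ (in fact $C_{k}(c)=C_{k}(1)/c$ by scaling) is what actually justifies the lower bound on $\mathbb{P}(\tau\le\epsilon)$. Finally, both you and the paper must read $C_{k}^{k/(k-1)}$ in the statement with the sign of $C_{k}<0$ carried through so that the limit is negative; you make this explicit.
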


Before we proceed, let us first quote de Bruijn's Tauberian theorem from the book by Bingham, Goldie and Teugels \cite{Bingham}, which will
be used in the proof of Proposition \ref{explosionsmall}.

\begin{theorem}[de Bruijn's Tauberian theorem]
Let $\mu$ be a measure on $(0,\infty)$ whose Laplace transform $M(\lambda):=\int_{0}^{\infty}e^{-\lambda x}d\mu(x)$ converges for all $\lambda>0$.
If $\alpha<0$, $\phi\in\mathcal{R}_{\alpha}(0+)$, i.e. $\phi(\lambda t)/\phi(t)\sim\lambda^{\alpha}$ as $t\sim 0+$, 
put $\psi(\lambda):=\phi(\lambda)/\lambda\in\mathcal{R}_{\alpha-1}(0+)$, then, for $B>0$,
\begin{equation}
-\log\mu(0,x]\sim\frac{B}{\bar{\phi}(1/x)},\quad x\rightarrow 0+,
\end{equation}
if and only if
\begin{equation}
-\log M(\lambda)\sim(1-\alpha)\left(\frac{B}{-\alpha}\right)^{\frac{\alpha}{\alpha-1}}\frac{1}{\bar{\psi}(\lambda)},\quad\lambda\rightarrow\infty.
\end{equation}
Here, $\bar{\phi}(\lambda):=\sup\{t:\phi(t)>\lambda\}$ and similarly for $\bar{\psi}$.
\end{theorem}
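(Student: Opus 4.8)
The plan is to obtain de Bruijn's theorem from the general principle that, on the logarithmic scale, the Laplace transform interchanges the behaviour of a measure near the origin with the behaviour of its transform near infinity through a Legendre-type duality. Concretely, I would first reduce the equivalence to an ``exponential Tauberian'' statement about the kernel $e^{-(\lambda x+f(x))}$, carry out the saddle-point asymptotics for the Abelian half, identify the resulting variational expression with the stated constant, and then close the Tauberian half by a monotonicity argument. For the set-up, in the direction $\mu\Rightarrow M$ write $f(x):=-\log\mu(0,x]$; the hypothesis says $f(x)\sim B/\bar\phi(1/x)$ as $x\to0+$, and since $\phi\in\mathcal R_\alpha(0+)$ with $\alpha<0$, the standard inverse-of-regular-variation formulas (Karamata theory) show that $\bar\phi$ and $\bar\psi$ are regularly varying at $\infty$ of indices $1/\alpha$ and $1/(\alpha-1)$, hence $f$ is regularly varying at $0+$ of (negative) index $1/\alpha$ and in particular $f(x)\to\infty$, i.e. $\mu(0,x]\to0$ rapidly. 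Using $M(\lambda)<\infty$ (which forces $e^{-\lambda x}\mu(0,x]\to0$ at both ends) and integration by parts,
\begin{equation}
M(\lambda)=\lambda\int_0^\infty e^{-\lambda x}\mu(0,x]\,dx=\lambda\int_0^\infty e^{-(\lambda x+f(x))}\,dx .
\end{equation}

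Second, the Abelian half is a soft saddle-point estimate. Put $g_\lambda(x):=\lambda x+f(x)$ and $I(\lambda):=\inf_{x>0}g_\lambda(x)$. Because $f$ is (asymptotically) decreasing with $-f'(x)\sim-\tfrac1\alpha f(x)/x$ by the monotone density theorem, the infimum is attained at an interior $x_\lambda$ solving $-f'(x_\lambda)=\lambda$, and $x_\lambda$ is regularly varying in $\lambda$ of the appropriate negative index. The upper bound $-\log M(\lambda)\ge I(\lambda)-\log\bigl(\lambda\int_0^\infty e^{-(g_\lambda(x)-I(\lambda))}dx\bigr)$ and the lower bound obtained by restricting the integral to a window $|x-x_\lambda|\le\delta x_\lambda$, where the uniform convergence theorem for regularly varying $f$ keeps the exponent within $o(I(\lambda))$ of $I(\lambda)$, together give $-\log M(\lambda)\sim I(\lambda)$ as $\lambda\to\infty$: the prefactor $\lambda$ and the window width are only powers of $\lambda$ times slowly varying factors, hence log-negligible against $I(\lambda)\to\infty$.

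Third, one must evaluate $I(\lambda)=\lambda x_\lambda+f(x_\lambda)$. The stationarity relation $-f'(x_\lambda)=\lambda$ gives $f(x_\lambda)\sim-\alpha\lambda x_\lambda$, so $I(\lambda)\sim(1-\alpha)\lambda x_\lambda$, and it remains to express $x_\lambda$ through $\bar\psi$. In the pure power-law model $\phi(t)=t^\alpha$ (so $\bar\phi(\lambda)=\lambda^{1/\alpha}$, $\psi(\lambda)=\lambda^{\alpha-1}$, $\bar\psi(\lambda)=\lambda^{1/(\alpha-1)}$ and $f(x)=Bx^{1/\alpha}$) one computes directly
\begin{equation}
\inf_{x>0}\bigl(\lambda x+Bx^{1/\alpha}\bigr)=(1-\alpha)\Bigl(\tfrac{B}{-\alpha}\Bigr)^{\alpha/(\alpha-1)}\lambda^{1/(1-\alpha)}
=(1-\alpha)\Bigl(\tfrac{B}{-\alpha}\Bigr)^{\alpha/(\alpha-1)}\frac{1}{\bar\psi(\lambda)},
\end{equation}
which is exactly the asserted constant. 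In the general case the slowly varying corrections in $\bar\phi$ are propagated through the generalized inverse to $\bar\psi$ via the de Bruijn conjugate identities (cf. \S1.5 of Bingham, Goldie and Teugels \cite{Bingham}), yielding $-\log M(\lambda)\sim(1-\alpha)(B/(-\alpha))^{\alpha/(\alpha-1)}/\bar\psi(\lambda)$.

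Fourth, the converse $M\Rightarrow\mu$ cannot differentiate, so one exploits monotonicity of $x\mapsto\mu(0,x]$: from $M(\lambda)\ge e^{-\lambda x}\mu(0,x]$ we get the one-sided bound $-\log\mu(0,x]\ge\sup_{\lambda>0}\bigl(-\lambda x-\log M(\lambda)\bigr)$, the biconjugate; the matching reverse bound follows from $\mu(0,x]\le e^{\lambda y}\int_0^y e^{-\lambda u}d\mu(u)\le e^{\lambda y}M(\lambda)$ integrated over a shrinking window $y\in[x,x(1+\eps)]$ and then optimized in $\lambda$, using that the log-Legendre transform restricted to this class of regularly varying functions is, up to asymptotic equivalence, an involution. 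Cleanly packaged, these last two steps are Kasahara's exponential Tauberian theorem together with the de Bruijn conjugate theorem from \cite{Bingham}, and combining the two halves yields the stated equivalence. The main obstacle is precisely this last part: the Abelian half is routine Laplace asymptotics, but recovering $\mu(0,x]$ from $M$ rigorously, and tracking the slowly varying factors through the Legendre transform in the third and fourth steps, genuinely requires the regular-variation machinery — the monotone density theorem, the uniform convergence theorem, and the existence and uniqueness of the de Bruijn conjugate — which is why in practice one quotes the result from \cite{Bingham} rather than reproving it.
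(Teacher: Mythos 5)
You should first be aware that the paper does not prove this statement at all: it is quoted verbatim from Bingham, Goldie and Teugels \cite{Bingham} as a known result, to be used as a black box in the proof of the small-time explosion asymptotics. So there is no proof in the paper to match against, and your closing remark that in practice one cites \cite{Bingham} is exactly what the paper does. Taken as a citation-plus-sketch, your proposal is in the right spirit: the reduction $M(\lambda)=\lambda\int_{0}^{\infty}e^{-(\lambda x+f(x))}dx$ with $f(x)=-\log\mu(0,x]$, the Laplace/Legendre asymptotics $-\log M(\lambda)\sim\inf_{x>0}(\lambda x+f(x))$, and the explicit minimization in the power-law model (which I checked: $\inf_{x>0}(\lambda x+Bx^{1/\alpha})=(1-\alpha)(B/(-\alpha))^{\alpha/(\alpha-1)}\lambda^{1/(1-\alpha)}=(1-\alpha)(B/(-\alpha))^{\alpha/(\alpha-1)}/\bar{\psi}(\lambda)$) do reproduce the stated constant, and this is indeed the standard Kasahara-type route.

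As a standalone proof, however, your fourth step has a genuine gap. The inequality you display for the ``matching reverse bound,'' namely $\mu(0,x]\le e^{\lambda y}\int_{0}^{y}e^{-\lambda u}d\mu(u)\le e^{\lambda y}M(\lambda)$, points in the same direction as your first Chernoff bound $\mu(0,x]\le e^{\lambda x}M(\lambda)$: both bound $\mu(0,x]$ from above, i.e. $f(x)$ from below. The genuinely Tauberian half is the opposite bound, a lower bound on $\mu(0,x]$, and no amount of optimizing upper bounds in $\lambda$ can produce it. The standard fix is to split $M(\lambda)=\int_{0}^{x}+\int_{x}^{\infty}$ and use $\mu(0,x]\ge M(\lambda)-\int_{x}^{\infty}e^{-\lambda u}d\mu(u)\ge M(\lambda)-e^{-(\lambda-\lambda_{0})x}M(\lambda_{0})$, then choose $\lambda=\lambda(x)$ large on the saddle scale so that $\lambda x$ dominates $-\log M(\lambda)$ (possible precisely because $-\log M$ is regularly varying of index $1/(1-\alpha)\in(0,1)$), making the subtracted tail term negligible. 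Relatedly, your appeal to the monotone density theorem to write $-f'(x_{\lambda})=\lambda$ presupposes a differentiable (or monotone-density) version of $f$ that is not given; the infimum formulation used by Kasahara avoids differentiating $f$ altogether. Since both you and the paper ultimately rest on \cite{Bingham}, this does not affect the paper's use of the theorem, but the sketch as written would not survive as a self-contained proof without repairing that step.
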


\begin{proof}[Proof of Proposition \ref{explosionsmall}]
First, let us observe that since we are considering the event $\{\tau\leq\epsilon\}$ for $\epsilon>0$ very small. It is sufficient
to consider the point process with intensity $\lambda(h(0)N_{t-})$ at time $t$.

To apply de Bruijin's Tauberian theorem, notice that
\begin{equation}
-\log M(\sigma)=-\sum_{i=0}^{\infty}\log\left(\frac{\lambda(ih(0))}{\lambda(ih(0))+\sigma}\right).
\end{equation}
Recall that $\lambda(z)=\gamma z^{k}+\delta$, where $\gamma,\delta>0$ and $k>1$. Then,
\begin{align}
-\log M(\sigma)&=-\sum_{i=0}^{\infty}\log\left(\frac{\gamma i^{k}h(0)^{k}+\delta}{\gamma i^{k}h(0)^{k}+\delta+\sigma}\right)
\\
&\geq-\int_{1}^{\infty}\log\left(\frac{\gamma x^{k}h(0)^{k}+\delta}{\gamma x^{k}h(0)^{k}+\delta+\sigma}\right)dx\nonumber
\\
&=-\sigma^{1/k}\int_{1/\sigma^{1/k}}^{\infty}\log\left(\frac{\gamma\sigma y^{k}h(0)^{k}+\delta}{\gamma\sigma y^{k}h(0)^{k}+\delta+\sigma}\right)dy
\nonumber
\\
&\sim-\sigma^{1/k}\int_{0}^{\infty}\log\left(\frac{\gamma y^{k}h(0)^{k}}{\gamma y^{k}h(0)^{k}+1}\right)dy,
\quad\text{as $\sigma\rightarrow\infty$.}\nonumber
\end{align}
Similarly,
\begin{align}
-\log M(\sigma)&\leq-\int_{0}^{\infty}\log\left(\frac{\gamma x^{k}h(0)^{k}+\delta}{\gamma x^{k}h(0)^{k}+\delta+\sigma}\right)dx
\\
&\sim-\sigma^{1/k}\int_{0}^{\infty}\log\left(\frac{\gamma y^{k}h(0)^{k}}{\gamma y^{k}h(0)^{k}+1}\right)dy
\quad\text{as $\sigma\rightarrow\infty$.}\nonumber
\end{align}
Now let $C_{k}=\int_{0}^{\infty}\log\left(\frac{\gamma y^{k}h(0)^{k}}{\gamma y^{k}h(0)^{k}+1}\right)dy$, $\phi(t)=t^{1-k}$, $\psi(t)=t^{-k}$
and $\alpha=1-k<0$. Then $\bar{\phi}(1/\epsilon)=(1/\epsilon)^{-\frac{1}{k-1}}$ and $\bar{\psi}(\sigma)=\sigma^{-\frac{1}{k}}$. To apply the theorem,
we need to solve $B$ such that
\begin{equation}
(1-\alpha)\left(\frac{B}{-\alpha}\right)^{\frac{\alpha}{\alpha-1}}=k\left(\frac{B}{k-1}\right)^{\frac{k-1}{k}}=C_{k},
\end{equation}
for $B=(k-1)(C_{k}/k)^{\frac{k}{k-1}}$. Therefore,
\begin{equation}
\lim_{\epsilon\rightarrow 0}\epsilon^{\frac{1}{k-1}}\log\mathbb{P}(\tau\leq\epsilon)
=C_{k}^{\frac{k}{k-1}}(k^{-\frac{1}{k-1}}-k^{-\frac{k}{k-1}}).
\end{equation}
\end{proof}

\chapter{Limit Theorems for Marked Hawkes Processes\label{chap:six}}

\section{Introduction and Main Results}

\subsection{Introduction}

We consider in this chapter a linear Hawkes process with random marks. Let $N_{t}$ be a simple point process.
$N_{t}$ denotes the number of points in the interval $[0,t)$. Let $\mathcal{F}_{t}$ be
the natural filtration up to time $t$.
We assume that $N(-\infty,0]=0$. At time $t$, the point process has $\mathcal{F}_{t}$-predictable intensity
\begin{equation}
\lambda_{t}:=\nu+Z_{t},\quad Z_{t}:=\sum_{\tau_{i}<t}h(t-\tau_{i},a_{i}),\label{dynamicsmarked}
\end{equation}
where $\nu>0$, the $(\tau_{i})_{i\geq 1}$ are arrival times of the points, and the $(a_{i})_{i\geq 1}$ are i.i.d. random marks,
$a_{i}$ being independent of previous
arrival times $\tau_{j}$, $j\leq i$. Let us assume that $a_{i}$ has a common distribution $q(da)$ on a metric space $\mathbb{X}$.
Here, $h(\cdot,\cdot):\mathbb{R}^{+}\times\mathbb{X}\rightarrow\mathbb{R}^{+}$ is integrable,
i.e. $\int_{0}^{\infty}\int_{\mathbb{X}}h(t,a)q(da)dt<\infty$. 
Let $H(a):=\int_{0}^{\infty}h(t,a)dt$ for any $a\in\mathbb{X}$. We also assume that
\begin{equation}
\int_{\mathbb{X}}H(a)q(da)<1.\label{lessthanone}
\end{equation}
Let $\mathbb{P}^{q}$ denote the probability measure for the $a_{i}$'s with the common law $q(da)$. 
Under assumption \eqref{lessthanone}, it is well known that there exists a unique stationary version of the linear
marked Hawkes process satisfying the dynamics \eqref{dynamicsmarked} and that by ergodic theorem, a law
of large numbers holds, 
\begin{equation}
\lim_{t\rightarrow\infty}\frac{N_{t}}{t}=\frac{\nu}{1-\mathbb{E}^{q}[H(a)]}.
\end{equation}
This chapter is organized as follows. In Section \ref{MainResults},
we will introduce the main results of this paper, i.e. the central limit theorem and the large deviation principle
for linear marked Hawkes processes. The proof of the central limit theorem will be given
in Section \ref{CLTProof} and the proof of the large deviation principle will be given in
Section \ref{LDPProof}. Finally, we will discuss an application of our results to a risk model in finance
in Section \ref{RiskModel}.

\subsection{Main Results}\label{MainResults}

For a linear marked Hawkes process satisfying
the dynamics \eqref{dynamicsmarked}, we have the following large deviation principle.

\begin{theorem}[Large Deviation Principle]\label{LDP}
Assume the conditions \eqref{lessthanone} and
\begin{equation}
\lim_{x\rightarrow\infty}\left\{\int_{\mathbb{X}}e^{H(a)x}q(da)-x\right\}=\infty.\label{anothercondition}
\end{equation}
Then, 
$(N_{t}/t\in\cdot)$ satisfies a large deviation principle with rate function,
\begin{align}
\Lambda(x)
&:=
\begin{cases}
\inf_{\hat{q}}\left\{x\mathbb{E}^{\hat{q}}[H(a)]+\nu-x+x\log\left(\frac{x}{x\mathbb{E}^{\hat{q}}[H(a)]+\nu}\right)
+x\mathbb{E}^{\hat{q}}\left[\log\frac{d\hat{q}}{dq}\right]\right\} &\text{$x\geq 0$}
\\
+\infty &\text{$x<0$}
\end{cases}\nonumber
\\
&=
\begin{cases}
\theta_{\ast}x-\nu(x_{\ast}-1) &\text{$x\geq 0$}
\\
+\infty &\text{$x<0$}
\end{cases},\nonumber
\end{align}
where the infimum of $\hat{q}$
is taken over $\mathcal{M}(\mathbb{X})$,
the space of probability measures on $\mathbb{X}$
such that $\hat{q}$ is absolutely continuous w.r.t. $q$. 
Here, $\theta_{\ast}$ and $x_{\ast}$
satisfy the following equations
\begin{equation}
\begin{cases}
x_{\ast}=\mathbb{E}^{q}\left[e^{\theta_{\ast}+(x_{\ast}-1)H(a)}\right]
\\
\frac{x}{\nu}=x_{\ast}+\frac{x}{\nu}\mathbb{E}^{q}\left[H(a)e^{\theta_{\ast}+(x_{\ast}-1)H(a)}\right]
\end{cases}.
\end{equation}
\end{theorem}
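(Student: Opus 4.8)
The plan is to establish the large deviation principle via the G\"{a}rtner--Ellis theorem, which requires first computing the limiting logarithmic moment generating function of $N_t$ and then identifying its Fenchel--Legendre transform with the claimed rate function $\Lambda(x)$. The key structural fact is that the marked linear Hawkes process retains an immigration-birth (Poisson cluster) representation: immigrants arrive as a Poisson process of rate $\nu$, each immigrant carries a mark $a \sim q(da)$, produces a Poisson$(H(a))$ number of offspring with birth times distributed according to $h(\cdot,a)/H(a)$, and each offspring independently carries a fresh mark and reproduces by the same law. Since $N_t$ counts all points in $(0,t]$, conditioning on the arrival times of immigrants and using the Poisson structure, one gets, for $\theta$ in the appropriate range,
\begin{equation}
\mathbb{E}[e^{\theta N_t}] = e^{\nu \int_0^t (F_t(s)-1)\,ds},
\end{equation}
where $F_t(s) = \mathbb{E}[e^{\theta S(s)}]$ is the moment generating function of the total number of descendants (including itself) produced by a single immigrant up to time $s$. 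This is exactly the mechanism of Lemma \ref{expectationII} adapted to the marked setting, and $F$ satisfies a fixed-point equation obtained by conditioning on the immigrant's mark $a$ and on the birth times of its children:
\begin{equation}
F_t(s) = e^{\theta} \int_{\mathbb{X}} \exp\left\{ \int_0^s h(u,a)\bigl(F_t(s-u)-1\bigr)\,du \right\} q(da).
\end{equation}

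First I would show that as $t \to \infty$, $F_t(\infty) =: x_\ast(\theta)$ converges to the minimal positive solution of the limiting equation $x_\ast = \mathbb{E}^q[e^{\theta + (x_\ast - 1)H(a)}]$, using monotonicity of $F_t$ in $s$ and a contraction / implicit-function argument that is valid precisely when condition \eqref{anothercondition} holds (this growth condition guarantees that the curve $x \mapsto \mathbb{E}^q[e^{\theta + (x-1)H(a)}]$ meets the diagonal, i.e. that a finite solution exists, for $\theta$ up to the critical value $\theta_\ast$). A renewal-type / Tauberian estimate, exactly as in the proof of Theorem \ref{MDP} where $G_1$ appears, then gives $\frac{1}{t}\int_0^t (F_t(s)-1)\,ds \to x_\ast(\theta) - 1$, so that
\begin{equation}
\Gamma(\theta) := \lim_{t\to\infty} \frac{1}{t}\log \mathbb{E}[e^{\theta N_t}] = \nu\bigl(x_\ast(\theta) - 1\bigr)
\end{equation}
for $\theta$ below the critical threshold, with $\Gamma(\theta) = +\infty$ above it. I would also need the standard superexponential control $\limsup_{t\to\infty}\frac1t \log \mathbb{P}(N_1/t \geq \epsilon) = -\infty$ and essential smoothness / steepness of $\Gamma$ at the boundary of its effective domain so that G\"{a}rtner--Ellis applies in its full (not merely convex-conjugate) form; steepness follows because $x_\ast'(\theta) \to \infty$ as $\theta \uparrow \theta_{\mathrm{crit}}$, which again is where \eqref{anothercondition} is used.

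Next I would carry out the Legendre transform. Differentiating the defining relation for $x_\ast(\theta)$ gives $x_\ast'(\theta) = x_\ast / (1 - \mathbb{E}^q[H(a)e^{\theta+(x_\ast-1)H(a)}])$, so the stationarity condition $\Gamma'(\theta) = x$ for the supremum $\sup_\theta\{\theta x - \Gamma(\theta)\}$ becomes exactly the second equation in the displayed system, namely $\frac{x}{\nu} = x_\ast + \frac{x}{\nu}\mathbb{E}^q[H(a)e^{\theta_\ast + (x_\ast-1)H(a)}]$, and substituting back yields $\Lambda(x) = \theta_\ast x - \nu(x_\ast - 1)$, with $\Lambda(x) = +\infty$ for $x < 0$ since $N_t \geq 0$. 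Finally I would verify the variational (entropy) formula for $\Lambda$: the expression $\inf_{\hat q}\{ x\mathbb{E}^{\hat q}[H(a)] + \nu - x + x\log(x/(x\mathbb{E}^{\hat q}[H(a)]+\nu)) + x\,\mathbb{E}^{\hat q}[\log(d\hat q/dq)]\}$ is obtained by a tilting / Sanov-type argument (tilting the mark law from $q$ to $\hat q$ and the immigration-plus-branching intensity, then optimizing), matching the unmarked rate function of Bordenave and Torrisi when $\hat q = q$ and $\mathbb{X}$ is a point; carrying out the inner minimization over $\hat q$ by Lagrange multipliers reproduces the closed form. The main obstacle I anticipate is the analysis near the critical parameter $\theta_{\mathrm{crit}}$: proving that $\Gamma$ is essentially smooth (steep) there, that the minimal solution $x_\ast(\theta)$ stays finite and depends smoothly on $\theta$ on the open interval, and that the phase transition at the boundary does not spoil the upper-bound half of G\"{a}rtner--Ellis — this is precisely the role of the somewhat unusual hypothesis \eqref{anothercondition}, and making that dependence rigorous (rather than the routine cluster-representation bookkeeping and the routine Legendre computation) is where the real work lies.
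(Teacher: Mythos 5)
Your proposal is correct in substance and, for the computation of the limiting logarithmic moment generating function, follows the same route as the paper: the immigration--birth (cluster) representation gives the exact identity $\mathbb{E}[e^{\theta N_t}]=\exp\{\nu\int_0^t(F_S(s)-1)\,ds\}$ with $F_S$ solving the marked fixed-point recursion, $F_S(s)$ increases to the minimal root $x_*$ of $x=\int_{\mathbb{X}}e^{\theta+H(a)(x-1)}q(da)$, and Ces\`aro convergence (you do not even need a Tauberian argument, since $F_S$ does not depend on $t$ — your notation $F_t(s)$ suggests a dependence that is not there) yields $\Gamma(\theta)=\nu(x_*-1)$, with $\Gamma=+\infty$ when no root exists; condition \eqref{anothercondition} is used exactly as you say, to produce the critical pair $(\theta_c,x_c)$. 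Where you genuinely diverge from the paper is the LDP lower bound: you invoke the full G\"artner--Ellis theorem and argue essential smoothness from $x_*'(\theta)=x_*/(1-\mathbb{E}^q[H(a)e^{\theta+(x_*-1)H(a)}])\to\infty$ as $\theta\uparrow\theta_c$ (correct, since the denominator vanishes at $(\theta_c,x_c)$ by the defining equations), whereas the paper deliberately sidesteps steepness and proves the lower bound by an explicit Girsanov tilt of both the intensity ($\lambda\mapsto K\lambda$) and the mark law ($q\mapsto\hat q$), combined with Jensen's inequality and the ergodic theorem (Lemma \ref{Markedlowerbound} and the proof of Theorem \ref{LDP}); this tilting argument is what produces the variational form of $\Lambda(x)$ as an infimum over $\hat q$ directly, after which the closed form $\theta_*x-\nu(x_*-1)$ is obtained by exhibiting the optimal tilt $dq_*\propto e^{(x_*-1)H(a)}dq$. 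Your plan buys a shorter lower bound at the cost of a careful steepness verification at $\theta_c$ and still requires the tilting computation anyway to establish the entropy formula that is part of the theorem's statement, so in the end you would carry out essentially all of the paper's work plus the G\"artner--Ellis bookkeeping; both routes are sound.
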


\begin{theorem}[Central Limit Theorem]\label{CLT}
Assume $\lim_{t\rightarrow\infty}t^{1/2}\int_{t}^{\infty}\mathbb{E}^{q}[h(s,a)]ds=0$ and
that \eqref{lessthanone} holds. Then, 
\begin{equation}
\frac{N_{t}-\frac{\nu t}{1-\mathbb{E}^{q}[H(a)]}}{\sqrt{t}}
\rightarrow N\left(0,\frac{\nu(1+\text{Var}^{q}[H(a)])}{(1-\mathbb{E}^{q}[H(a)])^{3}}\right),
\end{equation}
in distribution as $t\rightarrow\infty$.
\end{theorem}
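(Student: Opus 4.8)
## Proof Proposal for the Central Limit Theorem (Theorem \ref{CLT})

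The plan is to exploit the immigration-birth (Poisson cluster) representation of the linear marked Hawkes process, which generalizes the classical construction of Hawkes and Oakes \cite{HawkesII} to the marked setting. Under the stationary dynamics \eqref{dynamicsmarked}, immigrants arrive according to a homogeneous Poisson process of rate $\nu$, each immigrant carries an i.i.d. mark with law $q$, and conditional on its mark $a$ an immigrant produces a Poisson$(H(a))$ number of first-generation offspring, each born after a delay with density $h(\cdot,a)/H(a)$ and carrying a fresh independent mark; offspring reproduce by the same rule. Let $S$ be the total number of points (immigrant plus all descendants) in a single cluster. Conditioning on the root mark $a$, a branching argument gives $\mathbb{E}^q[S] = 1/(1-\mathbb{E}^q[H(a)])$, which is finite by \eqref{lessthanone}, and a second branching recursion together with the moment hypothesis shows $\mathbb{E}^q[S^2]<\infty$; I would record these as a preliminary lemma, since the decay assumption $t^{1/2}\int_t^\infty \mathbb{E}^q[h(s,a)]\,ds \to 0$ is precisely what is needed to control the spatial (temporal) spread of a cluster and hence the edge effects.

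First I would write $N_t = N_t^{(1)} + N_t^{(2)}$, where $N_t^{(1)}$ counts points belonging to clusters whose immigrant arrived in $[0,t)$ and $N_t^{(2)}$ is a correction term: it subtracts points of such clusters that land outside $[0,t)$ and adds points in $[0,t)$ coming from clusters whose immigrant arrived before time $0$ (the latter being present in the stationary version). The main term $N_t^{(1)}$ is a compound-Poisson-type sum: by the marking theorem for Poisson processes it has the same law as $\sum_{i=1}^{M_t} S_i$, where $M_t$ is Poisson$(\nu t)$ and the $S_i$ are i.i.d. copies of $S$ independent of $M_t$. A classical CLT for randomly-indexed sums (Anscombe's theorem, or a direct characteristic-function computation) then yields
\begin{equation}
\frac{N_t^{(1)} - \nu t\,\mathbb{E}^q[S]}{\sqrt{t}} \to N\bigl(0,\ \nu\,\mathbb{E}^q[S^2]\bigr)
\end{equation}
in distribution as $t\to\infty$. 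It remains to identify the constants: $\mathbb{E}^q[S] = 1/(1-\mathbb{E}^q[H(a)])$ gives the centering $\nu t/(1-\mathbb{E}^q[H(a)])$, and the second-moment recursion (differentiating the branching fixed-point relation, or solving directly) gives $\mathbb{E}^q[S^2] = (1+\mathrm{Var}^q[H(a)])/(1-\mathbb{E}^q[H(a)])^3$, which is the asserted variance.

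The hard part will be showing that the correction term is negligible, i.e.
\begin{equation}
\frac{N_t^{(2)}}{\sqrt{t}} \to 0 \quad\text{in probability as } t\to\infty.
\end{equation}
For the contribution of pre-time-$0$ immigrants, I would bound its expectation by $\nu \int_0^\infty \mathbb{E}^q[\text{(points of a cluster landing after time $s$)}]\,ds$ type quantities and translate this, via the cluster structure, into a tail integral of $\int_t^\infty \mathbb{E}^q[h(s,a)]\,ds$; the hypothesis $t^{1/2}\int_t^\infty \mathbb{E}^q[h(s,a)]\,ds\to 0$ makes the $L^1$ norm $o(\sqrt t)$, hence $o(\sqrt t)$ in probability. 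For the points of $[0,t)$-immigrant clusters that overflow past $t$, the same estimate applied near the right endpoint controls the expected overflow. A small amount of care is needed because the "number of points of a cluster born after a given lag" is a sum over generations, so I would iterate the one-generation bound using $\mathbb{E}^q[H(a)]<1$ to sum the geometric series, exactly as in the renewal-type estimates of Chapter \ref{chap:two}. Once the correction is shown to be $o_{\mathbb{P}}(\sqrt t)$, Slutsky's theorem combines it with the compound-Poisson CLT to finish the proof.
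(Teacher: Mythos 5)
Your proposal is correct, but it takes a genuinely different route from the paper. The paper does not use the cluster representation here: it centers $N_{t}$ by the compensator, writing $\int_{0}^{t}\lambda_{s}ds=\nu t+\sum_{\tau_{i}<t}H(a_{i})-\mathcal{E}_{t}$ with $\mathcal{E}_{t}=\sum_{\tau_{i}<t}\int_{t}^{\infty}h(s-\tau_{i},a_{i})ds$, rearranges to express $(N_{t}-\mu t)/\sqrt{t}$ as $(1-\mathbb{E}^{q}[H(a)])^{-1}$ times the sum of the two martingales $M_{1}(t)=N_{t}-\int_{0}^{t}\lambda_{s}ds$ and $M_{2}(t)=\sum_{\tau_{i}<t}(H(a_{i})-\mathbb{E}^{q}[H(a)])$ minus $\mathcal{E}_{t}/\sqrt{t}$, and then applies a martingale CLT after showing that the quadratic variation $\sum_{\tau_{i}<t}(1+H(a_{i})-\mathbb{E}^{q}[H(a)])^{2}$, divided by $t$, converges to $\nu(1+\text{Var}^{q}[H(a)])/(1-\mathbb{E}^{q}[H(a)])$ by the law of large numbers. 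The identity $\mathbb{E}^{q}[S^{2}]=(1+\text{Var}^{q}[H(a)])/(1-\mathbb{E}^{q}[H(a)])^{3}$ that you derive from the branching recursion thus appears in the paper only implicitly, through the quadratic variation; your compound-Poisson route makes the variance structurally transparent and matches the cluster machinery already used for the large deviation upper bound in this chapter, at the cost of the extra bookkeeping for $\mathbb{E}^{q}[S^{2}]<\infty$ and for the overflow term. Note that the key analytic input is identical in both proofs: your overflow is, up to the factor $(1-\mathbb{E}^{q}[H(a)])^{-1}$ accounting for the subtrees of the first crossing descendants, exactly $\mathbb{E}[\mathcal{E}_{t}]$, which both arguments bound by $\frac{\nu}{1-\mathbb{E}^{q}[H(a)]}\int_{0}^{t}\mathbb{E}^{q}[g(s,a)]ds=o(\sqrt{t})$ via L'H\^{o}pital and the tail hypothesis. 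Two small remarks: the process in this chapter starts from empty history ($N(-\infty,0]=0$), so only the right-edge overflow needs to be controlled, not the contribution of pre-time-$0$ immigrants; and both your argument and the paper's implicitly require $\text{Var}^{q}[H(a)]<\infty$ (equivalently $\mathbb{E}^{q}[H(a)^{2}]<\infty$), which is not a formal consequence of the stated tail condition on $\mathbb{E}^{q}[h(s,a)]$ but is needed for the variance to be finite.
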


\section{Proof of Central Limit Theorem}\label{CLTProof}

\begin{proof}[Proof of Theorem \ref{CLT}]
First, let us observe that
\begin{align}
\int_{0}^{t}\lambda_{s}ds
&=\nu t+\sum_{\tau_{i}<t}\int_{\tau_{i}}^{t}h(s-\tau_{i},a_{i})ds
\\
&=\nu t+\sum_{\tau_{i}<t}H(a_{i})-\mathcal{E}_{t},\nonumber
\end{align}
where the error term $\mathcal{E}_{t}$ is given by
\begin{equation}
\mathcal{E}_{t}:=\sum_{\tau_{i}<t}\int_{t}^{\infty}h(s-\tau_{i},a_{i})ds.
\end{equation}
Therefore, 
\begin{align}
\frac{N_{t}-\int_{0}^{t}\lambda_{s}ds}{\sqrt{t}}
&=\frac{N_{t}-\nu t-\sum_{\tau_{i}<t}H(a_{i})}{\sqrt{t}}+\frac{\mathcal{E}_{t}}{\sqrt{t}}\label{rearrange}
\\
&=(1-\mathbb{E}^{q}[H(a)])\frac{N_{t}-\mu t}{\sqrt{t}}+\frac{\mathbb{E}^{q}[H(a)]N_{t}-\sum_{\tau_{i}<t}H(a_{i})}{\sqrt{t}}
+\frac{\mathcal{E}_{t}}{\sqrt{t}}\nonumber,
\end{align}
where $\mu:=\frac{\nu}{1-\mathbb{E}^{q}[H(a)]}$. Rearranging the terms in \eqref{rearrange},
we get
\begin{equation}
\frac{N_{t}-\mu t}{\sqrt{t}}
=\frac{1}{1-\mathbb{E}^{q}[H(a)]}
\left[\frac{N_{t}-\int_{0}^{t}\lambda_{s}ds}{\sqrt{t}}+\frac{\sum_{\tau_{i}<t}(H(a_{i})-\mathbb{E}^{q}[H(a)])}{\sqrt{t}}
-\frac{\mathcal{E}_{t}}{\sqrt{t}}\right].
\end{equation}
It is easy to check that $\frac{\mathcal{E}_{t}}{\sqrt{t}}\rightarrow 0$ in probability as $t\rightarrow\infty$. To see this,
first notice that $\mathbb{E}[\lambda_{t}]\leq\frac{\nu}{1-\mathbb{E}^{q}[H(a)]}$ uniformly in $t$.
Let $g(t,a):=\int_{t}^{\infty}h(s,a)ds$. We have $\mathcal{E}_{t}=\sum_{\tau_{i}<t}g(t-\tau_{i},a_{i})$
and thus
\begin{align}
\mathbb{E}[\mathcal{E}_{t}]
&=\int_{0}^{t}\int_{\mathbb{X}}g(t-s,a)q(da)\mathbb{E}[\lambda_{s}]ds
\\
&\leq\frac{\nu}{1-\mathbb{E}^{q}[H(a)]}\int_{0}^{t}\int_{\mathbb{X}}g(t-s,a)q(da)ds\nonumber
\\
&=\frac{\nu}{1-\mathbb{E}^{q}[H(a)]}\int_{0}^{t}\mathbb{E}^{q}[g(s,a)]ds.\nonumber
\end{align}
Hence, by L'H\^{o}pital's rule,
\begin{align}
\lim_{t\rightarrow\infty}\frac{1}{t^{1/2}}\int_{0}^{t}\mathbb{E}^{q}[g(s,a)]ds
&=\lim_{t\rightarrow\infty}\frac{\mathbb{E}^{q}[g(t,a)]}{\frac{1}{2}t^{-1/2}}
\\
&=\lim_{t\rightarrow\infty}2t^{1/2}\int_{t}^{\infty}\mathbb{E}^{q}[h(s,a)]ds=0.\nonumber
\end{align}
Hence, $\frac{\mathcal{E}_{t}}{\sqrt{t}}\rightarrow 0$ in probability as $t\rightarrow\infty$.

Furthermore, $M_{1}(t):=N_{t}-\int_{0}^{t}\lambda_{s}ds$ and $M_{2}(t):=\sum_{\tau_{i}<t}(H(a_{i})-\mathbb{E}^{q}[H(a)])$
are both martingales. 

Moreover, since $\int_{0}^{t}\lambda_{s}ds$ is of finite variation, the quadratic
variation of $M_{1}(t)+M_{2}(t)$ is the same as the quadratic variation of $N_{t}+M_{2}(t)$.
And notice that $N_{t}+M_{2}(t)=\sum_{\tau_{i}<t}(1+H(a_{i})-\mathbb{E}^{q}[H(a)])$ which
has quadratic variation
\begin{equation}
\sum_{\tau_{i}<t}(1+H(a_{i})-\mathbb{E}^{q}[H(a)])^{2}.
\end{equation}
By the standard law of large numbers, we have
\begin{align}
\frac{1}{t}\sum_{\tau_{i}<t}(1+H(a_{i})-\mathbb{E}^{q}[H(a)])
&=\frac{N_{t}}{t}\cdot\frac{1}{N_{t}}\sum_{\tau_{i}<t}(1+H(a_{i})-\mathbb{E}^{q}[H(a)])^{2}
\\
&\rightarrow\frac{\nu}{1-\mathbb{E}^{q}[H(a)]}\cdot\mathbb{E}^{q}\left[(1+H(a)-\mathbb{E}^{q}[H(a)])^{2}\right]\nonumber
\\
&=\frac{\nu(1+\text{Var}^{q}[H(a)])}{1-\mathbb{E}^{q}[H(a)]},\nonumber
\end{align}
a.s. as $t\rightarrow\infty$. By a standard martingale central limit theorem, we conclude that
\begin{equation}
\frac{N_{t}-\frac{\nu t}{1-\mathbb{E}^{q}[H(a)]}}{\sqrt{t}}
\rightarrow N\left(0,\frac{\nu(1+\text{Var}^{q}[H(a)])}{(1-\mathbb{E}^{q}[H(a)])^{3}}\right),
\end{equation}
in distribution as $t\rightarrow\infty$.
\end{proof}

\section{Proof of Large Deviation Principle}\label{LDPProof}

\subsection{Limit of a Logarithmic Moment Generating Function}

In this subsection, we prove the existence of the limit of the logarithmic moment generating function
$\lim_{t\rightarrow\infty}\frac{1}{t}\log\mathbb{E}[e^{\theta N_{t}}]$ and give a variational formula 
and a more explicit formula for this limit.

\begin{theorem}\label{logarithmic}
The limit $\Gamma(\theta)$ of the logarithmic moment generating function is
\begin{equation}
\Gamma(\theta)=\lim_{t\rightarrow\infty}\frac{1}{t}\log\mathbb{E}[e^{\theta N_{t}}]
=
\begin{cases}
\nu(f(\theta)-1) &\text{if $\theta\in(-\infty,\theta_{c}]$}
\\
+\infty &\text{otherwise}
\end{cases},
\end{equation}
where $f(\theta)$ is the minimal solution to $x=\int_{\mathbb{X}}e^{\theta+H(a)(x-1)}q(da)$ and
\begin{equation}\label{thetastar}
\theta_{c}=-\log\int_{\mathbb{X}}H(a)e^{H(a)(x_{c}-1)}q(da)>0,
\end{equation}
where $x_{c}>1$ satisfies the equation $x\int_{\mathbb{X}}H(a)e^{H(a)(x-1)}q(da)=\int_{\mathbb{X}}e^{H(a)(x-1)}q(da)$.
\end{theorem}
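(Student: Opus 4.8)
The plan is to compute $\mathbb{E}[e^{\theta N_t}]$ via the immigration-birth (Poisson cluster) representation of the linear marked Hawkes process, exactly as in the unmarked case treated in Lemma \ref{expectationII}, and then analyze the resulting fixed-point equation. First I would set up the cluster representation: immigrants arrive as a rate-$\nu$ Poisson process; each immigrant, with mark $a$ drawn from $q$, spawns offspring whose count is Poisson with parameter $H(a)$, each offspring born at a time with density $h(\cdot,a)/H(a)$ and carrying its own independent mark, and so on recursively. Writing $F(t) = \mathbb{E}[e^{\theta S(t)}]$ where $S(t)$ is the number of descendants (including the root if born before $t$) of a single immigrant arriving at time $0$, the same conditioning-on-the-first-generation argument that produced the equation in Lemma \ref{expectationII} gives here, after averaging over the mark $a$,
\begin{equation}
F(t) = e^{\theta}\int_{\mathbb{X}} \exp\left\{ \int_0^t \frac{h(s,a)}{H(a)}(F(t-s)-1)\,H(a)\,ds\right\} q(da) = e^{\theta}\int_{\mathbb{X}} e^{\int_0^t h(s,a)(F(t-s)-1)ds} q(da),
\end{equation}
and, superposing independent clusters from a rate-$\nu$ Poisson process on $[0,t]$, $\mathbb{E}[e^{\theta N_t}] = \exp\{\nu\int_0^t (F(s)-1)\,ds\}$.

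Next I would let $t\to\infty$. Since $F(t)$ is monotone in $t$ (for $\theta$ in the relevant range) and bounded, $F(\infty) =: f(\theta)$ exists and, by monotone/dominated convergence applied to the integral equation, is the minimal positive solution of $x = \int_{\mathbb{X}} e^{\theta + H(a)(x-1)} q(da)$. A Tauberian/renewal argument (as invoked repeatedly in the excerpt via Feller, Chapters XIII--XIV) then yields $\frac{1}{t}\int_0^t (F(s)-1)\,ds \to f(\theta)-1$, hence $\Gamma(\theta) = \nu(f(\theta)-1)$ whenever the fixed point exists and is finite. The remaining work is to pin down the critical value $\theta_c$: define $G(x) = \int_{\mathbb{X}} e^{\theta + H(a)(x-1)} q(da)$, so the fixed point equation is $x = G(x)$. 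Since $G$ is convex and increasing in $x$ with $G(1) = e^\theta \int e^{0} q(da)$-type behavior, a solution in $x\ge 1$ exists iff the line $y=x$ is not strictly above $y=G(x)$ everywhere, i.e. iff the curves are tangent or cross. The borderline case is tangency: $x = G(x)$ together with $1 = G'(x) = \int_{\mathbb{X}} H(a) e^{\theta + H(a)(x-1)} q(da)$. Eliminating: at tangency $G'(x)=1$ gives $e^{\theta}\int H(a) e^{H(a)(x-1)}q(da) = 1$, i.e. $\theta_c = -\log\int_{\mathbb{X}} H(a)e^{H(a)(x_c-1)}q(da)$, and dividing $x = G(x)$ by $1 = G'(x)$ gives the characterizing equation $x_c \int H(a) e^{H(a)(x_c-1)}q(da) = \int e^{H(a)(x_c-1)}q(da)$, which has a unique root $x_c > 1$ by strict convexity considerations (the function $x\mapsto x G'(x) - G(x)$ is increasing and changes sign). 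For $\theta > \theta_c$ there is no finite solution and one shows directly (e.g. by a lower bound on $\mathbb{E}[e^{\theta N_t}]$, or by noting $F(t)\uparrow\infty$) that $\Gamma(\theta) = +\infty$. I would also verify $\theta_c > 0$: this uses condition \eqref{lessthanone}, $\mathbb{E}^q[H(a)] < 1$, which guarantees that at $\theta = 0$ the fixed point is $f(0)=1$ with $G'(1) = \mathbb{E}^q[H(a)] < 1$, so the tangency occurs at a strictly positive $\theta$.

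The main obstacle I anticipate is the careful justification of the limit interchange and the minimality/monotonicity claims: one must show $F(t)$ is well-defined and finite for $t$ fixed and $\theta \le \theta_c$ (this is where a Gronwall-type or iteration argument on the integral equation, together with $\alpha$-type control, enters), show $F(t)$ increases to the \emph{minimal} root rather than a larger one (a sub-/super-solution sandwich), and then apply the Tauberian theorem to the renewal-type equation for $F$ to get the Cesàro limit of $F(s)-1$. The analysis of the curve $x = G(x)$ and the existence/uniqueness of $x_c$ is elementary convex analysis but needs the precise hypotheses: convexity of $G$, $G(1^-)$ behavior, and condition \eqref{anothercondition} of Theorem \ref{LDP} (or its analog) ensuring $G(x)$ eventually dominates $x$ so that a tangency rather than a crossing-at-infinity governs the threshold. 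Once $\Gamma$ is identified with this explicit convex function, the domain $(-\infty,\theta_c]$ and the value $\nu(f(\theta)-1)$ follow, completing the proof.
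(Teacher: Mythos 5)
Your proposal is correct, and it is essentially the route the paper itself takes for the upper bound (Lemma \ref{Markedupperbound}): the immigration--birth computation giving $\mathbb{E}[e^{\theta N_{t}}]=\exp\{\nu\int_{0}^{t}(F(s)-1)ds\}$ with $F$ solving the same integral equation, followed by monotone convergence of $F$ to the minimal root $x_{*}$ (with $x_{*}=\infty$ when no root exists) and the Ces\`aro limit. Where you differ is in the overall organization: the paper proves the matching lower bound by an independent argument --- Girsanov tilting of both the intensity ($\hat{\lambda}=K\lambda$) and the mark law $\hat{q}$, the ergodic theorem, Jensen's inequality, and Lemma \ref{formula} --- and it is inside that lemma (Lemma \ref{Markedlowerbound}) that the tangency analysis of $G(x)=e^{\theta}\int e^{H(a)(x-1)}q(da)-x$, the identification of $(x_{c},\theta_{c})$ via $G(x_{c})=G'(x_{c})=0$, and the use of \eqref{lessthanone} for $G'(1)<0$ and of \eqref{anothercondition} for $G(\infty)=\infty$ all appear; you instead fold that elementary convex analysis into the single branching argument, which is legitimate since $F(s)\uparrow x_{*}$ (finite or infinite) already gives the limit from both sides. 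What the paper's two-sided structure buys is the variational/entropy representation of $\Gamma$ over tilted pairs $(\hat{\lambda},\hat{q})$, which is then reused to produce the rate function and the lower bound of the LDP in Theorem \ref{LDP}; your leaner route proves Theorem \ref{logarithmic} on its own, but a tilting argument of that kind would still be needed later. Two minor points: the Ces\`aro step needs no Tauberian theorem --- once $F(s)\to f(\theta)$ (increasing for $\theta\geq 0$, decreasing and bounded below for $\theta<0$), one has $\frac{1}{t}\int_{0}^{t}(F(s)-1)ds\to f(\theta)-1$ directly, also when the limit is $+\infty$; and minimality of the limit is cleanest by noting $F(t)=\mathbb{E}[e^{\theta S(t)}]\uparrow\mathbb{E}[e^{\theta S(\infty)}]$, which is the minimal positive solution by the standard branching-process argument, exactly as cited (Jagers) in Lemma \ref{expectationII}.
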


We will break the proof of Theorem \ref{logarithmic} into the proof of the lower bound, i.e. Lemma \ref{Markedlowerbound} and the 
proof of the upper bound, i.e. Lemma \ref{Markedupperbound}.

Before we proceed, let us first prove Lemma \ref{formula}, which will be repeatedly used.

\begin{lemma}\label{formula}
Consider a linear marked Hawkes process with intensity
\begin{equation}
\lambda_{t}:=\alpha+\beta Z_{t}:=\alpha+\beta\sum_{\tau_{i}<t}h(t-\tau_{i},a_{i}),
\end{equation}
and $\beta\mathbb{E}^{q}[H(a)]<1$, where the $a_{i}$ are i.i.d. random marks with the common law $q(da)$
independent of the previous arrival times, then
there exists a unique invariant measure $\pi$ for $Z_{t}$ such that
\begin{equation}
\int\lambda(z)\pi(dz)=\frac{\alpha}{1-\beta\mathbb{E}^{q}[H(a)]}.
\end{equation}
\end{lemma}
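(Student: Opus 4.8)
\textbf{Proof proposal for Lemma \ref{formula}.}

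The plan is to argue by a monotone Poisson-embedding construction (exactly as in the stationarity proof for linear Hawkes processes reviewed earlier in Chapter~\ref{chap:one}) to produce a stationary version of $Z_t$, and then to compute the mean intensity by taking expectations and exploiting stationarity together with the i.i.d.\ mark structure. First I would set up the canonical space carrying a Poisson random measure $\overline{N}$ of unit intensity on $\mathbb{R}^2$ and an independent i.i.d.\ sequence of marks with law $q(da)$ attached to the points; I would then define iteratively the processes $\lambda^n_t = \alpha + \beta\sum_{\tau_i<t} h(t-\tau_i,a_i)$ where the $\tau_i$ are the points of $\overline N$ lying below $\lambda^{n-1}$, with $\lambda^0 \equiv 0$. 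Since $h(\cdot,\cdot)\ge 0$, the sequences $\lambda^n_t$ and the associated point processes $N^n$ are nondecreasing in $n$, so they have pointwise limits $\lambda_t$, $N$. The assumption $\beta\mathbb{E}^q[H(a)]<1$ guarantees the limit is finite: taking $f_n(t):=\mathbb{E}[\lambda^n_t]$ one gets $f_n(t)\le \alpha + \beta\int_0^t \mathbb{E}^q[h(t-s,a)] f_{n-1}(s)\,ds$, and the sub-criticality of the kernel $\beta\mathbb{E}^q[h(\cdot,a)]$ (whose $L^1$ norm is $\beta\mathbb{E}^q[H(a)]<1$) yields a uniform bound $\sup_n\sup_t f_n(t)<\infty$, hence $\lambda_t<\infty$ a.s. by Fatou. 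This construction is time-stationary since $\overline N$ and the mark sequence are, and monotonicity shows $\lambda_t$ solves the fixed-point equation, so $Z_t$ is a stationary version. Uniqueness follows by the same coupling argument as for the unmarked linear Hawkes process: the contribution to $(0,T)$ of clusters initiated before time $-s$ has expectation bounded by a constant times $\int_s^\infty \mathbb{E}^q[h(u,a)]\,du \to 0$, so any two stationary versions agree in variation.

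Next I would compute the mean. Write $m:=\mathbb{E}^q[H(a)]$ and $\mu:=\mathbb{E}[\lambda_t]$, which is well-defined and finite and independent of $t$ by stationarity. Taking expectations in $\lambda_t = \alpha + \beta\sum_{\tau_i<t}h(t-\tau_i,a_i)$ and using that the mark $a_i$ is independent of the arrival times $\tau_j$, $j\le i$, one replaces $h(t-\tau_i,a_i)$ by its mark-average $\mathbb{E}^q[h(t-\tau_i,a)]$ inside the expectation. Then the martingale/compensator identity for the (unmarked) counting process, $\mathbb{E}\bigl[\int_A f(s)\,dN_s\bigr]=\mathbb{E}\bigl[\int_A f(s)\lambda_s\,ds\bigr]$ for predictable $f$, gives
\begin{equation}
\mu = \alpha + \beta\int_{-\infty}^{t}\mathbb{E}^q[h(t-s,a)]\,\mathbb{E}[\lambda_s]\,ds = \alpha + \beta\mu\int_0^\infty \mathbb{E}^q[h(u,a)]\,du = \alpha + \beta m\mu.
\end{equation}
Solving, $\mu = \alpha/(1-\beta m)$, which is exactly $\int\lambda(z)\pi(dz)$ where $\pi$ is the invariant law of $Z_t$ (with $\lambda(z)=\alpha+\beta z$). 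A small technical point to be handled carefully is justifying the interchange of $\sum_{\tau_i<t}$ and $\mathbb{E}$ and the use of the compensator identity directly on the infinite-horizon sum: I would do this by first working on the finite-history (empty past) version, where the sum is finite and all manipulations are legitimate, deriving $\mathbb{E}^\emptyset[\lambda_t]=\alpha+\beta\int_0^t\mathbb{E}^q[h(t-s,a)]\mathbb{E}^\emptyset[\lambda_s]\,ds$, then passing $t\to\infty$ and invoking the convergence of the empty-history version to the stationary version established above.

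The main obstacle is not any single hard estimate but the careful bookkeeping around the marks: one must check that the i.i.d.-mark independence really does let one average $h$ over $q$ inside every expectation (this needs the fact that $a_i$ is independent of $\tau_j$ for $j\le i$, which is part of the model assumption), and that the renewal-type fixed-point argument for finiteness works with the \emph{mark-averaged} kernel $\beta\mathbb{E}^q[h(\cdot,a)]$ rather than a deterministic one. Once the stationary version is in hand and the averaging is justified, the computation of $\mu$ is a one-line renewal identity. Hence the bulk of the writeup is really the (standard, and already essentially recorded earlier in the thesis) existence/uniqueness of the stationary marked linear Hawkes process, adapted to allow the multiplicative constants $\alpha$, $\beta$ in place of $\nu$, $1$.
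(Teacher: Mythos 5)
Your proposal is correct and follows essentially the same route as the paper: the paper simply invokes the well-known ergodicity/stability of the sub-critical linear marked Hawkes process and then derives the mean intensity identity $\mu=\alpha+\beta\,\mathbb{E}^{q}[H(a)]\,\mu$ by taking expectations under stationarity, exactly as in your final computation. The extra material you supply (the Poisson-embedding construction and the coupling uniqueness argument) is a spelled-out version of what the paper cites as known, so no gap remains.
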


\begin{proof}
The ergodicity of $Z_{t}$ is well known. Let $\pi$ be the invariant probability measure
for $Z_{t}$. Then
\begin{equation}
\int\lambda(z)\pi(dz)=\alpha+\beta\int_{\mathbb{X}}\int_{0}^{\infty}h(t,a)dtq(da)\int\lambda(z)\pi(dz).
\end{equation}
\end{proof}

\begin{lemma}[Lower Bound]\label{Markedlowerbound}
\begin{equation}
\liminf_{t\rightarrow\infty}\frac{1}{t}\log\mathbb{E}[e^{\theta N_{t}}]
\geq
\begin{cases}
\nu(f(\theta)-1) &\text{if $\theta\in(-\infty,\theta_{c}]$}
\\
+\infty &\text{otherwise}
\end{cases},
\end{equation}
where $f(\theta)$ is the minimal solution to $x=\int e^{\theta+H(a)(x-1)}q(da)$ and $\theta_{c}$ is defined in
\eqref{thetastar}.
\end{lemma}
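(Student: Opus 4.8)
The plan is to prove the lower bound by an exponential change of measure combined with Jensen's inequality and the ergodic theorem, with Lemma \ref{formula} supplying the mean intensity of the tilted process. Given a probability measure $\hat{q}$ absolutely continuous with respect to $q$ and a constant $x>0$ with $x\mathbb{E}^{\hat{q}}[H(a)]<1$, let $\hat{\mathbb{P}}$ be the law under which $N$ is the linear marked Hawkes process with baseline intensity $x\nu$, exciting function $x\,h(\cdot,a)$ and mark law $\hat{q}$. By the Girsanov formula for marked point processes, $\log\frac{d\hat{\mathbb{P}}}{d\mathbb{P}}\big|_{\mathcal{F}_t}=\int_0^t\!\int_{\mathbb{X}}\log\!\big(x\,\frac{d\hat{q}}{dq}(a)\big)N(ds\,da)-\int_0^t(x\lambda_s-\lambda_s)\,ds$ with $\lambda_s=\nu+Z_s$, and from $\mathbb{E}[e^{\theta N_t}]=\hat{\mathbb{E}}[\exp(\theta N_t-\log\frac{d\hat{\mathbb{P}}}{d\mathbb{P}}\big|_{\mathcal{F}_t})]$ and Jensen's inequality one gets
\[
\tfrac1t\log\mathbb{E}[e^{\theta N_t}]\ \ge\ \tfrac{\theta}{t}\,\hat{\mathbb{E}}[N_t]-\tfrac1t\,\hat{\mathbb{E}}\big[\log\tfrac{d\hat{\mathbb{P}}}{d\mathbb{P}}\big|_{\mathcal{F}_t}\big].
\]
Both terms on the right will be evaluated in the limit $t\to\infty$ using that, under $\hat{\mathbb{P}}$, the compensator of $N(ds\,da)$ is $x\lambda_s\,\hat{q}(da)\,ds$ and $\hat{\mathbb{E}}[N_t]=x\,\hat{\mathbb{E}}[\int_0^t\lambda_s\,ds]$, and that the mark contribution to the relative entropy is $\hat{\mathbb{E}}[N_t]\,\mathbb{E}^{\hat{q}}[\log\frac{d\hat{q}}{dq}]$.

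For $\theta\le\theta_c$ I would take $x=f(\theta)$ and $\hat{q}(da)=e^{(f(\theta)-1)H(a)}q(da)\big/\!\int_{\mathbb{X}}e^{(f(\theta)-1)H(a')}q(da')$. The defining equation $f(\theta)=\int_{\mathbb{X}}e^{\theta+H(a)(f(\theta)-1)}q(da)$ gives $\log x=\theta+\log\!\int e^{(x-1)H}dq$, and the tangency characterization \eqref{thetastar} of $x_c$ together with \eqref{lessthanone} shows $x\mathbb{E}^{\hat{q}}[H(a)]<1$ for all $\theta<\theta_c$ (the function $x\mapsto x\int He^{(x-1)H}dq-\int e^{(x-1)H}dq$ is increasing, negative at $x=1$ by \eqref{lessthanone}, and vanishes at $x_c$), so $\hat{\mathbb{P}}$ is a subcritical linear marked Hawkes process and Lemma \ref{formula} yields $\tfrac1t\hat{\mathbb{E}}[N_t]\to\hat{\mu}:=x\nu/(1-xm)$ with $m:=\mathbb{E}^{\hat{q}}[H(a)]$, and also $\tfrac1t\hat{\mathbb{E}}[\int_0^t\lambda_s\,ds]\to\hat{\mu}/x=\nu/(1-xm)$. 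Computing the relative-entropy integrand explicitly, the coefficient of $\hat{\mathbb{E}}[\int_0^t\lambda_s\,ds]$ in the Jensen bound equals $x(\theta-\log(x/\!\int e^{(x-1)H}dq))+(x-1)(1-xm)$, and the first term vanishes by the $f(\theta)$-equation, leaving $(x-1)(1-xm)\cdot\nu/(1-xm)=\nu(x-1)=\nu(f(\theta)-1)$, which is exactly the claimed bound. The boundary value $\theta=\theta_c$ then follows because $\theta\mapsto\mathbb{E}[e^{\theta N_t}]$ is nondecreasing, so $\liminf_{t\to\infty}\tfrac1t\log\mathbb{E}[e^{\theta_c N_t}]\ge\sup_{\theta<\theta_c}\nu(f(\theta)-1)=\nu(f(\theta_c)-1)$.

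For $\theta>\theta_c$, where no $x=f(\theta)$ exists, I would instead tilt to the critical process $x=x_c$, $\hat{q}(da)\propto e^{(x_c-1)H(a)}q(da)$, for which $x_c\mathbb{E}^{\hat{q}}[H(a)]=1$ by \eqref{thetastar}. Then $\hat{\mathbb{P}}$ is a critical linear marked Hawkes process, so by the critical-regime estimates of Chapter \ref{chap:five} (the marked analogue of Lemma \ref{momentoflambda}) one has $\hat{\mathbb{E}}[\lambda_t]\to\infty$ and hence $\tfrac1t\hat{\mathbb{E}}[\int_0^t\lambda_s\,ds]\to\infty$. Running the same computation with $x=x_c$, the coefficient of $\hat{\mathbb{E}}[\int_0^t\lambda_s\,ds]$ becomes $x_c(\theta-\theta_c)+(x_c-1)(1-x_cm)=x_c(\theta-\theta_c)>0$, so the Jensen bound diverges to $+\infty$, as required.

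The step I expect to be the main obstacle is the rigorous identification of the two ergodic limits, in particular $\tfrac1t\hat{\mathbb{E}}[\log\frac{d\hat{\mathbb{P}}}{d\mathbb{P}}\big|_{\mathcal{F}_t}]$ converging to the relative-entropy rate: this needs integrability of the log-likelihood under $\hat{\mathbb{P}}$ and convergence of the Ces\`aro averages of $\hat{\mathbb{E}}[\lambda_s]$, which in the subcritical case is a Tauberian/renewal argument and in the critical case requires the super-linear growth rate, together with confirming that the exponentially tilted mark law keeps the tilted process subcritical throughout $\theta<\theta_c$. A secondary point is checking that the Girsanov density is a genuine probability measure on $\mathcal{F}_t$, which is standard for linear (marked) Hawkes processes since their intensities are bounded on bounded time intervals. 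Combined with the matching upper bound Lemma \ref{Markedupperbound}, this will pin $\Gamma(\theta)$ to $\nu(f(\theta)-1)$.
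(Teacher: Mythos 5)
Your proposal is correct and follows essentially the same route as the paper: a Girsanov tilt of the intensity by a constant multiple $x$ and of the mark law to $\hat q(da)\propto e^{(x-1)H(a)}q(da)$, followed by Jensen's inequality, the ergodic theorem, and Lemma \ref{formula} to evaluate the mean intensity of the tilted (subcritical) process; the only cosmetic difference is that you substitute the optimal pair $(x,\hat q)=(f(\theta),q_{*})$ directly and verify the algebra, whereas the paper first writes the bound as a supremum over all tilts $(K\lambda,\hat q)$ and then optimizes, and your treatment of $\theta>\theta_c$ via the critical tilt is the same divergence the paper obtains by letting $K\uparrow\mathbb{E}^{\hat q}[H(a)]^{-1}$.
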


\begin{proof}
The intensity at time $t$ is $\lambda_{t}:=\lambda(Z_{t})$ where $\lambda(z)=\nu+z$
and $Z_{t}=\sum_{\tau_{i}<t}h(t-\tau_{i},a_{i})$.
We tilt $\lambda$ to $\hat{\lambda}$ and $q$ to $\hat{q}$ such that by Girsanov formula
the tilted probability measure $\hat{\mathbb{P}}$ is given by
\begin{equation}
\frac{d\hat{\mathbb{P}}}{d\mathbb{P}}\bigg|_{\mathcal{F}_{t}}
=\exp\left\{\int_{0}^{t}(\lambda(Z_{s})-\hat{\lambda}(Z_{s}))ds+\int_{0}^{t}\log\left(\frac{\hat{\lambda}(Z_{s})}{
\lambda(Z_{s})}\right)
+\log\left(\frac{d\hat{q}}{dq}\right)dN_{s}\right\}.
\end{equation}
Let $\mathcal{Q}_{e}$ be the set of $(\hat{\lambda},\hat{q},\hat{\pi})$ such that the marked Hawkes process with 
intensity $\hat{\lambda}(Z_{t})$ and random 
marks distributed as $\hat{q}$ is ergodic with $\hat{\pi}$ as the invariant measure of $Z_{t}$.

By the ergodic theorem and Jensen's inequality, for any $(\hat{\lambda},\hat{q},\hat{\pi})\in\mathcal{Q}_{e}$, 
\begin{align}
&\liminf_{t\rightarrow\infty}\frac{1}{t}\log\mathbb{E}[e^{\theta N_{t}}]
\\
&\geq\liminf_{t\rightarrow\infty}\hat{\mathbb{E}}\left[\frac{1}{t}\theta
N_{t}-\frac{1}{t}\int_{0}^{t}(\lambda-\hat{\lambda})ds
-\frac{1}{t}\int_{0}^{t}\left[\log(\hat{\lambda}/\lambda)+\log(d\hat{q}/dq)\right]\hat{\lambda}ds\right]\nonumber
\\
&=\int\theta\hat{\lambda}\hat{\pi}(dz)+\int(\hat{\lambda}-\lambda)\hat{\pi}(dz)
-\iint\left[\log(\hat{\lambda}/\lambda)+\log(d\hat{q}/dq)\right]\hat{\lambda}\hat{q}\hat{\pi}(dz).\nonumber
\end{align}
Hence, 
\begin{align}
&\liminf_{t\rightarrow\infty}\frac{1}{t}\log\mathbb{E}[e^{\theta N_{t}}]
\\
&\geq\sup_{(\hat{\lambda},\hat{q},\hat{\pi})\in\mathcal{Q}_{e}}\left\{\int\theta\hat{\lambda}\hat{\pi}
+\int(\hat{\lambda}-\lambda)\hat{\pi}-\iint\left[\log(\hat{\lambda}/\lambda)+\log(d\hat{q}/dq)\right]\hat{\lambda}\hat{q}
\hat{\pi}\right\}.\nonumber
\\
&\geq\sup_{(K\lambda,\hat{q},\hat{\pi})\in\mathcal{Q}_{e}}\int\left[\left(\theta-\mathbb{E}^{\hat{q}}[\log(d\hat{q}/dq)]
\right)\hat{\lambda}
+\hat{\lambda}-\lambda-\hat{\lambda}\log\left(\hat{\lambda}/\lambda\right)\right]\hat{\pi}\nonumber
\\
&\geq\sup_{0<K<\mathbb{E}^{\hat{q}}[H(a)]^{-1},(K\lambda,\hat{q},\hat{\pi})\in\mathcal{Q}_{e}}
\int\left[\left(\theta-\mathbb{E}^{\hat{q}}[\log(d\hat{q}/dq)]\right)
+1-\frac{1}{K}-\log K\right]\hat{\lambda}\hat{\pi}\nonumber
\\
&=\sup_{\hat{q}}\sup_{0<K<\mathbb{E}^{\hat{q}}[H(a)]^{-1}}\left[\left(\theta-\mathbb{E}^{\hat{q}}[\log(d\hat{q}/dq)]
\right)+1-\frac{1}{K}-\log K\right]
\cdot\frac{K\nu}{1-K\mathbb{E}^{\hat{q}}[H(a)]},\nonumber
\end{align}
where the last equality is obtained by applying Lemma \ref{formula}. The supremum
of $\hat{q}$ is taken over $\mathcal{M}(\mathbb{X})$ such that $\hat{q}$
is absolutely continuous w.r.t. $q$. Optimizing over $K>0$, we get
\begin{align}
&\liminf_{t\rightarrow\infty}\frac{1}{t}\log\mathbb{E}[e^{\theta N_{t}}]
\\
&\geq
\begin{cases}
\sup_{\hat{q}}\nu(\hat{f}(\theta)-1) 
&\text{if
$\theta\in\big(-\infty,\mathbb{E}^{\hat{q}}\left[\log\frac{d\hat{q}}{dq}\right]+\mathbb{E}^{\hat{q}}[H(a)]-1-\log\mathbb{E}^{\hat{q}}[H(a)]\big]$
}
\\
+\infty &\text{otherwise}
\end{cases},\nonumber
\end{align}
where $\hat{f}(\theta)$ is the minimal solution to the equation
\begin{align}
x&=e^{\theta+\mathbb{E}^{\hat{q}}[\log(dq/d\hat{q})]+\mathbb{E}^{\hat{q}}[H(a)](x-1)}
\\
&\leq\mathbb{E}^{\hat{q}}\left[e^{\theta+H(a)(x-1)}\frac{dq}{d\hat{q}}\right]=\int e^{\theta+H(a)(x-1)}q(da).\nonumber
\end{align}
The last inequality is satisfied by Jensen's inequality; the equality holds if and only if
\begin{equation}
\frac{d\hat{q}}{dq}=\frac{e^{H(a)(x-1)}}{\mathbb{E}^{q}[e^{H(a)(x-1)}]}.
\end{equation}

Optimizing over $\hat{q}$, we get
\begin{equation}
\liminf_{t\rightarrow\infty}\frac{1}{t}\log\mathbb{E}[e^{\theta N_{t}}]
\geq
\begin{cases}
\nu(f(\theta)-1) &\text{if $\theta\in(-\infty,\theta_{c}]$}
\\
+\infty &\text{otherwise},
\end{cases}
\end{equation}
where $\theta_{c}$ is some critical value to be determined. Let
\begin{equation}
G(x)=e^{\theta}\int e^{H(a)(x-1)}q(da)-x.
\end{equation}
If $\theta=0$, then $G(x)=\int e^{H(a)(x-1)}q(da)-x$ satisfies $G(1)=0$, $G(\infty)=\infty$ (by \eqref{anothercondition})
and $G'(1)=\mathbb{E}^{q}[H(a)]-1<0$ which implies $\min_{x>1}G(x)<0$.
Hence, there exists some critical $\theta_{c}>0$ such that $\min_{x>1}G(x)=0$.
The critical values $x_{c}$ and $\theta_{c}$ satisfy $G(x_{c})=G'(x_{c})=0$, which implies
\begin{equation}
\theta_{c}=-\log\int H(a)e^{H(a)(x_{c}-1)}q(da),
\end{equation}
where $x_{c}>1$ satisfies the equation $x\int H(a)e^{H(a)(x-1)}q(da)=\int e^{H(a)(x-1)}q(da)$.

It is easy to check that indeed, for $dq_{\ast}=\frac{e^{H(a)(x_{\ast}-1)}}{\mathbb{E}^{q}[e^{H(a)(x_{\ast}-1)}]}dq$,
\begin{equation}
\mathbb{E}^{q_{\ast}}\left[\log\frac{dq_{\ast}}{dq}\right]+\mathbb{E}^{q_{\ast}}[H(a)]-1-\log\mathbb{E}^{q_{\ast}}[H(a)]
)
=-\log\int H(a)e^{H(a)(x_{\ast}-1)}q(da).
\end{equation}
\end{proof}

\begin{lemma}[Upper Bound]\label{Markedupperbound}
\begin{equation}
\limsup_{t\rightarrow\infty}\frac{1}{t}\log\mathbb{E}[e^{\theta N_{t}}]
\leq
\begin{cases}
\nu(f(\theta)-1) &\text{if $\theta\in(-\infty,\theta_{c}]$}
\\
+\infty &\text{otherwise}
\end{cases},
\end{equation}
where $f(\theta)$ is the minimal solution to $x=\int e^{\theta+H(a)(x-1)}q(da)$ and $\theta_{c}$ is defined in
\eqref{thetastar}.
\end{lemma}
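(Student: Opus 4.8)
```latex
\begin{proof}[Proof plan for Lemma \ref{Markedupperbound}]
\textbf{Overall strategy.} The plan is to produce an exponential supermartingale whose associated ``rate'' is at most $\nu(f(\theta)-1)$, so that a direct application of Dynkin's formula (or Gronwall's lemma, exactly as in the proof of Theorem \ref{mainlimit} in Chapter \ref{chap:four}) gives $\mathbb{E}[e^{\theta N_t}]\le C e^{\nu(f(\theta)-1)t}$ for $\theta\le\theta_c$, and $=+\infty$ for $\theta>\theta_c$. The obstacle compared with the Markovian exponential case is that $N_t$ here is \emph{not} a finite-dimensional Markov process: the memory $Z_t=\sum_{\tau_i<t}h(t-\tau_i,a_i)$ depends on the whole past and on the marks. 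The trick is that, as in Lemma \ref{expectationII} (the immigration--birth computation for the unmarked linear case), the linear marked Hawkes process has a Poisson cluster / immigration--birth representation, and the moment generating function can be written through a single scalar fixed-point equation.

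\textbf{Step 1: cluster representation and a renewal identity for the mgf.} By the immigration--birth representation of the linear marked Hawkes process (immigrants arriving at rate $\nu$, each immigrant of mark $a$ producing a Poisson$(H(a))$ number of children, children born with density $h(\cdot,a)/H(a)$, marks i.i.d. $\sim q$), write $N_t$ as the total number of individuals in clusters started before time $t$. Let $S_t$ be the total progeny (up to time $t$) of a single immigrant and $F(t)=\mathbb{E}[e^{\theta S_t}]$. Conditioning on the immigrant's mark and on its children, as in the proof of Lemma \ref{expectationII}, I would obtain
\begin{equation}
F(t)=\int_{\mathbb{X}} e^{\theta}\exp\!\left\{\int_0^t h(s,a)\big(F(t-s)-1\big)ds\right\}q(da),
\end{equation}
and then, by the same Poissonization argument as in Lemma \ref{expectationII},
\begin{equation}
\mathbb{E}[e^{\theta N_t}]=e^{\nu\int_0^t (F(s)-1)\,ds}.
\end{equation}

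\textbf{Step 2: monotone limit and identification of the fixed point.} Since $\theta\ge$ nothing is assumed, but for $\theta\le\theta_c$ I would show $F(t)\uparrow F(\infty)<\infty$, where $F(\infty)$ is the \emph{minimal} positive solution $x$ of
\begin{equation}
x=\int_{\mathbb{X}} e^{\theta+H(a)(x-1)}q(da),
\end{equation}
i.e. $F(\infty)=f(\theta)$. The standard way is to note $F$ is nondecreasing (monotonicity of the cluster in $t$), bounded above by any solution of the fixed-point equation whenever one exists, and to analyze the function $G(x)=e^{\theta}\int_{\mathbb{X}}e^{H(a)(x-1)}q(da)-x$: as in the proof of Lemma \ref{Markedlowerbound}, $G(1)\le 0$ when $\theta\le 0$, $G(\infty)=\infty$ by \eqref{anothercondition}, and the critical $\theta_c$ is precisely where the minimum of $G$ on $(1,\infty)$ hits $0$, with $x_c,\theta_c$ as in \eqref{thetastar}. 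For $\theta\le\theta_c$ a finite minimal root $f(\theta)$ exists, $F(t)\uparrow f(\theta)$, hence
\begin{equation}
\limsup_{t\rightarrow\infty}\frac{1}{t}\log\mathbb{E}[e^{\theta N_t}]
=\lim_{t\rightarrow\infty}\frac{\nu}{t}\int_0^t (F(s)-1)\,ds=\nu(f(\theta)-1).
\end{equation}
For $\theta>\theta_c$, $G$ has no positive root, so $F(t)\uparrow\infty$; in fact $F(t)$ blows up in finite time (the cluster mgf equation has no global bounded solution), giving $\mathbb{E}[e^{\theta N_t}]=+\infty$ for $t$ large, which is the ``$+\infty$'' case.

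\textbf{Step 3: the boundary and the supermartingale check.} The delicate point — and the main obstacle — is the boundary value $\theta=\theta_c$, where $G'(x_c)=0$, so the linearization of the renewal equation \eqref{...} at the fixed point is critical; I would handle this by a monotonicity/continuity argument (take $\theta\uparrow\theta_c$ and use that $f$ is left-continuous at $\theta_c$, $f(\theta)\uparrow x_c$) rather than by a direct Tauberian estimate. Alternatively, and this is the route I would actually write out to keep things self-contained and parallel to Chapter \ref{chap:four}, one can bypass the cluster representation: set $u(z)=e^{f(\theta)z/\text{(something)}}$-type test functions built from the minimal root, use that $\exp\{\theta N_t-\int_0^t(e^{\theta}-1)\lambda_s\,ds\}$ and the mark-tilting martingales are martingales, apply H\"older across marks and the ``$\mathbb{E}[e^{\int f dN}]\le \mathbb{E}[e^{\int(e^{2f}-1)\lambda}]^{1/2}$''-style bound (the analogue of Lemma in Section \ref{upperbound}), and optimize the exponent to exactly $\nu(f(\theta)-1)$. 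Either way the conclusion is the stated upper bound, which together with Lemma \ref{Markedlowerbound} proves Theorem \ref{logarithmic}; Theorem \ref{LDP} then follows by the G\"artner--Ellis theorem once one checks essential smoothness of $\Gamma$ on $(-\infty,\theta_c)$ and steepness at $\theta_c$ (using \eqref{anothercondition}).
\end{proof}
```
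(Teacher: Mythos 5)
Your proposal is correct and follows essentially the same route as the paper: the immigration--birth (cluster) representation, the renewal fixed-point equation $F(t)=\int_{\mathbb{X}}e^{\theta+\int_0^t h(s,a)(F(t-s)-1)ds}q(da)$, monotone convergence of $F(t)$ to the minimal root $f(\theta)$, and the Poissonization identity $\mathbb{E}[e^{\theta N_t}]=e^{\nu\int_0^t(F(s)-1)ds}$. The extra worry in your Step 3 about $\theta=\theta_c$ is unnecessary in this approach, since the minimal root $x_c$ still exists there and $F(t)\uparrow x_c<\infty$ gives the Ces\`aro limit directly.
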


\begin{proof} 
It is well known that a linear Hawkes process has an immigration-birth representation.
The immigrants (roots) arrive via a standard Poisson process with constant intensity $\nu>0$. Each
immigrant generates children according to a Galton-Watson tree. 
(See for example Hawkes and Oakes \cite{HawkesII} and Karabash \cite{KarabashII}.)
Consider a random, rooted tree (with root, i.e. immigrant, at time $0$) associated to the Hawkes process via the Galton-Watson
interpretation. Note the root is unmarked at the start of the process so the marking goes into the expectation calculation later.
Let $K$ be the number of
children of the root node, and let $S^{(1)}_t,
S^{(2)}_t,
\ldots,S^{(K)}_t$ be the number of descendants of root's $k$-th child that were born before time $t$ (including
$k$-th child if an only
if it was born before time $t$). Let $S_t$ be the total number of children in tree before time $t$ including root node.
Then
\begin{align}
F_S(t) 	&:=\E[\exp(\theta S_t)]\\
	&= \sum_{k=0}^{\infty} \E[\exp(\theta S_t)| K=k ]\P(K=k)\nonumber \\
	&= \exp(\theta) \sum_{k=0}^{\infty} \P(K=k) \prod_{i=1}^k \E\left[\exp\left(\theta S_t^{(i)}\right)\right]\nonumber\\
	&= \exp(\theta) \sum_{k=0}^{\infty} \E\left[\exp\left(\theta S_{t}^{(1)}\right)\right]^k \P(K=k)\nonumber\\
	&= \exp(\theta) \sum_{k=0}^{\infty} \int_{\mathbb{X}} \left[ \left( \int_{0}^{t} \frac{h(s,a)}{H(a)}F_S(t-s)ds  \right)^k 
e^{-H(a)}\frac{H(a)^k}{k!} \right] q(da)\nonumber\\
	&= \int_{\mathbb{X}} \exp  \left( \theta+\int_0^t h(s,a)(F_S(t-s)-1)ds \right) q(da).\nonumber
\end{align}
Now observe that $F_S(t)$ is strictly increasing and hence must approach to the smaller solution $x_*$ of the following
equation
\begin{equation} \label{eq:x_*}
x=\int_{\mathbb{X}} \exp \left[ \theta+H(a)(x-1) \right] q(da).
\end{equation} 
Finally, since random roots arrive according to a Poisson process with constant intensity $\nu>0$,
we have
\begin{equation}
F_N(t) 	:=\E[\exp(\theta N_t)]= \exp  \left[ \nu \int_0^t (F_S(t-s)-1) ds  \right].
\end{equation}
But since $F_S(s) \uparrow x_*$ as $s \to \infty$ we obtain the main result 
\begin{equation}
\frac 1t \log F_N(t)=\nu \frac 1t \left[ \int_0^t \left( F_S(s)-1 \right) ds  \right] \mathop{\longrightarrow}_{t \to \infty}
\nu(x_*-1) ,
\end{equation}
which proves the desired formula. Note that $x_*=\infty$ when there is no solution to \eqref{eq:x_*}.
The proof is complete.
\end{proof}

\subsection{Large Deviation Principle}

In this section, we prove the main result, i.e. Theorem \ref{LDP} by using the G\"{a}rtner-Ellis theorem for the upper bound
and tilting method for the lower bound.

\begin{proof}[Proof of Theorem \ref{LDP}]
For the upper bound, since we have Theorem \ref{logarithmic}, we can simply apply G\"{a}rtner-Ellis theorem. 
To prove the lower bound, it suffices to show that for any $x>0$, $\epsilon>0$, we have
\begin{equation}
\liminf_{t\rightarrow\infty}\frac{1}{t}\log\mathbb{P}\left(\frac{N_{t}}{t}\in B_{\epsilon}(x)\right)
\geq-\sup_{\theta\in\mathbb{R}}\{\theta x-\Gamma(\theta)\},
\end{equation}
where $B_{\epsilon}(x)$ denotes the open ball centered at $x$ with radius $\epsilon$. 
Let $\hat{\mathbb{P}}$ denote the tilted probability measure with rate $\hat{\lambda}$ and marks distributed by $\hat{q}(da)$
as defined in Lemma \ref{Markedlowerbound}. By Jensen's inequality,
\begin{align}
&\frac{1}{t}\log\mathbb{P}\left(\frac{N_{t}}{t}\in B_{\epsilon}(x)\right)
\\
&\geq\frac{1}{t}\log\int_{\frac{N_{t}}{t}\in B_{\epsilon}(x)}\frac{d\mathbb{P}}{d\hat{\mathbb{P}}}d\hat{\mathbb{P}}\nonumber
\\
&=\frac{1}{t}\log\hat{\mathbb{P}}\left(\frac{N_{t}}{t}\in B_{\epsilon}(x)\right)
-\frac{1}{t}\log\left[\frac{1}{\hat{\mathbb{P}}\left(\frac{N_{t}}{t}\in B_{\epsilon}(x)\right)}
\int_{\frac{N_{t}}{t}\in B_{\epsilon}(x)}\frac{d\hat{\mathbb{P}}}{d\mathbb{P}}d\hat{\mathbb{P}}\right]\nonumber
\\
&\geq\frac{1}{t}\log\hat{\mathbb{P}}\left(\frac{N_{t}}{t}\in B_{\epsilon}(x)\right)
-\frac{1}{\hat{\mathbb{P}}\left(\frac{N_{t}}{t}\in B_{\epsilon}(x)\right)}
\cdot\frac{1}{t}\cdot\hat{\mathbb{E}}\left[1_{\frac{N_{t}}{t}\in B_{\epsilon}(x)}\log\frac{d\hat{\mathbb{P}}}{d\mathbb{P}}\right].\nonumber
\end{align}
By the ergodic theorem,
\begin{equation}
\liminf_{t\rightarrow\infty}\frac{1}{t}\log\mathbb{P}\left(\frac{N_{t}}{t}\in B_{\epsilon}(x)\right)
\geq-\mathop{\inf_{0<K<\mathbb{E}^{\hat{q}}[H(a)]^{-1}}}_{ (K\lambda,\hat{q},\hat{\pi})\in\mathcal{Q}_{e}^{x}} 
\mathcal{H}(\hat{\lambda},\hat{q},\hat{\pi}).
\end{equation}
where $\mathcal{Q}_{e}^{x}$ is defined by
\begin{equation}
\mathcal{Q}_{e}^{x}=\left\{(\hat{\lambda},\hat{q},\hat{\pi})\in\mathcal{Q}_{e}:\int\hat{\lambda}(z)\hat{\pi}(dz)=x\right\}.
\end{equation}
and the relative entropy $\mathcal{H}$ is
\begin{equation}
\mathcal{H}(\hat{\lambda},\hat{q},\hat{\pi})=\int(\lambda-\hat{\lambda})\hat{\pi}+\int\log(\hat{\lambda}/\lambda)\hat{\lambda}\hat{\pi}
+\iint\log(d\hat{q}/dq)\hat{q}\hat{\lambda}\hat{\pi}.
\end{equation}
By Lemma \ref{formula}, 
\begin{align}
&\inf_{0<K<\mathbb{E}^{\hat{q}}[H(a)]^{-1},x=\frac{\nu K}{1-K\mathbb{E}^{\hat{q}}[H(a)]},(K\lambda,\hat{q},\hat{\pi})\in\mathcal{Q}_{e}}
\mathcal{H}(\hat{\lambda},\hat{q},\hat{\pi})
\\
&=\inf_{K=\frac{x}{x\mathbb{E}^{\hat{q}}[H(a)]+\nu},(K\lambda,\hat{q},\hat{\pi})\in\mathcal{Q}_{e}}\left\{\frac{1}{K}-1+\log K
+\mathbb{E}^{\hat{q}}\left[\log\frac{d\hat{q}}{dq}\right]\right\}\int\hat{\lambda}\hat{\pi}\nonumber
\\
&=\inf_{\hat{q}}\left\{\mathbb{E}^{\hat{q}}[H(a)]+\frac{\nu}{x}-1+\log\left(\frac{x}{x\mathbb{E}^{\hat{q}}[H(a)]+\nu}\right)
+\mathbb{E}^{\hat{q}}\left[\log\frac{d\hat{q}}{dq}\right]\right\}x\nonumber
\\
&=\inf_{\hat{q}}\left\{x\mathbb{E}^{\hat{q}}[H(a)]+\nu-x+x\log\left(\frac{x}{x\mathbb{E}^{\hat{q}}[H(a)]+\nu}\right)
+x\mathbb{E}^{\hat{q}}\left[\log\frac{d\hat{q}}{dq}\right]\right\}.\nonumber
\end{align}
Next, let us find a more explict form for the Legendre-Fenchel transform of $\Gamma(\theta)$.
\begin{equation}\label{theta}
\sup_{\theta\in\mathbb{R}}\{\theta x-\Gamma(\theta)\}=\sup_{\theta\in\mathbb{R}}\{\theta x-\nu(f(\theta)-1)\},
\end{equation}
where $f(\theta)=\mathbb{E}^{q}[e^{\theta+(f(\theta)-1)H(a)}]$. Here,
\begin{equation}
f'(\theta)=\mathbb{E}^{q}\left[(1+f'(\theta)H(a))e^{\theta+(f(\theta)-1)H(a)}\right].
\end{equation}
So the optimal $\theta_{\ast}$ for \eqref{theta} would satisfy $f'(\theta_{\ast})=\frac{x}{\nu}$ and $\theta_{\ast}$ and $x_{\ast}=f(\theta_{\ast})$
satisfy the following equations
\begin{equation}
\begin{cases}
x_{\ast}=\mathbb{E}^{q}\left[e^{\theta_{\ast}+(x_{\ast}-1)H(a)}\right]
\\
\frac{x}{\nu}=x_{\ast}+\frac{x}{\nu}\mathbb{E}^{q}\left[H(a)e^{\theta_{\ast}+(x_{\ast}-1)H(a)}\right]
\end{cases},
\end{equation}
and $\sup_{\theta\in\mathbb{R}}\{\theta x-\Gamma(\theta)\}=\theta_{\ast}x-\nu(x_{\ast}-1)$. 

On the other hand, letting $dq_{\ast}=\frac{e^{(x_{\ast}-1)H(a)}}{\mathbb{E}^{q}[e^{(x_{\ast}-1)H(a)}]}dq$, we have
\begin{equation}
\mathbb{E}^{q_{\ast}}[H(a)]=\frac{\mathbb{E}^{q}\left[e^{\theta_{\ast}+(x_{\ast}-1)H(a)}\right]}
{\mathbb{E}^{q}\left[e^{(x_{\ast}-1)H(a)}\right]}=\frac{1}{x_{\ast}}-\frac{\nu}{x},
\end{equation}
and $\mathbb{E}^{q_{\ast}}[\log\frac{dq_{\ast}}{dq}]=(x_{\ast}-1)\mathbb{E}^{q_{\ast}}[H(a)]-\log\mathbb{E}^{q}[e^{(x_{\ast}-1)H(a)}]$, 
which imply
\begin{align}
&\liminf_{t\rightarrow\infty}\frac{1}{t}\log\mathbb{P}\left(\frac{N_{t}}{t}\in B_{\epsilon}(x)\right)
\\
&\geq-\inf_{\hat{q}}\left\{x\mathbb{E}^{\hat{q}}[H(a)]+\nu-x+x\log\left(\frac{x}{x\mathbb{E}^{\hat{q}}[H(a)]+\nu}\right)
+x\mathbb{E}^{\hat{q}}\left[\log\frac{d\hat{q}}{dq}\right]\right\}\nonumber
\\
&\geq-\left\{x\mathbb{E}^{q_{\ast}}[H(a)]+\nu-x+x\log\left(\frac{x}{x\mathbb{E}^{q_{\ast}}[H(a)]+\nu}\right)
+x\mathbb{E}^{q_{\ast}}\left[\log\frac{dq_{\ast}}{dq}\right]\right\}\nonumber
\\
&=\theta_{\ast}x-\nu(x_{\ast}-1)=\sup_{\theta\in\mathbb{R}}\{\theta x-\Gamma(\theta)\}.\nonumber
\end{align}
\end{proof}

\section{Risk Model with Marked Hawkes Claims Arrivals}\label{RiskModel}

We consider the following risk model for the surplus process $R_{t}$ of an insurance portfolio,
\begin{equation}
R_{t}=u+\rho t-\sum_{i=1}^{N_{t}}C_{i},
\end{equation}
where $u>0$ is the initial reserve, $\rho>0$ is the constant premium and the $C_{i}$'s are i.i.d. positive random variables
with the common distribution $\mu(dC)$. $C_{i}$ represents the claim size at the $i$th arrival time, 
these being independent of $N_{t}$, a marked Hawkes process.

For $u>0$, let
\begin{equation}
\tau_{u}=\inf\{t>0: R_{t}\leq 0\},
\end{equation}
and denote the infinite and finite horizon ruin probabilities by
\begin{equation}
\psi(u)=\mathbb{P}(\tau_{u}<\infty),\quad\psi(u,uz)=\mathbb{P}(\tau_{u}\leq uz),\quad u,z>0.
\end{equation}

By the law of large numbers,
\begin{equation}
\lim_{t\rightarrow\infty}\frac{1}{t}\sum_{i=1}^{N_{t}}C_{i}=\frac{\mathbb{E}^{\mu}[C]\nu}{1-\mathbb{E}^{q}[H(a)]}.
\end{equation}
Therefore, to exclude the trivial case, we need to assume that
\begin{equation}\label{between}
\frac{\mathbb{E}^{\mu}[C]\nu}{1-\mathbb{E}^{q}[H(a)]}<\rho<\frac{\nu(x_{c})-1}{\theta_{c}},
\end{equation}
where the critical values $\theta_{c}$ and $x_{c}=f(\theta_{c})$ satisfy
\begin{equation}\label{ftheta}
\begin{cases}
x_{c}=\int_{\mathbb{R}^{+}}\int_{\mathbb{X}}e^{\theta_{c}C+H(a)(x_{c}-1)}q(da)\mu(dC)
\\
1=\int_{\mathbb{R}^{+}}\int_{\mathbb{X}}H(a)e^{H(a)(x_{c}-1)+\theta_{c}C}q(da)\mu(dC)
\end{cases}.
\end{equation}

Let us first assume that the claim sizes following light tails, i.e. there exists some $\theta>0$
such that $\int_{\mathbb{R}^{+}}e^{\theta C}\mu(dC)<\infty$.

Following the proofs of large deviation results in Section \ref{LDPProof}, we have
\begin{equation}
\Gamma_{C}(\theta):=\lim_{t\rightarrow\infty}\frac{1}{t}\log\mathbb{E}\left[e^{\theta\sum_{i=1}^{N_{t}}C_{i}}\right]
=
\begin{cases}
\nu(x-1) &\text{if $\theta\in(-\infty,\theta_{c}]$}
\\
+\infty &\text{otherwise}
\end{cases},
\end{equation}
where $x$ is the minimal solution to the equation
\begin{equation}
x=\int_{\mathbb{R}^{+}}\int_{\mathbb{X}}e^{\theta C+(x-1)H(a)}q(da)\mu(dC).
\end{equation}

Before we proceed, let us quote a result from Glynn and Whitt \cite{Glynn}, which
will be used in our proof Theorem \ref{InfiniteHorizon}.

\begin{theorem}[Glynn and Whitt \cite{Glynn}]\label{GlynnThm}
Let $S_{n}$ be random variables. $\tau_{u}=\inf\{n: S_{n}>u\}$ and $\psi(u)=\mathbb{P}(\tau_{u}<\infty)$. 
Assume that there exist $\gamma,\epsilon>0$ such that

(i) $\kappa_{n}(\theta)=\log\mathbb{E}[e^{\theta S_{n}}]$ is well defined and finite for $\gamma-\epsilon<\theta<\gamma+\epsilon$.

(ii) $\limsup_{n\rightarrow\infty}\mathbb{E}[e^{\theta(S_{n}-S_{n-1})}]<\infty$ for $-\epsilon<\theta<\epsilon$.

(iii) $\kappa(\theta)=\lim_{n\rightarrow\infty}\frac{1}{n}\kappa_{n}(\theta)$ exists and is finite for $\gamma-\epsilon<\theta<\gamma+\epsilon$.

(iv) $\kappa(\gamma)=0$ and $\kappa$ is differentiable at $\gamma$ with $0<\kappa'(\gamma)<\infty$.

Then, $\lim_{u\rightarrow\infty}\frac{1}{u}\log\psi(u)=-\gamma$.
\end{theorem}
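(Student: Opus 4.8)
The statement above (Theorem \ref{GlynnThm}, the Glynn--Whitt logarithmic asymptotic) asserts $\lim_{u\to\infty}\frac1u\log\psi(u)=-\gamma$ under hypotheses (i)--(iv). The plan is to establish the two matching bounds $\limsup_{u\to\infty}\frac1u\log\psi(u)\le-\gamma$ and $\liminf_{u\to\infty}\frac1u\log\psi(u)\ge-\gamma$ by the two classical arguments: a Chernoff union bound for the former and a G\"{a}rtner--Ellis one-term lower estimate for the latter, both driven by the convexity of $\theta\mapsto\kappa(\theta)$ together with $\kappa(\gamma)=0$ and $0<\kappa'(\gamma)<\infty$.

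\emph{Upper bound.} First I would note that, since $\kappa(\gamma)=0$ and $\kappa$ is differentiable at $\gamma$ with $\kappa'(\gamma)>0$, we have $\kappa(\theta)<0$ for every $\theta$ slightly below $\gamma$; fix such a $\theta\in(\gamma-\epsilon,\gamma)$. By (iii) there are $\delta>0$ and $n_0$ with $\kappa_n(\theta)\le-n\delta$ for $n\ge n_0$, while by (i) each of $\kappa_1(\gamma),\dots,\kappa_{n_0-1}(\gamma)$ is finite. Chernoff's inequality then gives $\mathbb{P}(S_n>u)\le e^{-\theta u+\kappa_n(\theta)}$ for $n\ge n_0$ and $\mathbb{P}(S_n>u)\le e^{-\gamma u+\kappa_n(\gamma)}$ for $n<n_0$, hence
\begin{equation*}
\psi(u)=\mathbb{P}\Bigl(\bigcup_{n\ge1}\{S_n>u\}\Bigr)\le\sum_{n=1}^{n_0-1}e^{-\gamma u+\kappa_n(\gamma)}+e^{-\theta u}\sum_{n\ge n_0}e^{-n\delta}\le C\,e^{-\theta u}.
\end{equation*}
Therefore $\limsup_{u\to\infty}\frac1u\log\psi(u)\le-\theta$ for every $\theta<\gamma$, and letting $\theta\uparrow\gamma$ completes this half.

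\emph{Lower bound.} Here I would keep only one term of the union. Put $a=\kappa'(\gamma)\in(0,\infty)$. A finite convex function on an open interval is differentiable off a countable set, and $\kappa'(\gamma')\to\kappa'(\gamma)$ as $\gamma'\uparrow\gamma$ through differentiability points; so for any small $\eta>0$ there is $\gamma'\in(\gamma-\epsilon,\gamma)$ with $\kappa$ differentiable at $\gamma'$ and $\kappa'(\gamma')=:a_\eta\in(a-\eta,a]$. Set $n_u=\lceil u/a_\eta\rceil$, so $u/n_u\to a_\eta$, and use $\psi(u)\ge\mathbb{P}(S_{n_u}>u)$. Since (iii) supplies the limiting cumulant generating function $\kappa$ on $(\gamma-\epsilon,\gamma+\epsilon)$ and $a_\eta$ is exposed by $\gamma'$ for the Legendre transform $\kappa^*$, the local G\"{a}rtner--Ellis lower bound yields $\liminf_{u\to\infty}\frac1{n_u}\log\mathbb{P}(S_{n_u}>u)\ge-\kappa^*(a_\eta)=-\bigl(\gamma' a_\eta-\kappa(\gamma')\bigr)$, whence $\liminf_{u\to\infty}\frac1u\log\psi(u)\ge-\gamma'+\kappa(\gamma')/a_\eta$. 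Letting $\eta\downarrow0$ forces $\gamma'\uparrow\gamma$ and $\kappa(\gamma')\to\kappa(\gamma)=0$, producing $\liminf_{u\to\infty}\frac1u\log\psi(u)\ge-\gamma$. Hypothesis (ii) enters precisely here: the uniform sub-exponential control of the increments $S_n-S_{n-1}$ it provides is what lets one discard the overshoot $S_{n_u}-u$ and the contributions of indices $n\ne n_u$ without paying an extra exponential price, and it is what makes the local G\"{a}rtner--Ellis estimate usable for the dependent sequence $(S_n)$.

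\emph{Main obstacle.} The delicate step is the lower bound: (iii)--(iv) only supply finiteness and differentiability of $\kappa$ on a neighbourhood of $\gamma$, not the global steepness and finiteness of the textbook G\"{a}rtner--Ellis theorem, so the large-deviation lower bound must be run locally, checking that $a_\eta$ genuinely lies in $\kappa'\bigl((\gamma-\epsilon,\gamma)\bigr)$ and that the exposing-hyperplane argument, together with (ii), absorbs the overshoot and the off-index terms. The upper bound is comparatively routine once the tilting parameter is taken strictly inside the region $\{\kappa<0\}$.
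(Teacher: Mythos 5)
First, a point of comparison: the paper does not prove this statement at all. It is imported verbatim from Glynn and Whitt \cite{Glynn} and used as a black box in the proof of Theorem \ref{InfiniteHorizon}, so there is no in-paper argument to measure yours against. Judged on its own, your upper bound is complete and is the standard one: pick $\theta\in(\gamma-\epsilon,\gamma)$ with $\theta>0$ and $\kappa(\theta)<0$ (which follows from $\kappa(\gamma)=0$, differentiability at $\gamma$ and $\kappa'(\gamma)>0$), use (iii) to get $\kappa_{n}(\theta)\le-n\delta$ for large $n$ and (i) to control the finitely many remaining terms, and sum the Chernoff bounds.

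The lower bound, however, has a genuine gap: you outsource the key estimate to a ``local G\"{a}rtner--Ellis lower bound'' that is not available off the shelf. The G\"{a}rtner--Ellis lower bound in \cite{Dembo} requires the limiting cumulant generating function to exist (as an extended real) on all of $\mathbb{R}$ and applies only to exposed points of $\kappa^{*}$; hypotheses (i)--(iv) give $\kappa$ only on $(\gamma-\epsilon,\gamma+\epsilon)$, and $\kappa'(\gamma')$ need not be exposed (for instance if $\kappa$ is affine near $\gamma'$). The step you label as the main obstacle is precisely the step that must be carried out, and the detour through differentiability points $\gamma'<\gamma$ does not carry it out. The clean way to close it is to tilt at $\gamma$ itself: set $d\tilde{\mathbb{P}}_{n}/d\mathbb{P}=e^{\gamma S_{n}-\kappa_{n}(\gamma)}$, observe that the cumulant generating function of $S_{n}$ under $\tilde{\mathbb{P}}_{n}$ is $\kappa_{n}(\gamma+s)-\kappa_{n}(\gamma)$, and apply a two-sided exponential Chebyshev bound together with (iii) and the differentiability in (iv) to get $\tilde{\mathbb{P}}_{n}\left(|S_{n}/n-\kappa'(\gamma)|>\delta\right)\rightarrow 0$; restricting the change-of-measure identity to the window $\{u<S_{n_{u}}<n_{u}(\kappa'(\gamma)+\delta)\}$ with $n_{u}=\lceil u/(\kappa'(\gamma)-\delta)\rceil$ and using $\kappa_{n}(\gamma)/n\rightarrow\kappa(\gamma)=0$ gives $\liminf_{u}u^{-1}\log\mathbb{P}(S_{n_{u}}>u)\ge-\gamma(\kappa'(\gamma)+\delta)/(\kappa'(\gamma)-\delta)$, and $\delta\downarrow 0$ finishes. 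This uses no exposed points and no information about $\kappa$ outside the given neighbourhood. Note also that your account of where (ii) enters is not right: for the lower bound one keeps a single index, so there are no off-index terms to discard, and the overshoot is controlled by the tilted-measure window rather than by (ii).
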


\begin{remark}
We claim that $\Gamma_{C}(\theta)=\rho\theta$ has a unique positive solution $\theta^{\dagger}<\theta_{c}$. 
Let $G(\theta)=\Gamma_{C}(\theta)-\rho\theta$. Notice that $G(0)=0$, $G(\infty)=\infty$, and that $G$ is convex. 
We also have $G'(0)=\frac{\mathbb{E}^{\mu}[C]\nu}{1-\mathbb{E}^{q}[H(a)]}-\rho<0$ and $\Gamma_{C}(\theta_{c})-\rho\theta_{c}>0$ since 
we assume that $\rho<\frac{\nu(f(\theta_{c})-1)}{\theta_{c}}$. Therefore, there exists only one
solution $\theta^{\dagger}\in(0,\theta_{c})$
of $\Gamma_{C}(\theta^{\dagger})=\rho\theta^{\dagger}$.
\end{remark}

\begin{theorem}[Infinite Horizon]\label{InfiniteHorizon}
Assume all the assumptions in Theorem \ref{LDP} and in addition \eqref{between}, we have
$\lim_{u\rightarrow\infty}\frac{1}{u}\log\psi(u)=-\theta^{\dagger}$, where $\theta^{\dagger}\in(0,\theta_{c})$ 
is the unique positive solution of $\Gamma_{C}(\theta)=\rho\theta$.
\end{theorem}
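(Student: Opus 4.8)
The plan is to reduce the continuous-time ruin problem to a discrete-time level-crossing problem and then invoke the Glynn--Whitt criterion, Theorem \ref{GlynnThm}. Write $Y_t:=\sum_{i=1}^{N_t}C_i-\rho t$, so that $R_t=u-Y_t$ and $\psi(u)=\mathbb{P}(\sup_{t\ge 0}Y_t\ge u)$. Since $Y_t$ decreases linearly between the claim arrival times and jumps upward at each arrival, its running supremum is sandwiched by the integer-grid skeleton: with $S_n:=Y_n=\sum_{i=1}^{N_n}C_i-\rho n$, the elementary estimates $\sum_{i=1}^{N_t}C_i\le\sum_{i=1}^{N_{\lceil t\rceil}}C_i$ and $-\rho t\le-\rho\lfloor t\rfloor$ yield
\begin{equation}
\mathbb{P}\!\left(\sup_{n\ge 1}S_n\ge u\right)\le\psi(u)\le\mathbb{P}\!\left(\sup_{n\ge 1}S_n\ge u-\rho\right).
\end{equation}
Because the correction $\rho$ is a fixed constant, both sides decay at the same exponential rate in $u$, so it suffices to prove $\lim_{u\to\infty}\frac{1}{u}\log\mathbb{P}(\sup_n S_n>u)=-\theta^{\dagger}$, which is precisely the conclusion of Theorem \ref{GlynnThm} applied to the sequence $(S_n)_{n\ge 1}$.

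The bulk of the work is then to verify hypotheses (i)--(iv) of Theorem \ref{GlynnThm} with $\gamma=\theta^{\dagger}$. For (iii), since $\frac{1}{n}\log\mathbb{E}[e^{\theta S_n}]=\frac{1}{n}\log\mathbb{E}[e^{\theta\sum_{i=1}^{N_n}C_i}]-\rho\theta$, the compound analogue of Theorem \ref{logarithmic} established earlier in this section gives $\kappa(\theta):=\lim_n\frac{1}{n}\kappa_n(\theta)=\Gamma_C(\theta)-\rho\theta$, finite for $\theta<\theta_c$. For (i) we need $\kappa_n(\theta)=\log\mathbb{E}[e^{\theta S_n}]$ finite for $\theta$ in a neighbourhood of $\theta^{\dagger}$; since $\theta^{\dagger}<\theta_c$ we take the neighbourhood inside $(-\infty,\theta_c)$ and argue via the immigration--birth/Galton--Watson representation exactly as in the proof of Lemma \ref{Markedupperbound}, but with the weight $e^{\theta C_i}$ attached to each node and the light-tail assumption on $C$, so that the relevant fixed point $x_\ast(\theta)$ of $x=\int_{\mathbb{R}^+}\!\int_{\mathbb{X}}e^{\theta C+(x-1)H(a)}q(da)\mu(dC)$ is finite there and hence $\mathbb{E}[e^{\theta\sum_{i=1}^{N_n}C_i}]=\exp\{\nu\int_0^n(F_S(s;\theta)-1)ds\}<\infty$ for every $n$. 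For (ii), $S_n-S_{n-1}=\sum_{i=N_{n-1}+1}^{N_n}C_i-\rho$; bounding the number of claims in a unit interval by that of the stationary version and using a finite-exponential-moment estimate of the type behind Lemma \ref{secondmoment} gives $\limsup_n\mathbb{E}[e^{\theta(S_n-S_{n-1})}]<\infty$ for $\theta$ near $0$. Finally, for (iv): $\kappa(\theta^{\dagger})=\Gamma_C(\theta^{\dagger})-\rho\theta^{\dagger}=0$ by the definition of $\theta^{\dagger}$; $\Gamma_C$ is $C^1$ on $(-\infty,\theta_c)$ by the implicit function theorem applied to its defining equation on the stable branch (where $\partial_x$ of the right-hand side is $<1$), so $\kappa'(\theta^{\dagger})=\Gamma_C'(\theta^{\dagger})-\rho$ is finite; and it is strictly positive because $G:=\Gamma_C-\rho\,\mathrm{id}$ is strictly convex with $G(0)=0$, $G'(0)=\mathbb{E}^{\mu}[C]\frac{\nu}{1-\mathbb{E}^q[H(a)]}-\rho<0$ (the left inequality in \eqref{between}), and $G(\theta^{\dagger})=0$ with $\theta^{\dagger}>0$, which forces the minimizer of $G$ to lie strictly between $0$ and $\theta^{\dagger}$ and hence $G'(\theta^{\dagger})>0$.

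I expect the main obstacle to be hypothesis (i): establishing finiteness of the fixed point $x_\ast(\theta)$ --- equivalently, of $\mathbb{E}[e^{\theta\sum_{i=1}^{N_n}C_i}]$ at every finite horizon $n$ --- for $\theta$ in a full neighbourhood of $\theta^{\dagger}$. This is exactly where the strict separation $\theta^{\dagger}<\theta_c$ recorded in the Remark is indispensable, and where the branching/fixed-point analysis underlying the formula for $\Gamma_C$ must be revisited carefully in the presence of both the random marks and the claim weights. Once that point is secured, the remaining verifications and the sandwiching argument are routine given the tools already developed.
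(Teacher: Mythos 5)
Your proposal is correct and follows essentially the same route as the paper: discretize to an integer (or $h$-spaced) skeleton, sandwich $\sup_t S_t$ between $\sup_n S_n$ and $\sup_n S_n$ plus a bounded correction, and apply the Glynn--Whitt criterion with $\gamma=\theta^{\dagger}$. In fact you supply more detail than the paper, which simply asserts that the conditions of Theorem \ref{GlynnThm} can be checked; your verification of (i)--(iv), including the convexity argument for $\kappa'(\theta^{\dagger})>0$ and the fixed-point finiteness for $\theta<\theta_{c}$, is exactly what that assertion suppresses.
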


\begin{proof}
Take $S_{t}=\sum_{i=1}^{N_{t}}C_{i}-\rho t$ and $\kappa_{t}(\theta)=\log\mathbb{E}[e^{\theta S_{t}}]$. 
Then $\lim_{t\rightarrow\infty}\frac{1}{t}\kappa_{t}(\theta)=\Gamma_{C}(\theta)-\rho\theta$. 
Consider $\{S_{nh}\}_{n\in\mathbb{N}}$. We have $\lim_{n\rightarrow\infty}\frac{1}{n}\kappa_{nh}(\theta)=h\Gamma_{C}(\theta)-h\rho\theta$. 
Checking the conditions in Theorem \ref{GlynnThm} and applying it, we get 
\begin{equation}
\lim_{u\rightarrow\infty}\frac{1}{u}\log\mathbb{P}\left(\sup_{n\in\mathbb{N}}S_{nh}>u\right)=-\theta^{\dagger}.
\end{equation}
Finally, notice that
\begin{equation}
\sup_{t\in\mathbb{R}^{+}}S_{t}\geq\sup_{n\in\mathbb{N}}S_{nh}\geq\sup_{t\in\mathbb{R}^{+}}S_{t}-\rho h.
\end{equation}
Hence, $\lim_{u\rightarrow\infty}\frac{1}{u}\log\psi(u)=-\theta^{\dagger}$.
\end{proof}

\begin{theorem}[Finite Horizon]
Under the same assumptions as in Theorem \ref{InfiniteHorizon}, we have
\begin{equation}
\lim_{u\rightarrow\infty}\frac{1}{u}\log\psi(u,uz)=-w(z),\quad\text{for any $z>0$}.
\end{equation}
Here
\begin{equation}
w(z)=
\begin{cases}
z\Lambda_{C}\left(\frac{1}{z}+\rho\right) &\text{if $0<z<\frac{1}{\Gamma'(\theta^{\dagger})-\rho}$}
\\
\theta^{\dagger} &\text{if $z\geq\frac{1}{\Gamma'(\theta^{\dagger})-\rho}$}
\end{cases},
\end{equation}
$\Lambda_{C}(x)=\sup_{\theta\in\mathbb{R}}\{\theta x-\Gamma_{C}(\theta)\}$
and $\theta^{\dagger}\in(0,\theta_{c})$ 
is the unique positive solution of $\Gamma_{C}(\theta)=\rho\theta$, as before.
\end{theorem}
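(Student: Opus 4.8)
The plan is to realise $\psi(u,uz)$ as a first-passage probability and to identify the decay rate $w(z)$ with the cost of the cheapest ruin time. Writing $S_{t}:=\sum_{i=1}^{N_{t}}C_{i}-\rho t$, we have $\tau_{u}=\inf\{t>0:S_{t}\geq u\}$, so that
\begin{equation}
\psi(u,uz)=\mathbb{P}\left(\sup_{0\leq t\leq uz}S_{t}\geq u\right).
\end{equation}
Ruin at a time of order $us$ (with $0<s\leq z$) forces $\frac{1}{us}\sum_{i=1}^{N_{us}}C_{i}$ to be close to $\frac{1}{s}+\rho$; by the argument behind Theorem \ref{LDP} the aggregate loss process $\frac{1}{t}\sum_{i=1}^{N_{t}}C_{i}$ obeys a large deviation principle with good rate function $\Lambda_{C}(\cdot)$, the Legendre transform of $\Gamma_{C}(\cdot)$, so the cost of ruin near time $us$ is $us\,\Lambda_{C}(\tfrac{1}{s}+\rho)+o(u)$. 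Optimising over the allowed ruin times suggests
\begin{equation}
w(z)=\inf_{0<s\leq z}s\,\Lambda_{C}\!\left(\frac{1}{s}+\rho\right).
\end{equation}
A convex-duality computation identifies this infimum explicitly: using $\Lambda_{C}(x)=\sup_{\theta}\{\theta x-\Gamma_{C}(\theta)\}$ one gets $s\,\Lambda_{C}(\tfrac1s+\rho)=\sup_{\theta}\{\theta-s(\Gamma_{C}(\theta)-\rho\theta)\}$; since $\theta\mapsto\Gamma_{C}(\theta)-\rho\theta$ is convex, vanishes at $0$ and at $\theta^{\dagger}$, is negative in between, and has positive derivative $\Gamma_{C}'(\theta^{\dagger})-\rho$ at $\theta^{\dagger}$, the unconstrained infimum over $s>0$ equals $\theta^{\dagger}$ and is attained at $s^{\ast}=1/(\Gamma_{C}'(\theta^{\dagger})-\rho)$, while for $s$ below $s^{\ast}$ the map $s\mapsto s\Lambda_{C}(\tfrac1s+\rho)$ is strictly decreasing. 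Hence the constrained infimum is $z\,\Lambda_{C}(\tfrac1z+\rho)$ for $z<s^{\ast}$ and $\theta^{\dagger}$ for $z\geq s^{\ast}$, which is precisely $w(z)$ (here $\Gamma'=\Gamma_{C}'$).

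For the upper bound I would run the classical discretisation argument of Glynn and Whitt \cite{Glynn}. Fix $\delta>0$ and split $[0,uz]$ into $\lfloor uz/\delta\rfloor$ windows of length $\delta$. On the event that the first passage of $S$ above $u$ occurs in the window $[k\delta,(k+1)\delta]$ one has $S_{(k+1)\delta}\geq u-\Xi_{k}$, where $\Xi_{k}$ is the decrease of $\sum C_{i}-\rho t$ over a single window; a crude exponential bound on $\Xi_{k}$, together with the exponential Markov inequality and the estimate $\mathbb{E}[e^{\theta S_{t}}]=e^{t(\Gamma_{C}(\theta)-\rho\theta)+o(t)}$ (Theorem \ref{logarithmic} applied to the compound process, valid for $\theta<\theta_{c}$), gives $\mathbb{P}(\text{ruin in window }k)\leq e^{-\theta u+k\delta(\Gamma_{C}(\theta)-\rho\theta)+o(u)}$. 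Summing over the at most $uz/\delta$ windows, taking $\frac1u\log$, sending $u\to\infty$ and then $\delta\to0$, and finally optimising over $\theta\in(0,\theta_{c})$ yields $\limsup_{u\to\infty}\frac1u\log\psi(u,uz)\leq-\inf_{0<s\leq z}\sup_{\theta<\theta_{c}}\{\theta-s(\Gamma_{C}(\theta)-\rho\theta)\}=-w(z)$; the steepness of $\Gamma_{C}$ at $\theta_{c}$ (inherited from the square-root singularity of $f$ at $\theta_{c}$) guarantees that the relevant optimisers stay strictly below $\theta_{c}$, so that no mass is lost.

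For the lower bound, in the regime $0<z<s^{\ast}$ I would tilt the intensity of the Hawkes process and the mark law $q$ exactly as in Lemma \ref{Markedlowerbound}, and simultaneously tilt the claim law $\mu$, choosing the tilting parameters so that under the new measure $\hat{\mathbb{P}}$ the aggregate loss grows at speed $\frac1z+\rho$; then $\frac{1}{u}S_{uz}\to\frac1z$ in $\hat{\mathbb{P}}$-probability, so $\hat{\mathbb{P}}(S_{uz}\geq u)$ is bounded away from $0$, and the Jensen-inequality change-of-measure bound used in the proof of Theorem \ref{LDP} gives $\liminf_{u\to\infty}\frac1u\log\psi(u,uz)\geq-z\,\Lambda_{C}(\tfrac1z+\rho)$. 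For $z\geq s^{\ast}$, monotonicity $\psi(u,uz)\geq\psi(u,us')$ for $s'<s^{\ast}$ together with the previous case and continuity of $s'\mapsto s'\Lambda_{C}(\tfrac1{s'}+\rho)$ as $s'\uparrow s^{\ast}$ gives $\liminf\geq-\theta^{\dagger}$, while $\psi(u,uz)\leq\psi(u)$ and Theorem \ref{InfiniteHorizon} give the matching $\limsup\leq-\theta^{\dagger}$. Combining the two bounds proves $\lim_{u\to\infty}\frac1u\log\psi(u,uz)=-w(z)$. The main obstacle is the upper-bound discretisation: one needs a uniform-in-$k$ exponential estimate for the window first-passage probabilities together with a sufficiently sharp (sub-exponential) tail bound on the one-window overshoot $\Xi_{k}$, and the whole argument must be organised around the interior $\{\theta<\theta_{c}\}$ of the effective domain of $\Gamma_{C}$, where the exponential moments are finite; the steepness of $\Gamma_{C}$ at $\theta_{c}$ is what makes this possible and is also the reason the variational formula for $w(z)$ collapses to the stated two-regime expression.
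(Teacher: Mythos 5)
Your proposal is correct and follows essentially the same route the paper relies on by deferring to Stabile and Torrisi: a Glynn--Whitt-type discretisation plus exponential Chebyshev bound for the upper bound, a change-of-measure lower bound with tilted intensity, mark and claim laws, and the convex-duality computation identifying $\inf_{0<s\leq z}s\Lambda_{C}\left(\tfrac{1}{s}+\rho\right)$ with the stated two-regime formula for $w(z)$ (your use of steepness of $\Gamma_{C}$ at $\theta_{c}$ is the right justification that optimisers stay interior). One small repair in the lower bound: tilting so the aggregate loss drifts at exactly $\tfrac{1}{z}+\rho$ gives $\tfrac{1}{u}S_{uz}\to 1$ (not $\tfrac{1}{z}$), so $\hat{\mathbb{P}}(S_{uz}\geq u)$ is not automatically bounded away from zero; tilt instead to drift $\tfrac{1}{z}+\rho+\epsilon$ and let $\epsilon\downarrow 0$, using continuity of $s\mapsto s\Lambda_{C}\left(\tfrac{1}{s}+\rho\right)$.
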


\begin{proof}
The proof is similar to that in Stabile and Torrisi \cite{Stabile} and we omit it here.
\end{proof}

Next, we are interested to study the case when the claim sizes have heavy tails, i.e. $\int_{\mathbb{R}^{+}}e^{\theta C}\mu(dC)=+\infty$
for any $\theta>0$.

A distribution function $B$ is subexponential, i.e. $B\in\mathcal{S}$ if 
\begin{equation}
\lim_{x\rightarrow\infty}\frac{\mathbb{P}(C_{1}+C_{2}>x)}{\mathbb{P}(C_{1}>x)}=2,
\end{equation}
where $C_{1}$, $C_{2}$ are i.i.d. random variables with distribution function $B$. 
Let us denote $B(x):=\mathbb{P}(C_{1}\geq x)$
and let us assume that $\mathbb{E}[C_{1}]<\infty$ and define $B_{0}(x):=\frac{1}{\mathbb{E}[C]}\int_{0}^{x}\overline{B}(y)dy$,
where $\overline{F}(x)=1-F(x)$ is the complement of any distribution function $F(x)$.

Goldie and Resnick \cite{Goldie} showed that if $B\in\mathcal{S}$ and satisfies some smoothness
conditions, then $B$ belongs to the maximum domain of attraction of either the Frechet distribution
or the Gumbel distribution. In the former case, $\overline{B}$ is regularly varying,
i.e. $\overline{B}(x)=L(x)/x^{\alpha+1}$, for some $\alpha>0$ and we write
it as $\overline{B}\in\mathcal{R}(-\alpha-1)$, $\alpha>0$.

We assume that $B_{0}\in\mathcal{S}$ and either $\overline{B}\in\mathcal{R}(-\alpha-1)$ or $B\in\mathcal{G}$, i.e.
the maximum domain of attraction of Gumbel distribution. $\mathcal{G}$ includes Weibull and lognormal distributions.

When the arrival process $N_{t}$ satisfies a large deviation result, the probability that it deviates away
from its mean is exponentially small, which is dominated by subexonential distributions. By using the techniques
for the asymptotics of ruin probabilities for risk processes with non-stationary, 
non-renewal arrivals and subexponential claims from Zhu \cite{ZhuVI},
we have the following infinite-horizon and finite-horizon ruin probability estimates when the claim sizes
are subexponential.

\begin{theorem}
Assume the net profit condition $\rho>\mathbb{E}[C_{1}]\frac{\nu}{1-\mathbb{E}^{q}[H(a)]}$.

(i) (Infinite-Horizon)
\begin{equation}
\lim_{u\rightarrow\infty}\frac{\psi(u)}{\overline{B}_{0}(u)}
=\frac{\nu\mathbb{E}[C_{1}]}{\rho(1-\mathbb{E}^{q}[H(a)])-\nu\mathbb{E}[C_{1}]}.
\end{equation}

(ii) (Finite-Horizon) For any $T>0$,
\begin{align}
&\lim_{u\rightarrow\infty}\frac{\psi(u,uz)}{\overline{B}_{0}(u)}
\\
&=
\begin{cases}
\frac{\nu\mathbb{E}[C_{1}]}{\rho(1-\mathbb{E}^{q}[H(a)])-\nu\mathbb{E}[C_{1}]}
\left[1-\left(1+\left(\frac{\rho(1-\mathbb{E}^{q}[H(a)])-\nu\mathbb{E}[C_{1}]}{\rho(1-\mathbb{E}^{q}[H(a)])}\right)
\frac{T}{\alpha}\right)^{-\alpha}\right]
&\text{if $\overline{B}\in\mathcal{R}(-\alpha-1)$}
\\
\frac{\nu\mathbb{E}[C_{1}]}{\rho(1-\mathbb{E}^{q}[H(a)])-\nu\mathbb{E}[C_{1}]}
\left[1-e^{-\frac{\rho(1-\mathbb{E}^{q}[H(a)])-\nu\mathbb{E}[C_{1}]}{\rho(1-\mathbb{E}^{q}[H(a)])}T}\right]&\text{if $B\in\mathcal{G}$}
\end{cases}.\nonumber
\end{align}
\end{theorem}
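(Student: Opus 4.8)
The plan is to reduce the statement to the general asymptotic theory for ruin probabilities with subexponential claims and non-renewal, non-stationary arrivals (as in Zhu \cite{ZhuVI} and Stabile and Torrisi \cite{Stabile}), whose structural hypotheses I would verify for the marked Hawkes arrival process $N_{t}$ with effective rate $\beta:=\nu/(1-\mathbb{E}^{q}[H(a)])$. Write $S_{t}:=\sum_{i=1}^{N_{t}}C_{i}-\rho t$, so that $\psi(u)=\mathbb{P}(\sup_{t\geq 0}S_{t}>u)$ and $\psi(u,uz)=\mathbb{P}(\sup_{0\leq t\leq uz}S_{t}>u)$. The strong law $N_{t}/t\to\beta$ (which follows from ergodicity of the stationary marked Hawkes process together with the coupling of the empty-history version to the stationary one) combined with the independence and integrability of the claims $C_{i}$ gives $S_{t}/t\to\beta\mathbb{E}^{\mu}[C]-\rho<0$ under the net profit condition, so $\sup_{t}S_{t}$ is a.s. finite and the ruin problem is non-degenerate.

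First I would isolate the two inputs on the arrival stream that the subexponential machinery requires: (a) the a.s.\ (and $L^{1}$) law of large numbers $N_{t}/t\to\beta$; and (b) an exponential control of the upper large deviations of $N_{t}$. For (b), Theorem \ref{logarithmic} shows $\Gamma(\theta)=\lim_{t}t^{-1}\log\mathbb{E}[e^{\theta N_{t}}]$ is finite for $\theta$ in a neighbourhood of $0$ (this is where $\theta_{c}>0$, equivalently condition \eqref{anothercondition}, enters), so a Chernoff bound yields, for every $x>\beta$, a rate $I(x)>0$ with $\mathbb{P}(N_{t}\geq xt)\leq e^{-tI(x)(1+o(1))}$; summing over integer times then gives $\mathbb{P}(\exists\,t\geq R:\,N_{t}>(\beta+\delta)t)\leq Ce^{-cR}$ for each $\delta>0$. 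Equivalently one may simply quote the large deviation principle of Theorem \ref{LDP}, whose rate function $\Lambda$ is strictly positive off the value $\beta$.

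For the lower bound I would run the ``principle of a single big jump'': using $N_{t}\geq(\beta-\epsilon)t$ for $t$ large, condition on one of the first $\lfloor(\beta-\epsilon)t\rfloor$ claims exceeding a level of order $u$; since the $C_{i}$ are i.i.d.\ and independent of $N$, the remaining claims concentrate around their mean, and the classical renewal-theoretic computation of the integrated overshoot produces $\psi(u)\gtrsim\frac{\beta\mathbb{E}[C_{1}]}{\rho-\beta\mathbb{E}[C_{1}]}\overline{B}_{0}(u)$ after letting $t\to\infty$ and $\epsilon\downarrow 0$. For the upper bound I would split over the size of $N_{t}$ along a slowly growing horizon $R=R(u)\to\infty$ (e.g.\ $R(u)=\log u$): on the event $\{N_{t}\leq(\beta+\delta)t$ for all $t\geq R(u)\}$ the process is dominated by a compound process with at most $(\beta+\delta)t$ claims per unit time, for which Kesten's bound together with $B_{0}\in\mathcal{S}$ and $\overline{B}(u)=o(\overline{B}_{0}(u))$ gives the matching estimate; the complementary event contributes $O(e^{-cR(u)})=o(\overline{B}_{0}(u))$ by (b); and the finitely many arrivals in $[0,R(u)]$ contribute $O(\mathbb{E}[(1+\epsilon)^{N_{R(u)}}]\,\overline{B}(u))=o(\overline{B}_{0}(u))$. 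Letting $\delta\downarrow 0$ gives part (i), with the constant rewritten via $\beta=\nu/(1-\mathbb{E}^{q}[H(a)])$ into the stated form.

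For part (ii) I would repeat the big-jump argument, now demanding that the ruin-causing claim arrive, and ruin be realized, before time $uz$. On the relevant time scale the position of that claim is of order $u$ with an explicit limiting law, and requiring it to fall before $uz$ truncates the integrated-overshoot integral; this is exactly the computation in \cite{Stabile} and \cite{ZhuVI}, producing the factor $1-\big(1+\tfrac{\rho-\beta\mathbb{E}[C_{1}]}{\rho}\tfrac{T}{\alpha}\big)^{-\alpha}$ when $\overline{B}\in\mathcal{R}(-\alpha-1)$ (since then $\overline{B}_{0}\in\mathcal{R}(-\alpha)$) and $1-e^{-\frac{\rho-\beta\mathbb{E}[C_{1}]}{\rho}T}$ when $B\in\mathcal{G}$ (rapidly varying integrated tail). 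The step I expect to be the genuine obstacle is the upper-bound control of the arrival fluctuations: one must show that deviations of $N_{t}$ above $\beta t$, simultaneously over all times, are negligible next to the subexponential tail $\overline{B}_{0}(u)$ despite the clustering and long memory of the Hawkes dynamics — and this is precisely what the finiteness of $\Gamma$ near $0$ and the positivity of the rate function above the mean, i.e.\ Theorems \ref{logarithmic} and \ref{LDP}, are for. Everything else is an adaptation of standard subexponential ruin arguments.
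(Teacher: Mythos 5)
Your proposal is correct and follows essentially the same route as the paper, which proves this result by invoking the machinery of Zhu \cite{ZhuVI} for ruin with non-stationary, non-renewal arrivals and subexponential claims; the inputs you verify (the law of large numbers $N_{t}/t\rightarrow\nu/(1-\mathbb{E}^{q}[H(a)])$ and the exponential control of upper deviations of $N_{t}$ supplied by Theorems \ref{logarithmic} and \ref{LDP}) are exactly the hypotheses that reduction requires. Your single-big-jump/Kesten-bound sketch just makes explicit what is packaged inside the cited techniques, so no genuinely different argument is involved.
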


\section{Examples with Explicit Formulas}

In this section, we discuss two examples where an explicit formula exists.

Example \ref{EX1} is about the exponential asymptotics of the infinite-horizon ruin probability when $H(a)$ 
and the claim size $C$ are exponentially distributed.
Example \ref{EX2} gives an explicit expression for the rate function of the large deviation principle
when $H(a)$ is exponentially distributed.

\begin{example}\label{EX1}
Recall that $x$ is the minimal solution of
\begin{equation}
x=\int_{\mathbb{R}^{+}}\int_{\mathbb{X}}e^{\theta C+(x-1)H(a)}q(da)\mu(dC).
\end{equation}
Now, assume that $H(a)$ is exponentially distributed with parameter $\lambda>0$, then, we have
\begin{equation}
x=\mathbb{E}^{\mu}[e^{\theta C}]\frac{\lambda}{\lambda-(x-1)},
\end{equation}
which implies that
\begin{equation}
x=\frac{1}{2}\left\{\lambda+1-\sqrt{(\lambda+1)^{2}-4\lambda\mathbb{E}^{\mu}[e^{\theta C}]}\right\}.
\end{equation}
Now, assume that $C$ is exponentially distributed with parameter $\gamma>0$. Then, 
\begin{equation}
x=\frac{1}{2}\left\{\lambda+1-\sqrt{(\lambda+1)^{2}-4\lambda\frac{\gamma}{\gamma-\theta}}\right\}.
\end{equation}
The infinite horizon probability satisfies $\lim_{u\rightarrow\infty}\frac{1}{u}\log\psi(u)=-\theta^{\dagger}$,
where $\theta^{\dagger}$ satisfies
\begin{equation}
\rho\theta^{\dagger}
=\nu\left(\frac{1}{2}\left\{\lambda+1-\sqrt{(\lambda+1)^{2}-4\lambda\frac{\gamma}{\gamma-\theta^{\dagger}}}\right\}-1\right),
\end{equation}
which implies
\begin{equation}
\frac{2\rho\theta^{\dagger}}{\nu}+1-\lambda=-\sqrt{(\lambda+1)^{2}-\frac{4\lambda\gamma}{\gamma-\theta^{\dagger}}},
\end{equation}
and thus
\begin{equation}
\frac{\rho^{2}}{\nu^{2}}(\theta^{\dagger})^{2}+\frac{\rho\theta^{\dagger}}{\nu}(1-\lambda)
=\lambda-\frac{\lambda\gamma}{\gamma-\theta^{\dagger}}=\frac{-\lambda\theta^{\dagger}}{\gamma-\theta^{\dagger}}.
\end{equation}
Since we are looking for positive $\theta^{\dagger}$, we get the quadratic equation,
\begin{equation}
\rho^{2}(\theta^{\dagger})^{2}-(\rho^{2}\gamma-\rho\nu(1-\lambda))\theta^{\dagger}-(\rho\nu\gamma(1-\lambda)+\lambda\nu^{2})=0.
\end{equation}
Since $\rho>\frac{\mathbb{E}^{\mu}[C]\nu}{1-\mathbb{E}^{q}[H(a)]}=\frac{\nu\lambda}{\gamma(\lambda-1)}$, we have
$\rho\nu\gamma(1-\lambda)+\lambda\nu^{2}>0$. Therefore,
\begin{equation}
\theta^{\dagger}=\frac{(\rho^{2}\gamma-\rho\nu(1-\lambda))+\sqrt{(\rho^{2}\gamma-\rho\nu(1-\lambda))^{2}
+4\rho^{2}(\rho\nu\gamma(1-\lambda)+\lambda\nu^{2})}}{2\rho^{2}}.
\end{equation}
\end{example}

\begin{example}\label{EX2}
Now, let $H(a)$ be exponentially distributed with parameter $\lambda>0$. We want
an explicit expression for the rate function of the large deviation principle for $(N_{t}/t\in\cdot)$.
Notice that,
\begin{equation}
\Gamma(\theta)
=
\begin{cases}
\nu\left(\frac{1}{2}\left\{\lambda+1-\sqrt{(\lambda+1)^{2}-4\lambda e^{\theta}}\right\}-1\right)
&\text{for $\theta\leq\log\left(\frac{(\lambda+1)^{2}}{4\lambda}\right)$}
\\
+\infty &\text{otherwise}
\end{cases}.
\end{equation}
To get $I(x)=\sup_{\theta\in\mathbb{R}}\{\theta x-\Gamma(\theta)\}$, we optimize over $\theta$
and consider $x=\Gamma'(\theta)$. Evidently,
\begin{equation}
x+\frac{1}{2}\nu(-4\lambda)e^{\theta}\frac{1}{2\sqrt{(\lambda+1)^{2}-4\lambda e^{\theta}}}=0,
\end{equation}
which gives us
\begin{equation}
\theta=\log\left(\frac{-2x^{2}+x\sqrt{4x^{2}+\nu^{2}(\lambda+1)^{2}}}{\lambda\nu^{2}}\right),
\end{equation}
whence,
\begin{equation}
I(x)
=
\begin{cases}
x\log\left(\frac{-2x^{2}+x\sqrt{4x^{2}+\nu^{2}(\lambda+1)^{2}}}{\lambda\nu^{2}}\right)
\\
\qquad\qquad\qquad
-\nu\left(\frac{1}{2}\left\{\lambda+1-\frac{-2x+\sqrt{4x^{2}+\nu^{2}(\lambda+1)^{2}}}{\nu}\right\}-1\right)
&\text{if $x\geq 0$}
\\
+\infty &\text{otherwise}
\end{cases}.
\end{equation}
\end{example}

%

\end{document}